\newtheorem{theorem}{Theorem}[section]
\newtheorem{lemma}[theorem]{Lemma}
\newtheorem*{question*}{Question}
\newtheorem{prop}[theorem]{Proposition}
\newtheorem{corollary}[theorem]{Corollary}
\newtheorem{conjecture}[theorem]{Conjecture}
\theoremstyle{definition}
\newtheorem{defn}[theorem]{Definition}
\newtheorem{remark}[theorem]{Remark}
\newtheorem{example}[theorem]{Example}
\newcommand{\Q}{\mathbb Q}
\newcommand{\C}{\mathbb C}
\newcommand{\R}{\mathbb R}
\newcommand{\K}{\mathbb K}
\renewcommand{\P}{\mathbb P}
\newcommand{\T}{\mathbb T}
\newcommand{\D}{\mathbb D}
\newcommand{\F}{\mathbb F}
\newcommand{\Z}{\mathbb Z}
\newcommand{\N}{\mathbb N}
\newcommand{\cA}{\mathcal A}
\newcommand{\cO}{\mathcal O}
\newcommand{\ccH}{\mathcal H}
\newcommand{\ccL}{\mathcal L}
\newcommand{\ccJ}{\mathcal J}
\newcommand{\ccM}{\mathcal M}
\newcommand{\bbI}{\mathbb I}
\newcommand{\reg}{\textnormal{reg}}
\newcommand{\ls}{\textnormal{l.s.}}
\newcommand{\op}{\textnormal{op}}
\newcommand{\id}{\textnormal{id}}
\newcommand{\SH}{{\mathbb S}H}
\newcommand{\Sc}{\textnormal{Sc}}
\def\End{\textnormal{End}}
\def\ev{\textnormal{ev}}
\def\Hom{\textnormal{Hom}}
\def\id{\textnormal{id}}
\def\Im{\textnormal{Im}}
\def\std{\textnormal{std}}
\def\id{\textnormal{id}}
\def\lra#1{\overset{#1}{\longrightarrow}}
\def\mod#1{#1\text{-mod}}
\def\sys#1{\textnormal{2-sys-}#1}
\def\Ind#1{\textnormal{Ind-}#1}
\def\Pro#1{\textnormal{Pro-}#1}
\newlist{HF}{enumerate}{1}
\setlist[HF]{label=(HF\arabic*)}
\newlist{PB}{enumerate}{1}
\setlist[PB]{label=(PB\arabic*)}
\newlist{CZ}{enumerate}{1}
\setlist[CZ]{label=(CZ\arabic*)}
\newlist{S1HF}{enumerate}{1}
\setlist[S1HF]{label=(S1HF\arabic*)}
\numberwithin{equation}{section}
\title{Birational Calabi-Yau manifolds have the same small quantum products}
\author{Mark McLean}
\begin{document}

\begin{abstract}
We show that any two birational projective Calabi-Yau
manifolds have isomorphic small quantum cohomology algebras after a certain change of Novikov rings.
The key tool used is a version of an algebra called symplectic cohomology, which is constructed using Hamiltonian Floer cohomology.
Morally, the idea of the proof is to show that
both small quantum products are
identical deformations of symplectic cohomology of some common
open affine subspace.
%
%
\end{abstract}

\maketitle

\bibliographystyle{mcleanalpha}

\tableofcontents

\section{Introduction}

We are interested in the following broad question.
\begin{question*}
What properties do birationally equivalent Calabi-Yau manifolds have in common?
\end{question*}
Recall that two smooth projective varieties $X$, $\widehat{X}$ are {\it birationally equivalent}
if there exist
Zariski dense open subsets
$A \subset X$, $\widehat{A} \subset \widehat{X}$
and an isomorphism
$A \lra{\cong} \widehat{A}$.
In this paper, by {\it Calabi-Yau manifold}, we will mean a smooth projective variety with trivial first Chern class.
Batyrev showed in \cite{batyrev1999birational}
that birational Calabi-Yau manifolds have equal Betti numbers.
More generally,
by combining ideas from
\cite{Kontsevich:1995motivic},\cite{denef2001geometry}
with ideas in \cite[Section 3.3]{gillet1996descent}, it can be shown that they have identical integral cohomology groups.
One can ask, do their cup product structures agree?
It turns out that this is false (\cite[Example 7.7]{friedman1991threefolds}).
However,
Morrison in \cite{morrisonbeyond}
conjectured
that birational Calabi-Yau threefolds
have identical small quantum cohomology rings, and this was proven in \cite{liruansurgerybirational}.
This leads to the following conjecture.
\begin{conjecture} \label{conjecture birational induces isomorphism} (\cite{RuanSurgery})
Any two birational Calabi-Yau manifolds have isomorphic big quantum cohomology rings up to analytic continuation.
\end{conjecture}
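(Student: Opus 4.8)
The plan is to upgrade the small--quantum comparison indicated in the abstract to the full genus-zero Frobenius--manifold structure, by passing to a common affine model and working with a bulk-deformed version of its symplectic cohomology.

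First I would fix an affine Zariski-open $A \subset X$ whose image under $X \dashrightarrow \widehat X$ is an open $\widehat A \subset \widehat X$, arranging --- by blowing up $X$ and $\widehat X$ along centres disjoint from $A$, which leaves $A$ itself untouched --- that $D := X \setminus A$ and $\widehat D := \widehat X \setminus \widehat A$ are simple normal crossing divisors. The Calabi-Yau condition forces $D$ and $\widehat D$ to be anticanonical, so $A$ is a finite-type Stein log Calabi-Yau, and I would attach to it the symplectic cohomology $SH^*(A)$ used in the small-quantum case together with a bulk-deformed refinement $SH^*(A;\mathfrak b)$: for $\mathfrak b$ in a (formal, then convergent) neighbourhood of $0$ one deforms the Floer operations by insertions, the deformation parameters ranging over $H^*(A)$ enlarged by the Gysin classes of the strata of $D$, so that all of $H^*(X)$ is seen. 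One records the entire genus-zero descendent potential of this deformed theory.

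On the closed side, Batyrev--Kontsevich motivic integration supplies an isomorphism $H^*(X;\Z) \cong H^*(\widehat X;\Z)$, which I would pin down through the Gysin sequences of $A \hookrightarrow X$ and $\widehat A \hookrightarrow \widehat X$ so that it respects the Poincar\'e pairings and restriction to $A$. The structural claim, generalising the paper's main theorem, is then that after a change of Novikov rings as in the small-quantum statement the big quantum Frobenius germ of $X$ at the origin is a deformation of $SH^*(A;\mathfrak b)$ whose whole structure is intrinsic to $A$: the holomorphic curves in $X$ that feed the big quantum potential are repackaged as relative Gromov--Witten invariants of the pair $(X,D)$, and a degeneration-to-the-normal-cone argument together with a logarithmic PSS map identifies these with invariants of $A$ that do not depend on the compactification. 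Running the same argument for $(\widehat X,\widehat D)$ and composing then yields an isomorphism of the two Frobenius germs over a common Novikov ring.

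Two features of the situation force the qualifier ``up to analytic continuation,'' and this is where I expect the main difficulty to lie. The ample cones, and hence the monoids of effective classes and the natural Novikov completions, of $X$ and $\widehat X$ differ, so the isomorphism above lives only over a common partial compactification of the Novikov rings; to pass between the two Frobenius germs one must prove convergence of the quantum products in a neighbourhood of $H^2$ and then analytically continue the Dubrovin connection (equivalently the $J$-function) along a path joining the $X$- and $\widehat X$-expansion points, chosen to avoid the discriminant where semisimplicity degenerates. Moreover the bulk directions supported on $D$ and on $\widehat D$ are geometrically unrelated, so one must also show that the dependence of the big quantum potential on these non-intrinsic parameters is washed out by the same continuation --- equivalently, that the relative invariants of $(X,D)$ and of $(\widehat X,\widehat D)$ along their boundaries admit a common analytic continuation expressible through $A$. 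I expect the hard part to be precisely this: constructing a relative/logarithmic refinement of symplectic cohomology fine enough to record all genus-zero boundary-tangency data, and establishing the convergence estimates that make the continuation literal rather than merely formal --- a genuinely new ingredient beyond the small-quantum case, where formal Novikov coefficients suffice. By contrast the bookkeeping of the Novikov change and the precise shape of the isomorphism $H^*(X) \cong H^*(\widehat X)$, though delicate, should be of the same order of difficulty as in the small-quantum theorem.
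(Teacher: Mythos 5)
You are not addressing a theorem of the paper at all: the statement is Conjecture \ref{conjecture birational induces isomorphism}, which the paper attributes to \cite{RuanSurgery} and leaves open. What the paper actually proves is Theorem \ref{theorem main theorem}, the \emph{small} quantum cohomology statement, and the introduction says explicitly that, unlike \cite{lee2011invariance}, \cite{lee2013invariance}, \cite{lee2014invariance}, it does not identify big quantum rings or individual enumerative invariants, only pointing to \cite{seidel2016connections} and \cite{ganatra2015mirror} as possible future routes. So there is no proof in the paper to compare yours against, and your text has to be judged on its own terms as a programme, not a proof.

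As a programme it leaves exactly the decisive steps unestablished, and you acknowledge this yourself: the construction of a bulk-deformed or logarithmic symplectic cohomology of $A$ recording the full genus-zero (descendent) potential, the identification of the big quantum structure constants with invariants ``intrinsic to $A$'' via relative Gromov--Witten theory of $(X,D)$ and degeneration to the normal cone, and the convergence estimates needed to make the continuation literal are all asserted rather than proved. Two points are concretely problematic beyond being unproved. First, continuing the Dubrovin connection ``along a path avoiding the discriminant where semisimplicity degenerates'' is not available here: big quantum cohomology of a Calabi--Yau manifold is essentially never semisimple (e.g.\ for Calabi--Yau threefolds), and no general convergence statement for its big quantum product is known --- this is precisely why the conjecture is phrased ``up to analytic continuation'' and why it is open. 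Second, your bulk directions ($H^*(A)$ enlarged by Gysin classes of the strata of $D$, respectively $\widehat D$) are matched across the birational map only through the motivic-integration identification of cohomology groups, which need not respect cup products (\cite[Example 7.7]{friedman1991threefolds}, cited in the paper), and the boundary-supported directions on the two sides are geometrically unrelated; showing that the potential's dependence on them is ``washed out'' is again the whole content of the conjecture, not a step one may assume. The paper's actual machinery --- action filtrations over cones of $2$-forms, index-bounded contact cylinders, transfer maps, and the change-of-Novikov-ring isomorphism --- controls only the pair-of-pants product, which is why it yields the small quantum statement and nothing stronger.
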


It was shown in
\cite{lee2011invariance},
\cite{lee2013invariance}
and
\cite{lee2014invariance}
that certain birational isomorphisms called {\it ordinary flops} induce isomorphisms between big quantum cohomology rings up to analytic continuation.
In \cite[Conjecture IV]{wang2002k}, it was conjectured that a small perturbation of a birational isomorphism between Calabi-Yau manifolds is a sequence of ordinary flops.
Hence the work above tells us that \cite[Conjecture IV]{wang2002k} implies Conjecture \ref{conjecture birational induces isomorphism}.
Kawamata in \cite{kawamata2008flops}
showed that birational morphisms between Calabi-Yau manifolds
can be decomposed into sequences of flops, however the structure of these flops in general is unknown.

In order to state our main theorem precisely, we need to set up some notation.
Let $\widehat{\Phi} : X \dashrightarrow \widehat{X}$
be a birational isomorphism between Calabi-Yau manifolds.
Fix a field $\K$ and fix K\"{a}hler forms
$\omega_X$, $\omega_{\widehat{X}}$
on $X$ and $\widehat{X}$ respectively so that the de Rham cohomology classes $[\omega_X] \in H^2(X;\R)$, $[\omega_{\widehat{X}}] \in H^2(\widehat{X};\R)$ lift to integral cohomology classes.
Then a standard argument (see 
\cite[Lemma 4.2]{kawamata2002d} or
Corollary \ref{corollary identification of H2})
tells us that the map $\widehat{\Phi}$
gives us natural identifications
$H^2(X;\R) \cong H^2(\widehat{X};\R)$
and $H_2(X;\Z) \cong H_2(\widehat{X};\Z)$.
As a result, we will not distinguish between these groups.
Therefore we can define the following Novikov rings:

\begin{equation} \label{equation common novikov ring}
\Lambda^{\omega_X,\omega_{\widehat{X}}}_\K = \left\{\sum_{i \in \N} b_i t^{a_i} \ : \ b_i \in \K, \ a_i \in H_2(X;\Z), \quad \min(\omega_X(a_i),\omega_{\widehat{X}}(a_i)) \to \infty
\right\},
\end{equation}
\begin{equation} \label{equation novikov ring omegaX}
\Lambda^{\omega_X}_\K = \left\{\sum_{i \in \N} b_i t^{a_i} \ : \ b_i \in \K, \ a_i \in H_2(X;\Z), \quad \omega_X(a_i) \to \infty
\right\},
\end{equation}
$$
\Lambda^{\omega_{\widehat{X}}}_\K = \left\{\sum_{i \in \N} b_i t^{a_i} \ : \ b_i \in \K, \ a_i \in H_2(X;\Z), \quad \omega_{\widehat{X}}(a_i) \to \infty
\right\}.
$$
Here, the first Novikov ring is the intersection of the other two.
The aim of this paper is to prove the following theorem.
\begin{theorem} \label{theorem main theorem}
	Let $\widehat{\Phi} : X \dashrightarrow \widehat{X}$ be a birational equivalence between Calabi-Yau manifolds and let
	$\omega_X$ and $\omega_{\widehat{X}}$
	be K\"{a}hler forms on $X$ and $\widehat{X}$ respectively whose cohomology classes
	lift to integer cohomology classes.
	Then there exists a graded
	$\Lambda^{\omega_X,\omega_{\widehat{X}}}_\K$-algebra
	$Z$ and algebra isomorphisms
	\begin{equation} \label{equation quantum cohomoogy isomorphisms}
	Z \otimes_{\Lambda_\K^{\omega_X,\omega_{\widehat{X}}}} \Lambda^{\omega_X}_\K \lra{\cong}
	QH^*(X;\Lambda^{\omega_X}_\K), \quad Z \otimes_{\Lambda_\K^{\omega_X,\omega_{\widehat{X}}}} \Lambda_\K^{\omega_{\widehat{X}}} \lra{\cong}
	QH^*(X;\Lambda_\K^{\omega_{\widehat{X}}})
	\end{equation}
	over the Novikov rings $\Lambda^{\omega_X}_\K$ and $\Lambda^{\omega_{\widehat{X}}}_\K$
	respectively
	where $QH^*$ means small quantum cohomology.
\end{theorem}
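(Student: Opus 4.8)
The plan is to realize both small quantum cohomology rings as base changes of a single algebra built from Hamiltonian Floer theory on an affine open subset common to $X$ and $\widehat{X}$. First I would reduce to a one-sided statement. Using the identifications $H_2(X;\Z)\cong H_2(\widehat{X};\Z)$ and of the underlying cohomology groups induced by $\widehat{\Phi}$ (Corollary \ref{corollary identification of H2}), the target $QH^*(X;\Lambda_\K^{\omega_{\widehat X}})$ of the second isomorphism is canonically the small quantum cohomology of $\widehat{X}$ over its own Novikov ring. So it suffices to construct a graded $\Lambda_\K^{\omega_X,\omega_{\widehat X}}$-algebra $Z$ whose construction treats $X$ and $\widehat{X}$ symmetrically, together with the first isomorphism; the second then follows by the symmetry $X\leftrightarrow\widehat{X}$ (applied via $\widehat{\Phi}^{-1}$). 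For the geometric set-up I would replace the rational map by an honest isomorphism of affine opens: choose an effective ample divisor $D_X\subset X$ whose complement $M:=X\setminus D_X$ lies in the domain of definition of $\widehat{\Phi}$; then $\widehat{\Phi}(M)$ is an affine open of $\widehat{X}$, so $D_{\widehat X}:=\widehat{X}\setminus\widehat{\Phi}(M)$ is again the support of an ample divisor and $\widehat{\Phi}$ restricts to an isomorphism $M\lra{\cong}\widehat{X}\setminus D_{\widehat X}$. Since $K_X$ is trivial, $M$ is an affine Calabi--Yau: it carries a nowhere-vanishing holomorphic volume form and a finite-volume Kähler form with contact type end, so its symplectic cohomology is canonically $\Z$-graded, matching the grading on $QH^*$.

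Next I would construct $Z$. Let $SC^*(M)$ be a chain-level model for the symplectic cochains of $M$, assembled as a colimit of Hamiltonian Floer complexes for Hamiltonians of increasing slope along the contact boundary of $M$; its cohomology is the ordinary symplectic cohomology of $M$, an invariant of $M$ alone. The algebra $Z$ will be the cohomology of a Maurer--Cartan-type deformation of $SC^*(M)$, whose deformation term records, with Novikov weight $t^{[C]}$, rigid punctured holomorphic curves $C$ contained in $X$ near $D_X$, respectively in $\widehat{X}$ near $D_{\widehat X}$, and asymptotic along their punctures to the Reeb orbits at infinity of $M$. The point is that each such $C$ meets $M$, so its homology class, pushed to $H_2(X;\Z)\cong H_2(\widehat{X};\Z)$, is realized by an effective curve in $X$ and by an effective curve in $\widehat{X}$; hence $\omega_X([C])>0$ and $\omega_{\widehat X}([C])>0$, and by Gromov compactness only finitely many classes appear below any bound. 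Therefore the deformation term lies over $\Lambda_\K^{\omega_X,\omega_{\widehat X}}$, the deformed complex and hence $Z$ are defined over this ring, and --- since capping a Reeb orbit of $\partial M$ off inside $X$ and off inside $\widehat{X}$ are two bookkeepings of fillings of the same contact boundary --- the construction is symmetric in $X$ and $\widehat{X}$.

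The remaining step is the isomorphism $Z\otimes_{\Lambda_\K^{\omega_X,\omega_{\widehat X}}}\Lambda_\K^{\omega_X}\cong QH^*(X;\Lambda_\K^{\omega_X})$ of algebras, which is a statement purely about the pair $(X,D_X)$. I would prove it by running the Floer theory of a Hamiltonian on $X$ that near $D_X$ is a convex function of the distance to $D_X$ generating the fibrewise rotation of the normal disks: for small slope a PSS isomorphism identifies its Floer cohomology with $QH^*(X;\Lambda_\K^{\omega_X})$, while for large slope its orbits are the critical points away from $D_X$ together with, for each multiplicity $k\geq 1$, a Morse--Bott family of $k$-fold winding orbits concentrated near $D_X$. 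A neck-stretching/adiabatic degeneration along $\partial M$ splits the Floer differential and the pair-of-pants product of the large-slope complex into a piece intrinsic to $M$ and precisely the curve counts entering the deformation term of $Z$, identifying the colimit over slopes, after base change to $\Lambda_\K^{\omega_X}$, with the deformed complex defining $Z$; the enlargement of the Novikov ring to $\Lambda_\K^{\omega_X}$ is exactly what is needed for the infinite series produced by the winding orbits to converge and to assemble, via the PSS maps carried through the degeneration, into the small quantum product. Only a single marked-point insertion ever occurs in this comparison, which is why one obtains the small rather than the big quantum product.

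I expect the main obstacle to be of two kinds. First, for a general Calabi--Yau manifold over an arbitrary field $\K$ the Hamiltonian Floer package, and in particular the curve counts defining the deformation, require a virtual perturbation scheme with coherent orientations; this is heavy but essentially available technology. The genuinely delicate point, and the crux of the whole argument, is the claim that the deformation data extracted from $(X,D_X)$ and from $(\widehat{X},D_{\widehat X})$ give the same algebra $Z$ over the common Novikov ring: one must localize every relevant moduli space of curves to a neighbourhood of $\partial M$, show the resulting counts depend only on $M$, and match Novikov exponents across $H_2(X;\Z)\cong H_2(\widehat{X};\Z)$ --- in particular controlling how the "flopped" curve classes, which have opposite-sign areas for $\omega_X$ and $\omega_{\widehat X}$, re-enter after each base change. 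It is precisely this matching that forces the specific Novikov rings in the statement, and it is where the bulk of the work lies.
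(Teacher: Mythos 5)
Your proposal follows the ``moral'' outline that the abstract alludes to (both quantum products as deformations of a symplectic cohomology of a common affine piece, in the spirit of Borman--Sheridan), but this is not what the paper actually does, and as written it has a genuine gap at exactly the point you identify as the crux. You define $Z$ as the cohomology of a Maurer--Cartan deformation of $SC^*(M)$ whose deformation term counts punctured holomorphic curves near $D_X$, resp.\ near $D_{\widehat X}$, and you assert symmetry because these are ``two bookkeepings of fillings of the same contact boundary.'' That assertion is the entire theorem: the two moduli problems live in different ambient manifolds and there is no a priori reason their counts agree. The mechanism that makes the paper's argument work is completely absent from your sketch, namely that the indeterminacy loci $V_X$, $V_{\widehat X}$ of $\widehat\Phi$ have \emph{real codimension at least $4$} (Lemma \ref{lemma identification of second homology groups} --- this is where the Calabi--Yau hypothesis is used, not merely for gradings), so that all orbits and Floer trajectories defining the relevant invariant can be perturbed off $V_X\cup V_{\widehat X}$ and the two sides become \emph{literally} the same Floer data. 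Your construction, by contrast, deliberately localizes curves near the divisors, which is where $X$ and $\widehat X$ differ most; nothing in your argument controls that. Relatedly, your claim that every contributing class $[C]$ satisfies $\omega_X([C])>0$ \emph{and} $\omega_{\widehat X}([C])>0$ is unjustified and sits uneasily with the flopped classes you mention at the end: a capped curve near $D_X$ can pick up components or multiple-cover contributions in classes on which $\omega_{\widehat X}$ is negative, and without an argument that these cancel or are excluded you cannot even define $Z$ over $\Lambda_\K^{\omega_X,\omega_{\widehat X}}$.

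The second gap is in the comparison $Z\otimes\Lambda_\K^{\omega_X}\cong QH^*(X;\Lambda_\K^{\omega_X})$. The paper never performs a neck-stretching or adiabatic degeneration along $\partial M$, precisely because truncating the Floer theory to a compact piece of the affine part destroys the isomorphism with quantum cohomology. Instead it (a) proves the complement of a large compact set is \emph{stably displaceable} by an $h$-principle for stratified symplectic subsets (Theorem \ref{theorem stably displaceable complement}), so the transfer map from $SH^*(M\subset M)\cong QH^*$ is an isomorphism; (b) uses \emph{index-bounded} contact cylinders and a two-parameter action filtration recording intersection numbers with the divisor, packaged as $\varinjlim\varprojlim$ of Floer groups, to discard the orbits near the divisor (Proposition \ref{proposition alternative filtrations defining symplectic cohomology}); and (c) changes Novikov rings by a flatness plus $\varprojlim^1$-vanishing argument (Theorem \ref{theorem changing novikov ring}). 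Your route would require an SFT-type splitting of the Hamiltonian Floer differential and product along a normal-crossings divisor, with virtual perturbations and convergence of the resulting infinite series over an arbitrary field; none of this is established, and it is avoidable. If you want to salvage your outline, the essential repairs are: define $Z$ intrinsically as a completed, filtered symplectic cohomology of a compact set in the affine piece (never counting curves in the divisor), and prove the $X$--$\widehat X$ symmetry by the codimension-$\geq 4$ avoidance argument rather than by matching curve counts near $D_X$ and $D_{\widehat X}$.
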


Note that small quantum cohomology of a Calabi-Yau manifold can be defined over any field $\K$ (see \cite{Ruan:topologicalsigma}).
Previous theorems identifying quantum cohomology rings of birational Calabi-Yau manifolds were proven using a degeneration argument (along with many additional ideas, such as a quantum Leray-Hirsch theorem).
We will prove our theorem using
Hamiltonian Floer cohomology.
This proof has the advantage that it works in greater generality and it explains in some sense why the result should hold.
The downside, in comparison to previous results, is that the isomorphisms
(\ref{equation quantum cohomoogy isomorphisms})
are not explicit.
In fact, the algebra $Z$ isn't explicit either.
Also, the results
\cite{RuanSurgery}, \cite{lee2011invariance},
\cite{lee2013invariance}
and
\cite{lee2014invariance}
identify actual enumerative
invariants through analytic continuation dictated by the identification $H_2(X;\Z) \cong H_2(\widehat{X};\Z)$.
It might be possible to use ideas from \cite{seidel2016connections} and
\cite{ganatra2015mirror}
 to
extend Theorem \ref{theorem main theorem} above so that we can identify such enumerative invariants in some special cases.
The results \cite{lee2011invariance},
\cite{lee2013invariance}
and
\cite{lee2014invariance}
 also identify big quantum cohomology rings, whereas we only identify small quantum cohomology rings.
Finally, one may ask if higher genus invariants of birational Calabi-Yau's are related (see \cite{iwaoleelinwanginvariantsunderasimpleflop}).
We currently do not know if the techniques in this paper will be useful in answering this question.

We will give a sketch of the proof of Theorem \ref{theorem main theorem} in Subsection \ref{subsection sketch of proof}.
The ideas of this proof were inspired by ongoing work of Borman and Sheridan
and it uses a version of symplectic cohomology defined in
\cite{CieliebakFloerHofersymplectichomologyII},
\cite{cieliebak2015symplectic},
\cite{groman2015floer},
\cite{venkatesh2017rabinowitz} and
 \cite{umutvarolgunessymplecticcohomology}.
As a corollary of the theorem above, we provide an alternative proof of the fact that birational Calabi-Yau manifolds have the same cohomology groups over any field.

\begin{corollary}
Let $X$, $\widehat{X}$ be birational Calabi-Yau manifolds. Then they have isomorphic cohomology groups over any field.
\end{corollary}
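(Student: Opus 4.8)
The plan is to derive this as a dimension count from Theorem~\ref{theorem main theorem}. Fix a field $\K$. Because $X$ and $\widehat{X}$ are Calabi--Yau, $c_1$ vanishes and so the Novikov variables $t^a$ sit in degree $2c_1(a)=0$; hence, as a \emph{graded} module over its Novikov ring, small quantum cohomology is just ordinary cohomology base changed to that ring:
\begin{equation*}
QH^*(X;\Lambda^{\omega_X}_{\K}) \;\cong\; H^*(X;\K)\otimes_{\K}\Lambda^{\omega_X}_{\K}, \qquad QH^*(\widehat{X};\Lambda^{\omega_{\widehat{X}}}_{\K}) \;\cong\; H^*(\widehat{X};\K)\otimes_{\K}\Lambda^{\omega_{\widehat{X}}}_{\K}.
\end{equation*}
In particular both are free over their respective Novikov rings, with graded ranks the Poincar\'e polynomials $P_X(q)=\sum_k \dim_{\K}H^k(X;\K)\,q^k$ and $P_{\widehat{X}}(q)$.

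Next I would feed in Theorem~\ref{theorem main theorem}: there is a graded $\Lambda^{\omega_X,\omega_{\widehat{X}}}_{\K}$-algebra $Z$ with graded module isomorphisms $Z\otimes_{\Lambda^{\omega_X,\omega_{\widehat{X}}}_{\K}}\Lambda^{\omega_X}_{\K}\cong QH^*(X;\Lambda^{\omega_X}_{\K})$ and $Z\otimes_{\Lambda^{\omega_X,\omega_{\widehat{X}}}_{\K}}\Lambda^{\omega_{\widehat{X}}}_{\K}\cong QH^*(\widehat{X};\Lambda^{\omega_{\widehat{X}}}_{\K})$. Since $Z$ is built in the proof of the theorem as a free graded $\Lambda^{\omega_X,\omega_{\widehat{X}}}_{\K}$-module of some finite graded rank $P_Z(q)$, its graded rank is preserved by both base changes. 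Comparing with the previous paragraph forces $P_X(q)=P_Z(q)=P_{\widehat{X}}(q)$, that is, $\dim_{\K}H^k(X;\K)=\dim_{\K}H^k(\widehat{X};\K)$ for every $k$. Over a field, cohomology is a graded vector space, so this is precisely an isomorphism $H^*(X;\K)\cong H^*(\widehat{X};\K)$; as $\K$ was arbitrary, the corollary follows.

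The one point requiring care — and the only place where the internals of the theorem enter — is the rank transfer, i.e.\ that $Z$ is genuinely free (or at least of well-defined constant graded rank) over $\Lambda^{\omega_X,\omega_{\widehat{X}}}_{\K}$; this should be read off directly from the construction of $Z$ in the proof of Theorem~\ref{theorem main theorem}. Failing that, one could instead push both isomorphisms forward along the valuation homomorphisms $t^a\mapsto T^{\omega_X(a)}$ and $t^a\mapsto T^{\omega_{\widehat{X}}(a)}$ into the universal Novikov field of $\K$ and compare dimensions there, interpolating through the ring homomorphisms $t^a\mapsto T^{s\omega_X(a)+(1-s)\omega_{\widehat{X}}(a)}$ for $s\in[0,1]$ (each well defined since $sx+(1-s)y\ge\min(x,y)$); but invoking freeness is cleaner. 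The remaining ingredient is purely bookkeeping: the vanishing of $c_1$ is what puts the Novikov variables in degree $0$, upgrading equality of total Betti numbers to the degree-by-degree statement.
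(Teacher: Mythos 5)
Your overall strategy (a graded dimension count fed by Theorem \ref{theorem main theorem}, with $c_1=0$ putting the Novikov variables in degree $0$ so that the count is degree-by-degree) is the same as the paper's, but the step you yourself flag as ``the one point requiring care'' is a genuine gap, and your fallback does not close it. Nothing in the paper establishes that $Z$ is free, or even of well-defined constant graded rank, over $\Lambda^{\omega_X,\omega_{\widehat{X}}}_\K$: the algebra $Z$ is a symplectic cohomology group built from direct and inverse limits of Floer groups, and no structure theorem for it as a module over that ring is proved. Without freeness (or at least flatness), the graded ranks of $Z\otimes\Lambda^{\omega_X}_\K$ and $Z\otimes\Lambda^{\omega_{\widehat{X}}}_\K$ need not agree. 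Your fallback has the same problem: the two valuation homomorphisms $t^a\mapsto T^{\omega_X(a)}$ and $t^a\mapsto T^{\omega_{\widehat{X}}(a)}$ are distinct specializations of $\Lambda^{\omega_X,\omega_{\widehat{X}}}_\K$, and the dimension of $Z\otimes(-)$ can jump under specialization; interpolating through $s\in[0,1]$ does not prevent this, since there is no semicontinuity or constancy statement available for $Z$.

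The paper's fix avoids the issue entirely and is worth adopting: tensor $Z$ with the field of fractions $\check{\F}$ of $\Lambda^{\omega_X,\omega_{\widehat{X}}}_\K$ \emph{first}. Since $\check{\F}$ is contained in both fields of fractions $\F$ of $\Lambda^{\omega_X}_\K$ and $\widehat{\F}$ of $\Lambda^{\omega_{\widehat{X}}}_\K$, both base changes in Theorem \ref{theorem main theorem} factor through $\check{Z}:=Z\otimes\check{\F}$, giving
\begin{equation*}
\check{Z}_p\otimes_{\check{\F}}\F\cong H^p(X;\F),\qquad \check{Z}_p\otimes_{\check{\F}}\widehat{\F}\cong H^p(\widehat{X};\widehat{\F}),
\end{equation*}
where the right-hand identifications come from the universal coefficient theorem applied to $QH^p(X;\Lambda^{\omega_X}_\K)\otimes\F$ and $QH^p(\widehat{X};\Lambda^{\omega_{\widehat{X}}}_\K)\otimes\widehat{\F}$ (this replaces your ``$QH^*\cong H^*\otimes\Lambda$'' step and needs no freeness either). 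Now $\check{Z}_p$ is a vector space over the field $\check{\F}$, its dimension is preserved by the field extensions $\check{\F}\subset\F$ and $\check{\F}\subset\widehat{\F}$, and a second application of the universal coefficient theorem identifies $\dim_\F H^p(X;\F)$ with $\dim_\K H^p(X;\K)$ and likewise for $\widehat{X}$. This yields the equality of Betti numbers over $\K$ without ever needing a module structure result for $Z$.
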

\proof
We wish to show that
$H^p(X;\K) \cong H^p(\widehat{X};\K)$
for any field $\K$ and
for each $p \in \Z$.
Fix such a $\K$ and $p \in \Z$.
%
%
%
%
Let $\check{\F}$, $\F$, $\widehat{\F}$
be the field of fractions of $\Lambda^{\omega_X,\omega_{\widehat{X}}}_\K$, $\Lambda^{\omega_X}_\K$ and $\Lambda^{\omega_{\widehat{X}}}_\K$ respectively.
Then $\K \subset \check{\F}$, $\check{\F} \subset \F$
and $\check{\F} \subset \widehat{\F}$.
Let $Z$ be as in Theorem \ref{theorem main theorem}, define
$\check{Z} := Z \otimes_{\Lambda_\K^{\omega_X,\omega_{\widehat{X}}}} \check{\F}$ and let $\check{Z}_p$ be the degree $p$ part of this graded algebra.
By the universal coefficient theorem \cite[3.6.1]{weibel1995introduction}, we have the following isomorphisms of vector spaces:
$$H^p(X;\Lambda^{\omega_X}_\K) \otimes_{\Lambda^{\omega_X}_\K} \F \cong H^p(X;\F), \quad
H^p(\widehat{X};\Lambda^{\omega_{\widehat{X}}}_\K) \otimes_{\Lambda^{\omega_{\widehat{X}}}_\K} \widehat{\F} \cong H^p(\widehat{X};\widehat{\F}).$$
Hence by Equation (\ref{equation quantum cohomoogy isomorphisms}),
we have isomorphisms
\begin{equation} \label{equation isomorphism with cohomology groups}
\check{Z}_p \otimes_{\check{\F}} \F \cong H^p(X;\F), \quad \check{Z}_p \otimes_{\check{\F}} \widehat{\F} \cong H^p(\widehat{X};\widehat{\F}).
\end{equation}
By the universal coefficient theorem,
we also have isomorphisms
\begin{equation} \label{equation universal coeff with K}
H^p(X;\K) \otimes_{\K} \F \cong H^p(X;\F), \quad H^p(\widehat{X};\K) \otimes_{\K} \widehat{\F} \cong H^p(\widehat{X};\widehat{\F}).
\end{equation}
Therefore by Equations
(\ref{equation isomorphism with cohomology groups}) and
(\ref{equation universal coeff with K}),
the dimension of $H^p(X;\K)$
is equal to the dimension of $H^p(\widehat{X};\K)$.
Hence $H^p(X;\K) \cong H^p(\widehat{X};\K)$.
%
\qed

\subsection{Example} \label{subsection example computation}

Even though we do
not know what the isomorphisms (\ref{equation quantum cohomoogy isomorphisms})
look like explicitly
we will speculate what they should look like in a particular example when restricted to even degrees (see \cite[Section 4.3]{morrisonbeyond}).
Suppose that $X$, $\widehat{X}$ are of dimension $3$ and that there exists
a class $\Gamma \in H_2(X;\Z)$
so that
\begin{itemize}
\item every connected one dimensional subvariety
in $X$ representing $\Gamma$
or in $\widehat{X}$ representing $-\Gamma$
is isomorphic to $\P^1$ with normal bundle $\cO(-1) \oplus \cO(-1)$ and
%
%
%
\item $X$ and $\widehat{X}$ are related by an Atiyah flop along all of these curves.
\end{itemize}
Since $H_2(X;\Z)$ is naturally
identified with $H_2(\widehat{X};\Z)$,
we have by Poincar\'{e} duality
a natural
identification
$H^k(X;\Q) = H^k(\widehat{X};\Q)$ for each even $k \in \Z$. Hence
from now on we will identify these cohomology groups. 
%
Let $\widehat{A}_0,\cdots,\widehat{A}_l \in H^4(X;\Q)$
be a basis so that $\widehat{A}_0$ is Poincar\'{e} dual to $\Gamma$
and let $A_0,\cdots,A_l \in H^2(X;\Q)$ be the dual basis
 with respect to the pairing $(\alpha,\beta) \to \int_X \alpha \cup \beta$.
%
%
We speculate that the even degree part of the algebra $Z$ from Theorem \ref{theorem main theorem} is isomorphic as a $\Lambda^{\omega_X,\omega_{\widehat{X}}}_\K$-module to $H^{\textnormal{even}}(X;\Lambda^{\omega_X,\omega_{\widehat{X}}}_\K)$ and the product $*_Z$ on this module is uniquely determined by the structure constants:
\begin{equation} \label{equation s structure}
A_i *_Z A_j =  A_i \cup_X A_j +
l \delta_{0i}\delta_{0j} \widehat{A}_0 t^\Gamma
+
\sum_{k=0}^l \sum_{\beta \notin \Z \Gamma} GW^{X,\beta}_{0,3}(A_i,A_j,A_k) \widehat{A}_k t^{\beta}, \ i,j \in \{0,\cdots,l\}.
\end{equation}
By
replacing the class $A_0$ in (\ref{equation s structure}) with $\frac{1}{1-t^\Gamma}A_0$ and $-\frac{t^{-\Gamma}}{1-t^{-\Gamma}} A_0$
respectively
and the class
$\widehat{A}_0$ with
$(1-t^\Gamma)\widehat{A}_0$ and $-\frac{1-t^{-\Gamma}}{t^{-\Gamma}} \widehat{A}_0$ respectively, we get
the isomorphisms in Equation (\ref{equation quantum cohomoogy isomorphisms}).

\subsection{Sketch of Proof} \label{subsection sketch of proof}

Theorem \ref{theorem main theorem}
is proven using {\it Hamiltonian Floer cohomology}.
Very roughly, Hamiltonian Floer cohomology
$HF^*(H)$ is a cohomology ring whose chain complex is generated by $1$-periodic orbits of a Hamiltonian $H$.
The key property of Hamiltonian Floer cohomology is that
it is isomorphic to quantum cohomology and
hence it is sufficient for us to show that the Hamiltonian Floer cohomology algebras of appropriate Hamiltonians
on $(X,\omega_X)$ and $(\widehat{X},\omega_{\widehat{X}})$
respectively
are related via equations similar to (\ref{equation quantum cohomoogy isomorphisms}).
In order to do this, we choose Zariski dense affine open subsets $A \subset X$,
$\widehat{A} \subset \widehat{X}$
so that $\widehat{\Phi}$ maps $A$ isomorphically to $\widehat{A}$.
The key idea now is to choose Hamiltonians $H$ on $X$ and $\widehat{H}$ on $\widehat{X}$
so that $H|_A = \widehat{\Phi}^* \widehat{H}|_{\widehat{A}}$
and which are constant outside a large compact subset $K$ of $A$.
One also has to modify $\omega_X$ and $\omega_{\widehat{X}}$ so that these K\"{a}hler forms agree near $K$.
If one could ignore all $1$-periodic orbits of $H$ and $\widehat{H}$ outside $K$ then their Hamiltonian Floer groups would be `identical' and hence we would be done.
However it turns out that if one ignores these orbits,
one gets groups that are no longer isomorphic to quantum cohomology.
In order to get around this problem, we consider a sequence of such Hamiltonians tending to infinity outside $K$. We package all of this data into a group called {\it symplectic cohomology} and show that these groups are in fact isomorphic to quantum cohomology.
In the following subsections, we provide a slightly more detailed sketch of the proof.

\subsubsection{Hamiltonian Floer Cohomology with Alternative Filtrations.}

In this subsection, we summarize the results of Section \ref{section Hamitlonian Floer cohomology and Filtrations}.
Let $(M,\omega)$ be a symplectic manifold with trivial first Chern class and fix an $\omega$-tame almost complex structure $J$.
A {\it contact cylinder}
is a codimension $0$ symplectic embedding of a subset
$\check{C} = [1-\epsilon,1+\epsilon] \times C$
of a symplectization of a contact manifold $C$  which bounds a Liouville domain $D$ (see Definition \ref{definition contact cylinder}).
Let $H$ be a time dependent Hamiltonian
on $M$
which is {\it compatible}
with this contact cylinder
(see Definition \ref{definition Hamiltonians compatible with contact cylinder})
whose $1$-periodic orbits are non-degenerate.
A {\it capped $1$-periodic orbit}
of $H$ is a $1$-periodic orbit $\gamma$ together with a certain equivalence class of surfaces $\widetilde{\gamma} : \Sigma \lra{} M$ with boundary equal to $\gamma$
(see Definition \ref{defn capped 1-periodic orbit}).
Now  to each capped $1$-periodic orbit $\gamma$ and to each closed $2$-form
$\widetilde{\omega}$ which is `compatible' with $\check{C}$ and $J$
we can assign an {\it action}.
This action depends on $\gamma$, $H$ and the cohomology class of $\widetilde{\omega}$ together with two additional parameters (Corollary \ref{corollary action of capped loop disjoint from contact cylinder} and Remark \ref{remark only depends on cohomology class}).
Therefore, one can think of the action of $\gamma$ as a particular function $\cA_{\check{C},H}(\gamma)$
from
(a certain subset $Q_{\check{C}}$ of)
$H^2(M,D;\R) \times \R \times \R$ to $\R$.
Morally, one should think of $D$ as the `complement' of a particular ample divisor and that the action function
$\cA_{\check{C},H}(\gamma)$
records the usual action together with the intersection numbers of the capping surface $\widetilde{\gamma}$ with various components of this divisor.
Let $a_\pm : Q_\pm \lra{} \R$
be continuous functions where $Q_\pm \subset Q_{\check{C}}$ are certain cones in $Q_{\check{C}}$.
We define the chain complex
$CF^*_{\check{C},a_-,a_+}(H)$ to be
the free abelian group generated by
capped $1$-periodic orbits $\gamma$ satisfying $a_- \leq \cA_{\check{C},H}(\gamma)|_{Q_-}$
and $a_+ \nleq \cA_{\check{C},H}(\gamma)|_{Q_+}$
(see Definition \ref{definition Floer chain complex} for more details).
The differential is a matrix with respect to the above basis of capped $1$-periodic orbits whose entries `count' solutions to a particular PDE with boundary conditions given by these orbits (Definition \ref{defn of differential}).
We define the {\it Hamiltonian Floer cohomology group}
$HF^*_{\check{C},a_-,a_+}(H)$
to be the homology of this chain complex.
This is a module over a particular Novikov ring $\Lambda_\K^{Q_+,+}$ (Definition \ref{definition convex cone associated to contact cylinder}).
Here are some key examples.
\begin{enumerate}[label=\{\Alph*\}]
\item
\label{item one dimensional cones}
 The case when $\check{C}$ is the empty set and
$Q_- = Q_+$ are $1$-dimensional
cones spanned by $([\omega],1,1) \in H^2(M;\R) \times \R^2$.
In this case, one gets
the usual Hamiltonian Floer groups
and the usual action filtration
(used in, say, \cite{FloerHofer:SymhomI}).
The Novikov ring $\Lambda_\K^{Q_+,+}$ in this case is equal to (\ref{equation novikov ring omegaX}) where $\omega_X$ is replaced by $\omega$ and where only non-negative exponents are allowed (I.e. $\omega(a_i) \geq 0$).
\item \label{item two dimensional cone}
The case when $\check{C}$ is non-empty, $Q_+$ is a one dimensional cone and $Q_-$ is a two dimensional cone containing $Q_+$ which projects to the cone spanned by $[\omega] \in H^2(M;\R)$.
In this case the corresponding Floer cohomology group is defined over the same Novikov ring, but
since $Q_-$ is larger, one can use it to ignore certain $1$-periodic orbits.
\item \label{item four dimensional cone}
Finally there is the case when $Q_+$ is a certain
two dimensional cone and $Q_-$ is a four dimensional cone.
The Novikov ring in this case is a subring of
(\ref{equation common novikov ring}) where only non-negative exponents are allowed (I.e. $\omega_X(a_i), \omega_{\widehat{X}}(a_i) \geq 0$).
We will use this case to define the algebra $Z$.
\end{enumerate}

These Hamiltonian Floer groups
satisfy the following properties:
\begin{enumerate}[label=(HF\arabic*)]
\item  \label{item continuation map property}
(Definition \ref{definition chain level continuation map}).
If $H_- \leq H_+$ (plus some other conditions) then there is a natural {\it continuation map}
$\Phi^p_{H^-,H^+} : HF^p_{\check{C},a_-,a_+}(H^-) \lra{}
HF^p_{\check{C},a_-,a_+}(H^+)$ which is functorial.
\item (Definition \ref{definition action map}). If $Q^1_\pm \subset Q^0_\pm$ and $a^1_\pm \leq a^0_\pm|_{Q^1_\pm}$ then there is a natural {\it action map}
$HF^*_{\check{C},a^0_-,a^0_+}(H) \lra{}
HF^*_{\check{C},a^1_-,a^1_+}(H)$
and these maps are functorial and
commute with continuation maps.
\item \label{item pair of pants product property}
(Definition \ref{defnition pair of pants product}). 
Suppose 
\begin{itemize}
\item $(H^j)_{j=0,1,2}$ are Hamiltonians satisfying $H^0,H^1 < \frac{1}{2} H^2$,
\item $(Q^j_\pm)_{j=0,1,2}$ are certain cones in $H^2(M,D;\R) \times \R \times \R$ satisfying $Q^2_\pm \subset Q^j_\pm$ and
\item $a^j_\pm : Q^j_\pm \lra{} \R$ are certain continuous functions satisfying $a^2_- \leq a^0_- + a^1_-$  and $a^2_+ \leq \min(a^0_+ + a^1_-,a^0_- + a^1_+)$.
\end{itemize}
Then there is a {\it pair of pants product map} $HF^{p_0}_{\check{C},a^0_-,a^0_+}(H^0) \otimes_{\Lambda_\K^{Q,+}} HF^{p_1}_{\check{C},a^1_-,a^1_+}(H^1)
\lra{}
HF^{p_0+p_1}_{\check{C},a^2_-,a^2_+}(H^2)$ commuting with all of the maps above.
\end{enumerate}

\subsubsection{Lower semi-continuous Hamiltonians.}

The following subsection summarizes Section \ref{section floer cohomology for lower semi-continuous Hamitlonians}.
A {\it lower semi-continuous Hamiltonian}
is just a function $S^1 \times M \lra{} \R \cup \{\infty\}$ which is lower semi-continuous.
The good thing about this condition is that the set of smooth Hamiltonians smaller than $H$ form a directed system with respect to the usual ordering $\leq$.
For a lower semi-continuous Hamiltonian $H$ compatible with a contact cylinder $\check{C}$, we can define
$HF^*_{\check{C},a_-,a_+}(H)$
to be the direct limit of
$HF^*_{\check{C},a_-,a_+}(\check{H})$
for all smooth $\check{C}$ compatible Hamiltonians smaller than $H$.
These satisfy the same properties
\ref{item continuation map property}-\ref{item pair of pants product property}
above.

\subsubsection{Symplectic Cohomology}

The problem with the Floer groups above is that they do not have the correct invariance properties and they are not algebras.
We resolve these issues in Sections
\ref{section definition of symplectic cohomology} and \ref{section properties of symplectic cohomology}.
Let $M$, $\check{C}$, $D$ be as above.
Let $Q_- \subset Q_+$ be two cones in $Q_{\check{C}}$.
For a closed set $K \subset D$ 
we define the {\it symplectic cohomology algebra}
$$SH^*_{\check{C},Q_-,Q_+}(K \subset M) := \varinjlim_{a_-} \varprojlim_{a_+} HF^*_{\check{C},a_-,a_+}(H_K)$$
where we are using the directed system of continuous functions
$a_\pm : Q_\pm \lra{} \R$ with the usual ordering $\leq$ for $a_+$ and the opposite ordering for $a_-$ and where $H_K$ is the lower-semi-continuous Hamiltonian
\begin{equation} \label{equation index function Hamiltonian}
H_K : M \lra{} \R \cup \{\infty\}, \quad H_K(x) := \left\{
\begin{array}{ll}
0 & \text{if} \ x \in K \\
\infty & \text{otherwise.}
\end{array}
\right.
\end{equation}
This is defined over a particular Novikov ring $\Lambda_\K^{Q_+}$ and has a product induced by the pair of pants product.
The papers 
\cite{CieliebakFloerHofersymplectichomologyII},
\cite{cieliebak2015symplectic},
\cite{groman2015floer},
\cite{venkatesh2017rabinowitz} and
\cite{umutvarolgunessymplecticcohomology}
have a similar definition of symplectic cohomology.
However there are slight differences between all of these definitions (which potentially could lead to different algebras).
One main difference is that some of the definitions above
involve building a chain complex first,
and then taking homology.
Our definition does not do this, but only for the sake of ease.
The symplectic cohomology algebra satisfies the following properties (see the cited definitions and Propositions/Theorems for more accurate statements):
\begin{enumerate}[label=(SH\arabic*)]
\item \label{item transfer map property}
(Definition \ref{definition transfer map}). If $K_+ \subset K_- \subset D$ are closed subsets then
we have a {\it transfer map}
$SH^*_{\check{C},Q_-,Q_+}(K_- \subset M) \lra{} SH^*_{\check{C},Q_-,Q_+}(K_+ \subset M)$
which is functorial.
\item (Definition \ref{action maps for symplectic cohomology}).
If $Q^1_\pm \subset Q^0_\pm$ then there is an {\it action map}
$SH^*_{\check{C},Q^0_-,Q^0_+}(K \subset M) \lra{} SH^*_{\check{C},Q^1_-,Q^1_+}(K \subset M)$. These maps commute with continuation maps and are functorial.
\item \label{item isomorphic to quantum cohomology}
(Theorem \ref{theorem isomorphic to quantum cohomology}). If $K = M$ and
$\check{C}$, $Q_\pm$ are as in \ref{item one dimensional cones} then
$SH^*_{\check{C},Q_-,Q_+}(K \subset M)$
is isomorphic to quantum cohomology.
Also the `derived' version of symplectic cohomology $\varinjlim_{a_-} \varprojlim^1_{a_+} HF^*_{\check{C},a_-,a_+}(H_K)$ vanishes.
\item \label{item stably displaceable property}
(Theorem \ref{theorem stably displaceable complement}).
If the complement $M - K$ is stably displaceable (I.e. $(M- K) \times S^1 \subset M \times T^* S^1$ is displaceable by a Hamiltonian symplectomorphism) 
and $\check{C}$, $Q_\pm$ are as in \ref{item one dimensional cones}
then the transfer map
$SH^*_{\emptyset,Q_-,Q_+}(M \subset M) \lra{} SH^*_{\emptyset,Q_-,Q_+}(K \subset M)$ is an isomorphism.
\item \label{item alternative filtration property}
(Proposition \ref{proposition alternative filtrations defining symplectic cohomology}). If $\check{C}$, $Q_-$, $Q_+$
are as in \ref{item two dimensional cone}
and the Liouville domain $D$ is {\it index bounded} (Definition \ref{definition index bounded contact cylinder})
then the
action map
$$SH^*_{\check{C},Q_-,Q_+}(D \subset M) \lra{} SH^*_{\check{C},Q_+,Q_+}(D \subset M)$$ is an isomorphism.
\item \label{item index bounded transfer isomorphism}
(Proposition \ref{proposition transfer isomorphism between index bounded Liouville domains}).
Suppose that $\check{C}_0$, $\check{C}_1$ are index bounded contact cylinders with associated Liouville domains $D_0$ and $D_1$ respectively satisfying $D_1 \subset D_0$ along with some other conditions (essentially $D_0$ and $D_1$ need to be `large' in some sense). 
Then the transfer map
$SH^*_{\check{C},Q_-,Q_+}(D_0 \subset M) \lra{} SH^*_{\check{C},Q_-,Q_+}(D_1 \subset M)$ is an isomorphism
where $(Q_-,Q_+)$ is as in \ref{item two dimensional cone}.
\item \label{item changing novikov ring}
(Theorem \ref{theorem changing novikov ring} and Proposition \ref{flatness of rational polyhedral novikov rings}).
Let $\check{C}$ be an index bounded contact cylinder with associated Liouville domain $D$.
Suppose we have inclusions of rational polyhedral cones $Q^1_\pm \subset Q^0_\pm$ where $Q^1_-$ has dimension at least $2$ (e.g. case \ref{item two dimensional cone} or
\ref{item four dimensional cone}).
Also suppose
$\varinjlim_{a_-} \varprojlim^1_{a_+} HF^*_{\check{C},a_-,a_+}(H_D) = 0$
where $H_D$ is defined in (\ref{equation index function Hamiltonian}).
Then we have an isomorphism
$$SH^*_{\check{C},Q^0_-,Q^0_+}(D \subset M) \otimes_{\Lambda_\K^{Q^0_+}} \Lambda_\K^{Q^1_+}
\lra{\cong} SH^*_{\check{C},Q^1_-,Q^1_+}(D \subset M)$$
induced by the action map.
\item \label{item avoiding submanifolds}
(Proposition \ref{proposition regular subset for surface} and Lemma \ref{lemma regular Hamiltonians}).
Let $\check{C}$ be a contact cylinder with associated Liouville domain $D$.
Suppose $V \subset M - D - \check{C}$ is a union of real codimension $\geq 4$ submanifolds.
Let $Q_\pm$ be cones so that $Q_-$ is of dimension $\geq 2$.
Then the Floer trajectories and orbits defining $SH^*_{\check{C},Q_-,Q_+}(D \subset M)$ can be made to avoid $V$.
Hence this group only depends on these structures restricted to $M -V$.
\end{enumerate}

\subsubsection{Sketch of Proof of Main Theorem \ref{theorem main theorem}.}

Here we summarize the ideas behind the proof of Theorem \ref{theorem main theorem} coming from Sections
\ref{symplectic geometry of projective varieties} and \ref{section proof of main theorem}.
The proof has two parts.
In part (1), we modify the symplectic forms on $X$ and $\widehat{X}$ so that they agree on a certain large open subset and so that they admit certain index bounded contact cylinders.
In part (2), we use properties
\ref{item transfer map property}-\ref{item avoiding submanifolds} to finish our proof.

{\it Part (1)}:
First of all, we choose
Zariski dense affine subvarieties $A \subset X$, $\widehat{A} \subset \widehat{X}$ so that
\begin{enumerate}
\item the birational morphism
$\widehat{\Phi}$ induces an isomorphism
$\Phi : A \lra{} \widehat{A}$ and
\item $\omega_X$ and $\omega_{\widehat{X}}$ come from effective ample divisors with support equal to $X - A$ and $\widehat{X} - \widehat{A}$ respectively (after rescaling these forms).
\end{enumerate}
By Corollary \ref{proposition stably displaceable partly stratified symplectic subset}, we have that
$X - A$ is stably displaceable (by an $h$-principle).
Hence there is a compact subset
$K \subset A$ so that $X - K$
is stably displaceable.
By Proposition
\ref{proposition constructino of index bounded contact cylinder on appropriate affine variety}
we can construct an index bounded contact cylinder
$\check{C}$ on $X$ whose associated Liouville domain $D$ contains $K$ (here we use the fact that $\widehat{X}$ is Calabi-Yau).
By using the ample divisors above,
we can modify the K\"{a}hler form $\omega_{\widehat{X}}$
(without changing its cohomology class up to rescaling)
so that
$\omega_X$ and $\Phi^* \omega_{\widehat{X}}$ agree near $D$.
Again, by Corollary \ref{proposition stably displaceable partly stratified symplectic subset} we can find a compact subset $\widehat{K} \subset \widehat{A}$ containing $\Phi(D)$ so that $\widehat{X} - \widehat{K}$ is stably displaceable.
Also by Proposition
\ref{proposition constructino of index bounded contact cylinder on appropriate affine variety}
one can construct an index bounded contact cylinder 
$\widehat{C}$ in $\widehat{A}$ whose associated Liouville domain $\widehat{D} \subset \widehat{A}$ contains $\widehat{K}$.

{\it Part (2)}:
From now on we identify $H^2(X,D;\R) = H^2(\widehat{X},\widehat{D};\R)$.
We let $Q_{\omega_X}$ and $Q_{\omega_{\widehat{X}}}$ be $1$-dimension cones spanned by
$([\omega_X],1,1)$ and $([\omega_{\widehat{X}}],1,1)$ respectively as in \ref{item one dimensional cones}.
We let
$Q_\pm$, $\widehat{Q}_\pm$ be the corresponding enlarged cones as in \ref{item two dimensional cone}.
Finally we let
$\widetilde{Q}_\pm$ be the cones spanned by both $Q_\pm$ and $\widehat{Q}_\pm$
(these are cones as in \ref{item four dimensional cone}).
By \ref{item isomorphic to quantum cohomology}, \ref{item stably displaceable property}
and \ref{item alternative filtration property}
we have that
$SH^*_{\check{C},Q_-,Q_+}(D \subset X)$
and
$SH^*_{\widehat{C},\widehat{Q}_-,\widehat{Q}_+}(\widehat{D} \subset {\widehat{X}})$ are isomorphic to the quantum cohomology rings of $X$ and $\widehat{X}$ respectively.
By \ref{item index bounded transfer isomorphism}
we have that the transfer map
$
SH^*_{\widehat{C},\widehat{Q}_-,\widehat{Q}_+}(\widehat{D} \subset {\widehat{X}})
\lra{}
SH^*_{\Phi(\check{C}),\widehat{Q}_-,\widehat{Q}_+}(\Phi(D) \subset {\widehat{X}})$
is an isomorphism.
Define the $\Lambda_\K^{\omega_X,\omega_{\widehat{X}}}$-algebra
$Z := SH^*_{\check{C},\widetilde{Q}_-,\widetilde{Q}_+}(D \subset X)$.
Now since the regions $V_X \subset X$
and $V_{\widehat{X}} \subset \widehat{X}$ for which the birational morphisms $\widehat{\Phi}$
and $\widehat{\Phi}^{-1}$
are ill defined has real codimension $\geq 4$ by Lemma \ref{lemma identification of second homology groups}, we have by
\ref{item avoiding submanifolds}
an isomorphism of $\Lambda_\K^{\omega_X,\omega_{\widehat{X}}}$-algebras
\begin{equation} \label{equation Z isomorphism}
Z \cong SH^*_{\check{C},\widetilde{Q}_-,\widetilde{Q}_+}(\Phi(D) \subset \widehat{X}).
\end{equation}
The isomorphisms
(\ref{equation quantum cohomoogy isomorphisms})
now follow from Equation (\ref{equation Z isomorphism})
combined with the second part of
\ref{item isomorphic to quantum cohomology}
and
\ref{item changing novikov ring}.

\subsection{Notation Throughout this paper} \label{section notation}
\begin{itemize}
	\item We will fix a ring $\K$.
	\item $(M,\omega)$ will be a compact connected symplectic manifold of dimension $2n$ satisfying $c_1(\omega) = 0$
	and where $[\omega] \in H^2(M;\R)$ lifts to an integral cohomology class,
	\item $J_0$ is a fixed almost complex structure on $M$ taming $\omega$,
	\item $(V_i)_{i = 1}^l$ is a finite
	collection of
	 (not necessarily properly embedded) submanifolds of $M$
	of codimension $\geq 4$
	and where $V := \cup_{i = 1}^l V_i$ is compact,
	\item $\T := \R / \Z$, $\bbI_- := (-\infty,0]$, $\bbI_+ := [0,\infty)$,
	\item $(s,t)$ will be the natural coordinate system on $\bbI_\pm \times \T$
	or $\R \times \T$.
	\item 
	For any manifold $\Sigma$,
	$\ccJ^\Sigma(J_0,V,\omega)$ is the space of smooth families of $\omega$-tame almost complex structures $J := (J_\sigma)_{\sigma \in \Sigma}$
	smoothly parameterized by $\Sigma$
	 equipped with the $C^\infty$ topology
	so that all the derivatives of $J$
	and $J_0$ agree
	at $v$ for all $v \in V$.
	\item $\ccJ(J_0,V,\omega) := \ccJ^{\text{pt}}(J_0,V,\omega)$.
	\item If $I$ is a set and $W$ is a vector space then $W^I$ is the vector space of maps $I \lra{} W$
	or equivalently tuples $(w_j)_{j \in I}$ of elements in $W$.
\end{itemize}

\bigskip

{\it Acknowledgments}: I would like to thank Nick Sheridan for extremely helpful conversations concerning Novikov parameters, Strom Borman for helping me with transfer map properties in Section \ref{subsection Transfer Isomorphisms between Index bounded Liouville Domains} and Umut Varolgunes
for many helpful suggestions and corrections.
I would also like to thank Paul Seidel for his encouragement and suggestions for future work and Yongbin Ruan and Y.P. Lee
for their helpful comments.
Finally I would like to thank Chenyang Xu and Qizheng Yin for helpful discussions.
This paper is supported by the NSF grants DMS-1508207 and DMS-1811861.

\section{Hamiltonian Floer Cohomology and Filtrations} \label{section Hamitlonian Floer cohomology and Filtrations}

\subsection{Alternative Action Values of Periodic Orbits.}

\begin{defn}\label{defn capped 1-periodic orbit}
	A {\it Hamiltonian} is a smooth family of functions
	$H = (H_t)_{t \in \T}$ on a symplectic manifold (by default this is $(M,\omega)$ unless stated otherwise).
	It is {\it autonomous} if $H_t$ does not depend on $t$,
	and hence we usually express such a Hamiltonian as a single function $M \lra{} \R$.
	The {\it time $t$ flow
		$(\phi^H_t : M \lra{} M)_{t \in \R}$}
	of a Hamiltonian
	$H \equiv (H_t)_{t \in \T}$
	is the time $t$ flow of
	the unique time dependent
	vector field
	$(X^H_t)_{t \in \R}$
	satisfying
	$i_{X^H_t}\omega = -dH_t$ for all $t \in \R$.
	A {\it $1$-periodic orbit}
	is a smooth map
	$\gamma : \T \lra{} M$
	satisfying $\dot{\gamma} = X^H_t$ for all $t \in \T$.
	A $1$-periodic orbit $\gamma$
	is {\it non-degenerate}
	if the linearized return map
	$D\phi^H_1 : T_{\gamma(0)}M \lra{} T_{\gamma(0)}M$
	has no eigenvalue equal to $1$.
	A {\it capped loop} is an
	equivalence class of pairs $(\widetilde{\gamma},\check{\gamma})$
	of smooth maps
	\begin{equation} \label{equation pair of maps}
	\widetilde{\gamma} : S \lra{} M, \quad \check{\gamma} : \T \lra{} \partial S
	\end{equation}
	where $S$ is a smooth oriented surface with boundary,  $\check{\gamma}$ is an orientation preserving diffeomorphism
	and where any two such pairs
	$ (\widetilde{\gamma}_0,\check{\gamma}_0)$, $(\widetilde{\gamma}_1,\check{\gamma}_1)$
	are equivalent if $\widetilde{\gamma}_0 \circ \check{\gamma}_0 = \widetilde{\gamma}_1 \circ \check{\gamma}_1$
	and if the surface obtained by gluing $\widetilde{\gamma}_0$ and $\widetilde{\gamma}_1$ along the boundary via the map $\check{\gamma}_0 \circ \check{\gamma}_1^{-1}$ is null homologous.
	More precisely, this gluing is defined to be the continuous map
	\begin{equation} \label{equation join of two cappings}
	\widetilde{\gamma}_0 \star
	 \widetilde{\gamma}_1 : S_0 \sqcup \overline{S}_1 / \sim \lra{} M, \quad
	\gamma_0 \star \gamma_1(\sigma) :=
	\left\{
	\begin{array}{ll}
	\widetilde{\gamma}_0 & \text{if} \ z \in S_0 \\
	\widetilde{\gamma}_1 & \text{otherwise}
	\end{array}
	\right.
		\end{equation}
	where $S_0$ is the domain of $\widetilde{\gamma}_0$,
	$\overline{S}_1$ is the domain
	of $\widetilde{\gamma}_1$ with the opposite orientation
	 and where the identification $\sim$ is defined to be
	$$\partial S_0 \ni \check{\gamma}_0(t) \sim \check{\gamma}_1(t) \in \partial S_1, \ t \in \T.
	$$
	If $(\widetilde{\gamma},\check{\gamma})$
	is a capped loop, then the
	{\it associated loop} of $(\widetilde{\gamma},\check{\gamma})$
	is the map
	$\widetilde{\gamma} \circ \check{\gamma} : \T \lra{} M$.
	We define
	$\widetilde{\ccL}(M)$ to be
	the space of capped loops
	equipped with the quotient topology induced from the $C^\infty$ topology on the space of pairs of maps as in Equation (\ref{equation pair of maps}).
	
	A {\it capped $1$-periodic orbit $\gamma$} of a Hamiltonian $H$
	is a capped loop, whose associated loop $\overline{\gamma} : \T \lra{} M$
	is a $1$-periodic orbit of $H$.
	We call $\overline{\gamma}$ the {\it associated $1$-periodic orbit} of $\gamma$.
	A capped $1$-periodic orbit $\gamma$ is
	{\it non-degenerate}
	if the associated $1$-periodic orbit
	is non-degenerate.
\end{defn}

\begin{defn} \label{definition action}
	Let $K = (K_\sigma)_{\sigma \in \Sigma}$ be a smooth family of functions on $M$ parameterized by a manifold $\Sigma$.
	Let $\widetilde{\omega}$
	be a closed $2$-form on $M - V$.
	We say that $K$ is {\it $\widetilde{\omega}$-compatible} if
	there is a smooth family of functions
	$G = (G_\sigma)_{\sigma \in \Sigma}$ on $M - V$
	so that
	\begin{equation} \label{equation H compatible with omega}
	i_{X_{K_\sigma}} \widetilde{\omega}
	= -dG_\sigma, \quad \forall \ \sigma \in \Sigma.
	\end{equation}
	We will call $G$ {\it a primitive associated to $(K,\widetilde{\omega})$}.

	Now let $H := (H_t)_{t \in \T}$
	be a Hamiltonian which is $\widetilde{\omega}$-compatible and $F = (F_t)_{t \in \T}$ a primitive associated to
	$(H,\widetilde{\omega})$.
	The {\it $(H,\widetilde{\omega},F)$-action
	of a capped loop $\gamma := (\widetilde{\gamma},\check{\gamma})$
	} on $M$ 
where the associated loop
	$\overline{\gamma} : \T \lra{} M$ is disjoint from $V$
	is defined to be
	\begin{equation} \label{eqn:fomegahactionfunctional}
	\ \cA_{H,\widetilde{\omega},F}(\gamma) := -\int_S (\widetilde{\gamma}')^* \widetilde{\omega} + \int_0^1 F_t(\overline{\gamma}(t)) dt
	\end{equation}
	where $\widetilde{\gamma}' : S \lra{} M$ is some $C^\infty$ small perturbation of $\widetilde{\gamma}$ away from  $\partial S$
	so that its image is disjoint from $V$.
	If $\widetilde{\omega}$
	extends to a smooth $2$-form on $M$
	 and $F$ extends to a smooth family of functions on $M$ then we define
	the {\it $(H,\widetilde{\omega},F)$-action}
	of any capped loop $\gamma$
	by Equation (\ref{eqn:fomegahactionfunctional}) with $\widetilde{\gamma}'$ replaced by $\gamma$.
	If $(\widetilde{\gamma},\check{\gamma})$
	is any capped loop whose associated loop is constant then we define
	the {\it $(H,\widetilde{\omega},F)$-action}
	$\cA_{H,\widetilde{\omega},F}(\gamma)$
	to be 	$\cA_{H,\widetilde{\omega},F}(\gamma')$
	where $\gamma'$ is a capped loop disjoint from $V$ which is smoothly isotopic to $\gamma$ through capped loops with constant associated loops.
\end{defn}

Note that the perturbations $\widetilde{\gamma}'$ and $\gamma'$ above exist since $V$ is a finite union of codimension $\geq 4$ submanifolds of $M$.
Note also that such an action will usually be computed for
capped $1$-periodic orbits of
$H$.

We will only deal with very specific
closed $2$-forms $\widetilde{\omega}$
associated to certain contact hypersurfaces
inside $(M,\omega)$.
We will now introduce such closed $2$-forms.

\begin{defn} \label{definition contact cylinder} (See Figure \ref{fig:contactcylinder})
	A {\it contact cylinder in $M$}
	consists of a codimension $0$ submanifold
	$$\check{C} := [1-\epsilon,1+\epsilon] \times C \subset M - V$$
	so that $\omega|_{\check{C}} = d(r_C \alpha_C)$ where
	$r_C : \check{C} \lra{} [1-\epsilon,1+\epsilon]$ is the natural projection map, and $\alpha_C$ is a contact form on $C$.
	We also require that $\{0\} \times C$ is the boundary of a compact codimension $0$ submanifold $D$ 
	so that $\omega|_D = d\theta$ for some $\theta \in \Omega^1(D)$ satisfying $\theta|_{D \cap \check{C}} = r_C \alpha_C$.
	Here $r_C$ is called
	the {\it radial coordinate associated to $\check{C}$}, 
	$\alpha_C$ is called the {\it contact form associated to $\check{C}$}
	and $D$ is
	called the {\it Liouville domain associated to $\check{C}$}.
	A $2$-form 
	$\widetilde{\omega} \in \Omega^2(M-V)$
	is {\it $\check{C}$-compatible}
	if 
	\begin{enumerate}[label=(\alph*)]
		\item $\widetilde{\omega}$ is
		closed and $J_0$-tame outside
		$D \cup \check{C}$,
		\item 
		$\widetilde{\omega}|_{\check{C}} = 
		d(f_{\widetilde{\omega}}(r_C)r_C \alpha)$ where $f_{\widetilde{\omega}} : \R \lra{} \R$ is a smooth function satisfying:
	$$f_{\widetilde{\omega}}|_{(-\infty,1+\epsilon/4]} = \lambda^-_{\widetilde{\omega}}, \quad
	f_{\widetilde{\omega}}|_{[1+\epsilon/2,\infty)} = \lambda^+_{\widetilde{\omega}},
	\quad f_{\widetilde{\omega}}' \geq 0.
	$$
	for some constants $\lambda^\pm_{\widetilde{\omega}} \geq 0$,	
	\item $\widetilde{\omega}|_D = \lambda_{\widetilde{\omega}}^-\omega$ and
	\item $\widetilde{\omega} = \lambda^+_{\widetilde{\omega}} \omega$ if $\check{C}$ is the empty set. \end{enumerate}
	We call
	$\lambda^\pm_{\widetilde{\omega}}$ the {\it scaling constants for $\widetilde{\omega}$} and
	$f_{\widetilde{\omega}}$ the
	{\it scaling function for $\widetilde{\omega}$}.
	A family of $2$-forms
	$\widetilde{\omega}^\bullet = (\widetilde{\omega}^\sigma)_{\sigma \in \Sigma}$ is {\it $\check{C}$-compatible}
	if $\widetilde{\omega}_\sigma$ is $\check{C}$-compatible for each $\sigma \in \Sigma$.
	If $\check{C}$ is the empty contact cylinder then $\lambda^-_{\widetilde{\omega}}$ is defined to be arbitrary
	(I.e. we can choose $\lambda^-_{\widetilde{\omega}}$ to be anything we like and it is considered as part of the data defining $\widetilde{\omega}$).
\end{defn}

\begin{center}
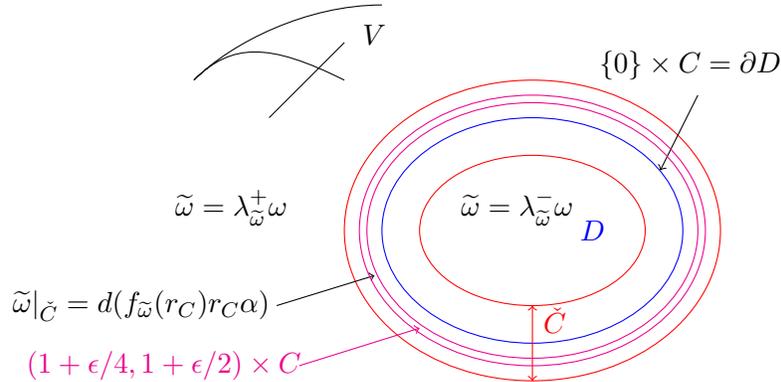
\begin{figure}[h]
\begin{tikzpicture}

\draw [blue] (1,-2) node (v1) {} ellipse (2 and 1.5);
\draw [red] (v1) node (v2) {} ellipse (1.5 and 1);
\draw [red] (v2) ellipse (2.5 and 2);
\draw (-1,1) .. controls (-2,1) and (-3,0.5) .. (-3.5,0) .. controls (-3,0.5) and (-2.5,0.5) .. (-1.5,0);
\draw (-1.5,0.5) -- (-2.5,-0.5);
\node at (1.8,-2) {\color{blue} $D$};
\draw [red,<->](1,-3) -- (1,-4);
\node at (1.3,-3.2) {\color{red} $\check{C}$};
\draw [->](3.2,-0.2) -- (2.7,-1.2);
\node at (3.1,0.2) {$\{0\} \times C = \partial D$};
\node at (0.8,-1.7) {$\widetilde{\omega} = \lambda^-_{\widetilde{\omega}} \omega$};
\node at (-3,-1.7) {$\widetilde{\omega} = \lambda^+_{\widetilde{\omega}} \omega$};
\draw[->] (-2.4,-3) -- (-1.1,-2.6);
\node at (-4.2,-3) {$\widetilde{\omega}|_{\check{C}} = 
	d(f_{\widetilde{\omega}}(r_C)r_C \alpha)$};
\node at (-1.1,0.6) {$V$};
\draw [magenta] (v1) node (v3) {} ellipse (2.2 and 1.7);
\draw [magenta] (v3) ellipse (2.3 and 1.8);
\draw [magenta,->](-2.1,-3.8) -- (-0.5,-3.3);
\node at (-3.9,-3.8) {\color{magenta} $(1+\epsilon/4,1+\epsilon/2) \times C$ };
\end{tikzpicture}
\caption{A contact cylinder} \label{fig:contactcylinder}
\end{figure}
\end{center}

\begin{defn} \label{definition Hamiltonians compatible with contact cylinder}
	An autonomous Hamiltonian $K : M \lra{} \R$
	is {\it weakly $\check{C}$-compatible}
	if
	$K|_{[1+\epsilon/8,1+\epsilon/2] \times C} = \lambda_K r_C + m_K$
	for some constants $\lambda_K$ and $m_K$.
	The constant
	$\lambda_K$ is called the {\it slope of $K$ along $\check{C}$} and
	$m_K$ is called the {\it height of $K$ at $\check{C}$}.
	Also if $\check{C}$ is the empty contact
	cylinder then we define the slope and height of $K$ to be $0$.
	We say that $K$ is
	{\it $\check{C}$-compatible}
	if it is weakly $\check{C}$-compatible and
	if $K|_{M - (D \cup \check{C})}$ is constant.
	A smooth family of autonomous Hamiltonians
	$K_\bullet := (K_\sigma)_{\sigma \in \Sigma}$ parameterized by a manifold
	$\Sigma$ is {\it (weakly) $\check{C}$-compatible}
	if $K_\sigma$ is (weakly) $\check{C}$-compatible for each $\sigma \in \Sigma$.
	An almost complex structure $J$ on $M$
	is {\it $\check{C}$-compatible}
	if
	$J \in \ccJ(J_0,V,\omega)$,
	and $dr_C \circ J = -\alpha_C$ inside
	$[1+\epsilon/8,1+\epsilon/2] \times C$.
	A smooth family of almost complex structures
	$J_\bullet = (J_\sigma)_{\sigma \in \Sigma}$
	is {\it $\check{C}$-compatible}
	if $J_\sigma$ is $\check{C}$-compatible
	for each $\sigma \in \Sigma$.
\end{defn}
	
\begin{remark}
The space of $2$-forms
which are $\check{C}$-compatible
is weakly contractible
(in fact it forms a convex subset of $\Omega^2(M)$).
Also the space of
(weakly) $\check{C}$-compatible
Hamiltonians (resp. almost complex structures)
is weakly contractible.

\end{remark}

By a direct calculation, we have the following lemma and corollary:

\begin{lemma} \label{lemma Hamiltonian admissible with two form associated to contact cylinder}
	Let 
	\begin{itemize}
	\item $\check{C}$ be a contact cylinder with cylindrical coordinate $r_C$ and associated Liouville domain $D$,
	\item $\widetilde{\omega}$ be a $2$-form compatible with $\check{C}$
	where $\lambda^\pm_{\widetilde{\omega}}$
	(resp. $f_{\widetilde{\omega}}$)
	are the scaling constants (resp.  scaling function) for $\widetilde{\omega}$ and
	\item let $H = (H_t)_{t \in \T}$ be a weakly $\check{C}$-compatible Hamiltonian which is $\check{C}$-compatible
	if $\widetilde{\omega}$
	is not a locally constant multiple of $\omega$ outside $[1+\epsilon/4,1+\epsilon/2] \times C$
	and
	 let
	$(\lambda_{H_t})_{t \in \T}$
	and $(m_{H_t})_{t \in \T}$ be the slopes and heights of $(H_t)_{t \in \T}$ along $\check{C}$ respectively.
	\end{itemize}
	Then the smooth family of
	functions $F^{H,\check{C},\widetilde{\omega}} := (F^{H_t,\check{C},\widetilde{\omega}})_{t \in \T}$
	defined by
	\begin{equation} \label{equation primitive assocatiated to contact cylinder}
	F^{H_t,\check{C},\widetilde{\omega}} : M \lra{} \R, \ \
	F^{H_t,\check{C},\widetilde{\omega}} :=
	\left\{
	\begin{array}{ll}
	\lambda^-_{\widetilde{\omega}} H_t & \text{in} \ D \cup ([1,1+\epsilon/4]\times C) \\
	\lambda_{H_t} f_{\widetilde{\omega}}(r_C)r_C + \lambda^-_{\widetilde{\omega}} m_{H_t} & \text{in} \ [1+\epsilon/4,1+\epsilon/2] \times C \\
	\lambda^+_{\widetilde{\omega}}(H_t - m_{H_t}) + \lambda^-_{\widetilde{\omega}}m_{H_t} & \text{otherwise}
	\end{array}
	\right.
	\end{equation}
	is a primitive associated to $(H,\widetilde{\omega})$
	as in Definition \ref{definition action}
	for all $t \in \T$ (See Figure \ref{fig:Primitive}).
\end{lemma}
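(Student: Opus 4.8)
The statement asserts that the piecewise-defined family $F^{H,\check C,\widetilde\omega}$ is a primitive associated to $(H,\widetilde\omega)$, i.e.\ that $i_{X_{H_t}}\widetilde\omega = -dF^{H_t,\check C,\widetilde\omega}$ on $M-V$ for each $t$. Since the assertion is local and the data are rotationally symmetric in the three prescribed regions, the plan is to verify the identity region by region and then check that the pieces match continuously (indeed $C^1$) across the gluing hypersurfaces $\{1\}\times C$, $\{1+\epsilon/4\}\times C$, and $\{1+\epsilon/2\}\times C$, so that $F^{H_t,\check C,\widetilde\omega}$ is a well-defined smooth function whose differential computes $i_{X_{H_t}}\widetilde\omega$ globally.

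First I would treat the outermost region $M-(D\cup\check C)$ together with the part of $\check C$ where $r_C\ge 1+\epsilon/2$. There $\widetilde\omega = \lambda^+_{\widetilde\omega}\,\omega$ (using compatibility condition (a)–(b) with $f_{\widetilde\omega}$ already constant $=\lambda^+_{\widetilde\omega}$, together with (c)/(d) if $\check C$ is empty), and $X_{H_t}$ is the same Hamiltonian vector field whether computed with $\widetilde\omega$ or with $\lambda^+_{\widetilde\omega}\omega$ up to the scaling: precisely $i_{X_{H_t}}(\lambda^+_{\widetilde\omega}\omega) = \lambda^+_{\widetilde\omega}\cdot i_{X_{H_t}}\omega = -\lambda^+_{\widetilde\omega}\,dH_t$, so $-d(\lambda^+_{\widetilde\omega}(H_t-m_{H_t})+\lambda^-_{\widetilde\omega}m_{H_t}) = -\lambda^+_{\widetilde\omega}\,dH_t$ since the added constants $\lambda^+_{\widetilde\omega}m_{H_t}$ and $\lambda^-_{\widetilde\omega}m_{H_t}$ have zero differential. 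Symmetrically, on $D\cup([1,1+\epsilon/4]\times C)$ we have $\widetilde\omega = \lambda^-_{\widetilde\omega}\omega$ by (c) (and by (b) with $f_{\widetilde\omega}=\lambda^-_{\widetilde\omega}$ on $(-\infty,1+\epsilon/4]$), so the same scaling computation gives $i_{X_{H_t}}\widetilde\omega = -\lambda^-_{\widetilde\omega}\,dH_t = -d(\lambda^-_{\widetilde\omega}H_t)$.

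The only genuine computation is in the transition annulus $[1+\epsilon/4,1+\epsilon/2]\times C$, and this is where I expect the main (though still routine) work to be. Here $H$ is weakly $\check C$-compatible with $H_t = \lambda_{H_t}r_C + m_{H_t}$ (valid on $[1+\epsilon/8,1+\epsilon/2]\times C$), and $\widetilde\omega|_{\check C} = d(f_{\widetilde\omega}(r_C)r_C\alpha_C)$, while $\omega|_{\check C}=d(r_C\alpha_C)$. One computes $X_{H_t}$ from $i_{X_{H_t}}\omega = -dH_t = -\lambda_{H_t}\,dr_C$: writing $\omega = dr_C\wedge\alpha_C + r_C\,d\alpha_C$, the Hamiltonian vector field of $r_C$ is the Reeb-type vector field $R$ with $i_R\omega = -dr_C$ (so $R$ is tangent to the level sets, $dr_C(R)=0$, $\alpha_C(R)=1$, $i_R d\alpha_C=0$), hence $X_{H_t} = \lambda_{H_t}R$. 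Then $i_{X_{H_t}}\widetilde\omega = \lambda_{H_t}\,i_R\,d(f_{\widetilde\omega}(r_C)r_C\alpha_C)$; expanding $d(f_{\widetilde\omega}(r_C)r_C\alpha_C) = (f_{\widetilde\omega}(r_C)r_C)'\,dr_C\wedge\alpha_C + f_{\widetilde\omega}(r_C)r_C\,d\alpha_C$ and contracting with $R$ (which kills $dr_C$ and $d\alpha_C$ and pairs to $1$ with $\alpha_C$) yields $i_{X_{H_t}}\widetilde\omega = -\lambda_{H_t}(f_{\widetilde\omega}(r_C)r_C)'\,dr_C = -d(\lambda_{H_t}f_{\widetilde\omega}(r_C)r_C)$, which is exactly $-dF^{H_t,\check C,\widetilde\omega}$ in this region since $\lambda^-_{\widetilde\omega}m_{H_t}$ is constant. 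Finally I would check the matching across $r_C = 1+\epsilon/4$ (where $f_{\widetilde\omega}=\lambda^-_{\widetilde\omega}$, so the middle formula reads $\lambda^-_{\widetilde\omega}(\lambda_{H_t}r_C + m_{H_t}) = \lambda^-_{\widetilde\omega}H_t$, agreeing with the first line, and the derivatives agree since $f_{\widetilde\omega}'=0$ there) and across $r_C = 1+\epsilon/2$ (where $f_{\widetilde\omega}=\lambda^+_{\widetilde\omega}$, so the middle formula reads $\lambda^+_{\widetilde\omega}\lambda_{H_t}r_C + \lambda^-_{\widetilde\omega}m_{H_t} = \lambda^+_{\widetilde\omega}(H_t - m_{H_t}) + \lambda^-_{\widetilde\omega}m_{H_t}$, again with matching derivatives since $f_{\widetilde\omega}'=0$ there), using the hypothesis that $H$ is $\check C$-compatible whenever $\widetilde\omega$ is not a locally constant multiple of $\omega$ outside the transition annulus to guarantee $H_t|_{M-(D\cup\check C)}$ is constant so the third-line formula is consistent. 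This produces a single smooth $F^{H_t,\check C,\widetilde\omega}$ with $i_{X_{H_t}}\widetilde\omega = -dF^{H_t,\check C,\widetilde\omega}$ everywhere on $M-V$, completing the proof.
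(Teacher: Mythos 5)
Your verification is correct and is precisely the ``direct calculation'' that the paper invokes without writing out: the paper offers no argument beyond that phrase, and your region-by-region check (the scaling identity $i_{X_{H_t}}(\lambda^\pm_{\widetilde\omega}\omega)=-\lambda^\pm_{\widetilde\omega}\,dH_t$ on the two constant regions, the contraction of $d(f_{\widetilde\omega}(r_C)r_C\alpha_C)$ with $X_{H_t}=\lambda_{H_t}R$ on the transition annulus, and the value/derivative matching at $r_C=1+\epsilon/4$ and $r_C=1+\epsilon/2$ using $f_{\widetilde\omega}'=0$ there) is exactly the intended computation. Your use of the $\check{C}$-compatibility hypothesis to make the third line consistent when $\widetilde\omega$ is not a locally constant multiple of $\omega$ outside the annulus is also the right reading of that clause.
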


\begin{center}
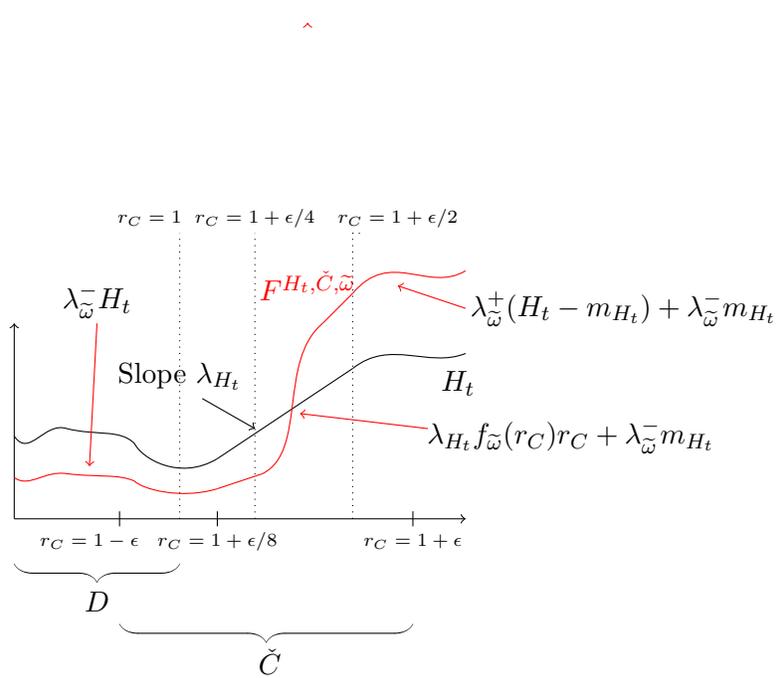
\begin{figure}[h]
\begin{tikzpicture}

\draw [->](-2,-2.5) -- (4,-2.5);
\draw [->](-2,-2.5) -- (-2,0.1);
\draw (0.7,-2.4) -- (0.7,-2.6) node (v1) {};
\draw (0.7,-1.7) -- (2.5,-0.5);
\draw (0.7,-1.7) .. controls (0.2,-2) and (-0.3,-1.7) .. (-0.4,-1.5) .. controls (-0.6,-1.3) and (-0.9,-1.4) .. (-1.3,-1.3) .. controls (-1.6,-1.2) and (-1.8,-1.7) .. (-2,-1.4);
\draw (2.5,-0.5) .. controls (3,-0.1) and (3.5,-0.5) .. (4,-0.3);

\draw[red] (0.7,-1.7*0.5-1.25) .. controls (0.2,-2*0.5-1.25) and (-0.3,-1.7*0.5-1.25) .. (-0.4,-1.5*0.5-1.25) .. controls (-0.6,-1.3*0.5-1.25) and (-0.9,-1.4*0.5-1.25) .. (-1.3,-1.3*0.5-1.25) .. controls (-1.6,-1.2*0.5-1.25) and (-1.8,-1.7*0.5-1.25) .. (-2,-1.4*0.5-1.25);
\draw[red] (2.5,-0.5*1.5+1.25) .. controls (3,-0.1*1.5+1.25) and (3.5,-0.5*1.5+1.25) .. (4,-0.3*1.5+1.25);
\draw[red] (0.7,-1.7*0.5-1.25) -- (1.3,-1.9);
\draw[red] (2.1,0.1) -- (2.5,-0.5*1.5+1.25);

\draw [red](2.1,0.1) .. controls (1.5,-0.4) and (1.9,-1.6) .. (1.3,-1.9);
\node at (3.9,-0.7) {$H_t$};
\draw[decoration={calligraphic brace,amplitude=7pt},decorate] (0.2,-3.1) -- (-2,-3.1);
\node at (-0.9,-3.6) {$D$};
\node at (-0.2,1.5) {\tiny $r_C=1$};
\node at (0.7,-2.8) {\tiny $r_C=1+\epsilon/8$};
\node at (1.2,1.5) {\tiny $r_C=1+\epsilon/4$};
\node at (3.1,1.5) {\tiny $r_C=1+\epsilon/2$};
\node at (1.9,0.6) {\color{red}$F^{H_t,\check{C},\widetilde{\omega}}$};
\draw [->,red](-0.9,0.1) -- (-1,-1.8);
\node at (-0.9,0.4) {$\lambda^-_{\widetilde{\omega}} H_t$};
\node at (1.3,-0.7) {};
\draw [->,red](4,0.3) -- (3.1,0.6);
\node at (6.1,0.3) {$\lambda^+_{\widetilde{\omega}}( H_t - m_{H_t}) + \lambda^-_{\widetilde{\omega}} m_{H_t}$};
\draw [->](0.5,-0.9) -- (1.2,-1.3);
\node at (0.2,-0.6) {Slope $\lambda_{H_t}$};
\draw [->,red](3.5,-1.3) -- (1.8,-1.1);
\node at (5.4,-1.4) {$\lambda_{H_t} f_{\widetilde{\omega}}(r_C)r_C + \lambda^-_{\widetilde{\omega}} m_{H_t}$};
\draw [->,red](1.9,4.1);
\draw [dotted](1.2,-2.5) -- (1.2,1.3);
\draw [dotted](2.5,-2.5) -- (2.5,1.3) node (v2) {} -- (v2);
\draw [dotted](0.2,-2.5) -- (0.2,1.3);
\draw [](3.3,-2.4) -- (3.3,-2.6);
\node at (3.3,-2.8) {\tiny $r_C=1+\epsilon$};
\draw [](-0.6,-2.4) -- (-0.6,-2.6);
\node at (-1,-2.8) {\tiny $r_C=1-\epsilon$};
\draw [decoration={calligraphic brace,amplitude=7pt},decorate](3.3,-3.9) -- (-0.6,-3.9);
\node at (1.4,-4.4) {$\check{C}$};
\end{tikzpicture}
\caption{Primitive associated to $(H,\widetilde{\omega})$} \label{fig:Primitive}
\end{figure}
\end{center}

\begin{remark} \label{remark mostly flat}
Most of the important Hamiltonians
in subsections \ref{section alternative filtratrions},
\ref{subsection Transfer Isomorphisms between Index bounded Liouville Domains}
and
\ref{subsection chain complex for sh}
 will have small derivatives outside
$D \cup ([1,1+\epsilon/8] \times C)$.
This makes our calculations easier.
It is good to keep such Hamiltonians in mind throughout this paper since
they appear in many of the most important calculations.
In this special case, Figure \ref{fig:Primitive} might look like Figure \ref{fig:specialPrimitive} instead.
\begin{center}
\begin{figure}[h]
	\begin{tikzpicture}
	
	\draw [->](-2,-2.5) -- (4,-2.5);
	\draw [->](-2,-2.5) -- (-2,0.1);
	\draw (0.7,-2.4) -- (0.7,-2.6){};
	\draw (-0.4,-1.3) -- (0,-0.4);
	\draw (-0.4,-1.3) .. controls (-0.6,-1.8) and (-0.9,-2) .. (-1.2,-1.5) .. controls (-1.5,-1.3) and (-1.4,-1.5) .. (-1.6,-1.3) .. controls (-1.8,-1.2) and (-1.8,-1.7) .. (-2,-1.4);
	\draw (0,-0.4) .. controls (0.2,0) and (0.3,-0.1) .. (1.1,-0.1);
	\draw (1.1,-0.1) -- (2.7,-0.2);
	\draw (2.7,-0.2) .. controls (3.7,-0.3) and (3.5,-0.2) .. (3.9,-0.2);
	
	\draw[red] (-0.4,-1.3*0.5-1.25) .. controls (-0.6,-1.8*0.5-1.25) and (-0.9,-2*0.5-1.25) .. (-1.2,-1.5*0.5-1.25) .. controls (-1.5,-1.3*0.5-1.25) and (-1.4,-1.5*0.5-1.25) .. (-1.6,-1.3*0.5-1.25) .. controls (-1.8,-1.2*0.5-1.25) and (-1.8,-1.7*0.5-1.25) .. (-2,-1.4*0.5-1.25);
	\draw[red] (-0.4,-1.3*0.5-1.25) -- (0,-0.4*0.5-1.25);
	\draw[red] (1.1,-0.1*0.5-1.25) -- (2.7,-0.2*0.5-1.25);
	\draw[red] (0,-0.4*0.5-1.25) .. controls (0.2,0*0.5-1.25) and (0.3,-0.1*0.5-1.25) .. (1.1,-0.1*0.5-1.25);
	\draw[red] (1.1,-0.1*0.5-1.25) -- (2.7,-0.2*0.5-1.25);
	\draw[red] (2.7,-0.2*0.5-1.25) .. controls (3.7,-0.3*0.5-1.25) and (3.5,-0.2*0.5-1.25) .. (3.9,-0.2*0.5-1.25);
	
	\node at (3,0) {$H_t$};
	\draw[decoration={calligraphic brace,amplitude=7pt},decorate](0.2,-3.1) -- (-2,-3.1);
	\node at (-0.9,-3.6) {$D$};
	\node at (-0.2,1.5) {\tiny $r_C=1$};
	\node at (0.7,-2.8) {\tiny $r_C=1+\epsilon/8$};
	\node at (1.2,1.5) {\tiny $r_C=1+\epsilon/4$};
	\node at (3.1,1.5) {\tiny $r_C=1+\epsilon/2$};
	\draw [->,red](0.7,-1.8) -- (0.1,-1.6);
	\node at (2.2,-1.8) {\color{red} $F^{H_t,\check{C},\widetilde{\omega}} \approx \lambda^-_{\widetilde{\omega}} H_t$};

	\draw [dotted](1.2,-2.5) -- (1.2,1.3);
	\draw [dotted](2.5,-2.5) -- (2.5,1.3) node (v2) {} -- (v2);
	\draw [dotted](0.2,-2.5) -- (0.2,1.3);
	\draw [](3.3,-2.4) -- (3.3,-2.6);
	\node at (3.3,-2.8) {\tiny $r_C=1+ \epsilon$};
	\draw [](-0.6,-2.4) -- (-0.6,-2.6);
	\node at (-1,-2.8) {\tiny $r_C=1-\epsilon$};
	\draw[decoration={calligraphic brace,amplitude=7pt},decorate] (3.3,-3.9) -- (-0.6,-3.9);
	\node at (1.4,-4.4) {$\check{C}$};
	\end{tikzpicture}
\caption{Primitive associated to $(H,\widetilde{\omega})$, where $H$ has small derivatives outside $D \cup ([1,1+\epsilon/8] \times C)$.} \label{fig:specialPrimitive}
\end{figure}
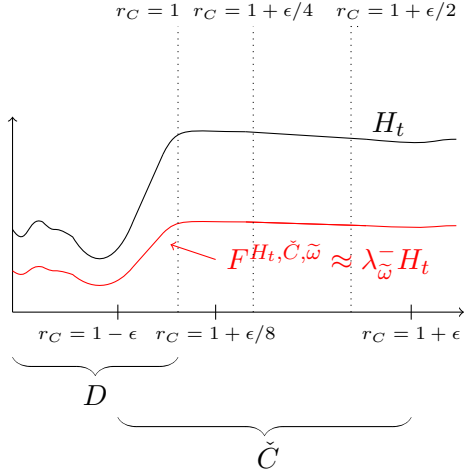
\end{center}
\end{remark}

We have the following corollary
of Lemma \ref{lemma Hamiltonian admissible with two form associated to contact cylinder}.

\begin{corollary} \label{corollary action of capped loop disjoint from contact cylinder}
	Let $\check{C}$, $\widetilde{\omega}$, $H = (H_t)_{t \in \T}$, $\lambda^\pm_{\widetilde{\omega}}$, $(\lambda_{H_t})_{t \in \T}$,
	$(m_{H_t})_{t \in \T}$, $F^{H,\check{C},\widetilde{\omega}} := (F^{H_t,\check{C},\widetilde{\omega}})_{t \in \T}$ be as in Lemma \ref{lemma Hamiltonian admissible with two form associated to contact cylinder}.
	Then for every capped loop
	$\gamma = (\widetilde{\gamma},\check{\gamma})$ whose associated loop
	$\overline{\gamma} : \T \lra{} M$
	satisfies $\overline{\gamma}(\T) \subset M - ([1+\epsilon/4,1+\epsilon/2] \times C)$,
	we have
	\begin{equation} \label{equation contact cylinder action}
	\cA_{H,\check{C},F^{H,\check{C},\widetilde{\omega}}}(\gamma) =
	\left\{
	\begin{array}{ll}
	 I_0 + \lambda^-_{\widetilde{\omega} }I_1 & \text{if} \ \overline{\gamma}(\T) \subset D \cup ([1,1+\epsilon/4] \times C) \\
	I_0 + \lambda^+_{\widetilde{\omega}}(I_1 - I_2) + \lambda^-_{\widetilde{\omega}}I_2
	&
	\text{otherwise}
	\end{array}
	\right.
	\end{equation}
	where
	$$
	I_0 := -\int_S \widetilde{\gamma}^* \widetilde{\omega}, \quad
	I_1 :=  \int_0^1 H_t(\overline{\gamma}(t))) dt, \quad I_2 := \int_0^1 m_{H_t} dt.
	$$
\end{corollary}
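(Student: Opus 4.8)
\emph{Proof proposal.} The plan is to reduce the statement to a direct substitution into the defining formula of the action functional. First I would invoke Lemma \ref{lemma Hamiltonian admissible with two form associated to contact cylinder}: under the stated hypotheses ($H$ weakly $\check{C}$-compatible, and $\check{C}$-compatible in the case where $\widetilde{\omega}$ is not locally a constant multiple of $\omega$ near $\check{C}$), the family $F^{H,\check{C},\widetilde{\omega}}$ is a primitive associated to $(H,\widetilde{\omega})$ in the sense of Definition \ref{definition action}. Hence the action of the capped loop $\gamma=(\widetilde{\gamma},\check{\gamma})$ is defined, and applying Equation (\ref{eqn:fomegahactionfunctional}) with the closed $2$-form $\widetilde{\omega}$ and the primitive $F^{H,\check{C},\widetilde{\omega}}$ gives
\[
\cA_{H,\check{C},F^{H,\check{C},\widetilde{\omega}}}(\gamma)\;=\;-\int_S (\widetilde{\gamma}')^*\widetilde{\omega}\;+\;\int_0^1 F^{H_t,\check{C},\widetilde{\omega}}(\overline{\gamma}(t))\,dt,
\]
where $\widetilde{\gamma}'$ is a perturbation of $\widetilde{\gamma}$ rel $\partial S$ with image in $M-V$ (and if $\overline{\gamma}$ is constant one first isotopes $\gamma$ into such a position, as in Definition \ref{definition action}). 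I would then observe that the first term is exactly $I_0$: it is independent of the choice of $\widetilde{\gamma}'$, because $\widetilde{\omega}$ is closed on $M-V$ and any two admissible perturbations cobound a $3$-dimensional homotopy that can be pushed off the codimension $\ge 4$ set $V$, so Stokes' theorem makes $-\int_S\widetilde{\gamma}^*\widetilde{\omega}$ well defined.

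The remaining work is to evaluate $\int_0^1 F^{H_t,\check{C},\widetilde{\omega}}(\overline{\gamma}(t))\,dt$, and here the hypothesis on $\overline{\gamma}$ does all the work. Removing the collar $[1+\epsilon/4,1+\epsilon/2]\times C$ separates a neighbourhood of $\check{C}$ into an inner piece and an outer piece; since $\overline{\gamma}(\T)$ is connected and avoids this collar, it lies either in the inner region $D\cup([1,1+\epsilon/4]\times C)$ or entirely in the complementary outer region, so the ``middle'' line of formula (\ref{equation primitive assocatiated to contact cylinder}) never occurs. In the first case (\ref{equation primitive assocatiated to contact cylinder}) gives $F^{H_t,\check{C},\widetilde{\omega}}(\overline{\gamma}(t))=\lambda^-_{\widetilde{\omega}}H_t(\overline{\gamma}(t))$, so the integral equals $\lambda^-_{\widetilde{\omega}}I_1$ and the action is $I_0+\lambda^-_{\widetilde{\omega}}I_1$. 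In the second case (\ref{equation primitive assocatiated to contact cylinder}) gives $F^{H_t,\check{C},\widetilde{\omega}}(\overline{\gamma}(t))=\lambda^+_{\widetilde{\omega}}(H_t(\overline{\gamma}(t))-m_{H_t})+\lambda^-_{\widetilde{\omega}}m_{H_t}$, so integrating over $t\in[0,1]$ yields $\lambda^+_{\widetilde{\omega}}(I_1-I_2)+\lambda^-_{\widetilde{\omega}}I_2$ and the action is $I_0+\lambda^+_{\widetilde{\omega}}(I_1-I_2)+\lambda^-_{\widetilde{\omega}}I_2$. This is precisely (\ref{equation contact cylinder action}).

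I do not expect a serious obstacle: the corollary is a formal consequence of Lemma \ref{lemma Hamiltonian admissible with two form associated to contact cylinder} together with the explicit shape of $F^{H,\check{C},\widetilde{\omega}}$. The only points demanding a little care are bookkeeping ones: (i) checking that replacing $\widetilde{\gamma}$ by the perturbation $\widetilde{\gamma}'$, and using the constant-orbit isotopy convention of Definition \ref{definition action}, does not change $-\int_S\widetilde{\gamma}^*\widetilde{\omega}$ (which uses closedness of $\widetilde{\omega}$ on $M-V$ and that $V$ has codimension $\ge 4$); and (ii) noting that connectedness of $\overline{\gamma}(\T)$ together with the hypothesis $\overline{\gamma}(\T)\subset M-([1+\epsilon/4,1+\epsilon/2]\times C)$ forces the clean dichotomy used above, rather than a mixture of the three regimes appearing in (\ref{equation primitive assocatiated to contact cylinder}).
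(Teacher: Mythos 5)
Your proposal is correct and is exactly the argument the paper intends: the paper offers no written proof beyond declaring the corollary a direct calculation from Lemma \ref{lemma Hamiltonian admissible with two form associated to contact cylinder}, and your substitution of the explicit primitive $F^{H_t,\check{C},\widetilde{\omega}}$ into Equation (\ref{eqn:fomegahactionfunctional}), together with the observation that connectedness of $\overline{\gamma}(\T)$ and avoidance of the collar $[1+\epsilon/4,1+\epsilon/2]\times C$ force the loop into exactly one of the two remaining regimes, is that calculation. Your two bookkeeping remarks (well-definedness of $I_0$ under perturbation off $V$, and the dichotomy of regions) are the right points to flag.
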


\begin{remark} \label{remark only depends on cohomology class}
The $(H,\widetilde{\omega},F^{H_t,\check{C},\widetilde{\omega}})$-action of a capped loop $\gamma$ satisfying the conditions
of Corollary \ref{corollary action of capped loop disjoint from contact cylinder},
only depends on $\omega$, $H$, $\check{C}$, $\lambda^\pm_{\widetilde{\omega}}$ and the relative cohomology class
$[\widetilde{\omega} - \lambda^-_{\widetilde{\omega}}\omega] \in H^2(M,D;\R)$.
\end{remark}

The following definition packages together
all the necessary action values
stated above.

\begin{defn} \label{definition action functionals}

Let $\omega_{\check{C}}$ be a $\check{C}$-compatible $2$-form
with scaling constants $0$ and $1$ and which is equal to $\omega$ outside $D \cup ([1,1+\epsilon/2] \times C)$.
Let $Q_{\check{C}} \subset H^2(M,D;\R) \times \R \times \R$
be the subset consisting of all triples $(q,\lambda^-,\lambda^+)$ satisfying
$q = [\widetilde{\omega}-\lambda^- \omega + \lambda^- \omega_{\check{C}}]$ for some
$\check{C}$-compatible $2$-form
$\widetilde{\omega} \in \Omega^2(M)$
whose scaling constants are $\lambda^\pm$ satisfying $\lambda^- \leq \lambda^+$.
For any capped loop
$\gamma = (\widetilde{\gamma},\check{\gamma})$ whose associated loop
has image disjoint from $ [1-\epsilon/4,1+\epsilon/2] \times C$,
we define the {\it $(H,\check{C})$-action}
of $\gamma$ to be the function
$$\cA_{H,\check{C}}(\gamma) : Q_{\check{C}}  \lra{} \R, \quad \cA_{H,\check{C}}(\gamma)(q,\lambda^-,\lambda^+) :=
\cA_{H,\check{C},F^{H,\check{C},\widetilde{\omega}}}(\gamma)$$
where
$\widetilde{\omega} \in \Omega^2(M)$
is a $\check{C}$-compatible $2$-form
with scaling constants $\lambda^\pm$
satisfying
$[\widetilde{\omega}-\lambda^- \omega + \lambda^- \omega_{\check{C}}] = q$
and $\lambda^- \leq \lambda^+$.
\end{defn}

The $(H,\check{C})$-action of $\gamma$
is well defined by Remark
\ref{remark only depends on cohomology class} and it does not depend on the choice of $\omega_{\check{C}}$.

\begin{example} \label{example usual action filtraiton}
	Let $\check{C}$, $\widetilde{\omega}$, $\lambda^\pm_{\widetilde{\omega}}$,  $(\lambda_{H_t})_{t \in \T}$, 
	$(m_{H_t})_{t \in \T}$,  $F^{H,\check{C},\widetilde{\omega}} := (F^{H_t,\check{C},\widetilde{\omega}})_{t \in \T}$ be as in Lemma \ref{lemma Hamiltonian admissible with two form associated to contact cylinder}.
	Suppose that the contact cylinder $\check{C}$ is the empty set and $\widetilde{\omega} =  \omega$.
	Then
	for each capped loop $\gamma = (\widetilde{\gamma},\check{\gamma})$, we get
	\begin{equation} \label{equation action functional}
	\cA_{H,\emptyset}(\gamma)([\widetilde{\omega}],a,1) = -\int_S \widetilde{\gamma}^* \omega + \int_0^1 H_t(\overline{\gamma}(t)) dt
	\end{equation}
	for all $a \in \R$
	which is the usual action functional
	defined in, say, \cite{Floer:gradientflow}
	(with different sign conventions)
	where $S$ is the domain of $\widetilde{\gamma}$
	and $\overline{\gamma}$ is the associated loop of $\gamma$.
\end{example}

\subsection{Floer Trajectories.}

In this section we will give a definition of a Floer trajectory converging to a collection of capped $1$-periodic orbits
and state some results concerning spaces of Floer trajectories.
Throughout this subsection,
we will fix a (possibly empty) contact
cylinder $\check{C} = [1-\epsilon,1+\epsilon] \times C \subset M$.

\begin{defn} \label{definition wspaces of compatible objects}
Let $\ccH^\Sigma(\check{C})$
be the space of smooth families
of Hamiltonians
$H^\bullet = (H^\sigma)_{\sigma \in \Sigma}$ parameterized by a manifold $\Sigma$
which are weakly $\check{C}$-compatible and equipped with the $C^\infty_{\textnormal{loc}}$ topology.
Let
$\overline{\ccH}^\Sigma(\check{C}) \subset \ccH^\Sigma(\check{C})$
be the subspace consisting of those $H$ which are $\check{C}$-compatible.
Let $\ccJ^\Sigma(\check{C})$
be the space of smooth families
$J^\bullet = (J^\sigma)_{\sigma \in \Sigma}$ of almost complex structures on $M$
which are $\check{C}$-compatible equipped with the $C^\infty_{\textnormal{loc}}$ topology.
\end{defn}

\begin{defn} \label{defn Riemann Surface}
(See Figure \ref{fig:riemannsurface}.)
A {\it Riemann surface with $n_-$ negative cylindrical ends and $n_+$ positive cylindrical ends}
is a Riemann surface $\Sigma$
together with a collection of proper embeddings
$$(\iota_j : \bbI_\pm \times \T \lra{} \Sigma)_{j \in I_- \sqcup I_+}$$
where
\begin{itemize}
\item $I_-, I_+$ are finite indexing sets and the images of $\iota_j$ are all disjoint from each other,
\item  $\Sigma$ is biholomorphic to
$\check{\Sigma} - \cup_{j \in I_- \sqcup I_+} \{p_j\}$ where $\check{\Sigma}$ is a closed Riemann surface and
$(p_j)_{j \in I_- \sqcup I_+}$ are distinct points in $\check{\Sigma}$ and
\item for each $j \in I_\pm$
there is a holomorphic chart $U_j$ in $\check{\Sigma}$ centered at $p_j$
so that $\Im(\iota_j) \subset U_j$ and
$$\iota_j(s,t) = e^{\mp  2\pi(s + i t)}$$
with respect to this chart.

\end{itemize}

The map $\iota_j$ is called the {\it negative } (resp. {\it positive}) {\it cylindrical end associated to $j \in I_-$} (resp. $j \in I_+$).
The coordinate $(s+it) \circ \iota_j^{-1} : \Im(\iota_j) \lra{} \C$ on the negative (resp. positive) cylindrical end $\iota_j \in I_\pm$ is called the {\it cylindrical coordinate associated to $j \in I_- \sqcup I_+$}.
\end{defn}

\begin{center}
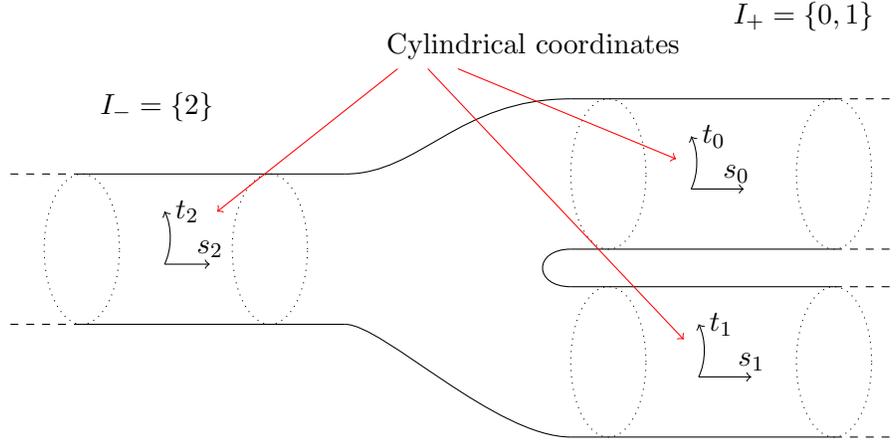
\begin{figure}[h]
	\begin{tikzpicture}
	
	\draw [dotted] (4,-1) ellipse (0.5 and 1);
	\draw [dashed](4,0) -- (5,0);
	\draw [dashed](4,-2) -- (5,-2);
	\draw (4,0) -- (0.5,0);
	\draw (4,-2) -- (0.5,-2);
	\draw [dotted] (4,1.5) ellipse (0.5 and 1);
	\draw [dashed](4,2.5) -- (5,2.5);
	\draw [dashed](4,0.5) -- (5,0.5);
	\draw [](4,2.5) -- (0.5,2.5);
	\draw (4,0.5) -- (0.5,0.5);
	\draw [dotted] (-6,0.5) ellipse (0.5 and 1);
	\draw [dashed](-6,1.5) -- (-7,1.5);
	\draw [dashed](-6,-0.5) -- (-7,-0.5);
	\draw (-6,1.5) -- (-2.5,1.5);
	\draw (-6,-0.5) -- (-2.5,-0.5);
	\draw (-2.5,1.5) .. controls (-1.5,1.5) and (-1,2.5) .. (0.5,2.5);
	\draw (-2.5,-0.5) .. controls (-2,-0.5) and (-0.5,-2) .. (0.5,-2);
	\draw (0.5,0) .. controls (0,0) and (0,0.5) .. (0.5,0.5);
	\draw [dotted] (1,1.5) ellipse (0.5 and 1);
	\draw [dotted] (1,-1) ellipse (0.5 and 1);
	\draw [dotted] (-3.5,0.5) ellipse (0.5 and 1);
	\node at (-5,2.4) {$I_-=\{2\}$};
	\node at (3.6,3.6) {$I_+=\{0,1\}$};
	\draw [->](2.1,1.3) -- (2.8,1.3);
	\draw [->](2.1,1.3) .. controls (2.2,1.5) and (2.2,1.8) .. (2.1,2);
	\node at (2.7,1.5) {$s_0$};
	\node at (2.4,2) {$t_0$};
	\draw [->](2.2,-1.2) -- (2.9,-1.2);
	\draw [->](2.2,-1.2) .. controls (2.3,-1) and (2.3,-0.7) .. (2.2,-0.5);
	\node at (2.9,-1) {$s_1$};
	\node at (2.5,-0.5) {$t_1$};
	\draw [->](-4.9,0.3) -- (-4.3,0.3);
	\draw [->](-4.9,0.3) .. controls (-4.8,0.5) and (-4.8,0.8) .. (-4.9,1);
	\node at (-4.3,0.5) {$s_2$};
	\node at (-4.6,1) {$t_2$};
	\node at (-5.1,-2.2) {};
	\draw [->,red](-1.8,2.9) -- (-4.2,1);
	\draw [->,red](-1.4,2.9) -- (2,-0.7);
	\draw [->,red](-1,2.9) -- (1.9,1.7);
	\node at (-0,3.2) {Cylindrical coordinates};
	\end{tikzpicture}
\caption{Riemann surface with one negative cylindrical end and two positive cylindrical ends} \label{fig:riemannsurface}
\end{figure}
\end{center}

\begin{defn} \label{definition riemann surface admissible}
Let $\Sigma$ be a Riemann surface with
$n_-$ negative cylindrical ends and $n_+$ positive cylindrical ends labeled by
finite sets $I_-$ and $I_+$ respectively.
A $1$-form $\beta \in \Omega^1(\Sigma)$
is {\it $\Sigma$-compatible} if $\iota_j^*\beta = \kappa_j dt$ for all $j \in I_- \sqcup I_+$ where $(\kappa_i)_{i \in I_- \sqcup I_+}$ are positive constants.
Here $\kappa_j$ is called the {\it weight of $\beta$} at the cylindrical end corresponding to $j$.
A smooth family of tensor fields $\alpha := (\alpha_z)_{z \in \Sigma}$ on $M$ (e.g. functions, differential forms, almost complex structures)
is $\Sigma$-compatible if there is a compact subset $K_\alpha \subset \Sigma$
and a smooth family of tensors $\alpha^j := (\alpha^j_t)_{t \in \T}$ on $M$ for each $j \in I_- \sqcup I_+$
so that 
$\alpha_{\iota_j(s,t)} = \alpha^j_t$ for all $(s,t) \in \bbI_\pm \times \T$
satisfying $\iota_j(s,t) \notin K_\alpha$ and all $j \in I_- \sqcup I_+$.
Here $\alpha^j$ is the {\it limit of $\alpha$ corresponding to $j \in I_- \sqcup I_+$} and $\alpha^\# := (\alpha^j)_{j \in I_- \sqcup I_+}$ are the {\it limits of $\alpha$}.
Let $\widetilde{\omega} \in \Omega^2(M)$ be $\check{C}$-compatible.
A smooth family of autonomous Hamiltonians
$H := (H_z)_{z \in \Sigma}$ on $M$
is {\it $(\Sigma,\check{C})$-admissible}
if
\begin{enumerate}
\item $H \in \ccH^\Sigma(\check{C})$ and $H$ is $\Sigma$-compatible,
\item \label{equation forms are negative}
$d(h^x \beta), d(\lambda \beta), d(m \beta), d((h^x - \lambda) \beta) \leq 0$
for all $x \in M$ where $h^x$, $\lambda$ and $m$ are maps from $\Sigma$ to $\R$ defined by:
$$h^x(\sigma) := H_\sigma(x), \quad \lambda(\sigma) := \lambda_{H_\sigma}, \quad m(\sigma) := m_{H_\sigma} \quad \ \forall \ \sigma \in \Sigma$$
where $(\lambda_{H_\sigma})_{\sigma \in \Sigma}$ and $(m_{H_\sigma})_{\sigma \in \Sigma}$
are the slopes and heights of $H = (H_\sigma)_{\sigma \in \Sigma}$ respectively.
\end{enumerate}
Let $H^\# := (H^j)_{j \in I_- \sqcup I_+}$ be a collection of Hamiltonians.
We define $\ccH^\Sigma(H^\#,\check{C})$
to be the set of $(\Sigma,\check{C})$-admissible smooth families of Hamiltonians whose limits are $H^\#$.
We also define
$\overline{\ccH}^\Sigma(H^\#,\check{C}) :=
\overline{\ccH}^\Sigma(\check{C}) \cap \ccH^\Sigma(H^\#,\check{C})$.

Let $J^j := (J^j_t)_{t \in \T}$ be a smooth family of almost complex structures in $\ccJ^\T(\check{C})$ for each $j \in I_- \sqcup I_+$
and let $J^\# := (J^j)_{j \in I_- \sqcup I_+}$.
Define $\ccJ^\Sigma(J^\#,\check{C}) \subset \ccJ^\Sigma(\check{C})$
to be the subspace of $\Sigma$-compatible
families
$J = (J_z)_{z \in \Sigma}$
of almost complex structures 
whose limits are $J^\#$.

Now let $H \in \ccH^\Sigma(H^\#,\check{C})$ and
$J \in \ccJ^\Sigma(J^\#,\check{C})$ for some $H^\#$,$J^\#$ as above and
let ${\bf j}$ be the natural complex structure on $\Sigma$.
We say that $u : \Sigma \lra{} M$ satisfies the {\it Floer equation}
with respect to
$(H,J)$ if
\begin{equation} \label{eqn:floer}
(du + X_{H_\sigma} \otimes \beta) + J_\sigma \circ (du + X_{H_\sigma} \otimes \beta) \circ {\bf j} = 0
\end{equation}
at each point $\sigma \in \Sigma$.
A continuous map $u : \Sigma \lra{} M$ {\it converges to capped $1$-periodic orbits
$\gamma^\# := ((\widetilde{\gamma}^j,\check{\gamma}^j))_{j \in I_- \sqcup I_+}$}
of $(\kappa_j H^j)_{j \in I_- \sqcup I_+}$ respectively
if
\begin{itemize}
\item $\lim_{s \to \pm \infty} u( \iota_j(s,t)) = \overline{\gamma}^j(t)$ for all $j \in I_\pm$ where $\overline{\gamma}^j$ is the associated loop of
$(\widetilde{\gamma}^j,\check{\gamma}^j)$
and
\item the surface obtained by gluing the ends of $u$ with the surfaces $(\widetilde{\gamma}^j)_{j \in I_- \sqcup I_+}$
is null-homologous in $H_2(M;\Z)$.
\end{itemize}
We let $\ccM(H,J,\gamma^\#)$
be the space of maps $u : \Sigma \lra{} M$ satisfying the Floer equation with respect to $(H,J)$ and converging to $\gamma^\#$ equipped with the $C^\infty_{\text{loc}}$ topology.
\end{defn}

\begin{remark}
The space $\ccM(H,J,\gamma^\#)$
also depends on $\beta$ but we omit
this from the notation as it is
either clear which $\beta$ we are using, or if $\beta$ isn't mentioned then we will assume some $\beta$ has been chosen.
\end{remark}

The motivation for part (\ref{equation forms are negative}) of the definition of a $(\Sigma,\check{C})$-admissible Hamiltonian above is that it ensures that, roughly,
that the Floer complex (defined in Section \ref{definition Floer alternative filtration}) is filtered by the
$(H,\check{C})$-action
from Definition \ref{definition action functionals}.
See also Lemma \ref{lemma filtration} combined with Equation (\ref{equation primitive assocatiated to contact cylinder}).

\begin{defn} \label{definition conley zehnder index}
	
	Each capped $1$-periodic orbit
	$\gamma$ can be assigned an
	index $CZ(\gamma)$
	called the {\it Conley-Zehnder index}.
	Such an index is defined in the following way:
	To any path $A := (A_t)_{t \in [a,b]}$
	of symplectic matrices,
	we can assign an index $CZ(A)$
	called its {\it Conley-Zehnder index}
	(\cite{ConleyZehnder:Index},
	\cite{RobbinSalamon:maslov}
	and
	\cite{Gutt:GeneralizedConleyZehnder}).
	We will not give a definition here,
	but state some important properties
	(See \cite[Proposition 6]{Gutt:GeneralizedConleyZehnder},
	\cite[Lemma 26]{Gutt:GeneralizedConleyZehnder}
	and
	\cite[Corollary 4.9]{McLean:isolated}):
	\begin{CZ}
		\item \label{item:cznormalization}
		$CZ((e^{it})_{t \in [0,2\pi]}) = 2$.
		\item \label{item:czadditive}
		$CZ(A \oplus B) = CZ(A) + CZ(B)$ where $A = (A_t)_{t \in [a,b]}$,
		$B = (B_t)_{t \in [a,b]}$ are paths of symplectic matrices and
		$A \oplus B := (A_t \oplus B_t)_{t \in [a,b]}$.
		\item  \label{item:catenationczproperty}
		The Conley-Zehnder index of the catenation of two paths is the sum of their Conley-Zehnder indices.
		\item \label{item:homotopyinvariance}
		If $A$ and $B$ are two paths of symplectic matrices which are homotopic relative to their endpoints
		then they have the same Conley-Zehnder index.
		Also such an index only depends on the path up to orientation
		preserving reparameterization.
		\item \label{item:sheartransformation}
%
		Let
		$$A_t = \left(
		\begin{array}{cc}
		\id & -tB \\
		0 & \id
		\end{array}
		\right)\in GL(2n;\R), \quad \forall t \in [0,1]$$
		be a family of $2n \times 2n$ matrices where $\id$ is the identity $n \times n$ matrix
		and $B$ is a symmetric $n \times n$ matrix.
		Let $A := (A_t)_{t \in [0,1]}$ be a path of symplectic matrices with respect to the
		linear symplectic form
		$\Omega = \sum_{i=1}^n x_i^* \wedge y_i^*$
		where $x_1^*,\cdots,x_n^*,y_1^*,\cdots,y_n^*$ are the dual basis vectors
		of the standard basis $x_1,\cdots,x_n,y_1,\cdots,y_n$
		of $\R^{2n}$.
		Then $CZ(A) = \frac{1}{2}\text{Sign}(B)$.
		\item \label{item:constantrankpath}
		Let $(A_t)_{t \in [0,1]}$
		be a path of symplectic matrices
		so that $\dim(\ker(A_t - \id))$
		is independent of $t$.
		Then $CZ((A_t)_{t \in [0,1]}) = 0$.
		\item \label{item:shortpath}
		Let $Sp(2n)$ be the space of symplectic $2n \times 2n$ matrices and let $A \in Sp(2n)$.
		Then there is a neighborhood $N_A$ of $A$
		so that any path $(A_t)_{t \in [0,1]}$ in $N_A$ with $A_0 = A$
		satisfies $CZ((A_t)_{t \in [0,1]}) \in [-\frac{k}{2},\frac{k}{2}]$
		where $k = \text{dim ker}(A - \text{id})$.
	\end{CZ}
	The {\it Conley-Zehnder index} of a capped $1$-periodic orbit $\gamma = (\widetilde{\gamma},\check{\gamma})$ of $H$
	is given by the Conley-Zehnder index of
	$$\tau|_{\check{\gamma}(t)} \circ D\phi^H_t|_{\check{\gamma}(0)} \circ (\tau|_{\check{\gamma}(0)})^{-1}, \quad t \in [0,1]$$
	where
	$\tau : \widetilde{\gamma}^* TM \lra{} S \times \C^n$
	is a symplectic trivialization over the domain $S$ of $\widetilde{\gamma}$ and $\tau|_\sigma : \widetilde{\gamma}^*(TM)|_\sigma \lra{} \C^n$ is its restriction to the fiber $\sigma \in \Sigma$.
	We define the {\it index of $\gamma$} to be $|\gamma| := n-CZ(\gamma)$.
	If $(\gamma_j)_{j \in I}$
		is a finite collection of capped $1$-periodic orbits then we define
		$|(\gamma_j)_{j \in I}| := \sum_{j \in I} |\gamma_j|$.
\end{defn}

\begin{remark} \label{definition conley zehnder index of associated loop}
	Such an index 
	does not depend on the choice of trivialization $\tau$
	by \ref{item:homotopyinvariance} combined with the fact that $\pi_1(Sp(2n))$ is abelian.
	Also since $c_1(M) = 0$, the index only depends on the associated loop $\overline{\gamma}$.
	Therefore, we will define
	$|\overline{\gamma}| := |\gamma|$
	for any associated loop $\overline{\gamma}$ of a capped $1$-periodic orbit $\gamma$.
\end{remark}

\begin{defn} \label{defnition ubiquitous}
	Let $T$ be a topological space.
	A subset $S \subset T$ is
	{\it ubiquitous}
	if it contains a countable intersection of
	dense open sets.
\end{defn}

We have the following important proposition:
\begin{prop} \label{proposition regular subset for surface}
	Let $\Sigma$, $H$, $J^\#$, $(\kappa_j)_{j \in I}$
	be as in Definition \ref{definition riemann surface admissible}.
	Then there is a ubiquitous subset
	$\ccJ^{\Sigma,\reg}(H,J^\#,\check{C}) \subset \ccJ^\Sigma(J^\#,\check{C})$ so that
	a certain family of linearized operators is surjective and so certain other transversality conditions hold (see Definition \ref{definition H V regular}).
	
	Also, suppose that we have a collection
	$\gamma^\# := (\gamma^j)_{j \in I_- \sqcup I_+}$ of non-degenerate capped $1$-periodic orbits of $(\kappa_j H^j)_{j \in I_- \sqcup I_+}$ whose associated $1$-periodic orbits are disjoint from $V$
	and so that if $\Sigma \neq \R \times \T$ then at least two such $1$-periodic orbits of $\gamma^\#$
	have distinct images.
	Then for each $J \in \ccJ^{\Sigma,\reg}(H,J^\#,\check{C})$,
	$\ccM(H,J,\gamma^\#)$
	is an oriented $k$-dimensional manifold where
	\begin{equation} \label{equation conley zehnder index sum}
		k := |(\gamma^j)_{j \in I_-}| - |(\gamma^j)_{j \in I_+}| + n(|I|_+ - |I|_- + \chi(\Sigma))
	\end{equation}
	where $\chi(\Sigma)$ is the Euler characteristic of $\Sigma$.
	Also if the dimension of $\ccM(H,J,\gamma^\#)$ is $\leq 1$
	then we can ensure that
	the image of each $u \in \ccM(H,J,\gamma^\#)$
	is disjoint from $V$.
\end{prop}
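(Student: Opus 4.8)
The statement is a standard transversality/compactness package for Floer trajectories on a Riemann surface $\Sigma$ with cylindrical ends, and I would prove it by the usual Sard–Smale argument adapted to the $\check{C}$-compatibility constraints and to the requirement of avoiding $V$. The plan is to set up the universal moduli space
\[
\ccM(H,\ccJ^\Sigma(J^\#,\check{C}),\gamma^\#) := \bigl\{(u,J) : J \in \ccJ^\Sigma(J^\#,\check{C}),\ u \in \ccM(H,J,\gamma^\#)\bigr\}
\]
in appropriate weighted Sobolev $W^{k,p}_\delta$ completions (exponential weights at the cylindrical ends, tuned to the non-degenerate orbits $\gamma^\#$ and to the slopes/heights dictated by $\check{C}$-compatibility), and to show the associated section of the Cauchy–Riemann-type Banach bundle is transverse to zero. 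The key analytic input is that the vertical differential $D_u\bar\partial_{H,J} \oplus \{$perturbations of $J\}$ is surjective: one writes a nonzero cokernel element $\eta$, uses the Floer equation plus unique continuation to conclude $u$ is somewhere injective on an open dense subset of $\Sigma$ (here one needs the hypothesis that when $\Sigma \neq \R\times\T$ at least two of the asymptotic orbits have distinct images, which rules out the degenerate case where $u$ has constant value and the standard somewhere-injectivity argument fails), and then deforms $J$ in a small ball around an injective point — chosen away from $V$, which is possible since $V$ has codimension $\geq 4$ and the constraint $J = J_0$ to infinite order only on $V$ leaves $J$ free off $V$ — to pair nontrivially against $\eta$, forcing $\eta = 0$. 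The Conley–Zehnder additivity/normalization properties \ref{item:cznormalization}--\ref{item:shortpath}, together with $c_1(M)=0$, give the Fredholm index formula: the linearized operator has index $|(\gamma^j)_{j\in I_-}| - |(\gamma^j)_{j\in I_+}| + n\bigl(|I|_+ - |I|_- + \chi(\Sigma)\bigr)$ by a Riemann–Roch computation on $\Sigma$ capped off by the trivializing surfaces $\widetilde{\gamma}^j$, so that zero of the section on the regular locus is a manifold of this dimension $k$.

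Next I would descend from the universal moduli space to the fixed-$J$ statement: the projection $\ccM(H,\ccJ^\Sigma,\gamma^\#) \to \ccJ^\Sigma(J^\#,\check{C})$ is Fredholm of index $k$ (in the Banach completion it is a smooth Fredholm map of separable Banach manifolds), so Sard–Smale produces a comeagre — hence ubiquitous in the sense of Definition \ref{defnition ubiquitous} — subset $\ccJ^{\Sigma,\reg}(H,J^\#,\check{C})$ of regular values; for $J$ in this set $\ccM(H,J,\gamma^\#)$ is a manifold of dimension $k$. Standard elliptic bootstrapping upgrades $W^{k,p}_\delta$ solutions to smooth ones, so this is the $C^\infty_{\mathrm{loc}}$ moduli space, and orientations come from the usual determinant-line construction for the linearized Cauchy–Riemann operators (coherent gluing of orientations, using $c_1(M)=0$ so the relative first Chern classes are unambiguous). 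One has to note that $\ccJ^\Sigma(J^\#,\check{C})$ is not a Banach manifold itself but one passes to the Floer $C^\varepsilon$-norm (Floer's $\varepsilon$-trick) to get a Banach manifold of perturbations dense in the $C^\infty$ topology; this is routine and I would cite it rather than reproduce it.

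Finally, for the codimension statement — when $\dim \ccM(H,J,\gamma^\#) \leq 1$ the trajectories miss $V$ — I would run a second, parametric transversality argument: consider the evaluation-type moduli space of pairs $(u,z)$ with $u \in \ccM$ and $z \in \Sigma$ such that $u(z) \in V$, show that for generic $J \in \ccJ^{\Sigma,\reg}$ this space is cut out transversally and has dimension $\dim\ccM(H,J,\gamma^\#) + 2 - \mathrm{codim}_\R V \leq 1 + 2 - 4 < 0$, hence is empty. The input is again that $J$ may be freely perturbed near any point of $M - V$ and that $u$ is somewhere injective, so the incidence condition $u(z)\in V$ is transversally cut; the codimension $\geq 4$ hypothesis is exactly what makes the expected dimension negative. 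I expect the main obstacle to be the somewhere-injectivity / structure-of-solutions analysis on a general Riemann surface $\Sigma$ with several ends (as opposed to the cylinder), in particular handling the case where $u$ could be a multiple cover or locally constant and carefully invoking the "two distinct asymptotic images" hypothesis to exclude it; everything downstream — Fredholm theory, Sard–Smale, orientations, the dimension count — is then routine and would be stated with references to the standard Floer-theoretic literature.
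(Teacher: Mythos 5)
Your overall strategy --- universal moduli space in Sobolev completions, somewhere-injectivity via the distinct-asymptotics hypothesis, Sard--Smale through Floer's $C^\epsilon$ perturbation spaces, index via Riemann--Roch on the capped surface, and an incidence/dimension-count argument to push $\leq 1$-dimensional moduli spaces off $V$ --- is the same as the paper's (Appendix B), which outsources the Fredholm and somewhere-injectivity analysis to Schwarz's thesis and phrases the $V$-avoidance as transversality of the evaluation map $\ev : \Sigma \times \ccM(H,J,\gamma^\#) \lra{} \Sigma \times M$ to a countable family of maps $f_i : W_i \lra{} \Sigma \times M$ with $\dim W_i \leq 2n-2$ covering $\Sigma \times V$. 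Your fiber-product count $k + 2 - \codim V < 0$ is the same computation, and your use of the evaluation submersion on the universal moduli space matches the paper's Lemmas on $\ev$.

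There are, however, two points your proof does not cover. First, the set $\ccJ^{\Sigma,\reg}(H,J^\#,\check{C})$ whose ubiquity you must prove is \emph{defined} (Definition \ref{definition H V regular}) to require not only that $\ev$ be transverse to families covering $V$, but also that every $J_\sigma$-holomorphic sphere have image inside $\cup_{i} f_i(W_i)$ with $\ev$ transverse to those $f_i$ as well. This sphere condition is what rules out bubbling in the semi-positive compactness argument (Proposition \ref{proposition compactness result}), and establishing it needs an additional genericity statement for the parametrized moduli space $\ccM^{\textnormal{vert}}(J)$ of somewhere-injective vertical spheres (of dimension $\leq 2n-2$ since $c_1(M)=0$), together with the consistency of the covering families $f^k_J$ under changes of $J$ away from a given compact piece of $\Sigma$; your argument as written only treats $V$. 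Second, the perturbations of $J$ that preserve the asymptotic data $J^\#$ are necessarily supported in a relatively compact $U \subset \Sigma$, whereas transversality must hold for trajectories that escape every compact set; the paper resolves this with an exhaustion of $\Sigma$ by compact sets $S_i$, perturbation regions $U_i$ disjoint from $S_i$, a Gromov-compactness exhaustion $K(i,J,\gamma)$ of each moduli space, and an intersection of countably many open dense conditions (Lemma \ref{lemma ubiquitous from Banach ubiquitous} and the final argument of Appendix B). Neither issue changes your method, but both must be filled in before the proof is complete.
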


The proof is extremely standard using ideas from \cite[Chapters 3 and 6]{McduffSalamon:sympbook} and from \cite{schwarz1995cohomology}.
However for the sake of completeness,
we will prove this proposition in Appendix B.
Note that $\ccM(H,J,\gamma^\#)$
may not be a manifold at all if any of the capped $1$-periodic orbits $\gamma^j$ are degenerate even if $J \in \ccJ^{\Sigma,\reg}(H,J^\#,\check{C}^\#)$ by our current definition.
We also have a $1$-parameter version of this proposition stated below.
\begin{defn} \label{definition parameterized moduli spaces}
Let $\Sigma_\bullet := (\Sigma_s)_{s \in [0,1]}$
be a smooth family of Riemann surfaces
with $n_-$ negative cylindrical ends and $n_+$ positive cylindrical ends labeled by finite sets $I_-$ and $I_+$ respectively.
Let $(\kappa_j)_{j \in I_- \sqcup I_+}$ be positive numbers and let
$\beta_s$
be a $\Sigma_s$-compatible $1$-form
so that $\kappa_j$ is the weight of $\beta_s$ at the cylindrical end corresponding to $j \in I_- \sqcup I_+$
for each $j \in I_- \sqcup I_+$, $s \in [0,1]$ and so that
$(\beta_s)_{s \in [0,1]}$
is a smooth family of $1$-forms.

Let $H^\# := (H_i)_{i \in I_- \sqcup I_+}$
be elements of $\ccH^\T(\check{C})$.
We define
$\ccH^{\Sigma_\bullet}(H^\#,\check{C})$
(resp. $\overline{\ccH}^{\Sigma_\bullet}(H^\#,\check{C})$) to be the space of smooth families of Hamiltonians $H := (H_{s,\sigma})_{s \in [0,1], \sigma \in \Sigma_s}$
so that the subfamily $H_{s,\bullet} := (H_{s,\sigma})_{\sigma \in \Sigma_s}$ is an element of
$\ccH^{\Sigma_s}(H^\#,\check{C})$
(resp $\overline{\ccH}^{\Sigma_s}(H^\#,\check{C})$)
 for each $s \in [0,1]$.
Similarly let $\ccJ^{\Sigma_\bullet}(J^\#,\check{C})$
  be the space of smooth families of almost complex structures
$J := (J_{s,\sigma})_{s \in [0,1], \sigma \in \Sigma_s}$ so that the subfamily
$J_{s,\bullet} := (J_{s,\sigma})_{\sigma \in \Sigma_s}$ is an element of
$\ccJ^{\Sigma_s}(J^\#,\check{C})$
 for each $s \in [0,1]$.
For each $H \in \ccH^{ \Sigma_\bullet}(H^\#,\check{C})$ and each $Y_j \in \ccJ^{\Sigma_j,\reg}(H_{j,\bullet},J^\#,\check{C})$ for $j=0,1$, define
$\ccJ^{\Sigma_\bullet}((Y_0,Y_1),J^\#,\check{C}^\#)$ to be the subspace of
$\ccJ^{\Sigma_\bullet}(J^\#,\check{C})$ consisting of those $J$ as above
satisfying $J_{0,\bullet} = Y_0$
and $J_{1,\bullet} = Y_1$.
%
For each $J \in \ccJ^{\Sigma_\bullet}((Y_0,Y_1),J^\#,\check{C})$ as above
and each tuple of capped $1$-periodic orbits $\gamma^\# := (\gamma^j)_{j \in I_- \sqcup I_+}$ of $(\kappa_j H^j)_{j \in I_- \sqcup I_+}$,
define
$\ccM(H,J,\gamma^\#) := \sqcup_{s \in [0,1]} \ccM(H_{s,\bullet},J_{s,\bullet},\gamma^\#)$ with the induced $C^\infty_{\textnormal{loc}}$ topology.
\end{defn}

We then have the following parameterized version of Proposition \ref{proposition regular subset for surface}
above.
We will not prove this since the ideas used to prove it are exactly the same as those from Proposition \ref{proposition regular subset for surface}.
\begin{prop} \label{proposition parameterized regular subset for surface}

	Let $\Sigma_\bullet$, $H^\#$, $J^\#$, $(\kappa_j)_{j \in I}$
	be as in Definition \ref{definition parameterized moduli spaces}.
	Suppose $H \in \ccH^{\Sigma_\bullet}(H^\#,\check{C})$,
	$Y_j \in \ccJ^{\Sigma_j,\reg}(H_{j,\bullet},J^\#,\check{C})$ for $j=0,1$ where $H_{s,\bullet}$ is described in the previous definition.
	Then there is a ubiquitous subset
	$$\ccJ^{ \Sigma_\bullet,\reg}(H,(Y_0,Y_1),J^\#,\check{C}) \subset \ccJ^{\Sigma_\bullet}((Y_0,Y_1),J^\#,\check{C})$$
	so that
	a certain family of linearized operators is surjective  and so certain other transversality conditions hold.

	Also suppose
	$\gamma^\# := (\gamma^j)_{j \in I_- \sqcup I_+}$ are capped $1$-periodic orbits of $(\kappa_j H^j)_{j \in I_- \sqcup I_+}$ so that $\gamma^j$ is non-degenerate for each $j \in I_- \sqcup I_+$,
	whose associated $1$-periodic orbits are disjoint from $V$
	and so that if $\Sigma \neq \R \times \T$ then at least two associated $1$-periodic orbits of $\gamma^\#$
	have distinct images.
	Then for each
	$J \in \ccJ^{ \Sigma_\bullet,\reg}(H,(Y_0,Y_1),J^\#,\check{C})$,
	$\ccM(H,J,\gamma^\#)$
	is an oriented $k+1$-dimensional manifold with boundary 
	equal to $\sqcup_{j=0}^1 \ccM(H_{j,\bullet},Y_j,\gamma^\#)$
	where $k$ is defined as in equation
	(\ref{equation conley zehnder index sum}).
		Also if the dimension of $\ccM(H,J,\gamma^\#)$ is $\leq 1$
	then we can ensure that
	the image of each $u \in \ccM(H,J,\gamma^\#)$
	is disjoint from $V$.
\end{prop}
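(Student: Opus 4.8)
The plan is to run the parameterized Sard--Smale argument, exactly parallel to the one behind Proposition \ref{proposition regular subset for surface}, treating the parameter $s \in [0,1]$ as an additional variable and letting the two endpoints $s \in \{0,1\}$ produce the boundary. First I would fix a Banach completion of $\ccJ^{\Sigma_\bullet}((Y_0,Y_1),J^\#,\check{C})$ --- for instance via a decreasing sequence of $C^\ell$ norms together with the usual Taubes argument, or via Floer's $C^\infty_\varepsilon$ spaces --- observing that the constraints $J_{0,\bullet} = Y_0$, $J_{1,\bullet} = Y_1$, the $\check{C}$-compatibility conditions on $[1+\epsilon/8,1+\epsilon/2] \times C$, and the infinite-order agreement with $J_0$ along $V$ all cut out Banach submanifolds and hence survive the completion. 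Then I would form the universal moduli space
$$\widetilde{\ccM} := \big\{ (s,u,J) \ : \ s \in [0,1], \ J \in \ccJ^{\Sigma_\bullet}((Y_0,Y_1),J^\#,\check{C}), \ u \in \ccM(H_{s,\bullet},J_{s,\bullet},\gamma^\#) \big\},$$
realized as the zero set of the section defined by the Floer equation (\ref{eqn:floer}) over an appropriate weighted $W^{1,p}$-space of maps with exponential convergence to the non-degenerate orbits $\gamma^\#$.

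The heart of the matter is to show that $\widetilde{\ccM}$ is a Banach manifold with boundary, the boundary lying over $s \in \{0,1\}$. For $s \in (0,1)$, where $J$ is unconstrained, this is the standard computation: the linearization of the Floer equation together with variations of $J$ and of $s$ is surjective because every solution $u$ is non-constant --- this is precisely where the hypothesis that at least two asymptotic orbits of $\gamma^\#$ have distinct images when $\Sigma \neq \R \times \T$ is used --- and therefore somewhere injective on an open dense subset of the domain, using the genuine surface-dependence of the data when $\Sigma \neq \R \times \T$ and the translation-quotient argument of \cite{schwarz1995cohomology} when $\Sigma = \R \times \T$; near an injective point contained in $M - V$ one may perturb $J$ freely, which gives the required surjectivity, just as in \cite[Chapter 3]{McduffSalamon:sympbook}. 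At the endpoints $s \in \{0,1\}$ one cannot vary $J$, but there the hypothesis $Y_j \in \ccJ^{\Sigma_j,\reg}(H_{j,\bullet},J^\#,\check{C})$ says exactly that the un-parameterized linearized operator is already surjective, so adjoining the transverse $s$-derivative exhibits $\widetilde{\ccM}$ as a manifold with boundary near these strata, with boundary $\sqcup_{j=0}^1 \ccM(H_{j,\bullet},Y_j,\gamma^\#) \times \{J\}$.

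Next I would note that the projection $\pi : \widetilde{\ccM} \to \ccJ^{\Sigma_\bullet}((Y_0,Y_1),J^\#,\check{C})$ is Fredholm of index $k+1$, with $k$ as in (\ref{equation conley zehnder index sum}), and restricts to a Fredholm map of index $k$ over the boundary. Applying the Sard--Smale theorem to $\pi$ and to $\pi|_{\partial \widetilde{\ccM}}$ simultaneously produces a comeager set of common regular values in each $C^\ell$ completion, and intersecting over $\ell$ (and over an exhaustion of the collections of orbits that occur) yields the ubiquitous subset $\ccJ^{\Sigma_\bullet,\reg}(H,(Y_0,Y_1),J^\#,\check{C})$ of smooth almost complex structures. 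For $J$ in this subset, $\ccM(H,J,\gamma^\#) = \pi^{-1}(J)$ is an oriented $(k+1)$-manifold with boundary $\sqcup_{j=0}^1 \ccM(H_{j,\bullet},Y_j,\gamma^\#)$, the orientation being inherited from the determinant-line conventions already fixed for Proposition \ref{proposition regular subset for surface}.

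For the avoidance assertion, suppose $\dim \ccM(H,J,\gamma^\#) \leq 1$, so that the endpoint strata have dimension $\leq 0$ and already avoid $V$ because $Y_0$ and $Y_1$ are regular; it remains to treat $s \in (0,1)$, where $J$ is still free away from $V$. There the standard transversality argument makes the incidence locus $\{(s,u,z) : s \in (0,1), \ u(z) \in V\}$ inside the universal space a manifold of dimension at most $\dim \ccM(H,J,\gamma^\#) + 2 - 4 \leq -1$ for generic $J$ (here we use that each $V_i$ has real codimension $\geq 4$), hence empty; intersecting this generic set with the one above and passing to a ubiquitous subset completes the argument. I expect the only genuine obstacle to be the interior regularity step: establishing somewhere-injectivity for solutions of the domain-dependent Floer equation on a Riemann surface with cylindrical ends, and then checking that the boundary stratum of the universal moduli space matches the prescribed regular endpoint data $Y_0$ and $Y_1$. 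Everything else is a verbatim transcription of the (by now routine) proof of Proposition \ref{proposition regular subset for surface} given in Appendix B.
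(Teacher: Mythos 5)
Your proposal is correct and follows essentially the same route the paper intends: the paper explicitly omits the proof of this proposition, stating that the ideas are identical to those of Proposition \ref{proposition regular subset for surface}, whose Appendix B proof is exactly the universal-moduli-space/Sard--Smale machinery (Floer's $C^\infty_\epsilon$ spaces, density of the image of the $J$-variations, Fredholm projection, passage from Banach-ubiquitous to ubiquitous subsets, and the codimension-$\geq 4$ incidence count for avoiding $V$) that you transcribe in parameterized form. Your treatment of the boundary strata --- using that $Y_0,Y_1$ are already regular so that the $s$-endpoints contribute a genuine boundary $\sqcup_{j=0}^1 \ccM(H_{j,\bullet},Y_j,\gamma^\#)$ --- is the standard and intended completion of that argument.
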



\begin{example} \label{example floer cylinders}
Let $\R \times \T = \C  / \Z$
be a Riemann surface with a positive cylindrical end indexed by $I_+ := \{+\}$
and a negative cylindrical end
indexed by $I_- := \{-\}$
given by the natural inclusion maps into $\R \times \T$.
Also let $\beta := dt$ and $\kappa_\pm := 1$
be the corresponding weights of $\beta$.
Here $\R \times \T$ along with $\beta$ and $\kappa_\pm$
is called a {\it Riemann cylinder}.
Let $H$ be $(\R \times \T,\check{C})$-compatible
with associated limits
$H^\pm$.
Also let $J^\pm \in \ccJ^\T(\check{C})$
and let $J \in \ccJ^{\R \times \T}((J^+,J^-),\check{C})$.
Then a map $u : \R \times \T \lra{} M$
satisfying the $(H,J)$-Floer equation and converging to capped $1$-periodic orbits $\gamma_-,\gamma_+$ of $H^-$, $H^+$ respectively
is called an
{\it $(H,J)$-Floer cylinder
connecting $\gamma_-$ and $\gamma_+$}.
\end{example}

\begin{defn} \label{definition space of Floer cylinders}
Let $\R \times \T$ be a Riemann cylinder.
Let
$$\iota_{\textnormal{Ham}} : \ccH^\T(\check{C}) \lhook\joinrel\lra{} \bigcup_{H^\# = (H^-,H^+) \in (\ccH^\T(\check{C}))^2} \ccH^{\R \times \T}(H^\#,\check{C})$$
 $$
 \iota_{\textnormal{cpx}} : \ccJ^\T(\check{C}) \lhook\joinrel\lra{} \bigcup_{J^\# = (J^-,J^+) \in (\ccJ^\T(\check{C}))^2} \ccJ^{\R \times \T}(J^\#,\check{C}) 
 $$
be the natural embeddings sending
$(H_t)_{t \in \T}$ 
to $(H_t)_{(s,t) \in \R \times \T}$
and
$(J_t)_{t \in \T}$
to $(J_t)_{(s,t) \in \R \times \T}$
respectively.

Also for each $(H,J) \in \ccH^\T(\check{C}) \times \ccJ^\T(\check{C})$ and for each pair $\gamma^\# := (\gamma^-,\gamma^+)$ of capped $1$-periodic orbits of $H$,
we define
\begin{equation} \label{equation quotient moduli space}
\ccM(H,J,\gamma^\#) := \ccM(\iota_{\textnormal{Ham}}(H),\iota_{\textnormal{cpx}}(J),\gamma^\#), \quad
\overline{\ccM}(H,J,\gamma^\#) := \ccM(H,J,\gamma^\#)/\R
\end{equation}
where the natural $\R$-action is given by translation in the $s$ coordinate.
Also for each $H \in \ccH^\T(\check{C})$,
define
$$\ccJ^{\T,\reg}(H,\check{C}) := \ccJ^{\T}(\check{C}) \cap \iota_{\textnormal{cpx}}^{-1}(\cup_{J \in \ccJ^\T(\check{C})}\ccJ^{\R \times \T,\reg}(\iota_{\text{Ham}}(H),(J,J),\check{C})).$$
%
\end{defn}

We also have the following proposition (see \cite[Proposition 3.4]{usher2011deformed}):
\begin{prop} \label{proposition regularity for translation invariant floer cylinders}
For each
$H \in \ccH^\T(\check{C})$,
$\ccJ^{\T,\reg}(H,\check{C})$
is a ubiquitous subset of
$\ccJ^\T(\check{C})$.
Also
for each pair of non-degenerate
capped $1$-periodic orbits
$\gamma_-$, $\gamma_+$ of $H$,
we have that
$\overline{\ccM}(H,J,\gamma^\#)$
is a manifold of dimension
$|\gamma_-| - |\gamma_+| -1$.
If the dimension of
$\overline{\ccM}(H,J,\gamma^\#)$
is $\leq 1$ then each element of
$\overline{\ccM}(H,J,\gamma^\#)$
has image disjoint from $V$.
\end{prop}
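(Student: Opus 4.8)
The plan is to carry out the standard transversality argument for translation-invariant Floer cylinders, essentially following \cite[Proposition 3.4]{usher2011deformed}; the one genuine point is that one cannot simply invoke Proposition \ref{proposition regular subset for surface} on $\Sigma = \R \times \T$, since that proposition perturbs within the space of $s$-dependent families of almost complex structures on the cylinder, whereas here the only admissible perturbations are the $s$-independent ones $\iota_{\textnormal{cpx}}(J)$, $J \in \ccJ^\T(\check{C})$. First I would reduce to non-stationary cylinders. With $H = \iota_{\textnormal{Ham}}(H)$, $J = \iota_{\textnormal{cpx}}(J)$ and $\beta = dt$, the Floer equation (\ref{eqn:floer}) becomes a gradient-flow-type equation for the action functional of Example \ref{example usual action filtraiton}, and the standard energy identity expresses $E(u)$ as the difference of the $(H,\emptyset)$-actions of $\gamma_-$ and $\gamma_+$, with $E(u) = 0$ exactly when $u$ is stationary, i.e.\ $u(s,t) = \overline{\gamma}(t)$ for a single $1$-periodic orbit. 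Such a stationary cylinder belongs to $\ccM(H,J,\gamma^\#)$ only if $\gamma_- = \gamma_+$ as capped orbits (the homological capping condition in Definition \ref{definition riemann surface admissible} forces the two cappings to coincide), in which case it is the unique element, is fixed by the $\R$-action, has image a single $1$-periodic orbit disjoint from $V$, and is excluded from $\overline{\ccM}(H,J,\gamma^\#)$; so for $\gamma_- = \gamma_+$ the quotient is empty, consistent with the claimed dimension $|\gamma_-|-|\gamma_+|-1 = -1$. Hence I may assume $\gamma_- \neq \gamma_+$, so that every element of $\ccM(H,J,\gamma^\#)$ is non-stationary.

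Next I would run the universal moduli space argument. A non-stationary Floer cylinder with non-degenerate asymptotics has a dense open set of injective points --- points $z$ with $du(z)\neq 0$, $u(z)\notin V$, and $u^{-1}(u(z))=\{z\}$ --- by the somewhere-injectivity and unique-continuation results underlying \cite[Chapters 3 and 6]{McduffSalamon:sympbook}; non-stationarity is exactly what rules out the trivial-cylinder degeneration that would otherwise obstruct this. I would form the universal space of pairs $(u,J)$ with $J$ an $s$-independent almost complex structure of suitable Sobolev class and $u \in \ccM(\iota_{\textnormal{Ham}}(H),\iota_{\textnormal{cpx}}(J),\gamma^\#)$, linearize the Floer operator jointly in $(u,J)$, and show surjectivity by pairing a hypothetical cokernel element against variations of $J$ supported near an injective point of $u$ --- such variations are automatically $s$-independent, so they remain inside our restricted perturbation class. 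This makes the universal space a Banach manifold and the projection to the space of $s$-independent $J$'s Fredholm of index $k = |\gamma_-|-|\gamma_+|$, which is formula (\ref{equation conley zehnder index sum}) for $\Sigma = \R\times\T$, $|I_\pm|=1$, $\chi=0$. Sard--Smale then produces a comeager set of regular values; intersecting over the countably many pairs of non-degenerate capped $1$-periodic orbits of $H$ (the non-degenerate underlying orbits are isolated, hence countable, and each admits countably many cappings), and passing to the $C^\infty$ topology in the standard way, yields the ubiquitous set $\ccJ^{\T,\reg}(H,\check{C})$ of Definition \ref{definition space of Floer cylinders}.

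For the remaining assertions, fix $J \in \ccJ^{\T,\reg}(H,\check{C})$. Then $\ccM(H,J,\gamma^\#)$ is a manifold of dimension $k$; the $\R$-action by $s$-translation is free (a non-stationary cylinder cannot be periodic in $s$, else its limits at $\pm\infty$ would force it to be stationary) and proper, so $\overline{\ccM}(H,J,\gamma^\#) = \ccM(H,J,\gamma^\#)/\R$ is a manifold of dimension $|\gamma_-|-|\gamma_+|-1$. For disjointness from $V$ I would observe that the evaluation map $\ccM(H,J,\gamma^\#) \times (\R \times \T) \to M$ descends, modulo the diagonal $\R$-action, to a map $\overline{\ev}$ out of a space of dimension $\dim\overline{\ccM}(H,J,\gamma^\#)+2$; for generic $J$ (again achieved with $s$-independent variations localized near injective points, and folded into $\ccJ^{\T,\reg}$) this map is transverse to each $V_i$, so $\overline{\ev}^{-1}(V)$ has dimension at most $\dim\overline{\ccM}(H,J,\gamma^\#)+2-4 = \dim\overline{\ccM}(H,J,\gamma^\#)-2$, which is negative --- hence $\overline{\ev}^{-1}(V)=\emptyset$ --- whenever $\dim\overline{\ccM}(H,J,\gamma^\#)\leq 1$.

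The hard part is precisely what distinguishes this from a corollary of Proposition \ref{proposition regular subset for surface}: one must show that the \emph{restricted} class of $s$-independent almost complex structures is still rich enough to make all non-stationary translation-invariant cylinders (and their intersections with $V$) regular. This rests on somewhere injectivity for these cylinders together with the elementary but crucial fact that a $J$-perturbation supported near a point of $M$ pulls back under the $s$-independent inclusion to an $s$-independent perturbation on $\R\times\T$; and it has to be combined with the bookkeeping needed to isolate and discard the stationary solutions, which can never be made regular in this way.
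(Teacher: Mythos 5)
Your proposal is correct and is precisely the standard Floer--Hofer--Salamon transversality argument for $t$-dependent but $s$-independent almost complex structures; the paper gives no proof of this proposition at all, deferring entirely to the citation of \cite[Proposition 3.4]{usher2011deformed}, which is proved by essentially the argument you sketch (including your correct observation that Proposition \ref{proposition regular subset for surface} cannot simply be pulled back along $\iota_{\textnormal{cpx}}$, so that somewhere-injectivity of non-stationary cylinders must be invoked to make the restricted perturbation class suffice). The one cosmetic slip is the case $\gamma_-=\gamma_+$: the stationary cylinder is a fixed point of the $\R$-action, so the literal quotient $\ccM(H,J,\gamma^\#)/\R$ is a point rather than empty; this edge case is discarded by convention (and is harmless, since the differential only pairs orbits of adjacent index).
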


\begin{example} \label{example pair of pants}
Let $\Sigma := \P^1 - \{0,1,\infty\}$
and let
$\varpi : \Sigma \lra{} \C^*$
be a proper holomorphic map of degree $2$ with exactly one branch point at $1 \in \C^*$ and so that $\varpi^{-1}(\{z \in \C^* \ : \ |z|<1\})$ is connected (See Figure \ref{fig:riemannsurfacebranch}).
\begin{center}
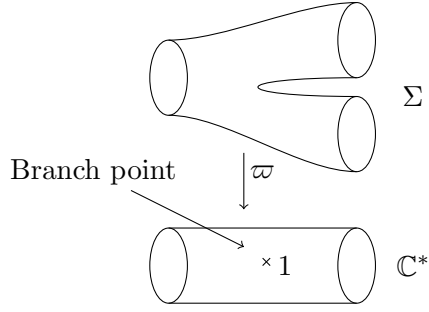
\begin{figure}[h]
	\begin{tikzpicture}
	\draw  (-3.75,2) ellipse (0.25 and 0.5);
	\draw  (-3.75,0.75) ellipse (0.25 and 0.5);
	\draw  (-6.25,1.5) ellipse (0.25 and 0.5);
	\draw  (-3.75,-1) ellipse (0.25 and 0.5);
	\draw  (-6.25,-1) ellipse (0.25 and 0.5);
	\draw (-6.25,2) .. controls (-5.5,2) and (-4.5,2.5) .. (-3.75,2.5);
	\draw (-6.25,1) .. controls (-5.5,1) and (-4.5,0.25) .. (-3.75,0.25);
	\draw (-3.75,1.5) .. controls (-5.5,1.5) and (-5.5,1.25) .. (-3.75,1.25);
	\draw (-6.25,-0.5) -- (-3.75,-0.5);
	\draw (-6.25,-1.5) -- (-3.75,-1.5);
	
	\draw (-5.0,-0.9) -- (-4.9,-1.0);
	\draw (-4.9,-0.9) -- (-5.0,-1.0);
	\draw[->] (-6.75,0) -- (-5.25,-0.75);
	\node at (-7.25,0.25) {Branch point};
	\node at (-3,1.25) {$\Sigma$};
	\node at (-3,-1) {$\C^*$};
	\draw[->] (-5.25,0.5) -- (-5.25,-0.25);
	\node at (-5,0.25) {$\varpi$};
	\node at (-4.7,-1) {$1$};
	\end{tikzpicture}
	\caption{Branched cover of pair of pants over a cylinder.} \label{fig:riemannsurfacebranch}
\end{figure}
\end{center}
Let $I_- := \{2\}$ and $I_+ := \{0,1\}$.
Choose cylindrical ends
$$\iota_j : \bbI_\pm \times \T \lra{} \Sigma, \quad j \in I_\pm$$
for $\Sigma$ so that
$\varpi(\iota_j(s,t)) = e^{2\pi (s + 1 + it)}$ for $j \in I_+$
and
$\varpi(\iota_2(s,t)) = e^{2\pi (s - 1 + 2it)}$.
Then
$\beta := \varpi^* d(\arg(z)/2\pi)$
is a $\Sigma$-compatible $1$-form
so that the weight $\kappa_2$ of $\beta$ is $2$
at the negative end and the weights $\kappa_0$, $\kappa_1$ are $1$ at each positive end.
We call $\Sigma$ with its negative and two positive cylindrical ends above together with the $1$-form $\beta$
the {\it pair of pants}. 

Let $H^\# = (H^j)_{j \in \{0,1,2\}}$ where $H^j \in \ccH^\T(\check{C})$ for $j=0,1,2$ and let $H \in \ccH^\T(H^\#,\check{C})$.
%
Similarly let $J^\# = (J^i)_{i \in \{0,1,2\}}$  where $J^j \in \ccJ^\T(\check{C})$ where $j=0,1,2$, define and let $J \in \ccJ(J^\#,\check{C})$.
Let $\gamma_j$ be a capped $1$-periodic orbit of $\kappa_j H_j$ for $j=0,1,2$ and define
$\gamma^\# := (\gamma_j)_{j\in \{0,1,2\}}$.
An {\it $(H,J)$-pair of pants connecting $\gamma^\#$}
is an $(H,J)$-Floer trajectory connecting the capped $1$-periodic orbits $\gamma^\#$.
\end{example}

%
%

\subsection{Directed and Inverse Systems}

In this subsection we will
give some definitions
concerning inverse/direct limits,
which will be used later to define
symplectic cohomology.
A small part of this section will also be used in the next section
to define Novikov rings.
Many of the main ideas come from
\cite{shapetheorybook1982} and
\cite{shapetheorybook2013}.
We will also introduce a category of ``double systems'' which, alternatively, is a category of appropriate ``ind-pro'' objects (I.e. direct limits of inverse limits of modules).
Such categories have appeared in
\cite[Section A.3]{beilinsonglueperverse}
and \cite{Kato:existencelocal}
for example.

\begin{defn} \label{definition directed set}
If $(I,\leq_I)$, $(K, \leq_K)$
are sets together relations $\leq_I$ and $\leq_K$ then we define
$(I \times K,\leq_{I \times K})$
to be the product $I \times K$
together with the relation $\leq_{I \times K}$
satisfying $(i,k) \leq_{I \times K} (i',k')$
if $i \leq_I i'$ and $k \leq_K k'$.
Also we define $(I^\op,\leq^\op)$
to be the set $I^\op := I$
together with the relation $\leq_I^\op$
satisfying $i \leq_I^\op i'$ if $i' \leq_I i$ for all $i,i' \in I^\op$.

A {\it directed set} $(I,\leq_I)$
is a set $I$ together with a reflexive
transitive binary relation
$\leq_I$ so that for all $i_0,i_1 \in I$,
there exists $i_2 \in I$ satisfying
$i_0,i_1 \leq_I i_2$.
An {\it inverse directed set}
$(K,\leq_K)$ is a set $K$ together with
a relation $\leq_K$ so that $(K^\op,\leq_K^\op)$ is a directed set.

If $(I,\leq_I)$ is a directed set,
then a subset $\check{I} \subset I$ is {\it cofinal}
if for each $a \in I$,
there exists $\check{a} \in \check{I}$
satisfying $a \leq_I \check{a}$.
A sequence $(a_k)_{k \in \N}$ of elements in $I$ is {\it cofinal}
if the set $\{a_k \ : \ k \in \N \} \subset I$
is cofinal.
A {\it directed subset} of $I$ is a subset
$\check{I} \subset I$ where the induced ordering
makes $\check{I}$ into a directed set.
A subset $\check{K} \subset K$ is {\it cofinal} inside an inverse directed set $(K,\leq_K)$
if $\check{K}^\op \subset K^\op$ is cofinal in $(K^\op,\leq^\op_K)$.
A {\it double set} is a pair
$(I \times K, \leq_{I \times K})$
where $(I \leq_I)$ is a directed set
and $(K, \leq_K)$ is an inverse directed set.
\end{defn}

\begin{remark}
We can think of a set with a reflexive transitive binary relation $(I,\leq)$
as a category with objects $I$
and with a unique morphism denoted by $i \to j$ for each $i \leq j$ and no other morphisms.
Since there is at most one morphism between any two objects, a functor
$F : (I_0,\leq) \lra{} (I_1,\leq)$
is characterized by the corresponding map
$F : I_0 \lra{} I_1$
on objects.
From now on, we will not distinguish between such functors and maps.

For ease of notation,
we will sometimes just write $I$ for a set with relation $(I,\leq)$ if it is clear what the relation is.
In particular, we will write
$I \times K$
instead of $(I \times K,\leq_{I \times K})$.
\end{remark}

\begin{defn} \label{definition double system}
Let $R$ be a commutative ring and let $\mod{R}$ be the category of $\Z$-graded $R$-modules. 
We write $Ob(\mod{R})$ and $Mor({\mod{R}})$
to be the class of objects and morphisms of $\mod{R}$.

A {\it directed system} is a functor
$D : I \lra{} \mod{R}$ where $(I,\leq_I)$
is a directed set.
An {\it inverse system} is a functor
$V : K \lra{} \mod{R}$ where $(K,\leq_K)$
is an inverse directed system.
A {\it double system of graded $R$-modules}
(or {\it double system})
is a functor
$W : I \times K \lra{} \mod{R}$
where $(I \times K, \leq_{I \times K})$ is a double set.
The morphisms $W((i,k)\to (i',k')) \in Mor(\mod{R})$, $(i,k) \leq_{I \times K} (i',k')$ are called
{\it double system morphisms} of $W$.
\end{defn}

\begin{remark}
We will sometimes just write down a
double system $W : I \times K \lra{} \mod{R}$ 
as a collection
$(W(i,k))_{(i,k) \in I \times K}$ of $R$-modules if it is clear what the
morphisms $W((i,k) \to (i',k'))$ are.
We will do the same for directed and inverse systems.
\end{remark}

\begin{remark}
A directed system $D : I \lra{} \mod{R}$
is equivalent to a double system
$D : I = I \times \{\star\} \lra{} \mod{R}$
where $\{\star\}$ is the single element (inverse) directed set.
Similarly an inverse system is a double system
$V : K = \{\star\} \times K \lra{} \mod{R}$.
We will call such double systems
the {\it double systems associated to $D$ (resp. $V$)}.
\end{remark}

\begin{example} \label{example trivial double system}
The {\it trivial double system}
is the double system
given by the functor
$\iota_R : \{\star\} \times \{\star\} \lra{} \mod{R}$
sending $(\star,\star)$
to $R$
where
$\{\star\}$ is the directed/inverse
system consisting of one point.
Trivial directed/inverse systems are defined in a similar way.
\end{example}

\begin{defn}
For any $\Z$ graded $R$-module $N$,
let $(N)_p$ be the degree $p$ part of $N$ for each $p \in \Z$.
If $W : I \times K \lra{} \mod{R}$
is a double system then we define
the {\it double system shifted by $m \in \Z$}
to be the unique double system
$W[m] : I \times K \lra{} \mod{R}$
satisfying
$(W[m](i,k))_p = (W(i,k))_{m+p}$
for all $p \in \Z$
with the natural induced morphisms.
\end{defn}

\begin{defn} \label{definition direct limit}
The {\it direct limit} of a directed system
$D : I \lra{} \mod{R}$
is defined to be the graded $R$-module
$$\varinjlim_{i \in I} D(i) := \sqcup_{i \in I}D(i) / \sim, \quad \ x_i \sim x_j \ \text{iff} \ \exists \ k \ \text{s.t.} \ i,j \leq k, \ \textnormal{and} \ f_{ik}(x_i) = f_{jk}(x_j)$$
where $f_{ij} := D(i \to j)$ for all $i \leq j$.
We will sometimes write
$\varinjlim_i D(i)$ for such a direct limit.
\end{defn}

\begin{defn} \label{definition inverse limit}
The {\it inverse limit} of an inverse system
$V : K \lra{} \mod{R}$
is defined to be the graded $R$-module
$\varprojlim V$ where
$$(\varprojlim V)_p := \left\{ (x_k)_{k \in K} \in  \prod_{k \in K}(V(k))_p \ : \
\forall \ k,\check{k} \in K
 \ \text{s.t.} \ k \leq_K \check{k}, \
  f_{k\check{k}}(x_k) = x_{\check{k}}
\right\}$$
for each $p \in \Z$
where $f_{k\check{k}} := V(k \to \check{k})$ for all $k \leq \check{k}$.
We will sometimes write
$\varprojlim_j V(j)$ for such an inverse limit.
\end{defn}

We now wish to describe an appropriate
category of directed/inverse/double systems.
This category should have the property that
$\varinjlim$ and $\varprojlim$ are functors
and so that
certain ``obvious'' endomorphisms of  directed/inverse/double systems inducing isomorphisms
on $\varinjlim$ and $\varprojlim$
are in fact isomorphisms in this category.
Another important property of this
category is that we require a
``derived'' version
of $\varprojlim$ to exist.

\begin{defn} \label{definition double morphism composition}
Let
$W : I \times K \lra{} \mod{R}$
be a double system and let
$\phi :  W(i,k) \lra{} A$,
$\phi' : A' \lra{} W(i',k')$
be morphisms in $\mod{R}$
for some $i,i' \in I$, $k,k' \in K$
satisfying $(i',k') \leq_{I \times K} (i,k)$.
Then we define the {\it $W$-composition} 
$\phi \circ_W \phi' : A' \lra{} A$
of $\phi'$ and $\phi$
to be the composition
$\phi \circ f_{i'k'ik} \circ \phi'$
where
$f_{i'k'ik} := W((i',k') \to (i,k))$.
\end{defn}

\begin{defn} \label{definition double system category}
Let $W_j : I_j \times K_j \lra{} \mod{R}$, $j=0,1$
be double systems.
Then we can define a new double system
${\mathcal Mor}(W_0,W_1) : (I_1 \times K_0^\op) \times (I_0^\op \times K_1) \lra{} \mod{R}$
where the object
$((i_1,k_0),(i_0,k_1))$ is sent to
$Hom_{\mod{R}}(W_0(i_0,k_0),W_1(i_1,k_1))$
and a morphism
$((i_1,k_0),(i_0,k_1)) \to ((i'_1,k'_0),(i'_0,k'_1))$
is sent to the morphism
$$
Hom_{\mod{R}}(W_0(i_0,k_0),W_1(i_1,k_1)) \lra{} Hom_{\mod{R}}(W_0(i'_0,k'_0),W_1(i'_1,k'_1)) \\
$$
$$
\Phi \lra{} f^1_{i_1k_1i'_1k'_1} \circ \Phi \circ f^0_{i'_0k'_0i_0k_0}
$$
where $f^j_{iki'k'} := W_j((i,k) \to (i',k'))$ for all $(i,k) \leq_{I_j \times K_j} (i',k')$, $j=0,1$.
We define
\begin{equation} \label{equation for sys morphism}
Mor(W_0,W_1) := \varprojlim_{i_0} \varinjlim_{i_1} \varprojlim_{k_1} \varinjlim_{k_0} {\mathcal Mor}(W_0,W_1)((i_1,k_0),(i_0,k_1)).
\end{equation}

We define the {\it category of double systems}
$\sys{R}$ to be the category
whose objects are double systems,
whose morphisms between objects $W_0$ and $W_1$ are elements of $Mor(W_0,W_1)$
and where composition is induced by composition of $R$-module morphisms
in the following way.
If $W_j : I_j \times K_j \lra{} \mod{R}$, $j=0,1,2$ are double
systems then since inverse and direct limits commute with finite products, we can define the composition maps as follows:
$$
Mor(W_1,W_2) \times Mor(W_0,W_1)
=$$
$$
\varprojlim_{i_0} \varprojlim_{i'_1}
\varinjlim_{i_1}  \varinjlim_{i_2}
\varprojlim_{k_1}\varprojlim_{k_2} 
\varinjlim_{k_0} \varinjlim_{k'_1}
Hom(W_1(i'_1,k'_1),W_2(i_2,k_2)) \times
Hom(W_0(i_0,k_0),W_1(i_1,k_1)) 
$$
$$
=
\varprojlim_{(i_0,i'_1)}
\varinjlim_{(i_1,i_2)}
\varprojlim_{(k_1,k_2)} 
\varinjlim_{(k_0,k'_1)}
Hom(W_1(i'_1,k'_1),W_2(i_2,k_2)) \times
Hom(W_0(i_0,k_0),W_1(i_1,k_1)) 
$$
$$
=
\varprojlim_{(i_0,i'_1)}
\varinjlim_{\substack{(i_1,i_2) \\ i'_1 \leq_I i_1}}
\varprojlim_{(k_1,k_2)} 
\varinjlim_{\substack{(k_0,k'_1) \\ k'_1 \leq_K k_1}}
Hom(W_1(i'_1,k'_1),W_2(i_2,k_2)) \times
Hom(W_0(i_0,k_0),W_1(i_1,k_1)) 
$$
$$
\lra{\circ_{W_1}}
\varprojlim_{(i_0,i'_1)}
\varinjlim_{\substack{(i_1,i_2) \\ i'_1 \leq_I i_1}}
\varprojlim_{(k_1,k_2)} 
\varinjlim_{\substack{(k_0,k'_1) \\ k'_1 \leq_K k_1}}
Hom(W_0((i_0,k_0),(i_2,k_2)))
= Mor(W_0,W_2)
$$
where $Hom := Hom_{\mod{R}}$.
The {\it category $\Ind{R}$ of directed systems} is the full subcategory of $\sys{R}$
whose objects are directed systems.
Similarly the {\it category $\Pro{R}$
of inverse systems}
is the full subcategory whose objects are inverse systems.
\end{defn}

\begin{example} \label{example morphisms}
	A key example of a morphism
	$\phi \in Mor(W_0,W_1)$ of double systems
	$W_j : I_j \times K_j \lra{} \mod{R}$, $j=0,1$
	is a natural transformation $\phi$
	from $W_0 \circ (\text{id}_{I_0} \times F)$ to
	$W_1 \circ (G \times \text{id}_{K_1})$
	where $F : K_1 \lra{} K_0$
	and $G : I_0 \lra{} I_1$
	are functors.

	If $I_0 = I_1$ and $K_0 = K_1$
	and the natural transformation maps
	are double system maps,
	then such a morphism is equal to the identity map in $\sys{R}$. 
	We will call such a natural transformation a
	{\it standard endomorphism}.

	Suppose $\phi' \in Mor(W_1,W_2)$ is another morphism
	where $W_2 : I_2 \times K_2 \lra{} \mod{R}$
	is a double system and where it is also given by a natural transformation
	$\phi' : W_1 \circ (\text{id}_{I_1} \times F')$ to
	$W_2 \circ (G' \times \text{id}_{K_2})$
	where $F' : K_2 \lra{} K_1$
	and $G : I_1 \lra{} I_2$
	are functors.
	Then the composition $\phi' \circ \phi \in Mor(W_0,W_2)$ can be represented by the natural transformation
	$$(\phi'  \cdot (\text{id}_{G \times \text{id}_{K_2}})) \circ (\phi \cdot (\text{id}_{\text{id}_{I_0} \times F'}))  : W_0 \circ (\text{id}_{I_0} \times F') \lra{} W_2 \circ (G \times \text{id}_{K_2})$$
	where $\circ$ denotes vertical composition of natural transformations and
	$\cdot$ denotes horizontal composition
	and where $\text{id}_Q$ means the identity natural transformation from a functor $Q$ to itself.
	Here we have also used the identity
	$(G \times \textnormal{id}_{K_1}) \circ (\textnormal{id}_{I_0} \times F') = (\textnormal{id}_{I_1} \times F') \circ (G \times \textnormal{id}_{K_2})$.
\end{example}

\begin{defn} \label{definition inclusion subsystem}
A {\it cofinal subsystem} of a double system
$W : I \times K \lra{} \mod{R}$
is the restriction $\check{W} := W|_{\check{I} \times \check{K}}$
of $W$ to
the subcategory $\check{I} \times \check{K} \subset I \times K$
where $\check{I} \subset I$, $\check{K}^\op \subset K^\op$
are cofinal subsets.
The {\it inclusion morphism} is the morphism
$\iota_{\check{W},W} : \check{W} \lra{} W$
in $\sys{R}$ given by the natural transformation between
$\check{W} \circ (\text{id}_{\check{I}} \times F)$ to
$W \circ (G \times \text{id}_K)$
constructed using double system maps
where $F : K \lra{} \check{K}$
satisfies $k \leq_K F(k)$ for all $k \in K$
and where $G : \check{I} \lra{} I$ is the natural inclusion map.
Here $F$ is constructed using the axiom of choice.
\end{defn}

\begin{remark} \label{remark inclusion invariance}
The inclusion map does not depend on the choice of functor
$F$. This due to the fact that if we have a morphism $\iota'_{\check{W},W} : \check{W} \lra{} W$
constructed in the same way,
but using a different choice of functor $F$
then there are standard endomorphisms $E,E' : W \lra{} W$ so that
$E \circ \iota_{\check{W},W} = E' \circ \iota'_{\check{W},W}$.
Since standard endomorphisms represent the identity map
in $\sys{R}$ we get that
$\iota_{\check{W},W} = \iota'_{\check{W},W}$.
\end{remark}

\begin{lemma} \label{lemma isomorphism condition}
The inclusion map $\iota_{\check{W},W}$
from Definition \ref{definition inclusion subsystem}
is an isomorphism.
\end{lemma}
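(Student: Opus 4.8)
The plan is to construct an explicit inverse to $\iota_{\check W,W}$ in $\sys R$ by "projecting back" into the cofinal subsystem, and to check that the two composites are standard endomorphisms (hence the identity in $\sys R$). Concretely: write $W : I\times K \lra{} \mod R$ with cofinal subsets $\check I\subset I$ and $\check K^\op\subset K^\op$, and $\check W = W|_{\check I\times\check K}$. The inclusion $\iota_{\check W,W}$ is built from a functor $F:K\lra{}\check K$ with $k\leq_K F(k)$ and from the inclusion $G:\check I\lra{} I$. For the inverse, I would choose, again by the axiom of choice, a functor $\check F : I \lra{} \check I$ with $a\leq_I \check F(a)$ for all $a\in I$ (possible since $\check I$ is cofinal in the directed set $I$), and a functor $\check G : \check K \lra{} K$ which is simply the inclusion. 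The candidate inverse $\rho : W\lra{}\check W$ is then the morphism in $\sys R$ represented by the natural transformation built out of double system maps $W(a,k)\lra{} W(\check F(a), k)=\check W(\check F(a),k)$, exactly in the same style as Definition \ref{definition inclusion subsystem} but with the roles of $I$ and $\check I$ reversed.

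Next I would compute the two composites using the horizontal/vertical composition formula recorded in Example \ref{example morphisms}. For $\rho\circ\iota_{\check W,W} : \check W\lra{}\check W$: on an object $(\check a,\check k)\in\check I\times\check K$ this is the double system map $\check W(\check a,\check k)\lra{}\check W(\check F(\check a),\check k)$ (the $K$–side maps are all identities since $\check G$ and the $F$ appearing there compose to something compatible with double system maps into the same indices after passing to a common refinement). Since $\check a\leq_I \check F(\check a)$ inside $\check I$ and everything in sight is a double system map of $\check W$, this composite is a standard endomorphism of $\check W$ in the sense of Example \ref{example morphisms}, hence equals $\id_{\check W}$ in $\sys R$. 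Symmetrically, $\iota_{\check W,W}\circ\rho : W\lra{} W$ is, on $(a,k)$, a composite of double system maps of $W$ through $W(\check F(a),F(k))$ (or through whichever common refinement the composition formula produces), which is again a standard endomorphism of $W$, hence the identity. One subtlety to be careful about: the composition in $\sys R$ is defined via the iterated $\varprojlim\varinjlim\varprojlim\varinjlim$ in Equation (\ref{equation for sys morphism}), so "composite of double system maps" must be checked at the level of the representing natural transformations, and then Example \ref{example morphisms}'s statement that standard endomorphisms represent the identity finishes the job; Remark \ref{remark inclusion invariance} already handles the fact that none of these morphisms depend on the auxiliary choices of $F$, $\check F$.

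The main obstacle, and the only place requiring genuine care rather than bookkeeping, is verifying that the composites really are \emph{standard} endomorphisms — i.e. that after the various index substitutions $a\mapsto\check F(a)$, $k\mapsto F(k)$ the natural transformation maps are literally double system maps of $W$ (resp. $\check W$) \emph{with the same source and target index sets}, as required by the definition of standard endomorphism. This is where one must use that $\check I\subset I$ is a full subcategory and that double system maps compose to double system maps, together with the identity $(G\times\id)\circ(\id\times F') = (\id\times F')\circ(G\times\id)$ used in Example \ref{example morphisms}. Once that identification is made, Lemma \ref{lemma isomorphism condition} follows immediately because an isomorphism is precisely a morphism with a two-sided inverse, and we have exhibited $\rho$ as such.
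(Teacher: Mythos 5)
Your proposal is correct and is essentially the paper's own argument: the paper defines the inverse $\eta$ via exactly the same data (a choice functor $G':I\lra{}\check I$ with $i\leq_I G'(i)$ and the inclusion $F':\check K\lra{} K$) and concludes by observing that both composites, computed as in Example \ref{example morphisms}, are standard endomorphisms and hence identities in $\sys{R}$. Your extra care about checking that the composites are genuinely standard endomorphisms is a reasonable elaboration of the same step, not a different route.
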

\begin{proof}[Proof of Lemma \ref{lemma isomorphism condition}.]
The inverse $\eta$ of $\iota_{\check{W},W}$ is given by the natural transformation
between
$W \circ (\text{id}_I \times F')$ to
$\check{W} \circ (G' \times \text{id}_K)$ built from double system maps
where
$F' : \check{K} \lra{} K$ is the natural inclusion map
and where
$G' : I \lra{} \check{I}$
satisfies $i \leq_I F(i)$ for all $i \in I$.
The compositions
$\eta \circ \iota_{\check{W},W}$,
$\iota_{\check{W},W} \circ \eta$ as described in Example
\ref{example morphisms} are standard endomorphisms and hence
$\iota_{\check{W},W}$ is an isomorphism.
\end{proof}

\begin{defn} \label{definition inverse direct limit is a functor}
We define $\varinjlim \varprojlim$
to be the functor
$$
\varinjlim \varprojlim : \sys{R} \lra{} \mod{R}, \quad
\varinjlim \varprojlim := \oplus_{m \in \Z} Hom_{\sys{R}}(\iota_R[-m],-)
$$
where $\iota_R$ is the trivial
double system.
We define
$\varinjlim : \Ind{R} \lra{} \mod{R}$
(resp. $\varprojlim : \Pro{R} \lra{} \mod{R}$)
to be the restriction of this functor to the subcategory
$\Ind{R}$ (resp. $\Pro{R}$).
\end{defn}

\begin{remark} \label{remark inverse direct limit is a functor}
The functor $\varinjlim \varprojlim$
sends a double system
$(W(i,k))_{(i,k) \in I \times K}$
to the limit
$\varinjlim_i \varprojlim_k W(i,k)$.
Similarly the functors $\varinjlim$ and $\varprojlim$
above
coincide with $\varinjlim$ and $\varprojlim$
from Definitions \ref{definition direct limit} and \ref{definition inverse limit}.
\end{remark}

\begin{defn} \label{definition of tensor product of double systems}
	Let $W_j : I_j \times K_j \lra{} \mod{R}$, $j=0,1$ be a double systems.
	We define
	$W_0 \times W_1$
	to be the double system
	$(W_0(i,k) \times W_1(\check{i},\check{k}))_{(i,\check{i},k,\check{k}) \in {(I_0 \times I_1) \times (K_0 \times K_1)}}$
	with the natural double system maps induced from the double system maps of $W_j$, $j=0,1$.
	We define
	$W_0 \otimes W_1$ 
	to be the double system
	$(W_0(i,k) \otimes_R W_1(\check{i},\check{k}))_{(i,\check{i},k,\check{k}) \in {(I_0 \times I_1) \times (K_0 \times K_1)}}$
	with the natural double system maps induced from the double system maps of $W_j$, $j=0,1$.
	
	Let $\check{W}_j : \check{I}_j \times \check{K}_j \lra{} \mod{R}$, $j=0,1$ be two additional double systems and let $T_{W_0,W_1,\check{W}_0,\check{W}_1}$ be the natural composition
	$$
	T_{W_0,W_1,\check{W}_0,\check{W}_1} : Mor_{\sys{R}}(W_0,\check{W}_0) \otimes_R Mor_{\sys{R}}(W_1,\check{W}_1) =
	$$
	$$
	(\varprojlim_{i_0} \varinjlim_{\check{i}_0} \varprojlim_{\check{k}_0} \varinjlim_{k_0} 
	Hom(W_0(i_0,k_0),\check{W}_0(\check{i}_0,\check{k}_0))) \otimes_R
	(\varprojlim_{i_1} \varinjlim_{\check{i}_1} \varprojlim_{\check{k}_1} \varinjlim_{k_1} Hom(W_1(i_1,k_1),\check{W}_1(\check{i}_1,\check{k}_1)))
$$
$$
\to
\varprojlim_{i_0}
\varprojlim_{i_1}
\varinjlim_{\check{i}_0}
\varinjlim_{\check{i}_1}
\varprojlim_{\check{k}_0}
\varprojlim_{\check{k}_1}
\varinjlim_{k_0} 
\varinjlim_{k_1} 
(Hom(W_0(i_0,k_0),\check{W}_0(\check{i}_0,\check{k}_0)))
\otimes_R
(Hom(W_1(i_1,k_1),\check{W}_1(\check{i}_1,\check{k}_1))) 
$$
$$
\to
\varprojlim_{(i_0,i_1)}
\varinjlim_{(\check{i}_0,\check{i}_1)}
\varprojlim_{(\check{k}_0,\check{k}_1)}
\varinjlim_{(k_0,k_1)} 
(Hom(W_0(i_0,k_0),\check{W}_0(\check{i}_0,\check{k}_0)))
\otimes_R
(Hom(W_1(i_1,k_1),\check{W}_1(\check{i}_1,\check{k}_1))) 
$$
$$
\to
\varprojlim_{(i_0,i_1)}
\varinjlim_{(\check{i}_0,\check{i}_1)}
\varprojlim_{(\check{k}_0,\check{k}_1)}
\varinjlim_{(k_0,k_1)} 
Hom\left(W_0(i_0,k_0) \otimes_R W_1(i_1,k_1),\check{W}_0(\check{i}_0,\check{k}_0) \otimes_R \check{W}_1(\check{i}_1,\check{k}_1)\right)
$$
$$
= Mor(W_0 \otimes W_1,\check{W}_0 \otimes \check{W}_1)
$$
where $Hom = Hom_{\mod{R}}$.
For any two morphisms
$\Phi_j : W_j \lra{} \check{W}_j$, $j=0,1$
in $\sys{R}$, we define
$\Phi_0 \otimes \Phi_1 := T_{W_0,W_1,\check{W}_0,\check{W}_1}(\Phi_0 \otimes_R \Phi_1) \in Mor_{\sys{R}}(W_0 \otimes W_1,\check{W}_0 \otimes \check{W}_1)$.

	Also, if $W : I \times K \lra{} \mod{R}$ is a double system,
	we define $\tau_W : W \otimes W \lra{} W \otimes W$ to be the morphism sending
	$x \otimes y$ to $(-1)^{pq} y \otimes x$
	for all $x \in (W(i,k))_p$, $y \in (W(i',k'))_q$,
	$(i,k)$, $(i',k')$ in $I \times K$ and $p,q \in \Z$.
\end{defn}

These operations make $\sys{R}$
into a symmetric monoidal category together with the identity object $\iota_R$ due to the fact that $\mod{R}$ is a symmetric monoidal category and the fact that natural transformations $\Phi$ between
double systems $W_j : I \times K \lra{} \mod{R}$, $j = 0,1$ where the morphisms of $\Phi$ are isomorphisms induce isomorphisms in $\sys{R}$.

\begin{defn}  \label{definition product on a double system}
	A {\it product} on $W : I \times K \lra{} \mod{R}$
	is a morphism
	$\mu : W \otimes W \lra{} W$ so that
	$\mu \circ (\id_W \otimes \mu) = \mu \circ (\mu \otimes \id_W)$ using the natural identification $(W \otimes W) \otimes W \cong W \otimes (W \otimes W)$.
	Such a product is {\it graded commutative}
	if $\mu \circ \tau = \mu$.
	The product $\mu$ is {\it unitary}
	if there exists a morphism
	$\iota : \iota_R \lra{} W$ where
	$\iota_R$ is the trivial double system
	satisfying
	$\mu \circ (\id_W \otimes \iota) = \mu \circ (\iota \circ \id_W) = \id_W$
	where we identify $W$ with $W \otimes \iota_R$
	and $\iota_R \otimes W$ in the natural way.
	A {\it morphism} of double systems $W_0$, $W_1$
	with products $\mu_j : W_j \otimes W_j \lra{} W_j$, $j=0,1$ is a morphism $\Phi : W_0 \lra{} W_1$
satisfying $\Phi \circ \mu_0 = \mu_1 \circ (\Phi \otimes \Phi)$.
	A (graded commutative) 
	(unital)  
	{\it product} on a directed or inverse system is a 
	(graded commutative) 
	(unital)
	product on the corresponding double system.
\end{defn}

\begin{remark} \label{remark products on direct and inverse limits}
	From now on, all products on double systems (resp. directed/inverse systems) will be
	unital
	 graded commutative  products. Hence from now on,
	we will just call them {\it products}.
	We will also assume all morphisms between such double systems with products preserve the unit.
	If $W$ is a double system with product $\mu : W \otimes W \lra{} W$, then we get a product
$$
(\varinjlim \varprojlim W) \otimes_R (\varinjlim  \varprojlim W) \lra{\oplus_{m,m'}  T_{\iota_R[-m],\iota_R[-m'
],W,W}}
(\varinjlim \varprojlim (W \otimes W)) \lra{\varinjlim  \varprojlim \mu} \varinjlim  \varprojlim W
$$
on
$\varinjlim \varprojlim W$ making it into a 
unital
 graded commutative $R$-algebra
where $T_{\iota_R[-m],\iota_R[-.m'
],W,W}$ is given in Definition \ref{definition of tensor product of double systems} and where we use the natural identification
$\iota_R \cong \iota_R \otimes \iota_R$.
Similarly a product on a directed/inverse system
gives us an induced product on its direct/inverse limit.	
\end{remark}

We will also need to show
that ``derived'' versions
of the functor
$\varinjlim \varprojlim$ exist.
We need some preliminary definitions and lemmas before we define such a functor.

\begin{defn} \label{definition extending functor}
Let $F : \Pro{R} \lra{} \mod{R}$
be a functor so that the corresponding maps
$F : \text{Mor}_{\Pro{R}}(P_0,P_1) \lra{} \text{Mor}_{\mod{R}}(F(P_0),F(P_1))$ are $R$-module maps for each pair of objects $P_0,P_1$ in $\Pro{(R)}$.
We define the functor
$\varinjlim F : \sys{R} \lra{} \mod{R}$
as follows.
If $W : I \times K \lra{} \mod{R}$
is a double system then
$\varinjlim F(W) := \varinjlim_i F(W|_i)$ where $W|_i$ is the inverse system
$(W_{i,k})_{k \in K}$.
Also if $W_j : I_j \times K_j \lra{} \mod{R}$, $j=0,1$ are double systems,
the corresponding functor on morphisms
is the natural composition:
$$\varinjlim F : Mor_{\sys{R}}(W_0,W_1)
= \varprojlim_{i_0} \varinjlim_{i_1} Mor_{\Pro{R}}(W_0|_{i_0},W_1|_{i_1})
 \lra{\varprojlim_{i_0} \varinjlim_{i_1} F}$$
$$
\varprojlim_{i_0} \varinjlim_{i_1} Mor_{\mod{R}}(F(W_0|_{i_0}), F(W_1|_{i_1}))
= Mor_{\Ind{R}}((F(W_0|_i))_{i \in I_0},(F(W_1|_i))_{i \in I_1})
$$
$$\lra{\varinjlim}
Mor_{\mod{R}}(\varinjlim_{i_0} F(W_0|_{i_0}), \varinjlim_{i_1} F(W_1|_{i_1}))
.$$
\end{defn}

\begin{defn} \label{definition catgeory of double systems with fixed I K}
Let $(K,\leq_K)$
be an inverse directed set.
Define
$\mod{R}^K$ to be the category
whose objects are inverse systems
$V : K \lra{} \mod{R}$
and whose morphisms
are natural transformations between such objects.
We define
$\alpha_K : \mod{R}^K \lra{} \sys{R}$
to be the functor sending objects $V$ to $V$ and sending morphisms
to the induced morphisms in $\sys{R}$.
\end{defn}

Since the category $\mod{R}^K$ has enough injectives
by \cite[Theorem 11.18]{shapetheorybook2013} we have the following definition below.
Technically \cite[Theorem 11.18]{shapetheorybook2013}
proves this for ungraded modules, but
the graded case follows immediately
since a graded module is a direct sum of ungraded ones.
This will also be true for other
theorems cited in \cite{shapetheorybook2013}.

\begin{defn} \label{definition derived lim sfunctor}
Let $(K,\leq_K)$
be an inverse directed set.
For each $k \in \N$, define
$\varprojlim^k|_K : \mod{R}^K \lra{} \mod{R}$
to be the $k$th right derived functor
of $\varprojlim \circ \alpha_K$.
\end{defn}

The following lemma follows immediately from
Theorem 15.5 and Remark 15.6 in \cite{shapetheorybook2013}.

\begin{lemma} \label{lemma derived extension to insys}
There is a natural functor $\varprojlim^k : \Pro{R} \lra{} \mod{R}$
satisfying $\varprojlim^k \circ \alpha_K = \varprojlim^k|_K$
for all inverse directed sets $(K,\leq_K)$.
\end{lemma}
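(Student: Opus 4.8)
The plan is to extract the functor $\varprojlim^k : \Pro{R} \lra{} \mod{R}$ from the family of derived functors $\varprojlim^k|_K : \mod{R}^K \lra{} \mod{R}$ indexed by inverse directed sets $K$, exactly as one extracts $\varprojlim$ itself from the functors $\varprojlim \circ \alpha_K$. The key point, supplied by \cite[Theorem 15.5 and Remark 15.6]{shapetheorybook2013}, is that the assignment $K \mapsto \mod{R}^K$ together with the restriction functors along cofinal inclusions is ``stable'' in an appropriate sense, so that the derived functors on the individual categories $\mod{R}^K$ are compatible with change of index set and therefore descend to the localized category $\Pro{R}$. Concretely, first I would recall that an object of $\Pro{R}$ is an inverse system $V : K \lra{} \mod{R}$ for some $K$, and that a morphism $V \lra{} V'$ in $\Pro{R}$ between systems over $K$ and $K'$ is an element of $\varprojlim_{k'} \varinjlim_{k} Hom(\cdots)$; in particular it can always be represented, after passing to a cofinal subset, by an honest natural transformation of inverse systems over a common index set. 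So to define $\varprojlim^k$ on objects, set $\varprojlim^k(V) := \varprojlim^k|_K(V)$ for $V$ an inverse system over $K$.

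The substantive step is well-definedness and functoriality: I would show that $\varprojlim^k|_K$ is insensitive to the choice of $K$ in the sense required, i.e.\ that for a cofinal inclusion $\check{K} \subset K$ the restriction functor $\mod{R}^K \lra{} \mod{R}^{\check K}$ intertwines $\varprojlim^k|_K$ with $\varprojlim^k|_{\check K}$ up to canonical natural isomorphism, and that a standard endomorphism (in the sense of Example \ref{example morphisms}) induces the identity on $\varprojlim^k$. The first assertion is the content of the cited Theorem 15.5: $\varprojlim$ and its derived functors are computed by cofinal subsystems, because a cofinal inclusion $\alpha_{\check K} \hookrightarrow \alpha_K$ is $\varprojlim$-acyclic and exact, hence induces isomorphisms on all right derived functors. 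The second assertion reduces, via the explicit description of morphisms in $\sys{R}$ as compositions of double-system maps modulo standard endomorphisms, to the fact that $\varprojlim^k|_K$ is a functor on $\mod{R}^K$ and that standard endomorphisms become the identity there after restriction to a common cofinal index set. Combining these, a general morphism in $\Pro{R}$ — represented by a natural transformation of inverse systems over a common index set after cofinal refinement — is sent to the corresponding map on $\varprojlim^k|_K$, and the compatibility of this assignment with composition follows from functoriality of each $\varprojlim^k|_K$ together with the independence of the choice of cofinal refinement (Remark \ref{remark inclusion invariance}).

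Finally, the identity $\varprojlim^k \circ \alpha_K = \varprojlim^k|_K$ holds essentially by construction, since $\alpha_K$ sends an inverse system over $K$ to the same system viewed as an object of $\Pro{R}$, and on that object we defined $\varprojlim^k$ to be $\varprojlim^k|_K$; one checks the same on morphisms using that $\alpha_K$ is full and faithful onto the subcategory of $\Pro{R}$ of systems indexed by $K$ with natural-transformation morphisms, which is precisely where Remark 15.6 is invoked.

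I expect the main obstacle to be the bookkeeping around morphisms: verifying that the localization procedure defining $\sys{R}$ (and hence $\Pro{R}$) is compatible with passing to derived functors, i.e.\ that inverting standard endomorphisms does not destroy functoriality of $\varprojlim^k$. This is exactly what \cite[Theorem 15.5, Remark 15.6]{shapetheorybook2013} is designed to handle, so in practice the proof is a short citation together with the remark that the graded case reduces to the ungraded one degree by degree.
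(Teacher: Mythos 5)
Your proposal is correct and follows essentially the same route as the paper: the paper's entire ``proof'' of this lemma is the citation to Theorem 15.5 and Remark 15.6 of \cite{shapetheorybook2013} (together with the remark that the graded case reduces to the ungraded one), and your write-up simply unpacks what that citation supplies — level representation of pro-morphisms after cofinal reindexing, invariance of $\varprojlim^k|_K$ under cofinal restriction, and triviality of standard endomorphisms. Nothing in your sketch conflicts with the paper's treatment.
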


For our purposes, it does not matter how the functor $\varprojlim^k$ is constructed.
We will only use the fact that it satisfies the property stated in the lemma above.

\begin{defn} \label{definition lim invlim functor}
For each $k \in \N$, define the {\it direct limit of $\varprojlim^k$} to be the functor
$\varinjlim \varprojlim^k : \sys{R} \lra{} \mod{R}$
where $\varinjlim$ is given as in Definition \ref{definition extending functor}
and $\varprojlim^k$ is constructed in Lemma \ref{lemma derived extension to insys}.
\end{defn}

Finally, we need a test telling us when $\varinjlim \varprojlim^1$ vanishes.
\begin{lemma} \label{lemma mittag leffler like test}
Suppose that $W : I \times K \lra{} \mod{R}$ is a double system
where $(I,\leq_I)$ (resp. $(K,\leq_K)$) is a directed (resp. inverse directed) set.
Suppose that there is a
cofinal family $\check{I} \subset I$
and a
countably infinite
cofinal family $\check{K} \subset K$
of $(K,\leq_K)^\op$
with the property that
for all $i \in \check{I}$, $k,k' \in \check{K}$ satisfying $k \leq_K k'$ we have that
$W((i,k) \lra{} (i,k'))$ is surjective.
Then $\varinjlim \varprojlim^1 W = 0$.
\end{lemma}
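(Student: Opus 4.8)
The plan is to reduce the statement to the classical Mittag--Leffler vanishing theorem for inverse systems indexed by $\N$. First I would unwind the construction of $\varinjlim\varprojlim^1$: by Definitions \ref{definition lim invlim functor} and \ref{definition extending functor} one has
$$\varinjlim\varprojlim^1 W = \varinjlim_{i \in I}\varprojlim^1\!\big(W|_i\big),$$
where $W|_i$ denotes the inverse system $(W(i,k))_{k\in K}$ over $K$. Hence it suffices to show $\varprojlim^1(W|_i)=0$ for every $i$ in the cofinal family $\check I$. Indeed, any class in the direct limit is represented (Definition \ref{definition direct limit}) by some $x \in \varprojlim^1(W|_{i_0})$ with $i_0 \in I$; choosing $\check i \in \check I$ with $i_0\leq_I\check i$, the class of $x$ agrees with the class of its image in $\varprojlim^1(W|_{\check i})=0$, so $x$ represents $0$.

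Next I would fix $i\in\check I$ and pass to a countable chain. Since $\check K$ is countably infinite and cofinal in $K$, the set $\check K^{\op}$ is a countably infinite directed set, so it contains a cofinal chain $c_0\leq_K^{\op}c_1\leq_K^{\op}\cdots$: enumerate $\check K^{\op}=\{a_0,a_1,\dots\}$, put $c_0:=a_0$, and let $c_{n+1}$ be an upper bound of $c_n$ and $a_{n+1}$ in $\check K^{\op}$. This chain is cofinal in $\check K^{\op}$, hence in $K^{\op}$, so $\{c_n\}_{n\in\N}$ is a cofinal subset of the inverse directed set $K$ and $\big(W(i,c_n)\big)_{n\in\N}$ is a cofinal subsystem of $W|_i$. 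By Lemma \ref{lemma isomorphism condition} applied to inverse systems (double systems whose directed index is a point), the inclusion $\big(W(i,c_n)\big)_n\hookrightarrow W|_i$ is an isomorphism in $\Pro{R}$; since $\varprojlim^1\colon\Pro{R}\to\mod{R}$ is a functor (Lemma \ref{lemma derived extension to insys}) we conclude $\varprojlim^1(W|_i)\cong\varprojlim^1\!\big((W(i,c_n))_n\big)$.

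Finally, the inverse system $\big(W(i,c_n)\big)_{n\in\N}$ is indexed by $\N$ and its transition maps $W\big((i,c_{n+1})\to(i,c_n)\big)$ are surjective --- this is exactly the hypothesis, applied to $i\in\check I$ and $c_{n+1}\leq_K c_n$ with both $c_{n+1},c_n\in\check K$. By Definition \ref{definition derived lim sfunctor} the functor $\varprojlim^1$ restricted to such a chain is literally the first right derived functor of ordinary $\varprojlim$, so the classical Mittag--Leffler theorem (e.g.\ \cite[\S15]{shapetheorybook2013} or \cite{weibel1995introduction}) gives $\varprojlim^1\big((W(i,c_n))_n\big)=0$. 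Combining the three steps yields $\varprojlim^1(W|_i)=0$ for all $i\in\check I$, and hence $\varinjlim\varprojlim^1 W=0$.

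I expect the only delicate point --- and the closest thing to an obstacle --- to be the two ``insensitivity'' checks needed to move between the abstract categorical framework and the concrete $\N$-indexed setting: that cofinal restriction of an inverse system is an isomorphism in $\Pro{R}$ (so that the derived functor $\varprojlim^1$ does not see it), and that the abstractly defined $\varprojlim^1$ of Lemma \ref{lemma derived extension to insys} agrees with the ordinary $\varprojlim^1$ on a chain of order type $\N^{\op}$. Both are immediate from Lemmas \ref{lemma isomorphism condition} and \ref{lemma derived extension to insys} together with the defining property $\varprojlim^k\circ\alpha_K=\varprojlim^k|_K$, so no genuine difficulty should arise.
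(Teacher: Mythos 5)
Your proof is correct and follows essentially the same route as the paper: reduce to $\varprojlim^1(W|_i)=0$ for $i$ in the cofinal family, pass via Lemma \ref{lemma isomorphism condition} to an $\N^{\op}$-indexed cofinal chain in $\check K$ whose transition maps are surjective by hypothesis, and invoke the classical Mittag--Leffler vanishing (Weibel) together with the invariance of $\varprojlim^1$ under cofinal restriction. The only cosmetic difference is that you package the last step through functoriality of $\varprojlim^1$ on $\Pro{R}$ where the paper cites \cite[Theorem 14.9]{shapetheorybook2013} directly; the content is identical.
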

\begin{proof}[Proof of Lemma \ref{lemma mittag leffler like test}.]
Let $W|_i$ be the inverse system $(W(i,k))_{k \in K}$ for each $i \in I$
and let
$\check{W}|_i$ be the inverse system
$(W(i,k))_{k \in \check{K}}$.
Since $\check{K}$ is countably infinite, we can assume
after passing to a cofinal subset of $\check{K}$ that $(\check{K},\leq_K)$
is equal to the inverse directed set $(\N^\op,\leq^\op)$ by Lemma \ref{lemma isomorphism condition}.
By \cite[Proposition 3.5.7]{weibel1995introduction} combined with
\cite[Lemma 11.49]{shapetheorybook2013}
 we have that
$\varprojlim^1 \check{W}|_i = 0$ for each $i \in \check{I}$.
Hence by \cite[Theorem 14.9]{shapetheorybook2013} we have
that
$\varprojlim^1 W|_i = 0$ for each $i \in \check{I}$.
Hence $\varinjlim \varprojlim^1 W = 0$.
\end{proof}

\subsection{Novikov Rings.}

In this section we give a definition
of a Novikov ring
(which we will define using
inverse and direct limits).
Novikov rings are appropriate coefficient
rings for our Hamiltonian Floer cohomology groups.
Throughout this subsection
we will fix a (possibly empty)
contact cylinder $\check{C} = [1-\epsilon,1+\epsilon] \times C$
and we will let $D \subset M$
be its associated Liouville domain.

\begin{defn} \label{definition convex cone}

	Let $W$ be a finite dimensional real vector space.
	A {\it convex cone}
	is a subset $Q \subset W$
	so that for all $x,y \in Q$
	and all positive real numbers $\alpha,\beta>0$,
	we have that $\alpha x + \beta y \in Q$.
	Such a cone is called {\it salient}
	if, for each $x \in Q - 0$, $-x \notin Q$.
	Now suppose $(A,\cdot)$ is a finitely generated abelian group
	and let $Q$ be a cone in
	$(A \otimes_\Z \R)^*$.
	Define $\preceq_Q$
	to be the binary relation on $A$
	where $x \preceq_Q y$
	if and only if
	$f((y\cdot x^{-1}) \otimes 1) \geq 0$ for all $f \in Q$.
\end{defn}

\begin{remark}
If $Q$ is a closed salient cone, then $(A,\preceq_Q)$
and $(A^\op,\preceq_Q^\op)$ are directed sets.
\end{remark}

\begin{defn} \label{definition novikov ring associated to salient cone}
	Let $(A,\cdot)$ be a finitely generated abelian group and let $Q \subset (A \otimes_\Z \R)^*$ be a closed salient cone.
	For each $x \in A$,
	let $F^Q_x$
	be the free $\K$-module generated
	by elements of the set
	$S^Q_x := \{a \in A \ : \ x \preceq_Q a \}$.
	Let $(I, \leq_I)$ and be the (inverse) directed set
	$(A^\op, \preceq_Q^\op)$.
	Then $F^Q := (F^Q_{x_-,x_+})_{(x_-,x_+) \in I \times I}$ is a double system
	where $F^Q_{x_-,x_+} := F^Q_{x_-}/(F^Q_{x_-} \cap F^Q_{x_+})$ and where the double system maps are the natural compositions
	$$
	F^Q_{x_-,x_+}\lra{}
	F^Q_{x_-,x'_+} \lra{} F^Q_{x'_-,x'_+}
	$$
	for all $x_\pm, x'_\pm \in A$ satisfying  $x'_\pm \preceq_Q x_\pm$.
	Choose a function $M : A^2 \lra{} A$
	satisfying $M(x,y) \preceq_Q x$
	and $M(x,y) \preceq_Q y$ for all $(x,y) \in A^2$.
	For each $x^0_-,x^1_-,x^2_+ \in A$,
	let
	$$\mu : F^Q_{x^0_-,x^2_+ - x^1_-} \otimes_\K F^Q_{x^1_-,x^2_+-x^0_-} \lra{}
	F^Q_{x^0_- \cdot x^1_-,x^2_+}
	$$
	be the unique $\K$-linear map
	sending $[a_0] \otimes [a_1]$
	to $[a_0 \cdot a_1]$
	for all $a_j \in S^Q_{x^j_-}$, $j=0,1$.
	Then $\mu$ defines for us a product
	$\mu : F^Q \otimes F^Q \lra{} F^Q$ on the double system $F^Q$.
	We define the {\it $(A,Q)$-Novikov ring} to be the ring
	\begin{equation} \label{equation Novikov ring definition}
	\Lambda_\K^{A,Q} := \varinjlim_{x_- \in I} \varprojlim_{x_+ \in I} F^Q_{x_-,x_+}
	\end{equation}
	whose product is induced by $\mu$
	as in Remark \ref{remark products on direct and inverse limits}.
	We also have a subring
	\begin{equation} \label{equation positive Novikov ring}
	\Lambda_\K^{A,Q,+} :=
	\varprojlim_{x_+ \in I} F^Q_{0,x_+} \subset \Lambda_\K^{A,Q}
	\end{equation}
	called the {\it positive $(A,Q)$-Novikov ring}.
\end{defn}

\begin{remark} \label{remark description of Novikov ring}
We can think of $\Lambda^{A,Q}$
as the ring of formal power series
$$
\Lambda_\K^{A,Q} = \left\{\sum_{i \in \N} b_i t^{a_i} \ : \ b_i \in \K, \ a_i \in A \quad \forall  \ i \in \N, \ (a_i)_{i \in \N} \ \text{is cofinal in} \ (A,\preceq_Q)
\right\}.
$$
Intuitively, the terms in this series
must ``tend to infinity'' in the ``cone''
$\{x \in A \ : \ 0 \preceq_Q x \}$.
\end{remark}

\begin{defn} \label{definition convex cone associated to contact cylinder}
Let $\iota_D : H^2(M,D;\R) \lra{} H^2(M;\R)$ be the natural restriction map on cohomology and let $\pi_D : H^2(M,D;\R) \times \R \times \R \lra{} H^2(M,D;\R)$ be the natural projection map.
Let $Q \subset H^2(M,D;\R) \times \R \times \R$
be a cone so that
$\check{Q} := \iota_D(\pi_D(Q))$ is a closed and salient cone.
Using the canonical identification
$$(H_2(M;\Z) \otimes_\Z \R)^* = H^2(M;\R),$$
we can define the {\it $Q$-Novikov ring}
to be the $(H_2(M;\Z), \check{Q})$-Novikov ring
$$\Lambda_\K^Q := \Lambda_\K^{H_2(M;\Z),\check{Q}}$$
and the {\it positive $Q$-Novikov ring}
to be the positive $(H_2(M;\Z),\check{Q})$-Novikov ring
$$\Lambda_\K^{Q,+} := \Lambda_\K^{H_2(M;\Z),\check{Q},+}.$$
\end{defn}

The Novikov rings above
are designed to deal with
multiple action values encoded in
Definition \ref{definition action functionals}.
Note that we could have defined the above Novikov ring $\Lambda_\K^Q$ to be a
$(H_2(M,D;\Z) \times \Z \times \Z,Q)$-Novikov ring associated to $Q$ instead.
Such a definition would have
forced us to generalize the notion of a
capped $1$-periodic orbit so that the
``capping'' also has some
boundary components inside $D$ (one would have to modify the way action is calculated too and restrict the class of Hamiltonians further).
This would
have given us more information,
but would have added an extra layer of
unnecessary complication.
Therefore we have decided
to use this simpler definition.

\begin{example} \label{example Novikov ring associated to some forms}

Let $\widetilde{\omega}_\# = (\widetilde{\omega}_i)_{i \in S}$
be a finite collection of $\check{C}$-compatible $2$-forms with scaling constants
$(\lambda^\pm_i)_{i \in S}$ and let
$\omega_{\check{C}}$ be a $\check{C}$-compatible $2$-form
with scaling constants $0$ and $1$ and which is equal to $\omega$ outside $D \cup ([1,1+\epsilon/2] \times C)$.
Let $Q_{\widetilde{\omega}_\#} \subset H^2(M,D;\R) \times \R \times \R$ be the smallest convex cone containing 
$([\widetilde{\omega}_i - \lambda^-_i \omega + \lambda^-_i \omega_{\check{C}}] ,\lambda^-_i,\lambda^+_i) \in H^2(M,D;\R) \times \R \times \R$ for each $i \in S$.
The {\it $(\widetilde{\omega}_\#)$-Novikov ring} is the Novikov ring:
$$\Lambda_\K^{\widetilde{\omega}_\#} := \Lambda_\K^{Q_{\widetilde{\omega}_\#}}.$$
The two key examples for this paper are when
\begin{itemize}
\item $\widetilde{\omega}_\#$ has one element $\widetilde{\omega}$, giving us a Novikov ring $\Lambda_\K^{\widetilde{\omega}} := \Lambda_\K^{\widetilde{\omega}_\#}$,
which can be thought of as the set of power series
$$
\Lambda_\K^{\widetilde{\omega}} = \left\{\sum_{i \in \N} b_i t^{a_i} \ : \ b_i \in \K, \ a_i \in H_2(M;\Z), \quad \widetilde{\omega}(a_i) \to \infty
\right\}
$$
\item and when $\widetilde{\omega}_\#$ has two elements $\widetilde{\omega}_0$,$\widetilde{\omega}_1$, giving us a Novikov ring $\Lambda_\K^{\widetilde{\omega}_0,\widetilde{\omega}_1} := \Lambda_\K^{\widetilde{\omega}_\#}$ which can be thought of as the set of power series
$$
\Lambda_\K^{\widetilde{\omega}_0,\widetilde{\omega}_1} = \left\{\sum_{i \in \N} b_i t^{a_i} \ : \ b_i \in \K, \ a_i \in H_2(M;\Z), \quad \min(\widetilde{\omega}_0(a_i),\widetilde{\omega}_1(a_i)) \to \infty
\right\}.
$$
\end{itemize}
\end{example}

\subsection{Definition of Floer Cohomology using Alternative Filtrations.} \label{definition Floer alternative filtration}

In this section we will give a definition of Hamiltonian Floer cohomology using the action function in Definition \ref{definition action functionals}.
Throughout this subsection,
we will fix a (possibly empty) contact
cylinder $\check{C} = [1-\epsilon,1+\epsilon] \times C \subset M$ with associated Liouville domain $D$
and cylindrical coordinate $r_C$.
We also let $\iota_D : H^2(M,D;\R) \lra{} H^2(M;\R)$ be the natural restriction map on cohomology and
let
$\pi_D : H^2(M,D;\R) \times \R \times \R \lra{} H^2(M,D;\R)$ be the natural projection map.

In order to define Hamiltonian Floer cohomology with the right properties, we need to consider certain cones inside $H^2(M,D;\R) \times \R \times \R$.

\begin{defn} \label{defn chain complex}
Let $\omega_{\check{C}}$ be a $\check{C}$-compatible $2$-form
with scaling constants $0$ and $1$ and which is equal to $\omega$ outside $D \cup ([1,1+\epsilon/2] \times C)$.
A cone $Q \subset H^2(M,D;\R) \times \R \times \R$
is {\it $\check{C}$-compatible}
if
\begin{itemize}
\item both $\iota_D(\pi_D(Q))$ and $Q$ are closed and salient and
\item $Q \subset Q_{\check{C}} \cup \{0\}$ where $Q_{\check{C}}$ is given as in Definition \ref{definition action functionals}.
\end{itemize}  
A $\check{C}$-compatible cone $Q$
is called {\it thin}
if $\pi_D|_Q : Q \lra{} H^2(M,D;\R)$
is an injective map.
A $\check{C}$-compatible cone $Q$ is called {\it small}
if $Q \subset \R[\omega_{\check{C}}] \times \R \times \R$ and if the natural projection map $$\R[\omega_{\check{C}}] \times \R \times \R \lra{} \R[\omega_{\check{C}}] \times \R, \quad (q,\lambda_-,\lambda_+) \lra{} (q,\lambda_-)$$
restricted to $Q$ is injective.
A pair of $\check{C}$-compatible cones $(Q_-,Q_+)$ is called {\it wide}
if for each $q \in \pi_D(Q_-) \cup \pi_D(Q_+)$ there exists $\lambda_{\pm,-}^q,\lambda_{\pm,+}^q \in \R$ so that
$(q,\lambda_{\pm,-}^q,\lambda_{\pm,+}^q) \in Q_\pm$,
$$\lambda^q_{-,-} < \lambda^q_{+,-} \ \text{and} \quad \lambda^q_{-,+} = \lambda^q_{+,+}.$$
A {\it $\check{C}$-interval domain pair} is a pair $(Q_-,Q_+)$
of $\check{C}$-compatible cones
so that
\begin{enumerate}
		\item \label{item:action interval thin}
$Q_+$ is thin and not equal to the trivial cone $\{0\}$,
\item $Q_+ \subset Q_-$ and
\item  \label{item:action interval height property}
if $Q_-$ is not small,
then $(Q_-,Q_+)$ is wide.
\end{enumerate}

For any $\check{C}$-compatible
cone $Q$, we define
$\Sc(Q)$ to be the space
of continuous functions
$f : Q \lra{} \R$
satisfying $f(\sigma x) = \sigma f(x)$ for all $x \in Q$ and $\sigma \geq 0$ equipped with the $C^0_{\text{loc}}$-topology.
%
A {\it $\check{C}$-action interval}
is a pair $(a_-,a_+) \in \Sc(Q_-) \times \Sc(Q_+)$
where 
$(Q_-,Q_+)$
is a $\check{C}$-interval domain pair.
We say that $(a_-,a_+)$ is {\it small}
if $Q_-$ and $Q_+$ are small and {\it wide} if $(Q_-,Q_+)$ is wide.

For each subset $P \subset \Z$,
weakly $\check{C}$-compatible Hamiltonian
$H$
and $\check{C}$-action interval $(a_-,a_+) \in \Sc(Q_-) \times \Sc(Q_+)$,
define
$\Gamma^P_{\check{C},a_-,a_+}(H)$ to be the set of
capped $1$-periodic orbits
$\gamma$ of $H$
whose associated $1$-periodic orbit
is not contained in $[1+\epsilon/8,1+\epsilon/2] \times C$, whose index is in $P$ and
satisfying
\begin{equation} \label{equation capped periodic orbits in action interval}
a_- \leq \cA_{H,\check{C}}(\gamma)|_{Q_-}, \quad  a_+ \nleq \cA_{H,\check{C}}(\gamma)|_{Q_+}
\end{equation}
where $\cA_{H,\check{C}}$ is given in Definition \ref{definition action functionals}.

If $(a_-,a_+) \in \Sc(Q_-) \times \Sc(Q_+)$ is wide
then we define
the {\it Hamiltonian height of $(a_-,a_+)$} to be
$$
\text{height}(a_-,a_+) :=$$
\begin{equation} \label{equation height equation}
\sup \left\{ \frac{a_+(x,\lambda_{+,-},\lambda_{+,+}) - a_-(x,\lambda_{-,-},\lambda_{-,+})}
{\lambda_{+,-} - \lambda_{-,-}} \ : \begin{array}{c}
(x,\lambda_{\pm,-},\lambda_{\pm,+}) \in Q_\pm,  \\
\lambda_{-,-} < \lambda_{+,-}, \\
\lambda_{-,+} = \lambda_{+,+}
\end{array}
\right\}.
\end{equation}

For each $\check{C}$-action interval $(a_-,a_+)$ and each manifold $\Sigma$,
define
$$
\ccH^\Sigma(\check{C},a_-,a_+) :=$$
\begin{equation} \label{equation high enough hamiltonians}
\left\{\begin{array}{ll}
\ccH^\Sigma(\check{C}) & \text{if} \ (a_-,a_+) \ \text{is small} \\
\left\{
H \in \overline{\ccH}^\Sigma(\check{C}) \ : \ 
m_{H_\sigma} > \text{height}(a_-,a_+) \ \forall \ \sigma \in \Sigma
\right\} & \text{otherwise}
\end{array}
\right.
\end{equation}
where
$\ccH^\Sigma(\check{C})$ and
$\overline{\ccH}^\Sigma(\check{C})$
are given in Definition
\ref{definition wspaces of compatible objects} and $m_{H_\sigma}$ is the height of $H_\sigma$ as in Definition \ref{definition Hamiltonians compatible with contact cylinder}
for each $\sigma \in \Sigma$ where $H = (H_\sigma)_{\sigma \in \Sigma}$.
For each subset $P \subset \Z$,
define
$$\ccH^\reg(\check{C},a_-,a_+,P) \subset \ccH^\T(\check{C},a_-,a_+)$$
to be the subspace of time dependent Hamiltonians $H = (H_t)_{t \in \T}$
so that there exist neighborhoods
$N_-$, $N_+$ of $a_-$ and $a_+$
in $\Sc(Q_-)$ and $\Sc(Q_+)$ respectively so that for each
$a'_\pm \in N_\pm$, we
have
\begin{itemize} 
\item that every capped $1$-periodic orbit
in $\Gamma^\Z_{\check{C},a'_-,a'_+}(H)$ is non-degenerate, 
\item there are no $1$-periodic orbits contained in $[1+\epsilon/8,1+\epsilon/2] \times C$ and
\item  
$\Gamma^P_{\check{C},a'_-,a'_+}(H) = \Gamma^P_{\check{C},a_-,a_+}(H)$.
\end{itemize}
Define
$\ccH^\reg(\check{C},a_-,a_+) :=
\ccH^\reg(\check{C},a_-,a_+,\Z)
$.
\end{defn}

We have that
$\ccH^\reg(\check{C},a_-,a_+)$
 is an open dense subset of $\ccH^\T(\check{C},a_-,a_+)$ by Lemma \ref{lemma regular Hamiltonians} in Appendix A.
 The height condition
 in Equation
(\ref{equation high enough hamiltonians})
is essential for this density
property since
any
 Hamiltonian
 in $\ccH^\T(\check{C},a_-,a_+)$
 is constant outside
 $D \cup \check{C}$ if $(a_-,a_+)$ is wide
and therefore these constant orbits $\gamma$
must not satisfy
Equation (\ref{equation capped periodic orbits in action interval}) because they are degenerate.


\begin{defn} \label{definition Floer chain complex}
Let $(a_-,a_+) \in \Sc(Q_-) \times \Sc(Q_+)$ be a $\check{C}$-action interval
and let $H \in \ccH^\reg(\check{C},a_-,a_+,\{p\})$
for some $p \in \Z$.
Define
$CF^p_{\check{C},a_\pm,\infty}(H)$
to be the free $\K$-module generated by
capped $1$-periodic orbits $\gamma$
of $H$ of index $p$ and
satisfying $a_\pm \leq \cA_{H,\check{C}}(\gamma)|_{Q_\pm}$.
Define
$$CF^p_{\check{C},a_-,a_+}(H) := CF^p_{\check{C},a_-,\infty}(H)/(CF^p_{\check{C},a_-,\infty}(H) \cap CF^p_{\check{C},a_+,\infty}(H)).$$
\end{defn}

\begin{remark} \label{remark basis}
The $\K$-module $CF^p_{\check{C},a_-,a_+}(H)$
is naturally isomorphic to the free $\K$-module
generated by $\Gamma^p_{\check{C},a_-,a_+}(H) := \Gamma^{\{p\}}_{\check{C},a_-,a_+}(H)$.
From now on we will not distinguish between
describing such a group as a quotient
or a free module.
\end{remark}

\begin{defn} \label{definition connect sub of orbit}
Let $H$ be a time dependent Hamiltonian on $M$.
If $\gamma = (\widetilde{\gamma},\check{\gamma})$ is a capped $1$-periodic orbit of $H$
and $v \in H_2(M;\Z)$ is a homology class,
then we define
$\gamma \# v := (\widetilde{\gamma} \# v,\check{\gamma})$
where
$\widetilde{\gamma} \# v$
has the property that
$(\widetilde{\gamma} \# v) \star \widetilde{\gamma}$ is homologous to $v$
where
$\star$ is defined Equation (\ref{equation join of two cappings}).
\end{defn}

Above, we are `connect summing' $v$  to the capping surface $\widetilde{\gamma}$.

\begin{remark} \label{remark action module}
Let $(a_-,a_+) \in (Q_-,Q_+)$ be a $\check{C}$-action interval
and let $H \in \ccH^\reg(\check{C},a_-,a_+,\{p\})$
for some $p \in \Z$.
Define $\check{Q}_+ := \iota_D(\pi_D(Q_+))$.
The submonoid
\begin{equation} \label{equation dual submonoid of Q}
S^{Q_+}_0 := \{x \in H_2(M;\Z) \ : 0 \preceq_{\check{Q}_+} x\} \subset H_2(M;\Z)
\end{equation}
acts on $CF^p_{\check{C},a_-,a_+}(H)$ 
as follows:
An element $v \in S^{Q_+}_0$ represents
the unique $\K$-linear map
sending an element
$\gamma \in \Gamma^p_{\check{C},a_-,a_+}(H)$
to $[\gamma \# (-v)] \in CF^p_{\check{C},a_-,a_+}(H)$.
As a result,
the monoid ring $\K[S^{Q_+}_0]$
acts on $CF^p_{\check{C},a_-,a_+}(H)$.

Also, since set of $1$-periodic orbits of $H$ is a compact subset of the free loop space of $M$ with respect to the $C^\infty$ topology,
there is an element
$a \in S^{Q_+}_0$
so that for each $v \in S^{Q_+}_0$ satisfying
$a \preceq_{\check{Q}_+} v$ and each
$\alpha \in CF^p_{\check{C},a_-,a_+}(H)$,
we have $v \cdot \alpha = 0$.
This implies that
the $\K[S^{Q_+}_0]$-action extends
to an action of
the positive ${Q_+}$-Novikov ring $\Lambda_\K^{{Q_+},+}$ 
on $CF^p_{\check{C},a_-,a_+}(H)$
by Equation (\ref{equation positive Novikov ring}).
Hence from now on,
we will think of
$CF^p_{\check{C},a_-,a_+}(H)$
as a $\Lambda_\K^{{Q_+},+}$-module.
\end{remark}

The $\Lambda_\K^{{Q_+},+}$-module
$CF^p_{\check{C},a_-,a_+}(H)$ will be the module underlying our chain complex for our Hamiltonian Floer group.
We now need to explain what the differential on $CF^p_{\check{C},a_-,a_+}(H)$ is.

\begin{defn} \label{defn of differential}
Let 
$(a_-,a_+)$ be a $\check{C}$-action interval and let $P := \{p,p+1\}$ for some $p \in \Z$.
Let $H \in \ccH^\reg(\check{C},a_-,a_+,P)$
and let $J \in \ccJ^{\T,\reg}(H,\check{C})$
where $\ccJ^{\T,\reg}(H,\check{C})$
is given in Definition
\ref{definition space of Floer cylinders}.
We define
the {\it Floer differential}
$$\partial_{H,J}^{(p)} : CF^p_{\check{C},a_-,a_+}(H) \lra{}  CF^{p+1}_{\check{C},a_-,a_+}(H)$$
to be the unique $\K$-linear map
satisfying
$$\partial_{H,J}^{(p)}(\gamma_+) = \sum_{\gamma_- \in \Gamma^{p+1}_{\check{C},a_-,a_+}(H)} \# \overline{\ccM}(H,J,\gamma) \gamma_-, \quad \forall \ \gamma_+ \in \Gamma^p_{\check{C},a_-,a_+}(H)$$
where $\gamma$ is the pair
$(\gamma_-,\gamma_+)$ and where
$\# \overline{\ccM}(H,J,\gamma)$
is the number of elements in the $0$-dimensional manifold
$\overline{\ccM}(H,J,\gamma)$
from Definition \ref{definition space of Floer cylinders}
 counted with sign
according to their orientation.
We define $\partial_{H,J} := \partial^{(p)}_{H,J}$ if it is clear which $p$ we are using.
\end{defn}

The definition above uses the fact that
$\overline{\ccM}(H,J,\gamma)$ is a compact oriented $0$-dimensional manifold
for all pairs of capped $1$-periodic orbits
$\gamma = (\gamma_-,\gamma_+) \in \Gamma^{p+1}_{\check{C},a_-,a_+}(H) \times \Gamma^p_{\check{C},a_-,a_+}(H)$
which follows from
Propositions
\ref{proposition regularity for translation invariant floer cylinders}
and \ref{proposition compactness result}
and
\cite[Section 17]{ritter2013topological}.
Note that one can always find
$J$ as above since
$\ccJ^{\T,\reg}(H,\check{C})$
is a ubiquitous subset of
$\ccJ^\T(\check{C})$
(Definition \ref{definition wspaces of compatible objects})
by Proposition \ref{proposition regular subset for surface}.
By analyzing $1$-parameter families of solutions of the Floer equation for the cylinder
as in Definition \ref{definition parameterized moduli spaces},
one can show that $\partial_{H,J}^2 = 0$.
This is done by a gluing theorem \cite[Theorem 9.2.1]{audin2014morse} combined with the compactness result Proposition \ref{proposition compactness result} and the orientation conventions \cite[Section 17]{ritter2013topological}.
Here we have replaced the compactness result \cite[Theorem 9.1.7]{audin2014morse}
with Proposition \ref{proposition compactness result}.
%
%
Finally, note that
$\partial_{H,J}$ is a $\Lambda_\K^{Q_+,+}$-linear differential where $Q_+$ is the domain of $a_+$. Hence
we have the following definition.

\begin{defn} \label{definition hamiltonian floer cohomology}
Let 
$(a_-,a_+) \in \Sc(Q_-) \times \Sc(Q_+)$ be a $\check{C}$-action interval and let $P := \{p-1,p,p+1\}$ for some $p \in \Z$.
Let $H \in \ccH^\reg(\check{C},a_-,a_+,P)$
and let $J \in \ccJ^{\T,\reg}(H,\check{C})$.
We define the {\it Hamiltonian Floer cohomology group of $H$}
to be the $\Lambda_\K^{Q_+,+}$-module
$$HF^p_{\check{C},a_-,a_+}(H) := \ker(\partial_{H,J}^{(p)})/\text{Im}(\partial_{H,J}^{(p-1)}).$$
\end{defn}

This Hamiltonian Floer cohomology group does not depend on the choice $J$ by
continuation map methods, which will be explained in the subsequent section.
Here are the two main examples of Hamiltonian Floer cohomology groups that
should be kept in mind.

\begin{example}
If $\check{C}$ is the empty contact cylinder and
$Q_- = Q_+$ is the cone spanned by $([\omega],1,1)$
then we get the usual definition of Floer cohomology with the usual action functional
(see Example \ref{example usual action filtraiton}).
\end{example}

\begin{example}
Let $q_0,\cdots,q_k \in H^2(M,D;\R)$
be classes
representing $\check{C}$-compatible $2$-forms whose images in $H^2(M;\R)$ are linearly independent.
Suppose that $Q_+$
is the polyhedral cone spanned by
$(q_0,1,1)$, $\cdots$, $(q_k,1,1)$
and $Q_-$ is the polyhedral cone spanned by
$$(q_0,1,1),\cdots,(q_k,1,1),
(q_0,0,1),\cdots,(q_k,0,1).$$
Then for $(a_-,a_+) \in \Sc(Q_-) \times \Sc(Q_+)$ and $H \in \ccH^\reg(\check{C},a_-,a_+,\Z)$,
$HF^*_{\check{C},a_-,a_+}(H)$
is the Hamiltonian Floer cohomology group generated by capped $1$-periodic orbits
$\gamma$
satisfying
$\cA_{\check{C},H}(\gamma)(q_i,1,1) \geq a_-(q_i,1,1)$
and
$\cA_{\check{C},H}(\gamma)(q_i,0,1) \geq (q_i,0,1)$
for {\it all} $i \in \{0,\cdots,k\}$
and
$\cA_{\check{C},H}(\gamma)(q_i,1,1) \leq a_+(q_j,1,1)$
for {\it some}
$j \in \{0,\cdots,k\}$
where $\cA_{\check{C},H}$ is defined in
\ref{definition action functionals}.
\end{example}

\subsection{Continuation Maps}

Again, throughout this subsection,
we will fix a (possibly empty) contact
cylinder $\check{C} = [1-\epsilon,1+\epsilon] \times C \subset M$ with associated Liouville domain $D$ and cylindrical coordinate $r_C$.
We will also fix a $\check{C}$-action interval $(a_-,a_+)$.

\begin{defn} \label{definition partial order on Hamiltonians}
Let $H^- = (H^-_t)_{t \in \T},H^+ = (H^+_t)_{t \in \T} \in \ccH^\T(\check{C},a_-,a_+)$
have slopes $(\lambda_{H^\pm_t})_{t \in \T}$
and heights $(m_{H^\pm_t})_{t \in \T}$ at $\check{C}$ respectively.
We write $H^- <_{\check{C}} H^+$ if
\begin{enumerate}
\item $H^-_t < H^+_t$,
\item $\lambda_{H_t^-} < \lambda_{H_t^+}$, $m_{H_t^-} < m_{H_t^+}$
\item 
$m^+_t - m^-_t < (H^+_t - H^-_t)|_{M - (D \cup ([1,1+\epsilon/2] \times C)}$
\end{enumerate}
for all $t \in \T$ (See Figure \ref{fig:smallerhamiltonians}).
\end{defn}

\begin{center}
\begin{figure}[h]
	\begin{tikzpicture}
	
	\draw (-8.5,0.5) -- (-0.5,0.5);
	\draw [->](-8.5,0.5) -- (-8.5,4.5);
	\draw [](-6,0.6) -- (-6,0.4);
	\draw [](-3,0.6) -- (-3,0.4);
	\draw [](-4.5,0.6) -- (-4.5,0.4);
	\draw [dotted](-8.5,0.5) -- (-8.5,-2.5);
	\node at (-8.5,-3) {$r_C=0$};
	\node at (-4.5,0) {$r_C=1$};
	\node at (-6.5,0) {$r_C=1-\epsilon$};
	\node at (-2.5,0) {$r_C=1+\epsilon$};
	\draw [](-6,1) -- (-4,2);
	\draw [dashed](-6,1) -- (-8.5,-0.5);
	\draw [dashed](-8.5,0) -- (-6,2.5);
	\draw (-6,2.5) -- (-4,4.5);
	\draw (-4,4.5) .. controls (-3,5.5) and (-2,5) .. (-0.5,4.5);
	\draw (-6,2.5) .. controls (-7,1.5) and (-8,2) .. (-8.5,2.5);
	\draw (-6,1) .. controls (-6.5,0.5) and (-7.5,1.5) .. (-8.5,1);
	\draw (-4,2) .. controls (-3,2.5) and (-1.5,1.5) .. (-0.5,2.5);
\node at (-1,5.2) {$H_t^+$};
\node at (-1.1,2.7) {$H_t^-$};
	\draw [->](-9.1,5.2) -- (-5.6,3.3);
	\draw [->](-5.5,5.2) -- (-4.8,1.9);
	\node at (-7,5.5) {\underline{This} slope bigger than \underline{this} slope};
	
	\draw [<->](-8.8,0) -- (-8.8,-0.6);
	
	\draw [<->](-2.3,4.7) -- (-2.3,2.4);
	\draw [decoration={calligraphic brace,amplitude=7pt},decorate](-3,-0.5) -- (-6,-0.5);
	\node at (-4.5,-1) {$\check{C}$};
	\draw[decoration={calligraphic brace,amplitude=7pt},decorate] (-4.5,-1.5) -- (-8.5,-1.5);
	\node at (-6.5,-2) {$D$};
	\draw [<-](-9,-0.3) .. controls (-9.7,-1.2) and (-9.6,-2) .. (-7.2,-2.7);
	\draw [->](-2.8,-2.7) .. controls (0.1,1.4) and (1,3.5) .. (-2,3.9);
	\node at (-4.2,-3) {\underline{This} distance  smaller than \underline{this} distance};
	\node at (-10.1,-0.2) {$m^+_t - m^-_t$};
	\node at (-3.4,3.4) {$H^+_t - H^-_t$};
	\end{tikzpicture}
\caption{Picture of $H^-$ and $H^+$ satisfying $H^- <_{\check{C}} H^+$.} \label{fig:smallerhamiltonians}
\end{figure}
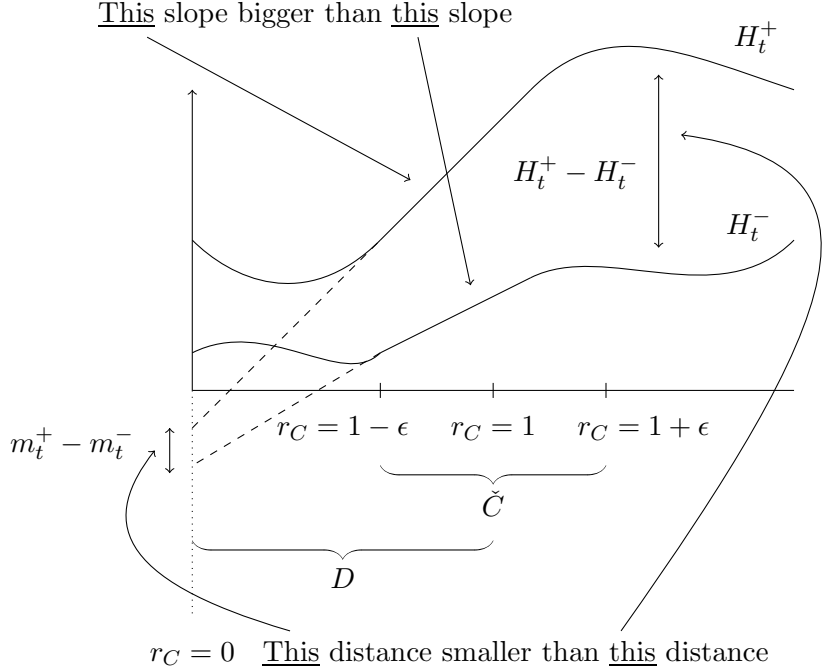
\end{center}

The following lemma below tells us that the relation $<_{\check{C}}$ has good properties.

\begin{lemma} \label{lemma partial order lemma}
Let $H \in \ccH^\T(\check{C},a_-,a_+)$.
Then there is a sequence
$(H_i)_{i \in \N}$ of elements in $\ccH^\T(\check{C},a_-,a_+)$ $C^\infty$ converging to $H$
so that
for each $K <_{\check{C}} H$, there exists
$i \in \N$ so that
$K <_{\check{C}} H_i <_{\check{C}} H$.
\end{lemma}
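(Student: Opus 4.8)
The plan is to exhibit $H$ as a $C^\infty$-limit of Hamiltonians obtained from $H$ by subtracting a small fixed positive function, chosen so that the perturbed Hamiltonians lie strictly below $H$ in the order $<_{\check{C}}$ while still dominating every $K$ with $K <_{\check{C}} H$.

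First I would fix a smooth function $g \colon M \lra{} \R$ that is $\check{C}$-compatible, has slope $\lambda_g > 0$ and height $m_g > 0$ along $\check{C}$, is strictly positive on $M$, and satisfies $g > m_g$ on $R := M - (D \cup ([1,1+\epsilon/2]\times C))$. Such a $g$ is easy to build: on $[1+\epsilon/8,1+\epsilon/2]\times C$ it is forced to equal $\lambda_g r_C + m_g$, which at $r_C = 1+\epsilon/2$ already exceeds $m_g$ since $\lambda_g > 0$; one then extends it monotonically in $r_C$ across $[1+\epsilon/2,1+\epsilon]\times C$ to a constant $c_g > m_g$ on $M - (D \cup \check{C})$, and over $D$ to any smooth positive function matching the collar near $\partial D$. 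Put $H_i := H - \tfrac1i g$. Since $g$ is fixed and $M$ is compact, $H_i \to H$ in $C^\infty$; moreover $H_i$ is weakly $\check{C}$-compatible with $\lambda_{H_{i,t}} = \lambda_{H_t} - \tfrac1i \lambda_g$ and $m_{H_{i,t}} = m_{H_t} - \tfrac1i m_g$, and it is $\check{C}$-compatible whenever $H$ is. If $(a_-,a_+)$ is small then $\ccH^\T(\check{C},a_-,a_+) = \ccH^\T(\check{C})$ contains every $H_i$; otherwise $\min_{t \in \T} m_{H_t} > \text{height}(a_-,a_+)$ by compactness of $\T$, so $H_i \in \ccH^\T(\check{C},a_-,a_+)$ for all large $i$. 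After discarding finitely many terms I may assume $H_i \in \ccH^\T(\check{C},a_-,a_+)$ for every $i$.

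Next I would verify $H_i <_{\check{C}} H$ for every $i$ straight from Definition \ref{definition partial order on Hamiltonians}: condition (1) holds since $\tfrac1i g > 0$; condition (2) holds since $\lambda_g, m_g > 0$; and condition (3) holds because on $R$ one has $m_{H_t} - m_{H_{i,t}} = \tfrac1i m_g < \tfrac1i g = (H_t - H_{i,t})|_R$, using $g > m_g$ on $R$.

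Finally, given any $K <_{\check{C}} H$, I would show $K <_{\check{C}} H_i$ for all large $i$, which together with the previous step gives $K <_{\check{C}} H_i <_{\check{C}} H$. Since $\T\times M$ is compact, the strict inequalities packaged into $K <_{\check{C}} H$ upgrade to uniform lower bounds $H_t - K_t \geq c_1$, $\lambda_{H_t} - \lambda_{K_t} \geq c_2$, $m_{H_t} - m_{K_t} \geq c_3$ with $c_1,c_2,c_3 > 0$; since $\tfrac1i g$ is uniformly $O(1/i)$, conditions (1) and (2) for $K <_{\check{C}} H_i$ then hold once $i$ is large. The one point needing a little care is condition (3): its region $R$ is not compact, so I would pass to the compact closure $\overline{R}$ (which is contained in $R \cup (\{1+\epsilon/2\}\times C)$) and observe that the continuous function $(t,x) \mapsto (H_t(x) - K_t(x)) - (m_{H_t} - m_{K_t})$ is positive on $R$ by hypothesis and, at $r_C = 1+\epsilon/2$, equals $(\lambda_{H_t} - \lambda_{K_t})(1+\epsilon/2) > 0$ by weak $\check{C}$-compatibility of $H$ and $K$; hence it is bounded below on $\overline{R}\times\T$ by some $c_4 > 0$, and a one-line estimate then gives condition (3) for $K <_{\check{C}} H_i$ for all large $i$. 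This compactification of $R$, needed to make the strict inequality in condition (3) uniform, is the only genuine obstacle; the rest is bookkeeping with compactness of $\T \times M$.
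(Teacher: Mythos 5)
Your proof is correct and follows essentially the same route as the paper: the paper also perturbs by $H_i := H + \tfrac{1}{i}K_0$ for a fixed $\check{C}$-compatible function $K_0$ (given there by an explicit formula in $r_C$) whose negative has positive slope and height along $\check{C}$ and exceeds its height on $M - (D \cup ([1,1+\epsilon/2]\times C))$, which is exactly your $-g$. Your closure/compactness argument for making condition (3) uniform is a correct filling-in of the paper's unexplained ``for all $i$ large enough.''
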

\proof
Suppose $H = (H_t)_{t \in \T}$ and let $\lambda_{H_t}$ and $m_{H_t}$ be the slope
and height of $H_t$ for each $t \in \T$.
Let $f : \R \lra{} \R$ be a smooth function satisfying
$$f|_{(-\infty,1]} = -2, \quad
f|_{[1+3\epsilon/4,\infty)} = -2-\epsilon, \quad
f' \leq 0$$
and
$f(x) = -1 - x$ for all $x \in [1 + \epsilon/8,1+\epsilon/2]$.
Define
$$K : M \lra{} \R, \quad
K(x) := \left\{
\begin{array}{ll}
-2 & \text{if} \ x \in D \\
f(r_C) & \text{if} \ x \in \check{C} \\
-2 - \epsilon & \text{otherwise.}
\end{array}
\right. 
$$
Then
$H_{i,t} := H_t + \frac{1}{i}K \in \ccH^\T(\check{C})$
has slope
$\lambda_{H_t} - \frac{1}{i}$
and height
$m_{H_{i,t}} := m_{H_t} - \frac{1}{i}$.
Also
$$m_{H_t} - m_{H_{i,t}} < \frac{2+\epsilon/2}{i} \leq (H_t - H_{i,t})|_{M - (D \cup ([1,1+\epsilon/2] \times C))}.$$
Therefore $H_i := (H_{i,t})_{t \in \T}$
has the properties we want for all $i$ large enough.
\qed

\begin{defn} \label{definition chain level continuation map}
Let $H^-,H^+ \in \ccH^\T(\check{C},a_-,a_+)$
satisfy $H^- <_{\check{C}} H^+$.
A smooth family of Hamiltonians
$H^{-+} = (H^{-+}_{s,t})_{(s,t) \in \R \times \T}$ {\it $(a_-,a_+)$-connects}
$H^-$ with $H^+$ if
\begin{itemize}
\item $H^{-+}_s := (H^{-+}_{s,t})_{t \in \T}$
is an element of $\ccH^\T(\check{C},a_-,a_+)$ for all $s \in \R$,
\item $H^{-+}_s = H^\pm$ for all $s \in \R$ satisfying $\mp s \gg 1$
\item and if $(\lambda_{H^{-+}_{s,t}})_{(s,t) \in \R \times \T}$,
$(m_{H^{-+}_{s,t}})_{(s,t) \in \R \times \T}$
are the corresponding slopes and heights of
$(H^{-+}_{s,t})_{(s,t) \in \R \times \T}$
along $\check{C}$ then
$$\frac{d}{ds}H^{-+}_{s,t}(x) \leq 0,
\ \forall \ x \in M, \quad
\frac{d}{ds}\lambda_{H^{-+}_{s,t}} \leq 0, \quad \frac{d}{ds}m_{H^{-+}_{s,t}} \leq 0,$$
$$\frac{d}{ds}H^{-+}_{s,t}(x)
\leq \frac{d}{ds}(m_{H^{-+}_{s,t}}), \ \forall \ x \in M- (D \cup ([1,1+\epsilon/2] \times C))
$$
for all $t \in \T$.
We will denote the space of such families of Hamiltonians by $$\ccH^{\R \times \T}(\check{C},a_-,a_+,H^-,H^+).$$
\end{itemize}
\end{defn}

Note that $\ccH^{\R \times \T}(\check{C},a_-,a_+,H^-,H^+)$
is contractible since it is a convex
subset of the space $\ccH^{\R \times \T}(\check{C},a_-,a_+)$ with at least one element equal to:
$$((1-\rho(s))H^+_t + \rho(s)H^-_t)_{(s,t) \in \R \times \T}$$
where $\rho : \R \lra{} \R$ is a smooth function satisfying $\rho' \geq 0$ and
where $\rho(s)$ is
equal to $0$ for $s \ll 0$ and $1$ for $s \gg 0$.

\begin{defn} \label{definition connecting pairs}
%
If $H^\pm \in \ccH^\T(\check{C},a_-,a_+)$
and $J^\pm \in \ccJ^\T(\check{C})$
(Definition \ref{definition wspaces of compatible objects})
then we say
$(H^{-+},J^{-+})$ {\it $(a_-,a_+)$-connects}
$(H^-,J^-)$ with $(H^+,J^+)$
if $H^{-+}$ $(a_-,a_+)$-connects $H^-$ with $H^+$
and if $J^{-+} \in \ccJ^{\R \times \T}((J^-,J^+),\check{C})$.
\end{defn}

\begin{defn} \label{definition connecting families of almost complex structures}
Let $H^{\pm} \in \ccH^\reg(\check{C},a_-,a_+,P)$
(Definition \ref{equation high enough hamiltonians}) where
$P = \{p-1,p,p+1\}$ for some $p \in \Z$ and let
$J^\pm \in
 \ccJ^{\T,\reg}(H^\pm,\check{C})$
where
$\ccJ^{\T,\reg}(H^\pm,\check{C})$
is given in Definition \ref{definition space of Floer cylinders}.
Let
$H^{-+} \in
\ccH^{\R \times \T}(\check{C},a_-,a_+,H^-,H^+)$
and
$J^{-+} \in 
\ccJ^{\R \times \T,\reg}(H^{-+},(J^-,J^+),\check{C})$ (Proposition \ref{proposition regular subset for surface}). 
The family $(H^{-+},J^{-+})$ then defines for us  a {\it continuation map}
$$\Phi^p_{H^{-+},J^{-+}} : HF^p_{\check{C},a_-,a_+}(H^-) \lra{}
HF^p_{\check{C},a_-,a_+}(H^+)$$
induced from the
{\it chain level continuation map}
\begin{equation} \label{equation continuation chain level map}
\widetilde{\Phi}^p_{H^{-+},J^{-+}} : CF^p_{\check{C},a_-,a_+}(H^-) \lra{}
CF^p_{\check{C},a_-,a_+}(H^+)
\end{equation}
which is the unique $\K$-linear map
satisfying
\begin{equation} \label{equation continuation chain level}
\widetilde{\Phi}^p_{H^{-+},J^{-+}}(\gamma_+)
:= \sum_{\gamma_- \in \Gamma^p_{\check{C},a_-,a_+}(H^+)} \# \ccM(H^{-+},J^{-+},\gamma) \gamma_-, \quad 
\forall \ \gamma_+ \in \Gamma^p_{\check{C},a_-,a_+}(H^-)
\end{equation}
where $\gamma = (\gamma_-,\gamma_+)$ and
$\# \ccM(H^{-+},J^{-+},\gamma)$
is the number of elements in the $0$-dimensional manifold 
$\ccM(H^{-+},J^{-+},\gamma)$
(Definition \ref{definition riemann surface admissible}) counted with sign according to orientation.
The set $\Gamma^p_{\check{C},a_-,a_+}(H^-)$
is defined in Definition \ref{defn chain complex}.
\end{defn}

The map $\Phi^p_{H^{-+},J^{-+}}$
is well defined since the
chain level map (\ref{equation continuation chain level map})
induced by Equation (\ref{equation continuation chain level})
commutes with the differentials
$\partial^{(p_\pm)}_{H^\pm,J^\pm}$, $p_- = p-1$, $p_+ = p$
by
the gluing theorem
\cite[Theorem 4.4.1]{schwarz1995cohomology}
combined with the compactness result
Proposition \ref{proposition compactness result}
and orientation conventions
\cite[Section 17]{ritter2013topological}.
The same gluing theorem also tells us that the composition of two continuation maps
is a continuation map.
Finally, if we have other elements
$K^{-+} \in
\ccH^{\R \times \T}(\check{C},a_-,a_+,H^-,H^+)
$
and
$Y^{-+} \in
\ccJ^{\R \times \T,\reg}(K^{-+},(J^-,J^+),\check{C})$
then $\Phi^p_{H^{-+},J^{-+}} = \Phi^p_{K^{-+},Y^{-+}}$.
Such an equivalence of maps is given by a chain homotopy
defined in a similar way
to Equation (\ref{equation continuation chain level map}),
except that we
count elements of
$\ccM(F,L,\gamma)$
where $\Sigma_\bullet$ is the smooth family of Riemann surfaces $(\R \times \T)_{\sigma \in [0,1]}$,
$$F = (F_{\sigma,s,t})_{(\sigma,s,t) \in [0,1] \times \R \times \T} \in \ccH^{\Sigma_\bullet}((H^-,H^+),\check{C})$$ satisfies
$F_{0,s,t} = H^{-+}_{s,t}$,
$F_{1,s,t} = K^{-+}_{s,t}$ and
$$(F_{\sigma,s,t})_{(s,t) \in \R \times \T} \in \ccH^{\R \times \T}(\check{C},a_-,a_+,H^-,H^+) \quad \forall \ \sigma \in [0,1]$$
and where
$$L \in
\ccJ^{\Sigma_\bullet,\reg}(F,(J^{-+},Y^{-+}),(J^-,J^+),\check{C})$$
(See Proposition \ref{proposition parameterized regular subset for surface}).
This map is a chain homotopy by \cite[Proposition 11.2.8]{audin2014morse}
where we replace the compactness result
\cite[Theorem 11.3.7]{audin2014morse}
by Proposition \ref{proposition compactness result}.
As a result of the facts above we have the following definition.
\begin{defn} \label{definition continuation map}
Let
$H^\pm \in \ccH^\reg(\check{C},a_-,a_+,P)$
(Definition \ref{defn chain complex}) satisfy $H^- <_{\check{C}} H^+$ where $P = \{p-1,p,p+1\}$ for some $p \in \Z$.
The {\it continuation map}
$$\Phi^p_{H^-,H^+} : HF^p_{\check{C},a_-,a_+}(H^-) \lra{}
HF^p_{\check{C},a_-,a_+}(H^+)$$
is defined to be $\Phi^p_{H^{-+},J^{-+}}$
for some choice of $(H^{-+},J^{-+})$
as in Definition \ref{definition chain level continuation map}.
\end{defn}


This is a $\Lambda_\K^{Q_+,+}$-module map.
We also have the following important lemmas giving us sufficient conditions ensuring that a continuation map is an isomorphism.

\begin{lemma} \label{lemma chain level continuation map filtration isomorphism}
Let $H \in \ccH^\reg(\check{C},a_-,a_+,P)$
where $P = \{p-1,p,p+1\}$
for some $p \in \Z$,
and let
$J \in \ccJ^\T(\check{C})$.
Then there is a convex neighborhood
$U_H \subset \ccH^\T(\check{C},a_-,a_+)$
of $H$
(Definition \ref{defn chain complex})
and a weakly contractible neighborhood
$V_J \subset \ccJ^\T(\check{C})$ of $J$
so that for all
$H^\pm \in U_H \cap \ccH^\reg(\check{C},a_-,a_+,P), \
J^\pm \in V_J \cap \ccJ^{\T,\reg}(H^\pm,\check{C}),$
\begin{equation} \label{equation H homotopy}
H^{-+}
= (H^{-+}_{s,t})_{(s,t) \in \R \times \T}
\in \ccH^{\R \times \T}(\check{C},a_-,a_+,H^-,H^+),
\end{equation}
\begin{equation} \label{equation J homotopy}
J^{-+} = (J^{-+}_{s,t})_{(s,t) \in \R \times \T} \in \ccJ^{\R \times \T,\reg}(H^{-+},(J^-,J^+),\check{C})
\end{equation}
(Proposition \ref{proposition regular subset for surface})
satisfying
$(H^{-+}_{s,t})_{t \in \T} \in U_H$
and
$(V^{-+}_{s,t})_{t \in \T} \in V_J$
for all $s \in \R$,
the degree $p$ chain level continuation map
$\widetilde{\Phi}^p_{H^{-+},J^{-+}}$
from Equation (\ref{equation continuation chain level map})
is an isomorphism.
\end{lemma}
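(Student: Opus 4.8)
The plan is to show that, after shrinking $U_H$ and $V_J$, every such chain level continuation map is filtered with respect to the $(H,\check{C})$-action and its associated graded is multiplication by units of the Novikov ring, from which invertibility is automatic. First I would use the regularity of $H$: by Definition \ref{defn chain complex} there are neighborhoods $N_\pm$ of $a_\pm$ in $\Sc(Q_\pm)$ such that for all $a'_\pm\in N_\pm$ the orbits in $\Gamma^\Z_{\check{C},a'_-,a'_+}(H)$ are non-degenerate, none lie in $[1+\epsilon/8,1+\epsilon/2]\times C$, and $\Gamma^P_{\check{C},a'_-,a'_+}(H)=\Gamma^P_{\check{C},a_-,a_+}(H)$. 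Since the $1$-periodic orbits of $H$ are non-degenerate and the set of $1$-periodic orbits of a Hamiltonian is compact in the free loop space of $M$, an implicit function theorem argument produces a convex neighborhood $U_H\subset\ccH^\T(\check{C},a_-,a_+)$ of $H$ such that for every $H'\in U_H$ the $1$-periodic orbits of $H'$ are in a canonical $C^\infty$-small bijection with those of $H$, preserving index, with none in the forbidden annulus, and with the $(H',\check{C})$-actions differing from the $(H,\check{C})$-actions by less than the slack afforded by $N_\pm$. Then for each $H'\in U_H\cap\ccH^\reg(\check{C},a_-,a_+,P)$ this bijection extends, via the connect-sum operation of Definition \ref{definition connect sub of orbit} and the positive Novikov action of Remark \ref{remark action module}, to an isomorphism of $\Lambda_\K^{Q_+,+}$-modules $CF^p_{\check{C},a_-,a_+}(H')\cong CF^p_{\check{C},a_-,a_+}(H)$; I would use these to view every chain level continuation map below as an endomorphism of the fixed module $CF^p_{\check{C},a_-,a_+}(H)$, and take $V_J\subset\ccJ^\T(\check{C})$ to be any weakly contractible neighborhood of $J$ small enough for the estimates in the next two steps.

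\textbf{Triangularity.} Next I would order the finitely many underlying $1$-periodic orbits supporting $\Gamma^p_{\check{C},a_-,a_+}(H)$ by the $(H,\check{C})$-action evaluated at an interior point of $Q_+$, and filter $CF^p_{\check{C},a_-,a_+}(H)$ accordingly, with the Novikov variable lowering the filtration. The monotonicity conditions on $H^{-+}$ in Definition \ref{definition chain level continuation map}, combined with Equation (\ref{equation primitive assocatiated to contact cylinder}), Lemma \ref{lemma filtration} and the energy identity for the Floer equation, bound the energy of any $u\in\ccM(H^{-+},J^{-+},(\gamma_-,\gamma_+))$ below by $0$ and above by the drop in $(H,\check{C})$-action from $\gamma_+$ to $\gamma_-$; hence $\widetilde{\Phi}^p_{H^{-+},J^{-+}}$ preserves the filtration.

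\textbf{Associated graded is multiplication by units.} The heart of the argument is to identify the induced map on associated graded quotients, i.e. the contribution of solutions that do not strictly drop the action. For such a solution both ends have the same underlying orbit $\bar\gamma$, with cappings differing by some $v\in S^{Q_+}_0$ of zero $\check{Q}_+$-action, so this contribution is a $\Lambda_\K^{Q_+,+}$-multiple of the identity whose constant term is the signed count of rigid solutions of the continuation equation from $\bar\gamma$ to itself with trivial capping. To evaluate this count I would deform $(H^{-+},J^{-+})$ inside $\ccH^{\R\times\T}(\check{C},a_-,a_+,H^-,H^+)$ together with a path of regular almost complex structures, keeping the ends in $U_H\times V_J$, first to the linear homotopy of Definition \ref{definition chain level continuation map} and then, using convexity of $U_H\cap\ccH^\reg(\check{C},a_-,a_+,P)$ and weak contractibility of $V_J$, to a constant homotopy at a common regular pair; the homotopy-of-homotopies invariance already used in the text (Proposition \ref{proposition parameterized regular subset for surface} together with \cite[Proposition 11.2.8]{audin2014morse} and Proposition \ref{proposition compactness result}) keeps these signed counts unchanged, and for a constant homotopy the only rigid solution with coinciding ends is the constant one, counted $+1$ by the orientation conventions of \cite[Section 17]{ritter2013topological}. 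Thus the associated graded of $\widetilde{\Phi}^p_{H^{-+},J^{-+}}$ is multiplication by units of $\Lambda_\K^{Q_+,+}$, namely $1$ plus a positive-action correction.

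\textbf{Conclusion and the main obstacle.} Finally, a filtered $\Lambda_\K^{Q_+,+}$-linear endomorphism of a finitely generated filtered module whose associated graded is an isomorphism is itself an isomorphism: writing $\widetilde{\Phi}^p_{H^{-+},J^{-+}}=D(\id+D^{-1}N)$ with $D$ the diagonal part (entries units) and $N$ strictly filtration-decreasing, hence nilpotent, the map $\id+D^{-1}N$ is invertible with inverse the finite sum $\sum_{k\geq 0}(-D^{-1}N)^k$, so $\widetilde{\Phi}^p_{H^{-+},J^{-+}}$ is an isomorphism. I expect the third step to be the main obstacle: controlling the genuinely diagonal solutions of a small, non-constant continuation equation and pinning down their signed count is exactly where convexity of $U_H$ and weak contractibility of $V_J$ are needed, and where the multi-parameter action window and the positive Novikov ring bookkeeping introduced in Subsection \ref{definition Floer alternative filtration} must be handled with care; the rest is standard Floer theory.
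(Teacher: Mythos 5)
Your overall strategy --- filter by the $(H,\check{C})$-action, show the continuation map respects the filtration, and show its leading term is invertible --- is the same ``action filtration argument'' the paper's proof ends with, and your first, second and fourth steps are sound. The gap is in the third step, which you correctly flag as the crux but do not actually close. The deformation you propose, from $(H^{-+},J^{-+})$ to a constant homotopy at a common regular pair, changes the asymptotic data: the ends move from $(H^-,J^-)$ and $(H^+,J^+)$ to a single pair, so the moduli spaces at the two ends of your deformation have different asymptotic orbits, and the chain-homotopy (``homotopy of homotopies'') invariance you cite is stated for homotopies with \emph{fixed} ends and does not apply. Even if you replace it by a parametrized cobordism in which the asymptotic orbits are allowed to vary, that cobordism only identifies the counts of the low-energy solutions at the two ends if the low-energy part is a clopen, hence compact, subset of the parametrized moduli space that no solution enters or leaves during the deformation; a chain homotopy, or a naive count of all index-zero solutions, does not isolate the diagonal term, and solutions could a priori drift continuously across any fixed energy threshold as the parameter varies.

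Closing this is exactly what the paper's proof does: it fixes nested isolating neighborhoods $N'_\Gamma\subset N_\Gamma$ of the finitely many starting points of the relevant orbits of $H$, and uses Gromov compactness to shrink $U_H$ and $V_J$ until every continuation solution whose image lies in $\phi^H_t(N_\Gamma)$ actually lies in $\phi^H_t(N'_\Gamma)$ (the class $\ccJ^{\R\times\T,\Gamma}$ in the proof). This selects the ``local'' solutions as a clopen region of each moduli space, makes the resulting local Floer complexes well defined with vanishing differential, and shows the local continuation maps are isomorphisms; that local isomorphism is precisely the invertible leading term your argument needs. The same localization also handles a point your proposal glosses over: two distinct orbits of $H$ may have equal action, so your ``diagonal'' is a block rather than a scalar, and one must rule out leading-order contributions between distinct orbits of the same action --- which again follows only because low-energy solutions cannot travel from a neighborhood of one orbit to a neighborhood of a different one.
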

\proof
The key idea of the proof of this lemma is to use a Gromov compactness result to show that
the low energy
solutions of the Floer equation
defining the chain level continuation map
induce an isomorphism.
For each $K \in \ccH^\reg(\check{C},a_-,a_+,P)$ and $t \in \T$,
let
$\text{ev}_t : \Gamma^P_{\check{C},a_-,a_+}(K) \lra{} M$
be the map sending a capped
$1$-periodic orbit $\gamma = (\widetilde{\gamma},\check{\gamma})$
to $\widetilde{\gamma}(\check{\gamma}(t))$
and define $\Gamma_K := \text{ev}_0(\Gamma^P_{\check{C},a_-,a_+}(K))$.
Define
$\Gamma := \Gamma_H$. 
Since each element of
$\Gamma^P_{\check{C},a_-,a_+}(H)$
is non-degenerate,
we have that $\Gamma$ is a finite subset of $M$.
Therefore we can find open subsets
$N'_\Gamma \subset N_\Gamma \subset M$
of $\Gamma$ so that
the inclusion maps
$\Gamma \hookrightarrow N'_{\Gamma}$, $N'_\Gamma \hookrightarrow N_\Gamma$
are homotopy equivalences and so that
the closure of $N'_{\Gamma}$ is contained in $N_\Gamma$.
Define
$\ccH^{\T,\Gamma} \subset \ccH^\reg(\check{C},a_-,a_+,P)$
to be the subspace of Hamiltonians
$K$ satisfying
$\phi^K_t(\Gamma_K) \subset \phi^H_t(N'_\Gamma)$ for all $t \in \T$
and so that the inclusion map
$\Gamma_K \hookrightarrow N'_\Gamma$ is a homotopy equivalence.

For each manifold $\Sigma$ and each subset $U_0 \subset \ccH^\T(\check{C})$
and
$V_0 \subset \ccJ^{\R \times \T}(\check{C})$,
define
$$\ccH^{\Sigma \times \T}(\check{C})|_{U_0} :=
\{(K_{\sigma,t})_{(\sigma,t) \in \Sigma \times \T} \in \ccH^{\Sigma \times \T}(\check{C}) \ : \ (K_{\sigma,t})_{t \in \T} \in U_0 \ \forall \ \sigma \in \Sigma \},$$
$$\ccJ^{\Sigma \times \R \times \T}(\check{C})|_{V_0} :=
\{(K_{\sigma,s,t})_{(\sigma,s,t) \in \Sigma  \times \R \times \T} \in \ccJ^{\Sigma \times \R \times \T}(\check{C}) \ : \ (K_{\sigma,s,t})_{(s,t) \in \R \times \T} \in V_0 \ \forall \ \sigma \in \Sigma \}$$
(See Definition \ref{definition wspaces of compatible objects}).
For each  $U_0 \subset \ccH^\T(\check{C})$,
define
$\ccJ^{\R \times \T,\Gamma}(U_0) \subset \ccJ^{\R \times \T}(\check{C})$
be the subspace of elements
$Y = (Y_{s,t})_{(s,t) \in \R \times \T}$
so that for each
\begin{itemize}
	\item $K^\pm \in \ccH^{\T,\Gamma} \cap U_0$,
	$\gamma^\pm \in \Gamma^P_{\check{C},a_-,a_+}(K^\pm)$,
	\item $K \in
	\ccH^{\R \times \T}(\check{C})|_{\ccH^{\T,\Gamma} \cap U_0} \cap
	\ccH^{\R \times \T}((K^-,K^+),\check{C})$ 
	and
	\item $u \in \ccM(K,Y,(\gamma_-,\gamma_+))$
	(Definition \ref{definition riemann surface admissible})
	satisfying $u_{s,t} \in \phi^H_t(N_\Gamma)$ for all $(s,t) \in \R \times \T$,
\end{itemize}
we have that
$u(s,t) \in \phi^H_t(N'_\Gamma)$ for all $(s,t) \in \R \times \T$.
Morally, the definition above is used to find those almost complex structures $Y$ for which we can select an isolated low energy region of
$\ccM(K,Y,(\gamma_-,\gamma_+))$.

Since
there are neighborhoods $N_-$, $N_+$ of $a_-$, $a_+$ so that
$\Gamma^P_{\check{C},a'_-,a'_+}(H) = \Gamma^P_{\check{C},a_-,a_+}(H)$ for all
$a'_\pm \in N_\pm$,
we have by a compactness argument
(such as the one in \cite[Lemma 2.3]{McLean:local})
that there exists a weakly contractible
open neighborhood
$U \subset \ccH^\T(\check{C},a_-,a_+)$
of $H$
satisfying
$U \subset \ccH^{\T,\Gamma}$.
Let
 $$
 \iota_{\textnormal{cpx}} : \ccJ^\T(\check{C}) \lhook\joinrel\lra{} \ccJ^{\T}(\check{C}), \quad \iota_{\textnormal{cpx}}((\check{J}_t)_{t \in \T}) := (\check{J}_t)_{(s,t) \in \R \times \T}
 $$
 be the natural inclusion map.
By a Gromov compactness argument
(such as the one in \cite[Lemma 2.3]{McLean:local})
we have, after shrinking $U$ further, that
there is a weakly contractible
open subset
$V \subset \ccJ^{\R \times \T}(\check{C})$ of $\iota_{\textnormal{cpx}}(J)$
so that
$V \subset \ccJ^{\R \times \T,\Gamma}(U_0)$.

As a result,
for each $K \in U$,
we can define the Hamiltonian Floer cohomology group
$HF^p(K|_{N_\Gamma})$
in the usual way except that
we only consider
capped $1$-periodic orbits
inside
$\cap_{t \in \T} \text{ev}_t^{-1}(\phi^H_t(N_\Gamma))$,
almost complex structures
$(J_t)_{t \in \T}$
satisfying
$\iota_{\textnormal{cpx}}((J_t)_{t \in \T}) \in V$,
and Floer trajectories
$u : \R \times \T \lra{} M$ satisfying
$u(s,t) \in \phi^H_t(N_\Gamma)$ for all $(s,t) \in \R \times \T$.
For topological reasons
we get that the differential vanishes and hence
$HF^p(K|_{N_\Gamma}) = CF^p(K|_{N_\Gamma})$ where
$CF^p(K|_{N_\Gamma})$ is the degree $p$ part of the chain complex defining $HF^p(K|_{N_\Gamma})$.

Also by only considering
families of Hamiltonians inside
$\ccH^{\R \times \T}(\check{C})|_U$,
$\ccH^{[0,1] \times \R \times \T}(\check{C})|_U$, families
of almost complex structures inside
$V$ and
$\ccJ^{[0,1] \times \R \times \T}(\check{C})|_V$
and Floer trajectories $u : \R \times \T \lra{} M$ satisfying
$u(s,t) \in \phi^H_t(N_\Gamma)$ for all $(s,t) \in \R \times \T$, one can show
that
the chain level continuation map
$\widetilde{\Phi}^p_{K^{-+},Y^{-+}} : CF^p(K|_{N_\Gamma}) \lra{} CF^p(\check{K}|_{N_\Gamma})$
is an isomorphism for each
$K,\check{K} \in U$,
$K^{-+} \in \ccH^{\R \times \T}(\check{C})|_U \cap 	\ccH^{\R \times \T}((K^-,K^+),\check{C})$
and $Y^{-+} \in \ccJ^{\R \times \T}(\check{C})|_V \cap \ccJ^{\R \times \T,\reg}(K,(Y^-,Y^+),\check{C})$
(see \cite[Chapter 11]{audin2014morse}).
The above arguments work because Gromov compactness \cite[Theorem 9.1.7]{audin2014morse}
still holds since
there are no Floer trajectories $u$
as above satisfying $u(s,t) \in N_\Gamma - N'_\Gamma$ for some $(s,t) \in \R \times \T$ (in other words we have selected appropriate clopen regions inside the moduli spaces of Floer cylinders needed to construct $\widetilde{\Phi}^p_{K^{-+},Y^{-+}}$
and show it is an isomorphism).

Hence if we choose $U_H \subset U$
to be a convex neighborhood of $H$
and $V_J \subset V
 \cap \ccJ^{\T}(\check{C})$
a weakly contractible neighborhood of $J$
then our lemma holds by an action filtration argument.
\qed

Very roughly, the following lemma
says that if one has a family of Hamiltonians whose Floer chain complexes
are all the same (after possibly pulling back by a diffeomorphism),
then the corresponding
continuation map is an isomorphism.

\begin{lemma} \label{lemma continuation map filtration isomorphism}
Let $P = [p_- - 1,p_+ +1]$ for some $p_\pm \in \Z$ and let $(a^m_-,a^m_+) \in \Sc(Q^m_-) \times \Sc(Q^m_+)$, $m \in I$ be a finite collection of $\check{C}$-action intervals.
Suppose there exists
$(H_{s,t})_{(s,t) \in \R \times \T} \in
\ccH^{[0,1] \times \T}(\check{C})$
so that
$H_{s,\bullet} := (H_{s,t})_{t \in \T} \in  \cap_{m \in I} \ccH^\reg(\check{C},a^m_-,a^m_+,P)$
(Definition \ref{equation high enough hamiltonians}) for all $s \in [0,1]$ and
$H_{s,\bullet} <_{\check{C}} H_{\check{s},\bullet}$ for each $s < \check{s}$.
Let $J^\pm \in  \ccJ^{\T,\reg}(H^\pm,\check{C})$ where
$H^- := H_{0,\bullet}$ and $H^+ := H_{1,\bullet}$.
Then there exists
\begin{equation} \label{equation path of H and J joining endpoints}
{H}^{-+}
\in \cap_{m \in I} \ccH^{\R \times \T}(\check{C},a^m_-,a^m_+,H^-,H^+), \quad
{J}^{-+} \in \ccJ^{\R \times \T,\reg}(H^{-+},(J^-,J^+),\check{C})
\end{equation}
so that the chain level continuation map
$$\widetilde{\Phi}^p_{{H}^{-+},{J}^{-+}} : CF^p_{\check{C},a^m_-,a^m_+}(H^-) \lra{} CF^p_{\check{C},a^m_-,a^m_+}(H^+)$$
is an isomorphism for each $p_- \leq p \leq p_+$ and each $m \in I$.
In particular the continuation
map $\Phi^p_{H^-,H^+} : HF^p_{\check{C},a^m_-,a^m_+}(H^-) \lra{} HF^p_{\check{C},a^m_-,a^m_+}(H^+)$ is an isomorphism for each such $p$ and $m$.
\end{lemma}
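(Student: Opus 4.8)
The plan is to deduce this from Lemma~\ref{lemma chain level continuation map filtration isomorphism} by a compactness argument along the path $s \mapsto H_{s,\bullet}$, writing the desired chain level continuation map as a composition of finitely many ``local'' continuation maps, each of which is an isomorphism by that lemma. First I would fix a reference $J \in \ccJ^\T(\check{C})$ and, for each $\sigma \in [0,1]$, apply Lemma~\ref{lemma chain level continuation map filtration isomorphism} to $H_{\sigma,\bullet}$ and $J$ for each of the finitely many intervals $(a^m_-,a^m_+)$, $m \in I$ (note $\{p-1,p,p+1\} \subset P$ for every $p_- \le p \le p_+$, so $H_{\sigma,\bullet} \in \ccH^\reg(\check{C},a^m_-,a^m_+,\{p-1,p,p+1\})$ by the hypothesis $H_{\sigma,\bullet} \in \ccH^\reg(\check{C},a^m_-,a^m_+,P)$). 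Intersecting the resulting neighborhoods over $m \in I$ yields a convex neighborhood $U_\sigma$ of $H_{\sigma,\bullet}$ and, after shrinking, a weakly contractible neighborhood $V_\sigma$ of $J$ such that any chain level continuation map built from Hamiltonians in $U_\sigma$, almost complex structures in $V_\sigma$, and a connecting homotopy staying in $U_\sigma$ and $V_\sigma$ is an isomorphism in degrees $p_- \le p \le p_+$ for every $m$. Since $s \mapsto H_{s,\bullet}$ is continuous, $\{\, s \in [0,1] : H_{s,\bullet} \in U_\sigma \,\}_{\sigma \in [0,1]}$ is an open cover of $[0,1]$; choosing a Lebesgue number I would take a subdivision $0 = s_0 < s_1 < \cdots < s_N = 1$ and points $\sigma_i$ with $H_{s,\bullet} \in U_{\sigma_i}$ for all $s \in [s_i,s_{i+1}]$. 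Only the finitely many $V_{\sigma_i}$ now matter, so I may fix a single weakly contractible neighborhood $V$ of $J$ with $V \subset \bigcap_{i} V_{\sigma_i}$.

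Next, on each subinterval I would take the linear homotopy $H^i := \bigl((1-\rho(s))H_{s_{i+1},\bullet} + \rho(s)H_{s_i,\bullet}\bigr)_{(s,t) \in \R \times \T}$ with $\rho$ the standard monotone cutoff; since $H_{s_i,\bullet} <_{\check{C}} H_{s_{i+1},\bullet}$ by hypothesis, this lies in $\bigcap_{m \in I}\ccH^{\R\times\T}(\check{C},a^m_-,a^m_+,H_{s_i,\bullet},H_{s_{i+1},\bullet})$ by the remark following Definition~\ref{definition chain level continuation map}, and it stays in $U_{\sigma_i}$ by convexity. I would fix regular $J^{(i)} \in \ccJ^{\T,\reg}(H_{s_i,\bullet},\check{C}) \cap V$ for $0 \le i \le N$ (a ubiquitous set intersected with a nonempty open set, hence nonempty) and then $J^i \in \ccJ^{\R\times\T,\reg}(H^i,(J^{(i)},J^{(i+1)}),\check{C})$ taking values in $V$, with the limits matched at the shared endpoints. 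By Lemma~\ref{lemma chain level continuation map filtration isomorphism} each $\widetilde{\Phi}^p_{H^i,J^i} : CF^p_{\check{C},a^m_-,a^m_+}(H_{s_i,\bullet}) \longrightarrow CF^p_{\check{C},a^m_-,a^m_+}(H_{s_{i+1},\bullet})$ is then an isomorphism for $p_- \le p \le p_+$ and $m \in I$. Concatenating the $H^i$ (reparameterizing each to occupy a bounded $s$-range and inserting short constant stretches for smoothness at the junctions) produces $H^{-+} \in \bigcap_{m}\ccH^{\R\times\T}(\check{C},a^m_-,a^m_+,H^-,H^+)$, and concatenating the $J^i$ produces $J^{-+} \in \ccJ^{\R\times\T,\reg}(H^{-+},(J^-,J^+),\check{C})$. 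By the gluing theorem \cite[Theorem 4.4.1]{schwarz1995cohomology} together with Proposition~\ref{proposition compactness result}, the chain level continuation map of the concatenation equals $\widetilde{\Phi}^p_{H^{N-1},J^{N-1}} \circ \cdots \circ \widetilde{\Phi}^p_{H^0,J^0}$, hence is an isomorphism in the required degrees for every $m \in I$. In particular $\Phi^p_{H^-,H^+}$, which is the induced map on homology and is independent of the connecting data by Definition~\ref{definition continuation map}, is an isomorphism.

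The hard part will be the bookkeeping rather than any new idea. I would need to check that the concatenated $H^{-+}$ genuinely satisfies all the monotonicity conditions of Definition~\ref{definition chain level continuation map}; these are local in $s$, so they reduce to the corresponding conditions for each $H^i$, which hold since each $H^i$ is a linear homotopy between the $<_{\check{C}}$-comparable Hamiltonians $H_{s_i,\bullet}$ and $H_{s_{i+1},\bullet}$. I would also need to arrange $J^{-+}$ to be simultaneously regular for all the PDE problems attached to the finitely many data $m \in I$ and to all degrees $p_- \le p \le p_+$ while remaining valued in $V$; this follows because the relevant regular sets are ubiquitous (Proposition~\ref{proposition regular subset for surface}) and a finite intersection of ubiquitous sets with a nonempty open set is nonempty. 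Everything else --- the Lebesgue number argument, convexity of the $U_{\sigma_i}$ and of the ambient spaces $\ccH^\T(\check{C},a^m_-,a^m_+)$, and functoriality of continuation maps under concatenation --- is routine.
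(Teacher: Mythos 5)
Your proposal is correct and follows essentially the same strategy as the paper's proof: subdivide $[0,1]$ by a compactness argument so that consecutive Hamiltonians along the path lie in a common neighborhood furnished by Lemma \ref{lemma chain level continuation map filtration isomorphism} (intersected over the finitely many $m \in I$), realize each short step by a monotone homotopy staying in that neighborhood with almost complex structures in a fixed small neighborhood of $J$, and glue the resulting chain-level isomorphisms. The only cosmetic difference is that the paper inserts auxiliary interpolating Hamiltonians via Lemma \ref{lemma partial order lemma} before gluing, whereas you use the points $H_{s_i,\bullet}$ of the given path directly, which works equally well since the hypotheses already give $H_{s_i,\bullet} <_{\check C} H_{s_{i+1},\bullet}$ and regularity along the whole path.
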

\proof
Here the key idea of the proof is to chop up the homotopy $(H_{s,t})_{(s,t) \in [0,1] \times \T}$ into many small homotopies and then apply Lemma \ref{lemma chain level continuation map filtration isomorphism} above.
Let $J \in \ccJ^\T(\check{C})$.
Let
$U^m_{H_{s,\bullet}} \subset \ccH^\T(\check{C},a^m_-,a^m_+)$
and
$V^m_{H_{s,\bullet}} \subset \ccJ^\T(\check{C},a^m_-,a^m_+)$
be open neighborhoods of $H_{s,\bullet}$
and $J$ respectively
so that the conclusion of
Lemma 
\ref{lemma chain level continuation map filtration isomorphism}
holds with $H$ and $(a_-,a_+)$ replaced
by $H_{s,\bullet}$ and $(a^m_-,a^m_+)$ respectively for each $m \in I$.
Let $U_{H_{s,\bullet}} \subset \cap_{m \in I} U^m_{H_{s,\bullet}}$
be a convex neighborhood of $H_{s,\bullet}$
and $V_{H_{s,\bullet}} \subset \cap_{m \in I} V^m_{H_{s,\bullet}}$
a weakly contractible open neighborhood of $J$ for each $s \in [0,1]$.
Since $[0,1]$ is compact,
we can choose
$s_0 = 0 < s_1 < s_2 \cdots < s_k = 1$
so that
$H_{s_{j+1},\bullet}  \in U_{H_{s_j,\bullet}}$
or
$H_{s_j,\bullet}  \in U_{H_{s_{j+1},\bullet}}$
for all $0 \leq j \leq k-1$.
Since
$H^- <_{\check{C}} H^+$
we have,
by applying
Lemma \ref{lemma partial order lemma}
inductively,
that there exists
$\check{H}_j \in U_{H_{s_j,\bullet}} \cap \cap_{i \in I } \ccH^\reg(\check{C},a^m_-,a^m_+,P)$,
$0 \leq j \leq k$
satisfying
$\check{H}_j <_{\check{C}} \check{H}_{j+1}$ and
$\check{H}_j, \check{H}_{j+1} \in U_{H_{s_j,\bullet}}$
or
$\check{H}_j, \check{H}_{j+1} \in U_{H_{s_{j+1},\bullet}}$
for all $0 \leq j \leq k-1$ and satisfying
$\check{H}_0 = H_{0,\bullet}$ and
$\check{H}_k = H_{1,\bullet}$.
Now choose
$\widetilde{J} \in \cap_{j=1}^k (V_{H_{s_j,\bullet}}
\cap \ccJ^{\T,\reg}(\check{H}_j,\check{C}))$
%
%
and choose
$$
H^{-+,j}
= (H^{-+,j}_{s,t})_{(s,t) \in \R \times \T}
\in \cap_{m \in I} \ccH^{\R \times \T}(\check{C},a^m_-,a^m_+,\check{H}_j,\check{H}_{j+1}),
$$
$$
J^{-+,j} = (J^{-+,j}_{s,t})_{(s,t) \in \R \times \T} \in \ccJ^{\R \times \T,\reg}(H^{-+},(\widetilde{J},\widetilde{J}),\check{C})
$$
satisfying
$(H^{-+,j}_{s,t})_{t \in \T} \in U_{H_{s_{j'},\bullet}}$
and
$(V^{-+,j}_{s,t})_{t \in \T} \in V_{H_{s_{j'},\bullet}}$
for all $s \in \R$,
for some $j' \in \{j,j+1\}$
(independent of $s$)
and for all
$0 \leq j \leq k-1$.
Then by Lemma
\ref{lemma chain level continuation map filtration isomorphism},
we have that
the degree $p$ chain level continuation map
$\widetilde{\Phi}^p_{H^{-+,j},J^{-+,j}}$
from Equation (\ref{equation continuation chain level map})
is an isomorphism for all
$0 \leq j \leq k-1$.
By a repeated
gluing argument
\cite[Theorem 11.1.16]{audin2014morse}
applied to
$(H^{-+,j},J^{-+,j})$ for all $0 \leq j \leq k-1$,
there exists
$(H^{-+},J^{-+})$
as in Equation (\ref{equation path of H and J joining endpoints})
so that so that the chain level continuation map
$$\widetilde{\Phi}^p_{H^{-+},J^{-+}} : CF^p_{\check{C},a^m_-,a^m_+}(H^-) \lra{} CF^p_{\check{C},a^m_-,a^m_+}(H^+)$$
is an isomorphism for each $p_- \leq p \leq p_+$ and $m \in I$.
\qed

\subsection{Action Maps.} \label{subsection Action maps and Cone isomorphisms}

Throughout this subsection,
we will fix a (possibly empty) contact
cylinder $\check{C} = [1-\epsilon,1+\epsilon] \times C \subset M$ with associated Liouville domain $D$.

\begin{defn} \label{definition compatible cones}
Let $(a^j_-,a^j_+) \in \Sc(Q^j_-) \times \Sc(Q^j_+)$ be a $\check{C}$-action interval for $j=0,1$.
We say that
{\it 
$(a^1_-,a^1_+)$
is smaller than
$(a^0_-,a^0_+)$} if
$Q^1_\pm \subset Q^0_\pm$
and if $a^1_\pm \leq a^0_\pm|_{Q^1_\pm}$.
\end{defn}

\begin{remark} \label{remark novikov rings}
If $(a^1_-,a^1_+)$ is smaller
than $(a^0_-,a^0_+)$ then we have induced morphisms of Novikov rings
$\Lambda_\K^{Q^0_+} \lra{} \Lambda_\K^{Q^1_+}$
and
$\Lambda_\K^{Q^0_+,+} \lra{} \Lambda_\K^{Q^1_+,+}$.
In particular,
any $\Lambda_\K^{Q^1_+}$ (resp. $\Lambda_\K^{Q^1_+,+}$)
module is naturally a
$\Lambda_\K^{Q^0_+}$ (resp. $\Lambda_\K^{Q^0_+,+}$)
module.
\end{remark}

\begin{defn} \label{definition action map}

Let $P = \{p-1,p,p+1\}$ for some $p \in \Z$.
Let
$(a^j_-,a^j_+)$ be a $\check{C}$-action interval
for $j=0,1$
so that
$(a^1_-,a^1_+)$
is smaller than $(a^0_-,a^0_+)$.
Let $H \in \cap_{j=0,1} \ccH^\reg(\check{C},a^j_-,a^j_+,P)$.
Then we have a natural chain map
\begin{equation} \label{equation chain version of action map}
CF^q_{\check{C},a^0_-,a^0_+}(H) \lra{\alpha}
CF^q_{\check{C},a^0_-,\infty}(H)/
(CF^q_{\check{C},a^0_-,\infty}(H) \cap CF^q_{\check{C},a^1_+,\infty}(H)) \lra{\beta}
CF^q_{\check{C},a^1_-,a^1_+}(H)
\end{equation}
for each $q \in P$
called a {\it chain level action map}
where $\alpha$ is the natural
quotient map and $\beta$ is
the natural inclusion map of $\K$-modules.
The induced map on homology
\begin{equation} \label{equation action map}
HF^p_{\check{C},a^0_-,a^0_+}(H) \lra{}
HF^p_{\check{C},a^1_-,a^1_+}(H)
\end{equation}
is called an {\it action map}.
\end{defn}

\begin{remark} \label{remark action map properties}
Action maps are morphisms of
$\Lambda_\K^{Q^0_+,+}$-modules
where $Q^0_+$ is the domain of $a^0_+$ (see Remark \ref{remark novikov rings}).
The composition of two action maps is an action map.
Action maps commute with continuation maps by Lemma \ref{lemma filtration}.
\end{remark}

We also have the following important lemma
giving us a sufficient condition for an action map to be an isomorphism.

\begin{lemma} \label{lemma action morphism is isomorphism}
Let $Q_j$, $(a^j_-,a^j_+)$, $j=0,1$, $P$, $p$, $H$ be as in Definition \ref{definition action map}.
Suppose that $\Gamma^P_{\check{C},a^0_-,a^0_+}(H)
= \Gamma^P_{\check{C},a^1_-,a^1_+}(H)$(Definition \ref{defn chain complex}).
Then the action map
$$
HF^p_{\check{C},a^0_-,a^0_+}(H) \lra{}
HF^p_{\check{C},a^1_-,a^1_+}(H)
$$
is an isomorphism.
\end{lemma}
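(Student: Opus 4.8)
The plan is to show that the chain level action map (\ref{equation chain version of action map}) is already a $\K$-module isomorphism in every degree $q \in P = \{p-1,p,p+1\}$, and then pass to homology. First I would fix a single $J \in \ccJ^{\T,\reg}(H,\check{C})$, which exists because $\ccJ^{\T,\reg}(H,\check{C})$ is ubiquitous in $\ccJ^\T(\check{C})$ by Proposition \ref{proposition regularity for translation invariant floer cylinders}, and which may be used to compute both $HF^p_{\check{C},a^0_-,a^0_+}(H)$ and $HF^p_{\check{C},a^1_-,a^1_+}(H)$: the regularity condition depends only on $H$ and on the set of capped $1$-periodic orbits of index in $P$, and by hypothesis this set is the same, $\Gamma^P_{\check{C},a^0_-,a^0_+}(H) = \Gamma^P_{\check{C},a^1_-,a^1_+}(H)$. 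Since the index of an orbit is intrinsic, this also gives $\Gamma^q_{\check{C},a^0_-,a^0_+}(H) = \Gamma^q_{\check{C},a^1_-,a^1_+}(H)$ for each $q \in P$.

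Next I would analyze the composite $\beta \circ \alpha$ on generators in each degree $q \in P$. By Remark \ref{remark basis}, $CF^q_{\check{C},a^j_-,a^j_+}(H)$ is the free $\K$-module on $\Gamma^q_{\check{C},a^j_-,a^j_+}(H)$ for $j = 0,1$. Unwinding Definition \ref{definition Floer chain complex}, a generator $\gamma \in \Gamma^q_{\check{C},a^0_-,a^0_+}(H)$ is sent by the quotient map $\alpha$ to its class, which is nonzero precisely because $a^1_+ \nleq \cA_{H,\check{C}}(\gamma)|_{Q^1_+}$ (this being part of what it means for $\gamma$ to lie in $\Gamma^q_{\check{C},a^1_-,a^1_+}(H)$); the inclusion $\beta$ then carries this class to the corresponding basis vector of $CF^q_{\check{C},a^1_-,a^1_+}(H)$, using $a^1_- \leq \cA_{H,\check{C}}(\gamma)|_{Q^1_-}$. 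Hence $\beta \circ \alpha$ restricts to the identity bijection between the two equal bases $\Gamma^q$, so it is a $\K$-linear isomorphism for every $q \in P$.

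Then I would observe that $\beta \circ \alpha$ is a chain map, as already recorded in Definition \ref{definition action map}; concretely, the Floer differentials $\partial^{(q)}_{H,J}$ on the two complexes are given by the same signed counts of elements of $\overline{\ccM}(H,J,\gamma)$ (Definition \ref{defn of differential}), which depend only on $H$, $J$, and the orbits, not on the choice of action interval, so under the identification of bases the maps $\partial^{(p-1)}$ and $\partial^{(p)}$ agree on the two sides. Since $HF^p = \ker(\partial^{(p)}_{H,J})/\operatorname{Im}(\partial^{(p-1)}_{H,J})$ depends only on the three groups $CF^{p-1}, CF^p, CF^{p+1}$ and these two differentials, an elementary diagram chase (or the obvious five-lemma argument) shows that the induced map $HF^p_{\check{C},a^0_-,a^0_+}(H) \to HF^p_{\check{C},a^1_-,a^1_+}(H)$ is an isomorphism.

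The only genuinely delicate point, and the step I would be most careful about, is the generator-by-generator bookkeeping in the second paragraph: verifying that $\beta \circ \alpha$ neither annihilates a generator of $CF^q_{\check{C},a^0_-,a^0_+}(H)$ nor identifies two of them, which is exactly the statement that the defining inequalities $a^1_- \leq \cA_{H,\check{C}}(\gamma)|_{Q^1_-}$ and $a^1_+ \nleq \cA_{H,\check{C}}(\gamma)|_{Q^1_+}$ for membership in $\Gamma^q_{\check{C},a^1_-,a^1_+}(H)$ are precisely what makes the image of $\gamma$ a nonzero basis vector after the quotient $\alpha$ followed by the inclusion $\beta$. Everything else is formal.
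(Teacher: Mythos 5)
Your proposal is correct and takes essentially the same route as the paper, whose entire proof is the observation that the chain maps $\alpha$ and $\beta$ from Equation (\ref{equation chain version of action map}) are isomorphisms in the relevant degrees; your generator-by-generator verification and the passage to homology are just the details behind that one-line argument. If anything you are slightly more careful than the paper, which says ``in degree $p$'' where one really needs the chain maps to be isomorphisms in all three degrees of $P=\{p-1,p,p+1\}$ — exactly the point your hypothesis $\Gamma^P_{\check{C},a^0_-,a^0_+}(H)=\Gamma^P_{\check{C},a^1_-,a^1_+}(H)$ supplies.
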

\proof
This follows from the fact that the chain maps
$\alpha$ and $\beta$ from Equation
(\ref{equation chain version of action map})
are isomorphisms in degree $p$.
\qed

\subsection{Invariance Under Time Reparameterization.}

Throughout this subsection, $\check{C}$
is a contact cylinder whose associated Liouville domain is $D$.
We will also fix a $\check{C}$-interval domain $(Q_-,Q_+)$.

\begin{defn}
Let $F : \T \lra{} \T$
be a smooth non-decreasing map.
Let $H = (H_t)_{t \in \T}$ be a smooth Hamiltonian.
We define
$H^F = (H^F_t)_{t \in \T}$
by
$H^F_t := F'(t) H_{F(t)}$ for each $t \in \T$.
\end{defn}

\begin{prop} \label{proposition reparameterization isomorphism}
Let $F : \T \lra{} \T$ be a smooth non-decreasing map which is homotopic to the identity map.
Let $(a_-,a_+) \in \Sc(Q_-) \times \Sc(Q_+)$
be a $\check{C}$-action interval
and $P := \{p-1,p,p+1\}$ for some $p \in \Z$.
Then for each $H \in \ccH^\reg(\check{C},a_-,a_+,P)$
there is an isomorphism
\begin{equation} \label{equation reparameterization isomorphism}
HF^*_{\check{C},a_-,a_+}(H)
\cong HF^*_{\check{C},a_-,a_+}(H^F)
\end{equation}
which commutes with continuation maps and action maps.
\end{prop}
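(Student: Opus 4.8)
The plan is to construct an explicit bijection between the Floer-theoretic data for $H$ and that for $H^F$, and to check that this bijection intertwines all the structure maps. The key observation is that the map $F : \T \lra{} \T$ induces a reparameterization of $1$-periodic orbits: if $\gamma$ is a $1$-periodic orbit of $H$, then $t \mapsto \gamma(F(t))$ is a $1$-periodic orbit of $H^F$. Indeed, writing $\gamma^F(t) := \gamma(F(t))$, one computes $\dot{\gamma}^F(t) = F'(t)\dot{\gamma}(F(t)) = F'(t) X^{H}_{F(t)}(\gamma(F(t))) = X^{H^F}_t(\gamma^F(t))$, using $X_{cK} = c X_K$ for a constant $c$ (here $c = F'(t)$ at fixed $t$). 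First I would record this correspondence and note that it is a bijection on the set of $1$-periodic orbits, that it preserves non-degeneracy (the linearized return map is conjugate), that it preserves the Conley--Zehnder index (by property \ref{item:homotopyinvariance} of Definition \ref{definition conley zehnder index}, since $F$ is homotopic to the identity, the reparameterized path of symplectic matrices is homotopic rel endpoints to the original), and hence preserves the index $|\gamma|$. A capping surface for $\gamma$ serves equally as a capping surface for $\gamma^F$, so the bijection lifts to capped $1$-periodic orbits.

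Next I would verify that this bijection preserves the $(H,\check{C})$-action function of Definition \ref{definition action functionals}. By Corollary \ref{corollary action of capped loop disjoint from contact cylinder}, the action is built from the integral $I_0 = -\int_S \widetilde{\gamma}^*\widetilde{\omega}$ (which is unchanged, as the capping surface is the same), from $I_1 = \int_0^1 H_t(\overline{\gamma}(t))\,dt$, and from $I_2 = \int_0^1 m_{H_t}\,dt$. For $I_1$, the substitution $s = F(t)$ gives $\int_0^1 H^F_t(\overline{\gamma}^F(t))\,dt = \int_0^1 F'(t) H_{F(t)}(\overline{\gamma}(F(t)))\,dt = \int_0^1 H_s(\overline{\gamma}(s))\,ds$, so $I_1$ is preserved; similarly $\lambda_{H^F_t} = F'(t)\lambda_{H_{F(t)}}$ and $m_{H^F_t} = F'(t) m_{H_{F(t)}}$, so the slope and height integrals also transform by the change of variables and $I_2$ is preserved. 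Hence $H^F$ is weakly (resp.\ fully) $\check{C}$-compatible whenever $H$ is, $\cA_{H^F,\check{C}}(\gamma^F) = \cA_{H,\check{C}}(\gamma)$, and the bijection identifies $\Gamma^P_{\check{C},a_-,a_+}(H)$ with $\Gamma^P_{\check{C},a_-,a_+}(H^F)$; thus it induces a $\Lambda_\K^{Q_+,+}$-module isomorphism $CF^p_{\check{C},a_-,a_+}(H) \cong CF^p_{\check{C},a_-,a_+}(H^F)$ on the underlying chain groups for each degree $p$, and $H^F \in \ccH^\reg(\check{C},a_-,a_+,P)$ precisely when $H \in \ccH^\reg(\check{C},a_-,a_+,P)$.

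The remaining work is to match the differential. Given $J = (J_t)_{t\in\T} \in \ccJ^{\T,\reg}(H,\check{C})$, define $J^F := (J_{F(t)})_{t\in\T}$, which is again $\check{C}$-compatible. A Floer cylinder $u : \R \times \T \lra{} M$ for $(H,J)$ reparameterizes to $u^F(s,t) := u(s,F(t))$, and one checks directly from the Floer equation \eqref{eqn:floer} with $\beta = dt$ that $u^F$ solves the $(H^F, J^F)$-Floer equation with the same $\beta$: the reparameterization in $t$ scales both $X_{H_t} \otimes dt$ and the effect of $\mathbf{j}$ compatibly. This gives a bijection $\overline{\ccM}(H,J,\gamma^\#) \cong \overline{\ccM}(H^F,J^F,(\gamma^\#)^F)$ preserving dimensions and, with a consistent choice of orientation conventions, signs; hence $J^F \in \ccJ^{\T,\reg}(H^F,\check{C})$ and the chain isomorphism commutes with $\partial_{H,J}$ and $\partial_{H^F,J^F}$. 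Taking homology yields \eqref{equation reparameterization isomorphism}. Finally, commutation with continuation maps and action maps follows by applying the same reparameterization to the $\R\times\T$-families of Hamiltonians and almost complex structures defining those maps (for continuation maps one reparameterizes only the $\T$-coordinate of the family, leaving the $\R$-coordinate fixed; the monotonicity conditions of Definition \ref{definition chain level continuation map} are preserved because $F' \geq 0$), and commutation with action maps is immediate since the bijection is compatible with the quotient and inclusion maps $\alpha$, $\beta$ of \eqref{equation chain version of action map}. The main obstacle I anticipate is purely bookkeeping: verifying that the reparameterization is compatible with the orientation conventions of \cite[Section 17]{ritter2013topological} so that the chain isomorphism is genuinely a chain map over $\K$ rather than merely up to sign; this is a routine but somewhat delicate check, handled by noting that the reparameterization acts trivially on the relevant determinant lines.
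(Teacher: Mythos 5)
Most of your proposal tracks the paper's argument: the bijection $\gamma \mapsto \gamma \circ F$ on ($1$-periodic) orbits, its lift to capped orbits using that $F$ is homotopic to the identity, the preservation of the Conley--Zehnder index via \ref{item:homotopyinvariance}, and the change-of-variables computation showing $\cA_{H^F,\check{C}}(\gamma^F) = \cA_{H,\check{C}}(\gamma)$ are all correct and are exactly what the paper does. The gap is in the step where you match the differentials. Your claimed correspondence $u^F(s,t) := u(s,F(t))$ does \emph{not} solve the $(H^F,J^F)$-Floer equation, and the verification you assert ("the reparameterization in $t$ scales both $X_{H_t}\otimes\beta$ and the effect of $\mathbf{j}$ compatibly") fails: with $\beta = dt$ the equation reads $\partial_s u + J_t(\partial_t u + X_{H_t}(u)) = 0$, and for $u^F$ one has $\partial_t u^F = F'(t)\,\partial_t u$ and $X_{H^F_t} = F'(t)X_{H_{F(t)}}$, but $\partial_s u^F = \partial_s u$ picks up no factor of $F'(t)$. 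Substituting the original equation, the left-hand side of the equation for $u^F$ reduces to $(1-F'(t))\,\partial_s u$, which is nonzero unless $F' \equiv 1$. So the asserted bijection of moduli spaces, the regularity of $J^F$, and the chain-map property are all unjustified as written; this is the heart of the proposition, not bookkeeping about orientations.

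The paper's proof differs from yours precisely at this point: it introduces the function $G:\T\to\R$ with $G(0)=0$ and $G'(t) = F'(t)-1$ and sets $u^F(s,t) := u(s+G(t),F(t))$, i.e.\ it shears the $s$-coordinate by a $t$-dependent amount to compensate for the failure of $(s,t)\mapsto(s,F(t))$ to be holomorphic, and it phrases the identification of the moduli problems at the level of a canonical identification of the linearized Fredholm operators rather than as a purely pointwise check. Any repair of your argument has to incorporate some such correction of the domain reparameterization (or replace the pointwise identification altogether, e.g.\ by an action-gap continuation argument along the family $F_\tau = (1-\tau)\,\mathrm{id} + \tau F$ in the spirit of Lemma \ref{lemma continuation map isomorphism action gap}); as it stands, the central step of your proof is false.
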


\begin{defn} \label{definition reparameterization isomorphism}
The isomorphism (\ref{equation reparameterization isomorphism})
will be called a {\it reparameterization isomorphism}.
\end{defn}

\begin{proof}[Proof of Proposition \ref{proposition reparameterization isomorphism}.]
The correspondence
sending a loop
$\gamma : \T \lra{} M$ to the loop $\gamma \circ F$
induces a bijection between $1$-periodic orbits of $H$
and $H^F$ respectively.
Since $F$ is isotopic through non-decreasing maps to the identity map,
we have that
this correspondence lifts in a natural way to capped $1$-periodic orbits.
Such a bijection induces an isomorphism of modules
$CF^p_{\check{C},a_-,a_+}(H)
\cong CF^p_{\check{C},a_-,a_+}(H^F)$ for each $p \in P$.
Since $F$ is homotopic to the identity map,
there is a unique function
$G : \T \lra{} \R$
satisfying
$G(0) = 0$
and $G'(t) = F'(t) - 1$.
Let $J = (J_t)_{t \in \T} \in \ccJ^\reg(H,\check{C})$ and define $J^F := (J_{F(t)})_{t \in \T}$.
Then there is a bijection between
$(H,J)$-Floer cylinders
$u : \R \times \T \lra{} M$
and
$(H^F,J^F)$-Floer cylinders
$$u^F : \R \times \T \lra{} M, \quad u^F(s,t) := u(s + G(t),F(t)).$$
Also
$J^F \in \ccJ^\reg(H^F,\check{C})$
since the corresponding
Fredholm operators 
linearizing the Floer equation
are canonically identified via a similar correspondence.
Putting everything together we get our isomorphism
(\ref{equation reparameterization isomorphism}).
By considering a similar correspondence for Floer trajectories defining continuation maps
as in Definition \ref{definition continuation map},
we see that such an isomorphism commutes with continuation maps.
Finally, since these isomorphisms preserve action
we get that they also respect action maps.
\end{proof}

\subsection{The pair of pants product}

Throughout this subsection
we will fix a (possibly empty) contact
cylinder $\check{C} = [1-\epsilon,1+\epsilon] \times C \subset M$ with associated Liouville domain $D$.

\begin{defn} \label{definition contact cylinders smaller than}
Let $(a^j_-,a^j_+) \in \Sc(Q^j_-,Q^j_+)$ be a $\check{C}$-action interval for $j=0,1,2$.
We say that
{\it $(a^2_-,a^2_+)$
is smaller than
$(a^j_-,a^j_+)_{j=0,1}$}
if $Q^2_\pm \subset Q^j_\pm$ for $j=0,1$ and
$$a^2_- \leq a^0_-|_{Q^2_-} + a^1_-|_{Q^2_-}, \quad
a^2_+ \leq \min(a^0_-|_{Q^2_+} + a^1_+|_{Q^2_+}, a^1_-|_{Q^2_+} + a^0_+|_{Q^2_+}).
$$
%
\end{defn}

Most of the time, it will be the case that
$$Q_0 = Q_1 = Q_2,
\ \
a_-^2 = a_-^0 + a_-^1 \ \ \text{and} \ \
a_+^2 = \min(a^0_- + a^1_+,a^0_+ + a^1_-).$$

\begin{defn} \label{definition catenation of hamitlonians}
For any time dependent Hamiltonian $H = (H_t)_{t \in \T}$ and any $k \in \N$,
define $(k)H := ((k)H_t)_{t \in \T}$
where $(k)H_t := kH_{kt}$ for all $t \in \T$.
\end{defn}

\begin{defn} \label{defnition pair of pants product}
Let $(a^j_-,a^j_+)$ be a $\check{C}$-action interval for $j=0,1,2$
so that $(a^2_-,a^2_+)$
is smaller than $(a^j_-,a^j_+)_{j=0,1}$.
Define $\kappa_0 = \kappa_1 := 1$
and $\kappa_2 := 2$.
Let
$H^j = (H^j_t)_{t \in \T}$, $j=0,1,2$ be Hamiltonians so that
$(\kappa_j) H^j \in \cap_{k=0}^2 \ccH^\reg(\check{C},a^k_-,a^k_+)$ for $j=0,1,2$.
Suppose that
for each
triple $(\gamma^j)_{j=0,1,2} \in \prod_{j=0,1,2} \Gamma^{p_j}_{\check{C},a^j_-,a^j_+}(\kappa_j H^j)$ of capped $1$-periodic orbits where $p_0,p_1,p_2 \in \Z$,
we have that the associated $1$-periodic
orbits of at least two of them have distinct images in $M$.

If $H^j <_{\check{C}} H^2$ for $j=0,1$,
we have a natural {\it pair of pants product map}
$$
\Phi_{H^0,H^1,H^2} : HF^{p_0}_{\check{C},a^0_-,a^0_+}(H^0) \otimes_{\K} HF^{p_1}_{\check{C},a^1_-,a^1_+}(H^1)
\lra{}
HF^{p_0+p_1}_{\check{C},a^2_-,a^2_+}((2)H^2)
$$
induced from a chain level map
\begin{equation} \label{equation chain level product}
\widetilde{\Phi}_{H,J} :
CF^{p_0}_{\check{C},a^0_-,a^0_+}(H^0) \otimes_{\K} CF^{p_1}_{\check{C},a^1_-,a^1_+}(H^1)
\lra{}
CF^{p_0+p_1}_{\check{C},a^2_-,a^2_+}((2)H^2)
\end{equation}
which is the unique $\K$-linear map satisfying
$$
\widetilde{\Phi}_{H,J}(\gamma_0 \otimes \gamma_1) =
\sum_{\gamma_2 \in \Gamma^{p_0+p_1}_{\check{C},a^2_-,a^2_+}(2H)} \# \ccM(H,J,\gamma) \gamma_2, \quad \ \gamma_j \in \Gamma^{p_j}_{\check{C},a_-,a_+}((\kappa_j) H_j), \ j=0,1
$$
for some fixed $(H,J)$ where
\begin{itemize}
\item
$H \in  \ccH^\Sigma((H^0,H^1,H^2),\check{C})$ 
 where $\Sigma$ is the pair of pants
as in Example \ref{example pair of pants},
\item
$J \in \cap_{k=0}^2 \ccJ^{\Sigma,\reg}(H,(J^0,J^1,J^2)) \cap \ccJ^\Sigma(\check{C})$
where
$J^j \in \ccJ^{\T,\reg}(\kappa_jH^j,\check{C})$ for all $j =0,1,2$
and
\item
$\gamma = (\gamma^j)_{j=0,1,2}$.
\end{itemize}

\end{defn}

\begin{remark} \label{remark pants product is graded commutative and associative and unital}
The condition
$H^j <_{\check{C}} H^2$ for $j=0,1$
ensures that the Hamiltonian $H$ exists
by considering the branched cover
$\varpi$ from Example \ref{example pair of pants}.
The pair of pants product is well defined
since the chain level map (\ref{equation chain level product})
commutes with the differentials
by a gluing argument
\cite[Theorem 4.4.1]{schwarz1995cohomology}
combined with the compactness result
Proposition \ref{proposition compactness result}
and orientation conventions
\cite[Section 17]{ritter2013topological}.
Since the space of families of Hamiltonians
$H$ as above is contractible
and since the space
$\ccJ^{\Sigma}((J^0,J^1,J^2),\check{C})$
is contractible,
we have
(by looking at the moduli spaces in Definition
\ref{definition parameterized moduli spaces}
and Proposition \ref{proposition parameterized regular subset for surface})
that
$\Phi_{H^0,H^1,H^2}$ does not depend on
$\Sigma$, $H$ or $J$ or the choice of
$\Sigma$-compatible $1$-form.
Such a fact follows from
\cite[Section 5.2]{schwarz1995cohomology}
where we only consider
families of Hamiltonians and almost complex structures from Definition
\ref{definition parameterized moduli spaces}
and where the compactness result
\cite[Proposition 5.2.3]{schwarz1995cohomology}
is replaced with Proposition
\ref{proposition compactness result}.

If $Q^0_+ = Q^1_+$
then $\Phi_{H^0,H^1,H^2}$
descends to a $\Lambda^{Q^0_+,+}$-bilinear map.
A gluing argument
\cite[Proposition 5.4.4]{schwarz1995cohomology} together with the use of the moduli spaces in Definition
\ref{definition parameterized moduli spaces}
and Proposition \ref{proposition parameterized regular subset for surface}
tells us
that this product commutes with continuation maps and is associative.
Since $\Phi_{H^0,H^1,H^2}$
only depends on the oriented diffeomorphism type of $\Sigma$, we get that it is commutative
when $a^0_\pm = a^1_\pm$
(see \cite[Section 5.5.1.3]{schwarz1995cohomology}).
The pair of pants product also commutes with action maps since this product respects
the filtrations on the Floer chain complexes
by Lemma \ref{lemma filtration}.
If the constant Hamiltonian
$\min(H^0)$ satisfies
$\min(H^0) <_{\check{C}} \check{H}^0$
for some $C^\infty$ small perturbation $\check{H}^0$ of $H$
and if
$a^0_-([\widetilde{\omega}],\lambda^-,\lambda^+) < \lambda^- \min(H^0)$ for all
$([\widetilde{\omega}],\lambda^-,\lambda^+) \in Q_0 - 0$
and $p_0 = 0$
then
this product has a {\it left unit}
${\bf 1} \in HF^{p_0}_{\check{C},a^0_-,a^0_+}(H^0)$
by \cite[Section 5.5.1.3]{schwarz1995cohomology}
together with the gluing and compactness results stated above.
By {\it left unit} we mean that for each
$H^3 \in\ccH^\reg(\check{C},a^2_-,a^2_+)$
satisfying $2H^2, H^2 <_{\check{C}} H^3$,
$\Phi^{p_1}_{2H^2,H^3}(\Phi_{H^0,H^1,H^2}({\bf 1} \otimes x))$
is the image of $x$ under the natural
composition
$$HF^{p_1}_{\check{C},a^1_-,a^1_+}(H^1) \lra{}
HF^{p_1}_{\check{C},a^1_-,a^1_+}(H^3)
\lra{}
HF^{p_0+p_1}_{\check{C},a^2_-,a^2_+}(H^3)$$
where the first map is a continuation map and the second map is an action map for each
$x \in HF^{p_1}_{\check{C},a^1_-,a^1_+}(H^1)$.
If $H^1$ satisfies similar conditions
then we have a {\it right unit}.
Left (resp. right) units get sent to left (resp. right) units under continuation and action maps.
\end{remark}


\section{Floer Cohomology for Lower Semi-Continuous Hamiltonians} \label{section floer cohomology for lower semi-continuous Hamitlonians}

In this section we introduce Hamiltonian Floer cohomology for certain lower semi-continuous Hamiltonians.
This will be the direct limit of Floer cohomology groups of smooth Hamiltonians strictly smaller than such a lower semi-continuous Hamiltonian.
These modules will basically satisfy all the same properties
as the Floer groups defined in Section
\ref{section Hamitlonian Floer cohomology and Filtrations}.
Such ideas have been
considered
in \cite{groman2015floer}.

\subsection{Main Definitions.}

Throughout this subsection we will fix a contact cylinder $\check{C} = [1-\epsilon,1+\epsilon] \times C$ with associated Liouville domain $D$ and cylindrical coordinate $r_C$.
We will also fix a $\check{C}$-action interval $(a_-,a_+)$.

\begin{defn} \label{definition lower semi-continuous hamiltonian}

Recall that a function $f : X \lra{} \R \cup \{\infty\}$ from a metric space $X$ is {\it lower semi-continuous} if for all $x_0 \in X$,
$\liminf_{x \to x_0} f(x) \geq f(x_0)$.
A {\it lower semi-continuous Hamiltonian} is a family of functions $(H_t)_{t \in \T}$
from $M$ to $\R \cup \{\infty\}$
so that the function
$$\widehat{H} : \T \times M \lra{} \R \cup \{\infty\}, \quad \widehat{H}(t,x) := H_t(x)$$
is lower semi-continuous.
Such a Hamiltonian is {\it continuous}
if $\widehat{H}$ is continuous.

A lower semi-continuous Hamiltonian $H = (H_t)_{t \in \T}$
is {\it weakly $\check{C}$-compatible}
if the restriction $H_t|_{[1+\epsilon/8,1+\epsilon/2] \times C}$ is either equal to $\infty$ or
$\lambda_{H_t} r_C + m_{H_t}$ for some constants $\lambda_{H_t}$, $m_{H_t}$ for each $t \in \T$.
We also require that the maps
$t \to \lambda_{H_t}$ and $t \to m_{H_t}$ from $\T$ to $\R$
are lower semi-continuous where $\lambda_{H_t} := 0$ and $m_{H_t} := \infty$ if $H_t|_{[1+\epsilon/8,1+\epsilon/2] \times C} = \infty$.
The {\it slope of $H_t$ along $\check{C}$}
is defined to be $\lambda_{H_t}$
and
the {\it height of $H_t$ along $\check{C}$}
is defined to be $m_{H_t}$ for all $t \in \T$.
Also the slope and height of $H_t$ is defined to be $0$ if $\check{C}$ is the empty contact cylinder.
We say that $H$ is {\it $\check{C}$-compatible}
if it is weakly $\check{C}$-compatible
and if
$H_t|_{M - (D \cup \check{C})}$ is constant
(possibly equal to $\infty$) for each $t \in \T$.
We define
$\ccH^{\T,\ls}(\check{C})$ (resp. $\overline{\ccH}^{\T,\ls}(\check{C})$)
to be the set of weakly $\check{C}$-compatible
(resp. $\check{C}$-compatible) lower semi-continuous Hamiltonians $H$.
Define
$$\ccH^{\T,\ls}(\check{C},a_-,a_+) := $$
\begin{equation} \label{equation Q lower semincontinuous hamiltonians}
\left\{\begin{array}{ll}
\ccH^{\T,\ls}(\check{C}) & \text{if} \ (a_-,a_+) \ \text{is small} \\
\{H \in \overline{\ccH}^{\T,\ls}(\check{C}) \ : \ m_{H_t} > \text{height}(a_-,a_+) \ \forall \ t \ \in \T \} & \text{otherwise}
\end{array}
\right.
\end{equation}
where $\text{height}(a_-,a_+)$
is defined as in Equation (\ref{equation height equation}) and where $(m_{H_t})_{t \in \T}$ are the heights of $H = (H_t)_{t \in \T}$.
For $H^-,H^+ \in \ccH^{\T,\ls}(\check{C})$,
we say $H^- \leq^\ls_{\check{C}} H^+$
if
\begin{enumerate}
\item $H^-_t \leq H^+_t$,
\item $\lambda_{H_t^-} \leq \lambda_{H_t^+}$, $m_{H_t^-} \leq m_{H_t^+}$,
\item 
$m_{H^+_t} - m_{H^-_t} \leq (H^+_t - H^-_t)|_{M - (D \cup ([1,1+\epsilon/2] \times C)}$ (where we define $\infty-\infty$ to be $0$)
\end{enumerate}
for all $t \in \T$
where $(\lambda_{H_t^\pm})_{t \in \T}$,
$(m_{H_t^\pm})_{t \in \T}$ are the slopes and heights of $H^\pm$.

For any
$P \subset \Z$
and any
$H \in \ccH^{\T,\ls}(\check{C},a_-,a_+)$,
define
$\ccH^\reg(<_{\check{C}} H,a_-,a_+,P)$
to be the subspace of smooth Hamiltonians
$\check{H} = (\check{H}_t)_{t \in \T} \in \ccH^\reg(\check{C},a_-,a_+,P)$
(Definition \ref{equation high enough hamiltonians})
satisfying
\begin{itemize}
\item $\check{H}_t < H_t$,
\item $\lambda_{\check{H}_t} < \lambda_{H_t}$,
$m_{\check{H}_t} < m_{H_t}$ and
\item $m_{H^+_t} - m_{H^-_t} < (H^+_t - H^-_t)|_{M - (D \cup ([1,1+\epsilon/2] \times C)}$ for all $t \in \T$.
\end{itemize}
Define
$\leq_{\check{C}}$ to be the relation on
$\ccH^\reg(<_{\check{C}} H,a_-,a_+,P)$
where
$H^- \leq_{\check{C}} H^+$
if $H^- = H^+$
or $H^- <_{\check{C}} H^+$.
Also define
$\ccH^\reg(<_{\check{C}} H,a_-,a_+) :=
\ccH^\reg(<_{\check{C}} H,a_-,a_+,\Z)$.
\end{defn}

\begin{lemma} \label{lemma leq check C is a directed system}
For any
$P \subset \Z$
and any
$H \in \ccH^{\T,\ls}(\check{C},a_-,a_+,P)$,
we have that
$(\ccH^\reg(<_{\check{C}} H,a_-,a_+,P), \leq_{\check{C}})$
is a non-empty directed set.
\end{lemma}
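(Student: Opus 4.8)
The plan is to verify directly the three defining conditions of a directed set in Definition \ref{definition directed set}: that $\leq_{\check C}$ is reflexive and transitive on $\ccH^\reg(<_{\check C}H,a_-,a_+,P)$, that this set is non-empty, and that any two of its elements have a common upper bound. Reflexivity is built into the definition of $\leq_{\check C}$. For transitivity, if $H^0<_{\check C}H^1<_{\check C}H^2$ then all quantities occurring are finite (the three Hamiltonians are smooth), so adding the corresponding pairs of inequalities from Definition \ref{definition partial order on Hamiltonians} gives $H^0_t<H^2_t$, $\lambda_{H^0_t}<\lambda_{H^2_t}$, $m_{H^0_t}<m_{H^2_t}$ and $m_{H^2_t}-m_{H^0_t}<(H^2_t-H^0_t)|_{M-(D\cup([1,1+\epsilon/2]\times C))}$ for every $t$ (using $\inf f+\inf g\leq\inf(f+g)$ for the last one), while the cases where one $<_{\check C}$ is replaced by an equality are immediate. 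So the substance is the non-emptiness and the upper-bound property.

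For non-emptiness, the plan is to approximate $H$ strictly from below by a smooth, $\check C$-compatible, regular Hamiltonian. Since $\T$ and $M$ are compact and $H$, together with $t\mapsto\lambda_{H_t}$ and $t\mapsto m_{H_t}$, is lower semi-continuous (Definition \ref{definition lower semi-continuous hamiltonian}), each of these attains its (possibly infinite) infimum; in particular, when $(a_-,a_+)$ is not small the hypothesis $H\in\ccH^{\T,\ls}(\check C,a_-,a_+)$ forces $\min_t m_{H_t}>\text{height}(a_-,a_+)$, so finite heights can be chosen strictly between $\text{height}(a_-,a_+)$ and $m_{H_t}$. A smoothing argument then produces a smooth $\check C$-compatible (or merely weakly $\check C$-compatible, if $(a_-,a_+)$ is small) Hamiltonian $\check H'$ with $\check H'_t<H_t$, $\lambda_{\check H'_t}<\lambda_{H_t}$, $m_{\check H'_t}<m_{H_t}$ and the complement inequality holding for every $t$ (reading $\infty$ via the conventions of Definition \ref{definition lower semi-continuous hamiltonian}), and with $\check H'\in\ccH^\T(\check C,a_-,a_+)$. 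These are all strict, hence open, conditions; since $\ccH^\reg(\check C,a_-,a_+)$, which is contained in $\ccH^\reg(\check C,a_-,a_+,P)$, is dense in $\ccH^\T(\check C,a_-,a_+)$ by Lemma \ref{lemma regular Hamiltonians}, an arbitrarily small $C^\infty$ perturbation moves $\check H'$ into $\ccH^\reg(\check C,a_-,a_+,P)$ without spoiling the open conditions, producing an element of $\ccH^\reg(<_{\check C}H,a_-,a_+,P)$.

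For the upper-bound property, given $H^0,H^1\in\ccH^\reg(<_{\check C}H,a_-,a_+,P)$ the plan is to squeeze a regular Hamiltonian strictly between them and $H$. Since $H^0,H^1$ are smooth, $\max(H^0,H^1)$ is continuous, so $H-\max(H^0,H^1)$ is lower semi-continuous and everywhere positive and therefore has positive (possibly infinite) infimum on $\T\times M$; likewise the affine-in-$r_C$ data $(\lambda_{H^j_t},m_{H^j_t})$, $j=0,1$, near $\check C$ are strictly dominated, uniformly in $t$, by those of $H$. Choosing near $\check C$ an affine function $\lambda_t r_C+m_t$ with $\max(\lambda_{H^0_t},\lambda_{H^1_t})<\lambda_t<\lambda_{H_t}$ and $\max(m_{H^0_t},m_{H^1_t})<m_t<m_{H_t}$ (so that it dominates $H^0_t$ and $H^1_t$ there, since $r_C>0$), setting $H^2_{\mathrm{pre}}$ equal to the constant $m_t$ off $D\cup\check C$, and interpolating smoothly over $D$ and the transition annuli inside the positive gap between $\max(H^0,H^1)$ and $H$, produces a smooth $\check C$-compatible $H^2_{\mathrm{pre}}\in\ccH^\T(\check C,a_-,a_+)$ with $H^j<_{\check C}H^2_{\mathrm{pre}}<_{\check C}H$ for $j=0,1$. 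As before, a small perturbation furnished by Lemma \ref{lemma regular Hamiltonians} moves $H^2_{\mathrm{pre}}$ into $\ccH^\reg(\check C,a_-,a_+,P)$, and the inequalities $H^j<_{\check C}H^2_{\mathrm{pre}}$ and $H^2_{\mathrm{pre}}<_{\check C}H$ have strictly positive margin (continuous data on compact sets), so they persist for the perturbed $H^2$, which is then the required common upper bound.

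The hard part will be the construction common to the last two paragraphs: producing a \emph{smooth} Hamiltonian that is strictly below the lower semi-continuous $H$ in all four senses while preserving the $\check C$-compatibility normal form near the contact cylinder --- in particular handling the affine coefficients, which are only lower semi-continuous in $t$, and the case (relevant to $H_D$ as in \eqref{equation index function Hamiltonian}) where $H\equiv\infty$ off $D\cup\check C$, for which the complement inequality must be read through the conventions fixed in Definition \ref{definition lower semi-continuous hamiltonian}. After that, the passage to regular Floer data without breaking strictness is exactly what the density statement Lemma \ref{lemma regular Hamiltonians} supplies, and what remains is routine bookkeeping of inequalities.
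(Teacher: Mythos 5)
Your proposal is correct and follows essentially the same route as the paper: use lower semi-continuity of $H$ and of the slope/height data to squeeze a smooth (weakly) $\check{C}$-compatible Hamiltonian strictly between the given data and $H$ (enforcing the affine normal form near $\check{C}$), then pass to a regular element via the density statement of Lemma \ref{lemma regular Hamiltonians} while preserving the strict inequalities. The only cosmetic difference is that the paper routes the final perturbation through Lemma \ref{lemma partial order lemma}, whereas you argue directly that the strict conditions have positive margin on compact sets and so persist under small perturbation; both are fine.
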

\proof
It is clear that $\leq_{\check{C}}$ is
 reflexive and transitive.
We now need to show that ever pair of elements has an upper bound.
Now suppose $H^j = (H^j_t)_{t \in \T} \in \ccH^\reg(<_{\check{C}} H,a_-,a_+,P)$ for $j=0,1$.
Let $(\lambda_{H_t})_{t \in \T}$,
and $(m_{H_t})_{t \in \T}$
(resp. $(\lambda_{H^j_t})_{t \in \T}$
and $(m_{H^j_t})_{t \in \T}$)
be the slopes and heights of
$H = (H_t)_{t \in \T}$
(resp. $H^j = (H^j_t)_{t \in \T}$ for $j=0,1$).
Since $\lambda_{H_t}$ and $m_{H_t}$
varies in a lower semi-continuous way
with respect to $t \in \T$,
we can find smooth families of constants
$(\lambda_t)_{t \in \T}$, $(m_t)_{t \in \T}$
so that
$\lambda_{H^j_t} < \lambda_t < \lambda_{H_t}$
and
$m_{H^j_t} < m_t < m_{H_t}$ for all $t \in \T$.
Also since $H$ is lower semi-continuous,
we can find a smooth Hamiltonian
$\check{H} = (\check{H}_t)_{t \in \T}$
so that
$H^j_t < \check{H}_t < H_t$ for all $t \in \T$
and also so that
$\check{H}_t|_{M - (\check{C} \cup D)}$ is constant for each $t \in \T$ if $(a_-,a_+)$ is not small.
We can also choose $\check{H}$ so that
$(H_t - \check{H}_t - (m_{H_t} - m_t))|_{M-(D \cup ([1,1+\epsilon/2] \times C))} > 0$ and
$(\check{H}_t - H^j_t - (m_t - m_{H^j_t}))|_{M-(D \cup ([1,1+\epsilon/2] \times C))} > 0$
for all $j=0,1$ and $t \in \T$.
Then by using a bump function on $\check{C}$
to interpolate between $\check{H}_t$
and $\lambda_t r_C + m_t$
in the region $((1,1+\epsilon/16) \cup (1+\epsilon/2,1+3\epsilon/4)) \times C$, we can assume
that $\check{H}_t|_{[1+\epsilon/8,1+\epsilon/2] \times C} = \lambda_t r_C + m_t$ for all $t \in \T$ as well.
Hence $\check{H} \in \ccH^\T(\check{C},a_-,a_+)$
and $H^j <_{\check{C}} \check{H} \leq^\ls_{\check{C}} H$.
By Lemma \ref{lemma partial order lemma}
combined with the fact that
$\ccH^\T(\check{C},a_-,a_+)$ is a ubiquitous
subset of $\ccH^\T(\check{C})$
by Lemma \ref{lemma regular Hamiltonians},
we can find
$H^2 \in \ccH^\reg(\check{C},a_-,a_+)$
satisfying
$H^j <_{\check{C}} H^2 <_{\check{C}} \check{H}$ for $j=0,1$.
Hence $H^2 \in \ccH^\reg(<_{\check{C}} H,a_-,a_+)$ and $H^j <_{\check{C}} H^2$ for $j=0,1$.
A similar construction also shows that
$\ccH^\reg(<_{\check{C}} H,a_-,a_+)$ is non-empty.
This completes our lemma.
\qed

\bigskip

\begin{defn} \label{definition of hamiltonian floer cohomology for lower semicontinuous hamiltonians}

Let
$H \in \ccH^{\T,\ls}(\check{C},a_-,a_+)$.
By Lemma \ref{lemma leq check C is a directed system},
we can define
$$HF^p_{\check{C},a_-,a_+}(H) := \varinjlim_{\check{H} \in \ccH^\reg(<_{\check{C}} H,a_-,a_+,P)} HF^*_{\check{C},a_-,a_+}(\check{H})$$
for any $P$ containing $\{p-1,p,p+1\}$
where the direct limit is taken with respect to
the directed system
$(\ccH^\reg(<_{\check{C}} H,a_-,a_+,P),\leq_{\check{C}})$ whose morphisms are continuation maps.
\end{defn}

If the lower semi-continuous Hamiltonian
$H$ from Definition \ref{definition of hamiltonian floer cohomology for lower semicontinuous hamiltonians}
is in fact smooth and an element
of $\ccH^\reg(Q,a_-,a_+,P)$
then it would be good to
check that
Definitions \ref{definition of hamiltonian floer cohomology for lower semicontinuous hamiltonians}
and
\ref{definition hamiltonian floer cohomology}
agree.
We will do this now.

%

\begin{lemma} \label{lemma both floer definitions agree}
Let $p \in \Z$ and let
$H \in \ccH^\reg(\check{C},a_-,a_+,P)$
where $P = \{p-1,p,p+1\}$.
Let $G_1$
be equal to
$HF^p_{\check{C},a_-,a_+}(H)$
from Definition \ref{definition hamiltonian floer cohomology}
and let
$G_2$
be equal to
$HF^p_{\check{C},a_-,a_+}(H)$
from Definition \ref{definition of hamiltonian floer cohomology for lower semicontinuous hamiltonians}.
Then the natural map
$G_2 \lra{} G_1$
induced by continuation maps is an isomorphism.
\end{lemma}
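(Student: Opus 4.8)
The plan is to realise $G_2$ as a direct limit over a directed set that contains a cofinal subfamily on which the continuation maps into $G_1$ are already isomorphisms; the identification $G_2\cong G_1$ is then formal.

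Write $\mathcal{S} := \ccH^\reg(<_{\check{C}} H, a_-, a_+, P)$ for the directed set of Definition \ref{definition of hamiltonian floer cohomology for lower semicontinuous hamiltonians} (with $P=\{p-1,p,p+1\}$), so that $G_2 = \varinjlim_{\check{H}\in\mathcal{S}} HF^p_{\check{C},a_-,a_+}(\check{H})$ and the map $G_2\to G_1$ of the statement is the one induced by the continuation maps $\Phi^p_{\check{H},H}\colon HF^p_{\check{C},a_-,a_+}(\check{H})\to HF^p_{\check{C},a_-,a_+}(H)=G_1$, which form a cocone over $\mathcal{S}$ by functoriality of continuation maps (\ref{item continuation map property}). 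First I would invoke Lemma \ref{lemma chain level continuation map filtration isomorphism} (and its proof) at $H$ to obtain a convex neighbourhood $U_H$ of $H$ with $U_H\subset\ccH^\reg(\check{C},a_-,a_+,P)$. I would then check that $\mathcal{S}\cap U_H$ is cofinal in $\mathcal{S}$: given $\check{H}\in\mathcal{S}$ we have $\check{H}<_{\check{C}}H$, and since every defining inequality of $<_{\check{C}}$ is strict there is a neighbourhood $W$ of $H$ with $\check{H}<_{\check{C}}H'$ for all $H'\in W$; combining the construction in the proof of Lemma \ref{lemma partial order lemma} (which produces Hamiltonians $<_{\check{C}}$-below $H$ and $C^\infty$-converging to $H$) with the density of $\ccH^\reg(\check{C},a_-,a_+,P)$ (Lemma \ref{lemma regular Hamiltonians}), one finds $H'\in U_H\cap W\cap\ccH^\reg(\check{C},a_-,a_+,P)$ with $H'<_{\check{C}}H$, so that $H'\in\mathcal{S}\cap U_H$ and $\check{H}<_{\check{C}}H'$.

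Next I would show $\Phi^p_{\check{H},H}$ is an isomorphism for every $\check{H}\in\mathcal{S}\cap U_H$. For such $\check{H}$ the affine path $H_{s,\bullet}:=(1-s)\check{H}+sH$, $s\in[0,1]$, lies in the convex set $U_H\subset\ccH^\reg(\check{C},a_-,a_+,P)$, and since $\check{H}<_{\check{C}}H$ and each defining inequality of $<_{\check{C}}$ depends linearly on the difference $H-\check{H}$ one checks directly that $H_{s,\bullet}<_{\check{C}}H_{\check{s},\bullet}$ whenever $s<\check{s}$. Thus the hypotheses of Lemma \ref{lemma continuation map filtration isomorphism} are met (with the collection of action intervals a single element), and that lemma gives that $\Phi^p_{\check{H},H}$ is an isomorphism. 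To finish, I would restrict the directed system defining $G_2$ to the cofinal subset $\mathcal{S}\cap U_H$, which leaves the direct limit unchanged (Lemma \ref{lemma isomorphism condition}); the isomorphisms $\Phi^p_{\check{H},H}$, $\check{H}\in\mathcal{S}\cap U_H$, being compatible with continuation maps, then constitute an isomorphism from the directed system $(HF^p_{\check{C},a_-,a_+}(\check{H}))_{\check{H}\in\mathcal{S}\cap U_H}$ to the constant system with value $G_1$, so on passing to direct limits the induced map $G_2\to G_1$ — which is exactly the map of the statement — is an isomorphism.

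The step I expect to be the main obstacle is the cofinality bookkeeping: producing approximants $H'$ of $H$ that simultaneously lie in $U_H$, are regular, are strictly $<_{\check{C}}$-below $H$, and strictly $<_{\check{C}}$-above the given $\check{H}$, so that the relevant continuation maps and the interpolating path invoked in Lemma \ref{lemma continuation map filtration isomorphism} are all well defined. Everything after that point is formal manipulation of direct limits.
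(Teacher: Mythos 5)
Your overall strategy is the same as the paper's: reduce to a cofinal subfamily $\Xi := U_H \cap \ccH^\reg(<_{\check{C}} H,a_-,a_+,P)$ via Lemma \ref{lemma isomorphism condition}, establish cofinality from Lemma \ref{lemma partial order lemma} together with density of the regular Hamiltonians, and show the continuation maps $\Phi^p_{\check{H},H}$ are isomorphisms for $\check{H}\in\Xi$. The cofinality bookkeeping you worried about is fine and is exactly the argument used in the proof of Lemma \ref{lemma leq check C is a directed system}.

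The gap is in the step where you prove $\Phi^p_{\check{H},H}$ is an isomorphism. You assert that the affine path $(1-s)\check{H}+sH$ lies in ``the convex set $U_H\subset\ccH^\reg(\check{C},a_-,a_+,P)$'' and then invoke Lemma \ref{lemma continuation map filtration isomorphism}. But Lemma \ref{lemma chain level continuation map filtration isomorphism} only produces a convex neighbourhood $U_H$ inside $\ccH^\T(\check{C},a_-,a_+)$, \emph{not} inside the regular locus: regularity (nondegeneracy of all orbits in $\Gamma^P_{\check{C},a_-,a_+}$, stability of that set under perturbing $a_\pm$, etc.) is an open dense condition and is not satisfied on an entire neighbourhood of $H$, nor is it preserved along affine segments between two regular Hamiltonians. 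Since Lemma \ref{lemma continuation map filtration isomorphism} requires $H_{s,\bullet}\in\ccH^\reg(\check{C},a_-,a_+,P)$ for \emph{every} $s\in[0,1]$, its hypotheses are unverified (and generically false) for your path, so that lemma cannot be applied. The repair is immediate and is what the paper does: apply Lemma \ref{lemma chain level continuation map filtration isomorphism} directly with $H^-=\check{H}$ and $H^+=H$ (both lie in $U_H\cap\ccH^\reg(\check{C},a_-,a_+,P)$), using a monotone straight-line homotopy $H^{-+}$ whose time slices stay in $U_H$ by convexity and a regular $J^{-+}$ staying in $V_J$; that lemma requires regularity only of the two endpoints and concludes that the chain-level continuation map, hence $\Phi^p_{\check{H},H}$, is an isomorphism. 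With that substitution the rest of your argument goes through.
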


As a result, we have consistent notation and we do not need to distinguish between
Definition \ref{definition hamiltonian floer cohomology}
and
Definition \ref{definition of hamiltonian floer cohomology for lower semicontinuous hamiltonians}.

\begin{proof}[Proof of Lemma \ref{lemma both floer definitions agree}.]
For any
$\check{H} \in \ccH^\reg(<_{\check{C}} H,a_-,a_+,P)$,
we let
$HF^p_{\check{C},a_-,a_+}(\check{H})$
be as in Definition
\ref{definition hamiltonian floer cohomology}.
By Lemma \ref{lemma isomorphism condition},
it is sufficient for us to show that
there is a cofinal family $\Xi$
inside
$(\ccH^\reg(<_{\check{C}} H,a_-,a_+,P),\leq_{\check{C}})$
so that the natural map
$HF^p_{\check{C},a_-,a_+}(\check{H}) \lra{} G_1$ is an isomorphism for each element $\check{H}$ of this cofinal family.
Let $U_H \subset \ccH^\T(\check{C},a_-,a_+)$ be the neighborhood of $H$
satisfying the properties of Lemma
\ref{lemma chain level continuation map filtration isomorphism}.	
Define $\Xi := U_H \cap \ccH^\reg(<_{\check{C}} H,a_-,a_+,P)$.
Then $\Xi$ is a cofinal family by Lemma
\ref{lemma partial order lemma}
and the natural map
$HF^*_{\check{C},a_-,a_+}(\check{H}) \lra{} G_1$ is an isomorphism
for all $\check{H} \in \Xi$ by Lemma 
\ref{lemma chain level continuation map filtration isomorphism}. This completes the lemma.
\end{proof}

\subsection{Continuation Maps}

Throughout this subsection, fix a contact cylinder $\check{C}$
together with a $\check{C}$-action interval $(a_-,a_+)$.

\begin{defn} \label{definition continuation map for lower semicontinuous Hamiltonians}

Let $p \in \Z$ and let $P \subset \Z$
satisfy $\{p-1,p,p+1\} \subset P$.
Let
$H^-,H^+ \in \ccH^{\ls}(\check{C},a_-,a_+)$
satisfy
$H^- \leq^\ls_{\check{C}} H^+$.
Then we have a natural inclusion of directed systems
$$\ccH^\reg(<_{\check{C}} H^-, a_-,a_+,P) \subset \ccH^\reg(<_{\check{C}} H^+, a_-,a_+,P)$$
and this induces a morphism
$$\Phi^p_{H^-,H^+} : HF^p_{\check{C},a_-,a_+}(H^-) \lra{}
HF^p_{\check{C},a_-,a_+}(H^+)$$
called a {\it continuation map}.
\end{defn}

If $H^-,H^+$ from Definition \ref{definition continuation map for lower semicontinuous Hamiltonians} above
are elements of
$\ccH^{\reg}(\check{C},a_-,a_+,P)$
then the continuation map from this definition is equal to the continuation map
from Definition \ref{definition continuation map}
by Lemma \ref{lemma both floer definitions agree}.
The composition of two continuation maps is a continuation map.

The following Lemma is a generalization of a slightly weaker version of Lemma \ref{lemma continuation map filtration isomorphism}
to all smooth Hamiltonians (not just ones in $\ccH^\reg(\check{C},a_-,a_+)$).

\begin{lemma} \label{lemma continuation map isomorphism action gap}
Let $H = (H_{s,t})_{(s,t) \in [0,1] \times \T}$
be a smooth family of autonomous smooth Hamiltonians
and define $H_{s,\bullet} := (H_{s,t})_{t \in \T}$ for all $s \in [0,1]$.
Fix $p \in \Z$.
Suppose 
\begin{enumerate}
	\item $H_{s,\bullet} \in \ccH^{\T,\ls}(\check{C},a_-,a_+)$ for each $s \in [0,1]$ and
	$H_{s_1,\bullet} \leq^\ls_{\check{C}} H_{s_2,\bullet}$
	for all $s_1 \leq s_2$,
	\item \label{item:neighborhoods}
	 there are neighborhoods
	$N_-$, $N_+$ of $a_-$, $a_+$ in $\Sc(Q_-)$, $\Sc(Q_+)$ respectively so that
	$$\Gamma^P_{\check{C},a_-,a_+}(H_{s,\bullet}) = \Gamma^P_{\check{C},a'_-,a'_+}(H_{s,\bullet})$$
	(Definition \ref{defn chain complex})
	for all $a'_\pm \in N_\pm$, $s \in [0,1]$
	where
	$P = [p-1-n,p+1+n]$
	\item \label{item:assumption3}
	 and that there are no $1$-periodic orbits of $H_{s,\bullet}$
	contained in $[1+\epsilon/8,1+\epsilon/2] \times C$ for all $s \in [0,1]$.
\end{enumerate}
Then the continuation map
$$\Phi^p_{H_{0,\bullet},H_{1,\bullet}} : HF^p_{\check{C},a_-,a_+}(H_{0,\bullet}) \lra{}
HF^p_{\check{C},a_-,a_+}(H_{1,\bullet})$$
in degree $p$ is an isomorphism.
\end{lemma}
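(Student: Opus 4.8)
The plan is to reduce the statement to the results already proven for families of \emph{regular} Hamiltonians, Lemmas~\ref{lemma chain level continuation map filtration isomorphism} and \ref{lemma continuation map filtration isomorphism}, via the lower semi-continuous formalism of Definition~\ref{definition of hamiltonian floer cohomology for lower semicontinuous hamiltonians}. The genuinely new feature is that the $H_{s,\bullet}$ need not lie in $\ccH^\reg(\check{C},a_-,a_+,P)$, so Lemma~\ref{lemma continuation map filtration isomorphism} cannot be applied directly; one localizes near each parameter instead. Since continuation maps for lower semi-continuous Hamiltonians compose, it suffices to prove the local claim: for every $s_0\in[0,1]$ there is $\delta>0$ such that $\Phi^p_{H_{s_1,\bullet},H_{s_2,\bullet}}$ is an isomorphism whenever $s_0-\delta\le s_1\le s_2\le s_0+\delta$. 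Granting this, cover $[0,1]$ by finitely many such intervals, choose $0=t_0<\cdots<t_N=1$ subordinate to the cover, and use $\Phi^p_{H_{0,\bullet},H_{1,\bullet}}=\Phi^p_{H_{t_{N-1},\bullet},H_{t_N,\bullet}}\circ\cdots\circ\Phi^p_{H_{t_0,\bullet},H_{t_1,\bullet}}$, a composite of isomorphisms.

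To prove the local claim, fix $s_0$. Since $H_{s_0,\bullet}$ is an honest smooth Hamiltonian, the set $\Gamma\subset M$ of time-$0$ images of its capped $1$-periodic orbits with index in $P$ whose $(H_{s_0,\bullet},\check{C})$-action (Definition~\ref{definition action functionals}) lies in the window determined by $(a_-,a_+)$ is \emph{compact}, and by hypothesis~(\ref{item:assumption3}) it is disjoint from $[1+\epsilon/8,1+\epsilon/2]\times C$; choose nested open sets $\Gamma\subset N_\Gamma'\subset\overline{N_\Gamma'}\subset N_\Gamma$. Now run the Gromov compactness argument from the proof of Lemma~\ref{lemma chain level continuation map filtration isomorphism} essentially verbatim: hypothesis~(\ref{item:neighborhoods}) supplies exactly the action gap used there to confine the relevant $1$-periodic orbits and all low-energy Floer cylinders — including the one-parameter families needed for $\partial^2=0$ and for independence of choices — to $N_\Gamma$. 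This produces a $C^\infty$-open neighbourhood $\mathcal U_{s_0}$ of $H_{s_0,\bullet}$ in $\overline{\ccH}^\T(\check{C})$ (in $\ccH^\T(\check{C})$ if $(a_-,a_+)$ is small) and a weakly contractible neighbourhood $\mathcal V$ of a chosen $J\in\ccJ^\T(\check{C})$, such that for all $\check{H}^-<_{\check{C}}\check{H}^+$ in $\mathcal U_{s_0}\cap\ccH^\reg(\check{C},a_-,a_+,P)$ and all connecting data in $\mathcal U_{s_0}\times\mathcal V$, the chain-level continuation map between $CF^p_{\check{C},a_-,a_+}(\check{H}^-)$ and $CF^p_{\check{C},a_-,a_+}(\check{H}^+)$ is an isomorphism; the only change from the regular case is that $\Gamma$ need not be finite, so the ``local'' invariant attached to $\check{H}\in\mathcal U_{s_0}$ is not its chain group but its local Floer cohomology near $\Gamma$, and the isomorphism assertion is just the continuation-invariance of that invariant. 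Shrinking $\mathcal U_{s_0}$ once more, we may also assume that any regular Hamiltonian lying strictly below some $H'\in\mathcal U_{s_0}$ and $C^\infty$-close enough to it again belongs to $\mathcal U_{s_0}$.

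By continuity of $s\mapsto H_{s,\bullet}$, the set $\{s:\ H_{s,\bullet}\in\mathcal U_{s_0}\}$ is an open neighbourhood of $s_0$; choose $\delta>0$ with $[s_0-\delta,s_0+\delta]$ contained in it. For each such $s$, the Hamiltonians in $\ccH^\reg(<_{\check{C}}H_{s,\bullet},a_-,a_+,P)$ that also lie in $\mathcal U_{s_0}$ form a cofinal directed subset — this uses the last property of $\mathcal U_{s_0}$ together with Lemmas~\ref{lemma partial order lemma}, \ref{lemma leq check C is a directed system} and \ref{lemma regular Hamiltonians} — and the continuation maps among its elements are isomorphisms by the previous paragraph. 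Hence, by Lemma~\ref{lemma isomorphism condition}, the structure map $HF^p_{\check{C},a_-,a_+}(\check{H})\to HF^p_{\check{C},a_-,a_+}(H_{s,\bullet})$ is an isomorphism for every regular $\check{H}\in\mathcal U_{s_0}$ with $\check{H}<_{\check{C}}H_{s,\bullet}$; and for $s_1\le s_2$ in $[s_0-\delta,s_0+\delta]$ and $\check{H}$ chosen with $\check{H}<_{\check{C}}H_{s_1,\bullet}$, the continuation map $\Phi^p_{H_{s_1,\bullet},H_{s_2,\bullet}}$, being induced by the inclusion of directed systems $\ccH^\reg(<_{\check{C}}H_{s_1,\bullet},\cdot)\subset\ccH^\reg(<_{\check{C}}H_{s_2,\bullet},\cdot)$, fits into a commuting triangle with these two structure maps and is therefore an isomorphism. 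All maps in sight are $\Lambda_\K^{Q_+,+}$-linear, so the local claim, hence the lemma, follows.

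The main obstacle is the construction of $\mathcal U_{s_0}$ in the second paragraph: one must carry out the Gromov compactness and action-filtration bookkeeping of the proof of Lemma~\ref{lemma chain level continuation map filtration isomorphism} while allowing $H_{s_0,\bullet}$ to be degenerate, replacing the identity ``local chain group $=$ full chain group'' by continuation-invariance of local Floer cohomology (as in \cite[Lemma~2.3]{McLean:local} and \cite[Chapter~11]{audin2014morse}), and one must check that the buffer $P=[p-1-n,p+1+n]$ is wide enough for every moduli space of cylinders, and of one-parameter families of cylinders, entering the invariance arguments to have the expected dimension. The cofinality step in the third paragraph is routine but must be arranged so that the resulting identifications genuinely commute with the maps $\Phi^p_{H_{s_1,\bullet},H_{s_2,\bullet}}$ and not merely with abstract continuation maps.
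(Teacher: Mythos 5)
Your reduction to a local statement near each parameter value and your final cofinality argument match the paper's overall strategy, but the core of your proof --- the construction of $\mathcal{U}_{s_0}$ in your second paragraph --- is a genuinely different route, and it contains the one claim that is not actually established. You assert that for regular $\check{H}^-<_{\check{C}}\check{H}^+$ near the possibly degenerate $H_{s_0,\bullet}$ the continuation map on $HF^p_{\check{C},a_-,a_+}$ is an isomorphism, and you propose to obtain this by running the proof of Lemma \ref{lemma chain level continuation map filtration isomorphism} ``essentially verbatim'' with local Floer cohomology in place of the local chain group. That proof cannot be transplanted: its conclusion is \emph{chain-level}, which is false here (two regular perturbations of a degenerate Hamiltonian generally have chain groups of different ranks), and the mechanism it relies on --- the vanishing of the differential on the local complex ``for topological reasons'', valid because $\Gamma$ is a finite set of isolated nondegenerate orbits --- is exactly what breaks down. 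What your route requires is a filtered local Floer cohomology for a compact, possibly infinite, isolated set of degenerate orbits, together with (i) its invariance under continuation and (ii) a passage from the local invariant to the truncated group $HF^p_{\check{C},a_-,a_+}$, which means decomposing $\Gamma$ into action-separated pieces compatibly with the cone-valued action $\cA_{H,\check{C}}$ and the cutoffs $a_\pm$. None of this machinery is developed in the paper; you correctly flag it as ``the main obstacle'' but do not resolve it, so as written this is a gap.

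The paper's proof is engineered precisely to avoid that obstacle. After the same localization in $s$, it chooses an autonomous $F$ with $0<_{\check{C}}F$ which is locally constant outside a small neighborhood $N$ of $[1+\epsilon/8,1+\epsilon/2]\times C$; by hypothesis (\ref{item:assumption3}) all relevant orbits avoid $N$, so adding $\tau F$ to the regular approximations $K_{i,s'}$ of $H_{s',\bullet}$ changes no $1$-periodic orbit, and by the action gap of hypothesis (\ref{item:neighborhoods}) it preserves $\Gamma^P_{\check{C},a_-,a_+}$ and regularity for small $\tau$. Lemma \ref{lemma continuation map filtration isomorphism} applied to the monotone family $\tau\mapsto K_{i,s'}+\tau F$ then shows that $HF^p_{\check{C},a_-,a_+}(H_{s',\bullet}+\tau^- F)\lra{}HF^p_{\check{C},a_-,a_+}(H_{s',\bullet}+\tau^+ F)$ is an isomorphism. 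Choosing $\epsilon_s$ so small that $H_{s_1,\bullet}-\eta_sF\leq^\ls_{\check{C}}H_{s_0,\bullet}\leq^\ls_{\check{C}}H_{s_1,\bullet}\leq^\ls_{\check{C}}H_{s_0,\bullet}+\eta_sF$, the two-out-of-six argument forces the middle continuation map to be an isomorphism. The point is that $H_{s_0,\bullet}$ and $H_{s_1,\bullet}$ are never compared directly: the only deformations that must be controlled are in the $\tau$-direction, where the orbits are literally unchanged, so no local Floer theory for degenerate Hamiltonians is ever needed. To complete your argument you would have to prove the filtered degenerate local invariance statement from scratch; otherwise, replace your second paragraph by the sandwich construction.
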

\proof
This lemma will be proven by showing that for each $s \in [0,1]$ there is a constant $\epsilon_s>0$ so that the continuation map
\begin{equation} \label{equation isomorphism  nondegenerate continuation}
HF^p_{\check{C},a_-,a_+}(H_{s_0})
\lra{}
HF^p_{\check{C},a_-,a_+}(H_{s_1})
\end{equation}
is an isomorphism for all $s_0,s_1 \in [0,1] \cap (s -\epsilon_s,s+\epsilon_s)$
satisfying $s_0 \leq s_1$.
So from now on, fix $s \in [0,1]$.

By (\ref{item:assumption3})
there are constants
$\delta_s, \zeta > 0$ so that for each
$s' \in [s-\delta_s,s+\delta_s]$
that every $1$-periodic orbit 
of
$H_{s',\bullet}$
has image not intersecting
$N := [1+\epsilon/8-\zeta,1+\epsilon/2+\zeta] \times C$.
By Lemma \ref{lemma partial order lemma},
there exists a cofinal family
$(K_{i,s'})_{i \in \N}$
in
$\ccH^\reg(<_{\check{C}} H_{s,\bullet}, a_-,a_+,P)$
which $C^\infty$ converges to $H_{s',\bullet}$ for each $s' \in [0,1]$.

Let
$F : M \lra{} \R$
be a $\check{C}$-compatible
autonomous Hamiltonian
so that
$F$ is locally
constant outside
$N$ and so that
$0 <_{\check{C}} F$.
Since $F$ is locally constant outside $N$,
there is a constant
$0<\eta_s<\delta_s$ so that
for each
$s' \in (s-\eta_s,s+\eta_s)$
and
$\tau \in [-\eta_s,\eta_s]$
there is a constant $N_{s'} >0$
so that
$K_{i,s'} + \tau F \in \ccH^{\T,\reg}(\check{C},a_-,a_+,P)$
for all $i \geq N_{s'}$
by assumptions (\ref{item:neighborhoods})
and (\ref{item:assumption3}).
Hence by Lemma
\ref{lemma continuation map filtration isomorphism},
the continuation maps
$HF^p_{\check{C},a_-,a_+}(K_{i,s'}+\tau^- F) \lra{}
HF^p_{\check{C},a_-,a_+}(K_{i,s'}+\tau^+ F)$
are isomorphisms for all
$s' \in (s-\eta_s,s+\eta_s)$,
$i \geq N_{s'}$ and
$\tau^-,\tau^+ \in [-\eta_s,\eta_s]$
satisfying $\tau^- < \tau^+$
and hence
the continuation maps
$HF^p_{\check{C},a_-,a_+}(H_{s',\bullet}+\tau^- F) \lra{}
HF^p_{\check{C},a_-,a_+}(H_{s',\bullet}+\tau^+ F)$
are isomorphisms for all
$s' \in (s-\eta_s,s+\eta_s)$
and
$\tau^-,\tau^+ \in [-\eta_s,\eta_s]$
satisfying $\tau^-\leq \tau^+$.

Choose $0<\epsilon_s<\eta_s$
small enough so that
$$H_{s_1,\bullet} -\eta_sF
\leq^\ls_{\check{C}}
H_{s_0,\bullet}
\leq^\ls_{\check{C}}
H_{s_1,\bullet}
\leq^\ls_{\check{C}}
H_{s_0,\bullet} + \eta_sF$$
for all $s_0,s_1 \in (s-\epsilon_s,s+\epsilon_s)$
satisfying $s_0 \leq s_1$.
By the discussion above
combined with the fact that
the composition of any two
continuation maps is a continuation map,
we get that
the composition of any two maps in
$$
HF^p_{\check{C},a_-,a_+}(H_{s_1,\bullet}-\eta_s F)
\to
HF^p_{\check{C},a_-,a_+}(H_{s_0,\bullet})
\lra{\alpha}
HF^p_{\check{C},a_-,a_+}(H_{s_1,\bullet})
\to
HF^p_{\check{C},a_-,a_+}(H_{s_0,\bullet}+\eta_s F)
$$
is an isomorphism.
Hence $\alpha$
is an isomorphism
for all $s_0,s_1 \in (s-\epsilon_s,s+\epsilon_s)$
satisfying $s_0 \leq s_1$.
\qed

\subsection{Action Maps.}

We can also define action maps for lower semi-continuous Hamiltonians in a similar way
to Subsection \ref{subsection Action maps and Cone isomorphisms}.
Throughout this section we will fix a contact cylinder $\check{C} = [1-\epsilon,1+\epsilon] \times C \subset M$ with associated Liouville domain $D$.

\begin{defn} \label{definition action maps for lower semi-continuous hamiltonians}
Let $P = \{p-1,p,p+1\}$ for some $p \in \Z$.
Let $(a^j_-,a^j_+)$ be a $\check{C}$-action interval for $j=0,1$
so that
$(a^1_-,a^1_+)$
is smaller than $(a^0_-,a^0_+)$.
Let
$H \in \cap_{j=0,1} \ccH^{\T,\ls}(\check{C},a^j_-,a^j_+)$ (Definition \ref{definition lower semi-continuous hamiltonian})
and let
$\Xi := \cap_{j=0,1} \ccH^\reg(<_{\check{C}} H,a^j_-,a^j_+)$
be the directed set with relation $\leq_{\check{C}}$.
Since
\begin{equation} \label{equation compute from directed subsystem}
HF^p_{\check{C},a^j_-,a^j_+}(H) = \varinjlim_{\check{H} \in \Xi} HF^p_{\check{C},a^j_-,a^j_+}(\check{H}), \quad j=0,1
\end{equation}
and since action maps commute with continuation maps,
we get that the action maps
$HF^*_{\check{C},a^0_-,a^0_+}(\check{H})
\lra{}
HF^*_{\check{C},a^1_-,a^1_+}(\check{H})$
for each $\check{H} \in \Xi$
from Definition \ref{definition action map}
induce a map
$$HF^p_{\check{C},a^0_-,a^0_+}(H)
\lra{} 
HF^p_{\check{C},a^1_-,a^1_+}(H)$$
which we also call an {\it action map}.
\end{defn}

Again action maps commute with continuation maps.
If the Hamiltonian
$H$ is an element of
$\cap_{j=0,1} \ccH^\reg(\check{C}, a^j_-,a^j_+,\{p-1,p,p+1\})$,
then the action map
from Definition \ref{definition action map} is equal to
the action map from Definition
\ref{definition action maps for lower semi-continuous hamiltonians}
by Lemma \ref{lemma both floer definitions agree}.
We also have the following analogue of Lemma \ref{lemma action morphism is isomorphism}.

\begin{lemma} \label{lemma action morphism for lower semicontinuous isomorphism}
Let $P = [p-1-n,p+1+n]$ for some $p \in \Z$.
Let $(a^j_-,a^j_+)$ be a $\check{C}$-action interval for $j=0,1$
so that
$(a^1_-,a^1_+)$
is smaller than $(a^0_-,a^0_+)$.
Let
$H \in \cap_{j=0,1} \ccH^{\T,\ls}(\check{C},a^j_-,a^j_+)$ be a smooth Hamiltonian. 
Suppose that $\Gamma^P_{\check{C},a^0_-,a^0_+}(H)
= \Gamma^P_{\check{C},a^1_-,a^1_+}(H)$.
Then the action map
$$
HF^p_{\check{C},a^0_-,a^0_+}(H) \lra{}
HF^p_{\check{C},a^1_-,a^1_+}(H)
$$
is an isomorphism.
\end{lemma}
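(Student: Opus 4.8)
The plan is to deduce this from Lemma \ref{lemma action morphism is isomorphism} by passing to the defining direct limit. By Definition \ref{definition action maps for lower semi-continuous hamiltonians}, for $j=0,1$ we have
\[
HF^p_{\check{C},a^j_-,a^j_+}(H) \;=\; \varinjlim_{\check{H}\in\Xi} HF^p_{\check{C},a^j_-,a^j_+}(\check{H}), \qquad \Xi := \cap_{j=0,1}\ccH^\reg(<_{\check{C}} H,a^j_-,a^j_+),
\]
the directed set $\Xi$ being ordered by $\leq_{\check{C}}$ with continuation maps as structure maps, and since action maps commute with continuation maps, the action map in the statement is the direct limit over $\check{H}\in\Xi$ of the action maps $\phi_{\check{H}}\colon HF^p_{\check{C},a^0_-,a^0_+}(\check{H})\to HF^p_{\check{C},a^1_-,a^1_+}(\check{H})$. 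So by Lemma \ref{lemma isomorphism condition} it suffices to find a cofinal subset $\Xi'\subset\Xi$ with $\phi_{\check{H}}$ an isomorphism for every $\check{H}\in\Xi'$; and since each such $\check{H}$ is smooth and non-degenerate, Lemma \ref{lemma action morphism is isomorphism} (applied with its $P$ taken to be $\{p-1,p,p+1\}$, which is contained in the present $P$) reduces this further to the condition $\Gamma^{\{p-1,p,p+1\}}_{\check{C},a^0_-,a^0_+}(\check{H})=\Gamma^{\{p-1,p,p+1\}}_{\check{C},a^1_-,a^1_+}(\check{H})$.

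First I would take $\Xi' := U\cap\Xi$ for a suitable open neighborhood $U$ of $H$ in $\ccH^\T(\check{C})$; by Lemma \ref{lemma partial order lemma} this is automatically cofinal in $\Xi$, just as in the proof of Lemma \ref{lemma both floer definitions agree}. The real work is to choose $U$ so that the two orbit sets above coincide for every $\check{H}\in U\cap\Xi$. For this I would run a compactness argument of the type used in the proof of Lemma \ref{lemma continuation map isomorphism action gap}: the set of $1$-periodic orbits of the smooth Hamiltonian $H$ whose images avoid $[1+\epsilon/8,1+\epsilon/2]\times C$ is compact in the free loop space (by Arzel\`a--Ascoli), so after shrinking $U$ every capped $1$-periodic orbit of $\check{H}\in U$ of index in $\{p-1,p,p+1\}$ and with image avoiding $[1+\epsilon/8,1+\epsilon/2]\times C$ is $C^\infty$-close to a capped $1$-periodic orbit of $H$ (with capping matched up to the connect-sum ambiguity of Definition \ref{definition connect sub of orbit}), and its $(H,\check{C})$-action function is correspondingly close to that of $H$, uniformly on compact subsets of $Q_{\check{C}}$ by the formula of Corollary \ref{corollary action of capped loop disjoint from contact cylinder}. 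Since the Conley--Zehnder index can jump by at most $n$ across a degenerate orbit --- this is exactly what property \ref{item:shortpath} of Definition \ref{definition conley zehnder index} gives, and it is why the index window is widened to $P=[p-1-n,p+1+n]$ --- the orbit of $H$ so obtained has index in $P$, hence is controlled by the hypothesis $\Gamma^P_{\check{C},a^0_-,a^0_+}(H)=\Gamma^P_{\check{C},a^1_-,a^1_+}(H)$. Rewriting the inclusions $Q^1_\pm\subset Q^0_\pm$, $a^1_\pm\le a^0_\pm|_{Q^1_\pm}$ as pointwise statements on the cones, this hypothesis says precisely that each such orbit of $H$ satisfies either both pairs of defining inequalities (\ref{equation capped periodic orbits in action interval}) or neither; using the regularity neighborhoods $N_\pm$ of $a^j_\pm$ built into $\ccH^\reg(\check{C},a^j_-,a^j_+,P)$ to replace the relevant non-strict inequalities by strict (hence open) ones, this dichotomy is stable under the perturbation and passes to the nearby orbits of $\check{H}$, yielding $\Gamma^{\{p-1,p,p+1\}}_{\check{C},a^0_-,a^0_+}(\check{H})=\Gamma^{\{p-1,p,p+1\}}_{\check{C},a^1_-,a^1_+}(\check{H})$.

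The hard part will be this last point: tracking capped $1$-periodic orbits and their full action functions through a perturbation of a possibly \emph{degenerate} smooth Hamiltonian, in particular checking that the index-jump bound genuinely confines all relevant orbits of $\check{H}$ to lie near orbits of $H$ of index in the widened window $P$, and that the ``both-or-neither'' dichotomy is an open condition --- this is the only place where one really needs the action-interval regularity (the neighborhoods $N_\pm$) rather than mere continuity of the action. Granting the choice of $U$, everything else is formal: $\Xi'=U\cap\Xi$ is cofinal, each $\phi_{\check{H}}$ with $\check{H}\in\Xi'$ is an isomorphism by Lemma \ref{lemma action morphism is isomorphism}, and the direct limit of these isomorphisms --- which is the action map in the statement --- is an isomorphism.
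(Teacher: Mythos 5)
Your proposal is correct and follows essentially the same route as the paper's proof: a cofinal family of regular Hamiltonians $C^\infty$-converging to $H$ (via Lemma \ref{lemma partial order lemma}), the index-jump bound \ref{item:shortpath} to get $\Gamma^{\{p-1,p,p+1\}}_{\check{C},a^0_-,a^0_+}(\check{H})=\Gamma^{\{p-1,p,p+1\}}_{\check{C},a^1_-,a^1_+}(\check{H})$ for the perturbations from the hypothesis on the widened window $P$, then Lemma \ref{lemma action morphism is isomorphism} and passage to the direct limit. Your writeup just spells out in more detail the compactness and openness considerations that the paper leaves implicit.
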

\proof
By Lemma \ref{lemma partial order lemma}
there exists a cofinal family
$(H^i)_{i \in \N}$ in $\cap_{j=0,1} \ccH^\reg(<_{\check{C}} H, a^j_-,a^j_+)$
so that $H^i$ $C^\infty$ converges to $H$.
Then for all $i$ sufficiently large,
we have by \ref{item:shortpath}
from Definition \ref{definition conley zehnder index}
that
$\Gamma^{\{p-1,p,p+1\}}_{\check{C},a^0_-,a^0_+}(H^i)
= \Gamma^{\{p-1,p,p+1\}}_{\check{C},a^1_-,a^1_+}(H^i)$.
Hence by Lemma \ref{lemma action morphism is isomorphism}, the action morphism
$$
HF^p_{\check{C},a^0_-,a^0_+}(H^i) \lra{}
HF^p_{\check{C},a^1_-,a^1_+}(H^i)
$$
is an isomorphism for all $i$ sufficiently large.
Therefore since action maps commute with continuation maps and since
Equation (\ref{equation compute from directed subsystem}) holds,
we get our result.
\qed

\subsection{Invariance Under Time Reparameterization.}

Throughout this subsection, $\check{C}$
is a contact cylinder whose associated Liouville domain is $D$.
We will also fix a $\check{C}$-interval domain $(Q_-,Q_+)$.

\begin{defn} \label{definition ls reparameterized Hamiltonian}
Let $F : \T \lra{} \T$
be a smooth non-decreasing map.
Let $H = (H_t)_{t \in \T}$ be a lower semi-continuous Hamiltonian.
We define
$H^F = (H^F_t)_{t \in \T}$
by
$H^F_t := F'(t) H_{F(t)}$.
\end{defn}

\begin{prop} \label{proposition ls reparameterization isomorphism}
Let $F : \T \lra{} \T$ be a smooth non-decreasing map which is homotopic to the identity map.
Let $(a_-,a_+) \in \Sc(Q_-) \times \Sc(Q_+)$
be a $\check{C}$-action interval.
Then for each $H \in \ccH^{\T,\ls}(\check{C},a_-,a_+)$
there is an isomorphism
of $\Lambda^{Q_+,+}$-modules
\begin{equation} \label{equation ls reparameterization isomorphism}
HF^*_{\check{C},a_-,a_+}(H)
\cong HF^*_{\check{C},a_-,a_+}(H^F)
\end{equation}
which commutes with continuation maps and action maps.
\end{prop}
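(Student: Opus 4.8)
[Proof proposal for Proposition \ref{proposition ls reparameterization isomorphism}.]
The plan is to reduce to the smooth case, Proposition \ref{proposition reparameterization isomorphism}, and then pass to direct limits. Recall from Definition \ref{definition of hamiltonian floer cohomology for lower semicontinuous hamiltonians} that $HF^*_{\check{C},a_-,a_+}(H)$ is the colimit of $HF^*_{\check{C},a_-,a_+}(\check{H})$ over the directed set $\mathcal{S}_H := \ccH^\reg(<_{\check{C}} H, a_-, a_+, P)$ (for a suitable $P \supset \{p-1,p,p+1\}$) with continuation maps as structure maps, and similarly for $H^F$. First I would record the elementary compatibilities of the operation $K \mapsto K^F$ of Definition \ref{definition ls reparameterized Hamiltonian} with the notions of Definition \ref{definition lower semi-continuous hamiltonian}: the slope and height of $K^F_t$ along $\check{C}$ are $F'(t)\lambda_{K_{F(t)}}$ and $F'(t)m_{K_{F(t)}}$, these remain lower semi-continuous in $t$, and since $F$ is non-decreasing and homotopic to the identity (hence surjective of degree $1$) one checks $H^F \in \ccH^{\T,\ls}(\check{C},a_-,a_+)$ and that $\mathrm{height}(a_-,a_+)$ is still dominated. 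Moreover, exactly as in the proof of Proposition \ref{proposition reparameterization isomorphism}, reparameterization identifies $1$-periodic orbits ($\gamma \leftrightarrow \gamma \circ F$), their cappings, Conley--Zehnder indices, actions, and Floer cylinders (via $u \mapsto u^F$), and it identifies the linearizations of the Floer equation; consequently a smooth $\check{H}$ lies in $\ccH^\reg(\check{C},a_-,a_+,P)$ if and only if $\check{H}^F$ does, and $\check{H}^F$ is regular whenever $\check{H}$ is.

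When $F$ is a diffeomorphism ($F' > 0$ everywhere), the above shows that $\check{H} \mapsto \check{H}^F$ is an isomorphism of directed sets $\mathcal{S}_H \lra{\cong} \mathcal{S}_{H^F}$ for the order $\leq_{\check{C}}$. Composing with the smooth reparameterization isomorphisms $HF^*_{\check{C},a_-,a_+}(\check{H}) \cong HF^*_{\check{C},a_-,a_+}(\check{H}^F)$ of Proposition \ref{proposition reparameterization isomorphism}, which commute with continuation maps, gives an isomorphism of the two directed systems and hence of their colimits, which is exactly (\ref{equation ls reparameterization isomorphism}). For a general non-decreasing $F$ homotopic to the identity, $F'$ may vanish, and then $\check{H}^F$ satisfies only $\check{H}^F \leq^\ls_{\check{C}} H^F$, with equality along $\{F'=0\}$ where both vanish, so $\check{H}^F$ need not lie in $\mathcal{S}_{H^F}$. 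I expect this to be the main obstacle. The fix is to replace $\check{H}^F$ by a small downward perturbation $\widetilde{\check{H}^F} \in \mathcal{S}_{H^F}$, supported near $\{F'=0\}$ and with vector field so small that no new counted orbits appear and the Floer chain complex is unchanged, exactly in the spirit of the proofs of Lemmas \ref{lemma both floer definitions agree} and \ref{lemma continuation map isomorphism action gap}; one then checks, using a cofinality argument (Lemma \ref{lemma isomorphism condition}) together with the fact that continuation maps for lower semi-continuous Hamiltonians (Definition \ref{definition continuation map for lower semicontinuous Hamiltonians}) are colimits of the smooth ones, that the resulting maps $HF^*_{\check{C},a_-,a_+}(\check{H}) \cong HF^*_{\check{C},a_-,a_+}(\widetilde{\check{H}^F}) \lra{} HF^*_{\check{C},a_-,a_+}(H^F)$ assemble, over a cofinal family of $\check{H}$, to an isomorphism $HF^*_{\check{C},a_-,a_+}(H) \lra{} HF^*_{\check{C},a_-,a_+}(H^F)$. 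Alternatively one can factor $F$ through a diffeomorphism and reduce to comparing cofinal families of approximations of $H$.

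Finally, compatibility of (\ref{equation ls reparameterization isomorphism}) with continuation maps and action maps, and its $\Lambda^{Q_+,+}_\K$-linearity, follow formally: by Definitions \ref{definition continuation map for lower semicontinuous Hamiltonians} and \ref{definition action maps for lower semi-continuous hamiltonians} these structures are induced on colimits by the corresponding smooth maps, the smooth reparameterization isomorphism commutes with smooth continuation and action maps by Proposition \ref{proposition reparameterization isomorphism}, and it is $\Lambda^{Q_+,+}_\K$-linear since it preserves action and respects the $S^{Q_+}_0$-action of Remark \ref{remark action module} (being built from a bijection of capped orbits that commutes with connect-summing homology classes). Hence the induced map on colimits inherits all of these properties.
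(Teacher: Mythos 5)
Your proposal is correct and follows essentially the same route as the paper: express $HF^*_{\check{C},a_-,a_+}(H^F)$ as the colimit of $HF^*_{\check{C},a_-,a_+}(\check{H}^F)$ over smooth approximations $\check{H}$ of $H$, and assemble the smooth reparameterization isomorphisms of Proposition \ref{proposition reparameterization isomorphism}, which commute with continuation maps, into an isomorphism of colimits. The paper's own proof is in fact terser than yours and silently elides the point you flag as the main obstacle (that $\check{H}^F$ need not satisfy $\check{H}^F <_{\check{C}} H^F$ where $F'$ vanishes), so your extra care there is a refinement of, not a departure from, the paper's argument.
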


\begin{defn} \label{definition ls reparameterization isomorphism}
The isomorphism (\ref{equation ls reparameterization isomorphism})
will be called a {\it reparameterization isomorphism}.
\end{defn}

\begin{proof}[Proof of Proposition \ref{proposition ls reparameterization isomorphism}.]
Since
$$HF^p_{\check{C},a_-,a_+}(H^F) = \varinjlim_{\check{H} \in \ccH^\reg(<_{\check{C}} H,a_-,a_+,\Z)} HF^*_{\check{C},a_-,a_+}(\check{H}^F)$$
by Proposition \ref{proposition reparameterization isomorphism}
and since the reparameterization isomorphisms
$$HF^*_{\check{C},a_-,a_+}(\check{H})
\cong HF^*_{\check{C},a_-,a_+}(\check{H}^F)$$
from Definition \ref{definition reparameterization isomorphism}
commute with continuation maps,
we get our isomorphism
(\ref{equation ls reparameterization isomorphism}).
\end{proof}

\subsection{Pair of Pants Product}

Throughout this section we will fix a contact cylinder $\check{C} = [1-\epsilon,1+\epsilon] \times C \subset M$ with associated Liouville domain $D$.

\begin{defn} \label{definition pair of pants product for lower semicontinuous hamiltonians}

Let $(a^j_-,a^j_+)$ be a $\check{C}$-action interval for $j=0,1,2$
so that $(a^2_-,a^2_+)$
is smaller than $(a^j_-,a^j_+)_{j=0,1}$ as in Definition \ref{definition contact cylinders smaller than}.
Define $\kappa_0 = \kappa_1 := 1$
and $\kappa_2 := 2$.
Let
$H^j = (H^j_t)_{t \in \T}$ be a lower semi-continuous Hamiltonian so that
$(\kappa_j) H^j \in \ccH^{\T,\ls}(\check{C},a^j_-,a^j_+)$ for $j=0,1,2$
where $(\kappa_j) H^j$ is defined as in Definition \ref{definition catenation of hamitlonians}
and suppose $H^j \leq^\ls_{\check{C}} H^2$ for $j=0,1$.

Choose a cofinal sequence
$((\kappa_j) \check{H}^{i,j})_{i \in \N}$
in $\cap_{k=0}^2 \ccH^\reg(<_{\check{C}},a^k_-,a^k_+,(\kappa_j) H^j)$
(Definition \ref{definition lower semi-continuous hamiltonian}) 
for each $j=0,1,2$
so that $\check{H}^{i,j} <_{\check{C}} \check{H}^{2,j}$ for
$j=0,1$ and so that
for each triple of capped $1$-periodic orbits
$(\gamma^j)_{j=0,1,2} \in \prod_{j=0,1,2} \Gamma^{p_j}_{\check{C},a^j_-,a^j_+}((\kappa_j) H^j)$  where $p_0,p_1,p_2 \in \Z$,
we have that the associated $1$-periodic
orbits of at least two of them have distinct images in $M$.
Define
$Q_{i,j} := HF^*_{\check{C},a^j_-,a^j_+}((\kappa_j) \check{H}^{i,j})$
for all $i \in \N$ and $j = 0,1,2$.
Then the {\it pair of pants product}
is the natural composition
$$\Phi_{H^0,H^1,H^2} : HF^*_{\check{C},a^0_-,a^0_+}(H^0) \otimes_{\K} HF^*_{\check{C},a^1_-,a^1_+}(H^1) =$$
$$(\varinjlim_{i \in \N} Q_{i,0})
\otimes_{\K} (\varinjlim_{i \in \N} Q_{i,1})
\lra{} \varinjlim_{i \in \N} (Q_{i,0} \otimes_{\K} Q_{i,1})
\lra{\alpha} \varinjlim_{i \in \N} Q_{i,2}
= HF^*_{\check{C},a^2_-,a^2_+}(2H^2)$$
where the morphism $\alpha$ is the direct limit of the natural pair of pants product map
$Q_{i,0} \otimes_{\K} Q_{i,1} \lra{} Q_{i,2}$
as in Definition \ref{defnition pair of pants product}.
\end{defn}

\begin{remark} \label{remark lower semicontinuous pants product}
If $(\kappa_j) H^j \in \ccH^\reg(\check{C},a^j_-,a^j_+)$ for $j=0,1,2$ then
the pair of pants product map above is identical to the one defined in Definition \ref{defnition pair of pants product}
by Lemma \ref{lemma both floer definitions agree}.
Also action maps and continuation maps
commute with pair of pants product maps.
Such a product is associative and graded commutative by Remark
\ref{remark pants product is graded commutative and associative and unital}.
If the domains of $a^0_+$ and $a^1_+$
agree then this product is
$\Lambda^{Q_+,+}$-bilinear where
$Q_+$ is the domain of $a^0_+$.
If the constant Hamiltonian
$\min(H^0)$ satisfies
$\min(H^0) \leq^\ls_{\check{C}} H^0$,
if
$a^0_-([\widetilde{\omega}],\lambda^-,\lambda^+) < \lambda^- \min(H^0)$ for all
$([\widetilde{\omega}],\lambda^-,\lambda^+) \in Q_0 - 0$ and if
$p_0 = 0$
then it has a {\it left unit}
${\bf 1} \in HF^{p_0}_{\check{C},a^0_-,a^0_+}(H^0)$
by 
Remark \ref{remark pants product is graded commutative and associative and unital}
combined with the fact that continuation maps
send left units to left units
 and the fact that
$HF^*_{\check{C},a^0_-,a^0_+}(K)$
has a left unit for any sufficiently $C^2$ small perturbation of $\min(H^0)$.
By {\it left unit} we mean
that for each $H^3 \in \ccH^{\T,\ls}(\check{C},a^2_-,a^2_+)$
satisfying
$2H^2,H^2 \leq^\ls_{\check{C}} H^3$,
$\Phi^{p_1}_{2H^2,H^3}(\Phi_{H^0,H^1,H^2}({\bf 1} \otimes x))$
is equal to the image of $x$ under the composition
$$HF^{p_1}_{\check{C},a^1_-,a^1_+}(H^1) \lra{}
HF^{p_1}_{\check{C},a^1_-,a^1_+}(H^3)
\lra{}
HF^{p_0+p_1}_{\check{C},a^2_-,a^2_+}(H^3)$$
of action maps and continuation maps for each
$x \in HF^{p_1}_{\check{C},a^1_-,a^1_+}(H^1)$.
This follows from Remark
\ref{remark pants product is graded commutative and associative and unital}.
We have a {\it right unit} under similar conditions. The only difference is that the indexes
`$0$' and `$1$' are swapped.
\end{remark}

\section{Definition of Symplectic Cohomology} \label{section definition of symplectic cohomology}

Throughout this section we will fix a contact cylinder $\check{C} = [1-\epsilon,1+\epsilon] \times C \subset M$ with associated Liouville domain $D$.


\begin{defn} \label{definition symplectic cohomology}
Let $K \subset \T \times M$ be a closed set with the property that $K \subset \T \times D$ if $D$ is non-empty.
We define
$\ccH^{\T,\ls}(\check{C},K,\leq 0)$
to be the subset of $\ccH^{\T,\ls}(\check{C})$ (Definition \ref{definition lower semi-continuous hamiltonian})
consisting of lower
semi-continuous Hamiltonians
$H = (H_t)_{t \in \T}$ 
with the property that
$H_t(x) \leq 0$ if $(t,x) \in K$
and $H_t(x) = \infty$ otherwise.
We define
$\ccH^{\T,\ls}(\check{C},\leq 0)$
to be the union 
of all such subsets $\ccH^{\T,\ls}(\check{C},K,\leq 0)$.
For each $H \in \ccH^{\T,\ls}(\check{C},\leq 0)$
and each $\check{C}$-interval domain
$(Q_-,Q_+)$ as in Definition \ref{defn chain complex}, define
$\SH^*_{\check{C},Q_-,Q_+}(H)$
to be the double system of $\Z$-graded $\Lambda_\K^{Q_+,+}$-modules
(as in Definition \ref{definition double system}):
$$
\SH^*_{\check{C},Q_-,Q_+}(H) = (HF^*_{\check{C},a_-,a_+}(H))_{(a_-,a_+) \in \Sc(Q_-) \times \Sc(Q_+)}
$$
where 
\begin{itemize}
\item the ordering on $\Sc(Q_\pm)$ is given by $\geq$,
\item the double system morphisms are action maps as in Definition \ref{definition action maps for lower semi-continuous hamiltonians}.
\end{itemize}
We define {\it symplectic cohomology of $H$}
to be
$SH^*_{\check{C},Q_-,Q_+}(H) := \varinjlim \varprojlim \SH^*_{\check{C},Q_-,Q_+}(H)$
where $\varinjlim \varprojlim$
is given in Definition \ref{definition inverse direct limit is a functor}.
For any closed set $K \subset \T \times M$ which is contained in $\T \times D$ if $D$ is non-empty,
define
$\SH^*_{\check{C},Q_-,Q_+}(K \subset \T \times M)$ (resp. $SH^*_{\check{C},Q_-,Q_+}(K \subset \T \times M)$) to be
$\SH^*_{\check{C},Q_-,Q_+}(H_K)$ (resp. $SH^*_{\check{C},Q_-,Q_+}(H_K)$)
where 
$$
H_K = (H_{K,t})_{t \in \T}, \quad
H_{K,t} : M \lra{} \R \cup \{\infty\} \quad H_{K,t}(x) := \left\{
\begin{array}{ll}
0 & \text{if} \ (t,x) \in K \\
\infty & \text{otherwise.}
\end{array}
\right.$$
The algebra $SH^*_{\check{C},Q_-,Q_+}(K \subset \T \times M)$
is called {\it symplectic cohomology of $K$ in $M$}.

If $K \subset M$ is a closed subset, which is contained in $D$ if $D$ is non-empty, then we define
$\SH^*_{\check{C},Q_-,Q_+}(K \subset M) := \SH^*_{\check{C},Q_-,Q_+}(\T \times K \subset \T \times M)$ 
and
$SH^*_{\check{C},Q_-,Q_+}(K \subset M) := SH^*_{\check{C},Q_-,Q_+}(\T \times K \subset \T \times M)$.
\end{defn}

\begin{remark} \label{remark on definition of symplectic cohomology}
The definition of symplectic cohomology makes sense because $\ccH^{\T,\ls}(\check{C}, \leq 0) \subset \ccH^{\T,\ls}(\check{C},a_-,a_+)$ for all $\check{C}$ action intervals
$(a_-,a_+) \in \Sc(Q_-) \times \Sc(Q_+)$.

If the Hamiltonian $H$ is autonomous then
$\SH^*_{\check{C},Q_-,Q_+}(H)$
is a double system of $\Z$-graded $\Lambda_\K^{Q_+,+}$-modules
with product induced by
the pair of pants product maps
\begin{equation} \label{equation pair of pants product for symplectic cohomology}
HF^*_{\check{C},a^0_-,a^0_+}(H) \otimes_{\Lambda_\K^{Q_+,+}} HF^*_{\check{C},a^1_-,a^1_+}(H)
\lra{} HF^*_{\check{C},a^2_-,a^2_+}(H)
\end{equation}
where
$a^2_- = a^0_- + a^1_-$, $a^2_+ = \min(a^0_- + a^1_+, a^1_- + a^0_+)$.
This product is well defined since $H$ is autonomous and $H \leq^\ls_{\check{C}} \frac{1}{2}H$ because the slope of $H$ along $\check{C}$ is $0$ and because $H$ is either non-positive or $\infty$.
Also
$SH^*_{\check{C},Q_-,Q_+}(H)$ is a graded $\Lambda_\K^{Q_+,+}$-algebra by Remark \ref{remark products on direct and inverse limits}.
Such an algebra is a unital graded commutative algebra by Remark \ref{remark lower semicontinuous pants product}.
\end{remark}

\begin{defn} \label{definition transfer map}
Let $(Q_-,Q_+)$ be a $\check{C}$-interval domain pair.
Let $H^\pm \in \ccH^{\T,\ls}(\check{C},\leq 0)$ satisfy $H^- \leq H^+$.
Then $H_- \leq_{\check{C}} H_+$ and hence the natural continuation maps
$$HF^*_{\check{C},a_-,a_+}(H_-) \lra{} HF^*_{\check{C},a_-,a_+}(H_+)$$
for all $\check{C}$-action intervals $(a_-,a_+)$
give us a morphism of double systems
$$\SH^*_{\check{C},Q_-,Q_+}(H_-) \lra{} \SH^*_{\check{C},Q_-,Q_+}(H_+)$$
called a {\it transfer morphism}.
This also induces a morphism
of algebras
$SH^*_{\check{C},Q_-,Q_+}(H_-) \lra{} SH^*_{\check{C},Q_-,Q_+}(H_+)$ called a {\it transfer map}.
In particular, if $K_+ \subset K_- \subset D$ are closed subsets,
we have a {\it transfer map}
$SH^*_{\check{C},Q_-,Q_+}(K_- \subset M) \lra{} SH^*_{\check{C},Q_-,Q_+}(K_+ \subset M)$.
\end{defn}

If $H_\pm$ is autonomous then
this transfer morphism or map respects the natural product structures on the corresponding double systems or modules
and they also send units to units.

\begin{defn} \label{action maps for symplectic cohomology}
Let $(Q^j_-,Q^j_+)$ be a
$\check{C}$-interval domain pair
for $j=0,1$
so that $Q^1_\pm \subset Q^0_\pm$.
Suppose
$H \in \ccH^{\T,\ls}(\check{C},\leq 0)$.
Then the action maps
$$HF^*_{\check{C},a_-,a_+}(H) \lra{} HF^*_{\check{C},a_-|_{Q^1_-},a_+|_{Q^1_+}}(H), \quad (a_-,a_+) \in \Sc(Q^0_-) \times \Sc(Q^0_+)$$
give us a morphism of double systems
$\SH^*_{\check{C},Q^0_-,Q^0_+}(H) \lra{} \SH^*_{\check{C},Q^1_-.Q^1_+}(H)$ called an {\it action morphism}.
\end{defn}

Again this morphism respects the product structure if $H$ is autonomous.

\begin{defn} \label{definition reparameterization isomorphism for SH}
Let $(Q_-,Q_+)$ be a $\check{C}$-interval domain pair and let $H \in \ccH^{\T,\ls}(\check{C},\leq 0)$.
Let $F : \T \lra{} \T$
be a smooth non-decreasing map.
Then the reparameterization isomorphisms
$$
HF^*_{\check{C},a_-,a_+}(H)
\cong HF^*_{\check{C},a_-,a_+}(H^F)
$$
from Definition \ref{definition ls reparameterization isomorphism}
for each
$\check{C}$-action interval $(a_-,a_+) \in \Sc(Q_-) \times \Sc(Q_+)$
gives us an isomorphism of double systems
$$\SH^*_{\check{C},Q_-,Q_+}(H) \cong \SH^*_{\check{C},Q_-,Q_+}(H^F)$$
called a
{\it reparameterization isomorphism}.
\end{defn}

\begin{lemma} \label{lemma extend to Novikov field action} 
Let
$(Q_-,Q_+)$ be a $\check{C}$-interval domain pair and let $H \in \ccH^{\T,\ls}(\check{C},\leq 0)$.
Then the $\Lambda_\K^{Q_+,+}$-module structure on
$SH^*_{\check{C},Q_-,Q_+}(H)$ extends uniquely to a
$\Lambda_\K^{Q_+}$-module structure on $SH^*_{\check{C},Q_-,Q_+}(H)$ making it into a
$\Lambda_\K^{Q_+}$-algebra.
\end{lemma}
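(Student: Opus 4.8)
The plan is to recognise $\Lambda_\K^{Q_+}$ as a localisation of $\Lambda_\K^{Q_+,+}$ and to implement the localising elements by explicit automorphisms of $SH^*_{\check{C},Q_-,Q_+}(H)$ coming from connect-summing cappings. The algebraic input is the following. By Remark \ref{remark description of Novikov ring} and the construction of the Novikov rings in Definition \ref{definition novikov ring associated to salient cone}, every element of $\Lambda_\K^{Q_+}$ lies in $\varprojlim_{x_+}F^{\check{Q}_+}_{x_-,x_+}$ for some $x_-$; after replacing $x_-$ by a class which is $\preceq_{\check{Q}_+}$ both it and $0$ we may take $x_-=-v$ with $v\in S^{Q_+}_0$, and multiplication by $t^v$ then identifies $\varprojlim_{x_+}F^{\check{Q}_+}_{-v,x_+}$ with $\Lambda_\K^{Q_+,+}$ after a cofinal reindexing. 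Hence $\Lambda_\K^{Q_+}$ is the localisation of $\Lambda_\K^{Q_+,+}$ at the multiplicative set $\{t^v:v\in S^{Q_+}_0\}$, and it is enough to prove that multiplication by each $t^v$ is invertible on the $\Lambda_\K^{Q_+,+}$-module $SH^*_{\check{C},Q_-,Q_+}(H)$: the universal property of localisation then furnishes a unique $\Lambda_\K^{Q_+}$-module structure extending the given one, and since the pair-of-pants product is $\Lambda_\K^{Q_+,+}$-bilinear (Remark \ref{remark on definition of symplectic cohomology}) while each $t^v$ acts invertibly, this product is automatically $\Lambda_\K^{Q_+}$-bilinear, making $SH^*_{\check{C},Q_-,Q_+}(H)$ a $\Lambda_\K^{Q_+}$-algebra.

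Now fix $v\in S^{Q_+}_0$. Connect-summing the homology class $-v$ onto the capping of each generator (Definition \ref{definition connect sub of orbit}) induces, for every $\check{C}$-action interval $(a_-,a_+)$, an isomorphism $HF^p_{\check{C},a_-,a_+}(H)\to HF^p_{\check{C},a_--\ell_v,a_+-\ell_v}(H)$, where $\ell_v\in\Sc(Q_{\check{C}})$ is the linear action shift of Corollary \ref{corollary action of capped loop disjoint from contact cylinder} and Remark \ref{remark only depends on cohomology class}, so that $\cA_{\cdot,\check{C}}(\gamma\#v)=\cA_{\cdot,\check{C}}(\gamma)+\ell_v$ on $Q_{\check{C}}$; here $\ell_v|_{Q_\pm}\le 0$, this being exactly what makes the $S^{Q_+}_0$-action of Remark \ref{remark action module} well defined. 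Since connect-summing a fixed class is transparent to all the null-homology conditions in the moduli problems of Section \ref{section Hamitlonian Floer cohomology and Filtrations}, this isomorphism commutes with Floer differentials, continuation maps, action maps and pair-of-pants products (cf.\ Lemma \ref{lemma filtration}), so it assembles into a morphism $c_v\colon\SH^*_{\check{C},Q_-,Q_+}(H)\to\SH^*_{\check{C},Q_-,Q_+}(H)$ in $\sys{\Lambda_\K^{Q_+,+}}$ of the type of Example \ref{example morphisms}, with order-preserving reindexing $a_-\mapsto a_--\ell_v$, $a_+\mapsto a_++\ell_v$ of $\Sc(Q_-)\times\Sc(Q_+)$. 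Its inverse is the analogous morphism $c_{-v}$ built from connect-summing $+v$; on each generator the two connect-sums compose to the identity, so $c_{-v}\circ c_v$ and $c_v\circ c_{-v}$ are standard endomorphisms and hence equal the identity in $\sys{\Lambda_\K^{Q_+,+}}$ (Example \ref{example morphisms}, Remark \ref{remark inclusion invariance}). Applying $\varinjlim\varprojlim$, $c_v$ induces an automorphism of $SH^*_{\check{C},Q_-,Q_+}(H)$ which equals multiplication by $t^v$: chain group by chain group the operator $\gamma\mapsto[\gamma\#(-v)]$ defining the $t^v$-action (Remark \ref{remark action module}) is $c_v$ followed by the action map $CF^p_{\check{C},a_--\ell_v,a_+-\ell_v}(\check{H})\to CF^p_{\check{C},a_-,a_+}(\check{H})$, and these action maps are the structure morphisms of the double system, so the two maps agree in the colimit. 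This gives the invertibility required above.

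The substantive point is the compatibility claim in the second paragraph: that connect-summing a fixed class really defines a morphism in the double-system category $\sys{\Lambda_\K^{Q_+,+}}$ — that it lands in the correctly indexed chain groups (which uses the inequalities $\ell_v|_{Q_\pm}\le 0$, themselves a feature of the $S^{Q_+}_0$-action) and that it commutes simultaneously with every continuation, action and product map of Section \ref{section Hamitlonian Floer cohomology and Filtrations}. This is essentially the content of Lemma \ref{lemma filtration}, together with the observation that connect-summing shifts all the relevant glued homology classes coherently; granted this, everything else is formal bookkeeping with inverse and direct limits.
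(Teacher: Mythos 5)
Your proposal is correct and follows essentially the same route as the paper's own proof: both reduce the statement to showing that each monoid element $t^v$, $v\in S^{Q_+}_0$, acts invertibly (the paper invokes \cite[Tag 07JY]{stacks-project} in place of your explicit localisation argument), and both realise the inverse by the connect-sum map $\gamma\mapsto\gamma\#(\mp v)$ together with the observation that the uniformly shifted double system $(a_-,a_+)\mapsto(a_-\pm L_v,a_+\pm L_v)$ has the same $\varinjlim\varprojlim$. The only difference is that you spell out the compatibility of the connect-sum reindexing with the structure maps of $\sys{\Lambda_\K^{Q_+,+}}$, which the paper leaves implicit.
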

\proof
Let
$\pi_D : H^2(M,D;\R) \times \R \times \R \lra{} H^2(M,D;\R)$ be the natural projection map
and let $\iota_D : H^2(M,D;\R) \lra{} H^2(M;\R)$ be the natural restriction map.
Let
$$Q := \iota_D(\pi_D(Q_+)) \subset H^2(M;\R) = (H_2(M;\Z) \otimes_\Z \R)^*$$
and let $\preceq_Q$ be the induced ordering
on $H_2(M;\Z)$ as in Definition
\ref{definition convex cone}.
Define $Q^\vee := \{x \in H_2(M;\Z) \ : \ 0 \preceq_{Q} x \} \subset H_2(M;\Z)$.
By Definition \ref{definition convex cone associated to contact cylinder},
we have that
$Q^\vee$ is naturally a submonoid of the
multiplicative group
$(\Lambda_\K^{Q_+,+})^\times$.
All such elements are invertible in $\Lambda_\K^{Q_+}$.
Also each element
$w \in \Lambda_\K^{Q_+}$ has the property that there exists some element
$s_w \in Q^\vee$
satisfying $s_w w \in \Lambda_\K^{Q_+,+}$.
Therefore by \cite[Tag 07JY]{stacks-project}
it is sufficient for us to show that
each $s \in Q^\vee$
acts as an automorphism
on $SH^*_{\check{C},Q_-,Q_+}(H)$.
%


We now fix such an element $s$.
Since $s \in H_2(M;\Z)$, it defines a linear function on $H^2(M;\R)$ and hence by pulling back via $\iota_D$ and $\pi_D$ and restricting to $Q_+$, a function $L_s \in \Sc(Q_+)$.
The map sending a capped loop
$\gamma$ to $\gamma \# (-s)$
induces an isomorphism
$HF^*_{\check{C},a_- - L_s,a_+ - L_s}(H)
\lra{\cong} HF^*_{\check{C},a_-,a_+}(H)$.
Since
$SH^*_{\check{C},Q_-,Q_+}(H) =
\varinjlim_{a_-} \varprojlim_{a_+} HF^*_{\check{C},a_- - L_s,a_+ - L_s}(H)$,
we get that such a map
induces an automorphism of
$SH^*_{\check{C},Q_-,Q_+}(H)$.
This automorphism coincides with the
natural
module
action of $s \in \Lambda_\K^{Q_+,+}$.
Hence the $\Lambda_\K^{Q_+,+}$-action extends to a $\Lambda_\K^{Q_+}$-action.
%
%
%
\qed

\begin{remark} \label{remark continuation maps are module morphisms}
Since continuation maps and action maps between symplectic cohomology algebras of autonomous Hamiltonians are induced by
inclusions of double systems,
they are naturally $\Lambda_\K^{Q_+,+}$-algebra homomorphisms and hence $\Lambda_\K^{Q_+}$-algebra homomorphisms by the lemma above.
\end{remark}

\section{Properties of Symplectic Cohomology} \label{section properties of symplectic cohomology}

\subsection{Changing Contact Cylinders}

\begin{prop} \label{proposition changing contact cylinder}
Let $\check{C}_j$ be a contact cylinder
with associated Liouville domain
$D_j$ for $j=0,1$
so that $D_1 \subset D_0$.
Suppose also  that
the natural restriction  map
$\iota := H^2(M,D_0;\R) \lra{} H^2(M,D_1;\R)$
is an isomorphism and define
\begin{equation} \label{equation product restriction map cone}
\widetilde{\iota} : H^2(M,D_0;\R) \times \R^2 \lra{} H^2(M,D_1;\R) \times \R^2, \quad \widetilde{\iota} := \iota \times \id_{\R^2}.
\end{equation}
Let $(Q_-,Q_+)$ be a wide $\check{C}_0$-interval domain and let $H \in \ccH^{\T,\ls}(\check{C}_1,\leq 0)$ be autonomous.
Then there is an isomorphism of double systems
with product
\begin{equation} \label{equation isomorphism of double systems changing contact cylinder}
\SH^*_{\check{C}_0,Q_-,Q_+}(H)
\lra{\cong} \SH^*_{\check{C}_1,\widetilde{\iota}(Q_-),\widetilde{\iota}(Q_+)}(H).
\end{equation}
\end{prop}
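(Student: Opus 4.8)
The plan is to establish the isomorphism (\ref{equation isomorphism of double systems changing contact cylinder}) by comparing the two double systems term by term, after checking that $\widetilde{\iota}$ identifies all the relevant indexing data. First I would dispose of the bookkeeping. Since $\widetilde{\iota} = \iota \times \id_{\R^2}$ is a linear isomorphism and $\iota$ is the canonical restriction map, it commutes with the projections to $H^2(M,D_j;\R)$ and satisfies $\iota_{D_1}\circ\iota = \iota_{D_0}$ for the restriction maps to $H^2(M;\R)$; hence $\widetilde{\iota}$ carries $\check{C}_0$-compatible cones to $\check{C}_1$-compatible cones and preserves the properties ``salient'', ``thin'', ``small'' and ``wide''. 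In particular $(\widetilde{\iota}(Q_-),\widetilde{\iota}(Q_+))$ is again a (wide) interval domain pair, and $\widetilde{\iota}$ induces homeomorphisms $\Sc(Q_\pm) \cong \Sc(\widetilde{\iota}(Q_\pm))$ by $a_\pm \mapsto a_\pm\circ\widetilde{\iota}^{-1}$, so the two double systems are indexed by the same poset. Moreover $\iota_{D_1}(\pi_{D_1}(\widetilde{\iota}(Q_+))) = \iota_{D_0}(\pi_{D_0}(Q_+))$ as cones in $H^2(M;\R)$, so the Novikov rings $\Lambda_\K^{Q_+}$ and $\Lambda_\K^{\widetilde{\iota}(Q_+)}$ (and their positive parts) literally coincide.

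The core of the argument is, for each $\check{C}_0$-action interval $(a_-,a_+)$ with image $(\widetilde{a}_-,\widetilde{a}_+) := (a_-\circ\widetilde{\iota}^{-1}, a_+\circ\widetilde{\iota}^{-1})$, to produce a natural isomorphism $HF^*_{\check{C}_0,a_-,a_+}(H) \cong HF^*_{\check{C}_1,\widetilde{a}_-,\widetilde{a}_+}(H)$. Both groups are direct limits over smooth Hamiltonians approximating $H$ from below, and I would identify a cofinal family computing both: the ``mostly flat'' Hamiltonians of Remark \ref{remark mostly flat} associated to $\check{C}_1$, namely Hamiltonians $\check{H}$ that are $\leq 0$ on $K$, climb over $D_1$, are linear over $\check{C}_1$, then (since the convention forces the $\check{C}_0$-slope of an approximation of $H$ to be strictly negative) decrease with arbitrarily small negative slope over $\check{C}_0$ and are constant outside $D_0\cup\check{C}_0$. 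Such a $\check{H}$ is $C^2$-close to a genuinely $\check{C}_1$-compatible Hamiltonian and is monotone near $\check{C}_0$, so all its capped $1$-periodic orbits (and, in low dimensions, all the Floer and continuation trajectories between them) lie inside $D_1\cup([1,1+\epsilon/8]\times C_1)\subset D_0$; a short continuation argument as in Lemma \ref{lemma continuation map isomorphism action gap} then identifies the $\check{C}_0$- and $\check{C}_1$-Floer cohomologies of $\check{H}$ with cofinal subsystems of the two directed sets. The point is that for such an orbit $\gamma$, Corollary \ref{corollary action of capped loop disjoint from contact cylinder} together with Remark \ref{remark only depends on cohomology class} gives $\cA_{H,\check{C}_0}(\gamma) = \cA_{H,\check{C}_1}(\gamma)\circ\widetilde{\iota}$ on the relevant cones: both actions are computed by the same formula $I_0 + \lambda^- I_1$, with the same $I_1 = \int_0^1 H_t(\overline{\gamma})\,dt$, and the $I_0$-terms agree because $\iota$ is the canonical restriction isomorphism and the capping boundary lies in $D_1$. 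Hence the generating sets $\Gamma^p_{\check{C}_0,a_-,a_+}(\check{H})$ and $\Gamma^p_{\check{C}_1,\widetilde{a}_-,\widetilde{a}_+}(\check{H})$ coincide, the differentials and continuation maps agree on the nose, and passing to the double limit gives the required isomorphism of $\Lambda_\K^{Q_+}$-modules.

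Finally I would check that this isomorphism respects all the structure maps. Compatibility with action maps is immediate from the identification $a_\pm \leftrightarrow a_\pm\circ\widetilde{\iota}^{-1}$, since on the common cofinal family the chain-level action maps are literally the same quotient-then-include maps. For the product, autonomy of $H$ means the pair of pants product on each side is induced by counting the same pair-of-pants solutions --- all with image in $D_0$ --- and these are filtered by the same action data under $\widetilde{\iota}$; so the isomorphism intertwines the products strictly, giving the asserted isomorphism of double systems with product in (\ref{equation isomorphism of double systems changing contact cylinder}).

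The step I expect to be the main obstacle is the cofinality claim together with the control of the perturbation near $\check{C}_0$. Because a Hamiltonian equal to $\infty$ on the cylinder has $\check{C}_0$-slope $0$ by convention, a genuinely $\check{C}_1$-compatible Hamiltonian is not literally admissible for the $\check{C}_0$-direct system, so one must perturb it near $\check{C}_0$; proving that this perturbation creates no new $1$-periodic orbits or Floer trajectories, does not move the action filtration, and does so uniformly over a cofinal family and compatibly with continuation, action and product maps, is the delicate part, and is exactly where one leans on the Gromov-compactness inputs behind Lemmas \ref{lemma chain level continuation map filtration isomorphism} and \ref{lemma continuation map isomorphism action gap}.
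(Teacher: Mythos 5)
Your core comparison --- choose a cofinal family of Hamiltonians all of whose relevant capped orbits lie in $D_1$, observe via Corollary \ref{corollary action of capped loop disjoint from contact cylinder} and Remark \ref{remark only depends on cohomology class} that $\cA_{H,\check{C}_0}(\gamma)$ and $\cA_{H,\check{C}_1}(\gamma)\circ\widetilde{\iota}$ then agree, and conclude that the two chain complexes, differentials, continuation/action maps and products literally coincide --- is exactly the engine of the paper's Lemma \ref{lemma changing contact cylinder disjiont}. But the paper only runs that argument under the extra hypothesis that $\check{C}_0\cap\check{C}_1=\emptyset$, and then proves the general statement by a separate reduction: it extends both cylinders slightly by a Moser argument and applies the disjoint-cylinder lemma four times along a chain of intermediate contact cylinders (an outer collar of $\check{C}_0$, a thin cylinder around $\partial D_1$, an outer collar of $\check{C}_1$, and $\check{C}_1$ itself), each consecutive pair being disjoint. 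Your proposal omits this reduction, and that is where it has a genuine gap.

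Concretely: the hypotheses only give $D_1\subset D_0$, so the collar regions $[1+\epsilon_0/8,1+\epsilon_0/2]\times C_0$ and $[1+\epsilon_1/8,1+\epsilon_1/2]\times C_1$ may intersect. On an overlap, a Hamiltonian in your ``common cofinal family'' would have to equal $\lambda_0 r_{C_0}+m_0$ and $\lambda_1 r_{C_1}+m_1$ simultaneously with both slopes strictly negative, which is impossible for two unrelated radial coordinates; so the family you describe (``linear over $\check{C}_1$, then decreasing with small negative slope over $\check{C}_0$'') need not exist, and no perturbation or continuation argument repairs a nonexistent indexing set. Even when the collars happen to be disjoint, your Hamiltonians are only \emph{close to} $\check{C}_1$-compatible (they are not constant outside $D_1\cup\check{C}_1$), so they do not lie in the directed system $\ccH^\reg(<_{\check{C}_1}H,\cdot,\cdot)$ defining the right-hand side; you flag the resulting comparison as ``the delicate part'' but do not carry it out. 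The paper's chain of disjoint intermediate cylinders is precisely the device that makes both problems disappear, since for a disjoint pair one can take Hamiltonians with tiny negative slope in each of the two separated collars and constant elsewhere outside $D_1$, which belong to both directed systems on the nose.
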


Before we prove this proposition,
we need a preliminary lemma.

\begin{lemma} \label{lemma changing contact cylinder disjiont}
Proposition \ref{proposition changing contact cylinder}
is true when $\check{C}_0$ and $\check{C}_1$ are disjoint
\end{lemma}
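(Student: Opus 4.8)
\textbf{Proof proposal for Lemma \ref{lemma changing contact cylinder disjiont}.}

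The plan is to reduce everything to the level of the Floer complexes defining the two double systems and to show that the generating orbits and Floer trajectories for $\SH^*_{\check{C}_0,Q_-,Q_+}(H)$ and $\SH^*_{\check{C}_1,\widetilde{\iota}(Q_-),\widetilde{\iota}(Q_+)}(H)$ can be taken to be \emph{literally the same}. The key point that makes the disjoint case special is this: since $H \in \ccH^{\T,\ls}(\check{C}_1,\leq 0)$ is autonomous and $\check{C}_0 \cap \check{C}_1 = \emptyset$, every smooth Hamiltonian $\check{H} <_{\check{C}_1} H$ is automatically $\check{C}_0$-compatible up to a $C^\infty$-small perturbation supported near $\check{C}_0$ (it is already locally constant there because $\check{C}_0 \subset D_1$ or $\check{C}_0 \subset M - (D_1 \cup \check{C}_1)$, and on both pieces $\check H$ can be arranged constant outside a compact subset of $D_1$). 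So the two systems of admissible Hamiltonians agree after passing to cofinal subfamilies, and by Lemma \ref{lemma isomorphism condition} this does not change the double system. First I would fix such cofinal subfamilies and observe that, with the same $\check H$ and the same $J$ (chosen $\check{C}_0$- and $\check{C}_1$-compatible, which is possible since the compatibility conditions are imposed on disjoint regions and each space of compatible objects is weakly contractible), the spaces $\overline{\ccM}(\check{H},J,\gamma)$ used to build the differential, continuation maps, and pair of pants products are identical.

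The remaining content is to match the \emph{action functions} and the \emph{Novikov rings}. For the action: by Corollary \ref{corollary action of capped loop disjoint from contact cylinder} and Remark \ref{remark only depends on cohomology class}, the $(\check{H},\check{C}_j)$-action of a capped $1$-periodic orbit $\gamma$ disjoint from the relevant annular regions depends only on $\omega$, $\check H$, the scaling constants $\lambda^\pm$, and the relative class $[\widetilde{\omega} - \lambda^- \omega] \in H^2(M,D_j;\R)$. Under the hypothesis that $\iota : H^2(M,D_0;\R) \to H^2(M,D_1;\R)$ is an isomorphism, the map $\widetilde\iota$ identifies $Q_{\check{C}_0}$ with $Q_{\check{C}_1}$ compatibly, and I would check that $\cA_{\check{H},\check{C}_1}(\gamma) \circ \widetilde\iota = \cA_{\check{H},\check{C}_0}(\gamma)$ as functions on $Q_{\check{C}_0}$ (here I use that the orbits generating these groups, being orbits of a Hamiltonian constant outside a compact subset of $D_1$, automatically avoid the annuli $[1-\epsilon/4,1+\epsilon/2]\times C_j$ for both $j$, perhaps after enlarging $D_1$ slightly — or rather noting orbits live in the locus where both primitives are computed by the same formula \eqref{equation primitive assocatiated to contact cylinder}). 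Consequently $\widetilde\iota$ carries each $\check{C}_0$-action interval $(a_-,a_+) \in \Sc(Q_-)\times\Sc(Q_+)$ to a $\check{C}_1$-action interval $(a_- \circ \widetilde\iota^{-1}, a_+ \circ \widetilde\iota^{-1}) \in \Sc(\widetilde\iota(Q_-))\times\Sc(\widetilde\iota(Q_+))$ inducing a bijection of the indexing directed sets, and an equality $\Gamma^p_{\check{C}_0,a_-,a_+}(\check H) = \Gamma^p_{\check{C}_1,\widetilde\iota\text{-image}}(\check H)$. This gives an isomorphism $HF^*_{\check{C}_0,a_-,a_+}(\check H) \cong HF^*_{\check{C}_1,\widetilde\iota(a_\pm)}(\check H)$ compatible with the chain-level differentials, continuation maps, and action maps, hence an isomorphism of the double systems $\SH^*$, and by Definition \ref{definition of tensor product of double systems} and Remark \ref{remark on definition of symplectic cohomology} it respects the pair of pants product since those moduli spaces coincide too. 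For the Novikov rings, $\widetilde\iota$ identifies $\check Q_+ := \iota_{D_0}(\pi_{D_0}(Q_+))$ with $\iota_{D_1}(\pi_{D_1}(\widetilde\iota(Q_+)))$ as closed salient cones in $H^2(M;\R)$ — this is immediate because $\iota_{D_j}\circ\pi_{D_j}$ on the image of $Q$ factors through the restriction to $M$, which is insensitive to whether we relativize at $D_0$ or $D_1$ — so the two positive Novikov rings $\Lambda_\K^{Q_+,+}$ and $\Lambda_\K^{\widetilde\iota(Q_+),+}$ are canonically equal, and the isomorphism above is $\Lambda_\K^{Q_+,+}$-linear.

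The main obstacle I expect is bookkeeping rather than a genuine difficulty: one must be careful that the orbits and Floer trajectories for the cofinal families really do avoid the annular regions $[1\pm\epsilon/4, 1+\epsilon/2]\times C_j$ attached to \emph{both} contact cylinders simultaneously, so that the clean action formula of Corollary \ref{corollary action of capped loop disjoint from contact cylinder} applies to each and Remark \ref{remark only depends on cohomology class} can be invoked to express everything in terms of relative cohomology classes that $\widetilde\iota$ matches. Since $H$ (hence each approximating $\check H$ in the cofinal family) is constant outside a compact subset of $D_1$ and its $1$-periodic orbits therefore lie in that compact set, while $\check C_0$ is disjoint from $\check C_1$ and the orbits avoid a neighborhood of $\check C_1$, one can choose the cofinal family so that all orbits avoid $\check C_0$ as well; the trajectory estimates then follow from the standard maximum-principle / no-escape argument (as in Proposition \ref{proposition compactness result}, invoked via \ref{item avoiding submanifolds}-type reasoning) applied at each contact cylinder independently. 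Once that is in place, the isomorphism \eqref{equation isomorphism of double systems changing contact cylinder} is just the identity on chain complexes relabeled by $\widetilde\iota$, and all the claimed compatibilities (with transfer maps, action morphisms, reparameterization isomorphisms, and the product) are formal.
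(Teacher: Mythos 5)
Your proposal is correct and follows essentially the same route as the paper's proof: pick a cofinal family of Hamiltonians simultaneously adapted to both (disjoint) contact cylinders so that the Floer data and chain complexes literally coincide, match the actions via Corollary \ref{corollary action of capped loop disjoint from contact cylinder} and Remark \ref{remark only depends on cohomology class}, and conclude with Lemma \ref{lemma isomorphism condition}. The one mechanism the paper makes explicit that you leave buried in ``passing to cofinal subfamilies'' is the diagonal height condition --- the Hamiltonians are chosen constant on $M-D_1$ with constant value exceeding $\text{height}(a_-^i,a_+^i)$ for a cofinal family of action intervals with increasing heights, which is precisely what expels the degenerate constant orbits in $M-D_1$ (in particular in $D_0-D_1$, where the two action formulas of Corollary \ref{corollary action of capped loop disjoint from contact cylinder} would otherwise disagree) from both action windows and makes the two chain complexes identical.
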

\begin{proof}[Proof of lemma \ref{lemma changing contact cylinder disjiont}.]
Let 
$(a^j_-)_{j \in \N}$,
$(a^j_+)_{j \in \N}$ be a cofinal family
of $(\Sc(Q_-^0), \geq)$
and $(\Sc(Q_+^0), \leq)$ respectively.
After passing to a subsequence,
we can assume that the function
$i \to \text{height}(a_-^i,a_+^i)$
is increasing.

Since $H$ has slope $0$ along both contact cylinders and $\check{C}_0 \cap \check{C}_1 = \emptyset$
we can find a cofinal family of Hamiltonians
$H_i \in \cap_{k=0,1} \cap_{j \in \N} \ccH^{\T,\reg}(<_{\check{C}_k} H,a_-^j,a_+^j)$, $i \in \N$
so that 
$H_i|_{M-D_1}$ is constant and
$H_i|_{M - D_1} > \text{height}(a_-^i,a_+^i)$ for all $i \in \N$.
Then the Floer chain complexes
computing
$HF^*_{\check{C}_1,a_-^j,a_+^j}(H_i)$
and
$HF^*_{\check{C}_0,a_-^j,a_+^j}(H_i)$
are identical for each $i,j \in \N$ satisfying $i \geq j$ by Corollary \ref{corollary action of capped loop disjoint from contact cylinder}. The continuation maps, action maps and pair of pants product maps coincide under such an isomorphism.
This gives us our isomorphism (\ref{equation isomorphism of double systems changing contact cylinder}) by Lemma \ref{lemma isomorphism condition}.
\end{proof}

\begin{proof}[Proof of Proposition \ref{proposition changing contact cylinder}.]

Let
$$\check{C}_j = [1-\epsilon_j,1+\epsilon_j] \times C_j, \quad j=0,1$$
be our contact cylinders.
By a Moser argument (\cite[Exercise 3.36]{McduffSalamon:sympbook}), we can extend them
to contact cylinders
$$\check{C}'_j = [1-\epsilon_j-\delta,1+\epsilon_j+\delta] \times C_j, \quad j=0,1$$
for some $0<\delta < \min_{j=0,1} (\epsilon_j/2)$.
We can also assume that $\delta$ is small enough so that
$[1-\delta,1+\delta] \times C_1$ is disjoint from $[1+\epsilon_0 + \delta/2, 1+ \epsilon_0 + \delta] \times C_0$.
We now apply Lemma \ref{lemma changing contact cylinder disjiont}
four times, with four different pairs of contact cylinders in the following order:
\begin{enumerate}
		\item 
		$\check{C}_0$,
		$[1 + \epsilon_0 + \delta/2,1+\epsilon_0 + \delta] \times C_0$,
	\item 
	$[1 + \epsilon_0 + \delta/2,1+\epsilon_0 + \delta] \times C_0$,
	$[1 - \delta,1+ \delta] \times C_1$,
	\item
	$[1 - \delta,1+ \delta] \times C_1$,
	$[1 + \epsilon_1 + \delta/2,1+\epsilon_1 + \delta] \times C_1$ and
	\item $[1 + \epsilon_1 + \delta/2,1+\epsilon_1 + \delta] \times C_1$,
	$\check{C}_1$.
\end{enumerate}
This completes the proposition.
\end{proof}

We also need a proposition telling us what to do when we forget the contact cylinder.

\begin{defn} \label{definition Novikov ring for quantum cohomology}
	Let $\check{C} = \emptyset$ be the empty contact cylinder.
	The {\it standard $\omega$-cone} is
	the cone
	$Q_\omega \subset H^2(M,\emptyset;\R) \times \R \times \R$ defined to
	be the $1$-dimensional cone spanned by
	$([\omega],1,1)$.
	The {\it standard Novikov ring} $\Lambda_\K^\omega$
	is defined to be $\Lambda_\K^{Q_\omega}$.
	The {\it standard positive Novikov ring} $\Lambda_\K^{\omega,+}$
	is defined to be $\Lambda_\K^{Q_\omega,+}$.
\end{defn}

Note that the standard Novikov ring is equal to the Novikov ring (\ref{equation novikov ring omegaX}) in the introduction
with $\omega_X$ replaced by $\omega$.

\begin{prop} \label{proposition forgetting contact cylinder}
Let $\check{C} = [1-\epsilon,1+\epsilon] \times C$
be a contact cylinder with associated Liouville domain $D$.
Let $Q_{\omega_{\check{C}}} \subset H^2(M,D;\R) \times \R^2$ be the cone spanned by $([\omega_{\check{C}}],1,1)$ where
$\omega_{\check{C}}$ is a $\check{C}$-compatible $2$-form
with scaling constants $0$ and $1$ and which is equal to $\omega$ outside $D \cup ([1,1+\epsilon/2] \times C)$.
Then for any autonomous $H \in \ccH^{\T,\ls}(\check{C},\leq 0)$
(Definition \ref{definition symplectic cohomology}) there is an isomorphism of double systems
with product
\begin{equation} \label{equation isomorphism of double systems removing contact cylinder}
\SH^*_{\check{C},Q_{\omega_{\check{C}}},Q_{\omega_{\check{C}}}}(H)
\lra{\cong} \SH^*_{\emptyset,Q_\omega,Q_\omega}(H).
\end{equation}
\end{prop}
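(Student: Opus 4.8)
The plan is to show that, for the one–dimensional ``small'' cones $Q_{\omega_{\check{C}}}$ and $Q_\omega$, every piece of Floer–theoretic data on the two sides literally coincides, so that the two double systems are isomorphic by (essentially) the identity after identifying their index categories.

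First I would record the relevant identifications. Both cones are one–dimensional and small in the sense of Definition~\ref{defn chain complex}, so no height condition is imposed on the Hamiltonians involved (Equation~(\ref{equation high enough hamiltonians})), and evaluation at the spanning vectors gives canonical order–preserving identifications $\Sc(Q_{\omega_{\check{C}}})\cong\R_{\geq 0}\cong\Sc(Q_\omega)$; I use these to identify the two index categories $\Sc(\cdot)\times\Sc(\cdot)$ of the double systems. Next, $\omega_{\check{C}}-\omega$ is exact on $M$ — a global primitive is $-\theta$ on $D$, $(f_{\omega_{\check{C}}}(r_C)-1)r_C\alpha_C$ on $\check{C}$, and $0$ outside $D\cup\check{C}$, and these agree on the overlaps — so the image of $[\omega_{\check{C}}]\in H^2(M,D;\R)$ in $H^2(M;\R)$ is $[\omega]$; hence $\iota_D(\pi_D(Q_{\omega_{\check{C}}}))=\R_{\geq 0}[\omega]$ and therefore $\Lambda_\K^{Q_{\omega_{\check{C}}}}=\Lambda_\K^{Q_\omega}$ and $\Lambda_\K^{Q_{\omega_{\check{C}}},+}=\Lambda_\K^{Q_\omega,+}$, so the two sides are modules/algebras over the same Novikov ring. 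Finally, the essential action computation: the unique $\check{C}$–compatible $2$–form realizing the triple $([\omega_{\check{C}}],1,1)$ is $\omega$ itself (its scaling function is forced to be $\equiv 1$, being monotone with equal endpoints), so by Lemma~\ref{lemma Hamiltonian admissible with two form associated to contact cylinder} the associated primitive is $F^{\check{H},\check{C},\omega}=\check{H}$; by Corollary~\ref{corollary action of capped loop disjoint from contact cylinder}, for any weakly $\check{C}$–compatible smooth $\check{H}$ and any capped orbit $\gamma$ whose associated loop avoids $[1+\epsilon/4,1+\epsilon/2]\times C$ one gets $\cA_{\check{H},\check{C}}(\gamma)([\omega_{\check{C}}],1,1)=-\int_S\widetilde{\gamma}^*\omega+\int_0^1\check{H}_t(\overline{\gamma}(t))\,dt$, which by Example~\ref{example usual action filtraiton} is exactly $\cA_{\check{H},\emptyset}(\gamma)([\omega],1,1)$.

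I would then compare chain complexes for a fixed $\check{H}\in\ccH^\reg(\check{C},a_-,a_+,P)$. Such an $\check{H}$ has no $1$–periodic orbit inside $[1+\epsilon/8,1+\epsilon/2]\times C$, so the neck–exclusion in the definition of $\Gamma^P_{\check{C},a_-,a_+}$ is vacuous; combined with the action computation, the generating set $\Gamma^P_{\check{C},a_-,a_+}(\check{H})$ equals $\Gamma^P_{\emptyset,a'_-,a'_+}(\check{H})$ for the identified action interval $(a'_-,a'_+)$. A $\check{C}$–compatible regular $J\in\ccJ^{\T,\reg}(\check{H},\check{C})$ lies in $\ccJ(J_0,V,\omega)$ and is regular in the intrinsic sense, hence also lies in $\ccJ^{\T,\reg}(\check{H},\emptyset)$; since the Floer equation~(\ref{eqn:floer}) and the moduli spaces $\ccM$ are the same PDE with the same data, $CF^p_{\check{C},a_-,a_+}(\check{H})$ equals $CF^p_{\emptyset,a'_-,a'_+}(\check{H})$ as $\Lambda_\K^{Q_\omega,+}$–complexes, and the continuation maps, action maps, reparameterization isomorphisms and pair–of–pants products likewise agree. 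Thus $HF^*_{\check{C},a_-,a_+}(\check{H})=HF^*_{\emptyset,a'_-,a'_+}(\check{H})$ compatibly with all structure.

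Finally I would pass to the limit. Using Lemmas~\ref{lemma partial order lemma} and~\ref{lemma regular Hamiltonians} one checks that $\ccH^\reg(<_{\check{C}}H,a_-,a_+,P)$ is a cofinal sub–directed–set of $\ccH^\reg(<_\emptyset H,a'_-,a'_+,P)$: any smooth $\check{H}'<H$ can be raised on a neighborhood of the neck to a large negative–slope linear function (there is room since $H=\infty$ there) and then generically perturbed, producing $\check{H}\in\ccH^\reg(<_{\check{C}}H,\dots)$ with $\check{H}'<_\emptyset\check{H}$. By Lemma~\ref{lemma isomorphism condition} the direct limits over the two systems agree, so $HF^*_{\check{C},a_-,a_+}(H)=HF^*_{\emptyset,a'_-,a'_+}(H)$ compatibly, and applying $\varinjlim\varprojlim$ over the identified index categories yields the isomorphism of double systems with product~(\ref{equation isomorphism of double systems removing contact cylinder}). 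I expect the main obstacle to be the bookkeeping in this last paragraph — verifying that $\check{C}$–compatible regular almost complex structures are regular data for the $\emptyset$–theory, and that the weakly $\check{C}$–compatible smoothings are cofinal — but these are routine transversality and cofinality arguments of exactly the kind already used in the proof of Lemma~\ref{lemma changing contact cylinder disjiont}.
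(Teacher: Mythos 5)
Your proposal is correct and follows essentially the same route as the paper's own (much terser) proof: identify the two one-dimensional cones via the cohomological restriction map, observe that with $\widetilde{\omega}=\omega$ realizing $([\omega_{\check{C}}],1,1)$ the primitive is $F^{H,\check{C},\omega}=H$ so the two action functionals and hence the Floer chain complexes literally coincide, and pass to the limit over a cofinal family of $\check{C}$-compatible regular Hamiltonians. Your extra verifications (exactness of $\omega_{\check{C}}-\omega$, coincidence of Novikov rings, regularity of $J$, cofinality of the $\check{C}$-compatible smoothings) are exactly the details the paper leaves implicit, and they all check out.
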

\proof
Let $\Pi : Q_{\omega_{\check{C}}} \lra{} Q_\omega$ be the restriction of the natural map
$H^2(M,D;\R) \times \R^2 \lra{} H^2(M;\R) \times \R^2$ induced by cohomological restriction.
Choose any cofinal family of Hamiltonians
$H_i \in \ccH^{\T,\reg}(<_{\check{C}} H,a_-,a_+)$.
Then the isomorphism
(\ref{equation isomorphism of double systems removing contact cylinder}) follows from the natural isomorphisms
$HF^*_{\check{C},a_- \circ \Pi,a_+ \circ \Pi}(H) \cong HF^*_{\emptyset,a_-,a_+}(H)$
coming from the fact that the corresponding Floer chain complexes are identical
for each $(a_-,a_+) \in \Sc(Q_\omega) \times \Sc(Q_\omega)$.
\qed

\subsection{Partial Independence of the Hamiltonian.}

Throughout this subsection, $\check{C}$
is a contact cylinder whose associated Liouville domain is $D$.
We will also fix a $\check{C}$-interval domain $(Q_-,Q_+)$.

\begin{prop} \label{proposition continuation map isomorphism}
Let $K \subset \T \times M$ be a closed subset which is contained in $D$ if $D$ is non-empty.
Let $H^\pm \in \ccH^{\T,\ls}(\check{C},K,\leq 0)$
satisfy $H^-_t \leq H^+_t$.
Then the transfer morphism
\begin{equation} \label{equation transfer isomorphism}
\SH^*_{\check{C},Q_-,Q_+}(H^-) \lra{} \SH^*_{\check{C},Q_-,Q_+}(H^+)
\end{equation}
 is an isomorphism in the category
$\sys{\Lambda_\K^{Q_+,+}}$.
\end{prop}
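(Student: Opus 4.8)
The plan is to construct a two-sided inverse to the transfer morphism in $\sys{\Lambda_\K^{Q_+,+}}$, following the template of the ``independence of the defining Hamiltonian'' arguments in the papers cited after Definition \ref{definition symplectic cohomology}. First, since $H^-$ and $H^+$ both have slope $0$ and height $\infty$ along $\check{C}$ and $H^-\le H^+$, we have $H^-\le^\ls_{\check{C}}H^+$, so the transfer morphism is indeed defined. Using Lemma \ref{lemma partial order lemma}, the lower semi-continuity of $H^\pm$, and a Dini-type argument, I would choose cofinal sequences $(\check{H}^-_i)_{i\in\N}$ in $\ccH^\reg(<_{\check{C}}H^-,a_-,a_+,P)$ and $(\check{H}^+_i)_{i\in\N}$ in $\ccH^\reg(<_{\check{C}}H^+,a_-,a_+,P)$ with $\check{H}^-_i<_{\check{C}}\check{H}^+_i$ for every $i$ and with $\check{H}^+_i-\check{H}^-_i\le 1/i$ outside $K$ (possible because $H^\pm=\infty$ there), while on $K$ the difference stays bounded by $\sup_K(H^+-H^-)<\infty$. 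Through these families, and Lemma \ref{lemma isomorphism condition}, the transfer morphism becomes, componentwise over $\Sc(Q_-)\times\Sc(Q_+)$, the colimit of the continuation maps $HF^*_{\check{C},a_-,a_+}(\check{H}^-_i)\to HF^*_{\check{C},a_-,a_+}(\check{H}^+_i)$ attached to homotopies that are essentially bounded perturbations supported in $K$.

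The heart of the argument is to invert these maps in the limit. For fixed $i$ I would interpolate linearly from $\check{H}^-_i$ to $\check{H}^+_i$ and apply Lemma \ref{lemma continuation map isomorphism action gap}. Its hypotheses --- that the set of relevant capped $1$-periodic orbits $\Gamma^P_{\check{C},a_-,a_+}$ be locally constant near $(a_-,a_+)$ and independent of the homotopy parameter, and that no orbit enter $[1+\epsilon/8,1+\epsilon/2]\times C$ --- can be arranged after a generic perturbation of the window endpoints $(a_-,a_+)$ and after passing to large $i$, by a Gromov compactness / no-escape estimate: the monotonicity imposed in part (\ref{equation forms are negative}) of Definition \ref{definition riemann surface admissible} and in the definition of $<_{\check{C}}$, together with the action formula of Corollary \ref{corollary action of capped loop disjoint from contact cylinder}, forces every capped orbit living in $[1+\epsilon/4,1+\epsilon/2]\times C$ or created far from $K$ (as the outside-$K$ value of $\check{H}^\pm_i$ grows) to carry $(\check{H},\check{C})$-action outside every fixed window of the cofinal family, while the orbits and Floer cylinders confined near $K$ become, after the action truncation, insensitive to a bounded $K$-supported perturbation. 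Hence the truncated continuation maps are isomorphisms on the sub-double-systems that survive to $\varinjlim_{a_-}\varprojlim_{a_+}$; assembling these over the cofinal families and over all $\check{C}$-action intervals, and invoking that a natural transformation of double systems over a common index category with isomorphism components is an isomorphism in $\sys{\cdot}$ (plus Lemma \ref{lemma isomorphism condition} to pass between cofinal subsystems), produces the inverse. Compatibility of this isomorphism with the $\Lambda_\K^{Q_+,+}$-module structure --- and, when $H^\pm$ are autonomous, with the pair-of-pants product --- is automatic, since transfer morphisms are induced by inclusions of double systems.

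The step I expect to be the main obstacle is precisely this no-escape / stabilization estimate. Because $H^\pm$ are only lower semi-continuous on $K$, their smooth approximations $\check{H}^\pm_i$ may carry a priori uncontrolled families of high-gradient $1$-periodic orbits inside $K$, and one must show that none of these --- together with any of their Novikov cappings --- ever contributes to the action-truncated Floer cohomology surviving the double limit, i.e. that their $(\check{H},\check{C})$-actions escape every window of a cofinal family; this requires combining the action estimates of Corollary \ref{corollary action of capped loop disjoint from contact cylinder} with careful bookkeeping of the Novikov filtration (Remark \ref{remark description of Novikov ring}) and of the $c_1(M)=0$ grading.
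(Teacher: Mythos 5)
There is a genuine gap, and it sits exactly where you predict: the attempt to invert the transfer morphism window-by-window via Lemma \ref{lemma continuation map isomorphism action gap} applied to an interpolation from $\check{H}^-_i$ to $\check{H}^+_i$ cannot work. Hypothesis (\ref{item:neighborhoods}) of that lemma requires a single window $(a_-,a_+)$ whose small perturbations detect the same set $\Gamma^P_{\check{C},a_-,a_+}(H_{s,\bullet})$ for \emph{every} $s\in[0,1]$ simultaneously, i.e.\ $(a_-,a_+)$ must avoid the union over $s$ of the action spectra. But your homotopy changes the Hamiltonian on $K$ by an amount of order $\nu=\sup_K(H^+-H^-)>0$, so the action of a persisting orbit sweeps out an interval of length up to $\lambda^-\nu$ as $s$ varies; the union of action spectra over $s$ then contains intervals and there is no cofinal family of admissible windows. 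Your assertion that the truncated groups are ``insensitive to a bounded $K$-supported perturbation'' is the false step: a perturbation of size $\nu$ moves actions by order $\nu$, which is not small relative to a fixed window, so orbits enter and leave $\Gamma^P_{\check{C},a_-,a_+}$ along the homotopy and the individual maps $HF^*_{\check{C},a_-,a_+}(H^-)\lra{}HF^*_{\check{C},a_-,a_+}(H^+)$ are in general \emph{not} isomorphisms. The statement is only true at the level of the category $\sys{\Lambda_\K^{Q_+,+}}$, where a shift of the window is invisible.

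The paper's proof avoids controlling any homotopy. Since $H^\pm$ are $\leq 0$ on $K$ and $\infty$ off $K$, one can pick $\nu>0$ with $H^+-\nu\leq H^-$ and interleave
$\SH^*_{\check{C},Q_-,Q_+}(H^+-2\nu)\to\SH^*_{\check{C},Q_-,Q_+}(H^--\nu)\lra{\alpha}\SH^*_{\check{C},Q_-,Q_+}(H^+-\nu)\to\SH^*_{\check{C},Q_-,Q_+}(H^-)$.
By Lemma \ref{lemma proposition for constant hamiltonians} every composition of two consecutive arrows is a ``subtract a constant'' transfer morphism, which is an isomorphism: subtracting $\nu$ shifts \emph{all} actions uniformly by $L_\nu$, the translation isomorphism of Lemma \ref{lemma adding constant} identifies $HF^*_{\check{C},a_-,a_+}(H-\nu)$ with $HF^*_{\check{C},a_-+L_\nu,a_++L_\nu}(H)$ compatibly with continuation and action maps, and reindexing the windows by $L_\nu$ is a standard endomorphism, hence the identity in $\sys{\Lambda_\K^{Q_+,+}}$. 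The two-out-of-six argument then makes $\alpha$, and finally the transfer morphism (\ref{equation transfer isomorphism}), an isomorphism. If you want to salvage your approach you must replace the window-by-window inversion with this kind of interleaving; as written, the key estimate you defer is not a technicality but the point at which the argument fails.
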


Since $\varinjlim \varprojlim$
is  a functor by Definition \ref{definition inverse direct limit is a functor},
Proposition \ref{proposition continuation map isomorphism} implies that
the transfer map
$SH^*_{\check{C},Q_-,Q_+}(\check{H}) \lra{} SH^*_{\check{C},Q_-,Q_+}(H)$ is an isomorphism.
Before we prove Proposition \ref{proposition continuation map isomorphism}, we need some preliminary lemmas and the following definition.

\begin{defn} \label{definition linear function on cone}
Let $\nu \in \R$ be a constant.
Define
$$L_\nu : H^2(M,D;\R) \times \R \times \R \lra{} \R, \quad L_\nu(q,\lambda^-,\lambda^+) := \lambda^-\nu.$$
\end{defn}

\begin{lemma} \label{lemma adding constant}
	Let 
	$(a_-,a_+) \in \Sc(Q_-) \times \Sc(Q_+)$ be a $\check{C}$-action interval, $\nu \in (0,\infty)$ and
	let
	$H \in \ccH^{\T,\ls}(\check{C},a_- + L_\nu,a_+ + L_\nu)$.
	Then we have a commutative diagram:
	\begin{center}
		\begin{tikzpicture}
		\node at (-1.7,0.5) {$HF^*_{(\check{C},a_-,a_+)}(H)$};
		\node at (-5.5,0.5) {$HF^*_{\check{C},a_-,a_+}(H-\nu)$};
		\node at (-5.5,-0.5) {$HF^*_{\check{C},a_-+L_\nu,a_+ + L_\nu}(H)$};
		\draw [->] (-5.4,0.2) node [] {} -- (-5.4,-0.3) node [] {};
		\node [] at (-5.1416,-0.0908) {$\cong$};
		\draw [->] (-3.7527,0.5) node [] {} -- (-3.2,0.5) node [] {};
		\node [] at (-3.4387,0.7323) {$\alpha$};
		
		\draw [->] (-3.6352,-0.3973) node [->] {} -- (-2.3,0.1) node [->] {};
		\node [] at (-2.9,-0.4) {$\beta$};
		\node at (-5.7276,-0.0908) {$\gamma$};
		\end{tikzpicture}
	\end{center}
	
	where $\alpha$ is a continuation map
	and $\beta$ is an action map.
	The map $\gamma$ commutes with continuation maps and action maps.
\end{lemma}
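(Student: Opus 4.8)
The plan is to construct the map $\gamma$ as the composition of a reparameterization-type isomorphism coming from shifting the Hamiltonian by the constant $\nu$ with the tautological identification of Floer complexes under shifting the action window. First I would observe that subtracting a positive constant $\nu$ from an autonomous Hamiltonian $H$ does not change its $1$-periodic orbits, its slope, or its height along $\check{C}$ (only the height at $\check{C}$ shifts by $\nu$), and it does not change the Floer differential, continuation maps, or pair-of-pants maps, since none of these depend on the additive constant. What does change is the action: by Corollary \ref{corollary action of capped loop disjoint from contact cylinder}, for a capped $1$-periodic orbit $\gamma$ with image disjoint from the relevant collar, the $(H-\nu,\check{C})$-action differs from the $(H,\check{C})$-action precisely by $-\lambda^- \nu$, i.e. by $-L_\nu$ in the notation of Definition \ref{definition linear function on cone}, uniformly in the orbit. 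Hence the generators of $CF^*_{\check{C},a_-,a_+}(H-\nu)$ are in canonical bijection with the generators of $CF^*_{\check{C},a_- + L_\nu,a_+ + L_\nu}(H)$, and this bijection is a chain isomorphism compatible with all structure maps; passing to homology (and to the direct limit over approximating smooth Hamiltonians as in Definition \ref{definition of hamiltonian floer cohomology for lower semicontinuous hamiltonians}) gives the isomorphism $\gamma$.

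The second step is to verify the commutativity of the triangle. The map $\alpha$ is the continuation map associated to $H - \nu \leq H$ (valid since $\nu > 0$), and the map $\beta$ is the action map from Definition \ref{definition action maps for lower semi-continuous hamiltonians} associated to the inclusion $(a_- + L_\nu, a_+ + L_\nu)$ being smaller than $(a_-,a_+)$ — which holds because $\nu \geq 0$ forces $L_\nu \geq 0$ on the relevant cones, so $a_\pm \leq (a_\pm + L_\nu)|_{Q_\pm}$, and the cones are unchanged. I would then check at the chain level: under the tautological identification $\gamma$, a generator counted by the continuation map $\alpha$ (a Floer trajectory for a homotopy from $H - \nu$ to $H$, which because the constant shift is irrelevant is literally the same trajectory as for the constant homotopy) maps to the same generator, and the action-filtration bookkeeping matches exactly the definition of $\beta$ as a quotient-then-inclusion of subcomplexes cut out by the shifted action values. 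So $\beta \circ \gamma = \alpha$ on the nose at chain level, hence on homology and in the limit.

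The third step is the assertion that $\gamma$ commutes with continuation maps and action maps. This is immediate from the chain-level description: the bijection on generators is induced by the identity map on capped orbits (only the labeling action window is relabeled), so it intertwines the differentials, continuation maps (which count trajectories for $s$-dependent Hamiltonians, again insensitive to additive constants once one shifts the source and target windows in parallel), action maps (quotients/inclusions of subcomplexes which transform covariantly under the relabeling), and — since $H$ is autonomous — pair-of-pants product maps. Concretely I would phrase $\gamma$ as an isomorphism of double systems $\SH^*_{\check{C},Q_-,Q_+}(H - \nu) \cong \SH^*_{\check{C},Q_-,Q_+}(H)$ after precomposing the indexing by the affine shift $(a_-,a_+) \mapsto (a_- + L_\nu, a_+ + L_\nu)$ of $\Sc(Q_-) \times \Sc(Q_+)$, in the style of Example \ref{example morphisms}, so that functoriality is automatic.

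I expect the only real subtlety — not a deep obstacle, but the step requiring care — to be the action bookkeeping: one must make sure that the shift $-L_\nu$ produced by Corollary \ref{corollary action of capped loop disjoint from contact cylinder} is exactly the affine translation appearing in Definition \ref{definition action maps for lower semi-continuous hamiltonians}, that it respects the directed-system structures on $\Sc(Q_\pm)$ (with the opposite ordering on $Q_-$), and that the approximating smooth Hamiltonians can be chosen compatibly so that the direct limits defining the lower-semicontinuous Floer groups are identified term by term. Everything else is the standard observation that Floer theory is blind to additive constants in autonomous Hamiltonians.
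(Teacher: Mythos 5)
Your construction of $\gamma$ is genuinely different from the paper's and the idea behind it is sound: by Corollary \ref{corollary action of capped loop disjoint from contact cylinder}, replacing $H$ by $H-\nu$ shifts both branches of the action formula by exactly $-\lambda^-\nu = -L_\nu$, the heights shift so that membership in $\ccH^{\T,\ls}(\check{C},a_-,a_+)$ versus $\ccH^{\T,\ls}(\check{C},a_-+L_\nu,a_++L_\nu)$ matches, and $\check{H}\mapsto\check{H}+\nu$ identifies the directed systems $\ccH^\reg(<_{\check{C}} H-\nu,a_-,a_+)$ and $\ccH^\reg(<_{\check{C}} H,a_-+L_\nu,a_++L_\nu)$ term by term, so the tautological relabeling of generators does define an isomorphism $\gamma$ (this is the same mechanism the paper uses for the $\gamma\#(-s)$ shift in Lemma \ref{lemma extend to Novikov field action}). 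The paper instead never writes down a tautological map: it subdivides the shift into $2N$ small steps $B_{k,l}$, shows each small action map is an isomorphism by Lemma \ref{lemma action morphism is isomorphism} and each small continuation map is an isomorphism by Lemma \ref{lemma continuation map filtration isomorphism}, and \emph{defines} $\gamma$ as the staircase composite, so that commutativity of the triangle and compatibility with all structure maps are automatic from functoriality.

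The gap in your version is precisely the step you dismiss as bookkeeping: the identity $\beta\circ\gamma=\alpha$. You compute $\alpha$ at chain level using the homotopy $H_t-\rho(s)\nu$, whose Hamiltonian vector field is $s$-independent, and conclude the continuation map is the identity on generators. But $H-\nu$ and $H$ have \emph{equal} slopes along $\check{C}$, so $H-\nu\not<_{\check{C}}H$ (Definition \ref{definition partial order on Hamiltonians} requires $\lambda_{H^-_t}<\lambda_{H^+_t}$), and the map $\alpha$ is therefore not given by any single admissible continuation datum; it is defined only through Definition \ref{definition continuation map for lower semicontinuous Hamiltonians}, i.e.\ via the inclusion of directed systems of strictly $<_{\check{C}}$-smaller smooth Hamiltonians. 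Unwinding that, proving $\beta\circ\gamma=\alpha$ amounts to showing that for $\check{H}<_{\check{C}}H-\nu$ and a common upper bound $\check{H}''$ of $\check{H}$ and $\check{H}+\nu$ in $\ccH^\reg(<_{\check{C}}H,a_-,a_+)$, the honest continuation map $\Phi_{\check{H},\check{H}''}$ agrees with $\Phi_{\check{H}+\nu,\check{H}''}$ composed with your tautological-plus-action map. These two continuation maps solve genuinely different Floer equations (the interpolating Hamiltonians have different vector fields), so there is no "on the nose" chain-level identification; establishing the agreement requires an interpolation and action-filtration argument, which is exactly what the paper's subdivision into steps small enough for Lemmas \ref{lemma continuation map filtration isomorphism} and \ref{lemma action morphism is isomorphism} accomplishes. (Even in a setting where the constant homotopy were admissible, one would still need to rule out that a regular perturbation of it contributes extra, strictly action-decreasing terms to the chain map.) So the construction of $\gamma$ can stand, but the commutativity needs the staircase — or an equivalent substitute — rather than the tautological computation.
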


\begin{defn} \label{definition translation isomorphism}
	We will call the map $\gamma$
	a {\it translation isomorphism}.
\end{defn}

\begin{proof}[Proof of Lemma \ref{lemma adding constant}.]
	We will prove this lemma in two steps.
	In the first step, we will prove it in the case when $H \in \ccH^\reg(\check{C},a_-+L_\nu,a_++L_\nu)$
	(Definition \ref{defn chain complex})
	and the second step will deal with the general case.
	
	{\it Step 1}. Suppose $H \in  \ccH^\reg(\check{C},a_-+L_\nu,a_++L_\nu)$.
	For $N \in \N$ large enough,
	we have that
	$H - c \in \ccH^\reg(\check{C},a_-+L_\nu,a_++L_\nu)$ for all $c \in [0,\frac{2\nu}{N}]$.
	Therefore $H - (N-k+s)\nu/N \in \ccH^\reg(\check{C},a_- + k L_\nu/N,a_+ + kL_\nu/N)$ for all $k \in \Z$ and all
	$s \in [0,1]$.
	Define
	$$B_{k,l} :=
	HF^*_{\check{C},a_- + k L_\nu/N,a_+ + kL_\nu/N}(H - (N-l) \nu/N)$$
	for all
	$l,k \in \{0,\cdots,N\}$.
	Then 
	by Lemmas \ref{lemma continuation map filtration isomorphism} and \ref{lemma action morphism is isomorphism}
	we have a commutative diagram consisting of continuation maps and action maps (with the exception of $\gamma$ which is given as a definition):
	
	\begin{center}
		\begin{tikzpicture}
		
		\node at (-2,2) {$B_{0,0}$};
		\node at (-2,1.2) {$B_{1,0}$};
		\node at (-2,0.4) {$B_{1,1}$};
		\node at (-2,-0.4) {$B_{2,1}$};
		\node at (-2,-1.2) {$B_{2,2}$}; 
		\node at (-2,-2) {$B_{3,2}$};
		\node at (-2,-3.2) {$B_{N-1,N-1}$};
		\node at (-2,-4) {$B_{N,N-1}$};
		\node at (-2,-4.8) {$B_{N,N}$};
		\node at (1.8,2) {$B_{0,N}$};
		
		\node at (-0.2,1.2) {$B_{1,N}$};
		\node at (-0.2,0.4) {$B_{1,N}$};
		\node at (-0.2,-0.4) {$B_{2,N}$};
		\node at (-0.2,-1.2) {$B_{2,N}$};
		\node at (-0.2,-2) {$B_{3,N}$};
		\node at (-0.2,-3.2) {$B_{N-1,N}$};
		\node at (-0.2,-4) {$B_{N,N}$};
		\draw [->](-1.4,2) -- (1.2,2);
		\draw [->](-1.4,1.2) -- (-0.8,1.2);
		\draw [->](-1.4,0.4) -- (-0.8,0.4);
		\draw [->](-1.4,-0.4) -- (-0.8,-0.4);
		\draw [->](-1.4,-1.2) -- (-0.8,-1.2);
		\draw [->](-1.4,-2) -- (-0.8,-2);
		\draw [->](-1.2,-3.2) -- (-0.9,-3.2);
		\draw [->](-1.2,-4) -- (-0.8,-4);
		\draw [->](-1.2,-4.8) .. controls (-0.6,-4.8) and (4.2,-6.2) .. (2.4,1.6);
		\draw [->](0.4,-4) .. controls (1.3,-4) and (2.6,0) .. (2.2,1.6);
		\draw [->](0.5,-3.2) .. controls (0.8,-3.2) and (2.4,-0.6) .. (2,1.6);
		\draw [->](0.2,-2) .. controls (0.6,-1.8) and (1.8,-1) .. (1.8,1.6);
		\draw [->](0.2,-1.2) .. controls (0.4,-1) and (1.4,-0.4) .. (1.6,1.6);
		\draw [->](0.2,-0.4) .. controls (0.6,-0.2) and (1,1) .. (1.4,1.6);
		\draw [->](0.2,0.4) .. controls (0.4,0.6) and (0.8,1.4) .. (1.2,1.6);
		\draw [->](0.2,1.2) .. controls (0.4,1.2) and (0.8,1.6) .. (1.2,1.8);
		\draw [->](-1.9,1.4) -- (-1.9,1.7);
		\draw [->](-1.9,0.9) -- (-1.9,0.6);
		\draw [->](-1.9,-0.2) -- (-1.9,0.1);
		\draw [->](-1.9,-0.7) -- (-1.9,-0.9);
		\draw [->](-1.9,-1.4) -- (-1.9,-1.7);
		\draw [->](-1.9,-2.3) -- (-1.9,-2.5);
		
		\draw [->](-1.9,-3.8) -- (-1.9,-3.4);
		\draw [->](-0.2,-3.8) -- (-0.2,-3.4);
		\draw [->](-1.9,-4.3) -- (-1.9,-4.6);
		\draw [->](-0.2,-0.2) -- (-0.2,0.2);
		\draw [->](-0.2,-1.8) -- (-0.2,-1.4);
		\draw [double equal sign distance](-0.2,-2.4) -- (-0.2,-2.2);
		\draw [double equal sign distance](-0.2,-0.9) -- (-0.2,-0.6);
		\draw [double equal sign distance](-0.2,0.6) -- (-0.2,0.9);
		\node at (-0.3,2.3) {$\alpha$};
		\node at (2.2,-4.3) {$\beta$};
		\draw [->](-2.4,1.9) .. controls (-2.7,1.2) and (-3.3,-4.8) .. (-2.5,-4.8);
		\node at (-3.1,-1.4) {$\gamma$};
		\node at (-1.6,1.6) {$\cong$};
		\node at (-1.6,0.8) {$\cong$};
		\node at (-1.6,-0.1) {$\cong$};
		\node at (-1.6,-0.8) {$\cong$};
		\node at (-1.6,-1.6) {$\cong$};
		\node at (-1.6,-2.3) {$\cong$};
		
		\node at (-1.6,-3.6) {$\cong$};
		\node at (-1.6,-4.4) {$\cong$};
		\node at (-3.2,-2.2) {$\cong$};
		\node at (-0.9,-2.5) {$\vdots$};
		
		\node at (-1.9,-2.7) {$\vdots$};
		\node at (-0.2,-2.7) {$\vdots$};
		\end{tikzpicture}
	\end{center}
	
	The map $\gamma$ commutes with continuation maps and action maps since the diagram above is functorial with respect to continuation and action maps.
	
	\smallskip
	
	{\it Step 2}:
	Now suppose $H \in \ccH^{\T,\ls}(\check{C},a_- + L_\nu,a_+ + L_\nu)$.
	Then
	since
	$$HF^*_{\check{C},a_-,a_+}(H-\nu) = \varinjlim_{\check{H} \in \ccH^\reg(\check{C},a_-+L_\nu,a_++L_\nu)}
	HF^*_{\check{C},a_-,a_+}(\check{H}-\nu),$$
	$$HF^*_{\check{C},a_- + L_\nu,a_+ + L_\nu}(H) = \varinjlim_{\check{H} \in \ccH^\reg(\check{C},a_-+L_\nu,a_++L_\nu)}
	HF^*_{\check{C},a_- + L_\nu,a_+ + L_\nu}(\check{H})$$
	and continuation maps commute with the translation isomorphisms constructed in Step 1,
	we get our result.
\end{proof}

\bigskip

\begin{lemma} \label{lemma proposition for constant hamiltonians}
Let $H \in \ccH^{\T,\ls}(\check{C},\leq 0)$ and let $\nu>0$ be a constant.
	Then the transfer morphism
	$\SH^*_{\check{C},Q_-,Q_+}(H - \nu) \lra{} \SH^*_{\check{C},Q_-,Q_+}(H)$ is an isomorphism in $\sys{\Lambda_\K^{Q_+,+}}$.
\end{lemma}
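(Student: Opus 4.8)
The plan is to reduce the claim to the translation isomorphism of Lemma \ref{lemma adding constant} applied levelwise, together with a cofinality argument. First I would observe that both $H-\nu$ and $H$ are autonomous and lie in $\ccH^{\T,\ls}(\check{C},\leq 0)$ (since adding the negative constant $-\nu$ to a Hamiltonian that is $0$ on $K$ and $\infty$ elsewhere keeps it $\leq 0$ on $K$ and $\infty$ elsewhere), so the transfer morphism of Definition \ref{definition transfer map} is defined and is a morphism in $\sys{\Lambda_\K^{Q_+,+}}$ which, if autonomous, also respects the product. Hence it suffices to produce an inverse in this category.

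Next I would work levelwise over $\check{C}$-action intervals. Fix a $\check{C}$-action interval $(a_-,a_+)\in \Sc(Q_-)\times\Sc(Q_+)$. The key point is that the transfer morphism at level $(a_-,a_+)$ is the continuation map $HF^*_{\check{C},a_-,a_+}(H-\nu)\lra{} HF^*_{\check{C},a_-,a_+}(H)$, which by the triangle in Lemma \ref{lemma adding constant} factors as the translation isomorphism $\gamma : HF^*_{\check{C},a_-+L_\nu,a_++L_\nu}(H) \lra{\cong} HF^*_{\check{C},a_-,a_+}(H-\nu)$ composed with the action map $\beta$; equivalently, up to the isomorphism $\gamma$, the continuation map $\alpha$ at level $(a_-,a_+)$ is identified with the action map $HF^*_{\check{C},a_-+L_\nu,a_++L_\nu}(H) \lra{} HF^*_{\check{C},a_-,a_+}(H)$. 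So on the double system $\SH^*_{\check{C},Q_-,Q_+}(H-\nu)$, the transfer morphism is, level-by-level, the composition of the translation isomorphisms $\gamma$ with action morphisms that are \emph{internal} to the double system $\SH^*_{\check{C},Q_-,Q_+}(H)$ — namely the double system maps for the shift $(a_-,a_+)\mapsto (a_-+L_\nu,a_++L_\nu)$ going the "wrong way" in the ordering. Since $\nu>0$, the assignment $(a_-,a_+)\mapsto(a_-+L_\nu,a_++L_\nu)$ is an order-preserving self-map of $\Sc(Q_-)\times\Sc(Q_+)$ (with the product ordering used in Definition \ref{definition symplectic cohomology}), and it is cofinal: given any $(b_-,b_+)$ one has $(b_--L_\nu,b_+-L_\nu)$ below it whose shift recovers $(b_-,b_+)$.

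Therefore I would argue as follows. The collection of translation isomorphisms $\gamma$ of Lemma \ref{lemma adding constant}, which commute with continuation maps and action maps, assembles into an isomorphism of double systems from $\SH^*_{\check{C},Q_-,Q_+}(H-\nu)$ to the double system $W$ obtained from $\SH^*_{\check{C},Q_-,Q_+}(H)$ by precomposing the index category with the cofinal shift $(a_-,a_+)\mapsto(a_-+L_\nu,a_++L_\nu)$; under this identification the transfer morphism becomes the canonical inclusion-type morphism $W\lra{}\SH^*_{\check{C},Q_-,Q_+}(H)$ induced by a cofinal reindexing. By Lemma \ref{lemma isomorphism condition} (the inclusion of a cofinal subsystem, or more generally a cofinal reindexing, is an isomorphism in $\sys{\Lambda_\K^{Q_+,+}}$), this inclusion morphism is an isomorphism. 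Composing, the transfer morphism $\SH^*_{\check{C},Q_-,Q_+}(H-\nu)\lra{}\SH^*_{\check{C},Q_-,Q_+}(H)$ is an isomorphism in $\sys{\Lambda_\K^{Q_+,+}}$, which is what we wanted.

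The main obstacle I expect is bookkeeping rather than conceptual: one must check carefully that the translation isomorphisms $\gamma$ are natural enough to constitute a genuine morphism of double systems (not just a levelwise family) — that they commute with both the action-map double-system maps and with the monoid/Novikov-module structure so that the resulting map is $\Lambda_\K^{Q_+,+}$-linear and, in the autonomous case, multiplicative — and that the identification of the transfer morphism with the cofinal-reindexing inclusion is literally correct as morphisms in $\sys{\Lambda_\K^{Q_+,+}}$ rather than merely "up to standard endomorphisms" (here Remark \ref{remark inclusion invariance} and the fact that standard endomorphisms represent the identity should suffice). Once the naturality of $\gamma$ and the cofinality of the shift are in hand, Lemma \ref{lemma isomorphism condition} does the rest.
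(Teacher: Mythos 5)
Your proposal is correct and follows essentially the same route as the paper: both use the translation isomorphisms $\gamma_{a_-,a_+}$ of Lemma \ref{lemma adding constant} to identify the levelwise transfer (continuation) maps with the action maps for the shift $(a_-,a_+)\mapsto(a_-+L_\nu,a_++L_\nu)$, assemble these into an isomorphism of double systems, and observe that the remaining shift morphism is a standard endomorphism / cofinal reindexing, hence an isomorphism in $\sys{\Lambda_\K^{Q_+,+}}$ (the paper phrases this via Example \ref{example morphisms} rather than Lemma \ref{lemma isomorphism condition}, but the mechanism is the same). The only cosmetic discrepancy is that you write the translation isomorphism in the opposite direction from Lemma \ref{lemma adding constant}; since it is an isomorphism this does not affect the argument.
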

\proof
For each action interval
$(a_-,a_+) \in \Sc(Q_-) \times \Sc(Q_+)$,
we have a translation isomorphism
$$\gamma_{a_-,a_+} : HF^*_{\check{C},a_-,a_+}(H-\nu)
\lra{} HF^*_{\check{C},a_- + L_\nu,a_++ L_\nu}(H).$$
Since such isomorphisms commute with
continuation maps, we get an isomorphism
$$\gamma : \SH^*_{\check{C},Q_-,Q_+}(H-\nu) \lra{}
\SH^*_{\check{C},Q_-,Q_+}(H).$$
Also for each action interval
$(a_-,a_+) \in \Sc(Q_-) \times \Sc(Q_+)$,
we have an action morphism
$$\beta_{a_-,a_+} : HF^*_{\check{C},a_- + L_\nu,a_++ L_\nu}(H)
\lra{} HF^*_{\check{C},a_-,a_+}(H)$$
and this induces an isomorphism
$$\beta : \SH^*_{\check{C},Q_-,Q_+}(H)
\lra{} \SH^*_{\check{C},Q_-,Q_+}(H)$$
(such an isomorphism is induced by a standard endomorphism as in Example \ref{example morphisms}).
Also by Lemma \ref{lemma adding constant},
$$\beta_{a_-,a_+} \circ \gamma_{a_-,a_+} :
HF^*_{\check{C},a_-,a_+}(H-\nu) \lra{}
HF^*_{\check{C},a_-,a_+}(H)
$$
is a continuation map for all $a_-,a_+ \in \Sc(Q)$
giving us a transfer morphism
$$\SH^*_{\check{C},Q_-,Q_+}(H-\nu) \lra{} \SH^*_{\check{C},Q_-,Q_+}(H).$$
Our lemma now follows from the fact that $\beta \circ \gamma$ is a composition of isomorphisms in $\sys{\Lambda_\K^{Q,+}}$.
\qed

\begin{proof}[Proof of Proposition
	\ref{proposition continuation map isomorphism}.]
	
	Choose a constant $\nu > 0$
	so that $H^+ - \nu \leq H^-$.
	Then since $H^\pm \in \ccH^{\T,\ls}(\check{C},\leq 0)$ we get
	$H^+ - \nu \leq_{\check{C}} H^-$.
	We then have transfer morphisms
	$$\SH^*_{\check{C},Q_-,Q_+}(H^+ - 2\nu)
	\lra{}
	\SH^*_{\check{C},Q_-,Q_+}(H^- - \nu)
	\lra{\alpha}
	\SH^*_{\check{C},Q_-,Q_+}(H^+ - \nu)
	\lra{}
	\SH^*_{\check{C},Q_-,Q_+}(H^-).$$
	By Lemma \ref{lemma proposition for constant hamiltonians},
	the composition of any two such morphisms is an isomorphism and hence
	$\alpha$ is an isomorphism.
	Also by Lemma \ref{lemma proposition for constant hamiltonians},
	the natural continuation morphisms
	$$\beta_\pm : \SH^*_{\check{C},Q_-,Q_+}(H^\pm - \nu) \lra{} \SH^*_{\check{C},Q_-,Q_+}(H^\pm)$$ are isomorphisms.
	Our Proposition now follows from the fact that the continuation map (\ref{equation transfer isomorphism})
	is equal to the following composition of isomorphisms:	$\beta_+ \circ \alpha \circ (\beta_-)^{-1}$.

\end{proof}

\subsection{Relation with Quantum Cup Product}

\begin{theorem} \label{theorem isomorphic to quantum cohomology}
	Let $Q_\omega$, $\Lambda^\omega_\K$ be the standard $\omega$-cone
	and standard Novikov ring respectively
	as in Definition \ref{definition Novikov ring for quantum cohomology}.
	Suppose that $\K$ is a field.
	Then there is an isomorphism of $\Lambda_\K^\omega$-algebras
$$
SH^*_{\emptyset,Q_\omega,Q_\omega}(M \subset M) \cong QH^*(M,\Lambda_\K^{\omega})
$$
	where $QH^*$ is quantum cohomology.
	Also,
	$\varinjlim \varprojlim^1 \SH^*_{\emptyset,Q_\omega,Q_\omega}(M \subset M) = 0$
	where $\varinjlim \varprojlim^1$ is given in Definition \ref{definition lim invlim functor}.
\end{theorem}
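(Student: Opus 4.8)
The statement has two parts: an algebra isomorphism $SH^*_{\emptyset,Q_\omega,Q_\omega}(M\subset M)\cong QH^*(M,\Lambda_\K^\omega)$ and the vanishing of the derived limit $\varinjlim\varprojlim^1$. For the algebra isomorphism, the plan is to use the fact that when $\check C=\emptyset$, $K=M$, and $(Q_-,Q_+)=(Q_\omega,Q_\omega)$, the action interval structure collapses to the usual action filtration (Example \ref{example usual action filtraiton}), so that $HF^*_{\emptyset,a_-,a_+}(H_M)$ for a smooth Hamiltonian $H\leq 0$ is just the usual Hamiltonian Floer cohomology of $H$ truncated by action windows. First I would observe that $H_M$ is the lower semi-continuous Hamiltonian which is $0$ on $M$, so that the directed system defining $HF^*_{\emptyset,a_-,a_+}(H_M)$ is the system of all $\check C$-compatible smooth Hamiltonians $\check H < 0$; since $M$ is closed, any such $\check H$ is an ordinary Hamiltonian on a closed symplectic manifold with $c_1=0$. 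Taking $a_-\to-\infty$ (i.e.\ the direct limit over $\Sc(Q_\omega)^{\mathrm{op}}$ for $a_-$) removes the lower action truncation, and then taking $a_+\to+\infty$ (the inverse limit) removes the upper truncation, recovering the full Hamiltonian Floer cohomology $HF^*(\check H)$ with its $\Lambda_\K^{\omega,+}$-module structure. A standard cofinality and continuation-map argument (using Lemma \ref{lemma continuation map filtration isomorphism} and that continuation maps between $HF^*(\check H)$ for different $\check H\to 0$ are isomorphisms, all compatible with the pair-of-pants product) identifies $SH^*_{\emptyset,Q_\omega,Q_\omega}(M\subset M)$ as an algebra with $HF^*(H)\otimes_{\Lambda_\K^{\omega,+}}\Lambda_\K^\omega$ for a small $C^2$ Morse Hamiltonian $H$, after extending scalars via Lemma \ref{lemma extend to Novikov field action}. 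The classical PSS isomorphism then gives the algebra isomorphism with $QH^*(M,\Lambda_\K^\omega)$.

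For the $\varinjlim\varprojlim^1$ vanishing, the plan is to invoke Lemma \ref{lemma mittag leffler like test}: it suffices to exhibit a cofinal family $\check I$ in the $a_-$-index set and a countably infinite cofinal family $\check K$ in the $a_+$-index set such that the double-system maps $HF^*_{\emptyset,a_-,a_+}(H_M)\to HF^*_{\emptyset,a_-,a_+'}(H_M)$ (for $a_+\leq a_+'$ in $\check K$, $a_-\in\check I$) are surjective. The key point is that for $\K$ a field, once one fixes $a_-$ and lets $a_+$ grow, the groups $HF^*_{\emptyset,a_-,a_+}(H_M)$ stabilize: indeed, for fixed $\check H<0$ and fixed lower truncation, the number of capped $1$-periodic orbits of $\check H$ with action bounded below and of bounded index is finite (since $H_M$ is modelled on a compact set of orbits and the Novikov-type finiteness in Remark \ref{remark action module} applies), so raising $a_+$ eventually includes all of them and the quotient maps become eventually isomorphisms; in any case, the action maps $HF^*_{\emptyset,a_-,a_+}\to HF^*_{\emptyset,a_-,a_+'}$ are induced by inclusions of chain complexes followed by projections, and on the direct limit over the directed system of Hamiltonians $\check H$ these are surjective because one can realize any class by choosing $\check H$ with enough low-action generators. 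I would package this by choosing $\check I$ to be a cofinal sequence $(a_-^i)$ with $a_-^i\to-\infty$ and $\check K$ a cofinal sequence $(a_+^j)$ with $a_+^j\to+\infty$, and then verifying surjectivity of $HF^*_{\emptyset,a_-^i,a_+^j}(H_M)\to HF^*_{\emptyset,a_-^i,a_+^{j'}}(H_M)$ for $j\leq j'$ directly from the chain-level description (the relevant chain-level map is a quotient of free modules, hence surjective, and surjectivity passes to homology over a field since the relevant complexes split).

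The main obstacle I expect is the bookkeeping in the second part: one must be careful that the double-system maps in the definition of $\SH^*_{\emptyset,Q_\omega,Q_\omega}(M\subset M)$ go in the right directions (direct limit over $a_-$ with the opposite ordering, inverse limit over $a_+$), so that ``surjectivity as $a_+$ increases'' is indeed the hypothesis of Lemma \ref{lemma mittag leffler like test}, and that one has genuine chain-level surjectivity rather than just eventual isomorphism. A clean way to handle this is to first reduce, via Lemma \ref{lemma continuation map filtration isomorphism} and Lemma \ref{lemma isomorphism condition}, to a cofinal subsystem where the Hamiltonian is a fixed small Morse function $H$ perturbed appropriately, so that all chain complexes $CF^*_{\emptyset,a_-,a_+}(H)$ are explicit truncations of the finite-rank Morse--Novikov complex of $H$; then the structure maps for varying $a_+$ are literally quotient maps of the form $CF^*_{\emptyset,a_-,\infty}(H)/(\cdot\cap CF^*_{\emptyset,a_+,\infty}(H))\to CF^*_{\emptyset,a_-,\infty}(H)/(\cdot\cap CF^*_{\emptyset,a_+',\infty}(H))$, which are surjective, and over a field surjectivity on homology follows. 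Applying Lemma \ref{lemma mittag leffler like test} with these explicit cofinal families then yields $\varinjlim\varprojlim^1\SH^*_{\emptyset,Q_\omega,Q_\omega}(M\subset M)=0$.
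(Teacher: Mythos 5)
Your overall strategy coincides with the paper's: reduce to a $C^2$-small negative Morse function $\lambda H$ via Lemma \ref{lemma continuation map filtration isomorphism}, apply PSS for the algebra isomorphism, and verify the hypothesis of Lemma \ref{lemma mittag leffler like test} for the derived-limit vanishing. However, the justification you give for the key surjectivity step is not correct, and this is a genuine gap. First, the groups $HF^*_{\emptyset,a_-,a_+}(\lambda H)$ do \emph{not} stabilize as the upper cutoff grows: for fixed $a_-$ there are in general infinitely many capped $1$-periodic orbits with action bounded below (every capping class $A$ with $\omega(A)$ bounded above contributes a generator), so the inverse limit over $a_+$ is a genuine completion and no ``eventual isomorphism'' argument is available. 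Second, your fallback --- that the chain-level action map is a surjection of free modules and that ``over a field surjectivity on homology follows'' --- is false: a surjective chain map between complexes of vector spaces need not be surjective on homology (consider the quotient of the acyclic complex $\K\xrightarrow{\;1\;}\K$ onto its target term). What actually makes the argument work is that, for $\lambda H$ a $C^2$-small Morse function, the only Floer trajectories are $t$-independent Morse flowlines (\cite[Theorem 10.1.1]{audin2014morse}), so the Floer differential preserves the $H_2(M;\Z)$-class of the capping and the full complex splits as a direct sum of subcomplexes indexed by these classes; moreover, because $[\omega]$ lifts to an integral class and $\lambda H$ is small, one can take the cutoffs $a_{c_\pm}$ with $c_\pm\in\tfrac12+\Z$, so that no action window cuts through a single summand and every truncation map is literally a projection onto a sub-direct-sum \emph{of complexes} --- only then is surjectivity on homology automatic. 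Your proposal invokes a ``splitting'' but supplies neither the Morse-flowline input nor the half-integer choice of cutoffs, and without them the splitting is unjustified.

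A secondary point on logical order: in your first part you pass from $\varinjlim_{a_-}\varprojlim_{a_+}HF^*_{\emptyset,a_-,a_+}(\check H)$ to the untruncated group $HF^*(\check H)$, but commuting $\varprojlim$ past homology is exactly what the Milnor sequence controls, so this identification already requires the $\varprojlim^1$-vanishing you only establish in the second part. You should prove the vanishing first and then invoke \cite[Theorem 3.5.8]{weibel1995introduction}, as the paper does.
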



\proof
We identify the cone $Q_\omega$
with $[0,\infty)$ via the identification
$\lambda([\omega],1,1) \lra{} \lambda$ for all $\lambda \geq 0$.
Under this identification,
$\Sc(Q_\omega)$ becomes the space
of linear functions
$a_c : [0,\infty) \lra{} \R, \quad a_c(\lambda) := c\lambda$,
$c \in \R$.

We will first show
$\varinjlim \varprojlim^1 \SH^*_{\emptyset,Q_\omega,Q_\omega}(M \subset M) = 0$ by using Lemma
\ref{lemma mittag leffler like test}.
Let $H : M \lra{} \R$ be a $C^2$ small Morse function which is negative.
Since $[\omega]$ lifts
to an integral cohomology class in
$H^2(M;\Z)$,
we have for all $c_\pm \in \frac{1}{2}+\Z$
that the continuation maps
\begin{equation} \label{equation continuation map isomorphism for small Hamiltonians}
HF^*_{\emptyset,a_{c_-},a_{c_+}}(\lambda_1 H) \lra{} HF^*_{\emptyset,a_{c_-},a_{c_+}}(\lambda_2 H)
\end{equation}
are isomorphisms for all $0 < \lambda_2 \leq \lambda_1 \leq \frac{1}{4}$
by Lemma \ref{lemma continuation map filtration isomorphism}.
Hence the double system
$W := (HF^*_{\emptyset,a_{c_-},a_{c_+}}(\lambda H))_{c_\pm \in \frac{1}{2} + \Z}$
for some small $\lambda > 0$
is isomorphic to the double system
$(\varinjlim_{\check{\lambda} \to 0}HF^*_{\emptyset,a_{c_-},a_{c_+}}(\check{\lambda} H))_{c_\pm \in \frac{1}{2} + \Z}$ which, in turn is isomorphic to
$\SH^*_{\emptyset,Q_\omega,Q_\omega}(M \subset M) = 0$ by Lemma \ref{lemma isomorphism condition}.
Therefore
by Lemma
\ref{lemma mittag leffler like test}
it is sufficient
for us to show that the action map
\begin{equation} \label{equation action map for quantum cohomology}
HF^*_{\emptyset,a_{c_-},a_{c_+}}(\lambda H)
\lra{}
HF^*_{\emptyset,a_{c_-},a_{c_+-1}}(\lambda H)
\end{equation}
is surjective
for each $c_\pm \in \frac{1}{2} + \Z$.
Since $\lambda H$ is $C^2$ small,
we have by \cite[Theorem 10.1.1]{audin2014morse}
that the only Floer trajectories connecting
$1$-periodic orbits of $\lambda H$
are Morse flowlines (I.e. $t$ independent Floer trajectories).
This implies that the natural map
(\ref{equation action map for quantum cohomology})
is surjective
for each $c_\pm \in \frac{1}{2} + \Z$
which in turn implies that
$\varinjlim \varprojlim^1 \SH^*_{\emptyset,Q_\omega,Q_\omega}(M \subset M) = 0$
by Lemma \ref{lemma mittag leffler like test}.

We will now prove the first part of the lemma.
Choose $J \in \cap_{c_\pm \in \frac{1}{2} + \Z} \ccJ^{\T,\reg}(\lambda H,a_{c_-},a_{c_+})$.
Define
$$HF^*(\lambda H) := H_*\left(\varinjlim_{c_-} \varprojlim_{c_+} CF^*_{\emptyset,a_{c_-},a_{c_+}}(\lambda H)\right).$$
Since $\lim \lim^1 W =0$, we have a natural isomorphism 
$HF^*(\lambda H) = 
\varinjlim_{a_-}
\varprojlim_{a_+} HF^*_{\emptyset,a_-,a_+}(\lambda H)$
by
\cite[Theorem 3.5.8]{weibel1995introduction}
and Lemma \ref{lemma isomorphism condition}.
Since the morphism (\ref{equation continuation map isomorphism for small Hamiltonians}) is an isomorphism
for all $c_\pm \in \frac{1}{2}\Z$ and $0 < \lambda_2 < \lambda_1 \leq \frac{1}{4}$,
we get that the natural map
\begin{equation} \label{equation isomorphism with smyplectic cohomology}
HF^*(\lambda H) = \varinjlim_{a_-} \varprojlim_{a_+} HF^*_{\emptyset,a_-,a_+}(\lambda H)
\lra{} \varinjlim_{a_-} \varprojlim_{a_+} \varinjlim_{\check{\lambda} \to 0} HF^*_{\emptyset,a_-,a_+}(\check{\lambda} H) = SH^*_{\emptyset,Q_\omega,Q_\omega}(M \subset M)
\end{equation}
is an isomorphism for all sufficiently small $\lambda > 0$.
By 
\cite{piunikhin1996symplectic}
and \cite{oh2011floer}
we have isomorphisms
\begin{equation} \label{equation isomorphism with cohomology}
HF^*(\lambda H) \cong H^*(M;\Lambda^\omega_\K)
\end{equation}
for all $\lambda > 0$ small which commute with continuation maps
$HF^*(\lambda H) \lra{} HF^*(\lambda' H)$ for $0 < \lambda < \lambda'$ small.
%
These isomorphisms also commute with the pair of pants product as follows.
If $\lambda > 0$ is sufficiently small,
we have a commutative diagram of $\Lambda^{\omega}_\K$-modules

\begin{center}
\begin{tikzpicture}

\node at (0,0) {$(HF^*(\lambda H)) \otimes (HF^*(\lambda H))$	};

\node at (7,0) {$HF^*(2\lambda H)$};

\node at (0,-1.5) {$QH^*(M;\Lambda^{\omega}_\K) \otimes QH^*(M;\Lambda^{\omega}_\K)$};

\node at (7,-1.5) {$QH^*(M;\Lambda^{\omega}_\K)$};

\draw [->](2.45,0) -- (5.75,0);

\draw [->](2.55,-1.5) -- (5.75,-1.5);

\draw [->](-1.25,-0.5) -- (-1.25,-1);

\draw [->](1,-0.5) -- (1,-1);

\draw [->](6.75,-0.5) -- (6.75,-1);

\node at (-1.5,-0.75) {$\cong$};

\node at (0.75,-0.75) {$\cong$};

\node at (6.5,-0.75) {$\cong$};

\node at (4.25,0.25) {$\alpha$};

\node at (4.25,-1.25) {$\beta$};

\end{tikzpicture}

\end{center}
where $\alpha$
is the pair of pants product and $\beta$
is the quantum cup product and where the vertical isomorphisms are induced by (\ref{equation isomorphism with cohomology}) (the papers \cite{piunikhin1996symplectic}
and \cite{oh2011floer} only prove such isomorphisms over $\Q$, however these proofs are identical if one just formally replaces the coefficient field `$\Q$' with `$\K$' in their papers since $(M,\omega)$ is semi-positive).
Therefore by Equation (\ref{equation isomorphism with smyplectic cohomology})
we have an isomorphism of
 $\Lambda^\omega_{\K}$-algebras
$$SH^*_{\emptyset,Q_\omega,Q_\omega}(M \subset M) = 
\varinjlim_{\lambda \to 0}
HF^*(\lambda H) \cong
QH^*(M;\Lambda^{\omega}_\K)$$
since the isomorphism
(\ref{equation isomorphism with smyplectic cohomology}) commutes with the pair of pants product $\alpha$.


\qed

\subsection{Stably Displaceable Complements}

\begin{defn} \label{definition stably displaceable}
	A subset $B$ of a symplectic manifold is {\it Hamiltonian displaceable}
	if there is a Hamiltonian symplectomorphism $\phi$ satisfying $\phi(B) \cap B = \emptyset$.
	A subset $A \subset M$ is {\it stably displaceable} if $A \times \T \subset M \times T^* \T$ is Hamiltonian displaceable
	inside the product symplectic manifold
	$M \times T^* \T$ where $T^*\T = \R \times \T$ has the standard symplectic form $d\sigma \wedge d\tau$ where $\sigma : \R \times \T \lra{} \R$, $\tau : \R \times \T \lra{} \T$ are the natural projection maps.
\end{defn}

Throughout this subsection,
we fix the coordinates $\sigma,\tau$ above.
We also let $Q_\omega$ be the standard Novikov cone associated to the empty contact cylinder
as in Definition \ref{definition Novikov ring for quantum cohomology}.
The aim of this subsection is to prove the following proposition.

\begin{theorem}  \label{theorem stably displaceable complement}
Let $K \subset M$ be a closed set
so that $\overline{M-K}$ is stably displaceable.
Then the transfer morphism
\begin{equation} \label{equation transfer isomorphism displaceable}
\SH^*_{\emptyset,Q_\omega,Q_\omega}(M \subset M) \lra{} \SH^*_{\emptyset,Q_\omega,Q_\omega}(K \subset M)
\end{equation}
is an isomorphism in $\sys{\Lambda_\K^{Q_\omega,+}}$.
\end{theorem}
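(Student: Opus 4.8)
The plan is to unwind the definitions so that the statement becomes a comparison of ordinary action--window Hamiltonian Floer cohomology groups, and then to run the standard ``stable displacement energy'' argument. Using that for $\check C=\emptyset$ and $Q_\omega$ spanned by $([\omega],1,1)$, Example \ref{example usual action filtraiton} identifies $HF^*_{\emptyset,a_{c_-},a_{c_+}}(H)$ with the usual Floer cohomology of $H$ truncated to the action window $[c_-,c_+)$ --- write $HF^*_{[c_-,c_+)}(H)$ for this group --- the morphism (\ref{equation transfer isomorphism displaceable}) becomes the double--system morphism induced at level $(c_-,c_+)$ by the continuation maps $\varinjlim_{\check H<H_M}HF^*_{[c_-,c_+)}(\check H)\to\varinjlim_{\check H<H_K}HF^*_{[c_-,c_+)}(\check H)$ coming from the inclusion of directed systems $\{\check H<H_M\}\subset\{\check H<H_K\}$. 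Fix a $C^2$--small negative Morse function $H^0$; the family $\{\lambda H^0\}_{\lambda\in(0,1/4]}$ is cofinal below $H_M\equiv0$, and by the proof of Theorem \ref{theorem isomorphic to quantum cohomology} the continuation maps within it are isomorphisms, so $HF^*_{[c_-,c_+)}(\lambda H^0)$ computes the entry of $SH^*_{\emptyset,Q_\omega,Q_\omega}(M\subset M)$ in the limit. Choose also a cofinal increasing sequence $H^1\le H^2\le\cdots$ below $H_K$ with $H^m$ equal to $\lambda H^0$ near $K$ and equal to the constant $m$ on a neighbourhood of $\overline{M-K}$ (admissible by Proposition \ref{proposition continuation map isomorphism}, which lets one compute $SH^*_{\emptyset,Q_\omega,Q_\omega}(K\subset M)$ with any such cofinal sequence). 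Via Lemma \ref{lemma isomorphism condition}, the theorem reduces to showing that for $c_-$ sufficiently negative and $m$ large depending on $c_+$, the continuation map $HF^*_{[c_-,c_+)}(\lambda H^0)\to HF^*_{[c_-,c_+)}(H^m)$ is an isomorphism, naturally in $(c_-,c_+)$.

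The heart of the argument is the displacement estimate, and this is where \emph{stable} (rather than plain) displaceability is used: the Hamiltonians carry small time-dependent perturbations. By Definition \ref{definition stably displaceable}, stable displaceability of $\overline{M-K}$ furnishes a Hamiltonian $G$ on $M\times T^*\T$ of finite Hofer norm $e$ whose time-one flow pushes $\overline{M-K}\times\T$ off itself. The $1$-periodic orbits of $H^m$ fall into: inner orbits, contained in the region where $H^m=\lambda H^0$, which compute $HF^*(\lambda H^0)$ and have action $O(\lambda)$; constant orbits near the value $m$, whose action exceeds $c_+$ once $m$ is large; and transition orbits whose image meets $\overline{M-K}$. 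The plan is to lift the relevant time-dependent Hamiltonians and their $1$-periodic orbits to $M\times T^*\T$ (as in Definition \ref{definition stably displaceable}) and run the usual broken--continuation argument through $G$ --- cf.\ Frauenfelder--Schlenk and \cite{ritter2013topological} --- to show that the transition orbits span a subquotient of the action--window complex $CF^*_{[c_-,c_+)}(H^m)$ that is acyclic, the continuation data built from $G$ having total energy $<e$ and hence remaining inside the truncation once $c_-<-e-1$. It then follows that for $c_-<-e-1$ and $m$ large, $HF^*_{[c_-,c_+)}(H^m)$ is computed by its inner orbits alone, so the continuation maps $HF^*_{[c_-,c_+)}(\lambda H^0)\to HF^*_{[c_-,c_+)}(H^m)$ are isomorphisms; these assemble, over the cofinal subfamily $\{c_-<-e-1\}$ and $m\to\infty$, into an isomorphism of double systems. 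Compatibility with the pair-of-pants products is automatic since all maps in sight are continuation maps, so applying $\varinjlim\varprojlim$ gives the corresponding algebra isomorphism; thus (\ref{equation transfer isomorphism displaceable}) is an isomorphism in $\sys{\Lambda_\K^{Q_\omega,+}}$.

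I expect the displacement estimate to be the main obstacle, precisely because $M\times T^*\T$ is non-compact: one must prescribe the behaviour of the lifted Hamiltonians in the $T^*\T$-direction so that the auxiliary Floer theory is well defined --- e.g.\ so that an energy or maximum-principle argument confines the relevant moduli spaces to a compact subset --- and then verify that the action bounds obtained there descend to bounds on $\cA_{H^m,\emptyset}$ on $M$. This is a standard but technically delicate argument; granting it, everything else is bookkeeping with the direct and inverse limits developed in Sections \ref{section definition of symplectic cohomology} and \ref{section Hamitlonian Floer cohomology and Filtrations}.
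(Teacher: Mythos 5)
Your reduction steps (cofinal families of Hamiltonians, passage to $M\times T^*\T$, and the role of Propositions \ref{proposition continuation map isomorphism} and Lemma \ref{lemma isomorphism condition}) match the paper's skeleton, but the mechanism you propose at the crucial step is not the one the paper uses, and as stated it has a genuine gap. You want to show that the ``transition orbits'' of $H^m$ span an acyclic subquotient of $CF^*_{[c_-,c_+)}(H^m)$ via a broken-continuation argument through the displacing Hamiltonian $G$ of Hofer norm $e$, using $c_-<-e-1$. Two problems. First, the transition orbits do not in general form an action-window subquotient: their actions (over all cappings and all positions in the slope region of $H^m$) are interleaved with those of the inner orbits, so the differential need not respect the proposed splitting. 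Second, and more fundamentally, the displacement-energy mechanism only yields acyclicity/vanishing for complexes whose generators have actions confined to a window of width less than $e$; here the double-system isomorphism requires the continuation maps $HF^*_{[c_-,c_+)}(\lambda H^0)\lra{} HF^*_{[c_-,c_+)}(H^m)$ to be isomorphisms for a \emph{cofinal} family of windows, i.e.\ with $c_+-c_-$ arbitrarily large compared to $e$. An energy bound of $e$ on the continuation zig-zag cannot control such windows, so the asserted acyclicity does not follow.

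The paper avoids energy estimates entirely by Ginzburg's trick. After the Künneth and reparameterization reductions (Lemmas \ref{lemma product with s1}, \ref{lemma cross to product transfer isomorphism}, \ref{lemma transfer map reparameterization}), one takes $H^-:=\check{H}^G$, a time-reparameterization of the displacing Hamiltonian $\check H$ so that all of its flow happens during $t\in[0,1/2]$, and then adds bump functions $H_{s,\bullet}$ supported in $(1/2,1)\times Q$ where $Q=(M-K)\times\{\sigma<\nu\}$ is the displaced region. Because $\phi^{\check H}_1(Q)\cap Q=\emptyset$ and the bumps act only after the displacement has occurred, the Hamiltonians $H^-$ and $H^-+H_{s,\bullet}$ have \emph{literally the same} capped $1$-periodic orbits with the same actions for every $s$; no orbit passes through $Q$, so turning the bump up to $+\infty$ creates nothing new in any action window. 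Lemma \ref{lemma continuation map isomorphism action gap on M times TT} then shows every continuation map in the family is an isomorphism on every window, which is exactly the statement needed for the double-system isomorphism. If you want to salvage your write-up, replace the acyclic-subquotient step by this ``identical filtered orbit sets'' argument; the rest of your bookkeeping can stay.
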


The proof of this proposition
relies heavily on an idea due to Ginzburg in \cite{GinzburgConleyConjecture}.
Before we prove this proposition,
we need some preliminary definitions and lemmas.

\begin{defn} \label{definition hamiltoniansonproduct}

A lower semi-continuous Hamiltonian $H = (H_t)_{t \in \T}$
on $M \times T^*\T$ is
{\it admissible}
if there is a compact subset $K_H \subset M \times T^* \T$ and a lower semi-continuous Hamiltonian 
$K = (K_t)_{t \in \T}$ on $M$
so that
$H_t(x,(\sigma,\tau)) = K_t(x) - \frac{1}{2}|\sigma|$
for all $(x,(\sigma,\tau)) \in M \times T^* \T - K_H$.

Let $J_{T^*\T}$ be the almost complex structure on $\R \times \T$
satisfying $J_{T^* \T}(\frac{\partial}{\partial \sigma}) = \frac{\partial}{\partial \tau}$.
A smooth family of almost complex structures
$J = (J_t)_{t \in \T}$
on $M \times T^* \T$
is {\it admissible}
if they are $\omega + d\sigma \wedge d\tau$-tame
and
if they are equal to $J_M \oplus J_{T^* \T}$
outside a compact subset of $M \times T^* \T$
where $J_M$ is an $\omega$-tame almost complex structure on $M$.

For any admissible lower semi-continuous Hamiltonian
$H$ on $M \times T^* \T$ and any
$a_-,a_+ \in \Sc(Q_\omega)$,
we can define
$$HF^*_{\emptyset,a_-,a_+}(H)$$
as in Definition \ref{definition of hamiltonian floer cohomology for lower semicontinuous hamiltonians}
where we restrict ourselves to admissible Hamiltonians and almost complex structures. We can also define continuation maps and action maps in the same way.
\end{defn}

\begin{remark}
Such a definition, along with the theorems, propositions and lemmas used to construct such a definition, are identical except that $M$ is replaced by $M \times T^*\T$
and all Hamiltonians and almost complex structures involved are admissible.
The only additional ingredient needed is that one needs a maximum principle
to prove compactness to ensure that all Floer trajectories stay inside a fixed compact subset of $M \times T^* \T$
(see, for example \cite[Lemma 1.5]{Oancea:survey}).
Also note that since the natural projection map
$M \times T^* \T \lra{} M$ induces an isomorphism
$H_2(M \times T^*\T;\Z) \lra{\cong} H_2(M;\Z)$, we have that $\Lambda_\K^{Q_\omega}$ and
$\Lambda_\K^{Q_\omega,+}$
are the correct Novikov rings to use
(and not some larger Novikov rings).
\end{remark}

From now on we identify $H_2(M \times T^*\T;\Z)$ with $H_2(M;\Z)$
as in the remark above.

\begin{defn}
A closed subset $K \subset \T \times M \times T^* \T$
is {\it admissible} if there is a compact set $\kappa \subset \T \times M \times T^* \T$
and a closed subset $K_M \subset M$ so that
$K \cup \kappa = (\T \times K_M \times T^*\T) \cup \kappa$.
In other words, this closed subset is a product $\T \times K_M \times T^*\T$
near infinity (see Figure \ref{fig:admissibleclosedsubset}).
\end{defn}

\begin{center}
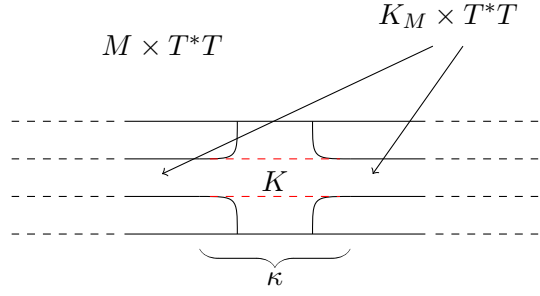
\begin{figure}[h]
\usetikzlibrary{decorations.pathreplacing}
\begin{tikzpicture}

\draw (-3,1) node (v1) {} -- (1,1) node (v3) {};
\draw (-3,-0.5) node (v2) {} -- (1,-0.5) node (v4) {};
\draw [dashed](v1) -- (-4.5,1);
\draw [dashed](v2) -- (-4.5,-0.5);
\draw [dashed](v3) -- (2.5,1);
\draw [dashed](v4) -- (2.5,-0.5);

\draw [](-1.5,1) .. controls (-1.5,0.5) and (-1.5,0.5) .. (-2,0.5);
\draw (-0.5,1) .. controls (-0.5,0.5) and (-0.5,0.5) .. (0,0.5);
\draw (-2,0.5) node (v9) {} -- (-3,0.5) node (v5) {};
\draw (-2,0) -- (-3,0) node (v6) {};
\draw (0,0.5) node (v10) {} -- (1,0.5) node (v7) {};
\draw (0,0) -- (1,0) node (v8) {};
\draw (-1.5,-0.5) .. controls (-1.5,0) and (-1.5,0) .. (-2,0) node (v11) {};
\draw (0,0) node (v12) {} .. controls (-0.5,0) and (-0.5,0) .. (-0.5,-0.5);
\draw [dashed](v5) -- (-4.5,0.5);
\draw [dashed](v6) -- (-4.5,0);
\draw [dashed](v7) -- (2.5,0.5);
\draw [dashed](v8) -- (2.5,0);
\node at (-2.5,2) {$M \times T^*T$};
\draw [->](1.5,2) -- (0.3,0.3);
\draw [->](1.1,2) -- (-2.5,0.3);
\node at (1.3,2.4) {$K_M \times T^*T$};
\draw [dashed,red](v9) -- (v10);
\draw [dashed,red](v11) -- (v12);
\draw [decoration={calligraphic brace,amplitude=7pt},decorate] (0,-0.7) -- (-2,-0.7);
\node at (-1,-1.1) {$\kappa$};
\node at (-1,0.2) {$K$};
\end{tikzpicture}
\caption{Admissible closed subset.} \label{fig:admissibleclosedsubset}
\end{figure}
\end{center}

\begin{defn} \label{defn appropriate lower semi continuous hamiltonians}
For any admissible closed subset $K \subset \T \times M \times T^* \T$,
we define $\ccH^{\T,\ls}(K,\leq 0)$ to be the set of admissible lower semi-continuous Hamiltonians $H = (H_t)_{t \in \T}$
on $M \times T^* \T$
satisfying $H_t(x,(\sigma,\tau)) \leq 0$ for $(t,x,(\sigma,\tau)) \in K$
and $H_t(x,(\sigma,\tau)) = \infty$
for $(t,x,(\sigma,\tau)) \notin K$.
%
A closed subset $K' \subset M \times T^* \T$ is {\it admissible} if $\T \times K'$ is admissible.
For such a $K'$, define $\ccH^{\T,\ls}(K',\leq 0) := \ccH^{\T,\ls}(\T \times K',\leq 0)$.

For any admissible closed subset $K \subset \T \times M \times T^* \T$ and any $H \in \ccH^{\T,\ls}(K,\leq 0)$
we define $\SH^*_{\emptyset,Q_\omega,Q_\omega}(H)$
to be the double system
$$(HF^*_{\emptyset,a_-,a_+}(H))_{a_-,a_+ \in \Sc(Q_\omega)}.$$

If $K_-,K_+ \subset \T \times M \times T^* \T$ are admissible closed subsets satisfying $K_+ \subset K_-$ and
$H^\pm \in \ccH^{\T,\ls}(K_\pm,\leq 0)$
satisfies $H^- \leq H^+$,
then the natural morphism of double systems
$$\SH^*_{\emptyset,Q_\omega,Q_\omega}(H^-) \lra{} \SH^*_{\emptyset,Q_\omega,Q_\omega}(H^+)$$
induced by continuation maps
is called a {\it transfer morphism}.

\end{defn}

We will need the following lemma
whose proof is identical to the proof of
Lemma \ref{lemma continuation map isomorphism action gap} in the case when $\check{C} = \emptyset$ and $Q = Q_\omega$,
except that all Hamiltonians and almost complex structures are admissible on $M \times T^* \T$.

\begin{lemma} \label{lemma continuation map isomorphism action gap on M times TT}
Let $H = (H_{s,t})_{(s,t) \in [0,1] \times \T}$
be a smooth family of autonomous admissible Hamiltonians on $M \times T^*\T$
and define $H_{s,\bullet} := (H_{s,t})_{t \in \T}$ for all $s \in [0,1]$.
Fix $p \in \Z$ and $a_\pm \in \Sc(Q_\omega)$.
Suppose 
\begin{itemize}
	\item 
	$H_{s_1,\bullet} \leq H_{s_2,\bullet}$
	for all $s_1 \leq s_2$ and
	\item that there are neighborhoods
	$N_-$, $N_+$ of $a_-$, $a_+$ in $\Sc(Q_\omega)$ respectively so that
	$$\Gamma^P_{\emptyset,a_-,a_+}(H_{s,\bullet}) = \Gamma^P_{\emptyset,a'_-,a'_+}(H_{s,\bullet})$$
	(Definition \ref{defn chain complex})
	for all $a'_\pm \in N_\pm$
	where
	$P = [p-1-n,p+1+n]$
	for all $s \in \R$.
\end{itemize}
Then the continuation map
$$\Phi^p_{H_{0,\bullet},H_{1,\bullet}} : HF^p_{\check{C},a_-,a_+}(H_{0,\bullet}) \lra{}
HF^p_{\check{C},a_-,a_+}(H_{1,\bullet})$$
in degree $p$ is an isomorphism.
\end{lemma}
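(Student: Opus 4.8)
\textbf{Proof proposal for Lemma \ref{lemma continuation map isomorphism action gap on M times TT}.}

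The plan is to transplant verbatim the argument used to prove Lemma \ref{lemma continuation map isomorphism action gap}, working in the product manifold $M \times T^* \T$ with admissible Hamiltonians and admissible almost complex structures throughout, and with the contact cylinder taken to be $\emptyset$ and the cone taken to be $Q_\omega$. The only genuinely new ingredient needed is the maximum principle (e.g.\ \cite[Lemma 1.5]{Oancea:survey}) that confines all Floer trajectories involved to a fixed compact subset of $M \times T^*\T$; once that is in place the Gromov compactness machinery, the regularity results of Propositions \ref{proposition regular subset for surface}--\ref{proposition regularity for translation invariant floer cylinders}, and Lemmas \ref{lemma chain level continuation map filtration isomorphism} and \ref{lemma continuation map filtration isomorphism} all go through with $M$ replaced by $M \times T^*\T$, since these are stated for an arbitrary symplectic manifold with trivial first Chern class and $c_1(M \times T^*\T) = 0$.

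Concretely, I would argue as in the proof of Lemma \ref{lemma continuation map isomorphism action gap}: it suffices to show that for each $s \in [0,1]$ there is $\epsilon_s > 0$ such that the continuation map $HF^p_{\emptyset,a_-,a_+}(H_{s_0,\bullet}) \to HF^p_{\emptyset,a_-,a_+}(H_{s_1,\bullet})$ is an isomorphism for all $s_0 \le s_1$ in $[0,1] \cap (s-\epsilon_s, s+\epsilon_s)$. Fix $s$. First choose an admissible autonomous Hamiltonian $F$ on $M \times T^*\T$ which is supported (up to a constant) near the compact region containing all $1$-periodic orbits of the $H_{s',\bullet}$ for $s'$ near $s$, with $0 \le F$ and positive slope, so that adding small multiples $\tau F$ perturbs the family into $\bigcap_{j}\ccH^{\T,\reg}(\emptyset,a_-,a_+,P)$ without creating new capped orbits in the relevant index range --- this uses the hypothesis that the orbit set is stable under perturbing $(a_-,a_+)$ inside $N_\pm$. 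Then apply Lemma \ref{lemma continuation map filtration isomorphism} (in its $M \times T^*\T$ form) to the homotopy $H_{s',\bullet} + \tau F$, $\tau \in [-\eta_s,\eta_s]$, to conclude those continuation maps are isomorphisms; finally, for $\epsilon_s$ small enough so that $H_{s_1,\bullet} - \eta_s F \le H_{s_0,\bullet} \le H_{s_1,\bullet} \le H_{s_0,\bullet} + \eta_s F$, the two-out-of-three / composition argument with continuation maps forces the middle map $\alpha : HF^p(H_{s_0,\bullet}) \to HF^p(H_{s_1,\bullet})$ to be an isomorphism. Since continuation maps compose and $[0,1]$ is compact, chaining finitely many such local isomorphisms gives the global statement.

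The main obstacle --- really the only one --- is confirming that the analytic foundations of Sections \ref{section Hamitlonian Floer cohomology and Filtrations} and \ref{section floer cohomology for lower semi-continuous Hamitlonians} survive the passage to the noncompact manifold $M \times T^*\T$. This is exactly what the ``admissible'' conditions in Definitions \ref{definition hamiltoniansonproduct} and \ref{defn appropriate lower semi continuous hamiltonians} are designed to handle: the $-\tfrac12|\sigma|$ term at infinity, together with the split almost complex structure $J_M \oplus J_{T^*\T}$, makes the projection to $T^*\T$ a holomorphic curve satisfying a maximum principle in the $\sigma$-coordinate, which a priori bounds every Floer trajectory into a fixed compact set. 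Once compactness is secured, every step of the original proof is formal, so the lemma follows. I would therefore phrase the proof as: ``The proof is word-for-word identical to that of Lemma \ref{lemma continuation map isomorphism action gap} with $\check C = \emptyset$, $Q = Q_\omega$, and $M$ replaced throughout by $M \times T^*\T$, using only admissible Hamiltonians and almost complex structures, the relevant compactness now being supplied by the maximum principle of \cite[Lemma 1.5]{Oancea:survey}.''
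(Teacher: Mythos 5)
Your proposal matches the paper's treatment exactly: the paper states that the proof is word-for-word that of Lemma \ref{lemma continuation map isomorphism action gap} with $\check{C} = \emptyset$ and $Q = Q_\omega$, carried out with admissible Hamiltonians and almost complex structures on $M \times T^*\T$, the only new ingredient being the maximum principle of \cite[Lemma 1.5]{Oancea:survey} that confines Floer trajectories to a compact set. Your reconstruction of the inner argument (perturbation by $\tau F$, Lemma \ref{lemma continuation map filtration isomorphism}, the sandwich/composition argument, and compactness of $[0,1]$) is faithful to the original.
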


We also have the following proposition whose proof is identical to the proof of Proposition \ref{proposition continuation map isomorphism},
except that all Hamiltonians, almost complex structures and closed subsets are admissible on $M \times T^* \T$.

\begin{prop} \label{proposition continuation map isomorphism for M times TT}

Let $K \subset \T \times M \times T^* \T$ be an admissible closed subset.
Let $H^-,H^+ \in \ccH^{\T,\ls}(K,\leq 0)$ satisfy $H^- \leq H^+$.
Then the transfer morphism
$$\SH^*_{\emptyset,Q_\omega,Q_\omega}(H^-) \lra{} \SH^*_{\emptyset,Q_\omega,Q_\omega}(H^+)$$
is an isomorphism in $\sys{\Lambda_\K^{Q_\omega,+}}$.
\end{prop}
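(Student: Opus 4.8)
The proof of Proposition \ref{proposition continuation map isomorphism for M times TT} is obtained by transporting the argument of Proposition \ref{proposition continuation map isomorphism} verbatim to the setting of $M \times T^*\T$, so the plan is simply to indicate which ingredients carry over. First I would choose a constant $\nu > 0$ with $H^+ - \nu \leq H^-$, noting that since $H^\pm \in \ccH^{\T,\ls}(K,\leq 0)$ both lie in the same class of admissible lower semi-continuous Hamiltonians bounded above by $0$ on $K$, the relation $H^+ - \nu \leq_{\emptyset} H^-$ holds just as in the non-product case (the slope condition is vacuous since $\check{C} = \emptyset$). This produces a chain of transfer morphisms
$$\SH^*_{\emptyset,Q_\omega,Q_\omega}(H^+ - 2\nu) \lra{} \SH^*_{\emptyset,Q_\omega,Q_\omega}(H^- - \nu) \lra{\alpha} \SH^*_{\emptyset,Q_\omega,Q_\omega}(H^+ - \nu) \lra{} \SH^*_{\emptyset,Q_\omega,Q_\omega}(H^-),$$
and the proposition follows once one knows that adding a positive constant induces an isomorphism of double systems, together with the usual two-out-of-three argument showing $\alpha$ is an isomorphism.

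The one nontrivial input to import is the analogue of Lemma \ref{lemma proposition for constant hamiltonians}: for $H$ an admissible lower semi-continuous Hamiltonian on $M \times T^*\T$ and $\nu > 0$, the transfer morphism $\SH^*_{\emptyset,Q_\omega,Q_\omega}(H-\nu) \lra{} \SH^*_{\emptyset,Q_\omega,Q_\omega}(H)$ is an isomorphism in $\sys{\Lambda_\K^{Q_\omega,+}}$. I would establish this exactly as before, by combining the translation isomorphisms of Lemma \ref{lemma adding constant} with the action-map isomorphisms coming from the chain-level identity $\gamma = \beta \circ \alpha$; the translation isomorphism construction is purely formal (it uses only the $\#(-s)$-reindexing of cappings and that continuation maps commute with it), and since the projection $M \times T^*\T \lra{} M$ identifies $H_2(M \times T^*\T;\Z)$ with $H_2(M;\Z)$, the Novikov rings and cones are unchanged, so nothing in that argument is sensitive to the ambient manifold. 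At this point one also needs the product-manifold analogue of Lemma \ref{lemma continuation map filtration isomorphism} and of Lemma \ref{lemma continuation map isomorphism action gap}; the latter is precisely Lemma \ref{lemma continuation map isomorphism action gap on M times TT}, and the former is built from Proposition \ref{proposition regular subset for surface} and Gromov compactness, both of which go through once the maximum principle (e.g. \cite[Lemma 1.5]{Oancea:survey}) guarantees that admissible Floer trajectories stay in a fixed compact set.

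Thus the only genuine content that must be checked, rather than cited, is that the compactness theory needed for all of the Floer-theoretic constructions above (regularity, continuation maps, action maps, the vanishing/identification arguments behind Lemma \ref{lemma adding constant}) remains valid on $M \times T^*\T$. This is exactly the point flagged in the remark following Definition \ref{definition hamiltoniansonproduct}: the noncompactness of $T^*\T$ is controlled by the Liouville coordinate $\sigma$ together with the admissibility of the Hamiltonians ($H_t = K_t(x) - \tfrac{1}{2}|\sigma|$ near infinity) and of the almost complex structures (equal to $J_M \oplus J_{T^*\T}$ near infinity), and a standard maximum-principle argument confines all Floer trajectories to a compact subset independent of the trajectory. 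I expect this confinement estimate to be the main — indeed only — obstacle, and since it is already invoked in the excerpt as a known fact, the proof reduces to the single sentence that every step in the proof of Proposition \ref{proposition continuation map isomorphism} (and of the lemmas it depends on) is literally the same once one restricts throughout to admissible Hamiltonians, admissible almost complex structures, and admissible closed subsets on $M \times T^*\T$.
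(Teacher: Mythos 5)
Your proposal matches the paper's treatment: the paper states that the proof is identical to that of Proposition \ref{proposition continuation map isomorphism}, with all Hamiltonians, almost complex structures and closed subsets taken to be admissible on $M \times T^*\T$, and your identification of the maximum principle as the only new ingredient needed for compactness is exactly the point the paper flags in the remark after Definition \ref{definition hamiltoniansonproduct}.
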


\begin{lemma} \label{lemma product with s1}
Let $K_M \subset M$ be closed and define $K := K_M \times T^*\T \subset M \times T^*\T$ and let
$H \in \ccH^{\T,\ls}(K,\leq 0)$.
Then there is an isomorphism of double systems
\begin{equation} \label{equation symplectic cohomology of product}
\Psi_K : \SH^*_{\emptyset,Q_\omega,Q_\omega}(H) \lra{}
\SH^*_{\emptyset,Q_\omega,Q_\omega}(K_M \subset M) \otimes_\K H^*(\T;\K)
\end{equation}
where $H^*(\T;\K)$ is thought of as a double system $(H^*(\T;\K))_{(i,k) \in \{\star\} \times \{\star\}}$
where $\{\star\}$ is the single element (inverse) directed set.
Such a morphism commutes with transfer maps. In other words, for any admissible closed subsets $K^+ = K_M^+ \times T^*\T \subset K^- = K_M^- \times T^*\T \subset M \times T^*\T$ and any $H^\pm \in \ccH^{\T,\ls}(K^\pm,\leq 0)$ satisfying $H^- \leq H^+$, we have the following commutative diagram in $\sys{\Lambda_\K^{Q_\omega,+}}$:
\begin{center}
\begin{tikzpicture}

\node at (-0.9,-0.5) {$\SH^*_{\emptyset,Q_\omega,Q_\omega}(H^-)$};

\node at (4.9,-0.5) {$\SH^*_{\emptyset,Q_\omega,Q_\omega}(K_M^- \subset M) \otimes_\K H^*(\T;\K)$};

\node at (-0.9,-1.5) {$\SH^*_{\emptyset,Q_\omega,Q_\omega}(H^+)$};

\node at (4.9,-1.5) {$\SH^*_{\emptyset,Q_\omega,Q_\omega}(K_M^+ \subset M) \otimes_\K H^*(\T;\K)$};

\draw [->](-0.6,-0.8) -- (-0.6,-1.3);

\draw [->](4.6,-0.8) -- (4.6,-1.2);

\draw [->](0.5,-0.5) -- (1.8,-0.5);

\draw [->](0.5,-1.5) -- (1.8,-1.5);

\node at (1.15,-0.2) {$\Psi_{K^-}$};

\node at (1.15,-1.2) {$\Psi_{K^+}$};

%

\end{tikzpicture}
\end{center}
where the vertical morphisms are induced by transfer morphisms.
\end{lemma}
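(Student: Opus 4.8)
\textbf{Proof proposal for Lemma \ref{lemma product with s1}.}

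The plan is to build the isomorphism $\Psi_K$ Floer-theoretically by exploiting the fact that the admissible Hamiltonians on $M \times T^*\T$ underlying $\SH^*_{\emptyset,Q_\omega,Q_\omega}(H)$ can be chosen of split form $K_t(x) - \tfrac12|\sigma| + (\text{perturbation})$ near infinity, so that their $1$-periodic orbits and Floer trajectories decompose as products of data on $M$ with data on $T^*\T$. First I would fix, for each admissible $\check C$-action interval, a cofinal family of smooth admissible Hamiltonians $H_i$ on $M\times T^*\T$ which, outside a compact set, equal $\check H_i(x) - \tfrac12|\sigma|$ for $\check H_i$ a cofinal family of smooth Hamiltonians on $M$ lying below $H_{K_M}$; one arranges (as in the proof of Lemma \ref{lemma continuation map isomorphism action gap on M times TT}, via a maximum principle, cf.\ \cite[Lemma 1.5]{Oancea:survey}) that all $1$-periodic orbits and Floer cylinders stay in a fixed compact set where the Hamiltonian is a product plus a small Morse-type perturbation in the $T^*\T$ factor. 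The key computation is then that on this compact region the Floer data splits: a $1$-periodic orbit of $H_i$ is a pair $(\gamma_M,\gamma_{T^*\T})$ where $\gamma_{T^*\T}$ is one of the two critical points of a $C^2$-small Morse function on $\T$, and a rigid Floer cylinder likewise splits into a rigid cylinder on $M$ and a Morse flowline on $\T$. This yields a chain-level isomorphism
$$
CF^*_{\emptyset,a_-,a_+}(H_i) \cong CF^*_{\emptyset,a_-,a_+}(\check H_i) \otimes_\K C^*_{\mathrm{Morse}}(\T;\K)
$$
compatible with differentials (the $\T$-factor differential vanishes), with continuation maps, and with action maps, because the action of $(\gamma_M,\gamma_{T^*\T})$ differs from that of $\gamma_M$ by a term that is $O(C^2\text{-size of the Morse perturbation})$ and hence negligible in the filtration; the homology classes in $H_2(M\times T^*\T;\Z)$ are identified with those in $H_2(M;\Z)$ by the projection, so the Novikov rings match. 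Passing to homology gives $HF^*_{\emptyset,a_-,a_+}(H_i) \cong HF^*_{\emptyset,a_-,a_+}(\check H_i)\otimes_\K H^*(\T;\K)$, and then taking $\varinjlim_i$, then $\varinjlim_{a_-}\varprojlim_{a_+}$, produces $\Psi_K$ as an isomorphism of double systems over $\Lambda_\K^{Q_\omega,+}$, using Lemma \ref{lemma isomorphism condition} to pass between cofinal families.

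For compatibility with transfer maps I would observe that, given $K^+_M \subset K^-_M$ and $H^- \le H^+$, one can choose the cofinal families $\check H_i^\pm$ on $M$ and the lifted families $H_i^\pm$ on $M\times T^*\T$ simultaneously and compatibly, so that the continuation maps $H_i^- \to H_i^+$ lift the continuation maps $\check H_i^- \to \check H_i^+$ tensored with the identity on $C^*_{\mathrm{Morse}}(\T;\K)$. The square then commutes at the chain level for each $i$ and each action interval, hence commutes after taking limits; since $\varinjlim\varprojlim$ is a functor (Definition \ref{definition inverse direct limit is a functor}) this gives the displayed commutative diagram in $\sys{\Lambda_\K^{Q_\omega,+}}$.

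The main obstacle I expect is the transversality/compactness bookkeeping needed to make the product decomposition of the moduli spaces of Floer cylinders literally hold: one must choose the almost complex structures to be of split form $J_M \oplus J_{T^*\T}$ on the relevant compact region and argue that the corresponding linearized operators are surjective (so that Proposition \ref{proposition regular subset for surface} applies) while still being product-like — this is the standard ``Künneth for Floer homology'' argument but requires care because our Hamiltonians are only split \emph{near infinity} and carry a small time-dependent perturbation on $M$ plus a Morse perturbation on $\T$. Controlling that the low-action Floer trajectories cannot escape the region where the data is split is exactly the maximum-principle input already isolated in the setup for $M\times T^*\T$; once that is in hand, the splitting of rigid trajectories into (cylinder on $M$) $\times$ (Morse flowline on $\T$) is forced by dimension counting, since a nonconstant Morse flowline on $\T$ already accounts for the single free $\R$-parameter and any genuinely $(s,t)$-dependent behavior in the $\T$ factor would raise the dimension.
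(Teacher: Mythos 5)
Your argument is essentially the paper's: both reduce the statement to a K\"unneth-type product decomposition for split Hamiltonians on $M \times T^*\T$, deduce the general case from the special one, and get transfer-compatibility from compatibility of the K\"unneth isomorphism with continuation maps. The only real difference is that the paper does not carry out the chain-level product splitting (with its split-$J$ transversality bookkeeping) by hand: it works with the single Hamiltonian $\pi_M^*(H_{K_M}) + f(\sigma)$, whose $T^*\T$-factor has only the Morse--Bott circle of constant orbits at the maximum of $f$, and cites \cite[Proposition 2.2]{CieliebakFloerHoferWysocki:SymhomIIApplications} together with a K\"unneth argument to get the isomorphism (\ref{equation kunneth isomorphismn for specific hamiltonians}).

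One point in your write-up needs repair. For a general $H \in \ccH^{\T,\ls}(K,\leq 0)$ the family of \emph{split} smooth Hamiltonians $\check H_i(x) + f(\sigma)$ lying below $H$ need not be cofinal among all smooth admissible Hamiltonians below $H$ (if $H$ depends nontrivially on $\tau$ or mixes the factors inside the compact region, any split Hamiltonian below $H$ must sit below the fiberwise minimum, and non-split Hamiltonians hugging $H$ will not be dominated by split ones), so you cannot compute $\SH^*_{\emptyset,Q_\omega,Q_\omega}(H)$ directly from your split cofinal family via Lemma \ref{lemma isomorphism condition}. The fix is exactly the paper's two-step structure: establish the isomorphism for the specific split Hamiltonian $\pi_M^*(H_{K_M}) + f(\sigma)$, and then transport it to arbitrary $H$ using Proposition \ref{proposition continuation map isomorphism for M times TT}, which says the transfer morphism between any two elements of $\ccH^{\T,\ls}(K,\leq 0)$ is an isomorphism of double systems. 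With that step inserted, your argument goes through, and the transfer-compatibility claim follows as you say because everything is induced by continuation maps.
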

\begin{proof}[Proof of Lemma \ref{lemma product with s1}.]

Let $f : \R \lra{} \R$ be a smooth function so that
\begin{itemize}
\item $f(0)=0$, $f''(0) < 0$,
\item $f'(x) > 0$ for $x < 0$ and $f'(x) < 0$ for $x  > 0$ and
\item $f(x) = -\frac{1}{2}|x|$ for $|x| \geq 1$ (See Figure \ref{fig:graphoff}).
\end{itemize}
\begin{center}
\begin{figure}[h]
\begin{tikzpicture}

\draw [<->](-4.5,0.5) -- (1.5,0.5);
\draw [<-](-1.5,-1.5) -- (-1.5,0.5);
\draw[decoration={calligraphic brace,amplitude=5pt},decorate]  (-2.5,-2) -- (-4.5,-2);
\draw[decoration={calligraphic brace,amplitude=5pt},decorate] (1.5,-2) -- (-0.5,-2);
\draw (-2.5,0.4) -- (-2.5,0.6);
\draw (-0.5,0.4) -- (-0.5,0.6);

\node at (-2.5,0.8) {$-1$};
\node at (-0.5,0.8) {$1$};

\node at (-3.4,-2.5) {$f(x)=-\frac{1}{2}|x|$};
\node at (0.3,-2.5) {$f(x)=-\frac{1}{2}|x|$};
\draw (-0.5,0) -- (1.5,-1);
\draw (-2.5,0) -- (-4.5,-1);

\draw (-2.5,0) .. controls (-2.3,0.1) and (-1.8,0.5) .. (-1.5,0.5);
\draw (-0.5,0) .. controls (-0.7,0.1) and (-1.2,0.5) .. (-1.5,0.5);
\end{tikzpicture}
\caption{Graph of $f$.} \label{fig:graphoff}
\end{figure}
\end{center}

Define $$H_{K_M} : M \lra{} \R, \quad H_{K_M}(x) = \left\{
\begin{array}{ll}
0 & \text{if} \ x \in K_M \\
\infty & \text{otherwise}
\end{array}
\right.$$
and let $\pi_M : M \times T^*\T \lra{} M$ be the natural projection map.
Then by combining \cite[Proposition 2.2]{CieliebakFloerHoferWysocki:SymhomIIApplications}
with a K\"{u}nneth formula argument,
we get an isomorphism
\begin{equation} \label{equation kunneth isomorphismn for specific hamiltonians}
HF^*_{\emptyset,a_-,a_+}(\pi_M^*(H_{K_M}) + f(\sigma)) \lra{\cong} HF^*_{\emptyset,a_-,a_+}(H_{K_M}) \otimes_\K H^*(\T;\K)
\end{equation}
for all $a_-,a_+ \in \Sc(Q_\omega,Q_\omega)$ since the only capped $1$-periodic orbits of $f(\sigma) : T^* \T \lra{} \R$ are the constant orbits at its maximum.
Such an isomorphism commutes with action maps and continuation maps (possibly tensored with $H^*(\T;\K)$).
Hence we get our isomorphism 
(\ref{equation symplectic cohomology of product}) when
$H = \pi_M^*(H_{K_M}) + f(\sigma)$.
For general $H$ this isomorphism exists by Proposition \ref{proposition continuation map isomorphism for M times TT}.
Again because the isomorphism
(\ref{equation kunneth isomorphismn for specific hamiltonians})
commutes with continuation maps,
we get that the isomorphism
(\ref{equation symplectic cohomology of product})
commutes with transfer morphisms.
\end{proof}

\begin{lemma} \label{lemma cross to product transfer isomorphism}

Let $K_M \subset M$ be a closed subset and let $\nu > 0$.
Let $H^\pm \in \ccH^{\T,\ls}(K^\pm,\leq 0)$ where
$$K_- := (K_M \times T^* \T) \cup (M \times \{ \sigma \geq \nu\}), \quad
K_+ := K_M \times T^*\T$$
(See Figure \ref{fig:kminusplus})
and where $H^- \leq H^+$.
Then the transfer morphism
\begin{equation} \label{equation transfer cross to product}
\SH^*_{\check{C},Q_\omega,Q_\omega}(H^-)
\lra{}
\SH^*_{\check{C},Q_\omega,Q_\omega}(H^+)
\end{equation}
is an isomorphism in $\sys{\Lambda_\K^{Q_\omega,+}}$.
\end{lemma}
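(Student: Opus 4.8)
The plan is to reduce Lemma \ref{lemma cross to product transfer isomorphism} to the already-established isomorphism theorems for symplectic cohomology on $M \times T^*\T$, in particular Proposition \ref{proposition continuation map isomorphism for M times TT} together with an application of Ginzburg's displacement trick (the idea underlying Theorem \ref{theorem stably displaceable complement}). First I would observe that since $H^- \leq H^+$ and both are admissible lower semi-continuous Hamiltonians with the correct slope behaviour near infinity, the transfer morphism is well defined; the content is that it is an isomorphism. Using Proposition \ref{proposition continuation map isomorphism for M times TT}, it suffices to prove the claim for one convenient choice of $H^\pm$ in $\ccH^{\T,\ls}(K^\pm,\leq 0)$, since any two choices of $H^\pm$ over the same $K^\pm$ give canonically isomorphic double systems and the transfer morphisms commute with these isomorphisms. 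So I would fix the model Hamiltonians $H^\pm$ built from $\pi_M^*(H_{K_M})$ plus a function of $\sigma$ as in the proof of Lemma \ref{lemma product with s1}, adjusted so that $H^-$ additionally forces the value $\infty$ on $M \times \{\sigma < \nu\}$ outside $K_M \times T^*\T$.

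The key step is to show that the extra region $M \times \{\sigma \geq \nu\}$ that $K^-$ contains, beyond $K^+ = K_M \times T^*\T$, does not affect symplectic cohomology — i.e.\ the complement $\overline{(M \times \{\sigma \geq \nu\}) - (K_M \times T^*\T)}$, or more precisely the part of $M\times T^*\T$ that gets ``filled in'' when passing from $H^+$ to $H^-$, is displaceable by an admissible Hamiltonian isotopy in $M \times T^*\T$. Indeed, the region $\{\sigma \geq \nu\} \subset T^*\T$ is displaceable inside $T^*\T$ by a translation in the $\tau$-direction generated by an admissible (compactly-supported-near-infinity) Hamiltonian, hence $M \times \{\sigma \geq \nu\}$ is Hamiltonian displaceable in $M \times T^*\T$ by a Hamiltonian of the admissible form $K_t(x) - \tfrac12|\sigma|$ plus compact error. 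Then I would run the Ginzburg-style argument exactly as in the proof of Theorem \ref{theorem stably displaceable complement}: sandwich the relevant transfer morphism between transfer morphisms associated to Hamiltonians pushed forward under the displacing isotopy, and use that a composition of those morphisms is forced to be an isomorphism because the orbits in the displaceable region contribute nothing to the relevant Floer groups in the limit. This shows the transfer morphism $\SH^*_{\emptyset,Q_\omega,Q_\omega}(H^-) \to \SH^*_{\emptyset,Q_\omega,Q_\omega}(H^+)$ is an isomorphism in $\sys{\Lambda_\K^{Q_\omega,+}}$.

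Concretely, the steps in order would be: (1) reduce to model Hamiltonians via Proposition \ref{proposition continuation map isomorphism for M times TT}; (2) exhibit an explicit admissible Hamiltonian symplectomorphism of $M \times T^*\T$ displacing $M \times \{\sigma \geq \nu\}$, checking it is admissible in the sense of Definition \ref{definition hamiltoniansonproduct}; (3) interpolate: build a chain of Hamiltonians $H^- = G_0 \leq G_1 \leq \cdots \leq G_m$ and pushed-forward copies under the displacing isotopy, so that transfer morphisms alternate between our map and maps known to be isomorphisms by Proposition \ref{proposition continuation map isomorphism for M times TT}; (4) conclude by the two-out-of-three / retraction argument that the transfer morphism (\ref{equation transfer cross to product}) is an isomorphism, exactly as $\alpha$ is shown to be an isomorphism at the end of the proof of Proposition \ref{proposition continuation map isomorphism}.

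The main obstacle I anticipate is step (2)–(3): carefully verifying that the displacement of the non-compact region $M\times\{\sigma\geq\nu\}$ can be carried out through \emph{admissible} Hamiltonians and almost complex structures, so that the maximum principle still controls Floer trajectories and all the machinery of Definitions \ref{definition hamiltoniansonproduct}–\ref{defn appropriate lower semi continuous hamiltonians} applies unchanged; and arranging the interpolating chain so that the ``displaceable-region contributes nothing'' input really is of the form covered by Theorem \ref{theorem stably displaceable complement}'s proof (which was stated for $M$, so here one re-runs its proof with $M$ replaced by $M\times T^*\T$ and everything admissible). Once that bookkeeping is in place, the rest is a formal diagram chase identical to ones already carried out in this section.
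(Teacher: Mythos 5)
Your step (2) is where the argument breaks down, and it cannot be repaired in the form you describe. A translation in the $\tau$-direction preserves every set of the form $M \times \{\sigma \geq \nu\}$ setwise (the set is defined by a condition on $\sigma$ alone), so it displaces nothing; what would displace $\{\sigma \geq \nu\}$ is a translation in the $\sigma$-direction, but that is generated by the multivalued ``Hamiltonian'' $\tau$ and is exactly the non-Hamiltonian symplectic vector field $V = \tfrac{\partial}{\partial\sigma}$ that the curled-up-embedding machinery of Theorem \ref{theorem stably displaceable complement} exists to work around. Worse, no admissible Hamiltonian in the sense of Definition \ref{definition hamiltoniansonproduct} can displace $M \times \{\sigma \geq \nu\}$ at all: outside a compact set such a Hamiltonian equals $K_t(x) - \tfrac12|\sigma|$, whose flow is a product of a flow on $M$ with a $\tau$-rotation, hence preserves $M \times \{\sigma \geq \nu'\}$ for $\nu'$ large. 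So the ``extra region contributes nothing because it is displaceable'' input you want to feed into a Ginzburg-style sandwich is simply not available here, and steps (3)--(4) have nothing to run on.

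The paper's proof avoids displacement entirely and rests on a different observation, which is the idea you are missing: one can write down an explicit monotone interpolation $H^{k,s}(x,(\sigma,\tau)) = (s+(1-s)\rho(\sigma))K^k(x) + f(\sigma) - \tfrac1k$ between approximations of $H^-$ ($s=0$) and $H^+$ ($s=1$), where $f$ has a single maximum at $\sigma=0$ and $\rho$ is a bump function equal to $1$ near $\sigma = 0$. Because $\sigma\,\partial H^{k,s}/\partial\sigma < 0$ away from $\{\sigma=0\}$, every null-homologous $1$-periodic orbit is forced to lie in $\{\sigma = 0\}$, where the family is independent of $s$; hence the orbits and their actions do not change along the homotopy, and Lemma \ref{lemma continuation map isomorphism action gap on M times TT} gives that each continuation map $HF^*(H^{k,-}) \to HF^*(H^{k,+})$ is an isomorphism. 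One then passes to the limit in $k$ using Lemma \ref{lemma isomorphism condition} and Proposition \ref{proposition continuation map isomorphism for M times TT}. Your step (1) (reduction to model Hamiltonians) is fine and matches the paper, but the core of the argument should be ``the interpolating Hamiltonians have no orbits in the region where they differ,'' not ``the region where they differ is displaceable.''
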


\begin{center}
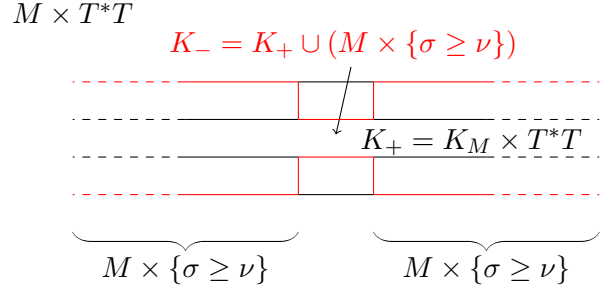
\begin{figure}[h]
	\usetikzlibrary{decorations.pathreplacing}
\begin{tikzpicture}

\draw[red] (-3,1) node (v1) {} -- (-1.5,1) node (v3) {};
\draw[red] (-3,-0.5) node (v2) {} -- (-1.5,-0.5) node (v4) {};
\draw[red] [dashed](-3,1) -- (-4.5,1);
\draw[red] [dashed](-3,-0.5) -- (-4.5,-0.5);
\draw[red] [dashed](1,1) -- (2.5,1);
\draw[red] [dashed](1,-0.5) -- (2.5,-0.5);

\node at (-4.5,1.9) {$M \times T^*T$};
\draw [->](-0.8,1.2) -- (-1,0.3);
\node at (0.8,0.2) {$K_+ = K_M \times T^*T$};

\draw (-3,0.5) node (v7) {} -- (-1.5,0.5) node (v5) {};
\draw (-3,0) node (v8) {} -- (-1.5,0) node (v6) {};
\draw (-0.5,0.5) -- (1,0.5);
\draw (1,0) -- (-0.5,0);
\draw [dashed](1,0.5) -- (2.5,0.5);
\draw[dashed](1,0)-- (2.5,0);
\draw [dashed](-3,0.5) -- (-4.5,0.5);
\draw [dashed](-3,0) -- (-4.5,0);

\draw[red] (-1.5,1) -- (-1.5,0.5);
\draw[red] (-0.5,1) -- (-0.5,0.5);
\draw[decoration={calligraphic brace,amplitude=7pt},decorate] (-1.5,-1) -- (-4.5,-1);
\draw[decoration={calligraphic brace,amplitude=7pt},decorate] (2.5,-1) -- (-0.5,-1);
\node at (-3,-1.5) {$M \times \{\sigma \geq \nu\}$};
\node at (1,-1.5) {$M \times \{\sigma \geq \nu\}$};
\node at (-0.9,1.5) {\color{red} $K_- = K_+ \cup (M \times \{\sigma \geq \nu\})$};
\draw[red] (-1.5,-0.5) -- (-1.5,0);
\draw[red] (-0.5,0) -- (-0.5,-0.5);
\draw (-0.5,1) -- (-1.5,1);
\draw[red] (-0.5,1) -- (1,1);

\draw[red] (-0.5,0.5) -- (-1.5,0.5);
\draw[red] (-1.5,0) -- (-0.5,0);
\draw[red] (-0.5,-0.5) -- (1,-0.5);
\draw (-0.5,-0.5) -- (-1.5,-0.5);
\end{tikzpicture}
\caption{The subsets $K_-$ and $K_+$.} \label{fig:kminusplus}
\end{figure}
\end{center}

\begin{proof}[Proof of Lemma \ref{lemma cross to product transfer isomorphism}.]
Let $\pi : M \times T^*\T \lra{} M$ be the natural projection map.
Let
$(K^k)_{k \in \N}$ be a sequence of smooth functions on $M$ so that
\begin{enumerate}
	\item $K^k \leq K^{k+1}$
	for all $k \in \N$,
	\item $K^k|_{M-K_M} >0$ and $K^k|_{K_M} = 0$ and
	\item for each $x \in M-K_M$, $K^k(x)$ tends to infinity as $k$ tends to infinity.
\end{enumerate}
Let $\rho : \R \lra{} \R$ be a smooth function so that
\begin{enumerate}
\item $\rho|_{[-\nu/2,\nu/2]} = 1$,
\item $\rho|_{(-\nu,\nu)} > 0$, $\rho|_{(-\infty,-\nu] \cup [\nu,\infty)} = 0$ and
\item $\rho'|_{(-\nu,-\nu/2)} > 0$,
$\rho'|_{(\nu/2,\nu)} < 0$ (See Figure \ref{fig:graphofrho}).
\end{enumerate}
\begin{center}
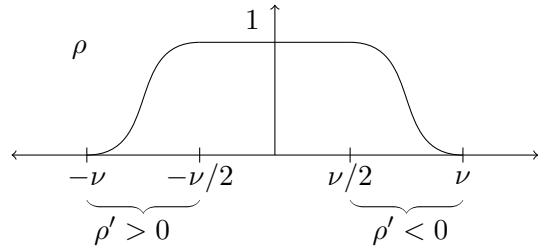
\begin{figure}[h]
	\begin{tikzpicture}
	
	\draw [<->](-5,-1.5) -- (2,-1.5);
	\draw [->](-1.5,-1.5) -- (-1.5,0.5);
	\draw (-2.5,0) -- (-0.5,0);
	\draw (-2.5,0) .. controls (-3.5,0) and (-3,-1.5) .. (-4,-1.5);
	
	\draw (-0.5,0) .. controls (0.5,0) and (0,-1.5) .. (1,-1.5);
	\draw[decoration={calligraphic brace,amplitude=5pt},decorate]  (-2.5,-2.1) -- (-4,-2.1);
	\draw[decoration={calligraphic brace,amplitude=5pt},decorate] (1,-2.1) -- (-0.5,-2.1);
	\node at (-4.1,-0.1) {$\rho$};
	\draw (-2.5,-1.4) -- (-2.5,-1.6);
	\draw (-4,-1.4) -- (-4,-1.6);
	\draw (-0.5,-1.4) -- (-0.5,-1.6);
	\draw (1,-1.4) -- (1,-1.6);
	\node at (-4,-1.8) {$-\nu$};
	\node at (-2.5,-1.8) {$-\nu/2$};
	\node at (-0.5,-1.8) {$\nu/2$};
	\node at (1,-1.8) {$\nu$};
	\node at (-1.8,0.3) {$1$};
	\node at (-3.4,-2.5) {$\rho' >0$};
	\node at (0.3,-2.5) {$\rho' < 0$};
	\end{tikzpicture}
	\caption{Graph of $\rho$.} \label{fig:graphofrho}
	\end{figure}
\end{center}

Let $f : \R \lra{} \R$ be a smooth function so that
\begin{enumerate}
\item $f(0) = 0$,
\item $f'|_{(-\infty,0)} > 0$, $f'|_{(0,\infty)} < 0$ and
\item $f(x) = -\frac{1}{2}|x|$ for $|x| \geq \nu$ (See Figure \ref{fig:graphoff2}).
\end{enumerate}
\begin{center}
\begin{figure}[h]
\begin{tikzpicture}

\draw [<->](-4.5,0.5) -- (1.5,0.5);
\draw [<-](-1.5,-1.5) -- (-1.5,0.5);
\draw[decoration={calligraphic brace,amplitude=5pt},decorate]  (-2.5,-2) -- (-4.5,-2);
\draw[decoration={calligraphic brace,amplitude=5pt},decorate] (1.5,-2) -- (-0.5,-2);
\draw (-2.5,0.4) -- (-2.5,0.6);
\draw (-0.5,0.4) -- (-0.5,0.6);

\node at (-2.5,0.8) {$-\nu$};
\node at (-0.5,0.8) {$\nu$};

\node at (-3.4,-2.5) {$f(x)=-\frac{1}{2}|x|$};
\node at (0.3,-2.5) {$f(x)=-\frac{1}{2}|x|$};
\draw (-0.5,0) -- (1.5,-1);
\draw (-2.5,0) -- (-4.5,-1);

\draw (-2.5,0) .. controls (-2.3,0.1) and (-1.8,0.5) .. (-1.5,0.5);
\draw (-0.5,0) .. controls (-0.7,0.1) and (-1.2,0.5) .. (-1.5,0.5);
\end{tikzpicture}
\caption{Graph of $f$.} \label{fig:graphoff2}
\end{figure}
\end{center}

Define
$$H^{k,s} : M \times T^*\T \lra{} \R, \quad
H^{k,s}(x,(\sigma,\tau)) := 
(s + (1-s)\rho(\sigma))K^k(x) + f(\sigma) - \frac{1}{k}$$
for each $k \in \N$ and $s \in [0,1]$.
Define
$H^{k,-} := H^{k,0}$ and $H^{k,+} := H^{k,1}$.
Also define
$$\check{H}^\pm : M \times T^* \T \lra{} \R, \quad \check{H}^{\pm}(x,(\sigma,\tau)) := \left\{
\begin{array}{ll}
\infty & \text{if} \ (x,(\sigma,\tau)) \notin K_\pm \\
f(\sigma) & \text{otherwise.}
\end{array}
\right\}.$$
By Lemma \ref{lemma action spectrum},
we can choose a sequence of elements
$(a_i)_{i \in \Z}$ in $\Sc(Q_\omega)$
so that
\begin{itemize}
\item $a_i([\omega],1,1)$ is not in the action spectrum of
$H^{k,\pm}$
 for all $k,j \in \N$ and
\item $a_i$ tends to infinity as $i$ tends to infinity and $a_i$ tends to $-\infty$ as $i$ tends to $-\infty$.
\end{itemize}
Then
$$HF^*_{(a_i,a_j)}(\check{H}^\pm) = \varinjlim_{k \in \N}(HF^*_{(a_i,a_j)}(H^{k,\pm}))$$
for each $i,j \in \Z$ and such an isomorphism is induced by continuation maps.
Hence
by Lemma \ref{lemma isomorphism condition},
we have an inclusion isomorphism of double systems
\begin{equation} \label{equation isomorphism of double systems for K plus minus}
(HF^*_{(a_i,a_j)}(\check{H}^\pm))_{(i,j) \in \Z \times \Z} \lra{\cong} \SH^*_{\emptyset,Q_\omega,Q_\omega}(\check{H}^{k,\pm})
\end{equation}
Since all the null homologous $1$-periodic
orbits of $H^{k,s}$ are contained
in $\{\sigma = 0\}$ because $\sigma \frac{\partial H^{k,s}}{\partial \sigma} < 0$ and $\frac{\partial H^{k,s}}{\partial \tau} = 0$ away from $\{\sigma=0\}$ for each $s \in [0,1]$,
we have by Lemma \ref{lemma continuation map isomorphism action gap on M times TT}
that the continuation map
$$HF^*_{(a_i,a_j)}(H^{k,-}) \lra{} HF^*_{(a_i,a_j)}(H^{k,+})$$
is an isomorphism for each $i,j \in \Z$ and $k \in \N$.
Combining this with the fact that the inclusion map (\ref{equation isomorphism of double systems for K plus minus}) is an isomorphism,
we have that the continuation map
$$\SH^*_{\emptyset,Q_\omega,Q_\omega}(\check{H}^-) \lra{} \SH^*_{\emptyset,Q_\omega,Q_\omega}(\check{H}^+)$$
is an isomorphism.
Hence by Proposition
\ref{proposition continuation map isomorphism for M times TT},
the map (\ref{equation transfer cross to product})
is an isomorphism.
\end{proof}

\begin{lemma} \label{lemma transfer map reparameterization}
Let $K^\pm \subset M \times T^* \T$ be admissible closed subsets so that $K^+ \subset K^-$.
Let $H^\pm \in \ccH^{\T,\ls}(K^\pm,\leq 0)$ (Definition \ref{defn appropriate lower semi continuous hamiltonians})
satisfy $H^- \leq H^+$
and let $\check{H}^\pm \in \ccH^{\T,\ls}((\T \times K^\pm) \cup ([0,1/2] \times M \times T^* \T),\leq 0)$
satisfy $\check{H}^- \leq \check{H}^+$.
Then the transfer map
$$\SH^*_{\check{C},Q_\omega,Q_\omega}(H^-)
\lra{}
\SH^*_{\check{C},Q_\omega,Q_\omega}(H^+)$$
is an isomorphism if and only if
the transfer map
$$\SH^*_{\check{C},Q_\omega,Q_\omega}(\check{H}^-)
\lra{}
		\SH^*_{\check{C},Q_\omega,Q_\omega}(\check{H}^+)$$
is an isomorphism.

\end{lemma}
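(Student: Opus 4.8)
The plan is to construct, compatibly with the two transfer maps appearing in the statement, an isomorphism between $\SH^*_{\check{C},Q_\omega,Q_\omega}(\check{H}^\pm)$ and $\SH^*_{\check{C},Q_\omega,Q_\omega}(H^\pm)$ for each sign, and then to finish by a diagram chase. Concretely, write $\theta^H\colon \SH^*_{\check{C},Q_\omega,Q_\omega}(H^-)\to \SH^*_{\check{C},Q_\omega,Q_\omega}(H^+)$ and $\theta^{\check{H}}\colon \SH^*_{\check{C},Q_\omega,Q_\omega}(\check{H}^-)\to \SH^*_{\check{C},Q_\omega,Q_\omega}(\check{H}^+)$ for the two transfer maps. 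If one produces isomorphisms $\Psi^\pm\colon \SH^*_{\check{C},Q_\omega,Q_\omega}(\check{H}^\pm)\to \SH^*_{\check{C},Q_\omega,Q_\omega}(H^\pm)$ with $\theta^H\circ\Psi^-=\Psi^+\circ\theta^{\check{H}}$, then $\theta^H$ is an isomorphism if and only if $\theta^{\check{H}}$ is, which is exactly the lemma. All the maps that will go into $\Psi^\pm$ (transfer maps, reparameterization isomorphisms) commute with continuation maps, so the required naturality square will hold automatically; the only real content is the existence of the $\Psi^\pm$.

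First I would use Proposition \ref{proposition continuation map isomorphism for M times TT} to replace $H^\pm$ and $\check{H}^\pm$ by convenient lower semi-continuous Hamiltonians with the same underlying admissible closed subsets, since any two such choices are connected by transfer isomorphisms that commute with all continuation, action and transfer maps. In particular I would choose $\check{H}^\pm$ so that on $[1/2,1]\times M\times T^*\T$ it is $H^\pm$ ``sped up'' onto the interval $[1/2,1]$, namely $\check{H}^\pm_t=2H^\pm_{2t-1}$ there (the restriction of $(2)H^\pm$ in the sense of Definition \ref{definition catenation of hamitlonians}), and is zero on $[0,1/2]\times M\times T^*\T$. Next I would apply the reparameterization isomorphism of Definition \ref{definition reparameterization isomorphism for SH}: by Proposition \ref{proposition ls reparameterization isomorphism} it exists for any smooth non-decreasing $F\colon\T\to\T$ homotopic to the identity and commutes with continuation and action maps, hence with transfer maps. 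Taking $F$ that stretches $[1/2,1]$ across (most of) $\T$ and collapses $[0,1/2]$, one gets that $(\check{H}^\pm)^F$ agrees with $H^\pm$ except on a time interval $J\subset\T$ that can be made arbitrarily short, where it remains ``free'' (at most $0$ rather than $+\infty$ off $K^\pm$). Since one and the same $F$ is used for both signs, the reparameterization isomorphism already intertwines $\theta^{\check{H}}$ with the transfer map between the symplectic cohomologies of the Hamiltonians $H_{(\T\times K^\pm)\cup(J\times M\times T^*\T)}$.

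It therefore remains to show that the transfer map
$\SH^*_{\check{C},Q_\omega,Q_\omega}\big(H_{(\T\times K^\pm)\cup(J\times M\times T^*\T)}\big)\to \SH^*_{\check{C},Q_\omega,Q_\omega}(H_{\T\times K^\pm})$
is an isomorphism for a sufficiently short interval $J$. I expect this to be the main obstacle: it is genuinely not formal, because shrinking $J$ changes the underlying admissible closed subset and so is outside the scope of Proposition \ref{proposition continuation map isomorphism for M times TT}, and a single reparameterization cannot make $J$ degenerate to a point. I would prove it by a Gromov-compactness argument modeled on the proof of Lemma \ref{lemma continuation map isomorphism action gap on M times TT}: the smooth admissible Hamiltonians approximating $H_{(\T\times K^\pm)\cup(J\times M\times T^*\T)}$ differ from those approximating $H_{\T\times K^\pm}$ only on $J\times(M-K^\pm)\times T^*\T$; choosing the interpolating family to grow rapidly there and using the maximum principle to confine Floer cylinders to a fixed compact set, one shows that for $|J|$ small the relevant low-energy part of the moduli spaces defining the continuation map is unchanged, so the continuation map is an isomorphism on each Floer group $HF^p_{\check{C},a_-,a_+}$ and hence, after passing to $\varinjlim\varprojlim$, on symplectic cohomology. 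Composing this isomorphism with the reparameterization isomorphisms and, if needed, with the Proposition \ref{proposition continuation map isomorphism for M times TT}-isomorphisms used to normalize the representatives, yields the maps $\Psi^\pm$ together with the identity $\theta^H\circ\Psi^-=\Psi^+\circ\theta^{\check{H}}$, completing the proof.
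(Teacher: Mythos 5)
Your overall strategy is the right one — reduce to specific representatives via Proposition \ref{proposition continuation map isomorphism for M times TT} and then produce isomorphisms $\Psi^\pm$ via the reparameterization isomorphism that intertwine the two transfer maps — but you apply the reparameterization in the wrong direction, and this creates a gap that the paper's argument avoids entirely. Because any admissible $F:\T\to\T$ homotopic to the identity is surjective, reparameterizing $\check{H}^\pm$ toward $H^\pm$ can never eliminate the free time interval; it only shrinks $[0,1/2]$ to a short interval $J$. Your remaining step — that the transfer map from $H_{(\T\times K^\pm)\cup(J\times M\times T^*\T)}$ to $H_{\T\times K^\pm}$ is an isomorphism for $|J|$ small — is essentially the original lemma in miniature, and the sketched Gromov-compactness argument does not establish it: the approximating smooth Hamiltonians for the two sides are not close on $J\times(M-K^\pm)\times T^*\T$ (one family stays bounded there while the other tends to $+\infty$), so Lemma \ref{lemma continuation map isomorphism action gap on M times TT} does not apply as stated, and you would still have to rule out $1$-periodic orbits and Floer cylinders that dip into $(M-K^\pm)\times T^*\T$ during $J$. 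As written, the proof is incomplete at its acknowledged ``main obstacle.''

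The fix is to reverse the direction: reparameterize $H^\pm$ rather than $\check{H}^\pm$. Take $F:\T\to\T$ smooth, non-decreasing, homotopic to the identity, with $F'|_{[0,1/2]}=0$ and $F'>0$ on $(1/2,1)$. Then $(H^\pm)^F_t=F'(t)H^\pm_{F(t)}$ vanishes identically for $t\in[0,1/2]$ and equals a rescaled copy of $H^\pm$ for $t\in(1/2,1)$, so $(H^\pm)^F$ lies \emph{exactly} in $\ccH^{\T,\ls}((\T\times K^\pm)\cup([0,1/2]\times M\times T^*\T),\leq 0)$, with $(H^-)^F\leq (H^+)^F$. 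Choosing $\check{H}^\pm:=(H^\pm)^F$ (legitimate by Proposition \ref{proposition continuation map isomorphism for M times TT}), the reparameterization isomorphisms of Proposition \ref{proposition ls reparameterization isomorphism} give the $\Psi^\pm$ directly, they commute with continuation maps, and no residual interval or compactness argument is needed.
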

\proof
By Proposition \ref{proposition continuation map isomorphism for M times TT},
it is sufficient for us to prove
this for specific $H^\pm$ and $\check{H}^\pm$.
Let $F : \T \lra{} \T$
be a non-decreasing smooth function homotopic to the identity
so that $F'(t) > 0$ for $t \in (1/2,1)$
and $F'(t)=0$ for $t \in [0,1/2]$.
Choose $H^\pm$ and $\check{H}^\pm$
so that $(H^\pm)^F = \check{H}^\pm$
where $(H^\pm)^F$ is given in Definition
\ref{definition ls reparameterized Hamiltonian}.
Then our lemma follows from the fact that we have reparameterization isomorphisms
$$\SH^*_{\check{C},Q_\omega,Q_\omega}(H^\pm)
\lra{}
\SH^*_{\check{C},Q_\omega,Q_\omega}(\check{H}^\pm)$$
which commute with continuation maps.
\qed

%
%

\begin{proof}[Proof of Theorem \ref{theorem stably displaceable complement}]

The key idea of the proof is
to use the displacing Hamiltonian $H^-$
to construct a family of Hamiltonians
$(H^- + H_{s,\bullet})_{s \in [0,\infty)})$ (see below)
with identical orbits realizing
the transfer isomorphism (\ref{equation transfer isomorphism displaceable}).
One then uses Lemma \ref{lemma continuation map isomorphism action gap on M times TT}.

For each $\nu > 0$, define
$$\widehat{K}_\nu := (K \times T^* \T) \cup (M \times \{ \sigma \geq \nu\})$$
and
$$\widetilde{K}_\nu := (\T \times \widehat{K}_\nu) \cup ([0,1/2] \times M \times T^* \T).$$
By Lemmas \ref{lemma product with s1},
\ref{lemma cross to product transfer isomorphism} and \ref{lemma transfer map reparameterization},
it is sufficient for us to show that
\begin{equation} \label{equation transfer isomorphism from M to K}
\SH^*_{\check{C},Q_\omega,Q_\omega}(H^-)
\lra{} \SH^*_{\check{C},Q_\omega,Q_\omega}(H^+)
\end{equation}
is an isomorphism for some $\nu > 0$
and some appropriate lower-semi-continuous Hamiltonians $H^- \in \ccH^{\T,\ls}(M \times T^* \T,\leq 0)$, $H^+ \in \ccH^{\T,\ls}(\widetilde{K}_\nu,\leq 0)$ satisfying $H^- \leq H^+$.

Since $(\overline{M-K}) \times (\{0\} \times \T)$ is Hamiltonian displaceable inside $M \times T^*\T$,
there is a smooth admissible Hamiltonian
$\check{H} = (\check{H}_t)_{t \in \T}$ on $M \times T^*\T$
and a constant $\nu > 0$
so that $\phi^{\check{H}}_1(Q) \cap Q = \emptyset$
where
$$Q := M \times T^* \T - \widehat{K}_\nu = (M -K) \times \{\sigma < \nu\}.$$
By subtracting a constant from $\check{H}$,
we can assume that $\check{H} < 0$.
Now let $F,G : \T \lra{} \T$ be a smooth non-decreasing functions
homotopic to the identity map
satisfying $F(0)=0$, $F'|_{[0,1/2]} = 0$,
$F'|_{(1/2,1)}>0$ and
$G(0) = 0$,
$G'|_{[1/2,1]} = 0$
and $G'|_{(0,1/2)} >0$.
Define $H^- := \check{H}^G$.
We define the lower semi-continuous
Hamiltonian $H^+ := (H^+_t)_{t \in \T}$ by
$$H^+_t(x) := \left\{
\begin{array}{ll}
H^-_t(x) & \text{if} \ (t,x) \in \widetilde{K}_\nu \\
\infty & \text{otherwise.} 
\end{array}
\right.
$$

Now choose a smooth family of autonomous Hamiltonians
$H = (H_{s,t})_{(s,t) \in [0,\infty) \times \T}$ on $M \times T^*\T$
satisfying
\begin{itemize}
\item $H_{s,t} \geq 0$, $\frac{\partial H_{s,t}}{\partial s} \geq 0$,
\item $H_{s,t}(x) = 0$ if and only if $(t,x) \in \widetilde{K}_\nu$ or $s = 0$,
\item $dH_{s,t}|_x = 0$ for all $(t,x) \in \widetilde{K}_\nu$.
\item $H_{s,t}(x) \to \infty$ as $s \to \infty$ for all $(t,x) \in \T \times M \times T^* \T - \widetilde{K}_\nu$.
\end{itemize}
Define $H_{s,\bullet} := (H_{s,t})_{t \in \T}$ for all $s \in [0,\infty)$.
By Lemma \ref{lemma action spectrum},
there is a sequence $(a_i)_{i \in \Z}$
in $\Sc(Q_\omega)$
so that
\begin{itemize}
\item $a_i([\omega],1,1) \to \pm \infty$ as $i \to \pm \infty$ and
\item $a_i([\omega],1,1)$ is not in the action spectrum of $H^-$ for each $i \in \Z$.
\end{itemize}
Note that the capped $1$-periodic orbits of $H^-$
and $H_{s,\bullet} + H^-$ are identical for all $s \in [0,\infty)$ since the support of $H_{s,t}$ is contained inside $(1/2,1) \times Q$ for all $(s,t) \in [0,\infty) \times \T$ (making $H_{s,\bullet}$ and $H^-$ Poisson commute) and since there are no $1$-periodic orbits $\gamma : \T \lra{} M \times T^* \T$ of $H^-$ or $H_{s,\bullet} + H^-$
satisfying $\gamma(0) \in Q$ for each $s \in [0,\infty)$.
Also, the corresponding actions of these capped $1$-periodic orbits are the same as well since $H^-_t|_{\gamma(t)} = (H_{s,\bullet} + H^-)_t|_{\gamma(t)}$
for all $t \in \T$ and all $1$-periodic orbits $\gamma$ of $H^-$.
Therefore by Lemma \ref{lemma continuation map isomorphism action gap on M times TT},
the natural continuation map
$$HF^*_{(a_i,a_j)}(H^-) \lra{} HF^*_{(a_i,a_j)}(H_{s,\bullet} + H^-)$$
is an isomorphism for all $i,j \in \Z$
and $s \in [0,\infty)$.
Hence the induced map of double systems
$$(HF^*_{(a_i,a_j)}(H^-))_{(i,j) \in \Z} \lra{} (HF^*_{(a_i,a_j)}(H_{s,\bullet} + H^-))_{(i,j) \in \Z}$$
is an isomorphism.
By Lemma \ref{lemma isomorphism condition},
this implies that
$$\SH^*_{\check{C},Q_\omega,Q_\omega}(H^-) \lra{} \SH^*_{\check{C},Q_\omega,Q_\omega}(H^+)$$
is an isomorphism.
This completes our proposition.
\end{proof}

\subsection{Symplectic Cohomology and Alternative Filtrations.}
\label{section alternative filtratrions}

In this subsection we will show
in Proposition \ref{proposition alternative filtrations defining symplectic cohomology}
below
that symplectic cohomology
defined with respect to
a particular
wide action interval domain
(Definition \ref{defn chain complex})
is isomorphic to one
defined over an action interval domain
that is not wide
under certain conditions.

\begin{defn} \label{definition index bounded contact cylinder prelim}
Let $(C,\alpha_C)$ be a $2n-1$-manifold 
with contact form.
Recall that the {\it Reeb vector field} 
of $\alpha_C$ is the unique vector field
$R_{\alpha_C}$ satisfying $i_{R_{\alpha_C}} d\alpha_C = 0$
and $\alpha_C(i_{R_{\alpha_C}})=1$.
A {\it periodic Reeb orbit of length $\lambda>0$}
is a map $\gamma : \R / \lambda \Z \lra{} C$ satisfying $\dot{\gamma} = R_{\alpha_C}$.
The {\it Reeb flow} of $\alpha_C$ is the flow
$(\phi^\alpha_t : C \lra{} C)_{t \in \R}$
of $R_{\alpha_C}$.
We will define $\text{length}(\gamma) := \lambda$.

Now let
$\check{C} = [1-\epsilon,1+\epsilon] \times C \subset M$
be a contact cylinder inside $M$ and let $\iota_C : C \lra{} M$ be the natural inclusion map sending $x$ to $(1,x) \in \check{C} \subset M$.
Let $r_C$ be the cylindrical coordinate of $\check{C}$ and let $\alpha_C$ be the associated contact form.
By abuse of notation, we will define
$R_{\alpha_C}$ to be the unique vector field on $\check{C}$ which projects to $R_{\alpha_C}$ in $C$
and $0$ in $[1-\epsilon,1+\epsilon]$.
We let $\frac{\partial}{\partial r_C}$
be the gradient of $r_C$ with respect to any product metric on $\check{C}$ where the factor $[1-\epsilon,1+\epsilon]$ has the standard Euclidean metric.
Let $\gamma$ be a periodic Reeb orbit of $\alpha_C$ of length $\lambda$ so that $\iota_C \circ \gamma$ is null homologous in $M$.
Let $\widehat{\gamma} = (\widetilde{\gamma},\check{\gamma})$
be a capped loop
so that
$\gamma(\lambda t) = \widetilde{\gamma}(\check{\gamma}(t))$ for each $t \in [0,1]$.
Then since we have a splitting
$TM|_{\check{C}} = \ker(\alpha_C) \oplus \text{Span}(\frac{\partial}{\partial r_C},R_{\alpha_C})$
of symplectic vector bundles
with associated symplectic forms $d\alpha_C|_{\ker(\alpha_C)}$ and $dr_C \wedge \alpha_C|_{\text{Span}(\frac{\partial}{\partial r_C},R_{\alpha_C})}$
and since $\text{Span}(\frac{\partial}{\partial r_C},R_{\alpha_C})$ has a natural choice of symplectic trivialization,
any symplectic trivialization of $\widetilde{\gamma}^* TM$
gives an induced symplectic bundle
trivialization
$$\tau : \gamma^* \ker(\alpha_C) \lra{} (\R / \lambda \Z) \times \C^{n-1}.$$
Let $P : (\R/\lambda \Z) \times \C^{n-1} \lra{} \C^{n-1}$ be the natural projection map.
The {\it Conley-Zehnder index} $CZ(\gamma)$
of $\gamma$ is defined
to be the Conley-Zehnder index
of the path of symplectic matrices
$$P \circ \tau \circ \phi^\alpha_t \circ (P \circ \tau|_0)^{-1}, \quad t \in [0,\lambda].$$
This does not depend on the choice of
trivialization
$\tau$ by \ref{item:homotopyinvariance}
or on the choice of $\widehat{\gamma}$ since $c_1(M) = 0$.
The {\it index} of a Reeb orbit $\gamma$
is defined to be $|\gamma| := n-CZ(\gamma)$.

Let $\Gamma_{\alpha_C}$ be the set of periodic Reeb orbits $\gamma$ of $\alpha_C$ so that $\iota_C \circ \gamma$ is null homologous inside $M$.
The {\it index $[-m,m]$ period spectrum}
of $\check{C}$ is the set
$$
\{ \
\text{length}(\gamma) \ : \ \gamma \in \Gamma_{\alpha_C}, \ -m \leq |\gamma| \leq m \
\}
\subset \R.
$$
\end{defn}

\begin{defn} \label{definition index bounded contact cylinder}
The contact cylinder $\check{C}$
as above
is {\it index bounded}
if for every $m > 0$,
there exists $\mu_m>0$ so that the index $[-m,m]$ period spectrum of $\check{C}$ is contained in the interval $(0,\mu_m)$.
\end{defn}

\begin{prop} \label{proposition alternative filtrations defining symplectic cohomology}
Suppose that $\check{C}$ is an index bounded contact cylinder with associated Liouville domain $D$.
Let $\omega_{\check{C}}$ be a $\check{C}$-compatible $2$-form
with scaling constants $0$ and $1$ and which is equal to $\omega$ outside $D \cup ([1,1+\epsilon/2] \times C)$.
Also let $Q_+ = Q_{\omega_{\check{C}}} \subset H^2(M,D;\R) \times \R \times \R$ be the cone spanned by $([\omega_{\check{C}}],1,1)$
and
$Q_-$ the cone spanned by
$([\omega_{\check{C}}],1,1)$
and
$([\omega_{\check{C}}],0,1)$.
Then the action map
\begin{equation} \label{transfer isomorphism alternative filtration}
\SH^*_{\check{C},Q_-,Q_+}(D \subset M) \lra{} \SH^*_{\check{C},Q_+,Q_+}(D \subset M)
\end{equation}
is an isomorphism in $\sys{\Lambda_\K^{Q_+,+}}$.
\end{prop}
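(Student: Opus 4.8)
The plan for proving Proposition~\ref{proposition alternative filtrations defining symplectic cohomology} is to compute both double systems through one cofinal family of smooth Hamiltonians approximating $H_D$, and to check that, after restricting to suitable cofinal families of action functions, the relevant action maps are isomorphisms orbit-by-orbit. Fix $p\in\Z$ and a finite interval $P=[p-1-n,p+1+n]$, pick $m$ with $P\subseteq[-m,m]$, and let $\mu_m>0$ bound the index $[-m,m]$ period spectrum of $\check C$ (index boundedness). Using Lemmas~\ref{lemma leq check C is a directed system} and~\ref{lemma partial order lemma} I would choose a cofinal sequence $(H_k)_{k\in\N}$ in $\ccH^\reg(<_{\check C}H_D,a_-,a_+,P)$ --- simultaneously for all the action intervals over $Q_\pm$ used below --- consisting of autonomous Hamiltonians that are $C^2$-small and non-positive on $D$, depend only on $r_C$ on $\check C$, are convex increasing there with slope tending to $\infty$, and are shaped in the standard way computing symplectic cohomology of a Liouville domain, so that the set where $|dH_k|\le\mu_m$ meets $\check C$ only in a fixed neighbourhood of $r_C=1$, and so that the turn-around region $[1+\epsilon/2,1+\epsilon]\times C$ carries only high-slope orbits. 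By Lemma~\ref{lemma action morphism for lower semicontinuous isomorphism} and Lemma~\ref{lemma isomorphism condition} it then suffices to produce such cofinal families of $(a_-,a_+)\in\Sc(Q_-)\times\Sc(Q_+)$ for which $\Gamma^P_{\check C,a_-,a_+}(H_k)=\Gamma^P_{\check C,a_-|_{Q_+},a_+}(H_k)$ for all $k$.

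The heart of the argument is a uniform estimate on the two action values that distinguish $Q_-$ from $Q_+$. First note that $\omega-\omega_{\check C}$ vanishes outside $D\cup\check C$ and carries a global primitive $\lambda_0$ (the Liouville form on $D$, and $(1-f_{\omega_{\check C}}(r_C))\,r_C\,\alpha_C$ on $\check C$), so $[\omega_{\check C}]=[\omega]$; in particular $Q_+$ is the ray through $([\omega],1,1)$, and the extra constraint imposed by $Q_-$ on a capped $1$-periodic orbit $\gamma$ is $a_-([\omega_{\check C}],0,1)\le\cA_{H_k,\check C}(\gamma)([\omega_{\check C}],0,1)$. Evaluating $\cA_{H_k,\check C}(\gamma)$ at the two generators via Corollary~\ref{corollary action of capped loop disjoint from contact cylinder} (with $\widetilde\omega=\omega$ and $\widetilde\omega=\omega_{\check C}$ respectively) one finds that for $\gamma$ with $\overline\gamma$ far out the difference equals $m_{H_k}-\int_{\overline\gamma}\lambda_0$, which is $\le 0$ because $m_{H_k}\le0$ for these profiles and $\lambda_0$ vanishes where $r_C\ge 1+\epsilon/2$, while for $\gamma$ with $\overline\gamma$ near $r_C=1$ it equals $\int_0^1 H_{k,t}(\overline\gamma)\,dt-\int_{\overline\gamma}\lambda_0$, which for an orbit of index in $P$ is bounded in absolute value by a constant depending only on $\check C$ and $P$: index boundedness forces the underlying Reeb orbit to have length $<\mu_m$, hence this orbit sits where $H_k$ is still small, and its parametrised length (whence $\int_{\overline\gamma}\lambda_0$) is $<\mu_m$ up to a fixed factor. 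Thus there is $C=C(\check C,P)\ge 0$ with
$$\cA_{H_k,\check C}(\gamma)([\omega],1,1)-\cA_{H_k,\check C}(\gamma)([\omega_{\check C}],0,1)\ \le\ C$$
for every $k$ and every capped $1$-periodic orbit $\gamma$ of $H_k$ of index in $P$.

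Given this, I would use as the $a_-$-direction only functions $a_-\in\Sc(Q_-)$ with $a_-([\omega_{\check C}],0,1)\le a_-([\omega],1,1)-C$; these are cofinal in $(\Sc(Q_-),\ge)$ (replace any $a_-$ by a linear function with sufficiently negative values). For such $a_-$ and any index-$P$ orbit $\gamma$, the $Q_+$-constraint $a_-([\omega],1,1)\le\cA_{H_k,\check C}(\gamma)([\omega],1,1)$ immediately yields $a_-([\omega_{\check C}],0,1)\le a_-([\omega],1,1)-C\le\cA_{H_k,\check C}(\gamma)([\omega],1,1)-C\le\cA_{H_k,\check C}(\gamma)([\omega_{\check C}],0,1)$, so the extra $Q_-$-constraint is automatic and $\Gamma^P_{\check C,a_-,a_+}(H_k)=\Gamma^P_{\check C,a_-|_{Q_+},a_+}(H_k)$. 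By Lemma~\ref{lemma action morphism for lower semicontinuous isomorphism} each action map $HF^p_{\check C,a_-,a_+}(H_k)\to HF^p_{\check C,a_-|_{Q_+},a_+}(H_k)$ is an isomorphism; taking the inverse limit over $a_+$, the direct limit over this cofinal family of $a_-$, and the direct limit over $k$, and invoking Lemma~\ref{lemma isomorphism condition} to pass to the cofinal subsystems, identifies the action map~(\ref{transfer isomorphism alternative filtration}) with an isomorphism in $\sys{\Lambda_\K^{Q_+,+}}$, compatible with the product since the $H_k$ are autonomous.

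The main obstacle is making the estimate of the second paragraph genuinely uniform: one must use index boundedness both to keep the low-index orbits of every $H_k$ in the region where $H_k$ (hence $\int H_k$ along those orbits) and their lengths are controlled, and then combine this with the negativity of the heights $m_{H_k}$ and with $\lambda_0$ vanishing past $r_C=1+\epsilon/2$ to bound the discrepancy between the two action functions by a constant independent of $k$ and of the orbit. The remaining ingredients --- existence of a regular cofinal family $(H_k)$ of the prescribed shape with no spurious low-index orbits, and the bookkeeping with cofinal subsystems of action functions --- are routine given Sections~\ref{section floer cohomology for lower semi-continuous Hamitlonians} and~\ref{section definition of symplectic cohomology}.
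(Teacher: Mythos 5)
Your overall strategy (match the generator sets orbit by orbit and invoke Lemma \ref{lemma action morphism for lower semicontinuous isomorphism}, then pass to cofinal subsystems) is the right skeleton, but the uniform estimate at the heart of your second paragraph is false, and this is not a repairable detail: it is exactly the point where the proof has to work harder. Any family $(H_k)$ that is cofinal in $\ccH^\reg(<_{\check C}H_D,a_-,a_+,P)$ must dominate Hamiltonians that are arbitrarily large constants outside $D$, so the heights $m_{H_k}$ necessarily tend to $+\infty$; your assertion that ``$m_{H_k}\le 0$ for these profiles'' is incompatible with cofinality. For a capped orbit $\gamma$ whose underlying orbit lies in the flat region outside $D\cup([1,1+\epsilon/2]\times C)$, Corollary \ref{corollary action of capped loop disjoint from contact cylinder} gives
$$\cA_{H_k,\check C}(\gamma)([\omega_{\check C}],1,1)-\cA_{H_k,\check C}(\gamma)([\omega_{\check C}],0,1)=m_{H_k},$$
which is unbounded in $k$ (this is precisely the lower bound (\ref{equation action lower estimate}) that the paper uses to \emph{expel} such orbits). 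Since such orbits of index in $P$ do occur in $\Gamma^P_{\check C,a_-|_{Q_+},a_+}(H_k)$ for every $k$ once $a_+$ is large (perturbed constant orbits with suitable cappings land in any sufficiently long window of standard action, $[\omega]$ being integral), no constant $C(\check C,P)$ controls the discrepancy over all $k$ and all index-$P$ orbits. Consequently, for a fixed action interval $(a_-,a_+)$ the two generator sets do \emph{not} agree for cofinally many $k$: the far-out orbits persist on the $Q_+$ side while their $([\omega_{\check C}],0,1)$-actions drop below $a_-([\omega_{\check C}],0,1)$ as $k\to\infty$. Your final paragraph then takes $\varinjlim_k$ of maps that are not isomorphisms for large $k$, so the conclusion does not follow.

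The missing idea is that the comparison of generator sets can only be made after increasing the Hamiltonian \emph{as a function of the action interval}. This is what Lemma \ref{lemma nice cofinal family of Hamiltonians for index bounded contact cylinder}(\ref{item index for large H}) supplies: for each wide $(a_-,a_+)$ there is $S(a_-,a_+)$ such that for $s\ge S$ every orbit in $\Gamma^{[-m,m]}_{\check C,a_-,a_+}(H_s)$ has image in $D$ --- the far-out orbits are excluded because their discrepancy $m_{H_s}$ exceeds $\textnormal{height}(a_-,a_+)$, and the steep region carries no low-index orbits by index boundedness (Lemma \ref{lemma index calculation}); on $D$ the family is $s$-independent, so there the discrepancy is bounded by a single constant $c_0$, which is the part of your estimate that is correct. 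The paper then interleaves two partial isomorphisms --- an action map $HF^p_{\check C,a_-+a_s,a_+}(H_s)\to HF^p_{\check C,a_-|_{Q_+},a_+|_{Q_+}}(H_s)$ valid for all $s$, and an action map $HF^p_{\check C,a_-,a_+}(H_{\check s})\to HF^p_{\check C,a_-+a_s,a_+}(H_{\check s})$ valid only for $\check s\ge s+v(a_-,a_+)$ --- into a commuting triangle of directed systems, and concludes via Lemma \ref{lemma isomorphism condition} and Proposition \ref{proposition continuation map isomorphism}. Some such zig-zag (a reindexing of the double system coupling the Hamiltonian parameter to $(a_-,a_+)$) is unavoidable; a termwise isomorphism at fixed $(a_-,a_+)$, uniform in $k$, is simply not true.
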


Before we prove Proposition \ref{proposition alternative filtrations defining symplectic cohomology},
we need some preliminary lemmas.
The first lemma relates the indices of Reeb orbits with
the indices of certain Hamiltonian orbits.

\begin{lemma} \label{lemma index calculation}
Let $\check{C} = [1-\epsilon,1+\epsilon] \times C$ be an index bounded contact cylinder with cylindrical coordinate $r_C$ and associated contact form $\alpha_C$
and let $\pi : \check{C} \lra{} C$ be the natural projection map.
Let $f : [1-\epsilon,1+\epsilon] \lra{} \R$ be a smooth function, $R_{\lambda,[-m,m]}$ the set of Reeb orbits in $\Gamma_{\alpha_C}$ of length $\lambda$ and index in $[-m,m]$
and $O_{\lambda,[-m,m]}$ the set of $1$-periodic orbits
of $f(r_C)$ contained in $\{r_C = (f')^{-1}(\lambda)\}$
of index in $[-m,m]$ which are  null homologous in $M$.
Then the map
$$O_{\lambda,[-m,m]} \lra{} R_{\lambda,[-m-\frac{1}{2},m + \frac{1}{2}]}$$
sending $\gamma : \T \lra{} \check{C}$
to $\pi \circ \gamma \circ b_\lambda$
is well defined where
$$b_\lambda : [0,\lambda] \lra{} [0,1], \quad b_\lambda(t) := (1/\lambda) t, \quad \forall \ t \in [0,\lambda].$$
\end{lemma}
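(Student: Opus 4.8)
\emph{Proof proposal.} The plan is to read off everything from the Hamiltonian dynamics of $f(r_C)$ inside the symplectization region $\check{C}$, where $\omega|_{\check{C}} = d(r_C \alpha_C)$ by Definition \ref{definition contact cylinder}. First I would compute the Hamiltonian vector field of $H := f(r_C)$ on $\check{C}$: from $\omega|_{\check{C}} = dr_C \wedge \alpha_C + r_C\, d\alpha_C$ and the identity $i_{R_{\alpha_C}}\omega = -dr_C$ one gets $X_H = f'(r_C)\, R_{\alpha_C}$, and since this has no $\tfrac{\partial}{\partial r_C}$ component the time-$t$ flow is $\phi^H_t(r,x) = \big(r,\ \phi^{\alpha_C}_{f'(r)t}(x)\big)$, where $\phi^{\alpha_C}$ is the Reeb flow of $\alpha_C$. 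Hence a $1$-periodic orbit $\gamma$ of $H$ contained in $\{r_C = c\}$ is precisely a Reeb orbit through $\gamma(0)$ that closes up after time $f'(c)$; therefore $f'(c) = \lambda$, i.e. $c = (f')^{-1}(\lambda)$, and $\pi \circ \gamma \circ b_\lambda : \R/\lambda\Z \to C$ is a periodic Reeb orbit of $\alpha_C$ of length $\lambda$. Since $\iota_C \circ (\pi \circ \gamma \circ b_\lambda)$ is freely homotopic inside $\check{C}$, hence inside $M$, to $\gamma$ itself, which is null homologous in $M$ by the definition of $O_{\lambda,[-m,m]}$, we get $\pi \circ \gamma \circ b_\lambda \in \Gamma_{\alpha_C}$. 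It then only remains to control the Conley--Zehnder index.

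For the index I would use the splitting of symplectic vector bundles $TM|_{\check{C}} = \ker(\alpha_C) \oplus \mathrm{Span}\big(\tfrac{\partial}{\partial r_C}, R_{\alpha_C}\big)$ along the orbit, as in Definition \ref{definition index bounded contact cylinder prelim}, and fix a symplectic trivialization of $\widetilde{\gamma}^*TM$ coming from a capping surface of $\gamma$; since $c_1(M) = 0$, the same capping serves to compute the index of $\pi \circ \gamma \circ b_\lambda$, so both indices are taken in compatible trivializations. Because $\phi^H_t(r,x) = (r, \phi^{\alpha_C}_{f'(r)t}(x))$ preserves $r_C$ exactly, and because $dr_C$ vanishes on $\ker\alpha_C$ (so an infinitesimal motion in a contact direction leaves $f'(r_C)$ unchanged to first order) while the Reeb flow genuinely preserves $\ker\alpha_C$, the linearized return map $D\phi^H_1$ is block diagonal with respect to this splitting, and a direct computation gives: on $\ker\alpha_C$ it is the linearized Reeb return map $D\phi^{\alpha_C}_\lambda|_{\ker\alpha_C}$ (reparametrized by $s = \lambda t$), and on $\mathrm{Span}\big(\tfrac{\partial}{\partial r_C}, R_{\alpha_C}\big)$, in the symplectic coordinates $x_1 = -R_{\alpha_C}$, $y_1 = \tfrac{\partial}{\partial r_C}$ in which the symplectic form has the normal form of property \ref{item:sheartransformation}, the whole path $t \mapsto D\phi^H_t$ is the shear $\big(\begin{smallmatrix}1 & -t f''(c)\\ 0 & 1\end{smallmatrix}\big)$. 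By additivity of the Conley--Zehnder index on block sums (property \ref{item:czadditive}), its invariance under orientation-preserving reparametrization (property \ref{item:homotopyinvariance}), and the shear formula (property \ref{item:sheartransformation}), one obtains $CZ(\gamma) = CZ(\pi\circ\gamma\circ b_\lambda) + \tfrac12\,\mathrm{Sign}\big(f''(c)\big)$, the last term being $0$ when $f''(c) = 0$. Since $|\gamma| = n - CZ(\gamma)$ and $|\pi\circ\gamma\circ b_\lambda| = n - CZ(\pi\circ\gamma\circ b_\lambda)$, this gives $\big|\,|\gamma| - |\pi\circ\gamma\circ b_\lambda|\,\big| \le \tfrac12$, so an orbit of index in $[-m,m]$ is sent to a Reeb orbit of index in $[-m-\tfrac12,\ m+\tfrac12]$, which is the assertion.

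The step I expect to be the main obstacle is establishing the block-diagonal form of $D\phi^H_1$ and pinning down that the $\mathrm{Span}\big(\tfrac{\partial}{\partial r_C}, R_{\alpha_C}\big)$ block contributes exactly $\pm\tfrac12$ to the Conley--Zehnder index (never more): one must check carefully that no terms of $D\phi^H_1$ mix $\ker\alpha_C$ with $\mathrm{Span}\big(\tfrac{\partial}{\partial r_C}, R_{\alpha_C}\big)$, and one must keep track of the trivializations so that the comparison of the two indices is meaningful --- the point being that $\mathrm{Span}\big(\tfrac{\partial}{\partial r_C}, R_{\alpha_C}\big)$ carries a canonical trivialization, so the ambiguity in the capping trivialization cancels in the difference. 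A couple of minor points: if $(f')^{-1}(\lambda)$ is a critical point of $f'$ the shear block is the identity path and the two indices agree outright (still within the bound); and the index-bounded hypothesis on $\check{C}$ is not actually used in this lemma, being carried along only because the lemma will be applied in that setting.
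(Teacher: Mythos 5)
Your proposal is correct and follows essentially the same route as the paper: compute $X_{f(r_C)} = f'(r_C)R_{\alpha_C}$, split $TM|_{\check{C}}$ into $\ker(\alpha_C)\oplus\mathrm{Span}(\tfrac{\partial}{\partial r_C},R_{\alpha_C})$ with compatible trivializations, observe the linearized return map is block diagonal with the $2\times 2$ block a shear, and conclude via properties \ref{item:czadditive}, \ref{item:homotopyinvariance} and \ref{item:sheartransformation} that the index shifts by at most $\tfrac12$. The only cosmetic difference is a sign convention in the shear block, which does not affect the bound.
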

\proof
Since $X_{f(r_C)} = f'(r_C) R_{\alpha_C}$
where $R_{\alpha_C}$ is the natural lift of the Reeb vector field of $\alpha_C$ to $\check{C}$,
we see that $\gamma \in O_{\lambda,[m,m]}$
gets pushed forward
to a Reeb orbit $\gamma \circ \pi \circ b_\lambda$ of length $\lambda$.
Therefore, all we need to do is compute the index of $\gamma \circ \pi \circ b_\lambda$.

Let $\xi := \ker(\pi^* \alpha_C) \cap \ker(dr_C)$ be a codimension $2$ distribution inside $\check{C}$ and
define $\xi^\perp := \text{Span}(\frac{\partial}{\partial r_C},R_{\alpha_C})$.
Then these are symplectic subbundles of $\gamma^* TM$ which are symplectically orthogonal to each other.
We have a natural symplectic trivialization
$T : \gamma^* \xi^\perp \lra{} \T \times \C$
sending $\frac{\partial}{\partial r}$ to $\frac{\partial}{\partial x}$
and $R$ to $\frac{\partial}{\partial y}$
where $z = x + iy$ is the natural complex coordinate on $\C$.

Let $\widehat{\gamma} := (\widetilde{\gamma},\check{\gamma})$
be a capped loop whose associated loop is $\gamma$ and where
the domain of $\widetilde{\gamma}$
is an oriented surface $\Sigma$.
Let $\iota : \gamma^* TM \lra{} \widetilde{\gamma}^* TM$ be the natural bundle inclusion map covering $\check{\gamma}$.
Let $$\tau : \widetilde{\gamma}^* TM \lra{} \Sigma \times \C^n, \quad
\check{\tau} : \gamma^* \xi \lra{} \T \times \C^{n-1}$$
be symplectic bundle trivializations so that
$\tau \circ
\iota
 = \check{\tau} \oplus T$ after identifying $\partial \Sigma$ with $\T$ via $\check{\gamma}$.
Let $P_{\C^n} : \Sigma \times \C^n \lra{} \C^n$ be the natural projection map.
Let $\phi_t : \check{C} \lra{} \check{C}$ be the time $t$ Hamiltonian flow of $f(r_C)$.
We have that
$$P_{\C^n} \circ \tau \circ D\phi_t \circ (P_{\C^n} \circ \tau|_{\check{\gamma}(0)})^{-1} : \C^n \lra{} \C^n$$
is equal to a block diagonal matrix
$$
\left(
\begin{array}{cc}
A_t & 0 \\
0 & B_t
\end{array}
\right)
$$
with respect to the splitting
$\C^n \cong \C^{n-1} \oplus \C$ for all $t \in [0,1]$.
By Definition \ref{definition index bounded contact cylinder prelim} and \ref{item:homotopyinvariance}
we have
$CZ((A_t)_{t \in \T}) = CZ(\pi \circ \gamma \circ b_\lambda)$.
Also
$B_t =
\left(
\begin{array}{cc}
1 & f''(r)t \\
0 & 1
\end{array}
\right)
$ for all $t \in [0,1]$.
Hence
$$CZ(\gamma) \stackrel{\text{\ref{item:czadditive}}}{=} CZ((A_t)_{t \in \T}) + CZ((B_t)_{t \in \T}) \stackrel{\text{\ref{item:sheartransformation}}}{=} CZ((A_t)) + \eta$$ where $\eta = 0$ or $\pm \frac{1}{2}$ and so
$$
|CZ(\gamma) - CZ(\pi \circ \gamma \circ b_\lambda)| \leq \frac{1}{2}
$$
and this completes our lemma.

\qed

The following lemma constructs for us a smooth family of Hamiltonians (which will compute symplectic cohomology later on)
with the property that all the $1$-periodic orbits in a fixed index range and whose action is not too big are identical.

\begin{lemma} \label{lemma nice cofinal family of Hamiltonians for index bounded contact cylinder}

Let $\check{C}, D$ be as in Proposition \ref{proposition alternative filtrations defining symplectic cohomology}.
Let $m > 0$ be a constant.
Then there is a smooth family of
autonomous $\check{C}$-compatible Hamiltonians
$H = (H_s)_{s \in [0,\infty)}$
on $M$ so that
\begin{enumerate}
\item \label{item non decreasing property of hamiltonian}
$H_s$ is locally constant outside $[1-\epsilon/2,1+\epsilon/8] \times C$
and a non-decreasing function of the radial coordinate of $\check{C}$ inside $\check{C}$
which has positive derivative near $\partial D$,
\item $\frac{dH_s(x)}{ds} \geq 0$ for all $x \in M$ and $H_s(x) \to \infty$ as $s \to \infty$ for all $x \in M - D$,
\item \label{item fixed on D}
$H_s|_{D} = H_0|_{D} < 0$ for all $s \in [0,\infty)$ and
\item \label{item index for large H}
for each $\check{C}$-action interval $(a_-,a_+) \in \Sc(Q_-) \times \Sc(Q_+)$ where $(Q_-,Q_+)$ is wide,
there exists $S \geq 0$
(depending continuously on $(a_-,a_+)$)
so that for each $s \geq S$
and each capped $1$-periodic orbit
$(\widetilde{\gamma},\check{\gamma}) \in \Gamma^{[-m,m]}_{\check{C},a_-,a_+}(H_s)$,
we have
$\textnormal{Image}(\widetilde{\gamma} \circ \check{\gamma}) \subset D$
where $\Gamma^{[-m,m]}_{\check{C},a_-,a_+}(H_s)$
is given in Definition \ref{defn chain complex}.
\end{enumerate}
\end{lemma}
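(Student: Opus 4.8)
The plan is to build the family $(H_s)_{s\in[0,\infty)}$ explicitly as radial functions on $\check{C}$ glued to constants, and then to control its $1$-periodic orbits by combining the index-boundedness hypothesis (via the period spectrum bound of Definition \ref{definition index bounded contact cylinder} and the index comparison of Lemma \ref{lemma index calculation}) with the action formula of Corollary \ref{corollary action of capped loop disjoint from contact cylinder}. Set $\mu := \mu_{m+1/2}$, the bound on the index $[-m-\frac12,m+\frac12]$ period spectrum of $\check{C}$, and let $\ell_{\min}>0$ be the infimum of lengths of closed Reeb orbits of $\alpha_C$. First I would fix a smooth negative function $g$ on $D$, locally constant outside $[1-\epsilon/2,1]\times C$, non-decreasing and depending only on $r_C$ on $D\cap\check C$, with $g'$ increasing monotonically from $0$ (for $r_C\le 1-\epsilon/2$) to a value exceeding $\mu$ by $r_C=1$; this $g$ will be $H_0|_D$ and is the same for every $s$, so property \ref{item fixed on D} holds automatically. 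Then for each $s$ I would extend $g$ to an autonomous $\check C$-compatible $H_s=h_s(r_C)$ with $h_s|_D=g$, $h_s$ non-decreasing, $h_s'>\mu$ on $[1,1+\epsilon/8-\delta_s]$ for some small $\delta_s>0$, $h_s$ equal to the constant $m_{H_s}$ for $r_C\ge 1+\epsilon/8$, with $m_{H_s}\nearrow\infty$, $\tfrac{d}{ds}H_s\ge 0$ pointwise, and the ``drop'' $m_{H_s}-h_s(1)$ over the upper transition region $[1+\epsilon/8-\delta_s,1+\epsilon/8]\times C$ bounded by a universal constant $C_0$. Properties \ref{item non decreasing property of hamiltonian} and the second item are then immediate.

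For property \ref{item index for large H}, I would classify the $1$-periodic orbits of $H_s$. The constant orbits lie in $D$ or in $\{r_C\ge 1+\epsilon/8\}$. The non-constant orbits lie in the two slope regions and, after the reparametrization $b_\lambda$ of Lemma \ref{lemma index calculation}, are in bijection with closed Reeb orbits of $\alpha_C$ null-homologous in $M$ whose length equals $h_s'(r_C)$; by Lemma \ref{lemma index calculation} those of index in $[-m,m]$ come from Reeb orbits of index in $[-m-\frac12,m+\frac12]$, hence of length $<\mu$ by index-boundedness. By construction $h_s'<\mu$ only on $[1-\epsilon/2,1]\times C$ (a subset of $D$, where the orbits have image in $D$ and are thus harmless for property \ref{item index for large H}) and on the upper transition region $[1+\epsilon/8-\delta_s,1+\epsilon/8]\times C$; and on the sub-regions where $h_s'<\ell_{\min}$ there are no $1$-periodic orbits at all. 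So it remains to show that for $s$ large the orbits in the upper transition region do not belong to $\Gamma^{[-m,m]}_{\check C,a_-,a_+}(H_s)$.

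For such an orbit $\gamma$ one has $I_1=\int_0^1 H_s(\overline\gamma(t))\,dt = I_2 = h_s(r_C) \ge m_{H_s}-C_0$, which tends to $\infty$ with $s$. Using Corollary \ref{corollary action of capped loop disjoint from contact cylinder} (the orbit sits in $D\cup([1,1+\epsilon/4]\times C)$ since $1+\epsilon/8<1+\epsilon/4$), I would write $\cA_{H_s,\check C}(\gamma)$ on $Q_\pm$ in terms of $I_0$, $I_1$, $I_2$ and evaluate at the generators $([\omega_{\check C}],1,1)$ of $Q_+$ and at $([\omega_{\check C}],1,1),([\omega_{\check C}],0,1)$ of $Q_-$. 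The membership conditions $a_-\le \cA_{H_s,\check C}(\gamma)|_{Q_-}$ and $a_+\nleq \cA_{H_s,\check C}(\gamma)|_{Q_+}$ (Definition \ref{defn chain complex}) become a bounded system of inequalities on the capping integral $I_0$ whose feasible set is pushed off to $+\infty$ by the term $I_1=h_s(r_C)$, while the $([\omega_{\check C}],0,1)$ constraint (an inequality on $I_0$ not involving $I_1$) caps it from above. Since for a wide pair $(Q_-,Q_+)$ every Hamiltonian defining the groups already satisfies $m_{H_s}>\text{height}(a_-,a_+)$, and $\text{height}(a_-,a_+)$ varies continuously with $(a_-,a_+)$, I would extract $S=S(a_-,a_+)$, continuous in $(a_-,a_+)$, for which this system is infeasible; then for $s\ge S$ every element of $\Gamma^{[-m,m]}_{\check C,a_-,a_+}(H_s)$ has image in $D$.

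The main obstacle is exactly that last bookkeeping: turning the two-sided action constraints into a genuinely empty range for the capping class as $m_{H_s}\to\infty$, i.e.\ ruling out a capping of large $\omega$-area that compensates the large value of $H_s$ while still respecting the $Q_-$-constraints (here one must use that the cone $Q_-$ is only two-dimensional over $\R[\omega_{\check C}]$, so the two evaluation points of $Q_-$ pin $I_0$ from both sides relative to $\omega_{\check C}$ and $\omega$), together with making the resulting $S$ depend continuously — indeed, locally constantly on the natural stratification — on $(a_-,a_+)$, which is what makes property \ref{item index for large H} usable in the proof of Proposition \ref{proposition alternative filtrations defining symplectic cohomology}. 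The remaining ingredients — the radial construction and the index/length dictionary — are routine given Lemma \ref{lemma index calculation}.
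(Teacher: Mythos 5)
Your proposal is correct and follows essentially the same route as the paper: a radial Hamiltonian whose slope on the region just above $\partial D$ exceeds the period bound coming from index-boundedness (so Lemma \ref{lemma index calculation} kills all index-$[-m,m]$ orbits there), followed by the observation that for an orbit surviving in the upper transition region the two evaluation points of a wide pair with equal $\lambda^+$ and differing $\lambda^-$ force $a_+-a_-$ to dominate $(\Delta\lambda^-)\bigl(I_1-\int_{\T}\overline{\gamma}^*\theta\bigr)$, which is impossible once $m_{H_s}$ exceeds $\textnormal{height}(a_-,a_+)$ plus a uniform capping correction. Two small points to tighten: property \ref{item index for large H} quantifies over \emph{all} wide pairs, so the two evaluation points should be the ones supplied by the wideness condition for each $q$ rather than the specific generators $([\omega_{\check{C}}],1,1)$, $([\omega_{\check{C}}],0,1)$; and $I_2=\int_0^1 m_{H_t}\,dt=m_{H_s}$ rather than $h_s(r_C)$, though only $I_1$ enters since the orbit lies in $D\cup([1,1+\epsilon/4]\times C)$.
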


\begin{remark} \label{remark same action in D0}
Note that every
capped $1$-periodic orbit of $H_s$ whose associated $1$-periodic orbit has image in $D$ is also a capped $1$-periodic orbit of $H_0$ with the same $\check{C}$ action.
\end{remark}

\begin{remark} \label{remark hamiltonian existence extension}
We can in fact strengthen Lemma \ref{lemma nice cofinal family of Hamiltonians for index bounded contact cylinder}
slightly.
We can show that there is a constant $\delta>0$ (independent of $s \in [0,\infty]$)
so that
properties
(\ref{item non decreasing property of hamiltonian})-
(\ref{item index for large H})
from 
Lemma \ref{lemma nice cofinal family of Hamiltonians for index bounded contact cylinder}
hold with $H_s$ replaced by
$e^\tau H_s$ for all $-\delta \leq \tau \leq \delta$.
The proof of this stronger lemma is exactly the same as
the proof of Lemma \ref{lemma nice cofinal family of Hamiltonians for index bounded contact cylinder}
below, except that the
action estimates (\ref{equation infimum action})
and
(\ref{equation action lower estimate}) below
have to be scaled appropriately.
This strengthening will be needed in Lemma \ref{lemma linear case} below.
\end{remark}

\begin{proof}[Proof of Lemma \ref{lemma nice cofinal family of Hamiltonians for index bounded contact cylinder}.]
Let $\check{C} = [1-\epsilon,1+\epsilon] \times C$ be our contact cylinder
with cylindrical coordinate $r_C$ and contact form $\alpha_C$.
Since $\check{C}$ is index bounded,
there exists a constant $\Xi > 0$
so that every Reeb orbit of length
greater than $\Xi$ has index in $\Z - [-m-2,m+2]$.

Let $f_s : [1-\epsilon,1+\epsilon] \lra{} \R$, $s \in [0,\infty)$
be a smooth family of functions
satisfying the following properties (see picture below):
\begin{enumerate}[label=(\alph*)]
\item $\frac{d f_s(x)}{d s} \geq 0$ for all $x \in M$ and $f_s(x)$ tends to infinity 
as $s \to \infty$ for each $x > 1$,
\item $f_s(x) = f_0(x)<0$ for all $x \leq 1$
and
there is constant $C_s \geq 0$
so that $f_s(x) = f_0(x) + C_s$
for all $x \in [1+\epsilon/16,\infty)$ and $s \in [0,\infty)$,
\item $f_s|_{(-\infty,1-\epsilon/2] \cup [1+\epsilon/8,\infty)}$ is
locally constant and
\item $f'_s \geq 0$ and $f'_s(x) > \Xi$ for all
$x \in [1,1+\epsilon/16]$ for all $s \geq 0$ (See Figure \ref{fig:graphoffs}).
\end{enumerate}

\begin{center}
\begin{figure}[h]
\begin{tikzpicture}

\draw [<->](-2,1.5) -- (-2,-2) -- (7.3,-2);
\draw (0.6,-1.9) -- (0.6,-2.1);
\node at (0.6,-2.3) {$1$};
\draw (3.3,-1.9) -- (3.3,-2.1);
\node at (3.3,-2.3) {$1+\epsilon/16$};
\draw (6,-1.9) -- (6,-2.1);
\node at (6.5,-2.3) {$1+\epsilon/8$};
\draw (-0.5,-1.9) -- (-0.5,-2.1);
\node at (-0.5,-2.3) {$1-\epsilon/2$};

\draw (-2,-4) -- (-0.5,-4);

\draw (-0.5,-4) .. controls (1.5,-4) and (0,-2.5) .. (1.5,-1) {};

\draw (1.5,-1) -- (3.5,1);

\draw (3.5,1) .. controls (4,1.5) and (5.5,1.5) .. (6,1.5);

\draw (6,1.5) -- (7.5,1.5);

\draw [dashed](3.5,-0.5) .. controls (4,0) and (5.5,0) .. (6,0);

\draw [dashed](6,0) -- (7.5,0);

\node at (6,-0.5) {$f_0(x)$};

\node at (6,2) {$f_s(x)$};

\draw[dashed] (3,-0.9331) -- (3.5155,-0.5114);

\draw [decoration={calligraphic brace,amplitude=7pt},decorate](3.3 - 0.05,-4.5) -- (0.5+0.05,-4.5);

\node at (2,-5) {$f'_s(x)>\Xi$};

\draw [dashed](0.7704,-2.3496) .. controls (1,-1.5) and (2.5377,-1.4025) .. (2.9632,-0.9037);

\draw [decoration={calligraphic brace,amplitude=7pt},decorate](7.3,-4.5) -- (3.3,-4.5);

\node at (5,-5) {$f_s(x)=f_0(x)+C_s$};

\draw [decoration={calligraphic brace,amplitude=7pt},decorate](0.5,-4.5) -- (-2,-4.5);

\node at (-1.3,-5) {$f_s(x)=f_0(x)<0$};

\draw[decoration={calligraphic brace,amplitude=7pt},decorate] (-2,-3.5) -- (-0.5,-3.5);
\draw[decoration={calligraphic brace,amplitude=7pt},decorate] (7.3,-3) -- (6,-3);
\node at (6.7,-3.5) {$f_s(x)$ constant};
\node at (-1.3,-3) {$f_s(x)$ constant};
\end{tikzpicture}
\caption{Graph of $f_s$ for each $s \in [0,\infty)$.} \label{fig:graphoffs}
\end{figure}
\end{center}

Define
$$H_s : M \lra{} \R, \quad H_s(x) := \left\{
\begin{array}{ll}
f_0(1-\epsilon/2) & \text{if} \ x \in D - \check{C} \\
f_s(r_C) & \text{if} \ x \in \check{C} \\
f_s(1+\epsilon/8) & \text{otherwise}
\end{array}
\right.
$$
for each $s \geq 0$.
It is clear that $(H_s)_{s \in [0,\infty)}$ satisfies
properties (\ref{item non decreasing property of hamiltonian})-(\ref{item fixed on D}).
Therefore we only need to show that it satisfies (\ref{item index for large H}).
Let $\pi_D: H^2(M,D;\R) \times \R^2 \lra{} H^2(M,D;\R)$ be the natural projection map.
Let $(a_-,a_+) \in \Sc(Q_-,Q_+)$ be a $\check{C}$-action interval where $(Q_-,Q_+)$ is wide.
Let $h := \text{height}(a_-,a_+)$ as in Equation (\ref{equation height equation}).

Choose a $1$-form
$\theta \in \Omega^1(M)$
so that $d\theta = \omega - \omega_{\check{C}}$ where $\omega_{\check{C}}$ is a $\check{C}$-compatible $2$-form
with scaling constants $0$ and $1$ and which is equal to $\omega$ outside $D \cup ([1,1+\epsilon/2] \times C)$ and so that $\theta = 0$ outside $D \cup ([1,1+\epsilon/2] \times C)$.
Since $C_s$ tends to infinity as $s \to \infty$, we
can choose $S \geq 0$
so that
\begin{equation} \label{equation infimum action}
\min(f_0) + C_S - \sup_{\breve{\gamma}} \int_\T \breve{\gamma}^* \theta > h
\end{equation}
where the infimum is taken over
all $1$-periodic orbits
$\breve{\gamma} : \T \lra{} M$
of $H_0$.
Now if $s \geq S$ and
$\gamma = (\widetilde{\gamma},\check{\gamma}) \in \Gamma^{[-m,m]}_{\check{C},a_-,a_+}(H_s)$
is a capped $1$-periodic orbit,
then by Lemma \ref{lemma index calculation},
we have that the image of the $1$-periodic orbit $\widetilde{\gamma} \circ \check{\gamma}$
does not intersect $[1,1+\epsilon/16] \times C$
since $f'_s(x) > \Xi$
for all $x \in [1,1+\epsilon/16]$.
Suppose, for a contradiction, the image of $\widetilde{\gamma} \circ \check{\gamma}$
does not intersect $D \cup ([1,1+\epsilon/16] \times C)$
then by Equation
(\ref{equation contact cylinder action}),
$$
|\cA_{H_s,\check{C}}(\gamma)(q,\lambda^q_{+,-},\lambda^q_{+,+}) -
\cA_{H_s,\check{C}}(\gamma)(q,\lambda^q_{-,-},\lambda^q_{-,+})| \geq 
$$
\begin{equation} \label{equation action lower estimate}
(\lambda_{+,-}^q - \lambda_{-,-}^q) \left(\min(f_0) + C_s - \int_0^1 \check{\gamma}^* \theta \right)
\end{equation}
for every $(q,\lambda^q_{\pm,-},\lambda^q_{\pm,+}) \in Q_\pm$ satisfying 
$
\lambda^q_{-,-} < \lambda^q_{+,-}$
and
$\lambda^q_{+,+} = \lambda^q_{-,+}$.
But this contradicts
Equation (\ref{equation infimum action}) and the fact that
$\gamma \in \Gamma^{[-m,m]}_{\check{C},a_-,a_+}(H_s)$.
Hence $\widetilde{\gamma} \circ \check{\gamma}$
must have image contained in $D$.
\end{proof}

\begin{proof}[Proof of Proposition \ref{proposition alternative filtrations defining symplectic cohomology}]

In this proof we will use the family of
Hamiltonians from Lemma \ref{lemma nice cofinal family of Hamiltonians for index bounded contact cylinder}
to compute symplectic cohomology and show that our action map
(\ref{transfer isomorphism alternative filtration}) is an isomorphism.
The key idea of the proof is to show that the $1$-periodic orbits
of these Hamiltonians
outside $D$ are not needed to compute
these symplectic cohomology groups
mainly by using property (\ref{item index for large H}) from Lemma \ref{transfer isomorphism alternative filtration}.

Fix $p \in \Z$.
Let $(H_s)_{s \in [0,\infty)}$ be as in Lemma \ref{lemma nice cofinal family of Hamiltonians for index bounded contact cylinder} with
$m= |p| + n + 1$.
%
%
Since the $1$-periodic orbits of $H_s$
form a compact subset of $C^\infty(\T,M)$,
there is a non-decreasing family of positive constants
$(c_s)_{s \in [0,\infty)}$ so that
for each $s \geq 0$ and each
capped $1$-periodic orbit $\gamma$ of $H_s$,
\begin{equation} \label{equation action difference small}
\left|
\cA_{H_s,\check{C}}(\gamma)([\omega_{\check{C}}],1,1) -
\cA_{H_s,\check{C}}(\gamma)([\omega_{\check{C}}],0,1)
\right| <  c_s.
\end{equation}
Let
$a_s : Q_- \lra{} \R$
be the unique linear map
satisfying
$a_s([\omega_{\check{C}}],1,1) = 0$
and $a_s([\omega_{\check{C}}],0,1) = -c_s$.
Then for each $\check{C}$-action interval
$(a_-,a_+) \in \Sc(Q_-) \times \Sc(Q_+)$,
the action morphism
$$HF^p_{\check{C},a_-+a_s,a_+}(H_s)
\lra{} HF^p_{\check{C},a_-|_{Q_+},a_+|_{Q_+}}(H_s)$$
is an isomorphism for each $s \geq 0$
by Equation (\ref{equation action difference small}) combined with Lemma \ref{lemma action morphism for lower semicontinuous isomorphism}.
Also if
\begin{equation} \label{equation slanted action interval}
a_-([\omega_{\check{C}}],0,1) < a_-([\omega_{\check{C}}],1,1) - c_0,
\end{equation}
then there is a function
$v : \Sc(Q_-) \times \Sc(Q_+) \lra{} [0,\infty)$
so that
the action morphism
$$HF^p_{\check{C},a_-,a_+}(H_{\check{s}})
\lra{} HF^p_{\check{C},a_-+a_s,a_+}(H_{\check{s}})$$
is an isomorphism for each
$\check{s} \geq v(a_-,a_+)$
by Lemma \ref{lemma nice cofinal family of Hamiltonians for index bounded contact cylinder} part (\ref{item index for large H}) combined with
Equation (\ref{equation action difference small}) (with $s=0$) and Lemma \ref{lemma action morphism for lower semicontinuous isomorphism}.
Therefore we have a commutative diagram:

\begin{center}
\begin{tikzpicture}

\node at (-4,0.5) {$HF^p_{\check{C},a_-,a_+}(H_s)$};
\node at (-4,2) {$HF^p_{\check{C},a_-,a_+}(H_{\check{s}})$};
\node at (4,0.5) {$HF^p_{\check{C},a_-|_{Q_+},a_+|_{Q_+}}(H_s)$};
\node at (4,2) {$HF^p_{\check{C},a_-|_{Q_+},a_+|_{Q_+}}(H_{\check{s}})$};
\node at (-0.3,0.5) {$HF^p_{\check{C},a_-+a_s,a_+}(H_s)$};
\node at (-0.3,2) {$HF^p_{\check{C},a_-+a_s,a_+}(H_{\check{s}})$};

\draw[->] (-2.5,2.1) -- (-2.1,2.1);
\draw[->] (-2.5,0.6) -- (-2.1,0.6);

\draw[->] (1.4,0.6) -- (2,0.6);
\draw[->] (1.4,2.1) -- (2,2.1);

\draw[->] (-4.1,0.9) -- (-4.1,1.6);
\draw[->] (-0.6,0.9) -- (-0.6,1.6);

\draw[->] (3.6,0.9) -- (3.6,1.6);

\node at (1.7,0.9) {$\cong$};

\node at (-2.3,2.4) {$\cong$};

\end{tikzpicture}
\end{center}
consisting of action maps (horizontal arrows)
and continuation maps (vertical arrows)
for each $(a_-,a_+) \in \Sc(Q_-) \times \Sc(Q_+)$ satisfying (\ref{equation slanted action interval}) and each $s \geq 0$ and $\check{s} \geq s + v(a_-,a_+)$.
Therefore we have a commutative diagram:
\begin{center}
\begin{tikzpicture}
\node at (-0.1,0.5) {$HF^p_{\check{C},a_-,a_+}(H_s)$};
\node at (-0.1,2) {$HF^p_{\check{C},a_-,a_+}(H_{\check{s}})$};
\node at (4,0.5) {$HF^p_{\check{C},a_-|_{Q_+}}(H_s)$};
\node at (3.9,2) {$HF^p_{\check{C},a_-|_{Q_+},a_+|_{Q_+}}(H_{\check{s}})$};
\draw[->] (1.4,0.6) -- (1.9,0.6);
\draw[->] (1.4,2.1) -- (1.8,2.1);
\draw[->] (-0.3,0.8) -- (-0.3,1.6);
\draw[->] (3.6,0.9) -- (3.6,1.6);
\draw [->](2.4,1) -- (0.9,1.7);
\end{tikzpicture}
\end{center}
for each such $a_-,a_+,s, \check{s}$.
This induces an isomorphism of directed systems
$$\left(\varinjlim_s HF^p_{\check{C},a_-,a_+}(H_s)\right)_{(a_-,a_+) \in \Sc(Q_-) \times \Sc(Q_+)} \cong \left(\varinjlim_s  HF^p_{\check{C},a'_-,a'_+}(H_s)\right)_{(a'_-,a'_+) \in \Sc(Q_+)^2}.$$
Therefore by Proposition \ref{proposition continuation map isomorphism},
the action map $\SH^*_{\check{C},Q_-,Q_+}(D \subset M) \lra{} \SH^*_{\check{C},Q_+,Q_+}(D \subset M)$ is an isomorphism in
$\sys{\Lambda_\K^{Q_+,+}}$.
\end{proof}

\subsection{Transfer Isomorphisms between Index bounded Liouville Domains.} \label{subsection Transfer Isomorphisms between Index bounded Liouville Domains}


In this subsection we give
some sufficient conditions for
a transfer map to be an isomorphism.

\begin{defn} \label{definition skeleton of contact cylinder}
Let $\check{C}$ be a contact cylinder
with associated Liouville domain
$D$.
A {\it Liouville form associated to
$\check{C}$}
is a $1$-form $\theta \in \Omega^1(D)$
satisfying $d\theta = \omega|_D$
and $\theta|_{D \cap \check{C}} = r_C \alpha_C$
where $r_C$ and $\alpha_C$
is the radial coordinate and contact form associated to $\check{C}$.
A {\it Liouville vector field associated to $\check{C}$}
is the unique vector field $X_\theta$
on $D$
satisfying $i_{X_\theta} \omega = \theta$ for some Liouville form $\theta$ associated to $\check{C}$.
The {\it skeleton} of a Liouville form  $\theta$ associated to $\check{C}$
is the set of points $x \in D$
where the time $t$ flow of $x$
along $X_\theta$ exists for all time $t \in \R$.
A {\it skeleton of $\check{C}$}
is the skeleton $S$ of $\theta$ for some Liouville for $\theta$ associated to $\check{C}$.
We call $\theta$ a {\it Liouville form associated to $S$}.
\end{defn}

\begin{remark} \label{remark skeleton nonunique}
Note that a skeleton $S$ of a contact cylinder is not unique.
For instance one can obtain other skeletons of $\check{C}$
by pushing forward $S$
via a Hamiltonian diffeomorphism
compactly supported in the interior of $D$.
Also note that $D$ is a disjoint union of its skeleton and a tubular neighborhood $(-\infty,0] \times \partial D$
given by flowing $\partial D$ backwards along $X_\theta$.
\end{remark}

\begin{defn}
	Let $\check{C} = [1-\epsilon,1+\epsilon] \times C$ be a contact cylinder with associated Liouville domain $D$.
	Define $\omega_{\check{C}}$ to be a $\check{C}$-compatible $2$-form
	with scaling constants $0$ and $1$ and which is equal to $\omega$ outside $D \cup ([1,1+\epsilon/2] \times C)$.
	We define $Q_+^{\check{C}} \subset H^2(M,D;\R) \times \R \times \R$ to be the cone spanned by $([\omega_{\check{C}}],1,1)$
	and
	$Q_-^{\check{C}}$ the cone spanned by
	$([\omega_{\check{C}}],1,1)$
	and
	$([\omega_{\check{C}}],0,1)$.
\end{defn}

\begin{prop} \label{proposition transfer isomorphism between index bounded Liouville domains}
Let $\check{C}_i$ be a contact cylinder with associated Liouville domain
$D_i$, Liouville form $\theta_i$ and skeleton $S_i$ of $\theta_i$
for $i=0,1,2,3,4$
so that
\begin{enumerate}
	\item $D_i \subset D_j$ and $\theta_j|_{D_i} - \theta_i$ is exact for $i \geq j$,
	\item $\theta_0|_{D_2} = \theta_2$ and $\theta_1|_{D_4} = \theta_4$,
	\item $S_i$ is contained in the interior of $D_4$ for all $i$,
	\item and $D_0$ and $D_3$ are index bounded (See Figure \ref{fig:liouviledomains}).
\end{enumerate}
Then the transfer map
\begin{equation} \label{equation transfer map of index bounded}
\SH^*_{\check{C}_0,Q_-,Q_+}(D_0 \subset M) \lra{} \SH^*_{\check{C}_0,Q_-,Q_+}(D_3 \subset M)
\end{equation}
is an isomorphism in $\sys{\Lambda^{Q_+}_\K}$ 
where
$Q_\pm := Q_\pm^{\check{C}_0}$.
\end{prop}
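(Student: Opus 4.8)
# Proof Proposal for Proposition \ref{proposition transfer isomorphism between index bounded Liouville domains}

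\textbf{Overall strategy.} The plan is to factor the transfer map \eqref{equation transfer map of index bounded} through the intermediate Liouville domains $D_1, D_2, D_4$ and to identify each factor with an isomorphism, using three main ingredients: (i) invariance of symplectic cohomology under enlarging the ``padding region'' between the skeleton and the contact cylinder (the role of $D_4$ and $D_2$), (ii) the fact that, for an index bounded contact cylinder, the high-index orbits outside the Liouville domain can be ignored (Proposition \ref{proposition alternative filtrations defining symplectic cohomology} and Lemma \ref{lemma nice cofinal family of Hamiltonians for index bounded contact cylinder}), and (iii) a direct comparison of the Floer chain complexes of suitable cofinal families of Hamiltonians supported near the skeleta. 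Since all the skeleta $S_i$ sit inside the interior of $D_4 \subset D_1$, the lower-semi-continuous Hamiltonians $H_{D_i}$ can be dominated by smooth Hamiltonians that all look the same near the skeleta, which is what will force the transfer maps to be isomorphisms.

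\textbf{Step 1: Reduce to comparing $D_0$ with $D_2$ and $D_1$ with $D_4$.} First I would split the transfer map into the composition
\[
\SH^*_{\check{C}_0}(D_0 \subset M) \to \SH^*_{\check{C}_0}(D_2 \subset M) \to \SH^*_{\check{C}_0}(D_1 \subset M) \to \SH^*_{\check{C}_0}(D_4 \subset M) \to \SH^*_{\check{C}_0}(D_3 \subset M)
\]
(suppressing $Q_\pm$ from the notation), using functoriality of transfer maps \ref{item transfer map property} and the inclusions $D_3 \subset D_4 \subset D_1 \subset D_2 \subset D_0$ which follow from hypothesis (1). The two ``outer'' maps $\SH^*_{\check{C}_0}(D_0) \to \SH^*_{\check{C}_0}(D_2)$ and $\SH^*_{\check{C}_1}(D_1) \to \SH^*_{\check{C}_1}(D_4)$ are exactly of the type handled by Proposition \ref{proposition transfer isomorphism between index bounded Liouville domains} in the special case where the inclusion of Liouville forms is a ``triviality'' in the sense that $\theta_0|_{D_2} = \theta_2$ and $\theta_1|_{D_4} = \theta_4$ (hypothesis (2)). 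In this case the region $D_0 - D_2$ (resp.\ $D_1 - D_4$) is a trivial Liouville collar $(-\infty,0] \times \partial D_2$ (resp.\ $(-\infty,0]\times \partial D_4$) by Remark \ref{remark skeleton nonunique}, so one can construct a cofinal family of Hamiltonians for $H_{D_0}$ and $H_{D_2}$ whose $1$-periodic orbits in a bounded index/action range are identical (they all live near the skeleton $S_0 \subset \mathrm{int}(D_4) \subset \mathrm{int}(D_2)$); the index boundedness of $\check{C}_0$ lets one appeal to Lemma \ref{lemma nice cofinal family of Hamiltonians for index bounded contact cylinder} to throw away the orbits in the collar.

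\textbf{Step 2: Handle the core map $\SH^*_{\check{C}_0}(D_2) \to \SH^*_{\check{C}_1}(D_1)$ via a change of contact cylinder.} The middle three domains $D_2, D_1, D_4$ all have the property that their skeleta lie in $\mathrm{int}(D_4)$, and by hypothesis $D_4 \subset D_1 \subset D_2$. Since the cohomological restriction map $H^2(M,D_0;\R) \to H^2(M,D_j;\R)$ is an isomorphism for each $j$ (because $D_j$ deformation retracts onto a skeleton, which has codimension $\geq 1$, so inclusion-induced maps on $H^2$ are controlled — and in any case the contact cylinders $\check C_i$ are arranged so the relevant cones match), I would invoke Proposition \ref{proposition changing contact cylinder} to identify $\SH^*_{\check{C}_0,Q_-,Q_+}(H)$ with $\SH^*_{\check{C}_j,\widetilde\iota(Q_-),\widetilde\iota(Q_+)}(H)$ for an autonomous $H$ dominated by the appropriate index function. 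Then the comparison of $\SH^*_{\check{C}_0}(D_2 \subset M)$, $\SH^*_{\check{C}_1}(D_1 \subset M)$, and $\SH^*_{\check{C}_1}(D_4 \subset M)$ reduces to comparing transfer maps for nested Liouville domains \emph{sharing a common skeleton region}. Here I would use Lemma \ref{lemma nice cofinal family of Hamiltonians for index bounded contact cylinder} applied to $\check{C}_0$ (resp.\ $\check{C}_3$), picking $m = |p| + n + 1$ for the fixed degree $p$: the resulting Hamiltonians $H_s$ are, up to large $s$, supported away from the collar and their low-action, bounded-index capped $1$-periodic orbits all have image inside the common inner domain. Because the Hamiltonians defining $H_{D_2}$, $H_{D_1}$, $H_{D_4}$ can all be dominated by the \emph{same} cofinal family near the skeleton, the transfer maps between the corresponding double systems are isomorphisms by Lemma \ref{lemma isomorphism condition} (identifying cofinal subsystems of identical chain complexes, continuation maps, action maps and products).

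\textbf{Step 3: Assemble.} Combining Steps 1 and 2, each of the four factors in the factorization of \eqref{equation transfer map of index bounded} is an isomorphism in $\sys{\Lambda_\K^{Q_+,+}}$, hence after extending scalars (Lemma \ref{lemma extend to Novikov field action}, Remark \ref{remark continuation maps are module morphisms}) an isomorphism in $\sys{\Lambda_\K^{Q_+}}$; their composition is the transfer map \eqref{equation transfer map of index bounded}, which is therefore an isomorphism. Finally one checks that all the identifications respect the product structures (pair of pants products, using that the relevant Hamiltonians are autonomous and Poisson-commute with the interpolating pieces as in the proof of Theorem \ref{theorem stably displaceable complement}), so the isomorphism is one of algebras.

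\textbf{Expected main obstacle.} The hard part will be Step 2: carefully constructing a \emph{single} cofinal family of smooth Hamiltonians that simultaneously computes the symplectic cohomology of $D_2$, $D_1$ and $D_4$ and whose bounded-index, bounded-action capped $1$-periodic orbits genuinely agree as capped orbits (with the same $\check{C}_0$-action). This requires gluing the local model near the common skeleton region to three different contact cylinders at infinity, controlling the Reeb dynamics via index boundedness of both $\check{C}_0$ and $\check{C}_3$, and verifying that the exactness hypotheses $\theta_j|_{D_i} - \theta_i$ exact (hypothesis (1)) guarantee that the actions, not just the orbits, are identified — the exact $1$-form contributes a bounded correction to the action functional that, for a fixed action interval, does not change which orbits are counted. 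Managing the interplay between the two index-bounded cylinders $\check{C}_0$ and $\check{C}_3$ and the intermediate non-index-bounded ones ($\check{C}_1, \check{C}_2, \check{C}_4$) is where all the genuine work lies; everything else is an assembly of the quoted propositions.
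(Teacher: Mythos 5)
Your proposal does not follow the paper's route, and as written it has two genuine problems. First, the inclusions are reversed: hypothesis (1) gives $D_4 \subset D_3 \subset D_2 \subset D_1 \subset D_0$, so your chain $\SH^*(D_0) \to \SH^*(D_2) \to \SH^*(D_1) \to \SH^*(D_4) \to \SH^*(D_3)$ is not a composition of transfer maps (transfer maps only go from a larger domain to a smaller one). Second, and more fundamentally, the strategy of exhibiting \eqref{equation transfer map of index bounded} as a composition of transfer maps each of which is an isomorphism cannot work with the stated hypotheses: only $\check{C}_0$ and $\check{C}_3$ are index bounded, so there is no control over the $1$-periodic orbits created in the regions $D_1 - D_2$ or $D_3 - D_4$ as the Hamiltonians blow up there, and the individual transfer maps such as $\SH^*(D_1\subset M) \to \SH^*(D_2 \subset M)$ need not be isomorphisms (nor does the proposition claim they are). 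Your Step 2 asserts that $H_{D_2}, H_{D_1}, H_{D_4}$ "can all be dominated by the same cofinal family near the skeleton", but a cofinal family below $H_{D_2}$ must tend to $+\infty$ on $D_1 - D_2$ while one below $H_{D_1}$ must stay nonpositive there, so these families cannot agree where it matters, and Lemma \ref{lemma isomorphism condition} does not apply.

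The missing idea is a sandwich argument via the Liouville flow rather than a factorization. The paper first uses Lemmas \ref{lemma skeleton Hamiltonian isotopy} and \ref{lemma making skeleton larger} (this is where $D_1, D_2, D_4$ and the exactness/skeleton hypotheses enter) to replace $\theta_0, \theta_1$ by cohomologous Liouville forms whose skeleta satisfy $S_3 \subset \widehat{S}_1 \subset \widehat{S}_0 \subset D_4^o$. The backward Liouville flows then carry $D_0$ onto some $D_0' \subset D_3$ and $D_3$ onto some $D_3' \subset D_0'$. Each of these inward flows is realized by a composition of transfer maps along contact cylinders swept out by $\partial D_0$ (resp. $\partial D_3$), and Lemma \ref{lemma linear case} — which uses exactly the index boundedness of $\check{C}_0$ and $\check{C}_3$ together with Lemmas \ref{lemma nice cofinal family of Hamiltonians for index bounded contact cylinder} and \ref{lemma continuation map isomorphism action gap} — shows that $\SH^*(D_0 \subset M) \to \SH^*(D_0' \subset M)$ and $\SH^*(D_3 \subset M) \to \SH^*(D_3' \subset M)$ are isomorphisms. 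Since $D_3' \subset D_0' \subset D_3 \subset D_0$ and compositions of transfer maps are transfer maps, the usual two-out-of-three argument forces the middle map $\SH^*(D_0 \subset M) \to \SH^*(D_3 \subset M)$ to be an isomorphism. Without this shrinking-and-sandwiching step your outline cannot be completed.
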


\begin{center}
\begin{figure}[h]
	\begin{tikzpicture}
	
	\draw  (-0.5,-1.5) node (v1) {} ellipse (5.5 and 2);
	\draw  (v1) node (v2) {} ellipse (4.5 and 1.5);
	\draw  (v2) node (v3) {} ellipse (3.5 and 1);
	\draw  (v3) node (v4) {} ellipse (2.5 and 0.5);
	\draw  (v4) ellipse (6.5 and 2.5);
	\node at (6.3,-1.4) {$D_0$};
	\node at (5.3,-1.4) {$D_1$};
	\node at (4.3,-1.4) {$D_2$};
	\node at (3.3,-1.4) {$D_3$};
	\node at (2.3,-1.4) {$D_4$};
	\node at (-0.6,-3.2) {$\theta_0|_{D_2}=\theta_2$};
	\node at (-0.6,-1.8) {$\theta_1|_{D_4}=\theta_4$};
	\draw[red] (0.8,-1.6) -- (-0.8,-1.2);
	\draw[red] (-0.4,-1.2) -- (-1.4,-1.4);
	\node at (1,-1.4) {\color{red}$S_1=S_4$};
	\draw[blue] (-1.4,-1.2) -- (-1.2,-1.6);
	\node at (-2.1,-1.5) {\color{blue} $S_0=S_2$};
	\end{tikzpicture}
	\caption{Schematic picture of the Liouville domains $D_0,\cdots,D_4$.} \label{fig:liouviledomains}
	\end{figure}
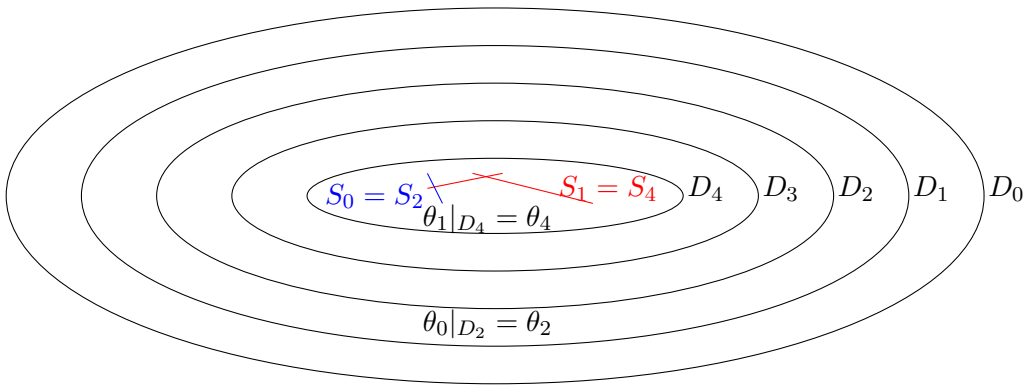
\end{center}

Before we prove this proposition,
we need a few preliminary lemmas.

\begin{lemma} \label{lemma linear case}
Let $\widehat{C}$ be a contact cylinder
with associated Liouville domain $\widehat{D}$
and let $\check{C} := [1-\epsilon,1+\epsilon] \times C$
be an index bounded
contact cylinder
with associated Liouville domain $D$ 
 so that $D \cup \check{C} \subset \widehat{D}$.
Then the transfer map
\begin{equation} \label{equation transfer map of cylindrical index bounded}
\SH^*_{\widehat{C},Q_-^{\widehat{C}},Q_+^{\widehat{C}}}(D \cup ([1,t''] \times C) \subset M) \lra{} \SH^*_{\widehat{C},Q_-^{\widehat{C}},Q_+^{\widehat{C}}}(D \cup ([1,t'] \times C) \subset M)
\end{equation}
is an isomorphism for all $1 \leq t' \leq t'' < 1+\epsilon$.
\end{lemma}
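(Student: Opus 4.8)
\textbf{Proof proposal for Lemma \ref{lemma linear case}.}

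The plan is to reduce the claim to an application of Proposition \ref{proposition continuation map isomorphism} together with a version of the ``no $1$-periodic orbits in the collar'' argument already used in the proof of Proposition \ref{proposition alternative filtrations defining symplectic cohomology}. The key observation is that the two closed subsets $K_{t''} := D \cup ([1,t''] \times C)$ and $K_{t'} := D \cup ([1,t'] \times C)$ differ only by a thin cylindrical collar $[t',t''] \times C$ on which $\omega = d(r_C \alpha_C)$, so that the symplectic cohomologies of $K_{t''}$ and $K_{t'}$ can both be computed by a single cofinal family of Hamiltonians whose $1$-periodic orbits in the relevant index and action range all live inside $D$. Since $\check C$ is index bounded, Lemma \ref{lemma index calculation} lets us control the indices of $1$-periodic orbits of radial Hamiltonians $f(r_C)$ on the collar in terms of the index $[-m,m]$ period spectrum of $\check C$.

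The main steps, in order, would be as follows. First, fix $p \in \Z$ and set $m := |p| + n + 1$. Using index boundedness of $\check C$, choose $\Xi > 0$ so that every Reeb orbit of $\alpha_C$ of length $> \Xi$ has index outside $[-m-2, m+2]$. Second, construct a smooth family of $\widehat C$-compatible autonomous Hamiltonians $H = (H_s)_{s \in [0,\infty)}$, as in Lemma \ref{lemma nice cofinal family of Hamiltonians for index bounded contact cylinder} (and using Remark \ref{remark hamiltonian existence extension} for the scaling flexibility), that are radial functions $f_s(r_C)$ on $\check C$ with $f_s' > \Xi$ on $[t', t'']$, locally constant (and tending to $+\infty$ with $s$) outside $K_{t'}$, and fixed and negative on $D \cup ([1,t'] \times C)$; the subfamily of smooth Hamiltonians bounded above by $H_{K_{t''}}$ (resp.\ $H_{K_{t'}}$) obtained by translating the $H_s$ is then cofinal, so $SH^*$ of both closed sets is computed by $\varinjlim_s HF^*_{\widehat C, a_-, a_+}(H_s)$ but with different ``outside'' behaviour. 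Third, by Lemma \ref{lemma index calculation} applied on the collar $[t',t''] \times C$, any capped $1$-periodic orbit of $H_s$ of index in $[-m,m]$ and with image meeting $[t',t''] \times C$ would project to a Reeb orbit of length $> \Xi$ and hence have index outside $[-m-2,m+2]$, a contradiction; so all such orbits have image contained in $D \cup ([1,t'] \times C)$. Fourth, combine this with the action estimate of Lemma \ref{lemma nice cofinal family of Hamiltonians for index bounded contact cylinder}(\ref{item index for large H}), so that for $s$ large (depending continuously on the action interval) the Floer complexes computing $HF^p_{\widehat C, a_-, a_+}(H_s)$ for the two families of ``outside'' Hamiltonians literally coincide in degrees near $p$. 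Fifth, conclude that the transfer morphism induces an isomorphism of the associated directed systems of double systems, hence by Lemma \ref{lemma isomorphism condition} and Proposition \ref{proposition continuation map isomorphism} the transfer map (\ref{equation transfer map of cylindrical index bounded}) is an isomorphism.

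I expect the main obstacle to be step two, namely arranging the family $(H_s)$ so that it is simultaneously (i) a valid cofinal family for \emph{both} $SH^*(K_{t''} \subset M)$ and $SH^*(K_{t'} \subset M)$ after suitable translation, and (ii) steep enough on the collar $[t',t'']\times C$ to force the relevant orbits into $D \cup ([1,t']\times C)$, while still being $\widehat C$-compatible and respecting the wide action interval structure $(Q^{\widehat C}_-, Q^{\widehat C}_+)$. This is essentially bookkeeping of the type already carried out in Lemma \ref{lemma nice cofinal family of Hamiltonians for index bounded contact cylinder}, but one must be careful that the steepness threshold $\Xi$ can be chosen uniformly over $s$ and that the action estimates (analogous to (\ref{equation infimum action}) and (\ref{equation action lower estimate})) still go through with $D$ replaced by the slightly larger set $D \cup ([1,t']\times C)$; the flexibility in Remark \ref{remark hamiltonian existence extension} is what makes this possible. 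A secondary point requiring care is verifying that the continuation and action maps are compatible with the identification of Floer complexes, but this is routine given Corollary \ref{corollary action of capped loop disjoint from contact cylinder} and the functoriality already established.
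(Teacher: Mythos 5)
Your overall strategy (localize the generators into a region where the Hamiltonians agree, using index boundedness on the steep collar and the height filtration where the Hamiltonian is large) is in the right spirit, but step two, which you correctly identify as the crux, contains a genuine gap rather than mere bookkeeping. A single family $(H_s)$ cannot, even after translating by constants, yield cofinal families for both $H_{K_{t''}}$ and $H_{K_{t'}}$: cofinality for $K_{t'}=D\cup([1,t']\times C)$ forces $H_s\to+\infty$ pointwise on $(t',t'']\times C$, while cofinality for $K_{t''}$ forces $H_s\leq 0$ there. (Your step two is in fact internally inconsistent as written: you ask for $f_s'>\Xi$ on $[t',t'']$ and simultaneously for $H_s$ to be locally constant outside $K_{t'}$.) So you must work with two distinct families, which can agree only on a \emph{shrinking} neighborhood of $K_{t'}$, and then step three fails as stated: the $K_{t'}$-family necessarily has an ``up-transition'' at radius just above $t'$ where $f_s'$ passes through every value between its slope on $[1,t']$ and $\infty$, producing $1$-periodic orbits corresponding to \emph{short} Reeb orbits. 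These have bounded Hamiltonian value (hence bounded action, so they survive the height filtration) and can have index in $[-m,m]$ (index boundedness only excludes \emph{long} orbits), and they are not generators of the $K_{t''}$-complex unless you force both families to share an identical up-transition inside the common region and keep the $K_{t''}$-family at slope $>\Xi$ on all of $[t',t'']$ — which in turn forces its value on $[1,t']$ below $-\Xi(t''-t')$ and needs Proposition \ref{proposition continuation map isomorphism} to justify. None of this is in your outline. A second, related gap: identical generator sets do not by themselves make the \emph{transfer map} an isomorphism (the differentials count trajectories that see the region where the Hamiltonians differ); you need Lemma \ref{lemma continuation map isomorphism action gap}, whose hypotheses require action windows whose endpoints avoid the action spectrum, hence Lemma \ref{lemma action spectrum}. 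You dismiss this as routine, but it is where the real work happens.

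For comparison, the paper takes a different route that sidesteps the orbit-matching problem entirely: it first reduces to $t',t''$ in a small interval around a fixed $t$ (chaining transfer maps over $[t',t'']$ by compactness), and then compares the Hamiltonians for $D_{t'}$ and $D_{t''}$ by pushing forward a \emph{single} family $(H_s)$ (built from Lemma \ref{lemma nice cofinal family of Hamiltonians for index bounded contact cylinder} relative to the rescaled cylinder $\check{C}_t$) under the radial Liouville-type flow $\phi_\tau$ of $r\frac{\partial}{\partial r_C}$. The flow carries all orbits, including those in the transition regions, bijectively onto one another, rescaling actions by $e^\tau$; this is exactly why Remark \ref{remark hamiltonian existence extension} and the measure-zero action spectrum argument enter, and why the statement must first be localized to small $\tau$. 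If you want to pursue your direct global comparison instead, you would need to supply the matched-transition construction sketched above together with the Lemma \ref{lemma continuation map isomorphism action gap} argument; as written, the proposal does not close.
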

\proof
Define $D_t := D \cup ([1,t] \times C)$
for each
$t \in [1,1+\epsilon)$.
It is sufficient for us to show
that for each $t \in [1,1+\epsilon)$,
there exists a constant
$0 < \delta_t < 1$
so that the transfer map
\begin{equation} \label{equation transfer map of close index bounded}
SH^*_{\widehat{C},Q_-,Q_+}(D_{t''} \subset M) \lra{} SH^*_{\widehat{C},Q_-,Q_+}(D_{t'} \subset M)
\end{equation}
is an isomorphism for each $t',t'' \in [1,1+\epsilon)$ satisfying
$t-\delta_t \leq t' \leq t'' \leq t+\delta_t$.

Fix $t \in [1,1+\epsilon)$.
Let $\check{C}_t \subset \check{C}$
be a contact cylinder with associated Liouville domain $D_t$ and whose cylindrical coordinate is $r_C/t$
where $r_C$ is the cylindrical coordinate associated to $\check{C}$.
Let $(H_s)_{s \in [0,\infty)}$ be as in Lemma \ref{lemma nice cofinal family of Hamiltonians for index bounded contact cylinder} with
$m= |p| + n + 1$ for some $p \in \Z$ and with $\check{C}$ replaced by $\check{C}_t$.
Let $A \subset \R$
be the set of action values
$\cA_{\widehat{C},H_0}(\gamma)([\omega_{\widehat{C}}],1,1) \in \R$
and
$\cA_{\widehat{C},H_0}(\gamma)([\omega_{\widehat{C}}],0,1) \in \Z$
where $\gamma$ runs over all
capped $1$-periodic orbits of $H_0$
whose associated $1$-periodic orbit is contained in $D_0$.
Since $A$
is closed and of measure $0$
by Lemma \ref{lemma action spectrum},
there exists $a \in \R$
and $\delta' > 0$
so that
$(a + k - \delta', a+k +  \delta') \cap A = \emptyset$
for all $k \in \Z$.
Hence there exists a cofinal family
$(a^j_-)_{j \in \N}$
in $(\Sc(Q_-^{\widehat{C}}),\geq)$, a cofinal family
$(a^j_+)_{j \in \N}$
in $(\Sc(Q_+^{{\widehat{C}}}),\leq)$
and neighborhoods
$N_\pm \subset \Sc(Q_\pm^{{\widehat{C}}})$
of $0$
so that
for each $j \in \N$ and
each capped $1$-periodic orbit $\gamma$ of $H_0$ whose associated $1$-periodic orbit is contained in $D_0$,
we have that $a^j_\pm(\gamma) - \cA_{\widehat{C},H_0}(\gamma)|_{Q^{\widehat{C}}_\pm} \notin N_\pm$.
Therefore by Lemma \ref{lemma nice cofinal family of Hamiltonians for index bounded contact cylinder}
combined with Remarks
\ref{remark same action in D0} and
\ref{remark hamiltonian existence extension} there is an increasing sequence of constants $(s_j)_{j \in \N}$
and a constant $\delta_t \in (0,1)$ so that
\begin{equation} \label{equation same action spectra}
\Gamma^{[-m,m]}_{\widehat{C},a^j_-,a^j_+}(e^\tau H_s) =
 \Gamma^{[-m,m]}_{\widehat{C},a^j_- + n_-,a^j_+ + n_+}(e^\tau H_s),
\end{equation}
(See Definition \ref{defn chain complex})
for all $j \in \N, \ s > s_j, \ n_\pm \in N_\pm
$ and $-\delta_t \leq \tau \leq \delta_t$
after possibly shrinking $N_\pm$.

Let $\phi_t : M \lra{} M$
be the time $t$ flow of a vector field on $M$
equal to $r\frac{\partial}{\partial r}$
inside $\check{C}$ 
for each $t \in \R$.
By Equation (\ref{equation same action spectra}) and Lemma
\ref{lemma continuation map isomorphism action gap} combined with the fact that
$H_s|_D = H_0|_D$
for each $s \in [0,\infty)$
we have that
the natural continuation map
$$HF^p_{\widehat{C},a^j_-,a^j_+}((\phi_{t''})_* H_s) \lra{}
HF^p_{\widehat{C},a^j_-,a^j_+}((\phi_{t'})_* H_s)$$
is an isomorphism for each $t',t'' \in [1,1+\epsilon)$ satisfying
$-\delta_t \leq t' < t'' \leq \delta_t$, $j \in \N$ and $s> s_j$.
This implies that the continuation map
$$HF^p_{\widehat{C},a^j_-,a^j_+}(H^{D_{t''}}) \lra{}
HF^p_{\widehat{C},a^j_-,a^j_+}(H^{D_{t'}})$$
is an isomorphism for each $t',t'' \in [1,1+\epsilon)$ satisfying
$t-\delta_t \leq t' < t'' \leq t+ \delta_t$ and $j \in \N$
where
$H^{D_\tau} \in \ccH^{\T,\ls}(\check{C},\leq 0)$
(Definition \ref{definition symplectic cohomology})
is equal to $(\phi_\tau)_*(H_0)$
inside $D_\tau$ and $\infty$ otherwise for each $\tau \in [-\delta_t,\delta_t]$.
Hence by Lemma
\ref{lemma isomorphism condition} and Proposition \ref{proposition continuation map isomorphism},
the map
(\ref{equation transfer map of close index bounded})
is an isomorphism.
\qed

\smallskip

We wish to reduce the proof of Proposition \ref{proposition transfer isomorphism between index bounded Liouville domains}
to Lemma \ref{lemma linear case} above.
In order to do this, we need to
`rearrange' our contact cylinders slightly.
This is the purpose of the following two lemmas.

\begin{lemma} \label{lemma skeleton Hamiltonian isotopy}
Let $(\check{C}_i)_{i=0,1}$ be
contact cylinders with associated Liouville domains
$(D_i)_{i=0,1}$
and skeletons $(S_i)_{i=0,1}$.
Let $(\theta_i)_{i=0,1}$ be Liouville forms associated to $(S_i)_{i=0,1}$.
Suppose that $D_1 \subset D_0$,
$S_0$ is contained in the interior of $D_1$, $S_1 \subset S_0$ and that $\theta_1 - \theta_0|_{D_1}$ is exact.
Suppose also that the time $t$ flow of $X_{\theta_0}$ sends $S_1$ to $S_1$ for all $t \in \R$.
Then for each neighborhood
$N$ of $S_1$,
there is a Hamiltonian isotopy
$\phi_t : M \lra{} M$, $t \in [0,1]$
compactly supported in
the interior of $D_1$
satisfying $\phi_0 = \id$,
$\phi_t(S_1) = S_1$
for all $t \in [0,1]$
and
$\phi_1(S_0) \subset N$.
\end{lemma}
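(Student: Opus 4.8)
The plan is to construct the required isotopy from the negative Liouville flow of $\theta_1$, which contracts $D_1$ onto its skeleton $S_1$, and then to upgrade this (merely conformally symplectic) contraction to a compactly supported Hamiltonian isotopy having the same effect on $S_0$ and fixing $S_1$ setwise.

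First I would work with $\psi_t := \phi^{X_{\theta_1}}_{-t}$, the time $-t$ flow of the Liouville vector field $X_{\theta_1}$ on $D_1$. Since $X_{\theta_1}$ is outward pointing along $\partial D_1$ this flow is defined on all of $D_1$ for $t \ge 0$; the sets $K_t := \psi_t(D_1)$ are compact and nested decreasing with $\psi_s(K_t) = K_{s+t}$, one has $\bigcap_{t \ge 0} K_t = S_1$ (the skeleton, by Definition \ref{definition skeleton of contact cylinder}), and $\psi_t(S_1) = S_1$ for all $t$ because $S_1$ is $X_{\theta_1}$-invariant. Given the neighbourhood $N$ of $S_1$, compactness of the $K_t$ produces $T > 0$ with $K_T \subseteq N$, so $\psi_T(S_0) \subseteq N$ since $S_0 \subseteq D_1$. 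I would then localise: because $S_0 \subseteq \Int D_1$ and $\Int D_1 = \bigcup_{\varepsilon > 0} \Int K_\varepsilon$, there is $\varepsilon_0 > 0$ with $S_0 \subseteq \Int L$ for $L := K_{\varepsilon_0}$, and $L$ is a compact subset of $\Int D_1$ with $\psi_s(L) \subseteq L$ for $s \ge 0$; hence the time $-t$ flow of any vector field supported in $\Int D_1$ and agreeing with $X_{\theta_1}$ on $L$ already carries $S_0$ into $N$ and fixes $S_1$.

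The remaining and hardest point is that $\psi_t$ is only conformally symplectic ($\psi_t^* \omega = e^{-t}\omega$) and no cutoff of $X_{\theta_1}$ is Hamiltonian, so one cannot simply truncate the Liouville flow; one needs a genuine Hamiltonian isotopy with the same effect on the subset $S_0$. Here I would exploit that in our situation the skeletons are isotropic, so that $S_1$ and $S_0$ are exact isotropic subsets for $\theta_1$ — for $S_0$ this is where the hypothesis $\theta_1 - \theta_0|_{D_1} = dg$ enters, since $\theta_1|_{S_0} = \theta_0|_{S_0} + dg|_{S_0}$ is then exact — while the hypothesis that the $X_{\theta_0}$-flow preserves $S_1$ forces $X_g = X_{\theta_1} - X_{\theta_0}$ to be tangent to $S_1$, i.e. $g$ to be locally constant on $S_1$. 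With these compatibilities, working in a Weinstein-type neighbourhood of $S_1$ and noting that $t \mapsto \psi_t(S_0)$, $t \in [0,T]$, moves the exact isotropic set $S_0$ through exact isotropic subsets, a Moser-type argument replaces $(\psi_t|_L)_{t \in [0,T]}$, rel $S_1$ and rel a neighbourhood of $\partial D_1$, by a Hamiltonian isotopy $\phi_t$, $t \in [0,1]$, supported in $\Int D_1$, with $\phi_0 = \id$, $\phi_t(S_1) = S_1$ and $\phi_1(S_0) \subseteq N$. I expect this last step to be the main obstacle, precisely because the Liouville contraction is intrinsically non-symplectic and the only reason a Hamiltonian substitute exists is the exact-isotropic nature of the skeletons together with the two compatibility hypotheses, so it requires a careful neighbourhood/Moser argument rather than a direct cutoff of the Liouville flow.
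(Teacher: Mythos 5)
The first half of your argument (the negative Liouville flow $\psi_t$ of $\theta_1$ contracts $D_1$ onto $S_1$, preserves $S_1$, and carries $S_0$ into $N$ after time $T$) is fine, but the second half has a genuine gap. You correctly identify that $\psi_t$ is only conformally symplectic and cannot be cut off to a Hamiltonian isotopy, but the repair you propose does not go through: the skeleton of a Liouville domain in the sense of Definition \ref{definition skeleton of contact cylinder} is just the compact invariant set $\bigcap_{t>0}\phi_t(D)$ and is in general \emph{not} a submanifold (no Weinstein structure is assumed anywhere in the lemma), so there is no Weinstein-type neighbourhood of $S_1$, the phrase ``$S_0$ is an exact isotropic subset'' has no precise meaning, statements like ``$X_g$ is tangent to $S_1$, i.e.\ $g$ is locally constant on $S_1$'' do not follow, and the isotropic-isotopy-extension/Moser machinery you invoke simply is not available for such sets. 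Moreover you need the ambient Hamiltonian isotopy to agree with $\psi_t$ on all of $S_0$ while fixing $S_1$ setwise, which is far beyond a ``careful neighbourhood argument'' even in the Weinstein case. So the step you yourself flag as the main obstacle is in fact not closable by the route you sketch.

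The paper avoids this issue entirely by using \emph{both} Liouville flows with complementary time parameters: setting $\phi^i_t$ to be the time $t$ flow of $-X_{\theta_i}$, it defines $\iota_t := \phi^1_t \circ \phi^0_{2T-t} : D_0 \to D_1$ for $t \in [0,T]$. Since $(\phi^i_t)^*\omega = e^{-t}\omega$, the conformal factors of the two factors multiply to the constant $e^{-2T}$, so $\frac{d}{dt}\iota_t$ is a symplectic vector field along $\iota_t(D_0)$; it is Hamiltonian because $i_{\frac{d}{dt}\iota_t}\omega = -\theta_1 + e^{-t}(\phi^1_t)_*\theta_0$ and $\theta_1 - \theta_0|_{D_1}$ is exact (together with $(\phi^1_t)^*\theta_1 = e^{-t}\theta_1$). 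The hypothesis that the $X_{\theta_0}$-flow preserves $S_1$ is exactly what makes $\iota_t(S_1) = \phi^1_t(S_1) = S_1$, and $\iota_T(S_0) = \phi^1_T(\phi^0_T(S_0)) \subset \phi^1_T(D_1) \subset N$. One then cuts off the generating time-dependent Hamiltonian to be compactly supported in the interior of $D_1$. The essential idea you are missing is this cancellation of conformal factors by running the two Liouville flows against each other; your construction never uses the flow of $\theta_0$ to move points at all.
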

\proof
Let $\phi^i_t : D_i \lra{} D_i$
be the time $t$ flow of
the Liouville vector field
$-X_{\theta_i}$ for each $i =0,1$.
Since $S_0$ is contained in the interior of $D_1$,
there exists $T>0$ so that
$\phi^0_t(D_0) \subset D_1$
for all $t \geq T$.
We can also assume
$\phi^1_t(D_1) \subset N$ for all $t \geq T$.
Define the embedding
$$\iota_t : D_0 \lra{} D_1, \quad \iota_t := \phi^1_t \circ \phi^0_{2T-t}$$
for each $t \in [0,T]$.
Then
$\frac{d}{dt}\iota_t$ is a Hamiltonian vector field on $\iota_t(D_0)$
for all $t \in [0,T]$,
$\iota_0(S_0) = S_0$,
$\iota_t(S_1) = S_1$ for all $t \in [0,T]$
and $\iota_T(S_0) \subset N$.
Extending this time dependent
Hamiltonian vector field to a Hamiltonian vector field on $M$
compactly supported in $D_1$
completes our lemma.
\qed

\begin{lemma} \label{lemma making skeleton larger}
Let $\check{C}_i$ be a contact cylinder with associated Liouville domain $D_i$
and associated
Liouville form $\theta_i$ for $i=0,1,2$.
Suppose
\begin{itemize}
	\item $D_2 \subset D_1 \subset D_0$,
	\item the skeleton of $D_i$ is contained in the interior of
	$D_2$ for each $i$,
	\item $\theta_0|_{D_2} = \theta_2$
	and $\theta_0|_{D_1} - \theta_1$ is exact.
\end{itemize}
Then there is a $1$-form $\widehat{\theta}_0$ associated to
$\check{C}_0$ so that
$\widehat{\theta}_0 - \theta_0$ is exact,
$\widehat{\theta}_0|_{D_0 - D_2} = \theta_0|_{D_0 - D_2}$,
the skeleton $\widehat{S}_0$
of $\widehat{\theta}_0$
contains the skeleton of $\theta_1$
and $\widehat{S}_0$ is contained in the interior of $D_2$.
Also we can assume that the time $t$ flow along $X_{\widehat{\theta}_0}$
of $S_1$ is $S_1$ for all $t \in \R$.
\end{lemma}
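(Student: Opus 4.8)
\textbf{Proof plan for Lemma \ref{lemma making skeleton larger}.}

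The plan is to build $\widehat{\theta}_0$ by modifying $\theta_0$ only in the region $D_1 - D_2$, interpolating between $\theta_1$ near $\partial D_2$ and $\theta_0$ near $\partial D_1$, so that the resulting Liouville vector field funnels everything inward toward the skeleton $S_1$ of $\theta_1$. First I would record the structural facts we are handed: inside $D_2$ we already have $\theta_0 = \theta_2$, and since $\theta_0|_{D_1} - \theta_1$ is exact we may write $\theta_0|_{D_1} = \theta_1 + dg$ for some $g \in C^\infty(D_1)$; by subtracting a constant we normalize $g$ as convenient. The hypothesis that the skeleton of each $D_i$ lies in the interior of $D_2$ means that flowing $\partial D_1$ backward along $-X_{\theta_1}$ (equivalently along $-X_{\theta_0}$ in a collar) the hypersurface enters $D_2$ in finite time; this gives a product collar $(-T,0]\times \partial D_1 \subset D_1 - \mathrm{int}(D_2)$ on which $\theta_0$ (and $\theta_1$) are cylindrical. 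I would then pick a cutoff function $\chi : D_1 \to [0,1]$ that is $\equiv 1$ on a neighborhood of $D_2$, $\equiv 0$ near $\partial D_1$, depends only on the collar coordinate in the collar region, and set
$$\widehat{\theta}_0 := \theta_0 - d(\chi g) = (1-\chi)\,\theta_0|_{D_1} + \chi\,\theta_1 - g\,d\chi$$
on $D_1$, extended by $\theta_0$ on $D_0 - D_1$. This is a smooth $1$-form on $D_0$ (the two definitions agree near $\partial D_1$ since $\chi$ and its derivative vanish there), it satisfies $d\widehat{\theta}_0 = d\theta_0 = \omega|_{D_0}$, it equals $\theta_0$ on $D_0 - D_1 \supset D_0 - D_2$, and $\widehat{\theta}_0 - \theta_0 = -d(\chi g)$ is exact. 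Near $\partial D_2$, where $\chi \equiv 1$, we get $\widehat{\theta}_0 = \theta_1$; and on $D_2$ itself we should check $\widehat{\theta}_0 = \theta_1$ (not $\theta_2$) — here one uses that $\theta_1$ is itself a valid Liouville form on all of $D_1 \supset D_2$, so I would actually take $\chi \equiv 1$ on all of $D_2$ and define $\widehat{\theta}_0 = \theta_1$ there directly.

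Next I would identify the skeleton $\widehat{S}_0$. By construction $X_{\widehat{\theta}_0} = X_{\theta_1}$ on a neighborhood of $D_2$ and $X_{\widehat{\theta}_0} = X_{\theta_0}$ near $\partial D_1$ and outward; in the interpolating collar region $D_1 - \mathrm{int}(D_2)$ the vector field $X_{\widehat{\theta}_0}$ is a $\chi$-interpolation of $X_{\theta_0}$ and $X_{\theta_1}$, both of which point \emph{outward} transverse to the collar hypersurfaces (this is where I use the product-collar normal form, so that $X_{\theta_i} = r\partial_r$ there). Hence $-X_{\widehat{\theta}_0}$ is inward-pointing throughout the collar, so every point of $D_0$ that is carried off to infinity under $-X_{\theta_0}$-flow (i.e. every point outside the skeleton of $\theta_0$) eventually enters $D_2$ under the $-X_{\widehat{\theta}_0}$-flow, after which it is governed by $-X_{\theta_1}$. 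Therefore $\widehat{S}_0$, the set of points whose forward $X_{\widehat{\theta}_0}$-flow exists for all time — equivalently whose backward flow stays bounded — is exactly the set of points that stay forever inside $D_2$, which is precisely the skeleton of $\theta_1$ (since inside $D_2$ the dynamics is that of $X_{\theta_1}$, whose bounded-backward-orbit set is $S_1 = \mathrm{skel}(\theta_1)$). In particular $\widehat{S}_0 = S_1$, so certainly $\widehat{S}_0 \supset \mathrm{skel}(\theta_1)$ and $\widehat{S}_0 \subset \mathrm{int}(D_2)$ (using that $S_1$ is contained in the interior of $D_2$ by hypothesis). The final clause, that the time-$t$ flow of $X_{\widehat{\theta}_0}$ preserves $S_1$, is then immediate: $X_{\widehat{\theta}_0} = X_{\theta_1}$ on a neighborhood of $S_1$, and the skeleton $S_1$ of a Liouville form is always invariant under its own Liouville flow.

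The main obstacle I anticipate is the transversality/normal-form bookkeeping in the collar $D_1 - \mathrm{int}(D_2)$: one must arrange that the two Liouville vector fields $X_{\theta_0}$ and $X_{\theta_1}$, and hence all their convex combinations, are simultaneously of the form $r\partial_r$ on a common collar, so that no new ``spurious'' recurrent set is created by the interpolation. This is where the hypothesis $\theta_0|_{D_1} - \theta_1 = dg$ is really used — it says the two Liouville structures on $D_1$ have the same symplectic form and differ by an exact $1$-form, so after a preliminary Moser-type isotopy (cf.\ \cite[Exercise 3.36]{McduffSalamon:sympbook}) one may take $g$ to vanish near $\partial D_1$ and take both collars to coincide. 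Once that normal form is in place, the dynamical description of $\widehat{S}_0$ is the routine ``Liouville flow escapes outward through a collar'' argument, and the remaining assertions are formal.
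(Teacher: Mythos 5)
There is a genuine gap: your interpolation is supported in the wrong region, and as a result the constructed form violates one of the stated conclusions. You cut off $g$ by a function $\chi$ that is $1$ near $D_2$ and $0$ near $\partial D_1$, so $\widehat{\theta}_0 = \theta_0 - d(\chi g)$ differs from $\theta_0$ throughout the annulus $D_1 - D_2$. Your justification that this is harmless rests on the claimed inclusion $D_0 - D_1 \supset D_0 - D_2$, which is backwards: since $D_2 \subset D_1$ we have $D_0 - D_1 \subset D_0 - D_2$, and $D_1 - D_2$ is contained in $D_0 - D_2$. Hence the required identity $\widehat{\theta}_0|_{D_0 - D_2} = \theta_0|_{D_0 - D_2}$ fails for your form (unless $g$ happens to be constant on $D_1 - D_2$, which the hypotheses do not give). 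This is not a cosmetic issue, since that identity is part of the lemma's conclusion and is what lets the lemma be applied repeatedly with different pairs of domains in Proposition \ref{proposition transfer isomorphism between index bounded Liouville domains}. A secondary consequence is that all of the collar/normal-form bookkeeping you flag as the ``main obstacle'' is work created by putting the cutoff in the annulus; it is not needed at all if the cutoff is placed correctly.

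The paper's proof avoids this entirely by localizing the modification \emph{inside} the interior of $D_2$, and in fact only near $S_1$: choose a neighborhood $N$ of $S_1$ with $\overline{N} \subset \mathrm{int}(D_2)$, choose $f : D_0 \to \R$ with compact support in $\mathrm{int}(D_2)$ such that $df|_N = (\theta_0 - \theta_1)|_N$ (take a primitive of $\theta_0|_{D_1} - \theta_1$ and multiply by a bump function that is $1$ on $N$ and supported in $\mathrm{int}(D_2)$), and set $\widehat{\theta}_0 := \theta_0 - df$. Then $\widehat{\theta}_0 - \theta_0$ is exact, $\widehat{\theta}_0 = \theta_0$ outside $\mathrm{int}(D_2)$ (so in particular on $D_0 - D_2$ and near $\check{C}_0$), and $\widehat{\theta}_0 = \theta_1$ on $N$, so $X_{\widehat{\theta}_0} = X_{\theta_1}$ near $S_1$; this already gives that $S_1$ is invariant under the $X_{\widehat{\theta}_0}$-flow and contained in $\widehat{S}_0$. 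The lesson is that one does not need $\widehat{\theta}_0 = \theta_1$ on a whole collar or on all of $D_2$ — only on a neighborhood of the skeleton $S_1$ — and arranging that requires no interpolation outside $D_2$. If you rework your argument with the cutoff supported in $\mathrm{int}(D_2)$ and equal to $1$ only near $S_1$, the remaining verifications (exactness, the identity on $D_0 - D_2$, flow-invariance of $S_1$, and $\widehat{S}_0 \subset \mathrm{int}(D_2)$) go through along the lines you describe.
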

\proof
Let $S_i$ be the skeleton of $\theta_i$
for each $i$.
Then $S_0 = S_2$.
Choose a neighborhood
$N$ of $S_1$ whose closure is contained in the interior of $D_2$.
Let $f : D_0 \lra{} \R$ be a smooth
function with compact support
in the interior of $D_2$
so that
$\theta_0|_N - \theta_1|_N = df|_N$.
Define
$\widehat{\theta}_0 :=
\theta_0 - df$.
This has the properties we want.
\qed

\begin{proof}[Proof of Proposition \ref{proposition transfer isomorphism between index bounded Liouville domains}]

Let $(D^o_i)_{i=0,1,2,3,4}$
be the interiors of $(D_i)_{i=0,1,2,3,4}$
respectively.
By applying Lemma \ref{lemma making skeleton larger}
twice we can find a Liouville form
$\widehat{\theta}_i$
associated to $\check{C}_i$
with associated skeleton
$\widehat{S}_i$
so that
$\widehat{\theta}_i - \theta_i$
is exact
for $i=0,1$
and so that
$S_3 \subset \widehat{S}_1 \subset \widehat{S}_0$
and $\widehat{S}_0 \subset D_2^o$
and $\widehat{S}_1 \subset D_4^o$.
Also we can assume that the time $t$ flow
along $X_{\theta_1}$ of $S_3$ is contained in $S_3$
and the time $t$ flow along $X_{\widehat{\theta}_0}$
of $\widehat{S}_1$
is contained in $\widehat{S}_1$.

By Lemma \ref{lemma skeleton Hamiltonian isotopy},
we can push forward
$\widehat{\theta}_0$ by an appropriate
Hamiltonian isotopy $\chi_t : M \lra{} M$, $t \in [0,1]$ compactly supported
in the interior of $D_1$
satisfying $\chi_t(\widehat{S}_1) = \widehat{S}_1$ for all $t \in [0,1]$
so that
$\chi_1(\widehat{S}_0) \subset D_4^o$.
Hence after replacing $\widehat{\theta}_0$ by its pushforward by $\chi_1$, we can still assume that $S_3 \subset \widehat{S}_1 \subset \widehat{S}_0$.

Let
$\phi_t : D_0 \lra{} D_0$
be the time $t$ flow
of $-X_{\widehat{\theta}_0}$
and $\psi_t : D_3 \lra{} D_3$
the time $t$ flow of
$-X_{\theta_3}$.
Since $S_3 \subset \widehat{S}_0 \subset D^o_4$,
there are constants
$T_0, T_3 > 0$
so that
$D'_0 := \phi_{T_0}(D_0) \subset D_3$
and $D'_3 := \psi_{T_3}(D_3) \subset D'_0$.
Since
$\cup_{t \in [0,T]} \phi_t(\partial D_0)$ is the image of a contact cylinder in $M$ for all $T > 0$,
we can apply Lemma \ref{lemma linear case}
to show that the transfer map
$$\SH^*_{\check{C}_0,Q_-^{\check{C}_0},Q_+^{\check{C}_0}}(D_0 \subset M) \lra{} \SH^*_{\check{C}_0,Q_-^{\check{C}_0},Q_+^{\check{C}_0}}(D'_0 \subset M)$$
is an isomorphism.
Similarly
$$\SH^*_{\check{C}_0,Q_-^{\check{C}_0},Q_+^{\check{C}_0}}(D_3 \subset M) \lra{} \SH^*_{\check{C}_0,Q_-^{\check{C}_0},Q_+^{\check{C}_0}}(D'_3 \subset M)$$
is an isomorphism.
Since $D_3' \subset D_0' \subset D_3 \subset D_0$ and since the composition of two transfer maps is a transfer map,
this implies that the transfer map
(\ref{equation transfer map of index bounded}) is an isomorphism.

\end{proof}

\subsection{A Chain Complex For Symplectic Cohomology} \label{subsection chain complex for sh}

In this section we construct a double system of chain complexes from Liouville domains associated index bounded contact cylinders.
In the next section
(see the proof of Theorem \ref{theorem changing novikov ring}),
we'll show that these double systems of chain complexes compute symplectic cohomology in some cases.
Throughout this subsection we fix
an {\it index bounded} (Definition \ref{definition index bounded contact cylinder}) contact cylinder
$\check{C} = [1-\epsilon,1+\epsilon] \times C$ with associated Liouville domain $D$.

\begin{lemma} \label{lemma chain complex}

Let $(Q^m_-,Q^m_+)_{m \in I}$ be a finite collection of
$\check{C}$-interval domain pairs which are wide (Definition \ref{defn chain complex}). For each $m \in I$ let
$(a^{j,m}_-)_{j \in \N}$,
$(a^{j,m}_+)_{j \in \N}$ be a cofinal family
of $(\Sc(Q^m_-), \geq)$
and $(\Sc(Q^m_+), \leq)$ respectively so that
$a^{j+1,m}_- \leq a^{j,m}_-$ and $a^{j,m}_+ \leq a^{j+1,m}_+$ for all $j \in \N$.
Then there is an element
$H_D \in \ccH^{\T,\ls}(\check{C},D,\leq 0)$
(Definition \ref{definition symplectic cohomology}),
a smooth family of autonomous Hamiltonians
$(H_{s,t})_{(s,t) \in [1,\infty) \times \T}$, a constant $s_p \geq 1$ and a subset
$N_p \subset D \cup ([1,1+\epsilon/8] \times C)$ which is open in $M$ for each $p \in \N$
so that
\begin{enumerate}
	\item \label{item cofinal property}
	$\{H_{s,\bullet} \ : \ s \geq s_p \}$
	is a cofinal subset of
	$\ccH^\reg(<_{\check{C}} H_D,a^{p,m}_-,a^{p,m}_+,[-p,p])$(Definition \ref{definition lower semi-continuous hamiltonian}) 
	for each $m \in I$
	where $H_{s,\bullet} := (H_{s,t})_{t \in \T}$ and $H_{s,\bullet} <_{\check{C}} H_{\check{s},\bullet}$ for each $s < \check{s}$,
	\item all $1$-periodic orbits of $H_{s,\bullet}$
	of index in $[-p,p]$ have image contained in $N_p$ for each $s \geq s_p$ and
	\item \label{item equal up to a shift in D}
	$H_{s,\bullet}|_{N_p} + 1/s = H_{\check{s},\bullet}|_{N_p} + 1/\check{s}$
	for each $s,\check{s} \geq s_p$ and each $p \in \N$.
\end{enumerate}
\end{lemma}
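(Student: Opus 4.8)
The plan is to build $H_D$ and the family $(H_{s,t})$ essentially out of the family of Hamiltonians produced by Lemma \ref{lemma nice cofinal family of Hamiltonians for index bounded contact cylinder}, then perturb generically to achieve regularity while keeping control of the action and of where the low-index orbits live. First I would set $H_D := H_{D}$ as in Equation (\ref{equation index function Hamiltonian}) restricted to $D$, i.e.\ the lower semi-continuous Hamiltonian equal to $0$ on $D$ and $\infty$ outside; since $\check{C}$ is index bounded, Lemma \ref{lemma nice cofinal family of Hamiltonians for index bounded contact cylinder} (applied with $m = m_p := p$, and using Remark \ref{remark hamiltonian existence extension} to get a $\delta$-window of rescalings) gives, for each fixed $p$, a one-parameter family $(G^p_s)_{s\in[0,\infty)}$ of autonomous $\check{C}$-compatible Hamiltonians which are non-decreasing in $s$, tend to $\infty$ outside $D$, are fixed on $D$ up to the additive constant, and whose capped $1$-periodic orbits of index in $[-p,p]$ and action below the relevant bound all have image inside $D$ once $s$ is large. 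The technical content of the lemma is to splice these $p$-dependent families into a \emph{single} smooth family $(H_{s,t})_{(s,t)\in[1,\infty)\times\T}$ and to arrange that it is eventually cofinal simultaneously for all the (finitely many) interval domain pairs $(Q^m_-,Q^m_+)_{m\in I}$.

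The key steps, in order, would be: (1) For each $p$, use Lemma \ref{lemma nice cofinal family of Hamiltonians for index bounded contact cylinder} part (\ref{item index for large H}) together with the finiteness of $I$ and the fact that the cofinal families $(a^{j,m}_\pm)_j$ are increasing, to extract a threshold $S_p$ beyond which every capped orbit in $\bigcup_{m\in I}\Gamma^{[-p,p]}_{\check{C},a^{p,m}_-,a^{p,m}_+}(G^p_s)$ has image in $D$; by compactness of the set of $1$-periodic orbits of $G^p_{S_p}$ in $C^\infty(\T,M)$, fatten $D$ slightly to an open set $N_p\subset D\cup([1,1+\epsilon/8]\times C)$ that still captures all these orbits (this is where property (2) and the shape of $N_p$ come from). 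Shrink further so $N_0\supset N_1\supset\cdots$ if convenient, though this is not required. (2) Interpolate between consecutive families: on a parameter interval $s\in[s_p,s_{p+1}]$ run $G^p$, and on the next use a smooth monotone homotopy to $G^{p+1}$; since each $G^p$ is non-decreasing in $s$ and each $G^{p+1}_{s}$ dominates $G^p_{S}$ for $s$ large, one can choose the gluing parameters $1\le s_1<s_2<\cdots$ so that the resulting $(H_{s,\bullet})$ is strictly increasing in the $<_{\check{C}}$ order, is $C^\infty$-convergent on $D$ (so that property (\ref{item equal up to a shift in D}) holds with the $1/s$ normalization, using that each $G^p$ is constant-in-$s$ on $D$ up to the additive constant which we arrange to be $-1/s$ on the nose by reparameterizing $s$), and is cofinal in each $\ccH^\reg(<_{\check{C}}H_D,a^{p,m}_-,a^{p,m}_+,[-p,p])$. (3) Finally, perturb $(H_{s,t})$ generically \emph{away from $N_p$}, using Lemma \ref{lemma partial order lemma} and the density statement for $\ccH^\reg(\check{C},a_-,a_+,P)$, so that for $s\ge s_p$ we land in $\ccH^\reg(<_{\check{C}}H_D,a^{p,m}_-,a^{p,m}_+,[-p,p])$ for all $m$ simultaneously; since the low-index orbits and their actions all sit inside $N_p$, the perturbation outside $N_p$ does not disturb properties (2) and (\ref{item equal up to a shift in D}).

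I expect the main obstacle to be step (2): producing a \emph{single} family $(H_{s,t})$ that is monotone in $<_{\check{C}}$ and simultaneously cofinal for \emph{every} $p$ and every $m\in I$, while keeping the normalization $H_{s,\bullet}|_{N_p}+1/s$ independent of $s$. The tension is that making $H_{s,\bullet}$ large enough outside $D$ to dominate all earlier members (needed for cofinality and monotonicity) must not force changes near $N_p$; this is exactly why Lemma \ref{lemma nice cofinal family of Hamiltonians for index bounded contact cylinder} was stated with the property that $H_s|_D$ is $s$-independent and the added constant is supported outside $D\cup([1,1+\epsilon/16]\times C)$, and why the orbits of bounded index are forced into $D$ by the index-boundedness (via Lemma \ref{lemma index calculation}) rather than by action alone. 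Carefully chaining the thresholds $S_p$ and the $\delta$-windows from Remark \ref{remark hamiltonian existence extension}, together with Lemma \ref{lemma leq check C is a directed system} to fill in intermediate $\check{C}$-compatible Hamiltonians, is the bookkeeping heart of the argument; the rest is a routine generic-perturbation argument using Proposition \ref{proposition regular subset for surface} and Lemma \ref{lemma partial order lemma}.
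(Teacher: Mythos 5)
Your strategy --- recycling the one-parameter families produced by Lemma \ref{lemma nice cofinal family of Hamiltonians for index bounded contact cylinder} for each $p$ and splicing them together --- is genuinely different from the paper's, which builds a single explicit family $K_s$ from scratch out of profile functions $f_s,g,\widetilde g,h$ that works for \emph{every} $p$ at once, so that the only $p$-dependent step is waiting until $s\ge s_p$. Your route might be salvageable, but as written it has a gap that would make step (3) fail outright. Membership in $\ccH^\reg(<_{\check C}H_D,a^{p,m}_-,a^{p,m}_+,[-p,p])$ requires every capped orbit in the relevant action window to be non-degenerate, and by your own property (2) those orbits lie \emph{inside} $N_p$ --- where the Hamiltonians coming out of Lemma \ref{lemma nice cofinal family of Hamiltonians for index bounded contact cylinder} are locally constant on $D-\check C$ and radial on the collar, hence highly degenerate. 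A perturbation supported outside $N_p$ cannot make those orbits non-degenerate. The perturbation must also be made inside $N_p$, and the only way to reconcile that with property (\ref{item equal up to a shift in D}) is to make the interior perturbation \emph{independent of $s$} once $s\ge s_p$. The paper achieves this by slicing $D$ into annuli $B_i=(a_i,a_{i+1})\times C$ (plus $B_0$), choosing the $a_i$ so that no $1$-periodic orbits sit on the separating hypersurfaces $\{r_C=a_i\}$ (using that $g'(a_i)$ avoids the Reeb length spectrum), and then, via Lemma \ref{lemma generic Hamiltonian constant elsewhere}, adding a small perturbation $W_k$ supported in $B_k$ once and for all at parameter time $s\approx k$: the $W_k$ with $k<s_p$ are thereafter frozen on $N_p$, while those with $k\ge s_p$ are supported outside $N_p$. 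This layered, $s$-independent interior perturbation is the key idea missing from your proposal.

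A second, smaller gap: the $-1/s$ normalization cannot be arranged ``by reparameterizing $s$''. Lemma \ref{lemma nice cofinal family of Hamiltonians for index bounded contact cylinder} gives $H_s|_D=H_0|_D$ literally constant in $s$, and no reparameterization of the parameter turns a constant function of $s$ into a non-constant one; without a strict increase on $D$ you also lose the strict relation $H_{s,\bullet}<_{\check C}H_{\check s,\bullet}$ demanded in property (\ref{item cofinal property}). One must modify the Hamiltonians themselves, subtracting $1/s$ inside $D$, and this forces a matching correction on $[1+\epsilon/8,1+\epsilon/2]\times C$ both to preserve $\check C$-compatibility and to give the Hamiltonian the strictly negative slope required by the condition $\lambda_{\check H_t}<\lambda_{H_{D,t}}=0$ in the definition of $\ccH^\reg(<_{\check C}H_D,\dots)$ (note the families of Lemma \ref{lemma nice cofinal family of Hamiltonians for index bounded contact cylinder} have slope $0$ there, so they do not even lie in the directed set). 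This is exactly the role of the term $\tfrac1s h(r_C)$ in the paper's construction, which your proposal does not account for.
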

\proof
Let
$g : (-\infty,1] \lra{} (-\infty,-1]$
be a continuous function so that
\begin{itemize}
\item $g|_{(-\infty,1)}$ is smooth and $g'(x) \geq 0$ for $x < 1$,
\item $g|_{(-\infty,1-\epsilon/16)} = -2$
\item $g(x) = -1-\sqrt{1-x}$ for $x \geq 1-\epsilon/32$.
\end{itemize}
Define
$$\widetilde{g} : [1+\epsilon/16,\infty) \lra{} (1,\infty), \quad \widetilde{g}(x) := 1 - g(2+\epsilon/16 - x).$$

Let $(a_s)_{s \in [1,\infty)}$ and
$(b_s)_{s \in [1,\infty)}$
be smooth families of constants so that
$\frac{d}{ds}(a_s) > 0$,
$\frac{d}{ds}(b_s) < 0$,
$a_s \in (1-\epsilon/32,1)$,
$b_s \in (1+\epsilon/16,1+3\epsilon/32)$
for all $s \geq 1$,
$a_s \to 1$, $b_s \to 1+\epsilon/16$ and
$g'(a_i)$
is not equal to the length of any Reeb orbit of $\alpha_C$ for each $i \in \N_{\geq 1}$.
Such constants exist since the set of lengths of Reeb orbits has measure $0$ in $\R$ by \cite[Proposition 3.2]{popov1993length}.
%
Let $f_s : \R \lra{} \R$, $s \in [1,\infty)$
be a smooth family of smooth functions satisfying:
\begin{itemize}
\item $f_s' \geq 0$, $\frac{d}{ds}f_s(x) > 0$,
\item $f_s(x) = g(x)-\frac{1}{s}$ for $x \leq a_s$,
$f_s(x) = \widetilde{g}(x) + s$
for $x \geq b_s$ and
\item $f_s|_{(1,\infty)}$ pointwise tends to infinity and the function
 $f'_s|_{[a_s,b_s]}$ uniformly tends to infinity as $s$ tends to infinity (See Figure \ref{fig:graphoffs2}).
\end{itemize}

Let $h : [1-\epsilon,1+\epsilon] \lra{} \R$ satisfy
\begin{itemize}
	\item $h' \leq 0$,
	\item $h|_{[1-\epsilon,1+\frac{3\epsilon}{32}]} = 0$,
	\item $h(x) = -\frac{1}{4}(1+\frac{x}{1+\epsilon})$ for all $x$ inside $[1+\epsilon/8,1+\epsilon/2]$,
	\item $h|_{[1+3\epsilon/4,1+\epsilon]} = -1$ (See Figure \ref{fig:graphofh}).
\end{itemize}

\begin{center}
\begin{figure}[h]
\begin{tikzpicture}

\draw [<->](-3,-1) -- (-3,-5) -- (4,-5);

\node at (0.5,-4.5) {$1$};

\node at (0,-4.5) {$a_s$};

\node at (-2,-4.5) {$1-\frac{\epsilon}{16}$};

\node at (2.5,-5.5) {$b_s$};

\node at (3.5,-5.5) {$1+\frac{\epsilon}{8}$};

\draw [dashed](0.5,-5.5) .. controls (0.5,-6) and (-0.5,-6.5) .. (-1,-6.5);

\draw [dashed](-3,-6.5) -- (-1,-6.5);

\node at (1.5,-5.5) {$1+\frac{\epsilon}{16}$};

\draw [dashed](1.5,-4.5) .. controls (1.5,-4) and (3,-3) .. (3.5,-3);

\draw [dashed](3.5,-3) -- (4.5,-3);

\draw (-3,-7) -- (-1,-7);

\draw (-1,-7) .. controls (-0.5,-7) and (0.4,-7) .. (0.5,-6.5);

\draw (4.5,-1) -- (3.5,-1);

\draw (3.5,-1) .. controls (3,-1) and (1.6,-1.4) .. (1.5,-2);

\draw (0.5,-6.5) -- (1.5,-2);

\draw [<->](-3.5,-6.5) -- (-3.5,-7);

\draw [<->](5,-1) -- (5,-3);

\node at (5.5,-2) {$s$};

\node at (-3.9,-6.7) {$\frac{1}{s}$};

\node at (1.7217,-3.135) {$f_s(x)$};

\node at (-2.3704,-6.1581) {$g(x)$};

\node at (4.3484,-4.83) {$x$};

\node at (4.3327,-3.5488) {$\widetilde{g}(x)$};

\draw [decoration={calligraphic brace,amplitude=7pt},decorate] (5.0359,-8.7939) -- (2.5,-8.7939);

\node at (3.3171,-9.3564) {$f_s(x) = \widetilde{g}(x) + s$};

\draw [decoration={calligraphic brace,amplitude=7pt},decorate](0,-8.8018) -- (-3.636,-8.8018);

\node at (-1.9798,-9.3301) {$f_s(x) = g(x)-\frac{1}{s}$};

\draw [decoration={calligraphic brace,amplitude=7pt},decorate]  (2.5,-7.7104) -- (0,-7.7104);

\node at (1.7702,-8.1791) {$f_s'(x) \to \infty$ uniformly as $s \to \infty$};

\draw (-2,-4.9) -- (-2,-5.1);

\draw (0,-4.9) -- (0,-5.1);

\draw (0.5,-4.9) -- (0.5,-5.1);

\draw (1.5,-4.9) -- (1.5,-5.1);

\draw (2.5,-4.9) -- (2.5,-5.1);

\draw (3.5,-4.9) -- (3.5,-5.1);

\draw [dashed](-3,-5) -- (-3,-8);

\end{tikzpicture}
\caption{Graph of $f_s$ for each $s \in [1,\infty)$.} \label{fig:graphoffs2}
\end{figure}
\end{center}

\begin{center}
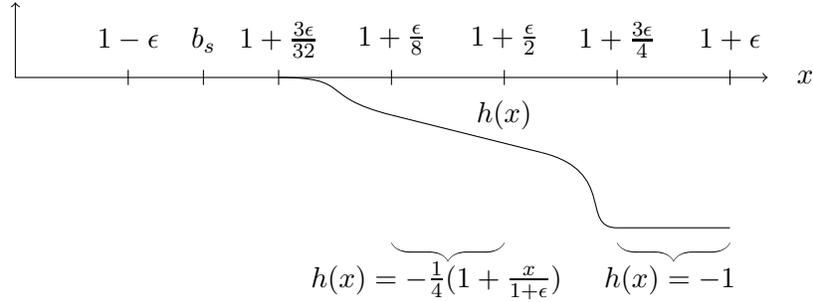
\begin{figure}[h]
\begin{tikzpicture}

\draw [<->](-2.5,-2.5) -- (-2.5,-3.5) -- (7.5,-3.5);

\node at (8,-3.5) {$x$};

\node at (-1,-3) {$1-\epsilon$};

\node at (1,-3) {$1+\frac{3\epsilon}{32}$};

\node at (2.5,-3) {$1+\frac{\epsilon}{8}$};

\node at (4,-3) {$1+\frac{\epsilon}{2}$};

\node at (5.5,-3) {$1+\frac{3\epsilon}{4}$};

\node at (7,-3) {$1+\epsilon$};

\draw (2.5,-4) -- (4.5,-4.5);

\draw (5.5,-5.5) -- (7,-5.5);

\draw (1,-3.5) .. controls (2,-3.5) and (1.5,-3.75) .. (2.5,-4);

\draw (4.5,-4.5) .. controls (5.5,-4.75) and (5,-5.5) .. (5.5,-5.5);

\node at (4,-4) {$h(x)$};

\draw[decoration={calligraphic brace,amplitude=7pt},decorate] (7,-5.7) -- (5.5,-5.7);

\node at (6.2,-6.2) {$h(x)=-1$};

\draw[decoration={calligraphic brace,amplitude=7pt},decorate] (4,-5.7) -- (2.5,-5.7);

\node at (3.1,-6.2) {$h(x) = -\frac{1}{4}(1+\frac{x}{1+\epsilon})$};

\node at (0.,-3) {$b_s$};

\draw (-1,-3.4) -- (-1,-3.6);

\draw (0,-3.4) -- (0,-3.6);

\draw (1,-3.4) -- (1,-3.6);

\draw (2.5,-3.4) -- (2.5,-3.6);

\draw (4,-3.4) -- (4,-3.6);

\draw (5.5,-3.4) -- (5.5,-3.6);

\draw (7,-3.4) -- (7,-3.6);

\end{tikzpicture}
\caption{Graph of $h$.} \label{fig:graphofh}
\end{figure}
\end{center}

Define
$$K_s : M \lra{} \R, \quad K_s := \left\{
\begin{array}{ll}
-2-1/s & \text{inside} \ D - \check{C} \\
f_s(r_C) + \frac{1}{s} h(r_C) & \text{inside} \ \check{C} \\
3+s - \frac{1}{s} & \text{otherwise}
\end{array}
\right\}, \quad s \in [1,\infty).$$
Then $K_s <_{\check{C}} K_{\check{s}}$ for all $s < \check{s}$.
Let $\rho : \R \lra{} [0,1]$ be a smooth function equal to $0$ inside $(-\infty,0]$
and $1$ inside $[1,\infty)$.
Define $B_i := (a_i,a_{i+1}) \times C$ for each integer $i \geq 1$
and $B_0 := D - ([a_1,1] \times C)$.

Since $\check{C}$ is index bounded,
let $\Xi_p$ be larger than the length of the longest Reeb orbit of $\alpha_C$ of index in $[-p,p]$
for each $p \in \N$ so that
$(\Xi_p)_{p \in \N}$ is increasing.
Let $h_p := \max_{m \in I} \textnormal{height}(a^{p,m}_-,a^{p,m}_+)$
where
$\textnormal{height}$ is given in Definition \ref{defn chain complex}.
For each $p \in \N$ choose
$\check{s}_p \geq 1$
so that
$f'_s|_{[a_s,b_s]} > \Xi_{p+n+1}$
for each $s \geq \check{s}_p$
and so that
$(\check{s}_p)_{p \in \N}$ is increasing.
Define
$$s_p := h_p + 2+ 
(1+\epsilon) \Xi_p
+ \check{s}_p.$$
Then by Corollary
\ref{corollary action of capped loop disjoint from contact cylinder}
and Lemma \ref{lemma index calculation}
we have that
for each $p \in \Z$
the $1$-periodic orbit
associated to each
element of
$\Gamma^{[-p-n-1,p+n+1]}_{\check{C},a^{p,m}_-,a^{p,m}_+}(H_s)$
(Definition \ref{defn chain complex})
has image contained in
$N_p := D - ([a_{s_p},1] \times \check{C})$ for each $m \in I$ and $s \geq s_p$.
Also $H_i$ has no $1$-periodic
orbits in $\{r_C = a_i\}$ for each $i \in \N$ since $g'(a_i)$ is not the length of any Reeb orbit of $\alpha_C$ for each $i$.
Therefore
by repeatedly applying Lemma \ref{lemma generic Hamiltonian constant elsewhere},
we can find smooth Hamiltonians
$W_k = (W_{k,t})_{t \in \T}$,
$k \in \N_{\geq 0}$
which are
$C^\infty$ small (in particular, $C^\infty$ tending to $0$
as $i$ tends to infinity) and non-negative and
where $W_i$ has support inside $B_i$ for each $i$
so that
$K_i + \sum_{k=0}^{i-1} W_k \in \cap_{j = 1}^q \ccH^\reg(<_{\check{C}},a^{j,m}_-,a^{j,m}_+,[-j,j])$ for each $q, i \in \N$ satisfying $i \geq s_q$.
Define
$H_{s,\bullet} :=
K_s + \sum_{k=0}^{\lfloor s \rfloor-1} \rho(s - k) W_k$ for each $s \geq 1$
where $\lfloor s \rfloor$ is the largest integer $\leq s$.
Then
$H_{s,\bullet} <_{\check{C}} H_{\check{s},\bullet}$
for all $s < \check{s}$.
Since $W_k$ is $C^\infty$ small for each $k$,
we can assume that
the associated $1$-periodic orbit of every element of $\Gamma^{[-p,p]}_{\check{C},a^{p,m}_-,a^{p,m}_+}(H_{s,\bullet})$
has image in
$N_p$ for each $m \in I$ (since the same is true for $H_s$).
Similarly, we can assume that
$H_{s,\bullet}|_D < 0$ for all $s \geq 1$ since $W_k$ is small for each $k \geq 0$.
%
All such orbits are non-degenerate
by construction
and $H_{s,\bullet}|_{N_p} + \frac{1}{s} = H_{\check{s},\bullet}|_{N_p} + \frac{1}{\check{s}}$
for each $s,\check{s} \geq s_p$.
Hence property (\ref{item cofinal property})-(\ref{item equal up to a shift in D}) holds, after possibly making $s_p$ larger so that $\frac{1}{s_p}$ is sufficiently small.
This completes the lemma.
%
%
\qed

\begin{defn} \label{definition chain complex for symplectic cohomology}
A {\it double system of chain complexes}
is a double system (Definition \ref{definition double system}) $W : I \times J \lra{} \mod{R}$ of $\Z$-graded $R$ modules where each module
$W(i,j)$ is a chain complex over $R$, where the differential has degree $1$ and each morphism
$W((i,j) \lra{} (i',j'))$ is a chain map.
We define $\varinjlim_i \varprojlim_j W(i,j)$
in the same way as Definitions \ref{definition direct limit}
and \ref{definition inverse limit}, except that it is now a chain complex with a differential.
Let $(Q^m_-,Q^m_+)_{m \in I}$, $(a^{j,m}_\pm)_{j \in \N}$,
$H_D$, $(H_{s,t})_{(s,t) \in [1,\infty) \times \T}$,
$(H_{s,\bullet})_{s \in [1,\infty)}$, $(s_p)_{p \in \N}$, $(N_p)_{p \in \N}$ be as in Lemma
\ref{lemma chain complex}.
Choose
$J \in \cap_{i \in \N} \ccJ^\reg(H_{i,\bullet},\check{C})$.
By Lemma \ref{lemma continuation map filtration isomorphism}
there exists
\begin{equation} \label{equation continuation map data}
H^{i,-+}
\in \cap_{m \in I} \ccH^{\R \times \T}(\check{C},a^{j,m}_-,a^{j',m}_+,H_{i,\bullet},H_{i+1,\bullet}), \quad
J^{i,-+} \in \ccJ^{\R \times \T,\reg}(H^{-+},(J,J),\check{C})
\end{equation}
so that
the chain level continuation map:
\begin{equation} \label{equation continuation for symplectic homology isomorphism}
\Phi^q_{i,m,j,j'} := \widetilde{\Phi}^q_{\check{H}^{i,-+},\check{J}^{i,-+}} : CF^q_{\check{C},a^{j,m}_-,a^{j',m}_+}(H_{i,\bullet}) \lra{} CF^q_{\check{C},a^{j,m}_-,a^{j',m}_+}(H_{i+1,\bullet})
\end{equation}
is an isomorphism for each $i,q,j,j' \in \N$ satisfying $j,j' \leq q$,
$i \geq s_p$, $-p \leq q \leq p$, $p \in \N$ and $m \in I$.
Finally define
$$\Phi^q_{i \to i',m,j,j'} :
CF^q_{\check{C},a^{j,m}_-,a^{j',m}_+}(H_{i,\bullet}) \lra{} CF^q_{\check{C},a^{j,m}_-,a^{j',m}_+}(H_{i',\bullet}), $$
$$\Phi^q_{i \to i',m,j,j'} := \Phi^q_{i'-1,m,j,j'} \circ \Phi^q_{i'-2,m,j,j'} \circ \cdots \circ \Phi^q_{i+1,m,j,j'} \circ  \Phi^q_{i,m,j,j'}$$
for all $i,q,j,j',i' \in \N$ satisfying
 $j,j' \leq q$, $i' \geq i$,
$i > s_p$, $-p \leq q \leq p$, $p \in \N$ and $m \in I$ where such a map is the identity map for $i = i'$.
These maps give us a directed system
$(CF^q_{\check{C},a^{j,m}_-,a^{j',m}_+}(H_{i,\bullet}))_{i \geq s_p}$
for each $q,j,j' \in \N$ satisfying $j,j' \leq q$,
$-p \leq q \leq p$, $p \in \N$ and $m \in I$.
Define
$$W^q_{j,j',m} := \varinjlim_{i \geq s_p} CF^q_{\check{C},a^{j,m}_-,a^{j',m}_+}(H_{i,\bullet})$$
for each $q,j,j',m$ as above.
A {\it compatible collection of double systems of chain complexes for $\SH^*_{\check{C},Q^m_-,Q^m_+}(D \subset M)$, $m \in I$} is defined to be
the double systems of chain complexes
$(W^*_{j,j',m})_{j,j' \in \N}$, $m \in I$
where the double system maps are
chain level action maps and where $\N$ has the ordering $\geq$.
If $I$ has just one element $m$ then such a double system is called a
{\it double system of chain complexes for
$\SH^*_{\check{C},Q^m_-,Q^m_+}(D \subset M)$}.
\end{defn}

\begin{remark} \label{remark double system isomoprhic to symplectic cohomology}
Suppose that $I = \{0,1\}$ and that
$(Q^1_-,Q^1_+)$ is smaller than
$(Q^0_-,Q^0_+)$ (Definition \ref{definition compatible cones}).
Then $\SH^*_{\check{C},Q^m_-,Q^m_+}(D \subset M)$
is isomorphic to the double system
$(H_*(W^*_{j,j',m}))_{j,j' \in \N}$
for each $m \in I$ 
by Lemma \ref{lemma isomorphism condition}
and the action map
$$\SH^*_{\check{C},Q^0_-,Q^0_+}(D \subset M)
\lra{} \SH^*_{\check{C},Q^1_-,Q^1_+}(D \subset M)$$
is equal to the natural map
$$(H_*(W^*_{j,j',0}))_{j,j' \in \N} \lra{} (H_*(W^*_{j,j',1}))_{j,j' \in \N}$$
induced by the corresponding chain level action maps under this isomorphism of double systems.
\end{remark}

\subsection{Changing Novikov Rings.} \label{subsection changing Novikov rings}

\begin{theorem} \label{theorem changing novikov ring}
Let $\check{C}$ be an
index bounded
contact cylinder with associated Liouville domain $D$ and
let $(Q^j_-,Q^j_+)$ be a $\check{C}$-interval domain for $j=0,1$
so that
$(Q^1_-,Q^1_+)$
is smaller than
$(Q^0_-,Q^0_+)$
(Definition \ref{definition compatible cones})
and so that $(Q^j_-,Q^j_+)$ is wide
for $j=0,1$
(Definition \ref{defn chain complex}). 
Suppose that $\Lambda_\K^{Q^1}$
is a flat $\Lambda_\K^{Q^0}$-module
and that
$\varinjlim \varprojlim^1 (\SH^*_{\check{C},Q^1_-,Q^1_+}(D \subset M)) = 0$.
Then the map
$$SH^*_{\check{C},Q^0_-,Q^0_+}(D \subset M) \otimes_{\Lambda_\K^{Q^0_+}} \Lambda_\K^{Q^1_+}
\lra{} SH^*_{\check{C},Q^1_-,Q^1_+}(D \subset M)$$
induced by the corresponding action map
is an isomorphism.
\end{theorem}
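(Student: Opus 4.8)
The plan is to reduce the statement to a chain-level computation using the double systems of chain complexes constructed in Subsection~\ref{subsection chain complex for sh}, and then to exploit flatness of the Novikov ring together with the vanishing of $\varinjlim\varprojlim^1$. First I would apply Lemma~\ref{lemma chain complex} and Definition~\ref{definition chain complex for symplectic cohomology} simultaneously to the two wide $\check{C}$-interval domains $(Q^0_-,Q^0_+)$ and $(Q^1_-,Q^1_+)$, i.e. with $I=\{0,1\}$; this produces a \emph{compatible} collection of double systems of chain complexes $(W^*_{j,j',0})_{j,j'\in\N}$ and $(W^*_{j,j',1})_{j,j'\in\N}$, built from \emph{one} smooth family of Hamiltonians $(H_{s,\bullet})_{s\in[1,\infty)}$ and one compatible almost complex structure $J$, so that the chain complexes computing the two symplectic cohomologies share the same underlying Hamiltonian Floer data and differ only in the choice of action cones $Q^0_\pm$ versus $Q^1_\pm$. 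By Remark~\ref{remark double system isomoprhic to symplectic cohomology} we then have $\SH^*_{\check{C},Q^j_-,Q^j_+}(D\subset M)\cong (H_*(W^*_{j,j',j}))_{j,j'\in\N}$ for $j=0,1$, and the action map between them is induced by the corresponding chain-level action maps.

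Next I would identify, at chain level, the relation between $W^q_{j,j',0}$ and $W^q_{j,j',1}$. The key point is that, by property~(\ref{item equal up to a shift in D}) of Lemma~\ref{lemma chain complex} (and Remark~\ref{remark same action in D0}), for $s$ large enough all the relevant capped $1$-periodic orbits of $H_{s,\bullet}$ have image in $N_p\subset D\cup([1,1+\epsilon/8]\times C)$ and their $(H,\check{C})$-actions are computed by the formulas of Corollary~\ref{corollary action of capped loop disjoint from contact cylinder}; in particular the value of the action function $\cA_{H_{s,\bullet},\check{C}}(\gamma)$ on a point $(q,\lambda^-,\lambda^+)\in Q_{\check{C}}$ depends affinely on $\lambda^-$ and is independent of $\lambda^+$ for these orbits (the capping surfaces stay away from $[1+\epsilon/4,1+\epsilon/2]\times C$). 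This means the generators of $CF^q_{\check{C},a^{j,m}_-,a^{j',m}_+}(H_{s,\bullet})$ — capped $1$-periodic orbits $\gamma\#v$, $v\in H_2(M;\Z)$ — are the same set for $m=0$ and $m=1$ once one tracks the $H_2(M;\Z)$-action, and the difference in the two action cones amounts precisely to a change of the Novikov submonoid acting on the chain groups. Concretely, $W^*_{j,j',0}$ is a $\Lambda_\K^{Q^0_+,+}$-module chain complex, $W^*_{j,j',1}$ is the corresponding $\Lambda_\K^{Q^1_+,+}$-module chain complex on the ``same'' generating orbits, and the chain-level action map realizes the base-change map $-\otimes_{\Lambda_\K^{Q^0_+}}\Lambda_\K^{Q^1_+}$ at the level of these finitely-generated-over-Novikov chain groups (using Remark~\ref{remark action module} to see the chain groups as Novikov modules). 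So on the nose one should get
\begin{equation*}
W^q_{j,j',0}\otimes_{\Lambda_\K^{Q^0_+}}\Lambda_\K^{Q^1_+}\;\cong\;W^q_{j,j',1}
\end{equation*}
as chain complexes of $\Lambda_\K^{Q^1_+}$-modules, compatibly with the double system maps (action maps in $j,j'$), the continuation maps defining the direct limit over $s$, and the differentials (since the Floer trajectories are the same — the $J$ and the Hamiltonians agree near $N_p$). Here one uses Lemma~\ref{lemma extend to Novikov field action} to pass from $\Lambda_\K^{Q_+,+}$ to $\Lambda_\K^{Q_+}$.

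From here I would conclude by a standard homological-algebra argument. Since $\Lambda_\K^{Q^1}$ is flat over $\Lambda_\K^{Q^0}$, tensoring with $\Lambda_\K^{Q^1_+}$ commutes with taking homology of the chain complex $\varinjlim_j\varprojlim_{j'} W^*_{j,j',0}$; the inverse limit requires care, but the hypothesis $\varinjlim\varprojlim^1(\SH^*_{\check{C},Q^1_-,Q^1_+}(D\subset M))=0$ (together with Proposition~\ref{flatness of rational polyhedral novikov rings}, cited in \ref{item changing novikov ring}) is exactly what is needed: it guarantees, via a Milnor/$\varprojlim^1$ exact sequence as in \cite[Theorem~3.5.8]{weibel1995introduction}, that $H_*(\varinjlim\varprojlim W^*_{j,j',1})=\varinjlim\varprojlim H_*(W^*_{j,j',1})$, so the derived functor corrections vanish and the base-change isomorphism descends to homology. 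Putting the pieces together,
\begin{equation*}
SH^*_{\check{C},Q^0_-,Q^0_+}(D\subset M)\otimes_{\Lambda_\K^{Q^0_+}}\Lambda_\K^{Q^1_+}\;\cong\;H_*\!\big(\varinjlim\varprojlim W^*_{j,j',0}\big)\otimes_{\Lambda_\K^{Q^0_+}}\Lambda_\K^{Q^1_+}\;\cong\;H_*\!\big(\varinjlim\varprojlim W^*_{j,j',1}\big)\;\cong\;SH^*_{\check{C},Q^1_-,Q^1_+}(D\subset M),
\end{equation*}
and by construction this composite isomorphism is the map induced by the action map. I expect the main obstacle to be the second step: carefully verifying that the chain-level action map \emph{is} literally the Novikov base-change map — this requires pinning down the $H_2(M;\Z)$-module structure on the chain groups and checking that the action cone change $Q^0_\pm\rightsquigarrow Q^1_\pm$ translates, under the action formulas of Corollary~\ref{corollary action of capped loop disjoint from contact cylinder} and the wideness of both interval domains, into precisely the submonoid inclusion defining $\Lambda_\K^{Q^0_+}\to\Lambda_\K^{Q^1_+}$, uniformly in $s$ and compatibly with continuation maps. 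The interchange of $\varprojlim$ with $\otimes$ and with homology is the other delicate point, handled entirely by the flatness and $\varprojlim^1$-vanishing hypotheses.
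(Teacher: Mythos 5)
Your proposal follows essentially the same route as the paper's proof: reduce to the compatible double systems of chain complexes from Definition \ref{definition chain complex for symplectic cohomology} with $I=\{0,1\}$, observe that the chain groups are free finitely generated Novikov modules with the chain-level action map realizing the base change $-\otimes_{\Lambda_\K^{Q^0_+}}\Lambda_\K^{Q^1_+}$ on generators, and then conclude via the Milnor $\varprojlim^1$ sequence together with flatness. The point you flag as the "main obstacle" is exactly the step the paper handles by asserting that $\varinjlim_j\varprojlim_{j'}W^p_{j,j',k}$ is free and finitely generated with $\beta$ sending generators to generators, so your argument is correct and matches the paper's.
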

\begin{proof}[Proof of Theorem \ref{theorem changing novikov ring}.]

Let $(W^*_{j,j',m})_{j,j' \in \N}$, $m = 0,1$
be a compatible collection of double systems of chain complexes for
$\SH^*_{\check{C},Q^m_-,Q^m_+}(D \subset M)$, $m = 0,1$
as in Definition \ref{definition chain complex for symplectic cohomology}.
By Remark
\ref{remark double system isomoprhic to symplectic cohomology}
it is sufficient for us to show that the natural map
\begin{equation} \label{equation isomorphism of specific chain complex homologies}
(\varinjlim_j \varprojlim_{j'} H_*(W^*_{j,j',0})) \otimes_{\Lambda_\K^{Q^0_+}} \Lambda_\K^{Q^1_+} \lra{}
\varinjlim_j \varprojlim_{j'} H_*(W^*_{j,j',1})
\end{equation}
induced by chain level action maps
is an isomorphism.

By Remark
\ref{remark double system isomoprhic to symplectic cohomology} combined with the fact that
$\varinjlim \varprojlim^1 \SH^p_{\check{C},Q^1_-,Q^1_+}(D \subset M) = 0$,
we get
$\varinjlim \varprojlim^1 (H_*(W^*_{j,j',1}))_{j,j' \in \N} = 0$.
Hence by \cite[Theorem 3.5.8]{weibel1995introduction}
combined with the fact that
direct limits preserve short exact sequences and commute with homology
and that  $\Lambda_\K^{Q^1_+}$ is a flat
$\Lambda_\K^{Q^0_+}$-module,
we get a commutative diagram:
\begin{center}
\begin{tikzpicture}

\node at (-5,0.5) {$(\varinjlim_j \varprojlim_{j'}^1 H_{p-1}(W^*_{j,j',0})) \otimes_{\Lambda_\K^{Q^0_+}} \Lambda_\K^{Q^1_+}$};
\node at (-5,-0.5) {$0$};
\node at (-5,2) {$H_p(\varinjlim_j \varprojlim_{j'} W^*_{j,j',0}) \otimes_{\Lambda_\K^{Q^0_+}} \Lambda_\K^{Q^1_+}$};
\node at (-5,4) {$0$};
\node at (-5,3) {$\varinjlim_j \varprojlim_{j'} H_p(W^*_{j,j',0}) \otimes_{\Lambda_\K^{Q^0_+}} \Lambda_\K^{Q^1_+}$};
\node at (-1.2,1.2) {$H_p((\varinjlim_j \varprojlim_{j'} W^*_{j,j',0}) \otimes_{\Lambda_\K^{Q^0_+}} \Lambda_\K^{Q^1_+})$};
\node at (2.5,0.5) {$0$};
\node at (2.7,2) {$H_p(\varinjlim_j \varprojlim_{j'} W^*_{j,j',1})$};
\node at (2.8,3) {$\varinjlim_j \varprojlim_{j'} H_p(W^*_{j,j',1})$};
\node at (2.5,4.1) {$0$};
\node at (2.5,-0.5) {$0$};
\draw [<-](-4.1,1.3) -- (-4.6,1.6);
\draw [->](-2.5-2.5,3.4) -- (-2.5-2.5,3.7);
\draw [->](-2.5-2.5,2.3) -- (-2.5-2.5,2.7);
\draw [->](-2.5-2.5,1.3-0.5) -- (-2.5-2.5,1.7);
\draw [->](-2.5-2.5,0.3-0.5) -- (-2.5-2.5,0.7-0.5);
\draw [->](-2.3,3) -- (1,3);
\draw [->](0.25-2.5,2-0) -- (0.6,2);
\draw [->](0.5-2.5,1-0.5) -- (2.1,1-0.5);
\draw [->](2.5,1.2-0.5) -- (2.5,1.6);
\draw [->](2.5,2.3) -- (2.5,2.6);
\draw [->](2.5,3.3) -- (2.5,3.8);
\draw [->](2.5,0.3-0.5) -- (2.5,0.7-0.5);
\node at (-4.5,1.3) {$\alpha$};
\node at (2,1.2) {$\beta$};
\node at (-0.6,3.4) {$\delta$};
\draw[->] (1.6,1.3) -- (2.1,1.6);
\node at (-0.6,2.3) {$\eta$};
\end{tikzpicture}
\end{center}
where the
vertical morphisms form short exact sequences
and the
remaining maps are induced by chain level action maps for each $p \in \Z$.
Since $\Lambda_\K^{Q^1_+}$ is a flat
$\Lambda_\K^{Q^0_+}$-module,
we have that $\alpha$ is an isomorphism.
Also $\beta$ is an isomorphism since $\varinjlim_j \varprojlim_{j'} W^p_{j,j',k}$ is a free finitely generated $\Lambda_\K^{Q^k_+}$-module for $k=0,1$
and $\beta$ sends the generators of one module to the other.
%
This implies that the map 
(\ref{equation isomorphism of specific chain complex homologies})
is an isomorphism.
\end{proof}


\section{Symplectic Geometry of Projective Varieties and Singular Ample Divisors.} \label{symplectic geometry of projective varieties}

\subsection{Constructing Appropriate K\"{a}hler Forms.}

In order to show that birational K\"{a}hler manifolds have the same small quantum groups,
we need to modify their K\"{a}hler forms
so that they
are identical on some large compact subset of a common open affine subset.
This will enable us
to show that various
Hamiltonian Floer
algebras are the
same on both Calabi-Yau
manifolds.
This technical subsection
is devoted to manipulating
certain symplectic forms on smooth affine varieties in order to achieve this goal.
It is also needed to ensure that this large compact subset contains a `large' index bounded contact cylinder (see Section \ref{subsection constructing contact cylinder}).

\begin{defn} \label{defn gradient with respect to symplectic vector field}
Let $\omega_X$ be a K\"{a}hler form on a complex manifold $(X,J_X)$.
Then for any open subset $U \subset X$ and any smooth function $f : U \lra{} \R$,
we define $\nabla_{\omega_X} f$
to be the unique vector field on $U$
satisfying
$\omega_X(\nabla_{\omega_X}f,J_X(-)) = df(-)$
(I.e. the gradient of $f$ with respect to the metric $\omega_X(-,J_X(-))$).
\end{defn}

\begin{defn} \label{definition algebraic plurisubharmonic function}
Let $A$ be a smooth affine variety.
Let $J_A : TA \lra{} TA$ be the complex
structure on $A$.
A smooth function $\rho : A \lra{} \R$ is
{\it exhausting} if it is proper and bounded
from below.
Define $d^c \rho := d\rho \circ J_A$.
We say that $\rho$ is {\it plurisubharmonic}
if $-dd^c \rho$ is a K\"{a}hler form.
For each plurisubharmonic function $\rho$,
we define $\omega_\rho := -dd^c \rho$
and for each function $f : A \lra{} \R$, we define
$\nabla_\rho f := \nabla_{\omega_\rho} f$.

A smooth function
$\rho : A \lra{} \R$ is an
{\it algebraic plurisubharmonic function}
if there exists a smooth projective variety
$X$ compactifying $A$, a holomorphic line bundle $L$ over $X$
with a Hermitian metric $|\cdot|$
and a holomorphic section $s$ of $L$
so that $\rho = -\log(|s|)$,
$s^{-1}(0) = X - A$
and $\rho$ is plurisubharmonic.

A smooth function $\rho : A \lra{} \R$
is a
{\it partially algebraic plurisubharmonic function}
if there is an algebraic plurisubharmonic
function $\rho_\infty : A \lra{} \R$
and a compact subset $K \subset A$
so that $\rho|_{A - K} = \rho_\infty|_{A-K}$ and if $\rho$ is plurisubharmonic.
\end{defn}

Algebraic plurisubharmonic functions always
exist since every affine variety can be compactified to a smooth projective variety $X$ by \cite{hironaka:resolution}.
One then can choose an ample line bundle $L$ together with a section $s$ satisfying $s^{-1}(0) = X - A$.
Any ample line bundle admits a positive Hermitian metric $|\cdot|$
(e.g. a metric induced from the Fubini Study metric) which implies that $-\log(|s|)$ is plurisubharmonic.
Having said that,
in general
we do not require that
the metric $|\cdot|$ be positive
outside $A$.
Also note that all algebraic plurisubharmonic
functions are exhausting.

The following technical lemma is needed to prove Corollaries 
\ref{lemma making forms coincide at infinity}
and \ref{corollary flow time} below.
This lemma is basically about controlling the size of the derivatives of partially algebraic plurisubharmonic functions near infinity.

\begin{lemma} \label{lemma gradient large enough at infinity}
Let $\rho_0,\rho_1$ be two partially algebraic plurisubharmonic functions on
a smooth affine variety $A$.
Define $\rho_t := \rho_0 + t \rho_1$ for each $t \in [0,1]$.
Let $k$ be a positive integer.
Then there is a
compact subset $K \subset A$
so that
\begin{equation} \label{equation gradient is very big}
d\rho_t(\nabla_{\rho_t} \rho_t) > \rho_t^k \quad \forall \ t \in [0,1]
\end{equation}
outside $K$.
Also there is a vector field $V$ on $A$
so that
\begin{equation} \label{equation vector field positive}
d\rho_0(V) > 0, \quad d\rho_1(V) > 0
\end{equation}
outside $K$.
\end{lemma}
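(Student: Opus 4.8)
\textbf{Proof proposal for Lemma \ref{lemma gradient large enough at infinity}.}

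The plan is to first reduce to the case of honest algebraic plurisubharmonic functions, then exploit the line-bundle description near infinity. Since $\rho_0,\rho_1$ are partially algebraic, there are algebraic plurisubharmonic functions $\rho_{0,\infty},\rho_{1,\infty}$ and a compact set $K_0 \subset A$ with $\rho_i|_{A-K_0} = \rho_{i,\infty}|_{A-K_0}$. All the statements of the lemma are assertions about the behaviour of $\rho_t$ \emph{outside} a (to-be-chosen, possibly enlarged) compact set, and $\nabla_{\rho_t}\rho_t$, $d\rho_t$ only depend on $\rho_t$ near a given point; so it suffices to prove everything with each $\rho_i$ replaced by $\rho_{i,\infty}$. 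From now on assume $\rho_0 = -\log|s_0|$ and $\rho_1 = -\log|s_1|$ for holomorphic sections $s_i$ of Hermitian line bundles $L_i$ on a smooth projective compactification $X$ of $A$, with $s_i^{-1}(0) = X - A$ in each case (we may take a common $X$ by resolving; passing to a further blow-up only changes things on a set disjoint from $A$).

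Next I would work near the divisor $D_X := X - A$ in local holomorphic coordinates. At a point $p \in D_X$, choosing a trivialization of $L_0$ and $L_1$ near $p$, write $s_i = g_i \cdot(\text{unit})$ with $g_i$ a holomorphic function vanishing on $D_X$, and $\rho_i = -\log|g_i| + h_i$ where $h_i$ is smooth (the contribution of the Hermitian metric, which is bounded with bounded derivatives near $p$). Then $\rho_t = -\log|g_0| - t\log|g_1| + (h_0 + t h_1)$, and the leading behaviour near $D_X$ is governed by the $-\log|g_i|$ terms, which blow up like a positive multiple of the "distance to $D_X$" on a logarithmic scale. The key computation is that, with respect to the K\"ahler metric $g_t := \omega_{\rho_t}(-,J_A(-))$, the quantity $d\rho_t(\nabla_{\rho_t}\rho_t) = |d\rho_t|^2_{g_t}$ is, near $D_X$, the squared norm of a form with a pole along $D_X$, measured in a metric $\omega_{\rho_t} = -dd^c\rho_t$ which itself is comparable near $D_X$ to the Poincar\'e-type metric $\sum_i \frac{|dz_i|^2}{|z_i|^2(\log|z_i|)^2}$ on a punctured polydisc (this is the standard local model for $-dd^c(-\log|z|)$-type metrics). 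A direct calculation in that model shows $|d(-\log|z|)|^2$ in this metric grows like $(\log|z|)^2$, i.e. like $\rho_t^2$, which already beats any fixed power $\rho_t^k$ once we are close enough to $D_X$ — wait, not any power; it beats $\rho_t^k$ only for $k \le 2$. So the honest statement to aim for is the sharper lower bound $d\rho_t(\nabla_{\rho_t}\rho_t) \gtrsim \rho_t^2 \log(\text{stuff})$? Let me reconsider: actually one should \emph{not} use the full Poincar\'e metric comparison but rather observe that $\omega_{\rho_t}$ near $D_X$ is \emph{dominated} by (not comparable to) a multiple of the ambient smooth K\"ahler metric away from $D_X$, forcing $|d\rho_t|^2_{g_t}$ to be at least $|d\rho_t|^2_{g_{\text{sm}}}/C \gtrsim |\nabla^{\text{sm}}\rho_t|^2$, and $|\nabla^{\text{sm}}\rho_t|$ itself blows up like $1/|z|$, i.e. exponentially in $\rho_t$, which dominates $\rho_t^{k/2}$. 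So the estimate follows from: (i) $\omega_{\rho_t} \le C\,\omega_{\text{sm}}$ is \emph{false} in general — plurisubharmonic exhaustions typically blow up. The correct and robust route is the opposite inequality $\omega_{\rho_t} \ge$ a fixed positive form times an increasing function of $\rho_t$ near $D_X$, which gives $|d\rho_t|^2_{\omega_{\rho_t}} \le$ something, the wrong direction. I therefore expect the main obstacle to be pinning down exactly which comparison holds; the clean way is: in the local model $\rho = -\log|z|$ on the punctured disc, compute $\nabla_\rho \rho$ and $d\rho(\nabla_\rho\rho)$ \emph{explicitly} and check it equals a constant times $(\log|z|)^2 = \rho^2$ up to lower order, then absorb the smooth perturbations $h_i$ and the $t$-dependence (uniform in $t\in[0,1]$ by compactness of $[0,1]$) — and then simply observe $\rho^2 > \rho^k$ for $\rho$ large when... no. The resolution is that the lemma as stated must be using that \emph{polynomial} growth in $\rho$ is required, and the local model gives growth $\rho^2$ while $\rho \to +\infty$; so one needs $k \le 2$? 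Since the lemma allows arbitrary $k$, the true mechanism must be that along generic approach to $D_X$ the growth is strictly faster, or that one is free to first \emph{replace} $\rho_i$ by $\rho_i^N$ or a convex increasing function of it — but that changes the form. I would resolve this by a more careful local computation showing the growth is actually like $|z|^{-2}$-type (polynomial in $1/|z|$, hence super-polynomial in $\rho = -\log|z|$), which happens if the relevant section vanishes to higher order or if the Hermitian metric contributes; in the worst case ($s$ vanishing simply, flat metric) one genuinely gets only $\rho^2$, and then the lemma's hypotheses elsewhere in the paper must guarantee $k\le 2$ suffices, or the ample/positive choice of metric forces the stronger bound. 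I would settle this by examining how Lemma \ref{lemma gradient large enough at infinity} is invoked (in Corollaries \ref{lemma making forms coincide at infinity} and \ref{corollary flow time}) and choosing the line bundles/metrics accordingly.

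Granting the growth estimate, the structure of the proof is then short. For inequality (\ref{equation gradient is very big}): by the local analysis near each point of the compact divisor $D_X$, and a partition-of-unity/compactness argument over $D_X$ together with uniformity in $t \in [0,1]$, there is a neighbourhood $U$ of $D_X$ in $X$ on which $d\rho_t(\nabla_{\rho_t}\rho_t) > \rho_t^k$ for all $t$; take $K := A - U$, which is compact since $\rho_t$ is exhausting (so $A-U$ is bounded, and it is closed in $A$). For the vector field $V$ in (\ref{equation vector field positive}): the natural candidate is $V := \nabla_{\rho_0}\rho_0 + \nabla_{\rho_0}\rho_1$, or more simply the real part of a holomorphic Euler-type vector field transverse to $D_X$; near $D_X$ in local coordinates with $D_X = \{z_1\cdots z_m = 0\}$, take $V$ generated by $\mathrm{Re}\sum_j z_j \partial_{z_j}$, along whose flow $-\log|z_j| = \rho$-type functions strictly increase, giving $d\rho_0(V) > 0$ and $d\rho_1(V) > 0$ outside a compact set; glue these local vector fields by a partition of unity subordinate to a cover of $D_X$, noting that positivity of $d\rho_i(V)$ is preserved under convex combinations, and enlarge $K$ if necessary. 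The main obstacle, as indicated, is the precise growth rate in the first estimate; the vector-field part is routine once the coordinate model near the boundary divisor is in hand.
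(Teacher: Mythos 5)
Your reduction to honest algebraic plurisubharmonic functions, the passage to a common normal-crossings compactification, and the construction of the vector field $V$ via the radial vector field $-\mathrm{Re}\sum_k z_k\partial_{z_k}$ near each boundary point, patched by a partition of unity, all match the paper's proof and are fine. The gap is in the main estimate (\ref{equation gradient is very big}), where you explicitly fail to close the argument and end by punting ("I would settle this by examining how the Lemma is invoked"). The source of the trouble is that you use the wrong local model for $\omega_{\rho_t}$ near the divisor. Writing $\rho_j = \eta_j - \log|h_j| - \sum_k a^j_k\log|z_k|$ in a chart (with $\eta_j$ the smooth metric weight, $h_j$ holomorphic and nonvanishing, $a^j_k$ positive integers), the singular terms $-\log|z_k|$ and $-\log|h_j|$ are \emph{pluriharmonic} away from the divisor, so $\omega_{\rho_j} = -dd^c\rho_j = -dd^c\eta_j$ there: the K\"ahler form is the curvature of the Hermitian metric and extends \emph{smoothly} (as a possibly degenerate semi-positive $(0,2)$-tensor) across $X-A$. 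It is emphatically not a Poincar\'e-type metric $\sum |dz_i|^2/(|z_i|^2(\log|z_i|)^2)$ — that is the model for $-dd^c$ of $\log(-\log|z|)$-type functions, not of $-\log|z|$. Your quadratic growth rate $\rho_t^2$, and the resulting worry that the lemma could only hold for $k\le 2$, comes from this incorrect model.

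The correct mechanism is the one you raise and then wrongly dismiss as "false in general": in this setting $\omega_{\rho_t}(-,J(-)) \le C\,g_{\mathrm{eucl}}$ does hold on each relatively compact chart, uniformly in $t\in[0,1]$ (since $g_{\rho_t} \le g_{\rho_0}+g_{\rho_1}$ and each summand extends continuously and is $\ge 0$). Dualizing, the norm of a covector in the metric $g_{\rho_t}$ is bounded \emph{below} by a positive constant times its Euclidean norm, so
$d\rho_t(\nabla_{\rho_t}\rho_t) = \|d\rho_t\|^2_{\rho_t} \ge c^2\|d\rho_t\|^2_{g_{\mathrm{eucl}}}$.
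Now $d\rho_t$ has a genuine first-order pole $\sum_k (a^0_k+ta^1_k)\,d\log|z_k|$ plus a bounded remainder, so its Euclidean norm squared grows like $\sum_k |z_k|^{-2}$ (minus a cross term of order $\sum_k|z_k|^{-1}$), which is exponential in $\rho_t \sim -\sum_k a_k\log|z_k|$ and therefore dominates $\rho_t^k$ for \emph{every} positive integer $k$, not just $k\le 2$. With this substitution the compactness/partition-of-unity argument over $X-A$ that you sketch does close the proof, and $K$ can be taken to be the complement of a small neighbourhood of $X-A$, which is compact in $A$ because the $\rho_t$ are exhausting.
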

\proof
The key idea here is to reduce
Equations (\ref{equation gradient is very big}) and (\ref{equation vector field positive}) to a local estimate near each point at infinity.
A similar thing has been done previously in
the proof of
\cite[Lemma 4.3]{Seidel:biasedview}.

First of all since we only require Equations (\ref{equation gradient is very big}) and (\ref{equation vector field positive}) to hold outside a compact set, we can assume that
$\rho_j$ is an algebraic plurisubharmonic
function for all $j = 0,1$.
Therefore, by definition,
for each $j \in \{0,1\}$
there is
\begin{itemize}
\item a smooth projective variety $X_j$ compactifying $A$,
\item  a holomorphic line bundle $\check{L}_j$ over $X_j$,
\item a Hermitian metric $\|\cdot\|_j$ on $\check{L}_j$ and
\item a holomorphic section $\check{s}_j$ of $\check{L}_j$
\end{itemize}
so that
$\rho_j = -\log(\|\check{s}_j\|_j)$
and
$\check{s}_j^{-1}(0) = X_j - A$.
By the Hironaka resolution of singularities
theorem \cite{hironaka:resolution},
there is a smooth projective variety $X$ compactifying $A$ and morphisms
$\pi_j : X \lra{} X_j$, $j=0,1$
satisfying $\pi_j(a) = a$ for each $a \in A$ and each $j =0,1$.
We can also assume that $X - A$
is a normal crossings variety (I.e. it is locally a transverse intersection of complex hypersurfaces).
Let $L_j = \pi_j^* \check{L}_j$,
$|\cdot|_j = \pi_j^*\|\cdot\|_j$
and
$s_j = \pi_j^* \check{s}_j$
be the corresponding pullbacks of our line bundle, metric and section to $X$ for each $j=0,1$.
Then
\begin{equation} \label{equation formula for rhoj}
\rho_j = -\log(|s_j|_j) \ \text{and} \ s_j^{-1}(0) = X - A \quad \forall \ j = 0,1.
\end{equation}
In order to prove Equation (\ref{equation gradient is very big})
it is sufficient for us to show that for each $x \in X - A$,
Equation (\ref{equation gradient is very big})
holds on a small neighborhood of $x$ since $X-A$ is compact.
Similarly, in order to prove
Equation (\ref{equation vector field positive}),
it is sufficient to show that there is a vector field $V_x$ defined in a neighborhood of $x$
satisfying $d\rho_j(V_x) > 0$, $j=0,1$.
This is because we can construct our desired vector field $V$ by patching together finitely many such vector fields $V_{x_1},\cdots,V_{x_k}$ using partitions of unity.

Therefore fix $x \in X - A$.
Choose a holomorphic coordinates $z_1,\cdots,z_n$
on a small chart $U_x \subset X$ centered at $x$
so that $(X - A) \cap U_x = \left\{\prod_{j = 1}^l z_j = 0\right\}$ for some $1 \leq l \leq n$.
After shrinking $U_x$, we can
choose trivializations of the line bundles $L_0|_{U_x}$ and $L_1|_{U_x}$.
We will also assume that the coordinates $z_1,\cdots,z_n$ and trivializations above extend to a neighborhood of the closure of $U_x$ in order to ensure that $C^1$ bounds hold.
For each $j=0,1$,
there are smooth functions $\eta_j : U_x \lra{} \R$ so that
$|\cdot|_j = e^{-\eta_j} |\cdot|$
with respect to the trivializations of $L_0|_U$ and $L_1|_U$ above.
Also for each $j=0,1$,
there are positive integers
$a^j_k$, $k=1,\cdots,l$
and a  holomorphic function $h_j : U_x \lra{} \C$ whose norm is bounded below by a positive constant
so that
$s_j = h_j\prod_{k=1}^l z_k^{a^j_k}$
with respect to the trivializations above.
Hence
$$
(|s_j|_j) |_{U_x} = e^{-\eta_j}|h_j| \prod_{k=1}^l |z_k|^{a^j_k} \quad \forall \ j = 0,1.
$$
Therefore
\begin{equation} \label{equation rhoj}
\rho_j|_{U_x \cap A} = \eta_j - \log(|h_j|) - \sum_{k=1}^l a^j_k \log(|z_k|) \quad \forall \ j = 0,1.
\end{equation}
Let $J_X$ be the complex structure on $X$.
Let $g(-,-)$ be
the standard Euclidean metric  on $U_x$ and let $\|\cdot\|_g$ be the induced norm on the cotangent bundle $T^*U_x$.
Let $\|\cdot\|_{\rho_0}$
be the induced norm on $T^* A$ coming from the metric
$\omega_{\rho_0}(-,J_X(-))$ on $A$.
The metric
$\omega_{\rho_0}(-,J_X(-))$
smoothly extends to a $(0,2)$-tensor $g_{\rho_0}$ on $X$ (which may be degenerate along points of $X-A$).
Since $g_{\rho_0}(Y,Y) \geq 0$ for all $Y \in TX$,
there is a constant $c>0$ so that
$g(Y,Y) \geq c g_{\rho_0}(Y,Y)$
for each $Y \in TU_x$.
Hence
$c\|\cdot\|_g|_A \leq  \|\cdot\|_{\rho_0}$.
Therefore, by Equation (\ref{equation rhoj}),
$$d\rho_t(\nabla_{\rho_t} \rho_t)|_{U_x \cap A}
= \|d\rho_t\|_{\rho_t}^2
\geq c^2\|d\rho_t\|_g^2$$
$$
\geq  \sum_{k=1}^l \frac{c^2}{|z_k|^2} \left\| (a^0_k+ta^1_k) d(|z_k|) \right\|_g^2
-$$
$$
\sum_{k=1}^l \frac{2c^2}{|z_k|}
\left(
\left\| (a^0_k+ta^1_k) d(|z_k|) \right\|_g \|d(\eta_0 - \log(|h_0|) + t d(\eta_1 - \log(|h_1|))\|_g^2\right), \quad \forall \ t \in [0,1].$$
Hence
Equation (\ref{equation gradient is very big}) holds near $x$ since the function $\frac{1}{y^2}$ grows much faster than $-\log(y)$ and $\frac{1}{y}$ as $y \to 0_+$.

Let $x_k$ be the real part of $z_k$
and $y_k$ the imaginary part for each $k=1,\cdots,n$.
Then $x_1,y_1,\cdots,x_n,y_n$ are real coordinates on $U_x$.
Now define the vector field $V_x := -\sum_{k=1}^n (x_k \frac{\partial}{\partial x_k} + y_k \frac{\partial}{\partial y_k})$ on $U_x$.
Then by Equation (\ref{equation rhoj}),
\begin{equation} \label{equation positive vector field stuff}
d\rho_j(V_x) = d(\eta_j - \log(|h_j|))(V_x)
+ \sum_{k=1}^l a^j_k, \quad \forall \ j=0,1
\end{equation}
which is positive in a neighborhood $\check{U}_x$ of $x$ since the $\|\cdot\|_g$ norm of $d(\eta_j - \log(|h_j|))$ is bounded and the $g$-norm of $V_x$ tends to $0$ as we approach $x$.
Choose points $x^1,\cdots,x^k$ in $X-A$ so that $\check{U}_{x^1},\cdots,\check{U}_{x^k}$ cover $X -A$.
Choose a partition of unity $f : A \lra{} [0,1]$, $f_j : \check{U}_j \lra{} [0,1]$, $j=1,\cdots,k$,  subordinate to the cover
$A, \check{U}_{x^1},\cdots,\check{U}_{x^k}$ of $X$.
Define $V := \sum_{j=1}^k f_j V_{x_j}$.
Then since Equation (\ref{equation positive vector field stuff}) holds inside $\check{U}_x$ for each $x \in X-A$, we get that $V$ satisfies Equation (\ref{equation vector field positive})
outside a compact subset of $X$.
%
%
%
%
%
%
%
%
%
%
%
\qed

\smallskip

%
%

The following corollary of Lemma \ref{lemma gradient large enough at infinity} will be used to construct appropriate K\"{a}hler forms on birational Calabi-Yau manifolds so that they coincide on some large compact set of some common affine variety.
This will be used in the proof of Theorem \ref{theorem main theorem} in Section \ref{section proof of main theorem} below.
Before we state this corollary,
we need a preliminary definition.
\begin{defn} \label{definition skeleton of plurisubharmonic function}
Let $\rho : A \lra{} \R$ be a plurisubharmonic function on an affine variety $A$.
Let $\phi_t : A \lra{} A$
be the time $t$ flow of $-\nabla_\rho \rho$ for each $t \geq 0$.
The {\it skeleton}
of $\rho$ is the subset
$$\cap_{t > 0} \phi_t(A) \subset A.$$
\end{defn}

Note that the complement of the skeleton of $\rho$ is diffeomorphic to a product $\R \times Y$
where $\{r\} \times Y$ is a level set of $\rho$ for each $r \in \R$.
As a result, one can think of $A$ as a `cylindrical end' $\R \times Y$ with the skeleton `glued' to one side (See Figure \ref{fig:skeleton}).
\begin{center}
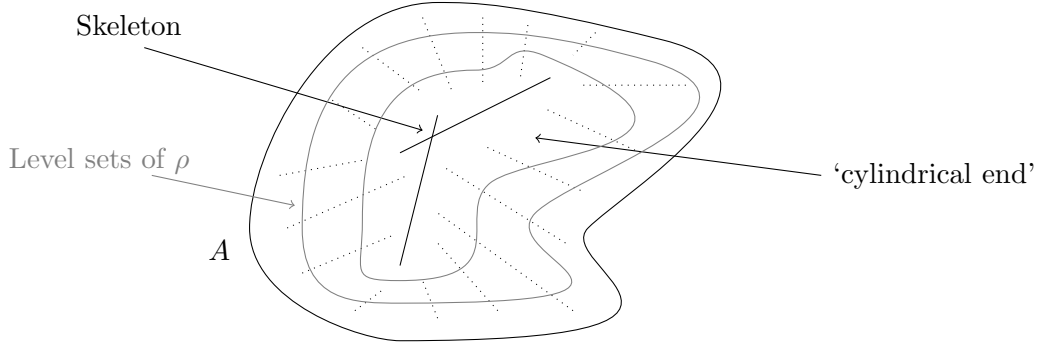
\begin{figure}[h]
\begin{tikzpicture}

\draw (0,2.5) .. controls (-1.5,2.5) and (-2.5,0.5) .. (-2.5,-0.5) .. controls (-2.5,-1.5) and (-1,-2) .. (-0.5,-2) .. controls (4.5,-2) and (1.5,-1) .. (2,-0.5) .. controls (2.5,0) and (5,1.5) .. (3,2) .. controls (1,2.5) and (0.5,2.5) .. (0,2.5);
\draw (1.5,1.5) -- (-0.5,0.5);
\draw (0,1) -- (-0.5,-1);
\draw [dotted](0.6,2.3) -- (0.6,1.4);
\draw [dotted](-1,1.9) -- (-0.2,1.3);
\draw [dotted](-0.2,2.3) -- (0.2,1.3);
\draw [dotted](-1.4,1.2) -- (-0.8,0.8);
\draw [dotted](-2.1,0.2) -- (-1,0.4);
\draw [dotted](-2,-0.5) -- (-0.4,0.2);
\draw [dotted](-1.8,-1.1) -- (-0.6,-0.6);
\draw [dotted](-1.1,-1.6) -- (-0.7,-1.3);
\draw [dotted](0,-1.7) -- (-0.2,-1.2);
\draw [dotted](0.8,-1.6) -- (0,-0.7);
\draw [dotted](1.8,-1.6) -- (0,-0.3);
\draw [dotted](1.7,-0.7) -- (0.1,0.3);
\draw [dotted](1.9,0) -- (0.6,0.6);
\draw [dotted](2.7,0.5) -- (1.4,1.1);
\draw [dotted](3.3,1.4) -- (1.9,1.4);
\draw [dotted](2.1,2.1) -- (1.8,1.8);
\draw [dotted](1.2,2.2) -- (1.1,1.5);
\draw [->](-3.9,1.9) -- (-0.2,0.8);
\node at (-4.1,2.2) {Skeleton};
\node at (-2.9,-0.8) {$A$};
\draw [->](5.1,0.2) -- (1.3,0.7);
\node at (6.6,0.2) {`cylindrical end'};
\draw [gray](0.5,2.1) .. controls (-1.5,2.1) and (-1.8,0.8) .. (-1.8,-0.5) .. controls (-1.8,-1.5) and (-1.1,-1.5) .. (-0.5,-1.5) .. controls (3.7,-1.5) and (0.7,-0.9) .. (1.3,-0.3) .. controls (1.8,0.2) and (4.7,1.1) .. (2.9,1.7) .. controls (1.5,2.1) and (0.9,2.1) .. (0.5,2.1);
\draw [gray](0.5,1.6) .. controls (-0.7,1.6) and (-1,0.8) .. (-1,-0.4) .. controls (-1,-1.1) and (-1.2,-1.2) .. (-0.5,-1.2) .. controls (1.1,-1.2) and (0.2,-0.3) .. (0.7,0.1) .. controls (1.2,0.5) and (4.3,0.6) .. (1.4,1.8) .. controls (0.9,2) and (1,1.6) .. (0.5,1.6);
\draw [gray,->](-3.8,0.2) -- (-1.9,-0.2);
\node at (-4.5,0.4) {\color{gray} Level sets of $\rho$};
\end{tikzpicture}
\caption{Skeleton of $A$.} \label{fig:skeleton}
\end{figure}
\end{center}

\begin{corollary} \label{lemma making forms coincide at infinity}
Let $\rho_0, \rho_1$ be two partially algebraic plurisubharmonic functions
on a smooth affine variety $A$.
Then for any compact subset $K \subset A$
there is a third partially algebraic plurisubharmonic function
$\rho : A \lra{} \R$, a compact set $Q$ containing $K$ and constants
$\kappa_1, \kappa_2 \in \N$
so that
\begin{itemize}
\item $\rho|_K = \rho_1|_K$,
\item $\rho$ is equal to $\kappa_1 (\rho_0 - \log{\kappa_2})$ outside $Q$ and
\item the skeleton of $\rho$ is equal to the skeleton of $\rho_1$.
\end{itemize}
\end{corollary}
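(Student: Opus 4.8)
Proof proposal for Corollary \ref{lemma making forms coincide at infinity}.

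The plan is to build $\rho$ by interpolating between $\rho_1$ near $K$ and a large positive multiple of $\rho_0$ near infinity, using Lemma \ref{lemma gradient large enough at infinity} to guarantee that the interpolating function is still plurisubharmonic and that its skeleton does not change. First I would fix the two algebraic plurisubharmonic functions $\rho_0^\infty$, $\rho_1^\infty$ agreeing with $\rho_0$, $\rho_1$ outside some compact set, and observe that by replacing $\rho_0^\infty$ with $\kappa_1(\rho_0^\infty - \log\kappa_2)$ we may arrange, thanks to the estimate (\ref{equation gradient is very big}) applied with $\rho_t = t\,\rho_1^\infty$-type combinations, that $\rho_0^\infty$ dominates $\rho_1$ (together with all its first derivatives, in a suitable sense) on a neighbourhood of infinity: concretely, one chooses $\kappa_2$ so large that $\kappa_1(\rho_0^\infty - \log\kappa_2) > \rho_1$ on a large sublevel set containing $K$, and $\kappa_1$ so large that the form $-dd^c(\kappa_1 \rho_0^\infty)$ dominates $-dd^c\rho_1$. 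Then on an annular region between $K$ and infinity I would take a convex interpolation $\rho := \chi \cdot \rho_1 + (1-\chi)\cdot \kappa_1(\rho_0^\infty - \log\kappa_2)$ where $\chi$ is a cutoff equal to $1$ near $K$ and $0$ near infinity, with $Q$ the support of $d\chi$ together with $K$.

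The main points to verify are then: (i) $\rho$ is plurisubharmonic, i.e. $-dd^c\rho$ is Kähler everywhere; (ii) $\rho$ is exhausting and hence a partially algebraic plurisubharmonic function (it equals $\kappa_1(\rho_0^\infty - \log\kappa_2)$ outside $Q$, which is algebraic plurisubharmonic); and (iii) the skeleton of $\rho$ equals the skeleton of $\rho_1$. For (i) I would use that a convex combination of two Kähler forms is Kähler, so the only danger is the region where $d\chi \neq 0$; there one computes $-dd^c\rho = \chi\,\omega_{\rho_1} + (1-\chi)\,\omega_{\kappa_1\rho_0^\infty} + (\text{terms involving } d\chi \text{ and } \rho_1 - \kappa_1(\rho_0^\infty-\log\kappa_2))$, and the estimates from Lemma \ref{lemma gradient large enough at infinity} (applied after rescaling, as in Remark-style scaling of (\ref{equation gradient is very big})) together with choosing the annular region far enough out make the error terms small compared to the positive leading part. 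For (iii), the key observation is that in the region outside $Q$ the functions $\rho$ and $\kappa_1(\rho_0^\infty - \log\kappa_2)$ literally agree, so the negative gradient flow of $\rho$ pushes everything outside $Q$ eventually into $Q$; inside a neighbourhood of $K$, $\rho = \rho_1$ so the flows agree there too; and in between one arranges, using the vector field $V$ from (\ref{equation vector field positive}) as a model for the "outward" direction, that the flow of $-\nabla_\rho\rho$ has no new trapped trajectories — every point flows monotonically inward past the annulus. Thus $\cap_{t>0}\phi_t^\rho(A) = \cap_{t>0}\phi_t^{\rho_1}(A)$.

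The hard part will be step (i) in the transition annulus: controlling the cross terms $d\chi \wedge d^c(\rho_1 - \kappa_1(\rho_0^\infty - \log\kappa_2))$ and $(\rho_1 - \kappa_1(\rho_0^\infty-\log\kappa_2))\,dd^c\chi$ so that the interpolated form stays positive. The resolution is exactly the content of Lemma \ref{lemma gradient large enough at infinity}: by pushing the annulus out to where $\rho_0^\infty$ is as large as we like and its gradient as large as we like relative to $\rho_1$ and $d\rho_1$, and by taking $\kappa_1$ large, the positive $(1-\chi)\,\omega_{\kappa_1\rho_0^\infty}$ term (whose size is forced to be large by (\ref{equation gradient is very big})) dominates the bounded error contributions from $\chi$ and its derivatives. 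Once positivity is secured on the annulus and trivially on the two ends, properties (ii) and (iii) follow from the explicit form of $\rho$ at the two ends, essentially by inspection using the description of $A$ as a skeleton glued to a cylindrical end (Figure \ref{fig:skeleton}).
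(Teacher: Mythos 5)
Your overall strategy (agree with $\rho_1$ near $K$, switch to $\kappa_1(\rho_0-\log\kappa_2)$ near infinity, use the vector field $V$ of Lemma \ref{lemma gradient large enough at infinity} to rule out new critical points and hence preserve the skeleton) matches the paper's, but the mechanism you propose for the transition region does not work, and this is exactly the hard point. The convex interpolation $\rho=\chi\,\rho_1+(1-\chi)\,f$ with $f:=\kappa_1(\rho_0^\infty-\log\kappa_2)$ produces
$-dd^c\rho=\chi\,\omega_{\rho_1}+(1-\chi)\,\kappa_1\omega_{\rho_0^\infty}-d\chi\wedge d^c(\rho_1-f)-d(\rho_1-f)\wedge d^c\chi-(\rho_1-f)\,dd^c\chi$,
and the claimed domination by $(1-\chi)\,\kappa_1\omega_{\rho_0^\infty}$ fails precisely on the inner part of the annulus, where $1-\chi$ is close to $0$ but $d\chi\neq 0$: there the positive contribution is essentially only the bounded form $\omega_{\rho_1}$, while the error terms contain $d\chi\wedge d^c f$ and $f\,dd^c\chi$, which \emph{grow linearly in $\kappa_1$} and have indefinite sign. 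Making $\kappa_1$ large or pushing the annulus to infinity makes these terms worse, not better; and Lemma \ref{lemma gradient large enough at infinity} gives a \emph{lower} bound on $|d\rho_t|$ near infinity, which is of no help in bounding the $d\chi$-cross terms (if anything, a large $d^c\rho_0^\infty$ inflates them). So step (i) of your plan has a genuine gap that cannot be closed by the estimates you cite.

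The paper sidesteps this entirely by never interpolating linearly: it chooses $\kappa_2$ so that $f<0<f$ on either side of a compact set $Q$ (i.e.\ $f|_{Q'}<\rho_1|_{Q'}$ for a compact $Q'\supset K$ and $f>\rho_1$ outside a compact subset of the interior of $Q$, which is possible because $\rho_0$ is exhausting), forms the continuous plurisubharmonic function $\max(\rho_1,f)$ — which automatically equals $\rho_1$ near $K$ and $f$ near infinity — and then invokes the smoothing-of-maxima result \cite[Proposition 3.20]{CieliebakEliashberg:symplecticgeomofsteinmflds} to replace it by a smooth plurisubharmonic $\rho$ with $\rho|_{Q'}=\rho_1|_{Q'}$, $\rho|_{A-Q}=f|_{A-Q}$ and $d\rho(V)>0$ outside a compact subset of the interior of $Q'$. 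That last condition, coming from Equation (\ref{equation vector field positive}), is what delivers the skeleton statement; your use of $V$ for that purpose is the right idea. If you want to keep an interpolation-style argument you would have to use a regularized maximum (a cutoff depending on $\rho_1-f$ rather than on position), which is essentially the same smoothing construction in disguise.
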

\proof
By Lemma \ref{lemma gradient large enough at infinity}
there is a vector field $V$ on $A$
and a compact subset $Q' \subset A$
so that $d\rho_0(V) > 0$ and $d\rho_1(V) >0$ outside $Q'$.
By enlarging $Q'$, we can assume that it contains $K$.
Let $\alpha := \max (\rho_0|_{Q'})+\log(2)$.
Choose a compact subset $Q \subset A$
whose interior contains $Q'$
and so that $\rho_0$ is greater than
$\max(\alpha,2)$ outside a compact subset of the interior of $Q$.
Choose $\kappa_2 \in \N$ so that
$\max (\rho_0|_{Q'}) < \log(\kappa_2) < \max(\alpha,2)$.
Then
$$\rho_0|_{Q'} - \log(\kappa_2) < 0, \quad \rho_0 |_{\overline{A - Q}} - \log(\kappa_2) > 0.$$
Hence we can choose an integer $\kappa_1 \gg 1$
so that
$f := \kappa_1 (\rho_0 - \log(\kappa_2))$ satisfies
$f|_{Q'} < \rho_1|_{Q'}$
and $f > \rho_1$ outside a compact subset of the interior of $Q$.
Then by 
\cite[Propositions 3.20]{CieliebakEliashberg:symplecticgeomofsteinmflds},
we can smooth the function $\max(\rho_1,f)$
to a plurisubharmonic function
$\rho : A \lra{} \R$ so that
$\rho|_{Q'} = \rho_1|_{Q'}$,
$\rho|_{A - Q} = f|_{A-Q}$
and
$d\rho(V) >0$
outside a compact subset of the interior of $Q'$. Then $\rho$ has the required properties.
\qed

\bigskip

The following technical corollary of Lemma
\ref{lemma gradient large enough at infinity}
will be used in the proof of
Proposition \ref{proposition constructino of index bounded contact cylinder on appropriate affine variety}
to construct certain index bounded contact cylinders.

\begin{corollary} \label{corollary flow time}
Let
$\rho_0$, $\rho_1$ be two partially algebraic plurisubharmonic functions on
 a smooth affine variety $A$
 and let $K \subset A$ be a compact subset.
Then there are
constants $0 < \delta \ll T \ll 1$,
a compact set $Q \subset A$ containing $K$
and an exhausting plurisubharmonic function
$\rho$ on $A$
satisfying
\begin{enumerate}
\item $\rho|_Q = \rho_0|_Q$,
\item $\rho$ is equal to $\rho_0 + \delta \rho_1$ outside a large compact set and
\item for all $x \in A$,
the time $T$ flow of $x$
along $-\nabla_\rho \rho$ is contained in $Q$ and 
is disjoint from $K$ if, in addition, $x \notin Q$.
\end{enumerate}
\end{corollary}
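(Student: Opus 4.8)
\textbf{Proof proposal for Corollary \ref{corollary flow time}.}

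The plan is to apply Corollary \ref{lemma making forms coincide at infinity} with the roles of $\rho_0$ and $\rho_1$ swapped (so that the new plurisubharmonic function agrees with $\rho_0$ on a large compact set and with a positive multiple of $\rho_1 - \log\kappa_2$ near infinity), but the main work is to control the flow time of $-\nabla_\rho\rho$. More precisely, first I would invoke Lemma \ref{lemma gradient large enough at infinity} applied to $\rho_0$ and $\rho_1$ with, say, $k=2$: this produces a compact set $K_0$ outside which $d\rho_t(\nabla_{\rho_t}\rho_t) > \rho_t^2$ for all $t\in[0,1]$, where $\rho_t := \rho_0 + t\rho_1$, and also a vector field $V$ with $d\rho_0(V)>0$, $d\rho_1(V)>0$ outside $K_0$. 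Enlarge $K_0$ to contain $K$. The key point of the gradient estimate is that, along a flow line of $-\nabla_{\rho_t}\rho_t$ outside $K_0$, the value of $\rho_t$ satisfies $\frac{d}{ds}\rho_t(\phi_s(x)) = -d\rho_t(\nabla_{\rho_t}\rho_t) < -\rho_t^2$, so $\rho_t$ strictly decreases and in fact decreases fast enough that one can bound how long the flow line can stay in any given sublevel region. This is what will let me choose the flow time $T$ small so that the flow moves points inward into $Q$ but cannot escape $Q$.

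The detailed steps would be: (1) Fix a compact set $Q_0$ with $K\subset \mathrm{int}(Q_0)$ so that $\rho_0 > \max(\rho_0|_{K}) + 1$ outside $Q_0$, i.e. $Q_0$ separates the sublevel set containing $K$ from infinity. (2) Using Corollary \ref{lemma making forms coincide at infinity} (with $\rho_0,\rho_1$ in the appropriate roles) together with the smoothing of $\max$ from \cite[Proposition 3.20]{CieliebakEliashberg:symplecticgeomofsteinmflds}, build an exhausting plurisubharmonic $\rho$ and a compact $Q\supset Q_0$ with $\rho|_Q = \rho_0|_Q$, with $\rho = \rho_0 + \delta\rho_1$ outside a large compact set for a small $\delta>0$ to be chosen, and with $d\rho(V)>0$ outside a compact subset of $\mathrm{int}(Q_0)$. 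The constant $\delta$ must be taken small enough that $\rho_{\delta} := \rho_0+\delta\rho_1$ still satisfies the strong gradient inequality from Lemma \ref{lemma gradient large enough at infinity} outside $Q_0$ (uniformly in $\delta\in(0,\delta_0]$, which is exactly what that lemma gives since the estimate holds for all $t\in[0,1]$). (3) Choose $T$ small: using $d\rho(V)>0$ outside $\mathrm{int}(Q_0)$ we know $\nabla_\rho\rho \neq 0$ there (in fact $\rho$ strictly decreases along $-\nabla_\rho\rho$), so by compactness there is a lower bound $c>0$ on $d\rho(\nabla_\rho\rho)$ on the compact ``collar'' region $\overline{Q\setminus Q_0}$; pick $T$ so small that, combined with a uniform upper bound on $|\nabla_\rho\rho|$ on that collar, no flow line starting in $Q$ can traverse the collar in time $T$, hence the time-$T$ flow of any $x\in A$ lands in $Q$ (for $x$ already outside $Q$, the flow enters through the collar but cannot reach $Q_0$, and since $\rho$ strictly decreases it in particular leaves the region $\{\rho \geq \max(\rho_0|_K)+1\}\supset A\setminus Q_0$ only after passing fully through the collar, so it stays outside $Q_0 \supset K$). (4) Finally take $0<\delta\ll T\ll 1$ as required, possibly shrinking $\delta$ again after $T$ is fixed.

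The main obstacle I anticipate is step (3): making precise and uniform the claim that, for a single small $T$, the time-$T$ flow of \emph{every} point of $A$ (including points far out near infinity) lands in the \emph{fixed} compact set $Q$, while simultaneously keeping the flow of points outside $Q$ disjoint from $K$. Near infinity the flow moves points a long $\rho$-distance in time $T$ (because of the $\rho^2$ lower bound on $d\rho(\nabla_\rho\rho)$ from Lemma \ref{lemma gradient large enough at infinity}, the ODE $\dot{u} \leq -u^2$ forces $u$ down into bounded range in arbitrarily short time from arbitrarily large initial value), so the two requirements are in tension with each other unless one chooses $Q$, the collar $Q\setminus Q_0$, and $T$ in the right order; this is why the statement demands $0<\delta\ll T\ll 1$ rather than independent constants. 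Concretely I would first fix $Q_0$ (hence $K$-separation), then fix the collar by enlarging to $Q$, then read off $c$ and the Lipschitz bound on that fixed collar to pin down $T$, and only then shrink $\delta$ so that near-infinity behavior of $\rho = \rho_0 + \delta\rho_1$ still obeys the quadratic gradient bound — the quadratic bound being exactly the tool that guarantees points near infinity are swept \emph{all the way} through the collar into $Q$ rather than stalling partway.
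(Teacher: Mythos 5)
Your overall shape is the right one --- the quadratic gradient estimate of Lemma \ref{lemma gradient large enough at infinity} is indeed the engine, and the interpolation between $\rho_0$ and $\rho_0+\delta\rho_1$ with $\delta$ chosen last is how the paper proceeds --- but step (3) has a genuine gap that your own last paragraph circles around without resolving. You propose to fix $Q_0$, then $Q$, and only then ``pin down $T$'' small from a speed bound on the collar $\overline{Q\setminus Q_0}$. But the requirement that the time-$T$ flow of \emph{every} $x\in A$ lands in $Q$ forces a \emph{lower} bound on $T$ once $Q$ is fixed: the ODE $\dot u\leq -u^2$ only guarantees that a point starting at $\rho=R$ reaches $\{\rho\leq 1/T\}$ by time $T$, so you need roughly $Q\supset\{\rho\leq 1/T\}$, i.e.\ $T\gtrsim 1/c$ if $Q$ sits at level $c$. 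Meanwhile your collar argument for keeping points outside $Q$ away from $K$ needs an \emph{upper} bound on $T$ of the form $(c-c_0)/\sup_{\mathrm{collar}}|\nabla_\rho\rho|^2$, and since the same quadratic estimate forces $\sup_{\mathrm{collar}}|\nabla_\rho\rho|^2\gtrsim c^2$, that upper bound is $\lesssim (c-c_0)/c^2 \approx 1/c$. The two windows need not overlap, so ``hence the time-$T$ flow of any $x\in A$ lands in $Q$'' does not follow from ``no flow line starting in $Q$ can traverse the collar in time $T$.'' The paper avoids speed estimates on a collar altogether: it sets $\widehat A_\tau:=\bigcup_{t}\phi_{t,\tau}(A)$ and $\check A_\tau:=\bigcap_t\phi_{t,\tau}(A)$ (relatively compact for $\tau>0$ precisely by the quadratic bound), \emph{defines} $Q:=\overline{\widehat A_{\tau_0}}$ and $T:=2\tau_0$ simultaneously, and arranges $K\subset\check A_{4\tau_0}$; then ``$\phi_T(A)\subset Q$'' and ``$x\notin Q\Rightarrow\phi_T(x)\notin K$'' both become consequences of the semigroup property and the nesting of these attainable sets, with no lower/upper bound tension.

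A secondary issue is step (2). Corollary \ref{lemma making forms coincide at infinity} produces a function equal to $\kappa_1(\rho_0-\log\kappa_2)$ near infinity, not $\rho_0+\delta\rho_1$, so it cannot be cited for this gluing even with the roles swapped; and a smoothed $\max(\rho_0,\rho_0+\delta\rho_1)$ equals $\rho_0$ exactly where $\rho_1<0$, which need not contain $Q$, so the max trick only gives the two required equalities up to additive constants. The paper instead uses the convex interpolation $\check\rho_\delta=(1-\beta)\rho_0+\beta\rho_\delta=\rho_0+\beta\delta\rho_1$ with a cutoff $\beta$ supported in the transition region; this is plurisubharmonic for small $\delta$ by $C^2$-openness of the K\"ahler condition and gives exact equality with $\rho_0$ on $Q$ and with $\rho_0+\delta\rho_1$ outside a compact set, and the final choice of $\delta$ is made so that the modified flow $\psi_{\delta,\cdot}$ still satisfies the confinement statements proved for $\phi_{\delta,\cdot}$, using $C^\infty_{\mathrm{loc}}$ convergence $\rho_\delta\to\rho_0$.
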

\proof
Define $\rho_t := \rho_0 + t \rho_1$ for each $t \in [0,1]$.
Let $\phi_{t,\tau} : A \lra{} A$
be the time $\tau$
flow of $-\nabla_{\rho_t} \rho_t$
for all $\tau \geq 0$ and $t \in [0,1]$.
For each $\tau \geq 0$,
define
$$\widehat{A}_\tau := \cup_{t \in [0,1]} \phi_{t,\tau}(A),
\quad
\check{A}_\tau := \cap_{t \in [0,1]} \phi_{t,\tau}(A).
$$
By Lemma \ref{lemma gradient large enough at infinity},
\begin{equation} \label{equation large enough gradient}
d\rho_t(\nabla_{\rho_t} \rho_t) > \rho_t^2, \quad \forall t \in [0,1]
\end{equation}
outside a compact subset of $A$.
This implies that
$\widehat{A}_\tau$ and $\check{A}_\tau$ are relatively compact subsets for each $\tau > 0$ since $(\rho_t)_{t \in [0,1]}$ are exhausting functions.
We also have the following
properties:
\begin{enumerate}[label=(\alph*)]
\item \label{item closure contained in interior} $\overline{\widehat{A}_{\tau_0}} \subset \widehat{A}_{\tau_1}$
and
$\overline{\check{A}_{\tau_0}} \subset \check{A}_{\tau_1}$
for each $0 \leq \tau_1 < \tau_2$.
\item \label{item flowing sets} $\phi_{t,\tau_0}(\check{A}_{\tau_1}) \subset \widehat{A}_{\tau_0 + \tau_1}$
and
$\phi_{t,\tau_0}(\widehat{A}_{\tau_1}) \supset \check{A}_{\tau_0 + \tau_1}$
for each $\tau_0,\tau_1 \geq 0$.
\end{enumerate}
Choose constants $1 \gg \tau_0 \gg \tau_1 \gg \tau_2>0$
so that
$K \subset \check{A}_{4\tau_0}$,
$\overline{\widehat{A}_{\tau_0}} \subset \check{A}_{2\tau_1}$
and
$\overline{\widehat{A}_{\tau_1}}
\subset \check{A}_{2\tau_2}$.
Define 
$T := 2 \tau_0$
and
$Q := \overline{\widehat{A}_{\tau_0}}$.
Let
$\beta : A \lra{} \R$
be a smooth function equal to
$0$ along $Q$
and $1$ outside $\check{A}_{2\tau_1}$.
Define
$\check{\rho}_t := (1-\beta) \rho_0 + \beta \rho_t$ for all $t \in [0,1]$.
Since being plurisubharmonic is a $C^2$ open condition,
there is a constant $\eta > 0$
so that $\check{\rho}_t$ is plurisubharmonic for all $t \in [0,\eta]$.
Let $\psi_{t,\tau}$ be the time $\tau$ flow of $- \nabla_{\check{\rho}_t} \check{\rho}_t$
for each $t \in [0,\eta]$.
Since 
$\rho_t$
converges to $\rho_0$ in the $C^\infty_{\textnormal{loc}}$ topology as $t \to 0$ and by \ref{item closure contained in interior}, \ref{item flowing sets} above combined with the fact that $\widehat{A}_\tau$, $\check{A}_\tau$ are relatively compact for all $\tau>0$,
there exists $\delta \in (0,\eta]$ small enough so that
\begin{equation} \label{eqn large compact subsets flowing correctly}
\psi_{\delta,\tau}(\check{A}_{\tau_2}) \subset Q, \quad
\psi_{\delta,\tau}(\check{A}_{\tau_2} - Q) \cap K = \emptyset, \quad \forall \ \tau \in \{T - \tau_1, T\}.
\end{equation}
Since $\beta^{-1}([0,1)) \subset \check{A}_{2\tau_1}$  and by \ref{item closure contained in interior}, \ref{item flowing sets}, we have
$$\psi_{\delta,\tau_1}(A - \widehat{A}_{\tau_1}) = \phi_{\delta,\tau_1}(A - \widehat{A}_{\tau_1}) \subset \widehat{A}_{\tau_1} - \check{A}_{2\tau_1}$$
and hence by Equation (\ref{eqn large compact subsets flowing correctly}),
$$\psi_{\delta,T}(A) \subset Q, \quad
\psi_{\delta,T}(A - Q) \cap K = \emptyset.
$$
Hence $\rho := \check{\rho}_{\delta}$ satisfies the properties we want.
\qed

\subsection{Constructing Index Bounded Contact Cylinders.} \label{subsection constructing contact cylinder}

The main aim of this section is to
prove the proposition below
which
 constructs appropriate
index bounded contact cylinders
in Calabi-Yau manifolds.
Recall that a
{\it contact cylinder}
is a codimension $0$ symplectic embedding of a subset
$\check{C} = [1-\epsilon,1+\epsilon] \times C$
of a symplectization of a contact manifold $C$  which bounds a Liouville domain $D$ (see Definition \ref{definition contact cylinder} for more precise details).
Such a contact cylinder
is {\it index bounded}
if 
for each $m > 0$,
there is a constant $\mu_m>0$ so that
each Reeb orbit in $C$
of index in $[-m,m]$
has length $<\mu_m$
(see Definition \ref{definition index bounded contact cylinder}).

\begin{prop} \label{proposition constructino of index bounded contact cylinder on appropriate affine variety}
Let $X$ be a smooth projective variety satisfying
$c_1(X) = 0$
and let $A \subset X$ be an affine open subset.
Let $\rho : A \lra{} \R$ be a partially algebraic plurisubharmonic function as in Definition \ref{definition algebraic plurisubharmonic function} so that $-dd^c \rho$ extends to a K\"{a}hler form $\omega_X$ on $X$.
Then for any compact subset $K \subset A$,
there exists an index bounded contact cylinder $\check{C}$ of $(X,\omega_X)$
inside $A - K$
whose associated Liouville domain $D$
satisfies $K \subset D \subset A$.
Also $\check{C}$ contains a contact cylinder $\check{C}_0 \subset \check{C}$
whose associated Liouville domain
domain $D_0$ contains $D$ and where $-d^c \rho|_{D_0}$ is a Liouville form associated to $\check{C}_0$.
\end{prop}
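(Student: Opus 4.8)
\textbf{Proof plan for Proposition \ref{proposition constructino of index bounded contact cylinder on appropriate affine variety}.}

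The plan is to build the contact cylinder from a carefully chosen exhausting plurisubharmonic function on $A$ whose high level sets bound Liouville domains containing $K$, and to control the indices of Reeb orbits on those level sets using the Calabi--Yau hypothesis $c_1(X)=0$ together with a Conley--Zehnder index growth estimate. First I would set up the geometry: write $\rho_\infty$ for the underlying algebraic plurisubharmonic function with $\rho|_{A-K'}=\rho_\infty|_{A-K'}$ for some compact $K'$, and recall that $\omega_X = -dd^c\rho$ extends to a K\"ahler form on $X$. I would then apply Corollary \ref{corollary flow time} (or, if only one function is needed, a direct version of Corollary \ref{lemma making forms coincide at infinity}) with $\rho_0=\rho$ and some auxiliary second partially algebraic function $\rho_1$, obtaining a plurisubharmonic function (still called $\rho$) which agrees with the original near $K$, is of the model form $\kappa_1(\rho_0 + \delta\rho_1 - \log\kappa_2)$ outside a large compact set $Q$, and whose negative gradient flow contracts all of $A$ into $Q$ in bounded time; this is what makes the level sets $\{\rho = c\}$ for large $c$ into genuine contact-type hypersurfaces of finite-volume ``cylindrical end'' type. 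Choosing $c$ large enough that $\{\rho \le c\}$ contains $K$ and that the level set lies in the cylindrical region, I set $D := \{\rho \le c\}$ and $\check C := \{c - \epsilon' \le \rho \le c + \epsilon'\}$ after reparametrizing the radial coordinate so that $\rho$ corresponds to $r_C \in [1-\epsilon,1+\epsilon]$; the Liouville form is $-d^c\rho$ restricted appropriately, and by construction $K \subset D \subset A$ and $\check C \subset A - K$. The inner contact cylinder $\check C_0$ with $D_0 \supset D$ and Liouville form $-d^c\rho|_{D_0}$ is obtained by taking a slightly larger level set, still inside the region where $\rho$ equals the genuinely plurisubharmonic (not interpolated) function, so that $-d^c\rho$ is an honest Liouville form there.

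The heart of the matter is verifying index boundedness (Definition \ref{definition index bounded contact cylinder}): for every $m>0$ there must be $\mu_m$ bounding the lengths of Reeb orbits of $\alpha_C$ whose index lies in $[-m,m]$. Here I would use the fact that near infinity $\rho$ is $\kappa_1(\rho_0 + \delta\rho_1 - \log\kappa_2)$ where $\rho_0,\rho_1$ are algebraic, i.e. of the form $-\log|s_j|_j$ for sections $s_j$ of ample-type line bundles over a log resolution $X'$ of the pair $(X, X\setminus A)$ with normal crossings boundary. On such a ``cylindrical end'' the Reeb dynamics of the level sets is, up to the reparametrization, the Hamiltonian flow of $\rho$ itself, and the key point is that the \emph{mean Euler characteristic}/index of a closed Reeb orbit of period $\lambda$ on a level set of a plurisubharmonic function of logarithmic type near a normal crossings divisor grows without bound as $\lambda \to \infty$. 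Concretely, in the local coordinates $z_1,\dots,z_n$ with $X'\setminus A = \{\prod_{k=1}^l z_k = 0\}$ from the proof of Lemma \ref{lemma gradient large enough at infinity}, the level sets are (perturbations of) products of circles $\{|z_k| = \text{const}\}$, the Reeb flow rotates these circles with speeds governed by the positive integers $a^j_k$, and long closed orbits correspond to winding many times around these circles; each such winding contributes a fixed positive amount to the Conley--Zehnder index via property \ref{item:sheartransformation} and the additivity \ref{item:czadditive} of Definition \ref{definition conley zehnder index}, exactly as in Lemma \ref{lemma index calculation}. Since $c_1(X)=0$, the index of a Reeb orbit is well-defined independent of capping (as noted in Definition \ref{definition index bounded contact cylinder prelim}), so one gets a genuine function $\gamma \mapsto |\gamma|$ with $|\gamma| \to +\infty$ (or $-\infty$, depending on convention) as $\operatorname{length}(\gamma) \to \infty$; contrapositively, bounded index forces bounded length. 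I would make this rigorous by a Morse--Bott analysis of the model Reeb flow on the product of circles, handling the degenerate orbits by the standard perturbation and the estimate \ref{item:shortpath}, and invoking \cite{McLean:isolated} or an analogous index computation for the contributions of the $z_k$-directions.

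\textbf{Main obstacle.} The delicate step is the index growth estimate: one must show that the Conley--Zehnder indices of \emph{all} closed Reeb orbits (including multiply covered and degenerate ones, and those winding in several of the $l$ normal-crossings directions simultaneously) on the level set tend to infinity with the orbit length, uniformly over the whole (compact) boundary divisor. The interpolation performed in Corollary \ref{corollary flow time} slightly distorts $\rho$ away from the exact algebraic model on the region between $Q'$ and $Q$, so I would need to either arrange the contact cylinder $\check C$ itself to lie in the region where $\rho$ is \emph{exactly} the algebraic model $\kappa_1(\rho_0 - \log\kappa_2)$ (which is possible by choosing the level $c$ large enough, since the interpolation is supported in a fixed compact set), thereby reducing to the pure algebraic case, or else show the index estimate is stable under the $C^2$-small perturbation the interpolation introduces near infinity. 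I expect the cleanest route is the former: push $\check C$ out far enough that $-d^c\rho$ there is genuinely the K\"ahler form of an algebraic plurisubharmonic function with normal-crossings divisor at infinity, and then the index boundedness is a direct computation in the local toric model, with the $a^j_k>0$ ensuring the positivity needed for \ref{item:sheartransformation} to contribute a \emph{definite} sign to the index per winding.
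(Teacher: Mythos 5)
Your overall shape is right --- exploit the algebraic structure of $\rho$ at infinity, reduce to a normal-crossings local model, and show that the Conley--Zehnder index of a Reeb orbit grows linearly (with definite sign) in its winding around the boundary divisor --- but the step you flag as the ``main obstacle'' is exactly where your argument has a genuine gap, and it is not the one you think. The quantity that controls the index contribution per winding around a component $\Delta_j$ of the divisor at infinity is not the vanishing order $a^j_k$ of the defining section of the ample line bundle (those control the \emph{wrapping numbers}, hence the lengths of Reeb orbits, via Lemma \ref{lemma negatively wrapped divisor}); it is the \emph{discrepancy} of $\Delta_j$ with respect to the trivialization of the anti-canonical bundle $\kappa_X^*$ supplied by $c_1(X)=0$ (Definition \ref{defn anti canonical bundle}). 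The paper's computation in Proposition \ref{proposition constructing appropriate contact cylinder near normal crossing divisor} gives $CZ(\zeta)=2\sum_i (a_i+1)d_i + O(1)$ with $d_i<0$ the winding numbers and $a_i$ the discrepancies, so index-boundedness requires $a_i\geq 0$ for every component, \emph{including every exceptional divisor of the resolution}. That non-negativity is Lemma \ref{lemma non-negative discrepancy example}, and it is the real use of the Calabi--Yau hypothesis; your proposal only uses $c_1(X)=0$ to say the index is capping-independent, and asserts ``a fixed positive amount per winding'' via \ref{item:sheartransformation}, which only ever produces the bounded term $\tfrac{1}{2}\mathrm{Sign}(B)$, never linear growth. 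Without the discrepancy input your index estimate simply does not close.

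There is a second, structural gap. You cannot take $\check C$ to be a level set of $\rho$ pushed out toward infinity: first, $X-A$ need not be normal crossings, so one must resolve, and on the resolution $Bl^*\omega_X$ degenerates along the exceptional locus; the paper therefore perturbs to $Bl^*\omega_X+\delta\omega_{\widetilde X}=-dd^c\widehat\rho$, builds the index-bounded cylinder for \emph{that} form (and not as a level set of $\widehat\rho$ either, but as a level set of $\sum_i f(r_i^2)$ inside a standard $U(1)^I$-tubular neighborhood of the SC divisor, obtained only after deforming the divisor so its components become symplectically orthogonal --- the Reeb dynamics on level sets of $\rho$ itself near the divisor are not the clean rotation model you describe). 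The resulting cylinder then lives in the wrong symplectic manifold, and the paper repairs this by flowing it for time $T$ along the Liouville vector field $-\nabla_{\widehat\rho}\widehat\rho$ into the compact region $Q$ where $\widehat\rho=\rho$ and hence $-dd^c\widehat\rho=\omega_X$; this is precisely what item (3) of Corollary \ref{corollary flow time} is engineered for, and it is absent from your proposal. Your alternative of staying in the region where $\rho$ is ``exactly the algebraic model'' does not resolve this, because the cylinder would still be of contact type for the perturbed form rather than for $\omega_X$, and in any case the quantitative Reeb analysis is only available in the standard tubular neighborhood upstairs on $\widetilde X$.
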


The idea of the proof is to blow up $X$ so that the complement of $A$ is a smooth normal crossing divisor.
One then finds a nice symplectic neighborhood of this divisor (after deformation)
and constructs an index bounded contact cylinder in this neighborhood.
As a result we need some technical definitions and lemmas
about the symplectic geometry of normal crossing divisors.

Before that, we need a lemma
about Conley-Zehnder indices of matrices.
\begin{lemma} \label{lemma bounded family of symplectic matrices}
Let $\R^{2n}$ be
the standard symplectic vector space and let $L \subset \R^{2n}$
be a fixed linear Lagrangian subspace.
Let $A := (A_t)_{t \in [0,1]}$
be a smooth family of symplectic matrices on $\R^{2n}$
so that $A_t(x) = x$ for all $x \in L$.
Then $CZ(A) \in [-2n,2n]$.
\end{lemma}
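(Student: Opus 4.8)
\textbf{Proof plan for Lemma \ref{lemma bounded family of symplectic matrices}.}

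The plan is to exploit property \ref{item:constantrankpath} of the Conley--Zehnder index: since every $A_t$ fixes the Lagrangian $L$ pointwise, the kernel $\ker(A_t - \id)$ contains $L$ and hence has dimension at least $n$ for every $t$. The strategy is to show that after a homotopy (relative to endpoints) and a catenation decomposition, the path $A$ can be written as a product of finitely many subpaths, each of which is either constant-rank of kernel-dimension $\geq n$ (contributing $0$ by \ref{item:constantrankpath}) or is a short path near a single symplectic matrix, which by \ref{item:shortpath} contributes something in $[-\tfrac{k}{2},\tfrac{k}{2}]$ with $k = \dim\ker(A - \id) \leq 2n$. Summing via \ref{item:catenationczproperty} then gives a bound, and one must check the constants work out to land inside $[-2n,2n]$.

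More concretely, first I would record that the space of symplectic matrices fixing $L$ pointwise is a closed subgroup $G$ of $Sp(2n;\R)$ and is connected (it is homeomorphic to the group of symplectic automorphisms of $L^\perp/L$-type data, equivalently an affine bundle over a linear symplectic group); so without changing the endpoints one may pass to a generic such path meeting the ``bad'' stratum $\{\dim\ker(A-\id) > n\}$ only at finitely many times $0 < t_1 < \cdots < t_r < 1$. On each closed subinterval between consecutive $t_i$'s (and before $t_1$ and after $t_r$) the rank of $A_t - \id$ is constant equal to $n$, so \ref{item:constantrankpath} gives Conley--Zehnder index $0$ there. Near each $t_i$ one chops out a tiny subinterval on which the path stays in a neighborhood $N_{A_{t_i}}$ as in \ref{item:shortpath}; the catenation of all these pieces is homotopic rel endpoints to $A$, so by \ref{item:homotopyinvariance} and \ref{item:catenationczproperty}, $CZ(A)$ is the sum of the contributions, each bounded in absolute value by $\tfrac{1}{2}\dim\ker(A_{t_i}-\id) \leq \tfrac{1}{2}(2n) = n$.

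The remaining point is to bound the number $r$ of crossings, or rather to arrange that only one contributes near-maximally. The cleanest route I expect to take is not to count crossings at all but to replace the whole path by a single short path: since $G$ is connected, $A_0$ and $A_1$ are joined by a path in $G$, and one can instead argue that the Conley--Zehnder index of \emph{any} path in $G$ from a matrix $A_0$ to a matrix $A_1$ differs from $CZ(A)$ by a fixed amount depending only on endpoints; then choosing a particularly simple representative path (for instance one lying in a maximal subgroup isomorphic to $Sp$ of the orthogonal complement data, where one can read the index off directly, or even a path through matrices all having $\ker \supseteq L$ and moving only in a $2n$-dimensional worth of shear-type directions as in \ref{item:sheartransformation}) gives the bound. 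Using \ref{item:sheartransformation}: a shear path by a symmetric $n\times n$ matrix $B$ has index $\tfrac12\mathrm{Sign}(B) \in [-\tfrac n2,\tfrac n2] \subset [-2n,2n]$, and a general element of $G$ factors (up to homotopy rel endpoints) into a bounded number of such elementary pieces whose indices are controlled.

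The main obstacle will be making the decomposition of an arbitrary path in $G$ into controlled elementary pieces rigorous while keeping the total index inside $[-2n,2n]$ — i.e. verifying that one genuinely only picks up one ``$n$-sized'' contribution and not $r$ of them. I expect the honest fix is the reduction-to-endpoints argument: prove that $CZ$ restricted to paths inside the connected group $G$ depends only on the endpoints (because $G$ is connected, any two such paths are homotopic rel endpoints), and then estimate $CZ$ for one concretely chosen representative using \ref{item:sheartransformation} and \ref{item:czadditive}, where the bound $[-2n,2n]$ drops out of $|\tfrac12\mathrm{Sign}(B)| \leq \tfrac n2$ together with at most a constant number of additive correction terms each bounded by a half-integer times $2n$. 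Care with the fractional ($\tfrac12\mathbb Z$) nature of the index under \ref{item:sheartransformation} and \ref{item:shortpath} is the only delicate bookkeeping.
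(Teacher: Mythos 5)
Your final paragraph is pointing at the right mechanism, but the proposal as written has a genuine gap: you never establish the one structural fact that makes the argument work, namely that a symplectic matrix fixing the Lagrangian $L$ \emph{pointwise} is forced to be a shear $\left(\begin{smallmatrix}\id & -B\\ 0 & \id\end{smallmatrix}\right)$ with $B$ symmetric. Indeed you mis-describe the group $G$: the stabilizer of $L$ \emph{as a subspace} is an affine bundle over $GL(n;\R)$, but the pointwise fixer is exactly the abelian, convex group of shears (writing $A=\left(\begin{smallmatrix}\id & C\\ 0 & D\end{smallmatrix}\right)$, the symplectic condition forces $D=\id$ and $C=C^T$). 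Once this is in hand, the proof is short and is what the paper does: convexity of the space of symmetric matrices lets you homotope $A$ rel endpoints (inside $Sp(2n)$, so \ref{item:homotopyinvariance} applies) to the catenation of the path scaling $B_0$ down to $0$ followed by the path scaling $0$ up to $B_1$; then \ref{item:catenationczproperty} and \ref{item:sheartransformation} give $CZ(A)=\tfrac12(\mathrm{Sign}(B_1)-\mathrm{Sign}(B_0))\in[-n,n]\subset[-2n,2n]$. Without identifying $G$ as the shears you cannot carry out the "choose a simple representative path and compute" step, and your fallback justification --- that connectedness of $G$ makes any two paths in $G$ with common endpoints homotopic rel endpoints --- is false as stated: two such paths could a priori differ by a loop generating $\pi_1(Sp(2n))\cong\Z$, shifting $CZ$ by an even integer. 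What saves you is that $G$ is actually contractible (convex), but that is precisely the fact you have not proved.

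Your first strategy (generic perturbation, constant-rank strata via \ref{item:constantrankpath}, and \ref{item:shortpath} near each crossing) does not close on its own, as you half-acknowledge: each of the $r$ crossings contributes up to $\tfrac{k}{2}\le n$ in absolute value and nothing bounds $r$ or forces cancellation, so the estimate you get is $O(rn)$, not $2n$. The cancellation that actually occurs is a telescoping of signature jumps of $B_t$, which is invisible at the level of the crossing-by-crossing bound and is exactly what the two-piece shear homotopy captures globally. So the fix is not to count crossings more carefully but to drop that approach entirely and prove the shear normal form first.
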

\proof
Let $x_1,x_2,\cdots,x_n,y_1,\cdots,y_n$
be a basis of $\R^{2n}$
so that
the standard linear symplectic
form on it is
$\Omega_\std = \sum_{i=1}^n x_i^* \wedge y_i^*$
where $x_1^*,\cdots,x_n^*,y_1^*,\cdots,y_n^*$
is the corresponding dual basis.
We can also assume that $L = \{y_1^*=y_2^*=\cdots=y_n^*=0\}$.
Since $A_t(x_i)=x_i$ for each $i=1,\cdots,n$, 
there exists a smooth family of $n \times n$ symmetric matrices
$(B_t)_{t \in [0,1]}$
so that
$$
A_t = \left(
\begin{array}{ll}
\id & -B_t \\
0 & \id
\end{array}
\right), \quad \forall \ t \in [0,1]
$$
with respect to the basis above where $\id$ is the $n \times n$ identity matrix.
Since the space of symmetric $n \times n$ matrices is convex, we have that the above path of matrices is homotopic relative to its endpoints to the catenation of the paths $A^0 = (A^0_t)_{t \in [0,1]}$
and $A^1 = (A^1_t)_{t \in [0,1]}$ where:
\begin{equation} \label{eqn shear matrices}
A^0_t := \left(
\begin{array}{ll}
\id & -(1-t)B_0 \\
0 & \id
\end{array}
\right), \quad
A^1_t := \left(
\begin{array}{ll}
\id & -tB_1 \\
0 & \id
\end{array}
\right),
\quad t \in [0,1].
\end{equation}
Therefore by \ref{item:catenationczproperty} and \ref{item:homotopyinvariance},
\begin{equation} \label{equation sum of indices}
CZ(A)
= CZ(A^0)
+ CZ(A^1).
\end{equation}
Also by
\ref{item:catenationczproperty},
$CZ(A^0)
= -CZ((A^0_{1-t})_{t \in [0,1]})$,
and hence by \ref{item:sheartransformation} and
Equation
(\ref{eqn shear matrices}),
$$CZ(A^0)
= -\frac{1}{2} \textnormal{Sign}(B_0),
\quad
CZ(A^1)
= \frac{1}{2}\textnormal{Sign}(B_1).
$$
Combining this with Equation
(\ref{equation sum of indices})
gives us
$$CZ((A_t)_{t \in [0,1]})
= \frac{1}{2}\left(
\textnormal{Sign}(B_1)-\textnormal{Sign}(B_0)
\right) \in [-2n,2n].$$
%
%
%
%
\qed

\begin{defn} \label{defn sc divisor}
\cite[Definition 2.1]{McLeanTehraniZinger:normalcrossings}.
Let $(\Delta_i)_{i \in S}$ be a finite
collection of
transversally intersecting
closed codimension $2$ symplectic submanifolds
of a compact symplectic manifold
$(W,\omega_W)$ so that
$\Delta_I := \cap_{i \in I} \Delta_i$ is a symplectic submanifold for all $I \subset S$ (our convention is that if $I = \emptyset$ then $\Delta_I = W$).
The {\it symplectic orientation}
on $\Delta_I$ is the orientation on $\Delta_I$
induced by the symplectic form.
Let $N\Delta_I$ be the normal bundle of $\Delta_I$ for each $I \subset S$.
The {\it intersection orientation}
on $\Delta_I$ is the orientation
on $\Delta_I$ coming from the symplectic orientation on its normal bundle
induced by the splitting
$N\Delta_I = \oplus_{i \in I} N\Delta_i|_{\Delta_I}$
and the symplectic orientation on $M$.

A {\it symplectic crossings divisor}
or {\it SC divisor}
inside a symplectic manifold $(W,\omega_W)$
is a finite
collection
$(\Delta_i)_{ i \in S}$
of transversally intersecting closed submanifolds of $W$ as above
so that $\Delta_I$ is symplectic and so that the
symplectic orientation and the intersection orientation
of $\Delta_I$ agree for all
$I \subset S$.
\end{defn}

One of the main examples
of an SC divisor to keep in mind
is a union of transversally intersecting complex hypersurfaces
in a K\"{a}hler manifold.
The definition above
should be thought of a
symplectic version
of such a union of complex
hypersurfaces.
This definition is more flexible
and will enable us to control what
the symplectic structure
looks like near $\cup_{i \in S} \Delta_i$
after deforming $(\Delta_i)_{i \in S}$.
For instance,
it would be nice for these $\Delta_i$'s to be symplectically orthogonal to each other after deformation.
The condition ensuring that the symplectic and intersection orientation agree is crucial for such a deformation to exist.
An example where this does not happen is if
$W = T^* \R^2$ and
if $\Delta_1$ is the graph of $xdy$
and $\Delta_2$ the graph of $y dx$
where $x,y$ are the standard coordinates on $\R^2$.
These are two symplectic hypersurfaces which intersect negatively at the origin.
There is no way of isotoping these linear symplectic subspaces so that
they intersect orthogonally with respect to the symplectic form.
A more sophisticated example where
this orientation condition fails is contained in \cite[Example 2.7]{McLeanTehraniZinger:normalcrossings}.

\begin{defn} \label{definition nice neighborhood of sc divisor}
Let $Q \subset W$ be a submanifold of a
manifold $W$.
A {\it tubular neighborhood}
of $Q$ is a smooth fibration
$\pi_Q : U_Q \lra{} Q$
so that
\begin{enumerate}
	\item $U_Q \subset W$ is an open
	subset containing $Q$,
	\item there is a metric $g$
	on $W$
	so that
$\pi_Q = \pi_{DQ} \circ \exp^{-1}$
where
$$DQ := \{v  \in TW|_x \ : \ x \in Q, \ g(v,v) < 1, \ g(v,w) = 0 \ \forall \ w \in TQ|_x \}$$
is the unit disk normal bundle,
$\pi_{DQ} : DQ \lra{} Q$ is the natural projection map
and
$\exp : DQ \lra{} W$ is the exponential map with respect to $g$ so that
\item $\exp$ is an embedding and $\exp(DQ) = U_Q$.
\end{enumerate}

Recall that an {\it Ehresmann connection}
on a smooth fiber bundle $\pi : E \lra{} B$
is a subbundle $H \subset TE$
so that $D\pi|_{H|_x} : H|_x \lra{} TB|_{\pi(x)}$ is an isomorphism for each
$x \in E$.
Such a connection is {\it complete}
if for every smooth embedding
$p : [0,1] \lra{} B$
and every $x \in \pi^{-1}(p(0))$,
there is a unique lift
$\widetilde{p} : [0,1] \lra{} E$
of $p$ tangent to $H$.
In other words,
points in $E$ do not parallel transport to infinity in finite time.

If $\pi : E \lra{} B$
is a smooth fibration
and $\Omega$ a closed
$2$-form on $E$ making the fibers of $E$ symplectic
then the {\it associated symplectic connection}
is defined to be the Ehresmann connection consisting of vectors $\Omega$-orthogonal to the fibers. I.e.
$$H_\Omega := \{Q \in TE|_x \ : \ x \in E, \ \Omega(Q,A) = 0 \ \forall \ A \in \ker(D\pi)|_x\}.$$

Let $I$ be a finite set.	
A {\it symplectic
$U(1)^I$ neighborhood}
of a symplectic submanifold $Q \subset W$
of a symplectic manifold $(W,\omega_W)$
is a tubular neighborhood
$\pi_Q : U_Q \lra{} Q$ of $Q$
where
\begin{enumerate}
	\item the fibers are symplectic submanifolds symplectomorphic to $\prod_{i \in I}\D_i(\epsilon)$ where $\D_i(\epsilon) \subset \C$ is the open symplectic disk of radius $\epsilon$ labeled by $i \in I$,
	\item the fiber bundle $\pi_Q$
	has structure group $U(1)^I := \prod_{i \in I} U(1)$ given by rotating such disks in the natural way and
	\item
	the associated symplectic connection is complete and
	the parallel transport maps
	induced by the symplectic connection
	respect the above structure group (in other words, parallel transport maps
	between fibers in a $U(1)^I$ trivialization are elements of $U(1)^I$).
\end{enumerate}
For $I' \subset I$,
let
$U_Q^{I'} \subset U_Q$
be the subset of points fixed under
the $U(1)^{I'} \subset U(1)^I$ action.
For each $I' \subset I$,
the {\it $U(1)^{I'}$-bundle
	associated to $\pi_Q$}
is the fibration
$\pi^{I'}_Q : U_Q \lra{} U_Q^{I'}$
whose restriction to each fiber in a $U(1)^I$ trivialization
is the natural projection map
$$\prod_{i \in I} \D_i(\epsilon)
\lra{} \prod_{i \in I'} \D_i(\epsilon)$$
and with induced structure group
$U(1)^{I'}$.
This is naturally a symplectic $U(1)^{I'}$ neighborhood of $U_Q^{I'}$
inside $U_Q$.

A {\it standard tubular neighborhood}
of an SC divisor $(\Delta_i)_{i \in S}$
consists of a symplectic $U(1)^I$
neighborhood
$\pi_I : U_I \lra{} \Delta_I$ for each $I \subset S$
so that
\begin{enumerate}
	\item $U_I \cap U_{I'} = U_{I \cup I'}$ for all $I , I' \subset S$,
	\item $\pi_{I'}(U_I) = U_I \cap \Delta_{I'}$ for all $I' \subset I \subset S$ and
	\item the $U(1)^{I'}$-bundle
	associated to
	$\pi_I$ is equal to
	$\pi_{I'}|_{U_I} : U_I \lra{} U_I \cap \Delta_{I'}$
	as fiber bundles with structure groups
	$U_Q^{I'}$
	for all $I' \subset I \subset S$.
\end{enumerate}

The {\it radius} of this standard tubular neighborhood is $\epsilon$.
The {\it radial coordinate $r_i : U_i \lra{} \R$ corresponding to $\Delta_i$}
is the map whose restriction to each fiber $\D_i(\epsilon)$ of a $U(1)^{\{i\}}$-trivialization
of $\pi_i$ is the standard radial coordinate on this disk.
\end{defn}

\begin{defn} \label{definition wrapping number discrepancy}
Let $(W,\omega_W)$ be a closed
symplectic manifold of dimension $2n$
and let $(\Delta_i)_{i \in S}$ be a symplectic
SC divisor in $W$. Define $W^o := W - \cup_{j \in S} \Delta_j$.
Let $\theta \in \Omega^1(W^o)$ satisfy $d\theta = \omega_W|_{W^o}$.
Let $N$ be a neighborhood of $\cup_{j \in S} \Delta_j$ which deformation retracts on to $\cup_{j \in S} \Delta_j$
and let $\beta : N \lra{} [0,1]$ be a compactly supported smooth function equal to
$1$ near $\cup_{j \in S} \Delta_j$.
Let $\omega_c$ be the compactly supported closed $2$-form
on $N$ equal to $\omega$ near $\cup_{j \in S} \Delta_j$
and $d(\beta \theta)$ inside $N \cap W^o$.
Then since $N$ deformation retracts
onto $\cup_{j \in S} \Delta_j$,
the Lefschetz dual of $\omega_c$
is equal to $-\sum_j w_j [\Delta_j] \in H_{2n-2}(N;\R)$ for unique real numbers $(w_j)_{j \in S}$.
The {\it wrapping number of $\theta$ around $\Delta_j$} is defined to be $w_j$ for each $j \in S$.
We call $(\Delta_j)_{j \in S}$ a {\it negatively wrapped divisor} if there exists a $1$-form
$\theta$ on $W^o$ as above so that
the wrapping number of $\theta$ around $W_j$ is negative for each $j \in S$.

\end{defn}

The following lemma gives us an important example of a negatively wrapped divisor.
\begin{lemma} \label{lemma negatively wrapped divisor}
Let $X$ be a complex projective variety
and let $A \subset X$ be a codimension $0$ affine subvariety so that $X - A$ is a union of transversally intersecting complex hypersurfaces
$D_1,\cdots,D_l$.
Let $\rho : A \lra{} \R$ be an exhausting
plurisubharmonic function on $A$
as in Definition \ref{definition algebraic plurisubharmonic function}
so that $-dd^c \rho$ extends to
a K\"{a}hler form $\omega_X$ on $X$.
Then $(D_i)_{i=1}^l$ is a negatively wrapped divisor on $(X,\omega_X)$.
\end{lemma}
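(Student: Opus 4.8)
\textbf{Proof proposal for Lemma \ref{lemma negatively wrapped divisor}.}

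The plan is to compute the wrapping numbers directly using the canonical primitive $-d^c\rho$ of $\omega_X$ on $A = X - \bigcup_i D_i$, together with a Poincaré–Lelong type identity. First I would set $\theta := -d^c\rho = -d\rho\circ J_A$, which satisfies $d\theta = -dd^c\rho = \omega_X|_A$ by hypothesis. The key point is that $\theta$ is an \emph{exact} primitive coming from a plurisubharmonic potential which, by Definition \ref{definition algebraic plurisubharmonic function}, has the form $\rho = -\log|s|$ near $X-A$ for a holomorphic section $s$ of an ample line bundle $L$ with $s^{-1}(0) = X - A$ and multiplicities $a_i \geq 1$ along $D_i$; away from a compact set this is the situation of Lemma \ref{lemma gradient large enough at infinity}. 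Then near each $D_j$, in the local model of Lemma \ref{lemma gradient large enough at infinity} (holomorphic coordinates $z_1,\dots,z_n$ with $D_i = \{z_i = 0\}$), one has $\rho = \eta - \log|h| - \sum_i a_i\log|z_i|$ with $\eta,\log|h|$ smooth, and the singular part $-\sum_i a_i\log|z_i|$ is exactly a constant multiple of the standard potential whose $d^c$ has wrapping number $-a_i$ around $\{z_i=0\}$.

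Next I would make the computation of the wrapping number intrinsic, following Definition \ref{definition wrapping number discrepancy}. Choose a neighborhood $N$ of $\bigcup_i D_i$ deformation retracting onto it, a cutoff $\beta$ equal to $1$ near $\bigcup_i D_i$ and supported in $N$, and form the compactly supported closed $2$-form $\omega_c$ on $N$ equal to $\omega_X$ near the divisor and to $d(\beta\theta)$ on $N\cap A$. The Lefschetz dual of $\omega_c$ lies in $H_{2n-2}(N;\mathbb R) \cong \bigoplus_i \mathbb R[D_i]$ (using the deformation retraction, so $H_{2n-2}(N;\mathbb R)$ is generated by the fundamental classes of the $D_i$ and their lower strata, but the class of $\omega_c$ is a combination of the $[D_i]$), so it is $-\sum_i w_i[D_i]$ for uniquely determined $w_i$, which are the wrapping numbers. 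To evaluate $w_j$, I would pair $[\omega_c]$ with a small meridian $2$-disk $\delta_j$ transverse to $D_j$ at a generic smooth point (i.e. a point of $D_j$ lying on no other $D_i$); there $\omega_c$ restricts to $d(\beta\theta)$ outside the center, and by Stokes $\int_{\delta_j}\omega_c = \int_{\partial\delta_j}\beta\theta = \int_{\partial\delta_j}\theta$ for a small enough loop where $\beta\equiv 1$. Using the local expression $\theta = -d^c\rho$ with $\rho = -a_j\log|z_j| + (\text{smooth})$, the loop integral $\int_{\partial\delta_j}(-d^c\rho)$ picks out only the singular term and evaluates to a negative multiple of $a_j$ (the smooth terms integrate to zero over a small loop in the limit, and $-d^c(-a_j\log|z_j|) = a_j\, d^c\log|z_j|$ whose integral over the oriented meridian circle is $2\pi a_j$ up to a fixed positive constant). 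Hence $w_j < 0$ since $a_j \geq 1$.

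The main obstacle I anticipate is bookkeeping the sign and orientation conventions: one must check that the meridian disk $\delta_j$ is oriented compatibly with the symplectic/intersection orientation of $D_j$ as an SC divisor (here $(D_i)$ is automatically an SC divisor since the $D_i$ are complex hypersurfaces in a Kähler manifold, so symplectic and intersection orientations agree by the discussion after Definition \ref{defn sc divisor}), and that the Lefschetz-dual pairing convention in Definition \ref{definition wrapping number discrepancy} matches the sign coming out of the Stokes computation, so that the negativity of $a_j$ indeed translates to $w_j < 0$ rather than $w_j > 0$. A secondary technical point is that $\theta$ should be replaced, if necessary, by a $1$-form agreeing with $-d^c\rho$ only near $\bigcup_i D_i$ and extending smoothly over $A$ — but since $-d^c\rho$ is already globally defined on all of $A$ this is immediate, so the single global primitive $\theta = -d^c\rho$ works directly, and the wrapping number of $\theta$ around each $D_j$ is $-a_j < 0$, establishing that $(D_i)_{i=1}^l$ is a negatively wrapped divisor.
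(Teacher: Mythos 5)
There is a genuine gap: your proof assumes more than the lemma's hypotheses provide. The lemma only asks that $\rho$ be an \emph{exhausting plurisubharmonic} function with $-dd^c\rho$ extending to a K\"ahler form; it does not ask that $\rho$ be algebraic (or partially algebraic) plurisubharmonic, so you are not entitled to the local expression $\rho = \eta - \log|h| - \sum_i a_i\log|z_i|$ with integer multiplicities $a_i\ge 1$ on which your entire computation rests. The paper's proof avoids this by using only the exhausting property: it restricts to a holomorphic disk $\iota:\D\hookrightarrow X$ meeting $D_j$ transversally at the origin, picks a smooth local potential $f$ with $\iota^*\omega_X=-dd^cf$, observes that $w_j = \frac{1}{2\pi}\int_{\partial\D}-d^c(\iota^*\rho-f)$ (this is the content of the cited characterization of wrapping numbers, and the class of $-d^c(\iota^*\rho-f)$ in $H^1(\D-\{0\};\R)$ is computed on any loop winding once), and then argues that since $\iota^*\rho-f\to+\infty$ as $|z|\to 0$, the circle-average of its radial derivative must be negative on some circle $\{r=\eta\}$, which makes $\int_{\{r=\eta\}}-d^c(\iota^*\rho-f)=\eta\int_0^{2\pi}\kappa(\eta e^{i\vartheta})\,d\vartheta<0$. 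That averaging step is the idea your proposal is missing; with it, no pole structure (and in particular no integrality or positivity of multiplicities) is needed.

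Two secondary points. First, your Stokes manipulation $\int_{\delta_j}\omega_c=\int_{\partial\delta_j}\theta$ is not quite right as written, because $\omega_c$ equals $\omega_X$ (not $d(\beta\theta)$) near the center of the meridian disk; the correct statement subtracts a smooth local primitive of $\omega_X$ on the disk, exactly the role played by $-d^cf$ above. Second, if you do add the hypothesis that $\rho$ is algebraic near the divisor (which is in fact the case in the paper's applications), your computation does go through and yields $w_j=-a_j$, modulo the $d^c$ sign convention you already flagged; but as a proof of the lemma as stated it does not suffice.
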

\proof
Let $\theta = -d^c \rho$.
We will show that the wrapping number
of $\theta$ around $D_i$
is negative for each $i \in \{1,\cdots,l\}$.
Fix such a $D_i$.
We will use a characterization of wrapping number
in terms of embedded disks
(See \cite[Lemma 5.5]{McLean:isolated}).
Let $\D \subset \C$ be the closed unit disk
and let $\iota : \D \hookrightarrow X$
be a holomorphic embedding
so that
\begin{itemize}
\item $\iota^{-1}(D_i) = \{0\}$,
\item $\iota(\D)$ intersects $D_i$ transversally and
\item $\iota^{-1}(D_j) = \emptyset$ for each $j \neq i$.
\end{itemize}
Since $\omega_X$ is a K\"{a}hler form
and since $\D$ is contractible,
there exists a smooth function
$f : \D \lra{} \R$
so that $\iota^*\omega_X = -dd^c f$.
Since $-dd^c (\iota^*\rho -f) = 0$,
we get that $-d^c(\iota^*\rho - f)$
represents a De Rham cohomology class
in $H^1(\D - \{0\};\R)$.
By \cite[Lemma 5.5]{McLean:isolated},
this cohomology class determines the wrapping number $w_i$ of $\theta$ around $D_i$.
Since De Rham cohomology classes
in $H^1(\D - \{0\};\R)$
are determined by integration around any loop wrapping once positively around the origin,
we get (\cite[Lemma 5.5]{McLean:isolated}):
\begin{equation} \label{equation wrapping number as integral}
w_i = \frac{1}{2\pi}\int_{\partial \D} -d^c(\iota^*\rho - f).
\end{equation}

Let $(r,\vartheta)$ be the standard polar coordinates on $\D$ and let $\partial_r := \frac{\partial}{\partial r}$ be the unit radial vector field on $\D - \{0\}$.
Define
$$\kappa : \D - \{0\} \lra{} \R, \quad \kappa := d(\iota^*\rho - f)\left(\partial_r\right).$$
Since $\rho$ is an exhausting function,
we have that $\rho(z) - f(z)$ tends to infinity as $|z|$ tends to $0$.
Hence there exists $\eta \in (0,1)$
so that
$$\int_0^{2\pi} \kappa(\eta e^{i\vartheta}) d\vartheta < 0.$$
Therefore
$$\int_{\{r = \eta\}} -d^c(\iota^*\rho -f)
= \eta\int_0^{2\pi} \kappa(\eta e^{i\vartheta}) d\vartheta < 0.$$
Hence the integral (\ref{equation wrapping number as integral})
is negative
which implies that the wrapping number
of $\theta$ around $D_i$ is negative.
\qed

\begin{defn} \label{defn anti canonical bundle}
Let $W$ be a manifold of dimension $2n$ with an almost complex structure $J$.
The {\it anti-canonical bundle}
of $(W,J)$ is the complex line bundle
$\kappa^*_W := \wedge^n_\C(TW,J)$.
Let $(\Delta_j)_{j \in S}$
be a finite collection of transversally intersecting codimension $2$ submanifolds.
Define $W^o := W - \cup_{j \in S} \Delta_j$.
Let $\tau : \kappa^*_W|_{W^o} \lra{} W^o \times \C$ be a trivialization of the anti-canonical bundle of $(W^o,J)$.
Let $N \subset W$ be a neighborhood of $\cup_{j \in S} \Delta_j$ which deformation retracts on to $\cup_{j \in S} \Delta_j$ and let $s$ be a smooth
section of $\kappa^*_W|_N$
transverse to $0$
so that $s(x) := \tau^{-1}(x,1)$
for all $x$
outside a compact subset of $N$.
Then $[s^{-1}(0)] = -\sum_j a_j [\Delta_j] \in H_{2n-2}(N;\Z)$
for unique $a_j \in \Z, \ j \in S$.
We define
the {\it $\tau$-discrepancy of $\Delta_j$}
to be $a_j$ for each $j \in S$.
If it is clear that we are using the trivialization $\tau$ (up to isotopy),
we will call the $\tau$-discrepancy of $\Delta_j$ the {\it discrepancy of $\Delta_j$}
for each $j \in S$.
\end{defn}

The following lemma gives us
an example of an SC divisor
where the discrepancy of its components are non-negative.
Before we state the lemma,
we remind the reader of some important notions in algebraic geometry.

\begin{defn} \label{definition divisors and line bundle etc}
Let $Y$ be a compact complex manifold of complex dimension $n$.
We define the {\it canonical bundle} $\kappa_Y$ of $Y$ to be the dual of the anti-canonical bundle of $Y$.
Let $\pi : \check{Y} \lra{} Y$ be a holomorphic map. 
We define the
{\it relative canonical bundle} of $\pi$
to be
\begin{equation} \label{equation relative canonical bundle}
\kappa_{\widetilde{Y}/Y} := \kappa_{\widetilde{Y}} \otimes (\pi^* \kappa_Y)^*.
\end{equation}
For any subvariety $F \subset \check{Y}$,
we say that $F$ is {\it contracted by $\pi$} if the dimension of the subvariety $\pi(F) \subset Y$ is less than the dimension of $F$.

Recall that a {\it divisor}
in $Y$
is a formal $\Z$-linear combination 
$D = \sum_{i=1}^l a_i D_i$
of codimension $1$ subvarieties  $D_1,\cdots,D_l$ in $Y$.
Such a divisor is {\it effective}
if $a_i \geq 0$ for each $i=1,\cdots,l$.
The {\it support} $\textnormal{supp}(D)$
of $D$ is the subset $\bigcup_{i \in \{1,\cdots, l\}, a_i \neq 0} D_i$.
The associated {\it homology class
$[D]$} is the sum
$\sum_{i=1}^l a_i [D_i] \in H_{2n-2}(Y;\Z)$
where $[D_i]$ is the fundamental class of $D_i$ (See \cite[Chapter 0, Section 4]{GriffithsHarris:algeraicgeometry}).
We will define
$[D]^* \in H^2(Y;\Z)$
to be the Poincar\'{e} dual of $[D]$.
Two divisors are {\it numerically equivalent}
if they represent the same homology class.
The {\it divisor line bundle correspondence}
gives us a 1-1 correspondence
between divisors $E$
and pairs $(L,s)$
where $L$ is a line bundle and $s$ a holomorphic section of $L$
(see \cite[Chapter 1, Section 1]{GriffithsHarris:algeraicgeometry}).
We define $(s)$ to be the {\it divisor associated to $(L,s)$} under the divisor line bundle correspondence.
\end{defn}

Under the divisor line bundle correspondence, we have that $[(s)]^* = c_1(L)$ (\cite[Chapter 1, Section 1]{GriffithsHarris:algeraicgeometry})).
Therefore if $s$ and $\check{s}$
are holomorphic sections of two line bundles $L$ and $\check{L}$
then $(s)$ is numerically equivalent to $(\check{s})$
if and only if $c_1(L) = c_1(\check{L})$.

\begin{lemma} \label{lemma non-negative discrepancy example}
Let $\pi : \widetilde{X} \lra{} X$
be a morphism of smooth projective varieties over $\C$.
Let $A \subset X$, $\widetilde{A} \subset \widetilde{X}$ be Zariski dense 
affine subvarieties
so that
\begin{equation} \label{equation isomorphism outside divisor}
\pi|_{\widetilde{A}} : \widetilde{A} \lra{} A
\end{equation}
is an isomorphism.
Suppose $\widetilde{X} - \widetilde{A}$ is a union of transversally intersecting complex hypersurfaces $D_1,\cdots,D_l$.
Let $\tau : \kappa^*_X \lra{} X \times \C$ be a trivialization of the anti-canonical bundle
of $X$
and let
$\widetilde{\tau} := \tau \circ (\pi|_{\widetilde{A}})$
be the induced trivialization of the anti-canonical bundle of $\widetilde{A}$.
Then the $\widetilde{\tau}$-discrepancy
of $D_i$ is non-negative for each $i=1,\cdots,l$.
\end{lemma}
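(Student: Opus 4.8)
The plan is to reduce the statement to a computation of discrepancies in the usual algebro-geometric sense and then invoke the fact that $\widetilde\tau$-discrepancy, as defined via the trivialization coming from $X$, agrees with the coefficients appearing in the relative canonical bundle $\kappa_{\widetilde X/X}$. First I would recall the divisor--line-bundle correspondence from Definition \ref{definition divisors and line bundle etc}: a global trivialization $\tau$ of $\kappa_X^*$ exists precisely because we are assuming nothing here except a section away from the divisor; more carefully, what we actually have is a nowhere-vanishing section of $\kappa_X^*$ over $A$ (pulled back from $X$), hence a rational section of $\kappa_{\widetilde X}^*$ on $\widetilde X$, and the $\widetilde\tau$-discrepancy of $D_i$ is by construction the order of vanishing (with sign) of this section along $D_i$. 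Equivalently, writing $K_X$, $K_{\widetilde X}$ for the canonical divisors and $s$ for the chosen rational canonical section, the divisor of $\pi^*s$ on $\widetilde X$ is $\pi^*(K_X) + \sum_i a_i D_i$ where the $D_i$ are exactly the exceptional-type divisors lying over $X-A$, and $a_i$ is the $\widetilde\tau$-discrepancy of $D_i$. In other words $K_{\widetilde X} = \pi^* K_X + \sum_i a_i D_i$ in a neighbourhood of $\widetilde X - \widetilde A$, so the $a_i$ are precisely the coefficients of the relative canonical divisor of $\pi$ supported on $\widetilde X-\widetilde A$.

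Next I would point out that since $\pi|_{\widetilde A}:\widetilde A\to A$ is an isomorphism and $A$, $\widetilde A$ are Zariski dense, the morphism $\pi$ is birational; moreover every component $D_i$ of $\widetilde X-\widetilde A$ maps into $X-A$, which has codimension at least $2$ in $X$ (here one must be slightly careful: if some $D_i$ mapped onto a codimension-$1$ subvariety of $X$, that subvariety would be disjoint from $A$, contradicting density of $A$ unless $\pi$ contracts nothing along $D_i$; but then $D_i$ would be a component of $X-A$, contradicting that $\pi|_{\widetilde A}$ is defined on a dense open and $D_i\not\subset\widetilde A$). Hence each $D_i$ is $\pi$-exceptional, i.e.\ contracted by $\pi$ in the sense of Definition \ref{definition divisors and line bundle etc}. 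Now the key classical input is the nonnegativity of discrepancies for a birational morphism between smooth varieties: for $\pi:\widetilde X\to X$ a birational morphism of smooth projective varieties, every $\pi$-exceptional prime divisor $D_i$ appears in $K_{\widetilde X}-\pi^*K_X$ with coefficient $a_i\ge 1>0$ (this is, for instance, the standard computation via local generators of the top exterior power of differential forms, or a consequence of the fact that $\widetilde X$ smooth implies canonical, hence log-terminal, singularities of $X$ with all discrepancies $\ge 1$; it also follows directly from an explicit blow-up chart computation of the Jacobian). Combining this with the identification of $a_i$ with the $\widetilde\tau$-discrepancy from the previous paragraph gives the claim.

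Concretely, the steps in order are: (1) translate the topological/section-theoretic Definition \ref{definition divisors and line bundle etc} of $\widetilde\tau$-discrepancy into the statement $K_{\widetilde X}\equiv\pi^*K_X+\sum_i a_i D_i$ near $\widetilde X-\widetilde A$, i.e.\ the $a_i$ are the multiplicities of the relative canonical divisor along the $D_i$; this uses that $\tau$ trivializes $\kappa_X^*$ over $A$ and that $\pi^*$ of a nowhere-zero section can only acquire zeros (not poles) along divisors mapping into the locus where $X$ is smooth and the section is regular --- which is all of $X$ --- so the $a_i$ are the vanishing orders of $\pi^*s$, matched against $[s^{-1}(0)]=-\sum a_j[\Delta_j]$ with the sign convention of Definition \ref{defn anti canonical bundle}. (2) Show $\pi$ is birational and each $D_i$ is $\pi$-exceptional, using density of $A,\widetilde A$ and $\mathrm{codim}(X-A)\ge 2$. (3) Apply nonnegativity (in fact positivity) of discrepancies of exceptional divisors for birational morphisms between smooth varieties. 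I expect step (3) to be the substantive point, but it is a standard theorem of birational geometry that may simply be cited; the part requiring genuine care in the write-up is step (1), namely checking that the sign conventions in Definitions \ref{defn anti canonical bundle} and \ref{definition divisors and line bundle etc} line up so that ``$\widetilde\tau$-discrepancy'' really equals the algebro-geometric discrepancy and not its negative, and confirming via density of $A$ that no $D_i$ is missed or mis-signed because it fails to be exceptional.
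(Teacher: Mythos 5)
Your overall strategy (identify the $\widetilde{\tau}$-discrepancies with the coefficients of the relative canonical divisor of $\pi$, then argue those coefficients are non-negative) is the same as the paper's, and your step (1) is essentially the sign-convention bookkeeping the paper carries out by tensoring the Jacobian section of $\kappa_{\widetilde{X}/X}$ with the pullback of the nowhere-vanishing canonical section $\sigma$ determined by $\tau$. However, your step (2) contains a genuine error. You assert that $X-A$ has codimension at least $2$ and conclude that every $D_i$ is $\pi$-exceptional. Since $A$ is a dense \emph{affine} open in the projective variety $X$, its complement $X-A$ is the support of an (ample) divisor, i.e.\ it has pure codimension $1$, not $\geq 2$. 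Your parenthetical attempt to rule out a $D_i$ dominating a divisor of $X$ also fails: a codimension-$1$ subvariety disjoint from $A$ does not contradict density of $A$ (the components of $X-A$ themselves are such subvarieties). In the paper's actual application $\pi$ is a blow-up resolving $X-A$ to normal crossings, and the strict transforms of the components of $X-A$ are among the $D_i$; these are \emph{not} contracted by $\pi$. Consequently step (3), which would give $a_i\geq 1$ for every $i$, proves something false in general (take $\pi=\mathrm{id}$, or any non-exceptional $D_i$, where the discrepancy is $0$); this is precisely why the lemma claims only non-negativity.

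The repair is short and is what the paper does: the Jacobian of $\pi$ is a \emph{holomorphic} section $s$ of $\kappa_{\widetilde{X}/X}\cong\Hom(\kappa^*_{\widetilde{X}},\pi^*\kappa^*_X)$, so its divisor $(s)$ is automatically effective — no appeal to the ``discrepancy $\geq 1$'' theorem is needed, only effectivity of the divisor of a holomorphic section. The hypothesis that $\pi|_{\widetilde{A}}$ is an isomorphism is used only to place the support of $(s)$ inside $\widetilde{X}-\widetilde{A}$ (the Jacobian cannot vanish on $\widetilde{A}$), so $(s)=\sum_i a_i D_i$ with $a_i\geq 0$, where $a_i=0$ is allowed for the non-exceptional $D_i$. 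Your step (1) then converts these $a_i$ into the $\widetilde{\tau}$-discrepancies. So the conclusion survives, but as written your argument both relies on a false codimension claim and over-claims strict positivity.
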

\proof
Let $\kappa_{\widetilde{X}}$ and $\kappa_X$ be the canonical bundles of $\widetilde{X}$ and $X$ respectively
and let $\kappa_{\widetilde{X}/X}$
be the relative canonical bundle of $\pi$.
The {\it Jacobian} of $\pi$
gives us a holomorphic section $s$
of $\kappa_{\widetilde{X}/X}$
in the following way:
By Equation
(\ref{equation relative canonical bundle}),
$\kappa_{\widetilde{X}/X}$ is naturally isomorphic to
$\Hom(\kappa^*_{\widetilde{X}},\pi^* \kappa^*_X)$
where $\kappa^*_{\widetilde{X}}$
and $\kappa^*_X$ are the anti-canonical bundles of $\widetilde{X}$ and $X$ respectively (See Definition \ref{defn anti canonical bundle}).
Under this identification,
the section $s$ is given
by the map sending
$v_1 \wedge \cdots \wedge v_1 \in \kappa^*_{\widetilde{X}}$
to $D\pi(v_1) \wedge \cdots \wedge D\pi(v_n) \in \pi^* \kappa^*_X$.
We have
$(s) = \sum_{i=1}^l b_i F_i$
is an effective divisor
with the property that $F_i$ is contracted by $\pi$ for each $i=1,\cdots,l$.
Since (\ref{equation isomorphism outside divisor})
is an isomorphism,
we get that $F_i \subset \widetilde{X} - \widetilde{A}$ for each $i=1,\cdots,k$.
Hence for each $i =1,\cdots,k$,
there exists $j_i \in \{1,\cdots,l\}$ so that
$F_i = D_{j_i}$.
Hence
$(s) = \sum_{i=1}^l a_i D_i$
for some non-negative integers $a_1,\cdots,a_l$.

Since $\tau$ is a trivialization of the anti-canonical bundle of $X$,
we get an induced trivialization
$\check{\tau} : \kappa_X \lra{} X \times \C$
of the canonical bundle (this is the unique trivialization so that $\check{\tau} \otimes \tau$ is the natural trivialization of $\kappa_X \otimes \kappa^*_X$).
Let $\sigma : X \lra{} \kappa_X$
be the unique smooth section
satisfying $\check{\tau}(\sigma(x)) = (x,1)$
for each $x \in X$.
By (\ref{equation relative canonical bundle}), we have $\kappa_{\widetilde{X}} = \kappa_{\widetilde{X}/X} \otimes \pi^* \kappa_X$
and
hence $s \otimes \pi^* \sigma$
is a smooth section of $\kappa_{\widetilde{X}}$.
Let $s'$ be a $C^\infty$ small perturbation of $s \otimes \pi^* \sigma$
which is transverse to $0$ and which is equal to $s \otimes \pi^* \sigma$ outside a small neighborhood of $\cup_{i =1}^l D_i$.
Then since $\sigma$ is nowhere zero,
we get that $[(s')^{-1}(0)]$
is homologous to $\sum_{i=1}^l a_i [D_i]$.
Since $k^*_{\widetilde{X}}$
is dual to $k_X$,
this implies that the $\tau$-discrepancy of $D_i$ is $a_i \geq 0$ for each $i=1,\cdots,l$.
\qed

\begin{defn} \label{definition index bounded contact cylinder II}

Let $(W,\omega_W)$ be a closed
symplectic manifold of dimension $2n$
and let $(\Delta_i)_{i \in S}$ be a symplectic
SC divisor in $W$. Define $W^o := W - \cup_{j \in S} \Delta_j$.
Let $J$ be an $\omega_W$-tame almost complex structure
and let $\tau$ be a trivialization of the anti-canonical bundle of $(W^o,J)$.

A {\it contact cylinder in}
$\check{C} \subset W^o$
is a contact cylinder as in Definition
\ref{definition contact cylinder}
with the symplectic manifold $(M,\omega)$
replaced by $(W^o,\omega_W)$.
We say that $\check{C}$ is {\it index bounded}
if it is index bounded in the sense of
Definition
\ref{definition index bounded contact cylinder}
with $(M,\omega)$ replaced
by $(W^o,\omega_W)$
and where the symplectic trivialization used along each Reeb orbit is induced by $\tau$ and where we consider all Reeb orbits (not just null homologous ones).
\end{defn}

	\begin{prop} \label{proposition constructing appropriate contact cylinder near normal crossing divisor}
Let $(\Delta_i)_{i \in S}$ be a negatively wrapped symplectic SC divisor
in a closed
symplectic manifold $(W,\omega_W)$
and let $U$ be an open neighborhood
of $\cup_{j \in S} \Delta_j$ in $W$.
Let $W^o, J, \tau$
be as in
Definition \ref{definition index bounded contact cylinder II} above.
Let $\rho : W - \cup_{i \in S} \Delta_i \lra{} \R$ be an exhausting smooth function.
Suppose that the $\tau$-discrepancy of
$\Delta_j$ is non-negative for each $j \in S$.
Then there is an index bounded contact cylinder $\check{C}$ contained in $U \cap W^o$
whose associated Liouville domain
contains $W -U$ and so that
$\check{C}$ contains a regular level set of $\rho$.
\end{prop}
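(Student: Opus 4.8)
\textbf{Proof proposal for Proposition \ref{proposition constructing appropriate contact cylinder near normal crossing divisor}.}

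The plan is to build the contact cylinder by hand in a standard tubular neighborhood of the divisor, using the negatively-wrapped hypothesis to produce a Liouville vector field pointing inward along the level sets of $\rho$, and using the non-negative discrepancy hypothesis together with Lemma \ref{lemma bounded family of symplectic matrices} to control the Conley--Zehnder indices of the Reeb orbits. First I would deform the symplectic structure near $\cup_{i \in S} \Delta_i$ so that it admits a \emph{standard tubular neighborhood} $(\pi_I : U_I \to \Delta_I)_{I \subset S}$ in the sense of Definition \ref{definition nice neighborhood of sc divisor}; this is the content of the normal-crossings normal-form results (the deformation exists because the symplectic and intersection orientations agree, by the definition of an SC divisor). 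We may take this deformation supported in $U$ and equal to the identity away from a small neighborhood of the divisor, so that $W - U$ is untouched and so that the deformed form agrees with $\omega_W$ outside $U$.

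Next, inside the standard tubular neighborhood I would write down the Liouville form explicitly in terms of the radial coordinates $r_i : U_i \to \R$. On each factor $\D_i(\epsilon)$ of a $U(1)^I$-trivialization, a primitive of the symplectic form is (a rescaling of) $\tfrac{1}{2} r_i^2 d\vartheta_i$; summing these primitives over $i$ and interpolating with the given primitive $\theta$ of $\omega_W|_{W^o}$ away from the divisor produces a $1$-form $\lambda$ on a deleted neighborhood of $\cup_{i \in S}\Delta_i$ with $d\lambda = \omega_W$. The key point is that because $(\Delta_i)_{i\in S}$ is \emph{negatively wrapped}, we may choose the interpolation so that the associated Liouville vector field $X_\lambda$ points strictly outward across some hypersurface of the form $\{\sum_i c_i r_i^2 = \delta\}$ for suitable weights $c_i>0$, i.e. strictly inward toward the divisor; equivalently the wrapping numbers $w_i$ are negative, which is exactly what forces $X_\lambda$ to be transverse and correctly oriented on such a level set. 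Flowing $\rho$-level sets along $X_\lambda$ and truncating, together with a Moser/interpolation argument matching $\omega_W$ near $W - U$, gives a contact hypersurface $C$ enclosing $W - U$ whose Liouville filling $D$ contains $W-U$; thickening to $[1-\epsilon,1+\epsilon]\times C$ gives the contact cylinder $\check C \subset U \cap W^o$. Arranging $C$ to be a regular level set of $\rho$ is then a matter of choosing the truncation level generically and rounding corners, using that $\rho$ is exhausting.

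The hard part is verifying \emph{index boundedness}. The Reeb flow on $C$ is, up to reparametrization, the flow generated by rotating the disk factors $\D_i(\epsilon)$, so a Reeb orbit is a loop that winds integrally around some collection of the $\Delta_i$'s and sits over a point (or a periodic orbit of an auxiliary twisted flow) in the corresponding stratum $\Delta_I$. Its length is essentially a positive-integer combination $\sum_{i \in I} k_i \cdot (\text{area of } \D_i)$ of the disk areas. I would argue that a Reeb orbit $\gamma$ of length at least $\mu_m$ must wind many times around at least one $\Delta_i$; since the $\tau$-discrepancy $a_i$ of $\Delta_i$ is non-negative, each extra full turn around $\Delta_i$ contributes a definite non-negative amount to the first Chern number of the relevant determinant line along the capping, and hence shifts the Conley--Zehnder index $CZ(\gamma)$ in a controlled direction; the transverse (winding) directions contribute shear-type blocks whose index contribution is bounded by $\pm\tfrac12$ each (as in Lemma \ref{lemma index calculation}), while the contribution of the ``base'' directions tangent to $\Delta_I$ is bounded by Lemma \ref{lemma bounded family of symplectic matrices} once one fixes a linear Lagrangian coming from the base fibre trivialization. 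Putting these together, $|CZ(\gamma)|$ grows (or at least escapes any fixed interval $[-m,m]$) as the length grows, which is precisely the index-bounded condition. The main technical obstacle I anticipate is bookkeeping the trivialization $\tau$ consistently across the nested strata $\Delta_I$ so that the discrepancy computation of Definition \ref{definition wrapping number discrepancy} and Lemma \ref{lemma non-negative discrepancy example} feeds cleanly into the Conley--Zehnder estimate; this is where the hypotheses $c_1$-triviality (implicit in the ambient setup) and non-negative discrepancy do the real work, and where a careful normal-form for $J$ near the divisor, compatible with the $U(1)^I$-structure, will be needed.
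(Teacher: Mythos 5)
Your overall architecture is the same as the paper's: deform $(\Delta_i)_{i\in S}$ to admit a standard tubular neighborhood, use the negative wrapping numbers to produce a primitive $\theta+dg$ of $\omega_W$ whose fiberwise restriction is $\sum_{i}(\tfrac12 r_i^2+\tfrac{1}{2\pi}w_i)d\vartheta_i$ and whose Liouville vector field is transverse to a level set $H^{-1}(\delta)$ of a function of the $r_i^2$'s near the divisor, flow to fatten this into a cylinder meeting a regular level of $\rho$, and then control the Reeb dynamics fiber by fiber. Up to the index computation, this matches the paper's proof.

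The genuine gap is in the index-boundedness step: your accounting does not identify where the unbounded drift of the Conley--Zehnder index actually comes from, and as stated it fails when the discrepancies vanish. You attribute the escape of $CZ(\gamma)$ from $[-m,m]$ to the fact that ``each extra full turn around $\Delta_i$ contributes a definite non-negative amount'' via the discrepancy $a_i\geq 0$, while claiming that the winding (normal disk) directions only contribute bounded shear blocks of size $\pm\tfrac12$ and the remaining directions are bounded by Lemma \ref{lemma bounded family of symplectic matrices}. But $a_i=0$ is allowed, and in that case every term in your list is bounded, whereas there are Reeb orbits of arbitrarily large length (winding arbitrarily many times around the $\Delta_i$); index boundedness would then fail under your accounting. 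The correct mechanism, as in the paper's computation, is $CZ(\zeta)=2\sum_{i\in I}(a_i+1)d_i+CZ\bigl((W_t)_{t\in[0,\lambda]}\bigr)$, where $d_i<0$ are the intersection multiplicities of the capping disk with $\Delta_i$, the length of $\zeta$ is bounded below by a positive constant times $-\sum_i d_i$, and $CZ((W_t))\in[-2n+2,2n-2]$ by Lemma \ref{lemma bounded family of symplectic matrices} because the linearized Reeb flow on $K_\delta=\ker\alpha_\delta\cap T(\pi_I^{-1}(q))$ fixes the Lagrangian spanned by the angular vector fields $\Theta_1,\dots,\Theta_{l-1}$. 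The essential ``$+1$'' in $(a_i+1)$ is \emph{not} a discrepancy contribution: it is the index $2\sum_i d_i$ of the change of trivialization between the Reeb-adapted frame on the normal disks and the frame induced by the capping disk, and it alone forces $CZ\to-\infty$ linearly in the length. The role of the hypothesis $a_i\geq 0$ is only to ensure the global correction $2\sum_i a_i d_i$ from comparing with $\tau$ has the same sign and cannot cancel this drift. Separately, your citation of Lemma \ref{lemma index calculation} for the $\pm\tfrac12$ shear bound is misplaced (that lemma compares Hamiltonian orbits of $f(r_C)$ with Reeb orbits on a contact cylinder); the relevant input here is property \ref{item:sheartransformation} as packaged in Lemma \ref{lemma bounded family of symplectic matrices}, and the ``base'' directions $T^\perp$ in fact contribute exactly $0$ by \ref{item:constantrankpath}, not merely a bounded amount.
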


\begin{proof}[Proof of Proposition \ref{proposition constructing appropriate contact cylinder near normal crossing divisor}.]
Let $w_j$, $a_j$ be the wrapping number and $\tau$-discrepancy of $\Delta_j$
for each $j \in S$.
By \cite{McLean:affinegrowth}[Lemma 5.3 and Lemma 5.14] (or by \cite[Theorem 2.12]{McLeanTehraniZinger:normalcrossings})
we can assume, after smoothly deforming $(\Delta_i)_{i \in S}$ through a family of
symplectic SC divisors (which doesn't change the symplectomorphism type of
the complement $(W^o,\omega_W)$ by
\cite[Lemma 5.15]{McLean:affinegrowth}),
that $(\Delta_i)_{i \in I}$ admits a standard tubular neighborhood
as in Definition \ref{defn sc divisor}.
Therefore we let $\pi_I : U_I \lra{} \Delta_I$, $I \subset S$ and $r_i : U_i \lra{} \R$, $i \in S$
be as in Definition \ref{defn sc divisor}.
We can also assume after shrinking
the radius
$\epsilon$ that this standard
tubular neighborhood is contained in
$U$.
Also the constants $a_j, w_j$ do not change under isotopy for all $j \in S$,
so we can still assume that
$w_j < 0$ and
$a_j \geq 0$ for all $j \in S$.
Let $\dot{\Delta}_I := \Delta_I - \cup_{j \in S - I} \Delta_j$,
$\dot{U}_I := U_I \cap W^o$
and 
$$\dot{\pi}_I : \dot{U}_I \lra{} \dot{\Delta}_I, \quad \dot{\pi}_I(x) := \pi_I(x)$$
for all $I \subset S$.
The map $\dot{\pi}_I$ is a symplectic fibration
with fibers symplectomorphic to
$\prod_{i \in I} \dot{\D}_i(\epsilon)$
where $\dot{\D}_i(\epsilon) := \D_i(\epsilon) - 0$
and whose structure group
is given by the natural action of $U(1)^I$.

By the proof of \cite[Lemma 5.18]{McLean:isolated},
we can find a smooth function
$g : W \lra{} \R$
with the property that $\theta + dg$
restricted to any fiber
$\prod_{i \in I} \dot{\D}_i(\epsilon)$
of $\dot{\pi}_I$
is $\sum_{i \in I} (\frac{1}{2} r_i^2 + \frac{1}{2\pi} w_i) d\vartheta_i$
after shrinking $\epsilon$
where  $(r_i,\vartheta_i)$ are the natural polar coordinates on $\D_i(\epsilon)$.
Choose $t' > 0$ so that
\begin{equation} \label{equation t is small enough}
e^{t'} \in \left(1,\frac{(\frac{1}{2}\epsilon)^2 + \frac{1}{\pi}w_i}{\epsilon^2 + \frac{1}{\pi}w_i}\right), \quad \forall \ i \in I.
\end{equation}
Choose $\check{\epsilon} \in (0,\frac{1}{2}\epsilon)$
small enough so that
\begin{equation} \label{equation tubular neighrohood containing regular level set}
\cup_{i \in I} \left\{r_i^2 < (\check{\epsilon}^2 + \frac{1}{\pi} w_i) e^{-t'} - \frac{1}{\pi} w_i \right\} - \cup_{i \in I} \{r_i^2 \leq \check{\epsilon}^2\}
\end{equation}
contains a regular level set of $\rho$.
Let $f : [0,\epsilon^2) \lra{} \R$
be a smooth function so that
$f(x) = 1-x$ for all $x \leq \frac{1}{2} \check{\epsilon}^2$,
$f|_{[\check{\epsilon}^2,\epsilon^2]} = 0$, $f \geq 0$, $f' \leq 0$, $f'|_{(0,\check{\epsilon}^2)}<0$ and $f'' \geq 0$ (See Figure \ref{fig:graphoff3}).

\begin{center}
\begin{figure}[h]
\begin{tikzpicture}[scale=0.7]

\draw [<->](-1.5,-0.4) -- (-1.5,-3.5) -- (7.5,-3.5);

\draw (3,-3.5) .. controls (1,-3.5) and (0,-2.5) .. (-0.5,-2);

\draw (-1.5,-1) -- (-0.5,-2);

\draw (6.4,-3.4) -- (6.4,-3.6);

\draw (3,-3.4) -- (3,-3.6);

\draw (-0.5,-3.4) -- (-0.5,-3.6);

\draw (4.2,0.9);

\node at (6.4,-4) {$\epsilon^2$};

\node at (3,-4) {$\check{\epsilon}^2$};

\node at (-0.5,-4) {$\frac{1}{2}\check{\epsilon}^2$};


\end{tikzpicture}
\caption{Graph of $f$.} \label{fig:graphoff3}
\end{figure}
\end{center}

Define $U_I^o := U_I - \bigcup_{j \in S-I} U_j$
and
$\dot{U}_I^o := U_I^o \cap \dot{U}_I$.
Let
$H : W \lra{} \R$
be the unique function which satisfies
$H(x) = \sum_{i \in I} f(r_i^2)$
for all $x \in U^o_I$ and all $I \subset S.$

Let $\frac{\partial}{\partial r_i}$
and $\frac{\partial}{\partial \vartheta_i}$
be the unique vector fields on
$\dot{U}_i$
which are tangent to the fibers of
$\dot{\pi}_I$ and equal to
$\frac{\partial}{\partial r_i}$
and $\frac{\partial}{\partial \vartheta_i}$
inside the fibers $\dot{\D}_i(\epsilon)$
of $\dot{\pi}_i$ respectively
where $(r_i,\vartheta_i)$ are standard polar coordinates on $\D_i(\epsilon)$.
Let $Q_I \subset TU_I$ be the natural symplectic connection for $\pi_I$
as in Definition \ref{definition nice neighborhood of sc divisor}.
Since the $d\theta$-dual $X_{\theta + dg}$ of $\theta + dg$
inside $\dot{U}_I^o$ is equal to
$$\sum_{i \in I}\left(\frac{1}{2} r_i + \frac{w_i}{2\pi r_i}\right) \frac{\partial}{\partial r_i} +  E_I =
\sum_{i \in I}\left(r_i^2 + \frac{w_i}{\pi}\right) \frac{\partial}{\partial (r_i^2)} + E_I$$
where $E_I$ is a vector field tangent to $Q_I$ for all $I \subset S$,
we get that $dH(X_{\theta + dg}) > 0$ inside $H^{-1}((0,\infty)) \cap \cup_{I \subset S} \dot{U}_I$.
Hence $C_\delta := H^{-1}(\delta)$ is a compact submanifold of $U$ with contact form $\alpha_\delta := (\theta+dg)|_{H^{-1}(\delta)}$
for all sufficiently small $\delta>0$.
For each $x \in W^o$,
let $\psi_t(x)$ be the time $t$ flow of $x$ along $-X_{\theta+dg}$ for all $t$ (when defined).
Since the region (\ref{equation tubular neighrohood containing regular level set})
contains a regular level set of $\rho$,
we have by Equation (\ref{equation t is small enough}) that the contact cylinder
with image
$$\check{C}_\delta := \bigcup_{t \in [0,t']} \psi_t(C_\delta)$$
contains a regular level set of $\rho$
for all sufficiently small $\delta > 0$.
Equation (\ref{equation t is small enough}) also ensures that this contact cylinder is contained in $U$.
To finish our lemma we will now show that the contact cylinder $\check{C}_\delta$ is index bounded for all $\delta >0$ small enough by
computing the Conley-Zehnder indices of the Reeb orbits of $\alpha_\delta$ for $\delta>0$ small enough.

Let $R_\delta$ be the Reeb vector field of $\alpha_\delta$.
Inside $\dot{U}_I^o$,
we have that
$X_H = 2 \sum_{i \in I} f'(r_i^2) \frac{\partial}{\partial \vartheta i}$.
Define
$$b : \dot{U}^o_I \lra{} \R, \quad
b := \sum_{i \in I} f'(r_i^2)(r_i^2 + \frac{w_i}{\pi}).
$$
Then
\begin{equation} \label{equation for reeb vector field}
R_\delta = \frac{1}{(\theta+dg)(X_H)} X_H =
\frac{2}{b}\sum_{i \in I}  f'(r_i^2) \frac{\partial}{\partial \vartheta i}.
\end{equation}
In particular, a Reeb orbit which starts inside
$\dot{U}^o_I \cap H^{-1}(\delta)$, stays inside $\dot{U}^o_I \cap H^{-1}(\delta)$
and all such Reeb orbits
are contained in fibers of $\dot{\pi}_I|_{H^{-1}(\delta)}$.
We will show that the Conley-Zehnder
index of every Reeb orbit of $\alpha_\delta$
is a bounded above by a linear function
of its length
whose slope is negative
when $\delta > 0$ is sufficiently small.
This will be sufficient for us to show that $\check{C}_\delta$ is index bounded for all $\delta>0$ sufficiently small.

Now let $\zeta : \R / \lambda \Z \lra{} H^{-1}(\delta)$
be a Reeb orbit of $\alpha_\delta$ of length
$\lambda$ .
Now $\zeta$ is contained inside
$\dot{\pi}_I^{-1}(q)$ for some $q \in \Delta_I - \cup_{j \in S-I}U_j$ and $I \subset S$
and so there 
exists a smooth map
$\check{\zeta} : \D \lra{} \pi_I^{-1}(q)$
from the closed unit disk $\D \subset \C$ so that
$\check{\zeta}(e^{2\pi i t}) = \zeta(\lambda t)$ for all $t \in [0,1]$.
Let $d_i \in \Z$ be the intersection number of
$\check{\zeta}$ with $\Delta_i \cap \pi_I^{-1}(q)$ inside $\pi^{-1}(q)$
for each $i \in I$.
Define
$CZ(\zeta)$ to be the Conley-Zehnder index of $\zeta$ inside $W^o$
and let $CZ(\check{\zeta})$
to be the Conley-Zehnder index of $\zeta$
inside a small neighborhood of $\pi_I^{-1}(q)$ (I.e. we think of a portion of our contact cylinder containing this orbit as a contact cylinder in a neighborhood of $\pi_I^{-1}(q)$).
By \ref{item:cznormalization}, \ref{item:czadditive} and \ref{item:catenationczproperty}
we have
$$CZ(\zeta) = CZ(\check{\zeta}) + 2\sum_{i \in I} d_i a_i.$$
By Equation (\ref{equation for reeb vector field}) combined with the fact that $f'(f^{-1}(x)) <0$ for all small $x > 0$,
we see that $d_i < 0$ for all $i \in I$
and that the
length of the Reeb orbit $\zeta$
is bounded below by a positive constant times $-\sum_{i \in I} d_i$.
Therefore it is sufficient for us to show
that $CZ(\zeta)$ is less than
or equal to some fixed linear function
of $\sum_{i \in I} d_i$ of positive slope.

Let $T := T(\pi_I^{-1}(q)) \subset TW$ be the tangent space of the fiber containing $\zeta$
and let $T^\perp \subset TW$
be the set of vectors which are $\omega_W$ orthogonal to $T$.
Let $x_1,y_1,\cdots,x_{|I|},y_{|I|}$
be symplectic coordinates of $\pi_I^{-1}(q)$
coming from a $U(1)^{|I|}$ trivialization of this fiber and let $J$ be the natural complex structure coming from this trivialization.
These coordinates induce a symplectic trivialization
$\tau_T : \zeta^* T \lra{} \R/\lambda\Z \times \C^{|I|}$
of $\zeta^*T$.
Define $K_\delta := \ker(\alpha_\delta) \cap T \subset T$
and let $K^\perp_\delta \subset T|_{\dot{\pi}_I^{-1}(q)}$
be the symplectic vector subspace of $T$ consisting of vectors which are $\omega_W|_T$-orthogonal to $K_\delta$.
Since $R_\delta$ is contained in $K_\delta^\perp$ and $K_\delta^\perp$ is a two dimensional vector bundle,
there is a unique, up to homotopy,
trivialization
$\tau_{R_\delta} : K_\delta^\perp \lra{} \dot{\pi}_I^{-1}(q) \times \C$ of
$K_\delta^\perp$ which maps $R_\delta$ to the constant section whose value is $i \in \C$.
Hence
there is a symplectic trivialization
$\check{\tau} : \zeta^* K_\delta \lra{} \R/\lambda \Z \times \C^{|I|-1}$
so that
$\check{\tau} \oplus \tau_{R_\delta}$
gives us a trivialization of
$\zeta^* T = \zeta^* (K_\delta \oplus K_\delta^\perp)$
homotopic to $\tau_T$.

Let
$\tau_{T^\perp} : \zeta^* T^\perp \lra{} \R/\lambda \Z \times \C^{n-|I|}$ be a trivialization of $\zeta^* T^\perp$
which is a restriction of a trivialization of $T^\perp$ (such a trivialization is unique up to homotopy since $\pi_I^{-1}(q)$ is contractible).
Then
\begin{equation} \label{equation trivialization of kernal in fiber}
\check{\tau} \oplus \tau_{T^\perp} : \zeta^* (K_\delta \oplus T^\perp) = \zeta^* \ker(\alpha_\delta) \lra{} \R/\lambda \Z \times \C^{n-1}
\end{equation}
is a trivialization of $\zeta^* \ker(\alpha_\delta)$.
This is the trivialization we need in order
to compute $CZ(\check{\zeta})$
since the trivialization $\tau_{R_\delta} \oplus \check{\tau} \oplus \tau_{T^\perp}$
of $\zeta^* TW$ extends over the disk $\check{\zeta}^* TW$ after identifying the boundary of $\partial \D$ with $\R/\lambda \Z$ in the natural way as explained earlier.

If $\phi^\delta_t : H^{-1}(\delta) \lra{} H^{-1}(\delta)$ is the time $t$ flow
of $R_\delta$
then its linearization $D\phi_t : \ker(\alpha_\delta)|_{\zeta(0)} \lra{} \ker(\alpha_\delta)|_{\zeta(t)}$
gives us a family of symplectic matrices
$(A_t)_{t \in [0,\lambda]}$ with respect to the trivialization (\ref{equation trivialization of kernal in fiber}).
Equation (\ref{equation for reeb vector field})
tells us that
$D\phi_t^
\delta$ respects the splitting
$\zeta^* \ker(\alpha_\delta) = \zeta^* K_\delta \oplus \zeta^* T^\perp$
and hence
$A_t = B_t \oplus C_t$ for some family of matrices
$(B_t)_{t \in [0,\lambda]}$ and $(C_t)_{t \in [0,\lambda]}$
with respect to the trivializations
of $\zeta^*K_\delta$ and
$\zeta^* T^\perp$.
Also Equation (\ref{equation for reeb vector field}) tells us that $C_t = \id$ for all $t \in [0,\lambda]$ after homotoping $\tau_{T^\perp}$ appropriately.
By \ref{item:constantrankpath},
we have that $CZ((C_t)_{t \in [0,\lambda]})=0$.
Hence by \ref{item:czadditive}, all we need to do is compute
the Conley-Zehnder index of $(B_t)_{t \in [0,\lambda]}$.

In order to compute $CZ((B_t)_{t \in [0,\lambda]})$ we will compute
the Conley-Zehnder index with respect to
an alternative trivialization
of $K_\delta$
and relate it to the trivialization $\check{\tau}$ above.
Choose a total ordering on $I$.
This means we have a natural identification
$I = \{1,\cdots,l\}$.
Define
\begin{equation} \label{equation explicit basis for s delta perp}
\check{R}_i := \left(2r_l f'(r_l^2)
\right)\frac{\partial}{\partial r_i} - \left(2r_i f'(r_i^2)\right)\frac{\partial}{\partial r_l}
\end{equation}
$$
\Theta_i := \left(\frac{1}{2}r_l^2 + \frac{1}{2\pi}w_l
\right)\frac{\partial}{\partial \vartheta_i} - \left(\frac{1}{2}r_i^2 + \frac{1}{2\pi}w_i\right)\frac{\partial}{\partial \vartheta_l}
$$
for all $i \in \{1,\cdots,l-1\}$.
Let $L$ be the span of
$\Theta_1,\cdots,\Theta_{l-1}$
and $L^\perp$ the span of
$\check{R}_1,\cdots,\check{R}_{l-1}$.
Since $K_\delta = \ker(\alpha_\delta) = \ker(\theta + dg) \cap \ker(dH)$,
we get that $\check{R}_1,\Theta_1,\cdots,\check{R}_{l-1},\Theta_{l-1}$
is a basis for $K_\delta$.
The problem with this basis is that it is not a symplectic basis.
However since $L$ and $L^\perp$
are Lagrangian, there is a new basis
$R_1,\cdots,R_{l-1}$
of $L^\perp$
so that 
$R_1,\Theta_1,\cdots,R_{l-1},\Theta_{l-1}$
is a symplectic basis of $K_\delta$.
Since the Poisson bracket of
$R_\delta$  with each of
$\Theta_1,\cdots,\Theta_{l-1}$ is zero
by Equation (\ref{equation for reeb vector field}),
we have that the flow
of $R_\delta$
sends $\Theta_i$ to $\Theta_i$ for each $i = 1,\cdots,l-1$.
The linearization $D\phi^\delta_t|_{K_\delta} : K_\delta|_{\zeta(0)} \lra{} K_\delta|_{\zeta(t)}$
of the Reeb flow $\phi^\delta_t$
of $R_\delta$
is a family of symplectic matrices
$(W_t)_{t \in [0,\lambda]}$ with respect
to the symplectic basis $R_1,\Theta_1,\cdots,R_{l-1},\Theta_{l-1}$.
Since $W_t(x) = x$ on $L$ with respect to this basis for all $t \in [0,\lambda]$,
we have by Lemma \ref{lemma bounded family of symplectic matrices}
that $CZ((W_t)_{t \in [0,\lambda]}) \in [-2l-2,2l+2] \subset [-2n+2,2n-2]$.

Let $\widehat{\tau} : \zeta^* K_\delta \lra{} \R/\lambda \Z \times \C^{l-1}$
be the trivialization of
$\zeta^* K_\delta$
induced by the symplectic basis
$R_1,\Theta_1,\cdots,R_{l-1},\Theta_{l-1}$.
Then $\widehat{\tau} \oplus \tau_{R_\delta}$
is isotopic to the trivialization induced by the symplectic basis
$$\frac{1}{r_1}\frac{\partial}{\partial r_1},\frac{\partial}{\partial \theta_1},\cdots,\frac{1}{r_l}\frac{\partial}{\partial r_l},\frac{\partial}{\partial \theta_l}.$$
Hence the bundle automorphism
$\check{\tau} \circ \widehat{\tau}^{-1}$ gives us a family of symplectic matrices
parameterized by $[0,\lambda]$
whose Conley-Zehnder index is
$2\sum_{i \in I} d_i$.
Hence by \ref{item:cznormalization}, \ref{item:czadditive} and \ref{item:catenationczproperty},
$$CZ((B_t)_{t \in [0,\lambda]}) =
CZ((W_t)_{t \in [0,\lambda]}) +
2\sum_{i \in I} d_i.$$
Hence by \ref{item:czadditive},
we get that
$$CZ((A_t)_{t \in [0,\lambda]}) =
CZ((B_t)_{t \in [0,\lambda]})
+ CZ((C_t)_{t \in [0,\lambda]}
= 2\sum_{i \in I} d_i + CZ((W_t)_{t \in [0,\lambda]}).$$
Hence
$$CZ(\zeta) = 2\sum_{i \in I} (a_i+1)d_i + CZ((W_t)_{t \in [0,\lambda]})$$
which is bounded above by a linear function of length of $\zeta$ with negative slope since we have $CZ((W_t)_{t \in [0,1]}) \in [-2n+2,2n-2]$.
Therefore $\check{C} := \check{C}_\delta$
is index bounded.
The associated Liouville domain
of $\check{C}$ is contained in $W^o$ and contains $K$ since its complement is contained in $\cup_{i \in S} U_i$.
\end{proof}

\begin{proof}[Proof of Proposition \ref{proposition constructino of index bounded contact cylinder on appropriate affine variety}.]

By \cite{hironaka:resolution},
we can blow up $X$ along smooth subvarieties
above $X - A$ giving us a variety
$\widetilde{X}$ so that the preimage of $X - A$ is a smooth normal crossing variety.
Let $Bl : \widetilde{X} \lra{} X$ be the blowdown map.
Choose an algebraic plurisubharmonic function
$\widetilde{\rho} : A \lra{} \R$
so that $-dd^c (Bl^*(\widetilde{\rho}))$
extends to a K\"{a}hler form
$\omega_{\widetilde{X}}$
on $\widetilde{X}$
(I.e. $\widetilde{\rho} = -dd^c \log(|s|)$
for some appropriate section $s$ of an ample line bundle on $\widetilde{X}$ with an appropriate Hermitian metric).
By Corollary \ref{corollary flow time}
there are constants $0 < \delta \ll T \ll 1$ and a compact set $Q \subset A$ containing $K$
and a plurisubharmonic function $\widehat{\rho}$ on $A$
%
satisfying
\begin{enumerate}
	\item $\widehat{\rho}|_Q = \rho|_Q$,
	\item $\widehat{\rho} = \rho + \delta \widetilde{\rho}$ outside a large compact set and
	\item for all $x \in A$, the time $T$ flow of $x$ along $-\nabla_{\widehat{\rho}} \widehat{\rho}$ is contained in $Q$ and is disjoint from $K$ if, in addition, $x \notin Q$.
\end{enumerate}

Let $\phi_T : A \lra{} A$ be the time $T$ flow of $-\nabla_{\widehat{\rho}} \widehat{\rho}$.
By Lemmas \ref{lemma negatively wrapped divisor} and
\ref{lemma non-negative discrepancy example} combined with
Proposition \ref{proposition constructing appropriate contact cylinder near normal crossing divisor},
we can find an index bounded contact cylinder
$\check{C}_1 \subset Bl^{-1}(A)$ with respect to the symplectic form $Bl^*\omega_X + \delta\omega_{\widetilde{X}}$ which is disjoint from $Q$ and whose associated Liouville domain contains $Q$
and which contains a regular level set $\widehat{\rho}^{-1}(C)$ for some $C \in \R$.
Then
$\check{C} := \phi_T(Bl(\check{C}_1)) \subset A$
is an index bounded contact cylinder with respect to $\omega_X$ since $\omega_X|_Q = \omega_{\widetilde{X}}|_Q$
and since both of these symplectic forms are K\"{a}hler.
Also the Liouville domain associated to $\check{C}$ contains $K$ and $\check{C}$ contains $C_0 := \phi_T(Bl(\widehat{\rho}^{-1}(C)))$ which is a hypersurface transverse to $\nabla_{\rho} \rho$ bounding a region containing $D$.
By a Moser argument (\cite[Exercise 3.36]{McduffSalamon:sympbook}), 
we can enlarge $\check{C}$ slightly so that
it contains a contact cylinder
$\check{C}_0 := [1-\epsilon_0,1+\epsilon_0] \times C_0$
where $\{0\} \times C_0 = C_0$.
The associated Liouville domain $D_0$ contains $D$ and $-d^c \rho|_{D_0}$
is a Liouville form associated to $\check{C}_0$.
\end{proof}

\subsection{Divisors are Stably Displaceable}

We recall from Section \ref{section notation}
that $(M,\omega)$ is a compact symplectic manifold.
However, the results of this section also work when $M$ is non-compact.
We also do not require that $(M,\omega)$ satisfy any other conditions, such as the Chern class condition $c_1(M,\omega) = 0$.

\begin{defn} \label{definition partily stratified symplectic subset}
A
{\it partly stratified symplectic subset}
$S$
of $(M,\omega)$ is a subset equal to a disjoint union of 
subsets $S_1,\cdots,S_l$ of $M$
so that for each $j \in \{1,\cdots,l\}$,
\begin{itemize}
\item $\cup_{i \leq j} S_i$
is a compact subset of $M$ and
\item $S_j$ is a proper codimension $\geq 2$
symplectic submanifold of $M - \cup_{i < j} S_i$ without boundary.
\end{itemize}
We call the subsets $S_1,\cdots,S_l$
the {\it strata of $S$}.
\end{defn}

\begin{example} 
\label{example subvariety as partly stratified symplectic subset}
\cite[Theorem 19.2]{whitney1965tangents}.
If
$M$ is a K\"{a}hler manifold
then any compact codimension $\geq 1$
subvariety of $M$
is a partly stratified symplectic subset.
\end{example}

\begin{prop} \label{proposition stably displaceable partly stratified symplectic subset}
Any partly stratified symplectic subset is stably displaceable as in Definition \ref{definition stably displaceable}.
\end{prop}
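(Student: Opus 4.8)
The plan is to prove Proposition~\ref{proposition stably displaceable partly stratified symplectic subset} by induction on the number $l$ of strata, using the well-known fact (due essentially to Gromov, and exploited by Polterovich, Laudenbach--Sikorav, etc.) that a closed symplectic submanifold of codimension $\geq 2$ is stably displaceable, together with the easy observation that a finite union of stably displaceable subsets is stably displaceable provided one can displace them one after another. So first I would record two elementary lemmas. The first: if $B_1, B_2 \subset M$ are such that $B_1$ is stably displaceable and $B_2$ is stably displaceable, then $B_1 \cup B_2$ is stably displaceable. This follows because in $M \times T^*\T$ we may first apply a compactly supported Hamiltonian symplectomorphism $\phi_1$ with $\phi_1(B_1 \times \T) \cap (B_1 \times \T) = \emptyset$; then, noting $B_2 \times \T$ is still stably displaceable and stable displaceability is preserved under the ambient Hamiltonian symplectomorphism $\phi_1$, choose $\phi_2$ displacing $\phi_1(B_2 \times \T)$; a suitable further iteration (or a direct three-fold argument as in the proof of Theorem~\ref{theorem stably displaceable complement} where one composes displacing maps) handles the union. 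Actually it is cleanest to displace $B_1$ first, then use an \emph{autonomous-in-the-$T^*\T$-direction} trick: after stabilizing twice, i.e. inside $M \times T^*\T \times T^*\T$, one gets enough room; but since stabilizing twice is the same as stabilizing once up to an obvious symplectomorphism $T^*\T \times T^*\T \cong T^*(\T \times \T)$ and one can then project, I would simply cite the standard fact that the stably displaceable subsets of $M$ form an ideal in the Boolean algebra generated by compact sets, or give the short composition argument directly.

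The second ingredient is the base case: a closed codimension $\geq 2$ symplectic submanifold $S \subset M$ (without boundary, possibly non-compact a priori but here compact) is stably displaceable. For this I would invoke the classical argument: since $S$ has a symplectic tubular neighborhood $\nu(S)$ which fibers over $S$ with symplectic disk fibers of real dimension $\geq 2$, one can find a global Hamiltonian on $\nu(S) \times \T \subset M \times T^*\T$ whose time-one flow rotates the disk fibers while simultaneously shearing in the $T^*\T$-direction, displacing $S \times \T$ from itself; the $\T$-factor is exactly what provides the extra degree of freedom needed because $S$ itself need not be displaceable in $M$ when it is not ``small''. This is precisely the content used implicitly in the stable displaceability literature, and I would cite it (e.g. Polterovich, or Gürel, or the exposition in the references already present) rather than reprove it.

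With these two facts in hand, the induction is straightforward. For $l=1$ the set $S = S_1$ is a closed codimension $\geq 2$ symplectic submanifold of $M$, so the base case applies. For the inductive step, suppose the claim holds for partly stratified symplectic subsets with $< l$ strata, and let $S = S_1 \sqcup \cdots \sqcup S_l$ with strata as in Definition~\ref{definition partily stratified symplectic subset}. The key point is that $S_l$ is a proper codimension $\geq 2$ symplectic submanifold \emph{without boundary} of the open manifold $M - \cup_{i<l} S_i$; since $\cup_{i<l} S_i$ is compact and $\cup_{i \leq l} S_i$ is compact, $S_l$ is closed as a subset of $M$ (its closure in $M$ is contained in $\cup_{i \leq l} S_i$ and it is already a closed submanifold of the complement of the lower strata, so no boundary points lie in $M - \cup_{i<l}S_i$; the only possible accumulation is onto $\cup_{i<l}S_i$). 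Hence $\overline{S_l} = S_l \cup (\text{subset of } \cup_{i<l} S_i)$, and in particular $\overline{S_l}$ is compact. Now $\overline{S_l}$ is not itself a manifold, but it is contained in $S_l \cup (\cup_{i<l} S_i)$; I would instead argue: a small enough compact neighborhood closure of $S_l$ inside $M - \cup_{i<l}S_i$ is displaceable stably (being contained in a stably displaceable tubular neighborhood of the closed submanifold $S_l$ of that open manifold — here one uses that stable displaceability only requires a Hamiltonian compactly supported near the set, which can be taken supported away from $\cup_{i<l}S_i$). So $S_l$ is stably displaceable in $M$, $\cup_{i<l} S_i$ is stably displaceable in $M$ by the inductive hypothesis, and by the union lemma $S = S_l \cup (\cup_{i<l}S_i)$ is stably displaceable.

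The main obstacle I anticipate is the bookkeeping in the base case / neighborhood step: making sure that the displacing Hamiltonian for $S_l$ can be chosen compactly supported in $M - \cup_{i<l}S_i$ (so that it does not interfere with the lower strata when we later combine), and handling the fact that $S_l$ is only properly embedded in the complement of the lower strata rather than closed in all of $M$. This is genuinely the place where the stratified structure matters, and the careful statement of Definition~\ref{definition partily stratified symplectic subset} (the nested compactness of $\cup_{i\leq j}S_i$) is exactly what makes it go through: it guarantees that a tubular neighborhood of $S_l$ can be shrunk to stay a positive distance from $\cup_{i<l}S_i$ while still covering all of $S_l$, since any point of $S_l$ approaching $\cup_{i<l}S_i$ would contradict $\cup_{i\leq l}S_i$ being exactly the disjoint union with $S_l$ open in its complement. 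The rest — the union lemma and the classical codimension $\geq 2$ stable displaceability — is standard and can be cited.
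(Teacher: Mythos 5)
Your argument has a genuine gap at its foundation: the ``union lemma'' you rely on --- that $B_1\cup B_2$ is stably displaceable whenever $B_1$ and $B_2$ are --- is not a formal consequence of the definitions, and your proposed proof of it does not work. If $\phi_1$ displaces $B_1\times\T$ and $\phi_2$ displaces $\phi_1(B_2\times\T)$, the composition $\phi_2\circ\phi_1$ gives you no control over where $\phi_2(\phi_1(B_1\times\T))$ lands (it may return to $B_1\times\T$ or hit $B_2\times\T$), nor over whether $\phi_2(\phi_1(B_2\times\T))$ meets $B_1\times\T$; none of the four required disjointness conditions follows from the two you have, and displaceability is closed under passing to subsets, not under finite unions, so there is no standard fact to cite. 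The inductive step has a second problem: the top stratum $S_l$ is a \emph{non-compact} submanifold of $M-\cup_{i<l}S_i$ whose closure meets the lower strata, so any Hamiltonian compactly supported in $M-\cup_{i<l}S_i$ vanishes near those accumulation points, and the tubular-neighborhood rotation argument gives no uniform displacement of $S_l\times\T$ there. The paper flags exactly this obstruction in the discussion following the proposition: near a normal-crossing point there is no vector field infinitesimally displacing $S$ from itself, and for two distinct lines $A,B\subset\C\P^2$ a $C^\infty$ generic Hamiltonian on $\C\P^2\times T^*\T$ cannot displace $(A\cup B)\times\T$ by an infinitesimally small amount, even though each line separately is stably displaceable.

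The paper's actual proof is global rather than stratum-by-stratum displacement. It constructs, by induction on strata, a ``curled up embedding'': a symplectic embedding $\iota$ of an entire neighborhood $N$ of $S\times T^*\T$ into the bounded region $\check{M}_{<1}$, equal to the identity near $M\times\T$ and with $H^1(N;\R)\to H^1(N\cap(M\times\T);\R)$ injective. The inductive step extends the embedding over a neighborhood of the next stratum using an h-principle for symplectic embeddings of open manifolds (Lemma \ref{lemma isotopy submanifold}, from Eliashberg--Mishachev) combined with a Moser-type extension lemma (Lemma \ref{lemma Moser lemma}); this is where the genuinely stratified nature of $S$ is handled, simultaneously for all strata rather than one at a time. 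Given such an embedding, the translation field $V=\partial/\partial\sigma$ and its pushforward $\iota_*V$ differ by an exact $1$-form (this is where the $H^1$ condition enters), so the difference is Hamiltonian and its flow carries $S\times\T$ out past $|\sigma|>1$ while the curled-up image stays bounded (Lemma \ref{lemma cureld embedding implies stably displaceable}). To salvage your outline you would need to replace the union lemma by a simultaneous construction of this kind; displacing the strata separately and composing cannot work.
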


We have the following immediate corollary:

\begin{corollary} \label{corollary divisor stably displaceable}
Any compact positive codimension subvariety of a 
K\"{a}hler manifold is stably displaceable.
\end{corollary}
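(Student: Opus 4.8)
The plan is to prove Proposition \ref{proposition stably displaceable partly stratified symplectic subset}, from which the corollary follows immediately by Example \ref{example subvariety as partly stratified symplectic subset}. So the real content is showing that a partly stratified symplectic subset $S = S_1 \sqcup \cdots \sqcup S_l$ is stably displaceable, i.e. that $S \times \T \subset M \times T^*\T$ can be displaced by a Hamiltonian symplectomorphism. The key structural feature to exploit is that each stratum $S_j$ has codimension $\geq 2$ in $M - \cup_{i<j} S_i$, so $S_j \times \T$ has codimension $\geq 2$ in $M \times T^*\T$ near a point whose $T^*\T$-coordinate has $\sigma$ bounded; more importantly, the extra $\T$-direction in $T^*\T$ gives us room to push things around. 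The standard idea (an $h$-principle / Gromov-type argument, as hinted by the phrase ``by an $h$-principle'' in the sketch) is that a closed subset which is locally contained in a submanifold of codimension $\geq 2$ inside a product with $T^*\T$ can be displaced, because one has enough freedom in the Hamiltonian flow to move it off itself.

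First I would reduce to displacing one stratum at a time. The naive induction is: if $\cup_{i>j} S_i$ can be displaced inside $M - \cup_{i\leq j} S_i$ (a noncompact symplectic manifold) then... but that does not immediately compose, since displacing the deeper strata may drag the shallower ones around. A cleaner approach is to build the displacing Hamiltonian in $l$ stages, working from the outermost stratum inward but in the \emph{reverse} order of the filtration, or alternatively to argue directly. Concretely: the stratum $S_1$ is a proper codimension $\geq 2$ symplectic submanifold of all of $M$; I would show $S_1 \times \T$ is displaceable in $M \times T^*\T$ by a Hamiltonian isotopy supported near $S_1 \times \T \times \{|\sigma| \leq 1\}$ using a ``spinning'' trick in the $\T$-fibre direction of $T^*\T$ combined with the codimension $\geq 2$ normal slices of $S_1$ — this is essentially the observation that $N \times \T$ for $N$ a codimension-$\geq 2$ submanifold of a symplectic manifold is displaceable inside the symplectization-type product, which can be found in work of Gürel, Ginzburg, or in Laudenbach–Sikorav style arguments. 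Then, having displaced $S_1 \times \T$, the image of the rest of the partly stratified set still lies in $M \times T^*\T$ and I can repeat the argument inside $M - S_1$ (noting the Hamiltonian can be taken compactly supported away from $S_1$ so it genuinely lives in the open manifold $M - S_1$), displacing $S_2 \times \T$, and so on. Composing the $l$ resulting Hamiltonian symplectomorphisms displaces all of $S \times \T$ — the only subtlety is that after displacing $S_1$, the later isotopies must be chosen to also avoid re-introducing intersections with the (already moved) $S_1$, which is arranged by making each stage's isotopy supported in a small neighbourhood of the relevant (moved) stratum and using that $S$ is a \emph{disjoint} union with each $\cup_{i\leq j}S_i$ compact, so there is a definite gap.

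The heart of the matter — and the step I expect to be the main obstacle — is the single-stratum statement: \emph{a proper codimension $\geq 2$ symplectic submanifold $N$ of a symplectic manifold $(W,\omega_W)$ has $N \times \T \subset W \times T^*\T$ displaceable}. I would prove this by taking a tubular neighbourhood $U$ of $N$ with a symplectic fibration structure (normal bundle with fibrewise symplectic forms, as in Definition \ref{definition nice neighborhood of sc divisor}), choosing fibre coordinates so that the fibre is a product of symplectic disks $\D^2 \times \cdots$, and then using the function $H(w,(\sigma,\tau)) = \beta(w)\,\chi(\sigma)\,\tau$-type Hamiltonian: the flow translates in the $\sigma$ direction by an amount depending on position in a normal disk and on a cutoff $\beta$ supported near $N$, and because the normal fibre is at least $2$-(real-)dimensional one can arrange that \emph{every} point of $N\times\T$ gets pushed to large $|\sigma|$ while the support of $H$ stays in a region where $\sigma$ is small — so the displaced copy is disjoint from the original. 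Making this precise requires: (i) a Moser/Weinstein neighbourhood theorem to put $\omega_W$ in standard form near $N$; (ii) a careful choice of the cutoffs so that $\phi^H_1(N\times\T)$ lands entirely in $\{|\sigma| > C\}$ for the $C$ bounding the support; and (iii) patching if $N$ is noncompact (but here $\overline{S_j} \subset \cup_{i\leq j}S_i$ is compact, so $N = S_j$ sits inside a compact set, which is what makes the argument work and is exactly why the definition of ``partly stratified'' imposes that compactness). I would cite the relevant neighbourhood theorems (\cite{McduffSalamon:sympbook}, Exercise 3.36 as used elsewhere in the paper, and the normal-bundle discussion in Section \ref{subsection constructing contact cylinder}) and otherwise present this as a direct construction. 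The corollary then follows verbatim since a compact positive-codimension subvariety of a Kähler manifold is, by Whitney, such a partly stratified symplectic subset.
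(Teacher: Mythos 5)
Your reduction of the corollary to Proposition \ref{proposition stably displaceable partly stratified symplectic subset} via Example \ref{example subvariety as partly stratified symplectic subset} is exactly what the paper does, but your proposed proof of that proposition has a genuine gap, and it is one the paper explicitly warns against. The stratum-by-stratum induction does not close up. First, there is no ``definite gap'' between consecutive strata: the strata are disjoint as sets, but $\overline{S_{j+1}}$ typically contains points of $S_j$ (for two lines $A,B\subset\C\P^2$ meeting at $p$, the strata are $S_1=\{p\}$ and $S_2=(A\cup B)-\{p\}$, and $\overline{S_2}\ni p$), so you cannot support the stage-$j{+}1$ isotopy in a neighbourhood of $S_{j+1}\times\T$ that avoids the (moved) $S_j\times\T$. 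Second, even granting separate displacements, composing $\phi_2\circ\phi_1$ where $\phi_1$ displaces $S_1\times\T$ from itself and $\phi_2$ displaces $S_2\times\T$ from itself does not displace $(S_1\cup S_2)\times\T$ from itself: you must rule out $\phi_2\circ\phi_1(S_1\times\T)$ meeting $S_2\times\T$ and vice versa, and near a singular point of $\overline{S_2}$ there is no room to do this --- the paper's remark that no vector field infinitesimally displaces a neighbourhood of a normal-crossing point of $S$ from $S$, and that a generic Hamiltonian on $\C\P^2\times T^*\T$ cannot displace $(A\cup B)\times\T$ by a small amount, is precisely a statement that the single-stratum (Laudenbach/G\"urel) method does not propagate through the induction. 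Third, your single-stratum Hamiltonian ``$\beta(w)\chi(\sigma)\tau$'' is multivalued in $\tau$; the vector field $\partial/\partial\sigma$ you ultimately want to use is symplectic but not Hamiltonian, and making a suitable modification of it exact is the actual analytic content of the problem, not a detail.

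The paper's route is different in an essential way: it does not displace stratum by stratum. It first constructs, by induction on strata using the Eliashberg--Mishachev $h$-principle for codimension $\geq 2$ symplectic embeddings (Lemmas \ref{lemma isotopy submanifold}--\ref{lemma partly stratified admits weak curled}), a single symplectic embedding $\iota$ that ``curls up'' a whole neighbourhood of $S\times T^*\T$ into the bounded region $\check{M}_{<1}$, arranged so that $H^1(N;\R)\to H^1(N\cap(M\times\T);\R)$ is injective (Lemma \ref{lemma weak curled implies curled}). That cohomological condition is what makes the $1$-form $\check\omega(V-\iota_*V,\cdot)$ exact, so that $V-\iota_*V$ is Hamiltonian and one single time-dependent Hamiltonian flow pushes all of $S\times\T$ to large $|\sigma|$ at once (Lemma \ref{lemma cureld embedding implies stably displaceable}). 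If you want to salvage your approach, you would have to replace the displacement induction by an embedding induction of this kind.
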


The rest of this subsection is devoted to the proof of Proposition \ref{proposition stably displaceable partly stratified symplectic subset}.
Throughout this subsection we will let
$(\sigma,\tau)$
be the natural Darboux coordinates on
$T^* \T = \R \times \T$
where $\sigma$ is the projection map to $\R$ and $\tau$ is the projection map to $\T$.
We will also let $(\check{M},\check{\omega})$ be the symplectic manifold $(M \times T^*\T, \omega + d\sigma \wedge d\tau)$.

Let us first give a sketch of the proof of Proposition \ref{proposition stably displaceable partly stratified symplectic subset}.
It would be nice if one could displace $S \times \T$ using the symplectic vector field $V := \frac{\partial}{\partial \sigma}$.
However this is not a Hamiltonian vector field.
What we wish to do is to subtract another time dependent symplectic vector field $V_t$, $t \in \R$
from $V$ so that
$V - V_t$ is Hamiltonian
and so that the time $T$ flow of $S \times \T$ along $V - V_t$ is
a bounded distance from the time $T$ flow of $S \times \T$ along $V$ for some large $T \in \R$.
To do this, we first `curl up' $S \times T^* \T$
into a small neighborhood of $S \times \T$.
In other words, we find an appropriate symplectic embedding $\iota$
of a neighborhood $N$ of $S \times T^* \T$ into
a relatively compact subset of $M \times \T^* \T$.
This is done via an explicit embedding technique from \cite[Lemma 12.1.2]{eliashbergmishchevhprinciple}.
We then define $V_t$ to be an appropriate extension of $(\phi^V_t)_* (\iota_* V)$, $t \in \R$ to $\check{M}$ where $\phi^V_t$, $t \in \R$ is the flow of $V$ (See Figure \ref{fig:vectorfieldscurledup}).
Then $V - V_t$ is the Hamiltonian vector field displacing $S \times \T$ in $M \times T^* \T$. This completes the sketch of the proof.

\begin{center}
\begin{figure}[h]
\usetikzlibrary{arrows}
\begin{tikzpicture}
[scale=1.2]

\draw (-6.5,-0.5) -- (2.2,-0.5);
\draw [->](-2,0.8) -- (-2.7,-0.4);
\node at (-2,1) {$S \times T^* \T$};

\draw (-3.7,-0.5) .. controls (-2.7,-0.5) and (-2.7,-1) .. (-2.7,-1.3) .. controls (-2.7,-1.6) and (-3,-1.9) .. (-3.4,-1.9) .. controls (-3.8,-1.9) and (-4,-1.6) .. (-4,-1.3) .. controls (-4,-1) and (-3.7,-0.8) .. (-3.4,-0.8) .. controls (-3.1,-0.8) and (-3,-1) .. (-3,-1.3) .. controls (-3,-1.5) and (-3.1,-1.7) .. (-3.4,-1.7) .. controls (-3.7,-1.7) and (-3.8,-1.5) .. (-3.8,-1.3) .. controls (-3.8,-1.1) and (-3.5,-1.1) .. (-3.4,-1.1) .. controls (-3.2,-1.1) and (-3.2,-1.3) .. (-3.2,-1.4) .. controls (-3.2,-1.5) and (-3.3,-1.6) .. (-3.4,-1.6) .. controls (-3.5,-1.6) and (-3.6,-1.5) .. (-3.6,-1.3);
\draw (-4.4,-0.5) .. controls (-5.2,-0.5) and (-5.3,-1) .. (-5.3,-1.3) .. controls (-5.3,-1.6) and (-5,-1.9) .. (-4.6,-1.9) .. controls (-4.2,-1.9) and (-4.1,-1.6) .. (-4.1,-1.3) .. controls (-4.1,-1) and (-4.3,-0.8) .. (-4.6,-0.8) .. controls (-4.9,-0.8) and (-5,-1) .. (-5,-1.3) .. controls (-5,-1.6) and (-4.8,-1.7) .. (-4.6,-1.7) .. controls (-4.4,-1.7) and (-4.3,-1.5) .. (-4.3,-1.3) .. controls (-4.3,-1.1) and (-4.5,-1.1) .. (-4.6,-1.1) .. controls (-4.8,-1.1) and (-4.8,-1.2) .. (-4.8,-1.3) .. controls (-4.8,-1.5) and (-4.5,-1.5) .. (-4.5,-1.3);

\draw[shift={(4,0)}] (-3.7,-0.5) .. controls (-2.7,-0.5) and (-2.7,-1) .. (-2.7,-1.3) .. controls (-2.7,-1.6) and (-3,-1.9) .. (-3.4,-1.9) .. controls (-3.8,-1.9) and (-4,-1.6) .. (-4,-1.3) .. controls (-4,-1) and (-3.7,-0.8) .. (-3.4,-0.8) .. controls (-3.1,-0.8) and (-3,-1) .. (-3,-1.3) .. controls (-3,-1.5) and (-3.1,-1.7) .. (-3.4,-1.7) .. controls (-3.7,-1.7) and (-3.8,-1.5) .. (-3.8,-1.3) .. controls (-3.8,-1.1) and (-3.5,-1.1) .. (-3.4,-1.1) .. controls (-3.2,-1.1) and (-3.2,-1.3) .. (-3.2,-1.4) .. controls (-3.2,-1.5) and (-3.3,-1.6) .. (-3.4,-1.6) .. controls (-3.5,-1.6) and (-3.6,-1.5) .. (-3.6,-1.3);
\draw[shift={(4,0)}] (-4.4,-0.5) .. controls (-5.2,-0.5) and (-5.3,-1) .. (-5.3,-1.3) .. controls (-5.3,-1.6) and (-5,-1.9) .. (-4.6,-1.9) .. controls (-4.2,-1.9) and (-4.1,-1.6) .. (-4.1,-1.3) .. controls (-4.1,-1) and (-4.3,-0.8) .. (-4.6,-0.8) .. controls (-4.9,-0.8) and (-5,-1) .. (-5,-1.3) .. controls (-5,-1.6) and (-4.8,-1.7) .. (-4.6,-1.7) .. controls (-4.4,-1.7) and (-4.3,-1.5) .. (-4.3,-1.3) .. controls (-4.3,-1.1) and (-4.5,-1.1) .. (-4.6,-1.1) .. controls (-4.8,-1.1) and (-4.8,-1.2) .. (-4.8,-1.3) .. controls (-4.8,-1.5) and (-4.5,-1.5) .. (-4.5,-1.3);

\draw [->](-2.6,-2.3) -- (-3.8,-2.1);
\node at (-2.5,-2.5) {$\iota(S \times T^*\T)$};
\draw [->](1.5,-2.2) -- (0.6,-2);
\node at (1.7,-2.5) {$\phi^V_t(\iota(S \times T^* \T))$};

\draw [-triangle 90](-5.4,-0.5) -- (-5.1,-0.5);
\draw [-triangle 90](-2.5,-0.5) -- (-2.2,-0.5);
\draw [-triangle 90](1.5,-0.5) -- (1.8,-0.5);
\draw [-triangle 90](-2.7,-1.1) -- (-2.7,-1.3);
\draw [-triangle 90](-5.3,-1.4) -- (-5.3,-1.3);
\draw [-triangle 90,shift={(4,0)}](-2.7,-1.1) -- (-2.7,-1.3);
\draw [-triangle 90,shift={(4,0)}](-5.3,-1.4) -- (-5.3,-1.3);

\draw [->](0.3,0.2) -- (1.5,-0.3);
\draw [->](-0.2,0.2) -- (-2.1,-0.3);
\node at (0.1,0.3) {$V$};
\draw [->](-2.2,-1.6) -- (-2.5,-1.3);
\node at (-2,-1.8) {$V_0$};
\draw [->](-5.9,-1.7) -- (-5.6,-1.5);
\node at (-6,-1.9) {$V_0$};
\draw [->](2,-1.3) -- (1.5,-1.2);
\node at (2.3,-1.4) {$V_t$};
\draw [->](-1.8,-1.1) -- (-1.5,-1.3);
\node at (-2,-1) {$V_t$};
\draw [-triangle 90](-5.1,0.1) -- (-4.4,0.1);
\draw [-triangle 90](1.1,0.5) -- (1.6,0.5);
\draw [->](0.3,0.3) -- (0.9,0.5);
\draw [black,fill] (-4,-0.5) ellipse (0.05 and 0.05);
\draw [->](-3.8,-0.1) -- (-4,-0.4);
\node at (-3.7,0.1) {$S \times \T$};

\end{tikzpicture}
\caption{Picture of $V$ and $V_t$, $t \in \R$.} \label{fig:vectorfieldscurledup}
\end{figure}
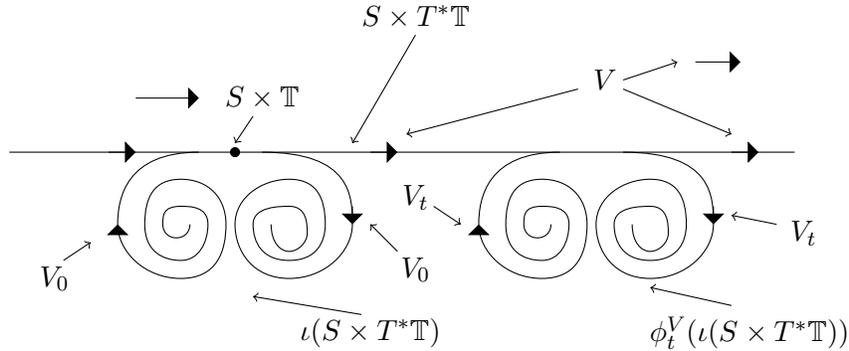
\end{center}

If $S$ has only one stratum (I.e. $S$ is a symplectic submanifold)
then the methods of 
\cite{laudenbach1994hamiltonian} and
\cite[Section 4.3]{gurel2008totally}
can be used to displace $S \times \T$ instead.
However it is hard to see how these methods could be used when $S$ has singularities.
This is due to the fact that near certain singular points $p$ of $S$ (such as normal crossing points), there is no vector field that can infinitesimally displace a neighborhood of $p$ in $S$ from $S$.
Also there are examples which seem difficult to infinitesimally displace.
For instance a $C^\infty$ generic Hamiltonian on $\C \P^2 \times T^* \T$ cannot displace $(A \cup B) \times \T$ by an infinitesimally small amount where $A,B \subset \C\P^2$ are distinct complex lines.
%
%
%

Before we prove Proposition \ref{proposition stably displaceable partly stratified symplectic subset},
we need some preliminary definitions and lemmas.
We will first provide a criterion for Hamiltonian displacement in terms of `curled up' symplectic embeddings.

\begin{defn} \label{definition curled up embedding}
For each $\nu >0$, define
$$\check{M}_{<\nu} := \{(x,(\sigma,\tau)) \in \check{M} \ : \ |\sigma| < \nu \}.$$
Let $Q \subset M$ be a subset.
A {\it weak curled up embedding}
of $Q \times T^*\T$ consists of a neighborhood $N$ of $Q \times T^* \T$ in $\check{M}$
and a symplectic embedding
$\iota : N \hookrightarrow \check{M}_{<1}$ 
so that $\iota(y) = y$
for all $y$ in a neighborhood of $N \cap (M \times \T) \subset N$.
A {\it curled up embedding}
of $Q \times T^*\T$ is a weak curled up embedding as above
with the additional property that the map
$H^1(N;\R) \lra{} H^1(N \cap (M \times \T);\R)$ induced by the inclusion map is injective.
\end{defn}

\begin{lemma} \label{lemma cureld embedding implies stably displaceable}
Let $Q \subset M$ be a compact subset of $M$ so that $Q \times T^*\T$ admits a curled up embedding.
Then $Q$ is stably displaceable.
\end{lemma}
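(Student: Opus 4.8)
The plan is to produce a Hamiltonian symplectomorphism $\psi$ of $\check{M}=M\times T^{*}\T$ (with its form $\omega+d\sigma\wedge d\tau$) satisfying $\psi(Q\times\T)\cap(Q\times\T)=\emptyset$, where $Q\times\T$ means $Q$ times the zero section $M\times\T\subset\check{M}$. The natural but non-Hamiltonian candidate is the time-$T$ flow $\phi^{V}_{T}$ of $V:=\partial/\partial\sigma$: it translates in $\sigma$, so $\phi^{V}_{T}(Q\times\T)=Q\times\{\sigma=T\}$ is disjoint from $Q\times\T$ for every $T>0$, but $V$ is only symplectic, with flux class $[\,\iota_{V}(\omega+d\sigma\wedge d\tau)\,]=[d\tau]\neq0$ in $H^{1}(\check{M};\R)$. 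Following the sketch preceding the lemma, I would correct $V$ by subtracting a time-dependent symplectic vector field $V_{t}$ built from the curled up embedding $\iota:N\hookrightarrow\check{M}_{<1}$, arranged so that $V-V_{t}$ is Hamiltonian with a complete flow and so that its time-$t$ flow moves $Q\times\T$ to within a bounded (independent of $t$) distance of $Q\times\{\sigma=t\}$; taking $T$ large then finishes the proof.

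First I would extract the cohomological content of the hypothesis. Since $\iota$ is symplectic and equals the identity near $N\cap(M\times\T)$, the closed $1$-form $\iota^{*}d\tau-d\tau$ on $N$ vanishes near $N\cap(M\times\T)$, so its class lies in the kernel of $H^{1}(N;\R)\to H^{1}(N\cap(M\times\T);\R)$, which is zero by the definition of a curled up embedding. Hence $\iota^{*}d\tau-d\tau=df$ for some $f:N\to\R$, and after subtracting a locally constant function (care is needed here about the components of $N\cap(M\times\T)$) we may take $f$ to vanish near $N\cap(M\times\T)$. Transporting by $\iota$, this says that on the bounded set $\iota(N)\subset\check{M}_{<1}$ the symplectic vector fields $\iota_{*}(V|_{N})$ and the ambient $V$ differ by the Hamiltonian vector field of (a multiple of) $f\circ\iota^{-1}$, a function supported in $\iota(N)$ and vanishing near $\iota(N)\cap(M\times\T)$.

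Next I would set up the correction. For $t\in[0,T]$ put $R_{t}:=\phi^{V}_{t}(\iota(N))$, a translate of $\iota(N)$ lying in $\{\,|\sigma-t|<1\,\}$, and define $V_{t}$ on $\check{M}$ to be the extension of $(\phi^{V}_{t})_{*}\bigl(\iota_{*}(V|_{N})\bigr)$ by $V$ outside $R_{t}$. Using the primitive $f\circ\iota^{-1}$ together with an auxiliary cutoff one makes this extension smooth across $\partial R_{t}$ and arranges that $V-V_{t}$ has vanishing flux, hence is the Hamiltonian vector field of a globally defined time-dependent function supported in a bounded region (depending on $T$); its flow $\psi_{t}$ is then a genuine Hamiltonian isotopy of $\check{M}$. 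Tracking the orbit of a point of $Q\times\T$, one checks that $\psi_{t}$ drags the zero section through the curled-up region and then translates it along increasing $\sigma$ a fixed bounded distance behind $R_{t}$, so that $\psi_{t}(Q\times\T)$ stays within a bounded distance of $Q\times\{\sigma=t\}$. For $T$ large enough all $\sigma$-values of $\psi_{T}(Q\times\T)$ are then positive, so $\psi_{T}(Q\times\T)\cap(Q\times\T)=\emptyset$ and $Q$ is stably displaceable.

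The hard part is the ``appropriate extension'' defining $V_{t}$: one must simultaneously make $V_{t}$ smooth across $\partial R_{t}$ (where $\iota$ need not be the identity), equal to $V$ away from a bounded set, and such that $V-V_{t}$ is \emph{exact} (zero flux) rather than merely closed -- this is precisely the step that uses the injectivity of $H^{1}(N;\R)\to H^{1}(N\cap(M\times\T);\R)$. A secondary, more routine point is the ODE estimate showing the flow of $V-V_{t}$ exists on $[0,T]$ and remains boundedly close to the translation flow $\phi^{V}_{t}$. I would also be mildly careful, in the cohomological step, about the connectedness of $N\cap(M\times\T)$ when choosing $f$ to vanish near it.
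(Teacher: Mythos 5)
Your proposal is correct and follows essentially the same route as the paper: the flux form $\check{\omega}(V-\iota_*V,\cdot)$ vanishes near $\iota(N)\cap(M\times\T)$ and is exact by the injectivity of $H^1(N;\R)\to H^1(N\cap(M\times\T);\R)$, one cuts off its primitive $\check{H}$ by a bump function equal to $1$ near $\iota((Q\times T^*\T)\cap\check{M}_{<3})$, pushes forward by $\phi^V_t$, and observes that the resulting compactly supported Hamiltonian flow satisfies $\phi^H_t(y)=\phi^V_t(\phi^{-\iota_*V}_t(y))\in\phi^V_t(\check{M}_{<1})$ on $Q\times\T$, giving displacement at finite time. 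The "hard part" you flag (smoothness across $\partial R_t$ and exactness of $V-V_t$) dissolves once the cutoff is applied to the Hamiltonian function rather than to the vector field, and your normalization of $f$ near $N\cap(M\times\T)$ is unnecessary.
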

\begin{proof}
Let
$\iota$, $N$ be as in Definition \ref{definition curled up embedding} above.
Let $V := \frac{\partial}{\partial \sigma}$.
We wish to construct $V_t$, $t \in \R$ as described above.
Define $\widetilde{V} := \iota_*(V)$.
Let $\phi^V_t$ be the time $t$ flow of $V$ for each $t \in \R$
and similarly define
$\phi^{\widetilde{V}}_t$ (when defined).
Since $\iota(y) = y$ for all $y \in \check{M}$ near $N \cap (M \times \T)$,
we get that the closed $1$-form
$\beta(-) := \check{\omega}(V|_{\iota(N)} - \widetilde{V},-)$
vanishes near $N \cap (M \times \T)$.
Combining this with the fact that
the map
$H^1(N;\R) \lra{} H^1(N \cap (M \times \T);\R)$ induced by the inclusion map is injective,
we have that
$\beta$
is exact.
Hence
$V|_{\iota(N)} - \widetilde{V}$
is a Hamiltonian vector field on $\iota(N) \subset \check{M}$.
Let $\check{H} : \iota(N) \lra{} \R$
be the corresponding Hamiltonian.
Let
$\rho : \iota(N) \lra{} [0,1]$
be a smooth compactly supported function
whose restriction to a small neighborhood of the relatively compact set
$\widetilde{Q} := \iota((Q \times T^* \T) \cap \check{M}_{<3})$ is $1$
and define
$$\widetilde{H} : \check{M} \lra{} \R, \quad \widetilde{H}(y) := \left\{
\begin{array}{ll}
\rho(y)\check{H}(y) & \textnormal{if} \ y \in \iota(N) \\
0 & \textnormal{otherwise}.
\end{array}
\right.$$
Define
$H_t := (\phi^V_t)_*(\widetilde{H})$
for each $t \in \R$.
Note that the vector field $V_t$ in the proof sketch above is just $V - X_{H_t}$.
Our claim is that $H := (H_t)_{t \in \R}$ is the Hamiltonian which displaces $Q$.
Let $\phi^H_t$ be the time $t$ flow of $H$ for each $t \in \R$.
Near $\phi^V_t(\widetilde{Q})$, we have
$$X_{H_t} = (\phi^V_t)_*(X_{\widetilde{H}})
= (\phi^V_t)_*(V - \widetilde{V})
= V + (\phi^V_t)_*(-\widetilde{V})$$
for all $t \in \R$.
Hence for each $y \in Q \times \T$,
$\phi^H_t(y) = \phi^V_t(\phi^{-\widetilde{V}}_t(y))$
for all $y \in Q \times \T$ and $t \in [0,3]$.
Combining this with the fact
that
$\phi^V_t(\phi^{-\widetilde{V}}_t(y)) \in \phi^V_t(\check{M}_{<1})$ for all $y \in Q \times \T$,
and $\phi^V_{3}(\check{M}_{<1}) \cap \check{M}_{<1} = \emptyset$ 
we get
$\phi^H_{3}(Q \times \T) \cap (Q \times \T) = \emptyset$.
Hence $Q$ is stably displaceable.
\end{proof}

\begin{defn} \label{defn less than function}
Let $A \subset M \times \T$ be a subset.
For any function
$f : A \lra{} \R \cup \{\infty\}$,
define
$$\check{M}_{< f}|_A := \{(x,(\sigma,\tau)) \in A \times T^* \T \ : \ |\sigma| < f(x,\tau) \} \subset \check{M}.$$

Let $N \subset \check{M}$ be an open subset
containing $A$.
Define
\begin{equation} \label{equation lower semi subset}
f_N : A \lra{} \R \cup \{\infty\},
\quad f_N(a) := \sup \{\sigma > 0 \ : \ \{a\} \times [-\sigma,\sigma] \subset N \}.
\end{equation}
See Figure \ref{fig:checkmlessthanN}.
We define
$$\check{M}_{<N}|_A := \check{M}_{<f_N}|_A.$$
If $A = M \times \T$,
we define
$$\check{M}_{<N} := \check{M}_{<N}|_A.$$
\end{defn}

\begin{center}
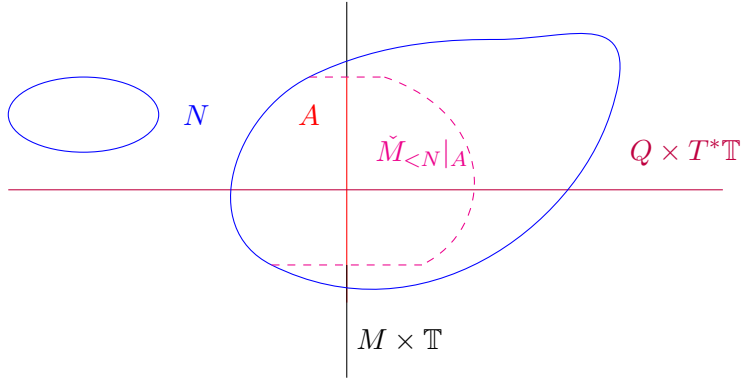
\begin{figure}[h]
\begin{tikzpicture}
\draw[red] (-1,0.5) -- (-1,-2.5);
\draw[purple] (-5.5,-1) -- (4,-1);
\draw (-1,1.5) -- (-1,0.5);
\draw (-1,-3.5) -- (-1,-2);

\node[red] at (-1.5,0) {$A$};

\node at (-0.3,-3) {$M \times \T$};

\node[purple] at (3.5,-0.5) {$Q \times T^*\T$};
\draw[blue]  (-4.5,0) ellipse (1 and 0.5);
\draw[blue] (-1.5,0.5) .. controls (-2.5,0) and (-3,-1.5) .. (-2,-2) .. controls (0,-3) and (2,-1.5) .. (2.5,0) .. controls (3,1.5) and (2,1) .. (1,1) .. controls (0.5,1) and (-0.5,1) .. (-1.5,0.5);
\node at (-3,0) {\color{blue} $N$};

\draw [dashed,magenta](-2,-2) -- (0,-2);
\draw [dashed,magenta](-1.5,0.5) -- (-0.5,0.5);
\draw [dashed,magenta](-0.5,0.5) .. controls (1,0) and (1,-1.5) .. (0,-2);
\node at (0,-0.5) {\color{magenta} $\check{M}_{<N}|_A$};
\end{tikzpicture}
\caption{Picture of $\check{M}_{<N}|_A$.} \label{fig:checkmlessthanN}
\end{figure}
\end{center}

\begin{lemma} \label{lemma open subset}
The function $f_N$ from Definition \ref{defn less than function}
is lower semi-continuous.
Hence $\check{M}_{<N}|_A$
is an open subset of $\check{M}$ if $A \subset M \times \T$ is open.
\end{lemma}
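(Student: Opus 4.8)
The plan is to verify lower semi-continuity of $f_N$ directly from the definition in Equation (\ref{equation lower semi subset}), and then deduce openness of $\check{M}_{<N}|_A$ as an easy consequence. Recall that a function $g : A \lra{} \R \cup \{\infty\}$ is lower semi-continuous if for every $a_0 \in A$ and every real number $c < g(a_0)$ there is a neighborhood $U$ of $a_0$ in $A$ with $g(a) > c$ for all $a \in U$.

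First I would fix $a_0 = (x_0,\tau_0) \in A$ and a real number $c$ with $0 \le c < f_N(a_0)$ (the case $c < 0$ being trivial since $f_N \ge 0$). By definition of the supremum in Equation (\ref{equation lower semi subset}), there exists $\sigma_1$ with $c < \sigma_1$ such that $\{a_0\} \times [-\sigma_1,\sigma_1] \subset N$. The key step is to use the fact that $N$ is open in $\check{M}$ together with compactness of the segment $\{a_0\} \times [-\sigma_1,\sigma_1]$: a standard tube lemma argument gives an open neighborhood $U$ of $a_0$ in $M \times \T$ such that $U \times [-\sigma_1,\sigma_1] \subset N$. Explicitly, for each $\sigma \in [-\sigma_1,\sigma_1]$ pick a product neighborhood $U_\sigma \times (\sigma - \delta_\sigma, \sigma + \delta_\sigma)$ of $(a_0,\sigma)$ contained in $N$; cover $[-\sigma_1,\sigma_1]$ by finitely many of the intervals $(\sigma - \delta_\sigma, \sigma + \delta_\sigma)$ and take $U$ to be the intersection of the corresponding finitely many $U_\sigma$. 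Then for every $a \in U$ we have $\{a\} \times [-\sigma_1,\sigma_1] \subset N$, so $f_N(a) \ge \sigma_1 > c$ by the definition of $f_N$. This establishes lower semi-continuity of $f_N$.

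For the second assertion, suppose $A \subset M \times \T$ is open. I would show $\check{M}_{<N}|_A = \check{M}_{<f_N}|_A$ is open in $\check{M}$. Let $(x,(\sigma,\tau)) \in \check{M}_{<N}|_A$, so $(x,\tau) \in A$ and $|\sigma| < f_N(x,\tau)$. Choose a real number $c$ with $|\sigma| < c < f_N(x,\tau)$. By lower semi-continuity there is a neighborhood $U$ of $(x,\tau)$ in $M \times \T$ with $f_N > c$ on $U$; shrinking, we may assume $U \subset A$. Then the open set $\{(x',(\sigma',\tau')) \ : \ (x',\tau') \in U, \ |\sigma'| < c\}$ is a neighborhood of $(x,(\sigma,\tau))$ in $\check{M}$ contained in $\check{M}_{<N}|_A$, since $|\sigma'| < c < f_N(x',\tau')$ for all such points. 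Hence $\check{M}_{<N}|_A$ is open.

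The only mildly delicate point is the tube lemma step, i.e. passing from pointwise containment of the segment in the open set $N$ to containment of a uniform neighborhood; this is where compactness of $[-\sigma_1,\sigma_1]$ enters, and it is entirely routine. Everything else is a direct unwinding of definitions, so I do not anticipate any real obstacle.
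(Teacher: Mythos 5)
Your proof is correct and follows essentially the same route as the paper: both arguments hinge on the compactness of the segment $\{a_0\}\times[-\sigma_1,\sigma_1]$ together with openness of $N$ to promote pointwise containment to containment of a uniform neighborhood (the paper phrases lower semi-continuity via sequences and a $\liminf$, you phrase it via neighborhoods, but this is the same argument). Your second paragraph actually spells out the deduction of openness of $\check{M}_{<N}|_A$, which the paper leaves implicit.
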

\begin{proof}[Proof of Lemma \ref{lemma open subset}]
Let $a \in A$ and let $(a_i)_{i \in \N}$
be a sequence of points in $A$ converging to $a$.
Let $\sigma > 0$
satisfy $\sigma < f_N(a)$.
Since $N \subset \check{M}$
is an open subset
and since $\{a\} \times [-\sigma,\sigma]$
is compact,
we have that $\{a_j\} \times [-\sigma,\sigma] \subset N$
for all $j$ sufficiently large.
Hence $\sigma < f_N(a_j)$ for all $j$ sufficiently large.
Hence
$$\liminf_{j \to \infty} f_N(a_j)
\geq \liminf_{\sigma < f_N(a)} \sigma
= f_N(a).$$
Hence $f_N$ is lower semi-continuous.
\end{proof}

\begin{lemma} \label{lemma weak curled implies curled}
Let $Q \subset M$ be a compact subset of $M$ so that $Q \times T^*\T$ admits a weak curled up embedding.
Then $Q \times T^* \T$ admits a curled up embedding.
\end{lemma}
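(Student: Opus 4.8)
The plan is to replace the neighborhood $N$ in the given weak curled up embedding by a smaller neighborhood of $Q \times T^*\T$ which deformation retracts onto its intersection with the zero section $M \times \T$. For such a neighborhood the inclusion $N' \cap (M \times \T) \hookrightarrow N'$ induces an isomorphism on $H^1$, so in particular the restriction map is injective, and the symplectic embedding $\iota$ restricts to this smaller neighborhood with all the required properties still holding; this is exactly what is needed for a curled up embedding in the sense of Definition \ref{definition curled up embedding}.

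Concretely, let $(N,\iota)$ be a weak curled up embedding and set $A := N \cap (M \times \T)$, which is an open neighborhood of $Q \times \T$ in $M \times \T$. Define $N' := \check{M}_{<N}|_A$ (Definition \ref{defn less than function}), built from the lower semi-continuous function $f_N$ of Equation (\ref{equation lower semi subset}); by Lemma \ref{lemma open subset}, $N'$ is an open subset of $\check{M}$. First I would verify three containments: $N' \subseteq N$ (if $|\sigma| < f_N(x,\tau)$ then $(x,(\sigma,\tau))$ lies on a segment $\{(x,\tau)\}\times[-\sigma',\sigma'] \subseteq N$ by the definition of $f_N$, using $(x,(0,\tau)) \in N$ for the case $\sigma = 0$); $Q \times T^*\T \subseteq N'$ (since $N \supseteq Q \times T^*\T$ one has $f_N \equiv \infty$ on $Q \times \T$, so the fibre of $N'$ over such a point is all of $\R$); and $N' \cap (M \times \T) = A$ (since $N$ is open, $f_N > 0$ on $A$).

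The key geometric point is that $N'$ is star-shaped in the fibre direction over $A$: for each $(x,\tau) \in A$ the slice $\{\sigma : (x,(\sigma,\tau)) \in N'\}$ is the symmetric interval $(-f_N(x,\tau),f_N(x,\tau))$, which is convex and contains $0$. Hence the map $\check{M} \times [0,1] \to \check{M}$ given by $((x,(\sigma,\tau)),t) \mapsto (x,((1-t)\sigma,\tau))$, which is manifestly continuous and does not involve $f_N$ at all, restricts to a strong deformation retraction of $N'$ onto $A$, because $|(1-t)\sigma| \le |\sigma| < f_N(x,\tau)$. Consequently the inclusion induces an isomorphism $H^1(N';\R) \to H^1(N' \cap (M \times \T);\R)$.

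It remains to observe that $\iota|_{N'} : N' \hookrightarrow \check{M}_{<1}$ is again a weak curled up embedding: it is a symplectic embedding as a restriction of $\iota$, and if $U \subseteq N$ is the neighborhood of $N \cap (M \times \T)$ on which $\iota$ is the identity, then $U \cap N'$ is open in $N'$ and contains $N' \cap (M \times \T) = A \subseteq U$, so $\iota|_{N'}$ is the identity near $N' \cap (M \times \T)$. Combined with the previous paragraph this shows $(N',\iota|_{N'})$ is a curled up embedding. I do not anticipate a serious obstacle; the only mild subtlety is that $f_N$ is merely lower semi-continuous rather than continuous, but this is already handled by Lemma \ref{lemma open subset}, and the explicit retraction formula above sidesteps the issue entirely.
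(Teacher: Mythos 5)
Your proof is correct and takes essentially the same route as the paper's: restrict $\iota$ to the fibrewise star-shaped open set cut out by $f_N$ and deformation retract it onto the zero section by the linear contraction $(x,(\sigma,\tau)) \mapsto (x,((1-t)\sigma,\tau))$. Your write-up is in fact more careful than the paper's, which applies the notation $\check{M}_{<N}$ even though Definition \ref{defn less than function} only defines it when $N \supseteq M \times \T$, and retracts onto an unnamed set $E$; your substitution of $\check{M}_{<N}|_{N \cap (M \times \T)}$ together with the verifications $N' \subseteq N$, $Q \times T^*\T \subseteq N'$, and $N' \cap (M \times \T) = N \cap (M \times \T)$ is exactly the intended reading.
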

\begin{proof}
Let
$\iota : N \hookrightarrow \check{M}_{<1}$, be a weak curled up embedding of $Q \times T^*\T$.
The claim is that $\iota|_{(\check{M}_{<N})}$
is a curled up embedding.
%
%
%
%
%
To prove this, 
it is sufficient to show that
$\check{M}_{<N}$ deformation retracts onto $E$.
This deformation retraction
is given by
$$\Phi_t : \check{M}_{<N} \lra{} \check{M}_{<N}, \quad \Phi(x,(\sigma,\tau)) := (x,((1-t)\sigma,\tau)), \quad t \in [0,1].$$
\end{proof}

We need the following Moser lemma in order to construct appropriate weak curled up embeddings of stratified symplectic subsets.
The proof of this lemma is a slight modification of \cite[Lemma 3.14]{McduffSalamon:sympbook}.

\begin{lemma} \label{lemma Moser lemma}
Let $(W,\omega_W)$, $(\check{W},\omega_{\check{W}})$ be symplectic manifolds, let $Q \subset W$ be a symplectic submanifold and let $U, N \subset W$ open sets satisfying $\overline{U} \subset N$.
Let
$\iota_N : N \hookrightarrow \check{W}$,
$\iota_Q : Q \hookrightarrow \check{W}$
be symplectic embeddings
so that
\begin{enumerate}
\item $\iota_N$ is a codimension $0$ symplectic embedding,
\item $\iota_N|_{N \cap Q} = \iota_Q|_{N \cap Q}$ and
\item \label{item:isomorphism between bundles} the pullback via $\iota_Q$ of the normal bundle of $\iota_Q(Q)$
is isomorphic as a symplectic vector bundle to the normal bundle of $Q$ in $W$ and this isomorphism coincides with the isomorphism induced by $\iota_N$ along $N \cap Q$.
\end{enumerate}
Then there is a neighborhood $V \subset W$ of $Q$ and a codimension $0$ symplectic embedding
$\iota : U \cup V \hookrightarrow \check{W}$
so that
$\iota|_U = \iota_U$ and $\iota|_Q = \iota_Q$ (See Figure \ref{fig:uvembedding}).
\end{lemma}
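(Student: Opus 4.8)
The plan is to deduce Lemma \ref{lemma Moser lemma} from the standard relative Moser argument (\cite[Lemma 3.14]{McduffSalamon:sympbook}), carefully tracking the open set $U$ where we must keep the embedding fixed equal to $\iota_U$. First I would use hypothesis (\ref{item:isomorphism between bundles}) together with the symplectic neighborhood theorem to produce \emph{some} codimension $0$ symplectic embedding $\iota_0 : V_0 \hookrightarrow \check{W}$ of a neighborhood $V_0$ of $Q$ with $\iota_0|_Q = \iota_Q$ and $D\iota_0$ along $Q$ equal to the given symplectic bundle isomorphism. The point of the bundle-isomorphism hypothesis matching along $N \cap Q$ is that $\iota_0$ and $\iota_N$ then agree to first order along $N \cap Q$: both restrict to $\iota_Q$ on $N \cap Q$ and both induce the same isomorphism on normal bundles there. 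Hence, after shrinking $V_0$, the two symplectic forms $\iota_0^* \omega_{\check{W}}$ and $\iota_N^* \omega_{\check{W}} = \omega_W$ agree along $Q$ as bilinear forms (in fact $\iota_0^*\omega_{\check{W}} = \omega_W$ on the nose since $\iota_0$ is a symplectic embedding), so we are in position to interpolate.

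The key step is the interpolation. Choose a smooth cutoff function $\chi : W \lra{} [0,1]$ which is $1$ on a neighborhood of $\overline{U}$ and supported in $N$, and on $N$ consider the family of embeddings obtained by interpolating between $\iota_N$ and $\iota_0$ in a Weinstein-type chart around $Q$; more precisely, work in a tubular neighborhood of $Q$ and define a family $\beta_t$, $t \in [0,1]$, of closed $2$-forms equal to $\omega_W$ on $Q$, equal to $\iota_N^*\omega_{\check{W}}$ near $\overline{U}$ for all $t$ (using $\chi$), and equal to $\iota_0^*\omega_{\check{W}}$ at $t = 1$ near $Q$ outside $U$. Since all these forms agree along $Q$ and are cohomologous near $Q$ (both are exact relative to $Q$ on a tubular neighborhood), the Moser trick produces a time-dependent vector field $X_t$, vanishing along $Q$ and vanishing on the neighborhood of $\overline{U}$ where the forms are constant in $t$, whose flow $\psi_t$ satisfies $\psi_t^* \beta_t = \beta_0$. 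Then $\iota := \iota_N \circ \psi_1$ on a suitable neighborhood $U \cup V$ (with $V$ a possibly smaller neighborhood of $Q$ on which $\psi_1$ is defined and lands where $\iota_0$ and the Moser-transported embedding match up) gives the desired embedding: it equals $\iota_U$ on $U$ because $\psi_1 = \id$ there, and equals $\iota_Q$ on $Q$ because $X_t$ vanishes along $Q$ so $\psi_1|_Q = \id$ and $\iota_N|_Q = \iota_Q$.

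The main obstacle I expect is bookkeeping around the \emph{overlap} of $U$ with the tubular neighborhood of $Q$: one must ensure that the Moser isotopy $\psi_t$ can simultaneously be made to fix $U$ (where $\iota_N$ is already the answer) and to connect $\iota_N$ to the model embedding $\iota_0$ near $Q \setminus \overline{U}$, which forces a careful choice of $\chi$ and a shrinking of the neighborhoods so that the two "constant-in-$t$" regions ($\overline U$ on one side, $Q$ on the other) are compatible with a single closed $2$-form family $\beta_t$ that is exact relative to $Q$. The hypothesis $\iota_N|_{N \cap Q} = \iota_Q|_{N \cap Q}$ is exactly what guarantees consistency on the overlap. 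Everything else is the standard relative Moser argument of \cite[Lemma 3.14]{McduffSalamon:sympbook}, applied with the extra parameter region where we keep the map fixed.
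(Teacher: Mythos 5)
Your overall strategy (a Weinstein-type model near $Q$ plus a relative Moser argument kept constant on $U$ and along $Q$) is the right one, but the interpolation step, which you yourself identify as the key step, does not work as described. You propose to run Moser on a family $\beta_t$ of closed $2$-forms interpolating between $\iota_N^*\omega_{\check{W}}$ and $\iota_0^*\omega_{\check{W}}$; but, as you note yourself, both of these pullbacks equal $\omega_W$ on the nose, so the family is constant, the Moser vector field vanishes identically, and $\psi_1=\mathrm{id}$. The resulting map therefore does not glue $\iota_N$ to $\iota_0$ at all. The difficulty is not that the two pulled-back \emph{forms} differ (they do not) but that the two \emph{embeddings} differ away from $N\cap Q$, and gluing two symplectic embeddings that merely agree to first order along $N\cap Q$ is essentially the content of the lemma, not something the Moser trick on forms in the source gives for free. (If instead you interpolate the maps themselves, say linearly in a chart of $\check{W}$, and set $\beta_t:=\iota_t^*\omega_{\check{W}}$, then $\beta_t$ is genuinely non-constant in $t$ near $\overline{U}$ away from $Q$, so the Moser flow is not the identity there and the glued map fails to restrict to $\iota_N$ on $U$.) There is also a smaller but real defect in your final formula: $\iota:=\iota_N\circ\psi_1$ is undefined on $V\setminus N$, since $Q$ need not be contained in $N$; on $V$ the embedding must be built from the model near $Q$, not from $\iota_N$.

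The paper's proof avoids this by never passing through an abstract Weinstein model that is already symplectic. It chooses complete metrics with $\iota_N^*\check{g}=g$ near $\overline{U}$ and takes as its model the (generally non-symplectic) map $F:=\exp_{\check{W}}\circ\,\phi\circ\exp_W^{-1}$, where $\phi$ is the normal-bundle isomorphism of hypothesis (\ref{item:isomorphism between bundles}). Because of the metric choice and because $\phi$ coincides with the isomorphism induced by $\iota_N$ over $N\cap Q$, the map $F$ equals $\iota_N$ \emph{on the nose} near $\overline{U}$, not merely to first order along $N\cap Q$. One then interpolates in the \emph{target} between $F_*\omega_W$ and $\omega_{\check{W}}$: these genuinely differ away from $\iota_Q(Q)$, but agree for all $t$ along $\iota_Q(Q)$ and on a neighborhood $U'$ of $\overline{U}$'s image, so the primitives $\sigma_t$ can be chosen to vanish there, the Moser isotopy $\psi_t$ of the target fixes $U'\cup\iota_Q(Q)$, and $\psi_1\circ F$ is a symplectic embedding agreeing with $\iota_N$ on $U$ and with $\iota_Q$ on $Q$. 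The exact (not just first-order) agreement of the model with $\iota_N$ near $\overline{U}$ is the ingredient your proposal is missing.
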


\begin{center}
\begin{figure}[h]
	\begin{tikzpicture}
	\draw (5,1.5) .. controls (4,0.5) and (4.5,-1) .. (6,-0.5) .. controls (7.5,0) and (7,1.5) .. (6.5,2.5) .. controls (6,3) and (5.5,2) .. (5,1.5);
	\draw[purple] (5.5,1) .. controls (5,0.5) and (5.5,0) .. (6.5,0.5) .. controls (7.5,1) and (6,1.5) .. (5.5,1);
	\draw[blue] (3.5,2) .. controls (5.5,1.5) and (6,0.5) .. (7,-1.5);
	\draw[red] (3,2) .. controls (5.5,0.5) and (5.5,0.5) .. (7,-2) .. controls (7,-2.5) and (8,-1.5) .. (6,1) .. controls (4,3) and (2,2.5) .. (3,2);
	\node at (7,2.1) {$N$};
	\node at (6.4,1.4) {\color{purple} $U$};
	\node at (4,1.6) {\color{blue} $Q$};
	\node at (2.7,1.8) {\color{red} $V$};
	\end{tikzpicture}
\caption{Embedding of $U \cup V$ into $\check{W}$.} \label{fig:uvembedding}
	\end{figure}
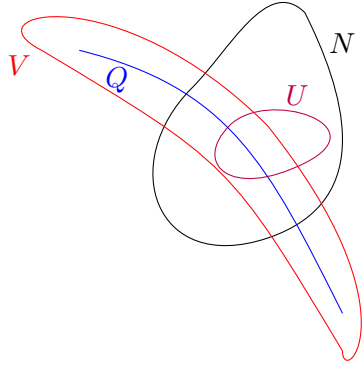
\end{center}

Lemma 3.1 in \cite{McduffSalamon:sympbook}
requires that $Q$ be compact.
However this is really not needed.

\begin{proof}[Proof of Lemma \ref{lemma Moser lemma}.]
Choose complete metrics $g$ and $\check{g}$ on $W$ and $\check{W}$ respectively
so that
$\iota_N^* \check{g} = g$
near $\overline{U}$ and
let $NQ$ be the symplectic normal bundle of $Q$ in $W$ and let $E \lra{} \iota_Q(Q)$ be the symplectic normal bundle of $\iota_Q(Q)$
in $\check{W}$.
Let
$\exp_W : NQ \lra{} W$
and
$\exp_{\check{W}} : E \lra{} \check{W}$
be the exponential maps using $g$ and $\check{g}$ respectively.
Let
$\phi : NQ \lra{} \iota_Q^* E$
be the isomorphism
described in 
(\ref{item:isomorphism between bundles}) above.
Choose small tubular neighborhoods
$A \subset NQ$ and $B \subset E$ of the zero section so that
$\exp_W|_A$ and
$\exp_{\check{W}}|_B$ are smooth embeddings and so that
$\phi(A) = B$.
We have a smooth family of symplectic forms
$$\omega_t := (1-t) ((\exp_{\check{W}})_* \phi_* \exp_W^* \omega_W) + t\omega_{\check{W}}, \quad t \in [0,1]$$
on $\widetilde{B} := \exp_{\check{W}}(B)$ after shrinking $A$ and $B$.
Now $\omega_t$ is equal to $\omega_{\check{W}}$ along $T\check{W}|_{\iota_Q(Q)}$ and also 
along some neighborhood $U' \subset \widetilde{B}$ of $\overline{U} \cap \widetilde{B}$ for each $t \in [0,1]$.
After shrinking $U'$, $A$ and $B$,
we can assume that $U'$ deformation retracts onto $U' \cap \iota_Q (Q)$.
Hence we can find a smooth family of $1$-forms $\sigma_t \in \Omega^1(\widetilde{B})$, $t \in [0,1]$
satisfying
\begin{itemize}
\item $\frac{d}{dt} \omega_t = d\sigma_t$ and
\item $\sigma_t(v) = 0$ for each $v \in T_x \widetilde{B}$, $x \in U' \cup \iota_Q(Q)$ and $t \in [0,1]$
\end{itemize}
after shrinking $U'$ slightly again.
Let $(X_t)_{t \in [0,1]}$ be a smooth family of vector fields
satisfying $\sigma_t + \iota_{X_t} \omega_t = 0$ for each $t \in [0,1]$.
Let $\psi_t$ be the time $t$ flow of $(X_t)_{t \in [0,1]}$ for each $t \in [0,1]$ (if it exists).
For each $x \in \iota_Q(Q)$,
we have that $X_t = 0$ at $x$ since $\sigma_t = 0$ at $x$.
Therefore for each $x \in \iota_Q(Q)$,
there is a small neighborhood
$V_x \subset \widetilde{B}$ of $x$ so that the time $t$
flow $\psi_t(y)$ is well defined
for each $y \in V_x$
and $t \in [0,1]$.
Hence the time $t$ flow
$\psi_t(y)$ is well defined
for each $y \in V := \cup_{x \in Q} V_x$
and each $t \in [0,1]$.
%
%
Define
$$\iota : U \cup V \hookrightarrow \check{W}, \quad \iota(x) := 
\left\{
\begin{array}{ll}
\iota_N(x) & \textnormal{if} \ x \in U \\
\psi_1(\exp_{\check{W}}(\phi(\exp_W^{-1}(x))) & \textnormal{if} \ x \in V.
\end{array}
\right.$$
This is a symplectic embedding.
Also since $\psi_1(x) = x$
for all $x \in (\overline{U} \cap \widetilde{B}) \cup \iota_Q(Q)$
we have
$\iota|_U = \iota_U$ and $\iota|_Q = \iota_Q$.
\end{proof}

The proof of the following lemma
is identical to the proof of
\cite[Lemma 12.1.2]{eliashbergmishchevhprinciple}
and so we will omit it.
However, we will give an idea of the proof since it will be a crucial ingredient in the proof
of Proposition \ref{proposition stably displaceable partly stratified symplectic subset} below.
\begin{lemma} \label{lemma isotopy submanifold}
Let $Q$ be a
symplectic submanifold of a symplectic manifold
$(W,\omega_W)$ of positive codimension and let $U, N \subset Q$ be open sets so that
$\overline{U} \subset N$.
Let $\alpha_t \in \Omega^1(Q)$, $t \in [0,1]$
be a smooth family of $1$-forms
so that $\omega_t := \omega_W|_{TQ} + d\alpha_t$, $t \in [0,1]$ is a smooth family of symplectic forms on $Q$ and so that $\alpha_t|_N = 0$ for all $t \in [0,1]$ and $\alpha_0=0$.
Then there is a smooth family of symplectic embeddings
$\iota_t : Q \hookrightarrow (W,\omega_W)$, $t \in [0,1]$ which are arbitrarily $C^0$
close to $\iota_0$
so that $\iota_1^* \omega_W = \omega_1$,
$\iota_0 = \id_Q$
and so that $\iota_0|_U = \iota_t|_U$ for all $t \in [0,1]$.
\end{lemma}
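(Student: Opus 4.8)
The plan is to follow the corrugation (``wiggly embedding'') argument of \cite[Lemma 12.1.2]{eliashbergmishchevhprinciple}, exploiting the positive codimension of $Q$. First I would invoke the symplectic neighbourhood theorem (cf.\ \cite{McduffSalamon:sympbook}) to identify a neighbourhood of $Q$ in $(W,\omega_W)$ with a neighbourhood $\cN$ of the zero section in the symplectic normal bundle $\pi : NQ \to Q$, in such a way that $\omega_W$ pulls back to a closed $2$-form on $\cN$ whose restriction to the zero section is $\omega_W|_{TQ}$ and whose fibres are symplectic of dimension $2k$ with $k\geq 1$. Since every hypothesis of the lemma involves only pulling $\omega_W$ back along maps into $W$ whose images may be taken inside an arbitrarily small neighbourhood of $Q$, it then suffices to produce a smooth family of embeddings $\iota_t : Q \hookrightarrow \cN \subset NQ$, $C^0$-close to the zero-section embedding, with $\iota_0 = \id_Q$, $\iota_t|_U = \iota_0|_U$, and $\iota_t^{*}\omega_W = \omega_t$.

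The heart of the argument is a local corrugation step. I would fix a locally finite cover of $Q$ by charts $(V_j)$ over which $NQ$ trivialises and splits off a symplectic $\R^2$-factor with coordinates $(x,y)$ and fibre form $dx\wedge dy$, pick a subordinate partition of unity, and write $d\alpha_t = \sum_j d\beta^j_t$ with each $\beta^j_t$ supported in $V_j$, vanishing on a fixed neighbourhood of $N$, and with $\beta^j_0 = 0$ (these last properties being inherited from $\alpha_0 = 0$ and $\alpha_t|_N = 0$). For a single $\beta := \beta^j_t$ one writes, on its support, $\beta = g\,d\phi$ for a coordinate function $\phi$ and smooth coefficient $g$, and replaces the zero section over $V_j$ by the spiral section $q\mapsto \big(q,\ \rho(q)\cos(m\phi(q)),\ \rho(q)\sin(m\phi(q))\big)$ of amplitude $\rho = \sqrt{2g/m}$ and large frequency $m$; a direct computation shows the pullback of the fibre form equals $d\big(\frac12\rho^2 m\,d\phi\big)$ up to an oscillatory term that can be made $C^0$-small, so one corrugation realises $d\beta$ modulo a high-frequency $C^0$-small error while the section itself has $C^0$-size $O(m^{-1/2})$.

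Assembling these, I would compose the corrugations for the various $V_j$ and then iterate the whole procedure as a convex-integration scheme so that the accumulated oscillatory errors converge and one obtains exactly $\iota_1^{*}\omega_W = \omega_1$; at every stage the construction depends smoothly on $t$, is the identity over the neighbourhood of $N$ on which all $\beta^j_t$ vanish (hence over $U$, as $\overline U\subset N$), and collapses to $\id_Q$ at $t=0$ since each $\beta^j_0=0$ forces every amplitude to vanish. Finally one transports $\iota_t$ back through the symplectic-neighbourhood identification. The main obstacle, and really the only delicate point, is the simultaneous control in the corrugation step: realising the fixed (non-small) perturbation $d\alpha_t$ while keeping the isotopy $C^0$-small forces the convex-integration bookkeeping — choosing the frequencies large enough at each stage, tracking the curvature terms of the connection on $NQ$ which add extra fibrewise-quadratic contributions to the pulled-back form, and preserving smoothness in $t$ and exact agreement with $\iota_0$ over $U$ throughout. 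The remaining ingredients (the symplectic-neighbourhood reduction, the partition of unity, the return to $W$, and the non-compactness of $Q$, handled by local finiteness of the cover) are routine.
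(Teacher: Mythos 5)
Your overall blueprint (localize with a partition of unity, use the positive codimension to graph $Q$ over a symplectic $2$-plane direction, keep everything supported away from $N\supset\overline U$) is the same as in \cite[Lemma 12.1.2]{eliashbergmishchevhprinciple}, which is the proof the paper defers to. But the key local step is done differently, and as written it has genuine gaps. First, your spiral section has amplitude $\rho=\sqrt{2g/m}$: this is undefined where the coefficient $g$ is negative and fails to be smooth at the zeros of $g$, and since $g$ must have compact support in $V_j-\overline U$ it necessarily has zeros. This is precisely the difficulty the cited proof is built to avoid: there one takes the map $(r,s):Q\to \D^2(R)$ into a \emph{large} disk and post-composes with a smooth area-preserving immersion $\tau:\D^2(R)\looparrowright\D^2(\epsilon)$ (the ``curled up'' immersion), so that $\phi=\tau\circ(r,s)$ is smooth everywhere, $C^0$-small, and satisfies $\phi^*(dx\wedge dy)=dr\wedge ds$ \emph{exactly}. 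Second, and relatedly, your claim that one corrugation realizes $d\beta$ only ``modulo a high-frequency $C^0$-small error'' which is then killed by an iterated convex-integration scheme is both unnecessary (with the twisting construction the identity is exact, so no iteration is needed) and unjustified: for an exact realization of a $2$-form, convergence of such an iteration — summability of the errors, smoothness of the limit, the limit being an embedding, smooth dependence on $t$ — is the hard part, and you only gesture at ``bookkeeping''.

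Two smaller points. A compactly supported $1$-form in a chart is a sum $\sum_i g_i\,dx^i$, not a single term $g\,d\phi$; the cited proof sidesteps this by first $C^1$-approximating $t\mapsto\alpha_t$ by a family that is piecewise linear in $t$ with each derivative of the form $r\,ds$ for a \emph{single} pair of compactly supported functions, and then composing finitely many twistings — you would need an analogous reduction or a composition over all $i$, which further compounds your error-control problem. Finally, the curvature/nontriviality of the normal bundle that you flag is handled in the paper's framework by doing the twisting in local product charts and then gluing with the Moser-type Lemma on symplectic neighbourhoods; it does not require any additional iteration once the local step is exact.
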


We will now give a hint at why the lemma above is true. The details are contained
in \cite[Lemma 12.1.2]{eliashbergmishchevhprinciple}.
First of all,
we can $C^1$ approximate
the family of $1$-forms
$\alpha_t$, $t \in [0,1]$
by a new
family of $1$-forms
$\alpha'_t$, $t \in [0,1]$
so that
\begin{itemize}
	\item $\alpha'_0 = \alpha_0$, $\alpha'_1 = \alpha_1$,
	\item
for each compact codimension $0$ submanifold $D \subset Q$,
the function 
$$[0,1] \lra{} \Omega^1(D), \quad t \to \alpha'_t|_D$$
is piecewise linear and
\item for each $t \in [0,1]$
where $\frac{d}{dt}\alpha'_t|_D$ is well defined,
there is a relatively compact open neighborhood $N_D$ of $D$
so that
$\frac{d}{dt}\alpha'_t|_{N_D}$
is equal to $r ds$
where $r, s$ are smooth functions on $Q$ with compact support in $N_D - \overline{U}$.
\end{itemize}
See \cite[12.1.3]{eliashbergmishchevhprinciple}) for more details.
After a bit more work, it is then sufficient to prove
Lemma \ref{lemma isotopy submanifold}
when $\alpha_t = t rds$
for some smooth compactly supported functions $r, s$ on $Q$.
In this special case,
the embeddings $\iota_t$, $t \in [0,1]$ are obtained by
using the following Lemma:
\begin{lemma} \cite[12.1.5]{eliashbergmishchevhprinciple} (Symplectic Twisting Lemma).
Let $(Q,\omega_Q)$
be a symplectic manifold and let $\D^2(\epsilon) \subset \C$
be the open disk of radius $\epsilon$ with the standard symplectic form $dx \wedge dy$.
Then for any compactly supported smooth functions $r,s$ on $Q$,
there is a smooth function $\phi : Q \lra{} \D^2(\epsilon)$ so that
the function
$$\Phi : Q \lra{} Q \times \D^2(\epsilon), \quad \Phi(q) := (q,\phi(q))$$
satisfies
$\Phi^*(\omega_Q + dx \wedge dy) = \omega_Q + dr \wedge ds$.
\end{lemma}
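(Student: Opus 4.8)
The plan is to establish the lemma in three stages: a reduction, an explicit local model, and the amalgamation of the local pieces.

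\textbf{Reduction.} Since $\Phi(q)=(q,\phi(q))$ is a graph, $\Phi^{*}$ restricted to the $\omega_{Q}$-summand of $\omega_{Q}+dx\wedge dy$ is the identity, so the asserted identity $\Phi^{*}(\omega_{Q}+dx\wedge dy)=\omega_{Q}+dr\wedge ds$ is equivalent to $\phi^{*}(dx\wedge dy)=dr\wedge ds$, where $\phi=(x_{0},y_{0}):Q\to\D^{2}(\epsilon)$. I would in fact prove the slightly stronger statement that $\phi$ may be taken to vanish outside a compact neighbourhood of $\mathrm{supp}(r)\cup\mathrm{supp}(s)$, since that is what is actually needed in the reduction of Lemma~\ref{lemma isotopy submanifold}. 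Observe that the naive choice $\phi_{0}=(\lambda r,\lambda^{-1}s)$ already satisfies $\phi_{0}^{*}(dx\wedge dy)=dr\wedge ds$ identically, but it has two defects: it is not compactly supported, and its norm $|\phi_{0}|^{2}=\lambda^{2}r^{2}+\lambda^{-2}s^{2}$ cannot be made $<\epsilon^{2}$ by any single $\lambda$ once $\max|r|\max|s|\ge\epsilon^{2}$. Both defects are local, so the strategy is to cut $dr\wedge ds$ into many small pieces, realize each by a cut-off rescaling of the model, and reassemble.

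\textbf{Local model and assembly within a family.} Fix a small $\eta>0$. Choose finitely many families $\mathcal{F}_{1},\dots,\mathcal{F}_{K}$ ($K$ a universal constant) of pairwise disjoint open squares of side $<\eta$ in $\R^{2}$ whose union covers the bounded set $(r,s)(Q)$. Pulling back under $(r,s):Q\to\R^{2}$, the open sets $V_{i,\gamma}:=(r,s)^{-1}(R_{i,\gamma})$ (for $R_{i,\gamma}\in\mathcal{F}_{i}$) cover $Q$, are pairwise disjoint for each fixed $i$, and on each $V_{i,\gamma}$ the function $s$ lies in an interval of length $<\eta$ with midpoint $s_{i,\gamma}$. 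Take a partition of unity $\{\mu_{i,\gamma}\}$ subordinate to $\{V_{i,\gamma}\}$, set $r_{i,\gamma}:=\mu_{i,\gamma}r$, so that $dr\wedge ds=\sum_{i,\gamma}dr_{i,\gamma}\wedge ds$ and $\eta_{i}:=\sum_{\gamma}dr_{i,\gamma}\wedge ds$ is supported in the disjoint family-$i$ cells. For each cell pick a cut-off $\rho_{i,\gamma}$ equal to $1$ on $\mathrm{supp}(r_{i,\gamma})$ and compactly supported in $V_{i,\gamma}$, put $\lambda:=\bigl(\eta/(2\max|r|)\bigr)^{1/2}$, and define $\phi_{i,\gamma}:=\bigl(\lambda r_{i,\gamma},\ \lambda^{-1}\rho_{i,\gamma}(s-s_{i,\gamma})\bigr)$. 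A direct computation, using that $\rho_{i,\gamma}\equiv 1$ on $\mathrm{supp}(dr_{i,\gamma})$ and that both sides vanish off that set, gives $\phi_{i,\gamma}^{*}(dx\wedge dy)=dr_{i,\gamma}\wedge ds$; and $|\phi_{i,\gamma}|^{2}<\lambda^{2}(\max|r|)^{2}+\lambda^{-2}(\eta/2)^{2}=\eta\max|r|$, which is $<\epsilon^{2}/K$ once $\eta$ is small enough (if $r\equiv 0$ then $dr\wedge ds=0$ and $\phi=0$ works). Since $\mathrm{supp}(r)$ is compact, only finitely many cells occur, and $\Psi_{i}:=\sum_{\gamma}\phi_{i,\gamma}$ is a well-defined smooth compactly supported map with $\Psi_{i}^{*}(dx\wedge dy)=\eta_{i}$ and $|\Psi_{i}|<\epsilon/\sqrt{K}$, because the $V_{i,\gamma}$ are disjoint for fixed $i$.

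\textbf{Amalgamation (the main obstacle).} It remains to combine $\Psi_{1},\dots,\Psi_{K}$ into a single $\phi$ with $\phi^{*}(dx\wedge dy)=\sum_{i}\eta_{i}=dr\wedge ds$. Simply setting $\phi=\sum_{i}\Psi_{i}$ fails, since it produces cross-terms $dx^{i}\wedge dy^{j}+dx^{j}\wedge dy^{i}$ over the overlaps $\mathrm{supp}(\Psi_{i})\cap\mathrm{supp}(\Psi_{j})$ which do not vanish. I would instead proceed by induction on the family index: given $\phi^{(\ell)}$ realizing $\eta_{1}+\dots+\eta_{\ell}$, compactly supported and supported away from the locus where $\phi$ must vanish, I modify it cell by cell over the disjoint family-$(\ell+1)$ cells — identifying each such cell with a small ball in $\R^{2n}$ and, crucially, taking the family-$(\ell+1)$ cells much smaller than the earlier ones so that $\phi^{(\ell)}$ is nearly constant on each of them — adding inside each cell a small graph-perturbation of the cut-off-rescaled-model type of the previous stage, tuned (together with a secondary correction absorbing the cross-terms against the slowly varying $\phi^{(\ell)}$) so that the pulled-back form gains exactly $dr_{\ell+1,\gamma}\wedge ds$ while the running total stays inside $\D^{2}(\epsilon)$. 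This is the iterative amalgamation mechanism of Eliashberg--Mishachev \cite[Section~12.1]{eliashbergmishchevhprinciple}. The single point that genuinely requires care, and which I expect to be the main obstacle, is exactly this inductive step: verifying that the cross-terms generated at each stage are absorbed rather than regenerated and that the successive perturbations remain small, which rests on the boundedness of $K$, the disjointness of the cells within each family, and the freedom to take the cells at later stages as small as one wishes.
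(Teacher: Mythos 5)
There is a genuine gap, and it sits exactly where you predicted: the amalgamation step. Your construction produces, for each family $i$, a map $\Psi_i$ with $\Psi_i^*(dx\wedge dy)=\eta_i$, but combining the families is the whole problem, because for a sum of maps the pulled-back area form acquires the cross-terms you mention, and the lemma demands the \emph{exact} identity $\phi^*(dx\wedge dy)=dr\wedge ds$, not an approximation. Your proposed mechanism --- take the later cells so small that $\phi^{(\ell)}$ is ``nearly constant'' on them, plus an unspecified ``secondary correction absorbing the cross-terms'' --- only makes the error 2-form small; it does not kill it, and no construction of the correction is given, nor any argument that correcting the error does not itself regenerate cross-terms of the same size or push the image out of $\D^2(\epsilon)$. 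Making this rigorous would require either an exact cancellation scheme or a convergent infinite iteration, neither of which is sketched; as written the induction does not close. (There is also a smaller point: the decomposition-into-cells-and-reassembly machinery you are invoking is essentially the proof of the \emph{reduction} step, i.e.\ Lemma 12.1.2/12.1.3 of Eliashberg--Mishachev, which in that book is what reduces matters to the twisting lemma; using it to prove the twisting lemma itself puts the hard gluing analysis back in at the base case.)

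The paper's proof avoids gluing altogether. After the same reduction to $\phi^*(dx\wedge dy)=dr\wedge ds$, it takes the naive map $(r,s):Q\to\C$, whose image lies in some large disk $\D^2(R)$, and composes it with a smooth \emph{area-preserving immersion} $\tau:\D^2(R)\looparrowright\D^2(\epsilon)$ (a ``curled-up'' spiral of the big disk inside the small one). Since $\tau^*(dx\wedge dy)=dx\wedge dy$ and $(r,s)^*(dx\wedge dy)=dr\wedge ds$, the map $\phi=\tau\circ(r,s)$ satisfies the required identity on the nose and has image in $\D^2(\epsilon)$; compact support of $r,s$ is inherited up to composing with a translation so that $\tau(0)=0$ if one wants $\phi$ to vanish at infinity. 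The only nontrivial input is the existence of the immersion $\tau$, which is elementary. If you want to salvage your approach, the honest fix is to replace the inductive cross-term absorption by exactly this trick: solve the problem globally with $(r,s)$ and repair only the failure of the image to fit in the small disk, which is a purely two-dimensional, target-side problem.
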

\proof
In order to prove this lemma,
one just needs to find $\phi$ so that
$\phi^* dx \wedge dy = dr \wedge ds$.
If $\epsilon>0$ was really large,
then we could just choose $\phi = (r,s)$.
However, $\epsilon$ could be really small
and such a map would not be well defined.
But this can be corrected by first finding a smooth
area preserving immersion
$\tau : \D^2(R) \looparrowright \D^2(\epsilon)$ for $R$ large and then letting $\phi = \tau \circ (r,s)$.
Such an immersion is illustrated in Figure \ref{fig:diskimmersion} below.
Here we should think of the disk $\D^2(R)$ as being `curled up'
inside the disk $\D^2(\epsilon)$.
Then $\phi = \tau \circ (r,s)$ for $R$ large has the properties we want.
\qed
\begin{center}
\begin{figure}[h]
	\begin{tikzpicture}
	
	\tikzset{
		ribbon/.style={
			preaction={
				preaction={
					draw,
					line width=0.85cm,
					white
				},
				draw,
				line width=0.8cm,
				black
			},
			line width=0.75cm,
			yellow!20!
		},
		ribbon/.default=white
	}

	\draw [fill=yellow!20!,thick] (0.1,0) ellipse (0.4 and 0.4);
	
	\draw  (0.1,-1.2) ellipse (2.7 and 2.3);

	\draw [ribbon](0.1,0) .. controls (2.1,0) and (2.4,-2.8) .. (0.2,-2.8);
	\draw [ribbon](0.2,-2.8) .. controls (-2.4,-2.8) and (-2.2,-0.1) .. (-0.2,-0.1);
	\draw [ribbon](-0.2,-0.1) .. controls (1.7,-0.1) and (2.1,-2.5) .. (0,-2.5);
	\draw [ribbon](0,-2.5) .. controls (-2.1,-2.5) and (-1.4,-0.2) .. (-0.2,-0.2) .. controls (1,-0.2) and (1.5,-2.3) .. (0.2,-2.3);
	\draw [ribbon](-0.2,-2.3) -- (0.2,-2.3);
	\draw [fill=yellow!20!,thick] (0.2,-2.1) ellipse (0.4 and 0.4);
	\draw [ribbon](-0.2,-2.3) .. controls (-1.1,-2.3) and (-1,-0.3) .. (-0.2,-0.3) .. controls (0.6,-0.3) and (1.3,-2.1) .. (0.2,-2.1);

	\node at (-3.3,-1.2) {$D^2(\epsilon)$};
	\node at (-3.6,0.2) {$D^2(R)$};
	\draw [->](-3,0.1) -- (-0.8,-1.1);

	\end{tikzpicture}
\caption{Immersion $\tau$.} \label{fig:diskimmersion}
\end{figure}
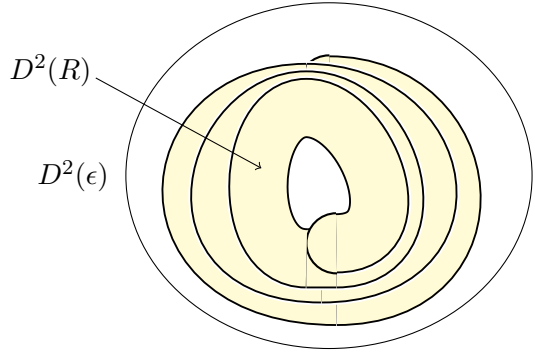
\end{center}

\begin{defn}
Let $Q$ be a manifold and let $U \subset N \subset Q$ be open subsets.
A {\it smooth embedded homotopy to $N$ rel $U$}
is a smooth family of embeddings
$\iota_t : Q \hookrightarrow Q$, $t \in [0,1]$
so that
\begin{enumerate}
\item $\iota_0 = \id$,
\item $\iota_1$ is a diffeomorphism onto $N$ and
\item $\iota_t(u) = u$ for all $u \in U$ and $t \in [0,1]$.
\end{enumerate}
\end{defn}

\begin{lemma} \label{h principle extension lemma}
Let $(W,\omega_W)$, $(\check{W},\omega_{\check{W}})$ be symplectic manifolds and let $Q \subset W$ be a symplectic submanifold of positive codimension.
Let $U, N \subset W$ be open sets so that
$\overline{U} \subset N$ 
and define
$U' := U \cap Q$ 
and $N' := N \cap Q$.
Suppose also that $\overline{U'}$ is a codimension $0$ submanifold
of $Q$ with the property that
$H^2(Q;\R) \lra{}H^2(U';\R)$
is an injection
and suppose
$Q$ admits a smooth embedded homotopy to $N'$ rel $U'$.
Let
$\iota_N : N \hookrightarrow \check{W}$
be a codimension $0$ symplectic embedding.
Then there is a neighborhood $V \subset W$ of $Q - \overline{U'}$ and a codimension $0$ symplectic embedding
$\iota : U \cup V \hookrightarrow \check{W}$
so that
$\iota|_U = \iota_N|_U$.
\end{lemma}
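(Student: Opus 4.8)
The plan is to turn the topological data (the smooth embedded homotopy) into symplectic data by means of the $h$-principle Lemma \ref{lemma isotopy submanifold}, and then to glue the resulting symplectic embedding of $Q$ onto $\iota_N$ using the Moser-type Lemma \ref{lemma Moser lemma}.

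First I would upgrade the embedded homotopy. Let $j_t : Q \hookrightarrow Q$ be the given smooth embedded homotopy to $N' := N\cap Q$ rel $U' := U\cap Q$, so $j_0 = \id$, $j_1$ is a diffeomorphism onto $N'$, and $j_t$ fixes $U'$ pointwise. Since $j_t$ is the identity on the open set $U'$ we get $dj_t = \id$ on $U'$, and hence, by continuity of $dj_t$ together with density of $U'$ in $\overline{U'}$, also $dj_t = \id$ on all of $\overline{U'}$; thus $j_t$ is $C^1$-small away from the identity in a small neighbourhood of the compact set $\overline{U'}$, and cutting off the correction $j_t-\id$ in a thin collar of $\partial\overline{U'}$ gives a smooth embedded homotopy $\widehat{j}_t$, still landing in $N'$ at time $1$ (here one uses $\overline{U'}\subset N'$, which follows from $\overline U\subset N$), which equals $\id$ on some open neighbourhood $U'_+$ of $\overline{U'}$ with $\overline{U'_+}\subset N'$. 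Now $\sigma_t := \widehat{j}_t^*(\omega_W|_{TQ})$ is a smooth path of symplectic forms on $Q$ with $\sigma_0 = \omega_W|_{TQ}$, $\sigma_1 = \widehat{j}_1^*(\omega_W|_{TN'})$, and $\sigma_t = \omega_W|_{TQ}$ on $U'_+$, while $[\sigma_t] = [\omega_W|_{TQ}]$ in $H^2(Q;\R)$ because $\widehat{j}_t$ is homotopic to $\id_Q$. Using this together with the hypothesis that $H^2(Q;\R)\to H^2(U';\R)$ is injective, one chooses a smooth path of $1$-forms $\alpha_t$ on $Q$ with $\alpha_0 = 0$, vanishing on a neighbourhood of $\overline{U'}$, and satisfying $\omega_W|_{TQ} + d\alpha_t = \sigma_t$ (an explicit choice is $\alpha_t = \int_0^t \widehat{j}_s^*\iota_{X_s}(\omega_W|_{TQ})\,ds$, $X_s$ the vector field generating $\widehat{j}_s$). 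Lemma \ref{lemma isotopy submanifold}, applied with its ``$U$'' a slightly smaller open neighbourhood of $\overline{U'}$ and its ``$N$'' equal to $U'_+$, then yields symplectic embeddings $\kappa_t : Q \hookrightarrow W$, arbitrarily $C^0$-close to the inclusion, with $\kappa_0 = \id$, $\kappa_1^*\omega_W = \sigma_1$, and $\kappa_t = \id$ on a neighbourhood of $\overline{U'}$.

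Next I would pass to $\check W$. Set $\phi := \iota_N\circ\widehat{j}_1 : Q \hookrightarrow \check W$; this is a smooth embedding with image $\iota_N(N')$, agreeing with $\iota_N$ near $\overline{U'}$, and with $\phi^*\omega_{\check W} = \sigma_1$. Put $Q^\sharp := \kappa_1(Q)\subset W$: a symplectic submanifold of positive codimension, arbitrarily $C^0$-close to $Q$, containing an open neighbourhood of $\overline{U'}$. Since $\kappa_1^*\omega_W = \sigma_1 = \phi^*\omega_{\check W}$, the map $\Psi := \phi\circ\kappa_1^{-1} : Q^\sharp \hookrightarrow \check W$ is a symplectic embedding of $(Q^\sharp,\omega_W|_{TQ^\sharp})$ into $(\check W,\omega_{\check W})$, with $\Psi = \iota_N$ near $\overline{U'}$. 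Now choose an open $N_0$ with $\overline U\subset N_0\subset\overline{N_0}\subset N$ small enough that $N_0\cap Q^\sharp$ lies in the region where $\Psi = \iota_N$ (possible since points of $W$ near $\overline U$ meet $Q^\sharp$ only near $\overline{U'}$), and apply Lemma \ref{lemma Moser lemma} with the codimension-$0$ symplectic embedding $\iota_N|_{N_0}$ and the symplectic embedding $\Psi$ of $Q^\sharp$: the overlap condition holds on $N_0\cap Q^\sharp$, and the symplectic normal-bundle condition holds because $\iota_N$, $\widehat{j}_1$ and $\kappa_1$ are symplectic embeddings homotopic to the identity, so the two symplectic normal bundles of $Q^\sharp$ are isomorphic compatibly with $\iota_N$ along $N_0\cap Q^\sharp$. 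Lemma \ref{lemma Moser lemma} then produces a neighbourhood $V^\sharp\subset W$ of $Q^\sharp$ and a codimension-$0$ symplectic embedding $\iota : U\cup V^\sharp \hookrightarrow \check W$ with $\iota|_U = \iota_N|_U$; since $Q^\sharp$ is as $C^0$-close to $Q$ as we wish, $V^\sharp$ contains $Q-\overline{U'}$, so $V := V^\sharp$ is the required neighbourhood.

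The main difficulty will be the bookkeeping of the nested neighbourhoods: arranging the embedded homotopy to equal the identity on a genuine open neighbourhood of $\overline{U'}$ (rather than only along $\overline{U'}$, which would leave the symplectic-form path standard only to first order), choosing $N_0$ so that $N_0\cap Q^\sharp$ falls inside the region where $\Psi = \iota_N$, and guaranteeing that the Moser neighbourhood $V^\sharp$ actually engulfs $Q-\overline{U'}$ — together with verifying the symplectic normal-bundle hypothesis of Lemma \ref{lemma Moser lemma}. The cohomological hypothesis that $H^2(Q;\R)\to H^2(U';\R)$ is injective is exactly what is needed to make the path of symplectic forms on $Q$ exact relative to $U'$, so that Lemma \ref{lemma isotopy submanifold} is applicable.
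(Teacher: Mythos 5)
Your overall strategy (upgrade the embedded homotopy to be the identity on a neighborhood of $\overline{U'}$, use the injectivity of $H^2(Q;\R)\to H^2(U';\R)$ to write the resulting path of symplectic forms as $\omega_W|_{TQ}+d\alpha_t$ with $\alpha_t$ vanishing near $\overline{U'}$, feed this into Lemma \ref{lemma isotopy submanifold}, and finish with Lemma \ref{lemma Moser lemma}) is the same as the paper's. But you apply Lemma \ref{lemma isotopy submanifold} in the wrong ambient manifold, and this creates a genuine gap at the end. You deform $Q$ inside the \emph{source} $W$, producing a new submanifold $Q^\sharp=\kappa_1(Q)$ and a symplectic embedding $\Psi$ of $Q^\sharp$ into $\check W$; Lemma \ref{lemma Moser lemma} then hands you a neighborhood $V^\sharp$ of $Q^\sharp$, not of $Q-\overline{U'}$. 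Your claim that $V^\sharp\supset Q-\overline{U'}$ because $Q^\sharp$ is $C^0$-close to $Q$ does not hold: the closeness parameter is fixed \emph{before} Lemma \ref{lemma Moser lemma} is invoked, and that lemma gives no lower bound on the thickness of $V^\sharp$ (the flow $\psi_t$ in its proof is only defined on small, uncontrolled neighborhoods $V_x$), so $V^\sharp$ may be much thinner than the distance from $Q^\sharp$ to $Q$ (and $Q$ need not even be compact here). Nor can you transport the neighborhood back by $\kappa_1^{-1}$: since $\kappa_1$ pulls $\omega_W|_{TQ^\sharp}$ back to $\sigma_1\neq\omega_W|_{TQ}$, it cannot extend to a symplectomorphism between neighborhoods of $Q$ and $Q^\sharp$.

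The repair is the paper's arrangement: run the isotopy of Lemma \ref{lemma isotopy submanifold} in the \emph{target}. Start from the embedding $\iota_N\circ j_1\colon Q\hookrightarrow\check W$ (with image $\iota_N(N')$), which pulls $\omega_{\check W}$ back to $j_1^*\omega_W$, and deform it inside $\check W-\overline{\iota_N(U)}$ through symplectic embeddings $\nu_t$ of $Q-\overline{U'}$, keeping $\nu_t=\iota_N$ on a collar of $\overline{U'}$, until $\nu_1^*\omega_{\check W}=\omega_W|_{TQ}$. Now the domain of $\nu_1$ is the honest submanifold $Q-\overline{U'}\subset W$ with its induced form, so Lemma \ref{lemma Moser lemma}, applied with $\iota_Q=\nu_1$ and the codimension $0$ embedding $\iota_N|_U$, produces a neighborhood of $Q-\overline{U'}$ in $W$, which is exactly what the statement requires.
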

\begin{proof}
Let $\iota_t : Q \lra{} Q$, $t \in [0,1]$
be our smooth embedded homotopy to $N'$ rel $U'$.
Since $\overline{U'} \subset N'$
is a codimension $0$ submanifold,
we can modify $(\iota_t)_{t \in [0,1]}$
so that it is a smooth embedded homotopy to $N'$ rel $Y$ where $Y \subset N'$ is a neighborhood of $\overline{U'}$.
After shrinking $Y$,
we can also ensure that $\overline{Y} \subset N'$ is a codimension $0$ submanifold and that
the inclusion map $U' \hookrightarrow Y$ is a homotopy equivalence.
Let $\omega_t := \iota_{1-t}^* \omega_W$ for each $t \in [0,1]$.
Since $\overline{Y}$ is a submanifold of $Q$ and since $H^2(Q;\R) \lra{} H^2(\overline{Y};\R)$ is an injection, there is a smooth family of $1$-forms $(\alpha_t)_{t \in [0,1]}$
on $Q$ satisfying
\begin{itemize}
\item $\omega_t = \omega_0 + d\alpha_t$ and
\item $\alpha_t|_Y = 0$ for all $t \in [0,1]$ and $\alpha_0=0$.
\end{itemize}
Choose an open subset $\check{U} \subset Q$
so that $\overline{U'} \subset \check{U}$ and $\overline{\check{U}} \subset Y$.
Then by Lemma
\ref{lemma isotopy submanifold},
there is a smooth family of symplectic embeddings
$$\nu_t : Q - \overline{U'} \hookrightarrow (\check{W} - \overline{\iota_N(U)}, \omega_{\check{W}}), \quad t \in [0,1]$$
so that
$\nu_0(y) = \iota_N(\iota_1(y))$ for each $y \in Q$,
$\nu_t(y) = \iota_N(y)$ for all $y \in \check{U} - \overline{U'}$
and $\nu_1^* \omega_{\check{W}} = \omega_1$.
Hence by Lemma
\ref{lemma Moser lemma},
there is a neighborhood $V \subset W$
of $Q - \overline{U'}$
and a codimension $0$ symplectic embedding
$\iota : U \cup V \lra{} \check{W}$ so that
$\iota|_U = \iota_N|_U$
and
$\iota|_Q = \nu_1$.
\end{proof}

\begin{lemma} \label{lemma partly stratified admits weak curled}
Every partly stratified symplectic subset $S \subset M$ admits a weak curled up embedding.
\end{lemma}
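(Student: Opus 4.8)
The plan is to induct on the number $l$ of strata of $S$. Write $S = S_1 \sqcup \cdots \sqcup S_l$ as in Definition \ref{definition partily stratified symplectic subset}, and set $S^{(k)} := S_1 \sqcup \cdots \sqcup S_k$, which is again a partly stratified symplectic subset of $M$ for each $k \le l$; write $\check{M} := M \times T^*\T$ with its product symplectic form. When $l = 0$ we have $S = \emptyset$ and the empty map $\emptyset \hookrightarrow \check{M}_{<1}$ is vacuously a weak curled up embedding, so assume $l \ge 1$ and that $S^{(l-1)} \times T^*\T$ admits a weak curled up embedding $\iota' : N' \hookrightarrow \check{M}_{<1}$ in the sense of Definition \ref{definition curled up embedding}.

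Since the strata are disjoint and $\overline{S_l}^M$ is compact with $\overline{S_l}^M \setminus S_l \subseteq S^{(l-1)}$, the closure of $S_l$ accumulates only along $S^{(l-1)}$, which $\iota'$ already handles; the part of $S_l$ away from a neighborhood of $S^{(l-1)}$ is relatively compact in $M_l := M - S^{(l-1)}$. First I would fix a compact codimension-$0$ submanifold-with-boundary $K_l \subset S_l$ such that $S_l \setminus \mathrm{int}(K_l)$ lies in a small neighborhood of $S^{(l-1)}$ (possible because $S_l$ minus such a neighborhood is a compact subset of $S_l$), shrink $\iota'$ to a smaller neighborhood $N'$ of $S^{(l-1)} \times T^*\T$ disjoint from $K_l \times T^*\T$ (possible since $K_l \cap S^{(l-1)} = \emptyset$), and keep the identity on a thin $\sigma$-slab $\Sigma_l$ around $K_l \times \T$. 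Because $\Sigma_l$ can be taken disjoint from $N'$, the map $\iota_N$ equal to $\iota'$ on $N' \cap (M_l \times T^*\T)$ and to the inclusion on $\Sigma_l$ is a well-defined codimension-$0$ symplectic embedding of $N := (N' \cap (M_l \times T^*\T)) \cup \Sigma_l$ into $\check{M}$.

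Now I would apply Lemma \ref{h principle extension lemma} with $W := M_l \times T^*\T$, $\check{W} := \check{M}$, $Q := S_l \times T^*\T$ (a symplectic submanifold of positive codimension), the embedding $\iota_N$ above, and $U \subset N$ a slightly smaller neighborhood. Its hypotheses hold: $U' := U \cap Q$ and $Q = S_l \times \R \times \T$ both deformation retract onto $S_l \times \T$ --- here one uses that every point of $S_l$ is either in $\mathrm{int}(K_l)$ (where $U'$ contains the slab $\Sigma_l$, which retracts to $\sigma = 0$) or near $S^{(l-1)}$ (where $U'$ contains the full $T^*\T$-fibre coming from $\iota'$) --- so $H^2(Q;\R) \to H^2(U';\R)$ is an isomorphism; and a smooth embedded homotopy of $Q$ to $N' := N \cap Q$ rel $U'$ exists since over $K_l$ one need only compress the $\sigma$-line into a subinterval rel a smaller subinterval, which is possible because $\mathrm{Diff}^+$ of a half-line fixing its endpoint is contractible. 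Lemma \ref{h principle extension lemma} then yields a neighborhood $V$ of $Q - \overline{U'}$ and a codimension-$0$ symplectic embedding $\iota : U \cup V \hookrightarrow \check{M}$ with $\iota|_U = \iota_N|_U$. Gluing $\iota$ to $\iota'$ (they agree on $U$) produces the desired weak curled up embedding of $S \times T^*\T$: the image lies in $\check{M}_{<1}$ because the compression homotopy squeezes $\sigma$ into an interval $(-\epsilon,\epsilon)$ with $\epsilon < 1$ and the Moser perturbation built into Lemma \ref{lemma isotopy submanifold} is $C^0$-small, while near $M \times \T$ the glued map is the identity.

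The hard part will be the bookkeeping of neighborhoods making the gluing a genuine (injective) embedding into $\check{M}_{<1}$: one must check that the newly curled region over $K_l$ does not collide with the image $\iota'(N')$ of the previously curled neighborhood. This is where the precise form of Lemma \ref{h principle extension lemma} is used --- the new, curled part $V \setminus U$ is produced inside $\check{W} - \overline{\iota_N(U)}$ and $C^0$-close to the compressing isotopy (Lemma \ref{lemma isotopy submanifold}) --- together with choosing, at each stage of the induction, the neighborhood $N'$ (and the slab radius) thin enough that $\iota'(N')$ stays within a controlled tube around its core. No genuinely new idea beyond Lemmas \ref{lemma isotopy submanifold}, \ref{lemma Moser lemma} and \ref{h principle extension lemma} is required; the content lies in organizing the stratum-by-stratum construction so that these lemmas apply.
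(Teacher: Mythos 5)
Your overall strategy --- induct on the strata and feed $Q = S_l \times T^*\T$ into Lemma \ref{h principle extension lemma} --- is the paper's strategy, but your setup of the open sets has two genuine gaps, and they sit exactly where the paper's technical machinery does its work. First, your verification of the hypotheses of Lemma \ref{h principle extension lemma} rests on the claim that, over points of $S_l$ near $S^{(l-1)}$, the set $U'$ ``contains the full $T^*\T$-fibre coming from $\iota'$.'' This is false: $N'$ is merely an open neighborhood of $S^{(l-1)}\times T^*\T$, and since the fibres $\{x\}\times T^*\T$ over $x\notin S^{(l-1)}$ are non-compact, $N'$ need not contain any of them entirely --- it can pinch as $|\sigma|\to\infty$. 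More seriously, near the $S^{(l-1)}$ end your $U'=\bigl((N'\cap W)\cup\Sigma_l\bigr)\cap Q$ is an essentially arbitrary open set: it need not be fibrewise connected or star-shaped in $\sigma$, so neither the deformation retraction onto $S_l\times\T$, nor the hypothesis that $\overline{U'}$ is a codimension-$0$ submanifold of $Q$ (which you do not address), nor the existence of a smooth embedded compression homotopy of $Q$ into $N\cap Q$ \emph{rel} $U'$, is justified. The paper handles precisely this by passing to the fibrewise slab neighborhoods $\check{M}_{<N}$ built from the lower semi-continuous height function $f_N$ (Definition \ref{defn less than function}, Lemma \ref{lemma open subset}) and then choosing a smooth positive minorant $\widetilde{f}$ of $f_{\check{U}\cup\check{N}}$ that blows up along the lower strata; the resulting $U'$ is a fibrewise interval bundle over $S_j\times\T$, for which all three conditions are immediate. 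Your proposal has no substitute for this step.

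Second, the gluing you defer to ``bookkeeping'' is a genuine problem, not a technicality. The set $V$ produced by Lemma \ref{h principle extension lemma} is a neighborhood in $W$ of $Q-\overline{U'}$, and $Q-\overline{U'}$ contains points $(x,(\sigma,\tau))$ with $x\in S_l$ accumulating on $S^{(l-1)}$; such points lie in $N'$, so in general $V\cap N'\neq\emptyset$, and on $(V-U)\cap N'$ the new embedding $\iota$ and the old one $\iota'$ have no reason to agree --- the glued map is not even well defined there, independently of injectivity. The paper avoids gluing altogether: it takes $U$ to be a neighborhood of the \emph{entire} set $(M\times\T)\cup(S_{<j}\times T^*\T)$ and $\iota_N=\widetilde{\iota}|_N$ to be the already-constructed embedding extended by the identity near $M\times\T$, so a single application of Lemma \ref{h principle extension lemma} outputs the weak curled up embedding of $S_{<j+1}\times T^*\T$ directly, with no second map to reconcile. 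If you reorganize the inductive step that way and insert the slab/minorant construction above, your argument becomes the paper's.
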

\begin{proof}
Let $S_1,\cdots,S_l$ be the strata of $S$.
Suppose (by induction) that
$S_{<j} := \cup_{i <j} S_i$ admits a weakly curled up embedding $\check{\iota} : \check{N} \hookrightarrow \check{M}_{<1}$ for some $j \in \{1,\cdots,l\}$.
Since $\check{\iota}(y) = y$ for each $y$ in a neighborhood of $\check{N} \cap (M \times \T) \subset \check{M}$,
there is a neighborhood $\check{U} \subset \check{M}_{<1}$
of $M \times \T$ with the property that
$$\widetilde{\iota} : \check{U} \cup \check{N} \lra{} \check{M}, \quad \widetilde{\iota}(y) := \left\{
\begin{array}{ll}
\check{\iota}(y) & \textnormal{if} \ y \in \check{N} \\
y & \textnormal{if} \ y \in \check{U}
\end{array}
\right.
$$
is a smooth map after shrinking $\check{N}$ by an arbitrarily small amount.
Let $f_{\check{U} \cup \check{N}} : M \times \T \lra{} \R \cup \{\infty\}$
be the lower semi-continuous function
given in Equation (\ref{equation lower semi subset}) with $N$ replaced by $\check{U} \cup \check{N}$
(see Lemma \ref{lemma open subset}).
Now $f_{\check{U} \cup \check{N}} = \infty$
along $S_{<j}$.
Combining this with the fact that $f_{\check{U} \cup \check{N}}$ is positive and lower semi-continuous and the fact that $S_{<j} \times \T$ is compact,
we can find a smooth function
$\widetilde{f} : (M - S_{<j}) \times \T \lra{} \R_{>0}$
satisfying $\widetilde{f} < f_{\check{U} \cup \check{N}}$
so that for any sequence of points $(x_j)_{j \in \N}$ in $(M - S_{<j}) \times \T$ converging to a point in $S_{<j} \times \T$,
we have $\widetilde{f}(x_j) \to \infty$ as $j \to \infty$.
In particular,
$$U := (S_{<j} \times T^* \T) \cup \check{M}_{<\widetilde{f}}|_{(M - S_{<j}) \times \T}$$
is an open neighborhood of
$(M \cup \T) \cup (S_{<j} \times T^* \T)$.
See Figure \ref{fig:mlessthan2f}.
%
\begin{center}
\begin{figure}[h]
\usetikzlibrary{decorations.pathreplacing}
\begin{tikzpicture}

\draw[orange] (5,-0.6) .. controls (-1.6,-0.6) and (-1.3,-2.4) .. (5,-2.4);

\draw (-0.5,-1.1) -- (5,-1.1);
\draw (-0.5,-2) -- (5,-2);
\draw[purple] (-0.5,0) -- (-0.5,-3);
\draw[red] (-0.5,-0.5) -- (5,-0.5);
\draw[red] (-0.5,-2.5) node (v1) {} -- (5,-2.5);

\draw (-0.5,0) -- (1.3,0);
\draw (-0.5,-3) -- (1.3,-3);

\draw (1.3,0) -- (1.3,-3);
\draw[decorate,decoration={brace,amplitude=4pt},xshift=0pt,yshift=0pt] (1.3,-3.2) -- (-0.5,-3.2);

\node at (0.4,-3.7) {$\check{U}$};
\draw[decorate,decoration={brace,amplitude=3pt},xshift=-4pt,yshift=0pt] (5.4,-0.5) -- (5.4,-1.1);
\node at (5.9,-0.8) {$\check{N}$};
\draw[decorate,decoration={brace,amplitude=3pt},xshift=-4pt,yshift=0pt] (5.4,-2) -- (5.4,-2.5);
\node at (5.9,-2.2) {$\check{N}$};
\draw [->](2.0,-1.5) -- (0.3,-1.6);
\node[orange] at (3.2,-1.5) {$\sigma = \widetilde{f}(x,\tau)$};
\node at (-1.3,-1.4) {$U$};
\draw [->](-0.8,-1.45) -- (-0.2,-1.5);
\node[red] at (2.9,-3) {$S_{<j}$};
\node[red] at (2.9,-0.1) {$S_{<j}$};

\node[purple] at (-0.5,0.4) {$M \times \T$};
\end{tikzpicture}
\caption{Picture of $\check{M}_{<f}|_{S_j \times \T}$.} \label{fig:mlessthan2f}
\end{figure}
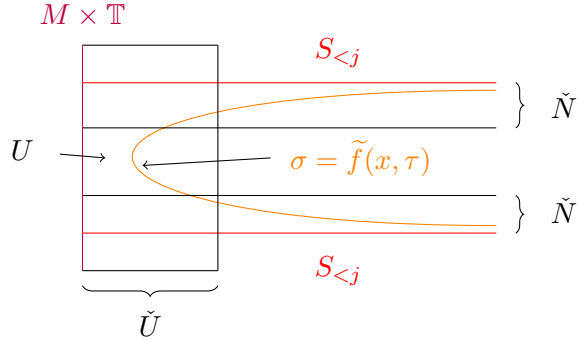
\end{center}
Define $Q := S_j \times T^*\T$
and $N := \check{M}_{<\check{U} \cup \check{N}}$.
Define $U' := U \cap Q$, $N' := N \cap Q$.
Then $\overline{U'}$
is a codimension $0$ submanifold of $Q$
with the property that $H^2(Q;\R) \lra{} H^2(U';\R)$ is injective.
Also $\overline{U} \subset N$ and
the manifold $Q$ admits a smooth embedded homotopy to $N'$ rel $U'$.
Hence by Lemma
\ref{h principle extension lemma} with $U$, $N$, $U'$, $N'$ and $Q$ as above and with $W = \check{M}$, $\check{W} = \check{M}_{< 1}$ and $\iota_N = \widetilde{\iota}|_N$,
there is a neighborhood $V \subset \check{M}$ of
$Q - \overline{U'}$
and a codimension $0$ symplectic embedding
$\iota : U \cup V \lra{} \check{M}_{<1}$ so that $\iota|_U = \widetilde{\iota}|_U$.
Since $\iota(y)=y$ for all $y$ sufficiently near $M \times \T$, we get that $\iota$ is a weakly curled up embedding of $S_{<j+1} = S_{<j} \cup S_j$.
Hence we are done by induction on $j \in \{1,\cdots,l\}$.
\end{proof}

\begin{proof}[Proof of Proposition \ref{proposition stably displaceable partly stratified symplectic subset}]
Let $S$ be our partly stratified symplectic subset.
By Lemma \ref{lemma partly stratified admits weak curled}
$S \times T^*\T$ admits a weak curled up embedding.
Hence by Lemma \ref{lemma weak curled implies curled},
$S \times T^*\T$ admits a curled up embedding.
Therefore by Lemma \ref{lemma cureld embedding implies stably displaceable},
$S$ is stably displaceable.
\end{proof}

\section{Proof of the Main Theorem} \label{section proof of main theorem}

By Calabi-Yau manifold,
we will just mean a smooth complex projective variety
with trivial first Chern class.
We have the following lemma:
\begin{lemma} \label{lemma identification of second homology groups}
(\cite[Proposition 3.1]{batyrev1999birational}).
Let $\widehat{\Phi} : X \dashrightarrow \widehat{X}$ be a birational equivalence between smooth projective Calabi-Yau manifolds.
Then there are complex
codimension $\geq 2$ subvarieties
$V_X \subset X$ and $V_{\widehat{X}} \subset \widehat{X}$
and an isomorphism
$$\Phi : X - V_X \lra{\cong} \widehat{X} - V_{\widehat{X}}$$
equal to $\widehat{\Phi}|_{X - V_X}$.
\end{lemma}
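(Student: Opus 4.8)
\textbf{Proof proposal for Lemma \ref{lemma identification of second homology groups}.}

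The plan is to reduce the statement to the standard fact that a birational map between smooth projective varieties is an isomorphism away from closed subsets of the respective ``indeterminacy'' and ``exceptional'' loci, and then to improve the codimension bound to $\geq 2$ using the Calabi-Yau hypothesis $c_1 = 0$ (equivalently, triviality of the canonical bundles $K_X$ and $K_{\widehat X}$). First I would recall that, by definition, $\widehat\Phi$ restricts to an isomorphism $A \xrightarrow{\cong} \widehat A$ between Zariski dense open subsets $A \subset X$ and $\widehat A \subset \widehat X$; write $E := X - A$ and $\widehat E := \widehat X - \widehat A$ for the (closed) complements. By resolving the graph of $\widehat\Phi$ (using Hironaka \cite{hironaka:resolution}), one obtains a smooth projective variety $Y$ with proper birational morphisms $p : Y \to X$ and $q : Y \to \widehat X$ such that $q \circ p^{-1} = \widehat\Phi$ as rational maps. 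Over $A$ (resp.\ $\widehat A$) the morphisms $p$ and $q$ are isomorphisms, so the only obstruction to $\widehat\Phi$ being a genuine isomorphism comes from the $p$-exceptional and $q$-exceptional divisors of $Y$.

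Next I would invoke the classical negativity/discrepancy argument for Calabi-Yau (more generally, relatively minimal) birational models, which is exactly the content of \cite[Proposition 3.1]{batyrev1999birational}. Since $K_X$ and $K_{\widehat X}$ are trivial, the relative canonical divisors $K_{Y/X}$ and $K_{Y/\widehat X}$ are both effective (crepant-type computation: pulling back the trivializing sections of $K_X$ and $K_{\widehat X}$ and comparing, as in the Jacobian-section argument behind Lemma \ref{lemma non-negative discrepancy example}), and their difference is the divisor $\sum a_i D_i - \sum b_j D_j'$ where $D_i$ are $p$-exceptional and $D_j'$ are $q$-exceptional. Effectivity of both forces every $p$-exceptional divisor to be $q$-exceptional and vice versa; a prime divisor cannot be exceptional for a birational morphism to a smooth (hence normal) variety while having image of codimension $1$. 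The upshot is that the locus in $X$ where $\widehat\Phi$ fails to be defined, call it $V_X$, is contained in the image under $p$ of the exceptional locus, and no component of $V_X$ has codimension $1$ in $X$; symmetrically for $V_{\widehat X}$. Taking $V_X$ and $V_{\widehat X}$ to be the (closed) loci where $\widehat\Phi$ and $\widehat\Phi^{-1}$ respectively are not defined — equivalently where $p$ (resp.\ $q$) is not a local isomorphism — I would conclude that $\Phi := \widehat\Phi|_{X - V_X}$ is an isomorphism onto $\widehat X - V_{\widehat X}$, both complements having complex codimension $\geq 2$.

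Finally I would record that since $\widehat\Phi$ agrees with $\Phi$ on the dense open $X - V_X \supset A$, the identification is canonical, and that the real-codimension bound asserted elsewhere in the paper ($\geq 4$, used via property \ref{item avoiding submanifolds}) follows immediately since complex codimension $\geq 2$ is real codimension $\geq 4$. The main obstacle is the discrepancy/negativity input: one must carefully justify that triviality of the canonical bundles on \emph{both} sides (rather than just numerical triviality of $c_1$ as a topological class, which is what ``$c_1(X)=0$'' literally asserts) is available — here one uses that on a smooth projective variety $c_1 = 0$ together with, say, the fact that in the Calabi-Yau case of interest $K_X$ is torsion, or one simply cites \cite[Proposition 3.1]{batyrev1999birational} which handles exactly this hypothesis. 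Everything else is a standard application of resolution of indeterminacy plus the observation that exceptional divisors have image of codimension $\geq 2$.
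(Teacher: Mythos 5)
Your proposal is correct in substance but takes a genuinely different route from the one written in the paper. You resolve the graph of $\widehat\Phi$ to get $p: Y \to X$, $q: Y \to \widehat X$ and then compare the two exceptional divisors of $K_Y$ via a discrepancy argument — this is the Koll\'ar--Mori \cite[Theorem 3.52]{KollarMori:birational} route, which the paper explicitly mentions as an alternative but does not follow. The paper instead works directly on $X$: in Part (1) it shows the indeterminacy locus $V_X$ has codimension $\geq 2$ by removing common factors from the local homogeneous-coordinate expression of $\iota\circ\widehat\Phi$, and invokes Shafarevich to produce a contracted divisor $Z \subset X - V_X$ if $\widehat\Phi|_{X-V_X}$ fails to be an isomorphism; in Part (2) it takes the Jacobian section of $\kappa_{X/\widehat X}$ on $X - V_X$, extends it across $V_X$ by Hartogs, and derives $c_1 \neq 0$ by pairing the resulting nonzero effective divisor with $[\omega]^{n-1}$. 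Your route buys generality (it is the argument that works for arbitrary minimal models) at the cost of outsourcing the crux to the negativity lemma; the paper's route is self-contained, avoids resolving the graph, and uses only the topological class $c_1 = 0$ directly.

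Two small cautions. First, your sentence ``Effectivity of both forces every $p$-exceptional divisor to be $q$-exceptional and vice versa'' hides the real work: effectivity of $K_{Y/X}$ and $K_{Y/\widehat X}$ holds for \emph{any} resolution of smooth varieties and by itself forces nothing; what you need is that $K_{Y/X} - K_{Y/\widehat X} = q^*K_{\widehat X} - p^*K_X$ is numerically trivial and then the negativity lemma (applied to both $p$ and $q$) to conclude the two exceptional divisors coincide. Since you cite \cite[Proposition 3.1]{batyrev1999birational} for exactly this step, and the lemma itself is stated as a citation of that result, this is acceptable, but spell it out if you want the proof to be self-contained. Second, your worry about $c_1(X)=0$ versus triviality of $K_X$ as a line bundle is unfounded: the negativity-lemma argument only needs $K_X$ and $K_{\widehat X}$ numerically trivial, which follows from $c_1 = 0$ in $H^2(\cdot;\R)$. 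Finally, note that at the end one must choose $V_X$ and $V_{\widehat X}$ compatibly so that $\Phi(X - V_X) = \widehat X - V_{\widehat X}$ exactly (the paper does this by setting $V_{\widehat X} := \widehat X - \widehat\Phi(X - V_X)$); your definition via exceptional loci needs this small adjustment.
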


Note that the Calabi-Yau condition here
is crucial.
For instance, if we take a smooth projective variety and blow it up at a point,
then we create a birational variety
whose second Betti number is strictly larger.
Even though the proof of lemma
\ref{lemma identification of second homology groups}
is contained in \cite[Proposition 3.1]{batyrev1999birational},
we give a more detailed version of the same proof for
readers who may not be experts
in algebraic geometry.
There is also a more general version of this lemma (see \cite[Theorem 3.52]{KollarMori:birational}),
which proves the same statement for
varieties which are `minimal' in some sense.

\begin{proof}[Proof of Lemma \ref{lemma identification of second homology groups}.]
Morally, the idea of the proof is as follows.
If the region where $\widehat{\Phi}$ is not a submersion had complex codimension $1$ then $\widehat{\Phi}$
would contract a codimension $1$ subvariety $Z$ to a lower dimensional variety (See Definition \ref{definition divisors and line bundle etc}). One can then show that
such a contraction would ensure
that $c_1(X)$ or $c_1(\widehat{X})$ is non-trivial giving us a contradiction.
A similar argument also applies to the inverse $\widehat{\Phi}^{-1}$ and this gives us our result.
Therefore, our proof comes in two parts:
\begin{enumerate}
\item[Part (1)] \label{item:contractingdivispr} Showing that if the region where $\widehat{\Phi}$ is not a submersion has codimension less than $2$, then $\widehat{\Phi}$ contracts a divisor.
\item[Part (2)] \label{item:c1nontrivialneardivisor} Showing that $c_1(X)$ or $c_1(\widehat{X})$ is non-trivial due to the existence of such a divisor.
\end{enumerate}

Part (1): Since $\widehat{X}$ is a projective variety, we have that
$\widehat{X}$ is a closed subvariety
of $\C P^N$ for some $N  \in \N$.
Let $\iota : \widehat{X} \hookrightarrow \C P^N$ be the inclusion map.
Consider the rational map
$f = \iota \circ \widehat{\Phi}$.
In any holomorphic chart
$z_1,\cdots,z_n$ on $X$,
the map $f$ has the form
$[\phi_0(z_1,\cdots,z_n),\cdots,\phi_N(z_1,\cdots,z_n)]$
in homogeneous coordinates on $\C P^N$
where $\phi_0,\cdots,\phi_N$
are holomorphic functions.
This local description of $f$
does not change if we multiply
$\phi_0,\cdots,\phi_N$ by a common
holomorphic function.
In particular, if $\phi_0,\cdots,\phi_N$
all vanish to the same order
along some codimension $1$
subvariety, then we can remove a common factor from all of these holomorphic functions so that
$\phi_0^{-1}(0) \cap \cdots \cap \phi_N^{-1}(0)$ has codimension $2$ or higher after shrinking the chart
(\cite[Chapter 0, Section 1]{GriffithsHarris:algeraicgeometry}).
As a result, we get that there is a codimension $\geq 2$ subvariety
$V_X \subset X$
so that $\widehat{\Phi}$ is well defined
outside $V_X$.
Now suppose that $\widehat{\Phi} : X - V_X \lra{} \widehat{X}$
is not an isomorphism onto its image.
Then by \cite[Theorem 2.16]{Shafarevichbasicalgebraic},
there is a codimension $1$
subvariety $Z \subset X -  V_X$
which is contracted by
$\widehat{\Phi}$
as in Definition \ref{definition divisors and line bundle etc}.

\bigskip

Part (2): Let $V_X$ and $Z$ be as above.
We now wish to show that the existence of the variety $Z$ implies that either $c_1(X)$ or $c_1(\widehat{X})$ is non-trivial, and therefore giving us a contradiction.
Consider the relative canonical bundle
$\kappa_{X/\widehat{X}} := \kappa_X \otimes \widehat{\Phi}^* \kappa_{\widehat{X}}$
on $X - V_X$
where $\kappa_X$ and $\kappa_{\widehat{X}}$
are the canonical bundles of $X$ and $\widehat{X}$ respectively (Definition \ref{definition divisors and line bundle etc}).
Then
$\kappa_{X/\widehat{X}}$
can be identified with the
bundle
$\Hom(\kappa^*_X,\widehat{\Phi}^* \kappa^*_{\widehat{X}})$
where $\kappa^*_X$
and $\kappa^*_{\widehat{X}}$ are the anti-canonical bundles of $X$ and $\widehat{X}$ respectively (See Definition \ref{defn anti canonical bundle}).
Under this identification,
we have a section $s$ of
$\kappa_{X/\widehat{X}}$
given by the {\it Jacobian} of $\widehat{\Phi}$,
which is the map sending
$v_1 \wedge \cdots \wedge v_1 \in \kappa^*_X$
to $D\pi(v_1) \wedge \cdots \wedge D\pi(v_n) \in \widehat{\Phi}^* \kappa^*_{\widehat{X}}$.
This holomorphic section $s$
vanishes along $Z$.
Now since $V_X$
has complex codimension $\geq 2$,
we have by using
Harthog's theorem and the Cauchy integral formula \cite[Chapter 0, Section 1]{GriffithsHarris:algeraicgeometry},
that the bundle $\kappa_{X/\widehat{X}}$
and the section $s$
extends to a holomorphic
bundle $L$ over $X$ together with a
holomorphic section $\check{s}$ of $L$.
We have that $c_1(L)$
is Poincar\'{e} dual to $[(\check{s})]$
by (\cite[Chapter 1, Section 1]{GriffithsHarris:algeraicgeometry}),
where $(\check{s})$ and the homology class
$[(\check{s})]$
are given in Definition \ref{definition divisors and line bundle etc}.
Choose a K\"{a}hler form $\omega$ on $X$.
Then $\omega^{n-1}$ is a positive
volume form
on the smooth locus of any codimension
$1$ subvariety.
Therefore the de-Rham cohomology class
$[\omega^{n-1}]$
on $X$ pairs non-trivially with the homology class $[(\check{s})$] since
$(\check{s})$ is a non-trivial
effective divisor (See Definition \ref{definition divisors and line bundle etc}).
Hence $c_1(L)$ is non-zero.
Since $V_X$ is a complex
codimension $\geq 2$ subvariety,
we get that the natural restriction map
$H^2(X;\Z) \lra{} H^2(X - V_X;\Z)$
is an isomorphism.
This implies that $c_1(\kappa_{X/\widehat{X}}) \neq 0$
in $X - V_X$.
Since $c_1(\kappa_{X/\widehat{X}})
= c_1(\kappa_X|_{X - V_X})
- \widehat{\Phi}^* c_1(\kappa_{\widehat{X}})$,
either $c_1(X)$ or $c_1(\widehat{X})$ is non-zero giving us a contradiction.
Hence $\widehat{\Phi}$ maps $X - V_X$ isomorphically to its image.

Similar reasoning can be used to show that
there is a codimension $\geq 2$ subvariety
$\check{V}_{\widehat{X}} \subset \widehat{X}$
so that $\widehat{\Phi}^{-1}$
maps $\widehat{X} - \check{V}_{\widehat{X}}$
isomorphically to an open subset
of $X$ containing $X - V_X$.
Hence $V_{\widehat{X}} := \widehat{X} - \widehat{\Phi}(X - V_X)$
has codimension $\geq 2$
and our lemma holds.
\end{proof}

\bigskip

We have the following immediate corollary of Lemma \ref{lemma identification of second homology groups}.

\begin{corollary} \label{corollary identification of H2}
Let $X$, $\widehat{X}$, $\widehat{\Phi}$, $V_X$, $V_{\widehat{X}}$ and $\Phi$
be as in Lemma
\ref{lemma identification of second homology groups}.
Let $D \subset X - V_X$
be a compact submanifold possibly with boundary and let $R$ be any ring.
Then there is
a natural identification of homology
and cohomology
groups
\begin{equation} \label{definition explicit isomorphism of H2 groups}
\Phi_* : H_2(X - D;R) \stackrel{\cong}{\longleftarrow} H_2(X - V_X - D;R) \stackrel[\Phi_*]{\cong}{\longrightarrow} H_2(\widehat{X} - V_{\widehat{X}} - \Phi(D);R) \lra{\cong} H_2(\widehat{X} - \Phi(D);R).
\end{equation}
\begin{equation} \label{definition explicit isomorphism of H hat 2 groups}
\Phi^* : H^2(\widehat{X},\Phi(D);R) \stackrel{\cong}{\longrightarrow} H^2(\widehat{X} - V_{\widehat{X}},\Phi(D);R) \stackrel[\Phi^*]{\cong}{\longrightarrow} H^2(X - V_X,D;R) \stackrel{\cong}{\longleftarrow} H^2(X,D;R).
\end{equation}
\end{corollary}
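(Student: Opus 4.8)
The plan is to deduce the corollary from a single topological input: removing from a manifold a closed subset of real codimension $\geq 4$ does not change its homology or cohomology in degrees $\leq 2$, even after passing to the pair relative to a disjoint closed subset. Since $V_X \subset X$ and $V_{\widehat{X}} \subset \widehat{X}$ are complex subvarieties of complex codimension $\geq 2$, they are closed subsets of real codimension $\geq 4$; by Whitney stratification (or resolution of singularities) each admits a decomposition into finitely many locally closed submanifolds $S_1,\dots,S_l$ of real codimension $\geq 4$, ordered so that $S_1\cup\dots\cup S_j$ is closed in $X$ and $S_j$ is a closed (properly embedded) submanifold of $X-(S_1\cup\dots\cup S_{j-1})$ for each $j$. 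For such a submanifold $N$ of codimension $c$ in a manifold $W$, the Thom isomorphism (via a tubular neighborhood) gives $H^k(W,W-N;R)\cong H^{k-c}(N;R)$ twisted by the orientation local system of the normal bundle, which vanishes for $k<c$, and dually $H_k(W,W-N;R)=0$ for $k<c$; this holds over an arbitrary ring $R$. Running an upward induction on $j$ and using the long exact sequence of the triple $X-(S_1\cup\dots\cup S_j)\subset X-(S_1\cup\dots\cup S_{j-1})\subset X$ at each step — in which the two relevant outer terms already vanish in degrees $\leq 3$ — one concludes $H^k(X,X-V_X;R)=H_k(X,X-V_X;R)=0$ for all $k\leq 3$, and similarly for $\widehat X$ and $V_{\widehat X}$. (Alternatively, one may simply quote the standard vanishing of local cohomology $H^k_V(W;R)=0$ for $k<2\,\codim_{\C}V$ when $V$ is a complex analytic subvariety.)

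Granting this, the four "$\cong$" arrows in (\ref{definition explicit isomorphism of H2 groups}) and (\ref{definition explicit isomorphism of H hat 2 groups}) are produced as follows. For the homology statement: $D$ is compact and contained in $X-V_X$, so $\overline D\cap V_X=\emptyset$ and excision applied to the pair $(X,X-V_X)$ with the closed set $D$ removed gives $H_k(X-D,\,X-V_X-D;R)\cong H_k(X,X-V_X;R)$, which vanishes for $k\leq 3$; the long exact sequence of the pair $(X-D,\,X-V_X-D)$ then forces the inclusion-induced map $H_2(X-V_X-D;R)\to H_2(X-D;R)$ to be an isomorphism, and the identical argument on $\widehat X$ handles the rightmost arrow. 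For the cohomology statement one uses instead the long exact sequence of the triple $D\subset X-V_X\subset X$, namely $H^k(X,X-V_X)\to H^k(X,D)\to H^k(X-V_X,D)\to H^{k+1}(X,X-V_X)$, whose first and last terms vanish for $k=2,3$, so the restriction $H^2(X,D;R)\to H^2(X-V_X,D;R)$ is an isomorphism; the triple $\Phi(D)\subset\widehat X-V_{\widehat X}\subset\widehat X$ gives the analogous arrow on $\widehat X$. Finally, $\Phi:X-V_X\to\widehat X-V_{\widehat X}$ from Lemma \ref{lemma identification of second homology groups} is a biholomorphism, hence a diffeomorphism carrying $D$ onto $\Phi(D)$; it therefore restricts to a diffeomorphism of pairs $(X-V_X,D)\to(\widehat X-V_{\widehat X},\Phi(D))$ and to a diffeomorphism $X-V_X-D\to\widehat X-V_{\widehat X}-\Phi(D)$, and the middle maps $\Phi_*$, $\Phi^*$ are the induced isomorphisms on (relative) homology and cohomology. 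Composing the three isomorphisms in each line yields the asserted identifications; naturality is automatic because every map other than the $\Phi$-induced ones is induced by an inclusion of spaces or of pairs, so all the intervening squares commute strictly.

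The main obstacle is the codimension-$\geq 4$ vanishing for the possibly singular subvarieties $V_X$ and $V_{\widehat X}$: one must set up the stratification so that the strata are successively closed in the appropriate open complements, so that the inductive removal of strata and the attendant Thom isomorphisms and triple exact sequences fit together — or else one appeals directly to the local-cohomology fact. Everything else (the excision check, the triple sequences, and the observation that $\Phi$ is a diffeomorphism of the relevant pairs) is routine. As a remark, one could additionally verify that the resulting identifications are independent of the choices of $V_X$, $V_{\widehat X}$ and $\Phi$ furnished by Lemma \ref{lemma identification of second homology groups}, since any two such choices agree outside a common codimension-$\geq 4$ subset and the same vanishing then forces the two comparison isomorphisms to coincide; this is not needed for the applications but justifies calling the identification "natural".
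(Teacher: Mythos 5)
Your proof is correct and fills in exactly the argument the paper leaves implicit (the corollary is stated as an ``immediate'' consequence of Lemma \ref{lemma identification of second homology groups} with no written proof): the real codimension $\geq 4$ vanishing of $H_k(X,X-V_X;R)$ and $H^k(X,X-V_X;R)$ for $k\leq 3$ via stratification of the possibly singular subvarieties and the Thom isomorphism, combined with excision, the pair/triple long exact sequences, and the fact that $\Phi$ is a diffeomorphism of the relevant pairs. I see no gaps.
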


\begin{remark}
From now on we will fix the notation $\widehat{\Phi}, \Phi, X, \widehat{X}, V_X$ and $V_{\widehat{X}}$.
Also we will not distinguish between $H_2(X -D;R)$ and $H_2(\widehat{X} -\Phi(D);R)$
(resp. $H^2(X,D;R)$, $H^2(\widehat{X},\Phi(D);R)$) for each compact submanifold with boundary $D \subset X - V_X$ and each ring $R$.
\end{remark}

The following lemma finds for us an appropriate common Zariski dense affine variety on $X$ and $\widehat{X}$.
This lemma will also be used to
construct appropriate K\"{a}hler forms on $X$ and $\widehat{X}$ which agree on a large compact subset of this common affine subvariety.
We will use the notation from Definition \ref{definition divisors and line bundle etc}.

\begin{lemma} \label{lemma effective divisors on X and widehat X}
Let $\omega_X$ and $\omega_{\widehat{X}}$ be K\"{a}hler forms on $X$ and $\widehat{X}$
and suppose that their corresponding De Rham
cohomology classes
admit lifts to integral cohomology classes
$[\omega_X] \in H^2(X;\Z)$
and $[\omega_{\widehat{X}}] \in H^2(\widehat{X};\Z)$ respectively.
Then there are effective divisors
$\Delta$ on $X$ and $\widehat{\Delta}$ on $\widehat{X}$
and an integer $\mu>0$
so that
\begin{enumerate}
\item \label{item ample condition}
$[\Delta]^* = \mu [\omega_X]$
and $[\widehat{\Delta}]^* = \mu [\omega_{\widehat{X}}]$,
\item \label{item containing exceptional sets}
$V_X \subset \textnormal{supp}(\Delta)$,
$V_{\widehat{X}} \subset \textnormal{supp}(\widehat{\Delta})$,
\item \label{item: same support} $\overline{\Phi(\textnormal{supp}(\Delta) - V_X)} = \textnormal{supp}(\widehat{\Delta})$ and
\item $A := X - \textnormal{supp}(\Delta)$
and $\widehat{A} := \widehat{X} - \textnormal{supp}(\widehat{\Delta})$
are affine varieties.
\end{enumerate}
\end{lemma}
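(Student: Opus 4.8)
The plan is to realise $\Delta$ and $\widehat\Delta$ as effective divisors supported on a \emph{common} configuration of prime divisors linked by $\Phi$. Write $\widehat\omega:=\Phi^*[\omega_{\widehat X}]\in H^2(X;\R)$ for the transplant of the ample class of $\widehat X$ under the identification of Corollary \ref{corollary identification of H2}; in general $\widehat\omega\neq[\omega_X]$, and $\widehat\omega$ need not even be nef on $X$. Both $[\omega_X]$ and $\widehat\omega$ are integral classes in the N\'eron--Severi group $\mathrm{NS}(X)$, and $\Phi^*$ restricts to a linear isomorphism $\mathrm{NS}(\widehat X)_\R\to\mathrm{NS}(X)_\R$ which on divisor classes is the strict transform. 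I will choose prime divisors $S_1,\dots,S_k$ on $X$, with strict transforms $\widehat S_i:=\overline{\Phi(S_i-V_X)}\subset\widehat X$ (well defined since no codimension-one subvariety of $X$ lies in the codimension-$\ge 2$ set $V_X$), so that $V_X\subset\bigcup_i S_i$, $V_{\widehat X}\subset\bigcup_i\widehat S_i$, and so that there exist positive integers $a_i,b_i,\mu$ with $\mu[\omega_X]=\sum_i a_i[S_i]^*$ and $\mu\widehat\omega=\sum_i b_i[S_i]^*$ in $\mathrm{NS}(X)_\R$. Granting this, set $\Delta:=\sum_i a_iS_i$ and $\widehat\Delta:=\sum_i b_i\widehat S_i$. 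Then $[\Delta]^*=\mu[\omega_X]$, and applying $(\Phi^*)^{-1}$ to the second identity gives $[\widehat\Delta]^*=\mu[\omega_{\widehat X}]$, so (\ref{item ample condition}) holds; $\mathrm{supp}(\Delta)=\bigcup_i S_i\supset V_X$ and $\mathrm{supp}(\widehat\Delta)=\bigcup_i\widehat S_i\supset V_{\widehat X}$ give (\ref{item containing exceptional sets}); since $\overline{\Phi(S_i-V_X)}=\widehat S_i$ for each $i$ we get $\overline{\Phi(\mathrm{supp}(\Delta)-V_X)}=\bigcup_i\widehat S_i=\mathrm{supp}(\widehat\Delta)$, which is (\ref{item: same support}); and condition (4) holds because $\Delta$ (resp.\ $\widehat\Delta$) is effective with $\mathcal O(\Delta)$ (resp.\ $\mathcal O(\widehat\Delta)$) ample, its Chern class being the ample class $\mu[\omega_X]$ (resp.\ $\mu[\omega_{\widehat X}]$), and the complement of the support of an effective divisor whose line bundle is ample is affine.

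To produce the $S_i$: since $[\omega_X]$ is an integral K\"ahler class it is ample, so by Serre's theorem the twisted ideal sheaf $\mathcal I_{V_X}(\mu_0[\omega_X])$ is globally generated for $\mu_0\gg 0$; a member $D$ of the linear system $|\mathcal I_{V_X}(\mu_0[\omega_X])|$ then contains $V_X$ and has class $\mu_0[\omega_X]$, so its prime components $D_1,\dots,D_r$ cover $V_X$ and satisfy $\sum_j n_j[D_j]^*=\mu_0[\omega_X]$ with each $n_j\ge 1$. Likewise a member $\widehat E$ of $|\mathcal I_{V_{\widehat X}}(\mu_1[\omega_{\widehat X}])|$ (for $\mu_1\gg 0$) has prime components covering $V_{\widehat X}$ whose strict transforms $E_1,\dots,E_s$ in $X$ satisfy $\sum_m \widehat n_m[E_m]^*=\mu_1\widehat\omega$, $\widehat n_m\ge 1$. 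Next, the ample cone of $X$ is open in $\mathrm{NS}(X)_\R$ and contains $[\omega_X]$, so after passing to suitable multiples (to make the classes very ample) and taking general members (which are prime) we may choose prime divisors $H_1,\dots,H_p$ on $X$ with $[\omega_X]$ in the interior of $\mathrm{cone}([H_1]^*,\dots,[H_p]^*)$; similarly prime divisors $\widehat H_1,\dots,\widehat H_q$ on $\widehat X$ with $[\omega_{\widehat X}]$ in the interior of $\mathrm{cone}([\widehat H_1]^*,\dots,[\widehat H_q]^*)$, with strict transforms $H'_1,\dots,H'_q$ on $X$; transplanting by $\Phi^*$, $\widehat\omega$ lies in the interior of $\mathrm{cone}([H'_1]^*,\dots,[H'_q]^*)$. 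Take $\{S_i\}:=\{D_j\}\cup\{E_m\}\cup\{H_a\}\cup\{H'_b\}$; then $V_X\subset\bigcup D_j\subset\bigcup S_i$ and $V_{\widehat X}\subset\bigcup\widehat S_i$ as required.

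Finally, the required positive combinations exist: $\mu_0[\omega_X]$ lies in the open cone $\mathrm{int}\,\mathrm{cone}(\{[H_a]^*\})$ (a cone is scale invariant), so subtracting sufficiently small positive multiples of each of the finitely many remaining classes $[D_j]^*$, $[E_m]^*$, $[H'_b]^*$ keeps us inside this open cone; hence $\mu_0[\omega_X]$ minus such a combination is a positive rational combination of the $[H_a]^*$, and clearing denominators yields $\mu'[\omega_X]=\sum_i a_i[S_i]^*$ with all $a_i$ positive integers. The symmetric argument on $\widehat X$, using $\mathrm{int}\,\mathrm{cone}(\{[\widehat H_b]^*\})$ and then transplanting by $\Phi^*$, gives $\mu''\widehat\omega=\sum_i b_i[S_i]^*$ with all $b_i$ positive integers; replacing $\mu',\mu''$ by their least common multiple $\mu$ and rescaling produces the common $\mu$. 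The only slightly delicate verifications are the bookkeeping of strict transforms and closures for (\ref{item: same support}) (which rests on $\Phi$ being a biregular isomorphism away from the codimension-$\ge 2$ sets $V_X$, $V_{\widehat X}$, so that $\overline{\Phi(\mathrm{supp}(\Delta)-V_X)}$ is exactly the union of the strict transforms $\widehat S_i$) and the affineness fact in (4). The main conceptual point — which is where one could be tempted down a dead end — is that one cannot manage with just $D$ and $E$: since $\widehat\omega$ is ample on $\widehat X$ but in general not nef on $X$, the classes $[\omega_X]$ and $\widehat\omega$ sit only on the boundary of $\mathrm{cone}([D]^*,[E]^*)$, and one genuinely must thicken the divisor configuration with ample classes drawn from \emph{both} birational models so that $[\omega_X]$ and $\widehat\omega$ become \emph{interior} points of the spanned cone.
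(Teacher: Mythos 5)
Your proof is correct, and its overall strategy matches the paper's: both arguments manufacture a single collection of prime divisors on $X$, linked to $\widehat{X}$ by strict transform, whose union contains $V_X$ (resp.\ $V_{\widehat{X}}$), and in terms of which $\mu[\omega_X]$ and $\mu[\omega_{\widehat{X}}]$ are positive integral combinations; affineness then follows from ampleness of $[\Delta]^*$ exactly as you say. Where you genuinely diverge is in how the positive integral combinations are produced. The paper picks irreducible hypersurfaces $\Delta_1,\dots,\Delta_l$ whose classes generate $H^{1,1}\cap H^2(\cdot\,;\Z)$ on \emph{both} models, uses openness of the K\"{a}hler cone to make $mN[\omega_X]-m\sum_i[\Delta_i]^*$ very ample, realizes it as an effective divisor $\Upsilon$ via Kodaira embedding, writes the transported divisor $\Upsilon'$ as an \emph{integral} (possibly negative) combination $\sum_i a_i\Delta_i$, and absorbs the negative coefficients by adding $m'm\sum_i\Delta_i$ with $m'\gg\max_i a_i$. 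You instead force containment of $V_X$ via global generation of the twisted ideal sheaf, surround $[\omega_X]$ and $\Phi^*[\omega_{\widehat{X}}]$ by a full-dimensional cone of ample prime divisors (openness of the ample cone plus Bertini), and extract strict positivity from an interior-point perturbation, clearing denominators only at the end. Your route makes the positivity transparent and correctly isolates the key obstruction (that $\Phi^*[\omega_{\widehat{X}}]$ need not be nef on $X$, so classes from both models are indispensable); the paper's route stays integral in the N\'{e}ron--Severi lattice throughout and never mentions cones. The two facts you use without proof --- that strict transform implements the identification of Corollary \ref{corollary identification of H2} on divisor classes, and that the complement of the support of an effective ample divisor is affine --- are likewise taken for granted in the paper, so I do not count them as gaps.
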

\proof
By \cite[Chapter 1, Section 2]{GriffithsHarris:algeraicgeometry},
we can find a collection
$\Delta_1,\cdots,\Delta_l$
of irreducible codimension $1$ subvarieties of $X$
so that
$[\Delta_1]^*,\cdots,[\Delta_l]^*$
generates $H^{1,1}(X) \cap H^2(X;\Z)$
and so that
$[\widehat{\Delta}_1]^*,\cdots,[\widehat{\Delta}_l]^*$
generates $H^{1,1}(\widehat{X}) \cap H^2(\widehat{X};\Z)$
where $\widehat{\Delta_i} = \overline{\Phi(\Delta_i - V_X)}$ for each $i=1,\cdots,l$.
We can enlarge this collection of varieties
so that {\it (\ref{item containing exceptional sets})} is satisfied.
Choose a positive integer $N>0$
large enough so that
$$[\omega_X] - \frac{1}{N} \sum_{i=1}^l [\Delta_i]^*, \quad [\omega_{\widehat{X}}] - \frac{1}{N} \sum_{i=1}^l [\widehat{\Delta}_i]^*$$
represent K\"{a}hler forms.
Such an $N$ exists since being K\"{a}hler is an open condition among $(1,1)$-forms with respect to the $C^0$-topology.
By the Kodaira embedding theorem
\cite[Chapter 1, Section 4]{GriffithsHarris:algeraicgeometry},
we can find a positive integer $m$
and codimension $1$ subvarieties
$\Upsilon$ on $X$ and $\widehat{\Upsilon}'$ on $\widehat{X}$
so that
\begin{equation} \label{equation ample perturbed sum}
[\Upsilon]^* = mN[\omega_X] - m\sum_{i=1}^l [\Delta_i]^*, \quad [\widehat{\Upsilon}']^* = mN[\omega_{\widehat{X}}] - m\sum_{i=1}^l [\widehat{\Delta}_i]^*.
\end{equation}
Let $\widehat{\Upsilon} := \overline{\Phi(\Upsilon-V_X)}$
and
$\Upsilon' = \overline{\Phi^{-1}(\widehat{\Upsilon}' - V_{\widehat{X}})}$.
Since $[\Delta_1]^*,\cdots,[\Delta_l]^*$
generates $H^{1,1}(X) \cap H^2(X;\Z)$,
and
$[\widehat{\Delta}_1]^*,\cdots,[\widehat{\Delta}_l]^*$
generates $H^{1,1}(\widehat{X}) \cap H^2(\widehat{X};\Z)$,
we have by
\cite[Chapter 1, Section 2]{GriffithsHarris:algeraicgeometry}
that
there are integers $(a_i)_{i=1}^l$,
$(\widehat{a}_i)_{i=1}^l$
so that
\begin{equation} \label{equation making divisors in span}
\sum_{j=1}^l a_i[\Delta_i]
= [\Upsilon'], \quad
\sum_{j=1}^l \widehat{a}_i[\widehat{\Delta}_i]
= [\widehat{\Upsilon}].
\end{equation}
Now choose a positive integer $m'$
greater than $\max_{i=1}^l a_i$
and $\max_{i=1}^l \widehat{a}_i$.
Define
$$\Delta := m'\Upsilon + \Upsilon' + \sum_{i=1}^l (m'm - a_i) \Delta_i, \quad
\widehat{\Delta} := \widehat{\Upsilon} + m'\widehat{\Upsilon}' + \sum_{i=1}^l (m'm - \widehat{a}_i) \widehat{\Delta}_i.
$$
Then by Equations (\ref{equation ample perturbed sum})
and
(\ref{equation making divisors in span}),
we have that
$[\Delta]^* = m'mN[\omega_X]$
and $[\widehat{\Delta}]^* = m'mN [\omega_{\widehat{X}}]$ and hence 
{\it (\ref{item ample condition})} holds with $\mu = m'm N$.
Also $\Delta$ and $\widehat{\Delta}$ are effective divisors with support
$$\textnormal{supp}(\Delta)
= \Upsilon \cup \Upsilon' \cup \bigcup_{i=1}^l \Delta_i, \quad
\textnormal{supp}(\widehat{\Delta}) =
\widehat{\Upsilon} \cup \widehat{\Upsilon}' \cup \bigcup_{i=1}^l \widehat{\Delta}_i
$$
and hence
{\it (\ref{item containing exceptional sets})} and
{\it (\ref{item: same support})} are satisfied.
Finally $A$ and $\widehat{A}$
are affine varieties
by the Kodaira embedding
theorem
\cite[Chapter 1, Section 4]{GriffithsHarris:algeraicgeometry}.
\qed

\bigskip

We will now prove our main result (Theorem \ref{theorem main theorem}). Here is a statement of this theorem:

{\it 
Let $\omega_X$ and $\omega_{\widehat{X}}$
be K\"{a}hler forms on $X$ and $\widehat{X}$ respectively whose cohomology classes
lift to integer cohomology classes.
Then there exists a graded
$\Lambda_\K^{\omega_X,\omega_{\widehat{X}}}$-algebra
$Z$ and algebra isomorphisms
$$Z \otimes_{\Lambda_\K^{\omega_X,\omega_{\widehat{X}}}} \Lambda_\K^{\omega_X} \lra{\cong}
QH^*(X;\Lambda_\K^{\omega_X}), \quad Z \otimes_{\Lambda_\K^{\omega_X,\omega_{\widehat{X}}}} \Lambda_\K^{\omega_{\widehat{X}}} \lra{\cong}
QH^*(X;\Lambda_\K^{\omega_{\widehat{X}}})$$
over the Novikov rings $\Lambda^{\omega_X}_\K$ and $\Lambda^{\omega_{\widehat{X}}}_\K$
respectively
where
$\Lambda_\K^{\omega_X,\omega_{\widehat{X}}}$,
$\Lambda_\K^{\omega_X}$ and
$\Lambda_\K^{\omega_{\widehat{X}}}$
are the Novikov rings given in Example
\ref{example Novikov ring associated to some forms}.
}
\begin{proof}[Proof of Theorem \ref{theorem main theorem}]
Let $n$ be the complex dimension of $X$.
Let $\Delta$, $\widehat{\Delta}$, $\mu$, $A$ and $\widehat{A}$ be as in Lemma \ref{lemma effective divisors on X and widehat X}.
By the divisor line bundle correspondence
\cite[Chapter 1, Section 1]{GriffithsHarris:algeraicgeometry},
there are holomorphic line bundles
$L \lra{} X$, $\widehat{L} \lra{} \widehat{X}$
with holomorphic sections
$s$ and $\widehat{s}$ respectively
so that we have an equality of divisors $(s^{-1}(0)) = \Delta$ and $(\widehat{s}^{-1}(0)) = \widehat{\Delta}$ respectively.
Choose Hermitian metrics
$|\cdot|$ and $|\cdot|'$
on $L$ and $\widehat{L}$ respectively
so that
$-dd^c \rho = \mu\omega_X|_A$ and
$-dd^c \widehat{\rho} = \mu \omega_{\widehat{X}}|_{\widehat{A}}$
where $\rho := -\log(|s|)$
and $\widehat{\rho} := -\log(|\widehat{s}|')$.

By Corollary \ref{corollary divisor stably displaceable},
there exists a constant
$C>0$ so that $X - K$ is stably displaceable inside the symplectic manifold
$(X,\mu \omega_X)$ where
$K := \rho^{-1}((-\infty,C])$.
Also by Lemma \ref{lemma gradient large enough at infinity}
we can enlarge $C$ so that
the interior of $K$
contains the skeleton of $\rho$
(Definition \ref{definition skeleton of plurisubharmonic function}).
Therefore by a Moser argument (\cite[Exercise 3.36]{McduffSalamon:sympbook}),
there is a contact cylinder
$\check{C}_4 \subset A$
inside the symplectic manifold
$(X,\mu \omega_X)$
whose associated Liouville domain
is $D_4 := K$.
Define $\widehat{C}_4 := \Phi(\check{C}_4)$
and $\widetilde{D}_4 := \Phi(D_4)$.

By Proposition
\ref{proposition constructino of index bounded contact cylinder on appropriate affine variety},
there is an index
bounded contact cylinder
$\check{C}_3 = [1-\epsilon_3,1+\epsilon_3] \times C_3 \subset A$
inside the symplectic manifold
$(X,\mu \omega_X)$
so that the interior of the associated Liouville domain $D_3$
contains $D_4$ and is contained in $A$.
Define
$\widehat{C}_3 := \Phi(\check{C}_3)$
and $\widetilde{D}_3 := \Phi(D_3)$.

By Corollary
\ref{lemma making forms coincide at infinity},
there exists a partially algebraic plurisubharmonic function
$\check{\rho} : \widehat{A} \lra{} \R$,
a compact set
$K'$ of $A$ containing $D_3 \cup \check{C}_3$ 
so that
\begin{enumerate}
	\item $\Phi^*(\check{\rho})|_{D_3 \cup \check{C}_3} = \rho|_{D_3 \cup \check{C}_3}$,
	\item $\check{\rho} = \kappa_1( \widehat{\rho} - \log(\kappa_2))$ outside $\Phi(K')$ for some large $\kappa_1,\kappa_2 \in \N$ and
	\item \label{items skeletons are the same up to isomorphism}
	the skeleton of $\Phi^*(\check{\rho})$ is equal to the skeleton of $\rho$.
\end{enumerate}
Define
$\omega'_X := \mu \omega_X$
and
$$\omega'_{\widehat{X}} := 
\left\{
\begin{array}{ll}
\kappa_1 \mu \omega_{\widehat{X}} & \text{inside} \ X - \Phi(K') \\
-dd^c \check{\rho} & \text{otherwise}.
\end{array}
\right\}
$$

By Corollary \ref{corollary divisor stably displaceable},
there is a compact subset
$\widehat{K} \subset \widehat{X}$
whose interior contains $\Phi(K')$
so that $X - \widehat{K}$ is stably displaceable in $(\widehat{X},\omega'_{\widehat{X}})$.
By Proposition
\ref{proposition constructino of index bounded contact cylinder on appropriate affine variety},
there exists an index bounded contact cylinder
$\widehat{C} = [1-\epsilon,1+\epsilon] \times C$
in
the symplectic manifold
$(\widehat{X},\omega'_{\widehat{X}})$
which is disjoint from $\cup_{j \in S} \widehat{\Delta}_j$ and $\widehat{K}$
and whose associated Liouville domain
$\widehat{D}$
is contained in $\widehat{A}$ and
whose interior contains $\widehat{K}$.
By the same proposition, we can also assume that $\widehat{C}$ contains a contact cylinder $\widehat{C}_1$
whose associated Liouville domain
 $\widetilde{D}_1$ contains $\widehat{K}$ and
so that a Liouville form associated to $\widehat{C}_1$ is the restriction of $-d^c \check{\rho}$ to $\widetilde{D}$.
By a Moser argument (\cite[Exercise 3.36]{McduffSalamon:sympbook}) we can construct
contact cylinders
$\widehat{C}_0 := [1-\epsilon_0,1+\epsilon_0] \times C_0$
and
$\widehat{C}_2$
inside
$(\widehat{X},\omega'_{\widehat{X}})$
whose associated Liouville domains
are $\widehat{D}_0 = \widehat{D} \cup \widehat{C}$
and 
$\widetilde{D}_2 = \overline{\widehat{D} - \widehat{C}}$.
I.e. they
have boundary equal to
$\{1+\epsilon\} \times C$
and $\{1-\epsilon\} \times C$ respectively.
Also we can assume
$\widetilde{D}_3 \subset \widetilde{D}_2 \subset \widetilde{D}_1 \subset \widetilde{D}_0$.
We define $\check{C}_i := \Phi^{-1}(\widehat{C}_i)$, $D_i := \Phi^{-1}(\widetilde{D}_i)$ for $i=0,1,2$ (See Figure \ref{fig:liouvilledomainsinxandwidehatx}).

\begin{center}
\begin{figure}[h]
	\begin{tikzpicture}[scale=1.5]

	\draw[red]  (1.2,-1.8) ellipse (2 and 1);
	\draw[orange] (1.2,-1.8) ellipse (1.8 and 0.8);
	\draw[magenta]  (1.2,-1.8) ellipse (1.6 and 0.6);
	\draw[purple]  (1.2,-1.8) ellipse (1.4 and 0.4);
	\draw [blue] (1.2,-1.8) ellipse (1.2 and 0.2);
	
	\draw (1.1,-0.5) -- (-0.9,-1.1);
	\draw (0.5,-0.5) -- (2.9,-0.9);
	\node at (1.9,-0.6) { \footnotesize $\Delta$};
	\node at (0.1316,-2.5486) {\color{red} \footnotesize $D_0$};
	\node at (0.6205,-2.4748) {\color{orange} \footnotesize $D_1$};
	\node at (0.9127,-2.315) {\color{magenta} \footnotesize $D_2$};
	\node at (1.2771,-2.1301) {\color{purple} \footnotesize $D_3$};
	\node at (1.5,-1.9) {\color{blue} \footnotesize $D_4$};
	\node at (-1.3271,-0.3782) {$X$};

	\node at (1.2+2.5,-1.6) {$\Phi$};
	\draw[->] (1.2+2.5-0.3,-1.8) -- (1.2+2.5+0.3,-1.8);

	\draw[red]  (1.2+5,-1.8) ellipse (2 and 1);
	\draw[orange] (1.2+5,-1.8) ellipse (1.8 and 0.8);
	\draw[magenta]  (1.2+5,-1.8) ellipse (1.6 and 0.6);
	\draw[purple]  (1.2+5,-1.8) ellipse (1.4 and 0.4);
	\draw [blue] (1.2+5,-1.8) ellipse (1.2 and 0.2);
	
	\draw (1.1+5,-0.5) -- (-0.9+5,-1.1);
	\draw (0.5+5,-0.5) -- (2.9+5,-0.9);
	\node at (1.9+5,-0.6) { \footnotesize $\widehat{\Delta}$};
	\node at (0.1316+5,-2.5486) {\color{red} \footnotesize $\widetilde{D}_0$};
	\node at (0.6205+5,-2.4748) {\color{orange} \footnotesize $\widetilde{D}_1$};
	\node at (0.9127+5,-2.315) {\color{magenta} \footnotesize $\widetilde{D}_2$};
	\node at (1.2771+5,-2.1301) {\color{purple} \footnotesize $\widetilde{D}_3$};
	\node at (1.5+5,-1.9) {\color{blue} \small $\widetilde{D}_4$};
	\node at (-1.3271+10,-0.3782) {$\widehat{X}$};
	
	\end{tikzpicture}
\caption{Liouville domains in $X$ and $\widehat{X}$.} \label{fig:liouvilledomainsinxandwidehatx}
\end{figure}
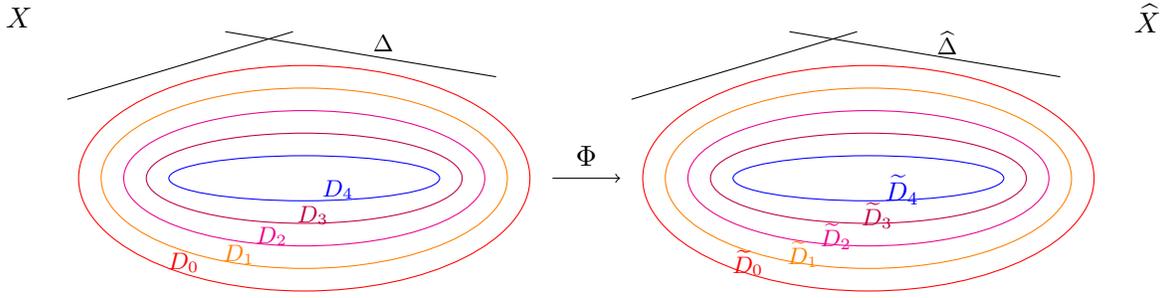
\end{center}

Let $\phi_t : \widehat{A} \lra{}  \widehat{A}$ be the time $t$ flow of $-\nabla_{\check{\rho}} \check{\rho}$
for all $t \geq 0$.
Since the skeleton of $\rho$ is contained in the interior of $D_4$,
we get that the skeleton of $\check{\rho}$ is contained in the interior of $\widetilde{D}_4$ by 
(\ref{items skeletons are the same up to isomorphism}).
Therefore
we have that there is a constant $T>0$
so that $\phi_T(\widetilde{D}_3) \subset \widetilde{D}_4$ for some $T>0$.
Since the inclusions
$\widetilde{D}_4 \subset \widetilde{D}_1$,
$\phi_T(\widetilde{D}_3) \subset \widetilde{D}_3$
and
$\widetilde{D}_2 \subset \widetilde{D}_0$
induce isomorphisms
$$H^2(\widehat{X},\widetilde{D}_1;\R)
\cong H^2(\widehat{X},\widetilde{D}_4;\R), \quad
H^2(\widehat{X},\widetilde{D}_3;\R)
\cong H^2(\widehat{X},\phi_T(\widetilde{D}_3);\R),$$
$$
H^2(\widehat{X},\widetilde{D}_0;\R)
\cong H^2(\widehat{X},\widetilde{D}_2;\R),
$$
we get that the restriction map
$H^2(\widehat{X},\widetilde{D}_i;\R)
\lra{} H^2(\widehat{X},\widetilde{D}_j;\R)
$
is an isomorphism for each $0 \leq i \leq j \leq 4$.
Also we have natural isomorphisms
$H^2(X,D_i;\R)
\lra{} H^2(\widehat{X},\widetilde{D}_i;\R)
$
for each $i=0,1,2,3,4$
by Corollary \ref{corollary identification of H2}.
So from now on we will identify all of these cohomology groups.

%

Since the quantum cohomology groups
and the associated Novikov rings
in the statement of our theorem
only depend on $\omega_X$ and $\omega_{\widehat{X}}$ up to scalar multiplication and up to adding an exact $2$-form,
we can just replace $\omega_X$
and $\omega_{\widehat{X}}$
with $\omega'_X$
and $\omega'_{\widehat{X}}$.
Hence from now on, we'll assume
$\omega_X = \omega'_X$
and
$\omega_{\widehat{X}} = \omega'_{\widehat{X}}$.

Let $\widetilde{\omega} \in \Omega^2(X)$
be a $\check{C}_2$-compatible
$2$-form which is equal to $0$
inside $D_3$
and $\omega_X$ outside $D_3 \cup ([1,1+\epsilon_3/2] \times C_3)$.
Let $\widetilde{\omega}' \in \Omega^2(\widehat{X})$
be a $\check{C}_0$-compatible $2$-form
which is equal to $0$ inside $\widetilde{D}_0$
and $\omega_{\widehat{X}}$ outside $\widetilde{D}_0 \cup ([1,1+\epsilon_0/2] \times C_0)$.
In the unlikely event that $[\widetilde{\omega}] \in H^2(X;\R)$ is proportional
to $[\widetilde{\omega}'] \in H^2(\widehat{X};\R)$, we rescale
$\widetilde{\omega}'$ so that these cohomology classes are equal (this is done to ensure that the cones constructed below satisfy condition (\ref{item:action interval thin}) of Definition
\ref{defn chain complex}).
Let $Q_+ \subset H^2(X,D_3;\R) \times \R \times \R= H^2(\widehat{X},\widetilde{D}_0;\R) \times \R\times \R$
be the cone spanned
by $([\widetilde{\omega}],1,1)$
and $([\widetilde{\omega}'],1,1)$.
Let $Q_- \subset H^2(X,D_3;\R) \times \R \times \R$
be the cone spanned by
$([\widetilde{\omega}],0,1)$,
$([\widetilde{\omega}],1,1)$
$([\widetilde{\omega}'],0,1)$
and $([\widetilde{\omega}'],1,1)$.
Let $Q^{\check{C}_3}_+ \subset H^2(X,D_3;\R) \times \R \times \R$,
(resp. $Q^{\widehat{C}_0}_+ \subset H^2(X,D_3;\R) \times \R \times \R$)
be the one dimensional cone spanned by
$([\widetilde{\omega}],1,1)$
(resp. 
$([\widetilde{\omega}'],1,1)$).
Let
$Q^{\check{C}_3}_- \subset H^2(X,D_3;\R) \times \R \times \R$,
(resp. $Q^{\widehat{C}_0}_- \subset H^2(X,D_3;\R) \times \R \times \R$)
be the two dimensional cone spanned by
$([\widetilde{\omega}],0,1)$ and
$([\widetilde{\omega}],1,1)$
(resp.
$([\widetilde{\omega}'],0,1)$,
$([\widetilde{\omega}'],1,1)$).

Since the interior of $D_3$ contains $D_4=K$,
we have that $\overline{X - D_3}$ is stably
displaceable inside $(X,\omega_X)$ and
hence
we have an isomorphism
of $\Lambda^{Q^{\check{C}_3}_+}_\K = \Lambda^{\omega_X}_\K$-algebras
\begin{equation} \label{equation isomorphic to QHX}
SH^*_{\check{C}_3,Q^{\check{C}_3}_-,Q^{\check{C}_3}_+}(D_3 \subset X) \cong QH^*(X,\Lambda_\K^{\omega_X}),
\end{equation}
and
\begin{equation} \label{equation lim lim1 is zero for X}
\varinjlim {\varprojlim}^1 \SH^*_{\check{C}_3,Q^{\check{C}_3}_-,Q^{\check{C}_3}_+}(D_3 \subset X) = 0.
\end{equation}
by Theorems \ref{theorem isomorphic to quantum cohomology}, \ref{theorem stably displaceable complement} and Propositions 
\ref{proposition forgetting contact cylinder} and \ref{proposition alternative filtrations defining symplectic cohomology}.
Similarly, we have an isomorphism
of $\Lambda^{Q^{\widehat{C}_0}_+}_\K = \Lambda^{\omega_{\widehat{X}}}_\K$-algebras
\begin{equation} \label{equation isomorphic to HWHX}
SH^*_{\widehat{C}_0,Q^{\widehat{C}_0}_-,Q^{\widehat{C}_0}_+}(\widetilde{D}_0 \subset \widehat{X}) \cong QH^*(X,\Lambda_\K^{\omega_{\widehat{X}}}),
\end{equation}
and
\begin{equation} \label{equation lim lim1 is zero}
\varinjlim {\varprojlim}^1 \SH^*_{\widehat{C}_0,Q^{\widehat{C}_0}_-,Q^{\widehat{C}_0}_+}(\widetilde{D}_0 \subset \widehat{X}) = 0.
\end{equation}
By Proposition \ref{proposition transfer isomorphism between index bounded Liouville domains} applied to
$(\widehat{C}_i)_{i=0,1,2,3,4}$
and
$(\widetilde{D}_i)_{i=0,1,2,3,4}$
we have that the transfer map
\begin{equation} \label{equation index bounded transfer map is an isomorphism}
\SH^*_{\widehat{C}_0,Q^{\widehat{C}_0}_-,Q^{\widehat{C}_0}_+}(\widetilde{D}_0 \subset \widehat{X}) \lra{\cong}
\SH^*_{\widehat{C}_0,Q^{\widehat{C}_0}_-,Q^{\widehat{C}_0}_+}(\widetilde{D}_3 \subset \widehat{X})
\end{equation}
is an isomorphism.
Also by Proposition \ref{proposition changing contact cylinder},
we have an isomorphism
\begin{equation} \label{equation changing contact cylinders for symplectic cohomology}
\SH^*_{\widehat{C}_0,Q^{\widehat{C}_0}_-,Q^{\widehat{C}_0}_+}(\widetilde{D}_3 \subset \widehat{X}) \cong
\SH^*_{\widehat{C}_3,Q^{\widehat{C}_0}_-,Q^{\widehat{C}_0}_+}(\widetilde{D}_3 \subset \widehat{X}).
\end{equation}

Define
$$Z := SH^*_{\check{C}_3,Q_-,Q_+}(D_3 \subset X).
$$
Since $\Phi(D_3) = \widetilde{D}_3$
and since all $1$-periodic orbits and Floer trajectories used to define
$Z$
can be made to avoid $V_X$ and $V_{\widehat{X}}$ 
by the ideas in Section \ref{section avoiding codimsion geq 4 submanifolds}
due to the fact that they are unions of submanifolds of real codimension $\geq 4$,
we have an isomorphism of $\Lambda_\K^{\omega_X,\omega_{\widehat{X}}} = \Lambda_\K^{Q_+}$-algebras
\begin{equation} \label{equation main isomorphism between Zs}
Z \cong SH^*_{\widehat{C}_3,Q_-,Q_+}(\widetilde{D}_3 \subset \widehat{X}).
\end{equation}
Since
both $\Lambda_\K^{\omega_X}$
and $\Lambda_\K^{\omega_{\widehat{X}}}$
are flat $\Lambda_\K^{\omega_X,\omega_{\widehat{X}}}$-modules
by Proposition \ref{flatness of rational polyhedral novikov rings},
we have by
Theorem \ref{theorem changing novikov ring}
combined with Equations
(\ref{equation lim lim1 is zero for X}),
(\ref{equation lim lim1 is zero}),
(\ref{equation index bounded transfer map is an isomorphism}), (\ref{equation changing contact cylinders for symplectic cohomology})
and
(\ref{equation main isomorphism between Zs})
isomorphisms
\begin{equation} \label{equation fmi}
Z \otimes_{\Lambda_\K^{\omega_X,\omega_{\widehat{X}}}} \Lambda_\K^{\omega_X} \cong SH^*_{\check{C}_3,Q^{\check{C}_3}_-,Q^{\check{C}_3}_+}(D_3 \subset X),
\end{equation}
\begin{equation} \label{equation smi}
Z \otimes_{\Lambda_\K^{\omega_X,\omega_{\widehat{X}}}} \Lambda_\K^{\omega_{\widehat{X}}} \cong
SH^*_{\widehat{C}_3,Q^{\widehat{C}_3}_-,Q^{\widehat{C}_3}_+}(\widetilde{D}_3 \subset \widehat{X}).
\end{equation}
Therefore our theorem now follows from Equations
(\ref{equation fmi}),
(\ref{equation smi}),
(\ref{equation isomorphic to QHX}),
(\ref{equation isomorphic to HWHX}),
(\ref{equation index bounded transfer map is an isomorphism}) and (\ref{equation changing contact cylinders for symplectic cohomology}).
\end{proof}

\section{Appendix A: Hamiltonians and Almost Complex Structures Compatible with Contact Cylinders.}

This section contains some lemmas allowing us to perturb Hamiltonians so that they become non-degenerate, while retaining certain properties.
It also has a lemma telling us that a certain action spectrum has measure $0$.

\begin{defn}
Let $H$ be a Hamiltonian on $M$.
Let $\Gamma$ be a collection of $1$-periodic orbits of $H$.
The {\it associated fixed points of $\Gamma$}
is a subset of $M$ denoted by
$$\Gamma(0) := \{x \in M \ : \ \exists \ \gamma \in \Gamma \ \text{such that} \ x = \gamma(0) \}.$$

We say that $\Gamma$ is {\it isolated}
if there is a neighborhood $N_\Gamma$ of $\Gamma(0)$ so that for each $1$-periodic orbit $\gamma$ satisfying $\gamma(0) \in N_\Gamma$,
we have $\gamma \in \Gamma$.
We call $N_\Gamma$ an {\it isolating neighborhood of $\Gamma$}.
\end{defn}

\begin{lemma} \label{lemma generic Hamiltonian constant elsewhere}

Let $H = (H_t)_{t \in \T}$ be a Hamiltonian on $M$ and let
$\Gamma$ be a set of $1$-periodic orbits of $M$ which is isolated with isolating neighborhood
$N_\Gamma$ and let $N$ be a neighborhood of $\Gamma(0)$ whose closure is contained in $N_\Gamma$.
Let $\ccH(N,H)$ be the space of Hamiltonians $K = (K_t)_{t \in \T}$
on $M$ satisfying
$H_t|_{\phi^H_t(N)} = K_t|_{\phi^H_t(N)}$ for all $t \in [0,1]$
equipped with the $C^\infty$ topology
and let $\ccH^\reg(N,H) \subset \ccH(N,H)$ be the subset of those Hamiltonians with the property that every $1$-periodic
orbit $\gamma$ not in $\Gamma$ is non-degenerate.
Then there exists a sequence $(H_i)_{i \in \N}$ of elements in $\ccH^\reg(N,H)$ converging to $H$.
\end{lemma}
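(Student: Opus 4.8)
The plan is to prove Lemma \ref{lemma generic Hamiltonian constant elsewhere} by a standard Sard--Smale transversality argument (cf.\ \cite[Chapter 9]{McduffSalamon:sympbook}), taking care that the allowed perturbations are supported away from $\Gamma(0)$. The first step is a purely topological reduction. Choose nested open sets $\Gamma(0) \subset N_0$ with $\overline{N_0} \subset N \subset \overline N \subset N_\Gamma$. Since the set of $1$-periodic orbits of $H$ is compact and every such orbit with starting point in $\overline N$ lies in $\Gamma$ (hence starts in $\Gamma(0) \subset N_0$), a continuous dependence and Arzel\`a--Ascoli argument shows that there is a $C^\infty$-neighborhood $\mathcal U \subset \ccH(N,H)$ of $H$ such that for every $K \in \mathcal U$ the $1$-periodic orbits of $K$ split into two classes: those starting in $N_0$, which coincide with $\Gamma$ (because $K$ and $H$ have the same Hamiltonian vector field near those orbits, so the two flows agree there), and those with $\gamma(0) \notin \overline N$. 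For $\gamma$ in the second class the point $\gamma(0)$ lies outside $\phi^H_0(N) = N$, so a sufficiently small space--time ball around $(0,\gamma(0))$ is disjoint from the tube $\bigcup_t \{t\} \times \phi^H_t(N)$ on which perturbations are forbidden; this is exactly what makes localized perturbations available. Thus it suffices to make every $1$-periodic orbit of $K$ with $\gamma(0) \notin \overline N$ non-degenerate, for $K$ ranging in $\mathcal U$.

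Next I would run the usual universal moduli space argument. Fix $p > 1$ and $\ell$ large (or use the space of perturbations with Floer's $C^\epsilon$-norm), let $\mathcal B$ be the Banach manifold of the corresponding $C^\ell$ Hamiltonians in $\mathcal U$, and let $\mathcal L^{1,p}M$ be the $W^{1,p}$-loop space. A $1$-periodic orbit of $K$ is a zero of the section $F(K,\gamma) = \nabla_t \gamma - X^K_t(\gamma)$ of the Banach bundle over $\mathcal B \times \mathcal L^{1,p}M$ with fiber $L^p(\gamma^*TM)$. Define the universal moduli space $\mathcal Z := \{(K,\gamma) : F(K,\gamma) = 0,\ \gamma(0) \notin \overline N\}$. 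The linearization of $F$ in the $\gamma$-direction is a Fredholm operator of index $0$ whose cokernel is identified, by duality, with solutions $\eta$ along $\gamma$ of a linear first-order adjoint ODE; by uniqueness for ODEs, a nonzero such $\eta$ is nowhere vanishing, in particular $\eta(0) \neq 0$. Using the perturbation freedom near $(0,\gamma(0))$ established above, one can choose $k$ supported there with $\int_0^1 \langle \eta(t), X^k_t(\gamma(t))\rangle\, dt \neq 0$, so the full linearization of $F$ is surjective and $\mathcal Z$ is a Banach manifold. The projection $\pi : \mathcal Z \to \mathcal B$ is then Fredholm of index $0$, and $K$ is a regular value of $\pi$ precisely when $d\phi^K_1 - \mathrm{id}$ is invertible at every such $\gamma(0)$, i.e.\ when all $1$-periodic orbits of $K$ with $\gamma(0)\notin\overline N$ are non-degenerate. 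By the Sard--Smale theorem the regular values form a residual subset of $\mathcal B$; by the topological reduction these are exactly the elements of $\ccH^\reg(N,H)\cap\mathcal U$ among $C^\ell$ Hamiltonians. Finally the Taubes trick (intersecting over all $\ell$, cf.\ \cite[Chapter 3]{McduffSalamon:sympbook}) upgrades this to a residual subset of the $C^\infty$ space $\ccH(N,H)\cap\mathcal U$, and since $H$ itself lies in $\mathcal U$ this produces a sequence $H_i \in \ccH^\reg(N,H)$ converging to $H$.

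The main obstacle is the bookkeeping around the support constraint: one must ensure that (i) near $\Gamma(0)$ nothing new appears and $\Gamma$ itself is unaffected, so that non-degeneracy away from $\overline N$ genuinely implies non-degeneracy away from $\Gamma$, and (ii) every remaining orbit really does leave the forbidden tube, so that enough perturbations are available to achieve transversality; both are handled by the nested-neighborhood and compactness argument above. The remaining ingredients --- Fredholm theory for the linearized operator, the surjectivity computation, and the $C^\infty$ approximation --- are entirely standard, with only the customary minor care needed for constant orbits, where $\gamma$ is not an embedded arc and one instead perturbs the differential of $K$ at the fixed point directly.
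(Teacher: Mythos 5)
Your argument is correct, but it takes a genuinely different route from the paper. The paper does not redo transversality from scratch: it invokes the genericity theorem of Hofer--Salamon to get a sequence of \emph{globally} non-degenerate Hamiltonians $\check H_i \to H$, and then interpolates, setting $H_{i,t} := (\phi^H_t)_*(\rho)\,\check H_{i,t} + (1-(\phi^H_t)_*(\rho))\,H_t$ for a cutoff $\rho$ vanishing on $N$ and equal to $1$ outside a slightly larger $N'$ with $\overline{N'}\subset N_\Gamma$. The paper then argues by compactness that, for large $i$, every $1$-periodic orbit of $H_i$ meeting the tube $\phi^H_t(N')$ already lies in $\Gamma$, while every orbit avoiding the tube lives in the region where $H_i=\check H_i$ and is therefore non-degenerate. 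You instead re-run the Sard--Smale universal moduli space argument with perturbations constrained to vanish on the tube $\bigcup_t\{t\}\times\phi^H_t(N)$; the key points you need for this to work --- that any orbit touching the open tube is forced by ODE uniqueness to be an $H$-orbit starting in $N$ (hence in $\Gamma$), and that every remaining orbit has $\gamma(0)\notin\overline N$ so that a space--time ball around $(0,\gamma(0))$ is available for perturbation --- are exactly right, and your nested-neighborhood dichotomy is the analogue of the paper's compactness step. The trade-off: the paper's proof is shorter and offloads all transversality to the literature at the price of the flow-adapted cutoff trick, whereas yours is self-contained but must re-verify the standard cokernel/surjectivity analysis (including the care needed at constant orbits) under the support constraint. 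Both are valid proofs of the lemma.
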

\proof

Let $N' \subset N_\Gamma$ be an open set
so that $\overline{N} \subset N'$ and $\overline{N'} \subset N_\Gamma$.
By \cite[Theorem 3.1]{HoferSalamonNovikov},
there is a sequence of non-degenerate Hamiltonians $(\check{H}_i)_{i \in \N}$ $C^\infty$ converging to $H$ where
$\check{H}_i = (\check{H}_{i,t})_{t \in \T}$ for all $i \in \N$.
Let $\rho : M  \lra{} [0,1]$ be a smooth function
equal to $0$ inside $N$ and $1$ outside $N'$.
Define $H_{i,t} := (\phi^H_t)_*(\rho) \check{H}_{i,t} + (1-(\phi^H_t)_*(\rho)) H_t$ for all $t \in \T$
and define $H_i := (H_{i,t})_{t \in \T}$.
By a compactness argument, we have
for all $i$ sufficiently large that
any $1$-periodic orbit $\gamma$ of $H_i$ satisfying $\gamma(t) \in \phi^H_t(N')$ for some $t \in \T$ also satisfies $\gamma \in \Gamma$.
Therefore all $1$-periodic orbits $\gamma$ of $H_i$ not contained in $\Gamma$ 
satisfy $\gamma(t) \notin \phi^H_t(N')$ for each $t \in \T$
and hence
are non-degenerate orbits of $\check{H}_{i,t}$ for all $i$ sufficiently large.
Therefore such orbits are non-degenerate
orbits of $H_{i,t}$ for all $i$ sufficiently large.
\qed

\begin{lemma} \label{lemma regular Hamiltonians}
Let $\check{C}$ be a contact cylinder
and let $(a_-,a_+)$ be a
$\check{C}$-action interval.
Then $\ccH^{\T,\reg}(\check{C},a_-,a_+)$
is an open dense subset
of $\ccH^\T(\check{C},a_-,a_+)$
(See Definition \ref{defn chain complex}).
\end{lemma}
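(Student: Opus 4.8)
The plan is to prove the two assertions — density and openness — separately, the key structural observation being that for any weakly $\check{C}$-compatible Hamiltonian $H$ the hypersurfaces $\{r_C=1+\epsilon/8\}$ and $\{r_C=1+\epsilon/2\}$ inside $\check{C}$ are invariant under the flow $\phi^H_t$, and $\{r_C=1+\epsilon\}$ is fixed pointwise (on $[1+\epsilon/8,1+\epsilon/2]\times C$ we have $X_H=\lambda_{H_t}R_{\alpha_C}$, which is tangent to the level sets of $r_C$, and a $\check{C}$-compatible $H$ is locally constant outside $D\cup\check{C}$ so $dH=0$ at $\{r_C=1+\epsilon\}$). Hence every $1$-periodic orbit of $H$ lies in exactly one of the three regions $R_1:=D\cup([1-\epsilon,1+\epsilon/8]\times C)$, $R_2:=[1+\epsilon/8,1+\epsilon/2]\times C$, $R_3:=([1+\epsilon/2,1+\epsilon]\times C)\cup(M-(D\cup\check{C}))$, and a perturbation of $H$ supported in $R_1$ or in $R_3\setminus(M-(D\cup\check{C}))$ cannot create or destroy orbits in the other regions. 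Orbits in $R_2$ are closed Reeb orbits of $\alpha_C$ and are excluded from every $\Gamma^P_{\check{C},a'_-,a'_+}(H)$ by definition; the constant orbits in $M-(D\cup\check{C})$, together with the remaining orbits in $R_3$, are excluded from $\Gamma^{\Z}_{\check{C},a'_-,a'_+}(H)$ for all $a'_\pm$ in a small enough neighbourhood of $a_\pm$ when $(a_-,a_+)$ is not small — this is precisely what the height hypothesis in (\ref{equation high enough hamiltonians}) buys, via the action formula of Corollary \ref{corollary action of capped loop disjoint from contact cylinder}; when $(a_-,a_+)$ is small the capped orbits contributing to $\Gamma^{\Z}_{\check{C},a'_-,a'_+}(H)$ still form a finite set modulo the Novikov action (since $CF^*_{\check{C},a_-,a_+}(H)$ is finitely generated over $\Lambda_\K^{Q_+,+}$, Remark \ref{remark action module}), confined to a compact subset disjoint from $R_2$, and $H$ may be perturbed freely on the unconstrained region $M\setminus R_2$. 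In all cases only finitely many $1$-periodic orbits (up to the Novikov action), all lying in a region on which $H$ may be perturbed freely within the relevant class, need to be made non-degenerate.

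For density, given $H\in\ccH^\T(\check{C},a_-,a_+)$ and $\delta>0$, I would first perturb the slopes $(\lambda_{H_t})_{t\in\T}$ by an arbitrarily small $t$-independent amount so that $\int_0^1\lambda_{H_t}\,dt$ avoids the set of periods of closed Reeb orbits of $\alpha_C$; that set is closed and of measure zero (Lemma \ref{lemma action spectrum}), so this is possible, and afterwards $H$ has no $1$-periodic orbits in a neighbourhood of $R_2$. Membership in $\ccH^\T(\check{C},a_-,a_+)$ is preserved since the constraint $m_{H_t}>\text{height}(a_-,a_+)$ is strict. Next I would apply the standard parametric transversality argument of Hofer--Salamon \cite{HoferSalamonNovikov} and \cite[Chapters 3,6]{McduffSalamon:sympbook}, in the form of Lemma \ref{lemma generic Hamiltonian constant elsewhere} applied to the isolated family $\Gamma$ of $1$-periodic orbits of $H$ lying in a small neighbourhood $N$ of $R_2\cup(M-(D\cup\check{C}))$, which by the invariance above and the slope choice consists only of orbits already excluded from $\Gamma^{\Z}$: this yields $H'$ arbitrarily $C^\infty$-close to $H$, equal to $H$ on $R_2\cup(M-(D\cup\check{C}))$ and hence still in $\ccH^\T(\check{C},a_-,a_+)$ with unchanged slopes and heights, with every $1$-periodic orbit outside $N$ non-degenerate. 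A final arbitrarily small generic perturbation of $H'$, again supported away from $R_2\cup(M-(D\cup\check{C}))$, moves the finitely many capped orbits so that none has $(H',\check{C})$-action lying on a wall at which $\Gamma^{\Z}_{\check{C},a'_-,a'_+}(\,\cdot\,)$ jumps; this produces neighbourhoods $N_\pm$ of $a_\pm$ witnessing $H'\in\ccH^{\T,\reg}(\check{C},a_-,a_+)$.

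For openness, let $H\in\ccH^{\T,\reg}(\check{C},a_-,a_+)$ with witnessing neighbourhoods $N_\pm$. The condition that $\int_0^1\lambda_{H_t}\,dt$ avoids the period set of $\alpha_C$ is $C^\infty$-open (that set being closed), so nearby $H'$ also have no $1$-periodic orbits in $R_2$. The finitely many capped orbits of $H$ (modulo the Novikov action) contributing to $\Gamma^{\Z}_{\check{C},a'_-,a'_+}(H)$ for $a'_\pm\in N_\pm$ are non-degenerate, hence persist and vary continuously under $C^\infty$-small perturbations of $H$, as do their actions; by a Gromov compactness argument of the type in the proof of \cite[Lemma 2.3]{McLean:local}, no further $1$-periodic orbit enters the relevant action range for $H'$ near $H$. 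Shrinking $N_\pm$ slightly then gives witnessing neighbourhoods for all such $H'$, so $\ccH^{\T,\reg}(\check{C},a_-,a_+)$ is open.

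The main obstacle is the bookkeeping of the first paragraph: arranging that the perturbation achieving non-degeneracy can be performed within the rigid subspace $\ccH^\T(\check{C},a_-,a_+)$ (constant outside $D\cup\check{C}$, linear on $R_2$, height large) rather than in the space of all Hamiltonians. The flow-invariance of $\{r_C=1+\epsilon/8\}$, $\{r_C=1+\epsilon/2\}$ and $\{r_C=1+\epsilon\}$ makes the perturbation problem separable across $R_1$, $R_2$, $R_3$, and the height hypothesis makes the orbits in the rigid regions simply irrelevant to $\Gamma^{\Z}$; once this is in place the genericity inputs and the compactness argument are entirely standard.
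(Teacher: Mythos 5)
Your proposal is correct and follows essentially the same route as the paper: make the average slope along $\check{C}$ avoid the (closed, measure-zero) Reeb period spectrum so no orbits survive in $[1+\epsilon/8,1+\epsilon/2]\times C$, then apply Lemma \ref{lemma generic Hamiltonian constant elsewhere} with $\Gamma$ the orbits in the rigid region (empty in the small case, $M-(D\cup\check{C})$ otherwise), using the height condition together with Corollary \ref{corollary action of capped loop disjoint from contact cylinder} to see those orbits never enter $\Gamma^{\Z}_{\check{C},a'_-,a'_+}$. You are in fact more careful than the paper on two points it leaves implicit — openness (which it declares "clear") and the final perturbation guaranteeing the witnessing neighbourhoods $N_\pm$ — and your only slip is citing Lemma \ref{lemma action spectrum} where the measure-zero statement for Reeb period spectra is really \cite[Proposition 3.2]{popov1993length}.
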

\proof
Openness is clear, so we just need to prove density.
Let $\ccH'(\check{C},a_-,a_+) \subset \ccH^\T(\check{C},a_-,a_+)$
be the subset consisting of Hamiltonians whose average slope along $\check{C}$ is not in the period spectrum of $\check{C}$.
This is a dense subset
since the period spectrum of both contact cylinders has measure $0$ in $\R$ (see \cite[Proposition 3.2]{popov1993length}).

Now there are two cases to consider.
The first case is when $(a_-,a_+)$ is small and the second case is when $(a_-,a_+)$ is not small.
In the first case,
$\ccH'(\check{C},a_-,a_+) \cap \ccH^{\T,\reg}(\check{C},a_-,a_+)$
is dense in
$\ccH'(\check{C},a_-,a_+)$
by Lemma
\ref{lemma generic Hamiltonian constant elsewhere}
applied to each $H \in \ccH'(\check{C},a_-,a_+)$
with $\Gamma = \emptyset$ and $N_\Gamma$, $N$
small neighborhoods of $[1+\epsilon/8,1+\epsilon/2] \times C$ containing no fixed points of $\phi^H_1$.
Hence our lemma is true if $(a_-,a_+)$ is small.

Now suppose $(a_-,a_+)$ is not small.
Let $D \subset M$ be the Liouville domain associated to $\check{C}$.
Then
${\ccH}'(\check{C},a_-,a_+) \cap {\ccH}^{\T,\reg}(\check{C},a_-,a_+)$
is dense in
${\ccH}'(\check{C},a_-,a_+)$
by applying Lemma
\ref{lemma generic Hamiltonian constant elsewhere}
to each $H \in  {\ccH}'(\check{C},a_-,a_+)$
with $\Gamma = M - (D \cup \check{C})$
 and $N$, $N_\Gamma$ a small neighborhood of $\Gamma \cup ([1+\epsilon/8,1+\epsilon/2] \times C)$ combined with the fact that the capped $1$-periodic orbits whose associated $1$-periodic orbit has image in $\Gamma$ are not contained in $\Gamma^\Z_{\check{C},a_-,a_+}(H)$ by Equation (\ref{equation high enough hamiltonians}) combined with Equation
 (\ref{equation contact cylinder action}).
\qed

\begin{defn} \label{definition action spectrum}
Let $H$ be a Hamiltonian.
The {\it action spectrum} of $H$ is the set:
$$\{\cA_{H,\emptyset}(\gamma)([\omega],1,1) \ : 
\gamma \ \text{is a capped 1-periodic orbit of} \ H  
\}$$
(see Example \ref{example usual action filtraiton}).
\end{defn}

\begin{lemma} \label{lemma action spectrum}

The action spectrum of any Hamiltonian is closed and has measure $0$.
\end{lemma}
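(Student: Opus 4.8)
The plan is to reduce the statement to two standard facts about Hamiltonian Floer theory: first, that for a fixed $1$-periodic orbit $\overline{\gamma}$ of $H$, the set of actions of all cappings of $\overline{\gamma}$ is a coset of the group $\{\langle [\omega], A\rangle : A \in \pi_2(M)\} \subset \R$; and second, that the set of $1$-periodic orbits of $H$ (as a subset of the free loop space $C^\infty(\T, M)$ with the $C^\infty$ topology) is compact. Since $[\omega] \in H^2(M;\R)$ lifts to an integral class (this is part of the standing assumptions in Section \ref{section notation}), the image of $\pi_2(M) \to \R$, $A \mapsto \langle [\omega], A\rangle$, is a subgroup of $\R$ contained in $\frac{1}{k}\Z$ for some positive integer $k$ (in fact it is a subgroup of $\Z$ after rescaling, but a discrete subgroup suffices). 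Hence it is a closed, countable, measure-zero subgroup of $\R$.

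First I would fix the sign conventions: by Example \ref{example usual action filtraiton}, for a capped loop $\gamma = (\widetilde{\gamma}, \check{\gamma})$ with domain $S$ and associated loop $\overline{\gamma}$, we have $\cA_{H,\emptyset}(\gamma)([\omega],1,1) = -\int_S \widetilde{\gamma}^*\omega + \int_0^1 H_t(\overline{\gamma}(t))\,dt$. Given a fixed $1$-periodic orbit $\overline{\gamma}$, any two cappings differ by an element of the image of $\pi_2(M)$ under the map $A \mapsto \int_A \omega$, so the set of action values realized by cappings of $\overline{\gamma}$ equals $c_{\overline{\gamma}} + G$ where $c_{\overline{\gamma}} := \int_0^1 H_t(\overline{\gamma}(t))\,dt$ minus a fixed capping integral, and $G := \{\langle[\omega],A\rangle : A \in \pi_2(M)\}$. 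Since $G \subseteq \tfrac{1}{k}\Z$ is discrete, each such coset is closed and has measure zero.

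Next I would observe that the action spectrum is the union $\bigcup_{\overline{\gamma}} (c_{\overline{\gamma}} + G)$ over all $1$-periodic orbits $\overline{\gamma}$ of $H$. The function $\overline{\gamma} \mapsto c_{\overline{\gamma}}$ is continuous on $C^\infty(\T, M)$, so its restriction to the compact set of $1$-periodic orbits has compact — hence closed and bounded — image $\mathcal{C} \subset \R$. Therefore the action spectrum is contained in $\mathcal{C} + G$, which is a bounded-below, locally finite union of cosets of $G$: for each interval $[N, N+1]$ only finitely many values of $c + g$ ($c \in \mathcal{C}$, $g \in G$) can lie in it, so $\mathcal{C} + G$ is closed and has measure zero. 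To finish I would check that the action spectrum is in fact all of $\mathcal{C} + G$ only matters for being precise about "closed"; it suffices that it is a closed subset of the closed measure-zero set $\mathcal{C}+G$. More carefully, the action spectrum equals $\bigcup_{\overline{\gamma}}(c_{\overline{\gamma}} + G)$, and since the map $\overline{\gamma}\mapsto c_{\overline{\gamma}}$ has compact image and $G$ is discrete, this union is closed (a limit of action values, being within bounded distance of finitely many cosets, stays in one of them).

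The main obstacle is being careful with the role of capped versus uncapped orbits and the discreteness of $G$: one must use the integrality of $[\omega]$ (rather than, say, a Novikov-type convergence hypothesis) to guarantee $G$ is a discrete — not merely countable dense — subgroup of $\R$, since a countable dense subgroup would still have measure zero but would not give closedness. With integrality in hand, $G$ is closed and discrete, and the compactness of the orbit set controls the "horizontal" part $\mathcal{C}$, so both closedness and measure zero follow. No genericity or transversality input is needed here; this is purely a statement about the shape of the action functional's critical values.
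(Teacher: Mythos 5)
The closedness part of your argument is essentially the paper's: compactness of the set of $1$-periodic orbits in $C^\infty(\T,M)$ plus discreteness of the period group $G$ (though note that two cappings of the same loop differ by a class in $H_2(M;\Z)$, not merely in the image of $\pi_2(M)$, so the relevant group is $\omega(H_2(M;\Z)) \subset \Z$; and your phrase ``finitely many cosets'' is unjustified --- what actually closes the argument is that the sum of a compact set and a closed discrete group is closed, or, as in the paper, that a convergent sequence of actions over a convergent sequence of orbits eventually has constant $G$-part).

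The measure-zero part, however, has a genuine gap. You show the spectrum is contained in $\mathcal{C}+G$ where $\mathcal{C}$ is the (compact) image of the continuous map $\overline{\gamma}\mapsto c_{\overline{\gamma}}$ on the compact orbit set, and then assert that each interval $[N,N+1]$ meets only finitely many values $c+g$. That is false unless $\mathcal{C}$ is finite, which requires non-degeneracy --- the lemma is for an arbitrary Hamiltonian, and the orbit set can be positive-dimensional (e.g.\ all constant loops at a non-Morse critical locus), in which case $\mathcal{C}$ can be uncountable; a compact set plus $\Z$ can even be all of $\R$ (take $\mathcal{C}=[0,1]$). So ``compact image'' buys you nothing toward measure zero. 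The set $\mathcal{C}$ is precisely a set of critical values of a smooth function, and the only way to conclude it has measure zero is Sard's theorem --- contrary to your closing remark that no such input is needed. This is exactly what the paper does: near each fixed point of $\phi^H_1$ it writes the graph of $\phi^H_1$ as a Lagrangian in a Weinstein neighborhood of the diagonal in $T^*\Delta$, produces a local primitive $f$ of the canonical $1$-form restricted to the graph whose critical points are the fixed points and whose critical values compute the actions modulo $\Z$, and then applies Sard to $f$ and covers the fixed-point set by finitely many such charts. You need to supply this (or an equivalent) generating-function-plus-Sard step; without it the measure-zero claim does not follow.
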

\proof
We will first prove that the action spectrum of a Hamiltonian $H$ has measure $0$.
Let $\Delta \subset M \times M$
be the diagonal
and let
$N \subset M \times M$
be a neighborhood of $\Delta$
so that $(N,-\omega \oplus \omega)$
is symplectomorphic to a neighborhood
of $\Delta$ in $T^*\Delta$ where such a symplectomorphism is the identity along $\Delta$ (see \cite[Theorem 3.33]{McduffSalamon:sympbook}).
Let $\lambda$ be the canonical $1$-form on $T^* \Delta$
and let $\lambda_N$ be its restriction to $N$.
Let $\gamma$ be a capped $1$-periodic orbit of $H$ with associated $1$-periodic orbit $\widehat{\gamma}$
and let $U \subset M$
be a simply connected open neighborhood of
$\widehat{\gamma}(0)$
 so that the graph $\Gamma$ of $\phi^H_1|_U$
 in $M \times M$ is contained in $N$
 and so that $\Gamma$ is transverse to the fibers of $T^* \Delta$ (this can be done after perturbing the fibration $T^*\Delta$ by a small Hamiltonian flow fixing $\Delta$).
%
%
Then since $\phi^H_1$ is a symplectomorphism and $U$ is simply connected,
there is a unique smooth function $f : \Gamma \lra{} \R$
so that
\begin{itemize}
	\item $f(\widehat{\gamma}(0),\widehat{\gamma}(0)) = 0$,
	\item $df = \lambda_N|_{\Gamma}$.
\end{itemize}
Let $O_U$ be the set of capped $1$-periodic orbits of $H$ whose associated $1$-periodic orbit starts in $U$.
Then each orbit in $O_U$ starts at a critical point of $f$.
Let $C_U \subset \R$ be the set of critical values of $f$.
Then for any $\gamma_0 \in O_U$
we have that $\cA_{H,\emptyset}(\gamma_0)([\omega],1,1) = -f(\widehat{\gamma}_0(0),\widehat{\gamma}_0(0)) + \cA_{H,\emptyset}(\gamma)([\omega],1,1) + k$ for some $k \in \Z$ (this is because the de Rham cohomology class of $\omega$ admits an integral lift).
As a result, every capped $1$-periodic orbit of $H$ in $O_U$
has action contained in a translation of $\cup_{k \in \Z} (C_U + k)$ which has measure $0$ by Sard's theorem.
Since the set of fixed points of $\phi^H_1$ can be covered by a finite number of such subsets $U$, we get that the action spectrum of $H$ has measure $0$.

Now we wish to show that the action spectrum of $H$ is closed.
Let $a_i$ be a sequence of points in the action spectrum of $H$
which converge to $a \in \R$.
Since all $1$-periodic orbits of $H$ are contained in a compact set
we have
a $1$-periodic orbit
$\widehat{\gamma}_\infty$ of $H$
and a sequence of capped $1$-periodic orbits $(\gamma_i)_{i \in \N} = (\widetilde{\gamma}_i,\check{\gamma}_i)_{i \in \N}$
so that
$\cA_{H,\emptyset}(\gamma_i)([\omega],1,1) = a_i$
and so that
$\widetilde{\gamma}_i \circ \check{\gamma}$
$C^\infty$ converges to $\widehat{\gamma}_\infty$.
Since the actions of these capped orbits $(\gamma_i)_{i \in \N}$
are bounded and since their associated $1$-periodic orbits converge to $\widehat{\gamma}_\infty$,
there is a capped $1$-periodic orbit $\gamma_\infty$
and a sequence of $H_2(M;\Z)$ classes
$(v_i)_{i \in \N}$ satisfying $\omega(v_i) = 0$ for all $i \in \N$
so that
$(\gamma_i \# v_i)_{i \in \N}$ converges to $\gamma_\infty$ in the natural topology on capped loops as described in Definition \ref{defn capped 1-periodic orbit} and where $\#$ is given in Definition \ref{definition connect sub of orbit}.
Hence $(a_i)_{i \in \N}$
converges to $\cA_{H,\emptyset}(\gamma_\infty)([\omega],1,1) = a$.
Hence the action spectrum is closed.
\qed


\section{Appendix B: Avoiding Codimension $\geq 4$ Submanifolds.} \label{section avoiding codimsion geq 4 submanifolds}

In this section we will show that families of Floer trajectories intersect any countable collection of submanifolds transversely.
Also since we are working in the semi-positive
setting, we also need low dimensional families of such Floer trajectories to avoid holomorphic spheres (this is needed for compactness to prevent bubbling).
These are standard arguments whose main ideas are contained in \cite[Section 6.3 and 6.7]{McDuffSalamon:Jholomorphiccurves}
for instance.
We will mainly cite and use the Floer theoretic machinery developed in \cite{schwarz1995cohomology}.
Throughout this section, we will fix:
\begin{itemize}
\item  a (possibly empty) contact
cylinder $\check{C} = [1-\epsilon,1+\epsilon] \times C \subset M$ (Definition \ref{definition contact cylinder}),
\item a Riemann surface $\Sigma$
with $n_-$ negative cylindrical ends 
and $n_+$ positive cylindrical ends
labeled by finite sets $I_-$ and $I_+$ respectively (Definition \ref{defn Riemann Surface}),
\item a $\Sigma$-compatible $1$-form $\beta \in \Omega^1(\Sigma)$ with weights
$(\kappa_j)_{j \in I_- \sqcup I_+}$ at the corresponding cylindrical ends
(Definition \ref{definition riemann surface admissible}),
\item a $(\Sigma,\check{C})$-compatible family of Hamiltonians $H := (H_\sigma)_{\sigma \in \Sigma}$
with limits $H^\# := (H^j)_{j \in I_- \sqcup I_+}$
as in Definition \ref{definition riemann surface admissible},
\item a tuple $J^\# := (J^j)_{j \in I_- \sqcup I_+}$ of elements of $\ccJ^\T(\check{C})$ (Definition \ref{definition wspaces of compatible objects}) and
\item $\Gamma(H)$ the set of tuples
$(\gamma^j)_{j \in I_- \sqcup I_+}$
of non-degenerate capped $1$-periodic orbits
of $(\kappa_j H^j)_{j \in I_- \sqcup I_+}$ respectively
whose associated $1$-periodic orbits are disjoint from $V$
so that if
$\Sigma \neq \R \times \T$ then at least two such $1$-periodic orbits have distinct images (Definition \ref{defn capped 1-periodic orbit}).
\end{itemize}

\begin{defn} \label{defn compactification of Riemann surface}
(\cite[Definition 2.1.2]{schwarz1995cohomology})
Let
$\overline{\Sigma}$
be a
manifold with boundary obtained
by extending each cylindrical end
$\iota_i : \bbI_- \times \T \hookrightarrow \Sigma$
(resp.
$\iota_i : \bbI_+ \times \T \hookrightarrow \Sigma$)
of $\Sigma$
to $\overline{\iota}_i : [-\infty,0] \times \T \hookrightarrow \overline{\Sigma}$,
$i \in I_-$
(resp. $\overline{\iota}_i : [0,\infty] \times \T \lra{} \Sigma$, $i \in I_+$).
We get a smooth structure on this manifold
by extending the smooth charts
$$\Im(\iota_i) \lra{} (-1,0] \times \T, \quad (s,t) \lra{} \left(\frac{s}{\sqrt{1+s^2}},t\right), \ i \in I_-$$
$$\Im(\iota_i) \lra{} [0,1) \times \T, \quad (s,t) \lra{} \left(\frac{s}{\sqrt{1+s^2}},t\right), \ i \in I_+$$
on $\Sigma$
to charts
$$\Im(\iota_i) \lra{} [-1,0] \times \T, \quad
\Im(\iota_i) \lra{} [0,1] \times \T$$
respectively on $\overline{\Sigma}$.
\end{defn}

\begin{defn} \label{defn sobolov spaces of sections}
(\cite[Definition 2.1.5]{schwarz1995cohomology})
Let $k = 0$ or $1$.
Let $\pi : E \lra{} \overline{\Sigma}$
be a $C^k$ vector bundle and let
$\phi_i : \iota_i^* E \lra{} (\bbI_\pm \times \T) \times \R^k$
be a $C^k$ trivialization
which extends to a $C^k$ trivialization of $\overline{\iota}_i^* E$
 and let $\Pi_i : (\bbI_\pm \times \T) \times \R^k \twoheadrightarrow \R^k$ be the natural projection map for each $i \in I_\pm$.
Define
$$W^{k,p}_\Sigma(E) := \left\{\sigma \in W^{k,p}_{\text{loc}}(E|_\Sigma) \ : \
\Pi_i \circ \phi_i \circ \sigma \circ \iota_i \in W^{k,p}(\bbI_\pm \times \T,\R^k), \   \forall \ i \in I_\pm 
\right\}.$$
We define
$L^p_\Sigma(E) := W^{0,p}_\Sigma(E).$
If $D \subset E$ is a subset of $E$
then we define
$W^{k,p}_\Sigma(D) := \{ \sigma \in W^{k,p}_\Sigma(E) \ : \ \Im(\sigma) \subset D \}$
and
$L^p_\Sigma(D) := W^{0,p}_\Sigma(D)$.
\end{defn}

These are Banach spaces which do not depend on the choice of trivializations $\phi_i, \ i \in I_- \sqcup I_+$
by 
the paragraph after \cite[Definition 2.1.5]{schwarz1995cohomology}.

\begin{defn} \label{defn Banach manifold of maps}
(\cite[Definition 2.1.6]{schwarz1995cohomology}).
Let $\gamma := (\gamma_i)_{i \in I_- \sqcup I_+} = ((\widetilde{\gamma}_i,\check{\gamma}_i))_{i \in I_- \sqcup I_+}$
be finite collection of capped loops as in Definition \ref{defn capped 1-periodic orbit}.
A smooth
map $u : \overline{\Sigma} \lra{} M$
{\it converges to $(\gamma_i)_{i \in I_- \sqcup I_+}$} if
$\iota_i(\pm \infty, t) = \widetilde{\gamma}_i(\check{\gamma_i}(t))$ for all $i \in I_\pm$, $t \in \T$
and the surface obtained by gluing
the surfaces $\widetilde{\gamma}_i$, $i \in I_- \sqcup I_+$ to
$u$ is null-homologous.
Let
$C^\infty_\gamma(\overline{\Sigma},M)$ be the space of such maps equipped with the $C^\infty$ topology.

Fix a Riemannian metric on $M$
and let
$D \subset TM$ be an open neighborhood of $M$ so that
$\exp|_{D \cap T_x M}$ is a diffeomorphism on to its image for all $x \in M$ where $\exp$ is the exponential map with respect to this metric.
Define
\begin{equation} \label{equation banach space of maps}
{\mathcal P}^{1,p}_\gamma(\Sigma,M)
:= \{\exp \circ v \in C^0(\overline{\Sigma},M) \ : \ v \in W^{1,p}_\Sigma(h^* D), \ h \in C^\infty_\gamma(\overline{\Sigma},M) \}
\end{equation}
for all $p > 2$.
\end{defn}

The space (\ref{equation banach space of maps}) is a Banach manifold by \cite[Theorem 2.1.7]{schwarz1995cohomology}
with charts mapping to $W^{1,p}_\Sigma(h^* D)$
for all $h \in C_\gamma^\infty(\overline{\Sigma},M)$.

\begin{defn}
Let $\pi : E \lra{} B$ be a Banach vector bundle.
Let $s : B \lra{} E$ be a $C^1$ section
and let $b \in s^{-1}(0)$,
We say that $s$
is {\it transverse to zero at $b$}
if
the linear map
\begin{equation} \label{equation linearization of section}
T_bB \lra{s} T_{s(b)}E \lra{\textnormal{pr}} \ker{D\pi}|_{s(b)}
\end{equation}
is surjective
where 
$$\text{pr} : TE|_B = TB \oplus (\ker{D\pi}|_B) \lra{} \ker{D\pi}|_B$$ is the natural projection map.
We say that $s$ is {\it transverse to $0$} if it is transverse to $0$
at every point $b \in s^{-1}(0)$.
Such a section is {\it Fredholm}
if the map (\ref{equation linearization of section}) is Fredholm.
\end{defn}

\begin{defn}
For each $J = (J_\sigma)_{\sigma \in \Sigma} \in \ccJ^\Sigma(J^\#,\check{C})$,
we define $M^J$ to
be the vector bundle over
$\overline{\Sigma} \times M$
whose fiber at $(\sigma,x)$
is the space of
$J|_{(\sigma,x)}$ anti-linear
maps from $T_\sigma \overline{\Sigma}$ to
$T_xM$.
\end{defn}

\begin{prop} \label{proposistion bundle and Fredholm section}
\cite[Theorem 2.2.5, Proposition 2.3.1, Theorem 3.1.31]{schwarz1995cohomology}.
Let
$\gamma \in \Gamma(H)$ and $J = (J_\sigma)_{\sigma \in \Sigma} \in \ccJ^\Sigma(J^\#,\check{C})$.
Then the set
\begin{equation} \label{equation banach bundle over B times P at J}    
{\mathcal E}_{J,\gamma} := \sqcup_{u \in {\mathcal P}^{1,p}_\gamma(\Sigma,M)}
\{u\} \times L^p_\Sigma((\id_{\overline{\Sigma}},u)^* M^J)
\end{equation}
is naturally a smooth Banach bundle over
${\mathcal P}^{1,p}_\gamma(\Sigma,M)$ for all $p > 2$.
Also if ${\bf j}$ is the natural complex structure on $\Sigma$ then the section
\begin{equation} \label{equation d bar from gamma to epsilon J}
\overline{\partial}_{J,\gamma} : {\mathcal P}^{1,p}_\gamma(\Sigma,M) \lra{} {\mathcal E}_{J,\gamma},
\end{equation}
$$
\overline{\partial}_{J,\gamma}(u)(\sigma) := 
(du + X_{H_\sigma} \otimes \beta) + J_\sigma \circ (du + X_{H_\sigma} \otimes \beta) \circ {\bf j}, \quad \forall \ \sigma \in \Sigma
$$
is well defined (I.e. $\overline{\partial}_{J,\gamma}(u)$ is an element of 
$L^p_\Sigma((\id_{\overline{\Sigma}},u)^* M^J)$ for all $u \in {\mathcal P}$) and $\overline{\partial}_{J,\gamma}$ is a smooth Fredholm section
of ${\mathcal E}_{J,\gamma}$.
\end{prop}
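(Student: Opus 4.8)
The statement to prove (Proposition~\ref{proposistion bundle and Fredholm section}) is really three assertions packaged together: (a) the space ${\mathcal E}_{J,\gamma}$ in \eqref{equation banach bundle over B times P at J} is a smooth Banach vector bundle over the Banach manifold ${\mathcal P}^{1,p}_\gamma(\Sigma,M)$; (b) the Cauchy--Riemann operator $\overline{\partial}_{J,\gamma}$ in \eqref{equation d bar from gamma to epsilon J} is a \emph{well-defined} section, i.e.\ $\overline{\partial}_{J,\gamma}(u)$ genuinely lies in $L^p_\Sigma((\id_{\overline{\Sigma}},u)^*M^J)$ for every $u\in{\mathcal P}^{1,p}_\gamma(\Sigma,M)$; and (c) this section is smooth and Fredholm. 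The plan is to cite \cite{schwarz1995cohomology} for the bulk of the work and only supply the adaptations needed for our slightly more general setup (a general Riemann surface with cylindrical ends, a $(\Sigma,\check{C})$-compatible Hamiltonian, and capped orbits disjoint from $V$).

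First I would address (a). The bundle structure follows the standard recipe: given $h\in C^\infty_\gamma(\overline{\Sigma},M)$, the chart $W^{1,p}_\Sigma(h^*D)\to {\mathcal P}^{1,p}_\gamma(\Sigma,M)$, $v\mapsto \exp\circ v$, pulls back ${\mathcal E}_{J,\gamma}$ to the trivial bundle with fiber $L^p_\Sigma(h^*M^J)$, and the transition maps are fiberwise linear and depend smoothly on the base point because parallel transport / the derivative of $\exp$ are smooth and because the Sobolev multiplication and composition estimates (for $p>2$, so $W^{1,p}_\Sigma\hookrightarrow C^0$) hold uniformly on the cylindrical ends where everything is asymptotically translation-invariant. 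This is \cite[Theorem 2.2.5]{schwarz1995cohomology} essentially verbatim; the only new point is that $\overline{\Sigma}$ may have several ends and need not be a cylinder, but the local-to-cylindrical-end analysis is identical end by end, and over the compact core $K_\Sigma$ (outside which $H$ and $J$ are $\Sigma$-compatible) the constructions are the classical ones over a compact surface with boundary.

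Next, (b): I would check that $\overline{\partial}_{J,\gamma}(u)$ has the right decay. Writing $u=\exp\circ v$ with $v\in W^{1,p}_\Sigma(h^*D)$, on the end $\iota_j(s,t)$ the section $\overline{\partial}_{J,\gamma}(u)$ measures the failure of $u$ to solve the Floer equation for $(\kappa_j H^j, J^j)$; since $\gamma^j$ is a \emph{non-degenerate} $1$-periodic orbit of $\kappa_j H^j$ and $u$ converges to $\gamma^j$, exponential convergence of $u$ to the orbit (an asymptotic analysis via the non-degeneracy, cf.\ \cite[Chapter 6]{McduffSalamon:sympbook}) forces $\overline{\partial}_{J,\gamma}(u)\in L^p$ on that end; over the compact core it is automatically $L^p$. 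This is \cite[Proposition 2.3.1]{schwarz1995cohomology}. One small bookkeeping item specific to us: the capping condition (the glued surface being null-homologous) is what makes ${\mathcal P}^{1,p}_\gamma(\Sigma,M)$ the correct connected component, and since the associated orbits of $\gamma$ are disjoint from $V$ the whole picture takes place away from $V$ so no extra care is needed there. Finally, smoothness of $\overline{\partial}_{J,\gamma}$ is the usual fact that the local expression is a composition of smooth Nemytskii-type maps, and the Fredholm property plus the index formula \eqref{equation conley zehnder index sum} is \cite[Theorem 3.1.31]{schwarz1995cohomology}: the linearization $D\overline{\partial}_{J,\gamma}(u)$ is a Cauchy--Riemann type operator on $\overline{\Sigma}$ with non-degenerate asymptotic operators at the ends (determined by the Conley--Zehnder data of the $\gamma^j$), hence Fredholm by the standard Riemann--Roch/gluing argument, with index equal to the expression in \eqref{equation conley zehnder index sum}.

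The main obstacle, such as it is, is purely one of exposition rather than mathematics: making sure the generalization from the cylinder $\R\times\T$ of \cite{schwarz1995cohomology} to an arbitrary $\Sigma$ with several cylindrical ends, together with the $(\Sigma,\check{C})$-compatibility conditions (which control the Hamiltonian term $X_{H_\sigma}\otimes\beta$ near the contact cylinder and guarantee the relevant a priori energy and $C^0$ estimates stay valid), does not disturb any of the Banach-space estimates. Since $H$, $\beta$ and $J$ are all $\Sigma$-compatible --- equal to fixed translation-invariant data outside a compact $K_\Sigma\subset\Sigma$ --- every estimate that \cite{schwarz1995cohomology} proves on $\R\times\T$ applies on each end of $\overline{\Sigma}$ unchanged, and on $K_\Sigma$ one is in the compact-domain situation where the analysis is classical. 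I would therefore present the proof as: invoke \cite[Theorem 2.2.5]{schwarz1995cohomology} for (a), \cite[Proposition 2.3.1]{schwarz1995cohomology} for (b), and \cite[Theorem 3.1.31]{schwarz1995cohomology} for (c), with a remark explaining the end-by-end reduction and that the capping/null-homology convention fixes the component of the mapping space. Full details of the transversality (surjectivity of the linearization for generic $J$) and the avoidance of $V$ and of holomorphic spheres are then carried out in the remainder of Appendix~B, so they are not part of this proposition.
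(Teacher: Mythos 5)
Your proposal is correct and takes essentially the same route as the paper, which gives no independent proof of this proposition but simply cites \cite[Theorem 2.2.5, Proposition 2.3.1, Theorem 3.1.31]{schwarz1995cohomology} and relies on the fact that the end-by-end reduction to the translation-invariant cylindrical model (plus the classical analysis over the compact core) carries the arguments over unchanged. Your additional remarks on exponential decay at non-degenerate orbits and on the capping convention fixing the component of the mapping space are accurate and consistent with the paper's setup.
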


\begin{defn} \label{definition regular almost complex structure}
An element $J \in \ccJ^\Sigma(J^\#,\check{C})$ is {\it $H$-regular} if the Fredholm section $\overline{\partial}_{J,\gamma}$ is transverse to zero for each $\gamma \in \Gamma(H)$.
\end{defn}

Note that if the Hamiltonian $H$ only has degenerate one periodic orbits then every element $J \in \ccJ^\Sigma(J^\#,\check{C})$
is $H$-regular by the definition above since $\Gamma(H)$ is empty.
We only care about Floer trajectories connecting non-degenerate $1$-periodic orbits.

\begin{theorem} \cite[Theorem 4.2.2 and 3.3.11]{schwarz1995cohomology}.
If $J \in \ccJ^\Sigma(J^\#,\check{C})$
is $H$-regular then
$\ccM(H,J,\gamma) = \overline{\partial}_{J,\gamma}^{-1}(0)$
and this set is a smooth finite dimensional submanifold of
${\mathcal P}^{1,p}_\gamma(\Sigma,M)$
of dimension
$k$ where $k$ is defined in Equation (\ref{equation conley zehnder index sum})
for each $\gamma \in \Gamma(H)$.
\end{theorem}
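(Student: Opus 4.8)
The plan is to obtain the theorem formally from Proposition~\ref{proposistion bundle and Fredholm section}, the implicit function theorem for Fredholm sections of Banach bundles, and standard elliptic regularity, following \cite[Sections 2--3]{schwarz1995cohomology}; there is no genuinely new content, and the contact-cylinder structure plays no role here --- only the non-degeneracy of the orbits in $\gamma$ is used. First I would prove the set equality $\ccM(H,J,\gamma) = \overline{\partial}_{J,\gamma}^{-1}(0)$, which does not require $H$-regularity. For the inclusion ``$\subseteq$'': a smooth $u : \Sigma \to M$ solving the Floer equation $(\ref{eqn:floer})$ and converging to $\gamma$ decays exponentially, together with its first derivatives, to its asymptotic $1$-periodic orbits, because those orbits are non-degenerate and so the associated asymptotic operators along the $\gamma^j$ are invertible (the standard exponential-decay estimate, see \cite[Section 2]{schwarz1995cohomology} and \cite[Section 6.7]{McDuffSalamon:Jholomorphiccurves}); hence $u$ differs from a smooth reference map in $C^\infty_\gamma(\overline\Sigma,M)$ by an element of $W^{1,p}_\Sigma$, so $u \in {\mathcal P}^{1,p}_\gamma(\Sigma,M)$ and $\overline{\partial}_{J,\gamma}(u) = 0$. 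For ``$\supseteq$'': any $u \in \overline{\partial}_{J,\gamma}^{-1}(0)$ is a $W^{1,p}_{\textnormal{loc}}$ solution of the Floer equation, so elliptic bootstrapping makes $u$ smooth, and the defining condition on ${\mathcal P}^{1,p}_\gamma$ forces $u(\iota_i(s,\cdot))$ to converge in $C^0$ to the respective orbit as $s \to \pm\infty$ and pins down the required null-homology class; thus $u \in \ccM(H,J,\gamma)$. That the $C^\infty_{\textnormal{loc}}$ topology on $\ccM(H,J,\gamma)$ agrees with the subspace topology inherited from ${\mathcal P}^{1,p}_\gamma(\Sigma,M)$ follows from interior elliptic estimates.

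Next, since $J$ is $H$-regular (Definition~\ref{definition regular almost complex structure}), the smooth Fredholm section $\overline{\partial}_{J,\gamma}$ of Proposition~\ref{proposistion bundle and Fredholm section} is transverse to zero, so the regular-value theorem for Fredholm sections of Banach bundles shows that $\overline{\partial}_{J,\gamma}^{-1}(0)$ is a smooth submanifold of ${\mathcal P}^{1,p}_\gamma(\Sigma,M)$ of dimension equal to the Fredholm index of the linearization $D\overline{\partial}_{J,\gamma}(u)$, a Cauchy--Riemann type operator on $u^*TM$ over $\overline\Sigma$. Together with the set equality from the previous paragraph this already gives that $\ccM(H,J,\gamma)$ is a smooth finite-dimensional submanifold; it remains only to identify its dimension with $k$.

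To compute the index I would apply the Riemann--Roch formula for Cauchy--Riemann operators on a surface with cylindrical ends: relative to a symplectic trivialization $\tau$ of $u^*TM$, the index equals $n\chi(\Sigma)$ plus twice the relative first Chern number of $(u^*TM,\tau)$ plus a sum of Conley--Zehnder contributions $CZ(\gamma^j)$ over the ends, signed according to whether an end is positive or negative. Because $c_1(M)=0$ the relative Chern number is independent of $u$ and vanishes for the canonical homotopy class of cappings (cf.\ Remark~\ref{definition conley zehnder index of associated loop}), so substituting $|\gamma^j| = n - CZ(\gamma^j)$ from Definition~\ref{definition conley zehnder index} converts the expression into exactly $k = |(\gamma^j)_{j \in I_-}| - |(\gamma^j)_{j \in I_+}| + n(|I|_+ - |I|_- + \chi(\Sigma))$. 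This is \cite[Theorem 3.3.11]{schwarz1995cohomology} specialized to $c_1 = 0$.

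The main obstacle is bookkeeping rather than mathematics. \cite[Theorem 3.3.11]{schwarz1995cohomology} is stated for the pair of pants, so for a general $\Sigma$ with arbitrarily many positive and negative ends I would reduce to it (or to the single-cylinder case) by an induction on the number of ends, using additivity of the Fredholm index under linear gluing together with the catenation property \ref{item:catenationczproperty} of the Conley--Zehnder index, and checking that the term $n\chi(\Sigma)$ aggregates correctly as cylinders are glued on --- each glued cylinder leaves $\chi$ unchanged while merging two punctures, and the index contributions at the merged orbit cancel. Getting the signs of the end contributions right, and hence that the answer is $k$ rather than its negative, is the one place where genuine care is needed; the non-degeneracy of the orbits in $\gamma \in \Gamma(H)$ is precisely what makes all of the asymptotic analysis and the Fredholm theory go through.
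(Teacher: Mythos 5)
Your proposal is correct and is essentially the standard argument behind the citation: the paper itself offers no proof of this statement, delegating it entirely to Schwarz (set equality via exponential decay at non-degenerate ends plus elliptic regularity, manifold structure via the implicit function theorem for the transverse Fredholm section of Proposition~\ref{proposistion bundle and Fredholm section}, and the dimension via the Riemann--Roch index formula with $c_1(M)=0$, which indeed reduces to $k$ after substituting $|\gamma^j| = n - CZ(\gamma^j)$). Your reconstruction matches that route, so there is nothing to compare beyond noting that the only point requiring care --- the sign conventions in the index formula --- is one you already flag, and it checks out against Proposition~\ref{proposition regularity for translation invariant floer cylinders} in the cylinder case.
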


\begin{defn}
Let $f_j : W_j \lra{} W$, $j=0,1$
be two smooth maps.
We say that $f_0$ is {\it transverse}
to $f_1$
if, for each $w_0 \in W_0$, $w_1 \in W_1$ satisfying $f_0(w_0) = f_1(w_1)$,
we have that the span of the subspaces
$Df_0(T_{w_0} W_0)$,
$Df_1(T_{w_1} W_1)$
is $T_{f_0(w_0)} W$.
\end{defn}

\begin{defn} \label{definition H V regular}
For each $\gamma \in \Gamma(H)$,
we say that $J = (J_\sigma)_{\sigma \in \Sigma} \in \ccJ^\Sigma(J^\#,\check{C})$
is {\it $(H,V,\gamma)$-regular}
if it is $H$-regular and
if there is a countable collection of smooth maps $f_i : W_i \lra{} \Sigma \times M$, $i \in \N$ where $\dim_\R(W_i) \leq 2n-2$ for each $i$
so that
\begin{enumerate}
\item $V \subset \cup_{i \in \N} f_i(W_i)$,
\item every $J_\sigma$-holomorphic map
$u : \P^1 \lra{} M$ has image contained in
$(\{\sigma\} \times M) \cap (\cup_i f_i(W_i))$
after identifying $\{\sigma\} \times M$ with $M$ in the natural way for each $\sigma \in \Sigma$ and
\item the evaluation map
$$\ev : \Sigma \times \ccM(H,J,\gamma) \lra{} \Sigma \times M, \quad (\sigma,u) \lra{} (\sigma,u(\sigma))$$
is smooth and transverse to $f_i$ for each $i \in \N$.
\end{enumerate}
We say that $J$ is {\it $(H,V)$-regular}
if it is $(H,V,\gamma)$-regular for each $\gamma \in \Gamma(H)$.
We define $\ccJ^{\Sigma,\reg}(H,J^\#,\check{C}) \subset \ccJ(J^\#,\check{C})$
to be the subspace of $(H,V)$-regular families of almost complex structures.
\end{defn}

We wish to show that $\ccJ^{\Sigma,\reg}(H,J^\#,\check{C})$
is ubiquitous in $\ccJ(J^\#,\check{C})$
(Proposition \ref{proposition regular subset for surface}).
In order to do this we
need to define an appropriate
space of almost complex structures.
\begin{defn} \label{Definition almost copmlex structures}
	\cite[Definitions 4.2.6, 4.2.10 and 4.2.11]{schwarz1995cohomology}.
%
Let $J \in \ccJ^\Sigma(J^\#,\check{C})$.
Let $U \subset \Sigma$ be a relatively compact open subset, define
$\widetilde{U} := U \times (M-([1+\epsilon/8,1+\epsilon/2] \times C))$
and
let $\pi : \Sigma \times M \lra{} M$ be the natural projection map.
Let
$$S_J := \{A \in \End(\pi^* TM) \ : \ AJ + JA = 0\}
$$
be a bundle over $\Sigma \times M$
and let
$C^\infty_U(S_J)$ be the space of $C^\infty$ sections $A$ of $S_J$
so that 
all the derivatives of $A$ vanish
along $\Sigma \times V$ and $\Sigma \times M - \widetilde{U}$.
Let $\|\cdot\|$ be a metric on $M$ given by $\frac{1}{2}(\omega(\cdot,J(\cdot)) + \omega(J(\cdot),\cdot)$.
Let $\epsilon := (\epsilon_i)_{i \in \N}$
be a sequence of rapidly decreasing positive constants
and define
$$C^\infty_{\epsilon,U}(S_J) :=
\{
A \in C^\infty_U(S_J) \ : \  \|A\|_\epsilon < \infty
\}
$$
where $\|A\|_\epsilon := \sum_{i \in \N} \epsilon_i \|\nabla^k A\|$
coming from a product metric on $\Sigma \times M$.
Define
$$\Phi_J : C^\infty_{\epsilon,U}(S_J) \lra{} \ccJ^\Sigma(J^\#,\check{C}), \quad \Phi_J(A) := J e^A$$
and define
$$\ccJ_\epsilon(J|_U) := \{\Phi_J(A) \ : \ A \in C^\infty_{\epsilon,U}(S_J), \ \|A\| < \epsilon_0 \} \subset \ccJ^\Sigma(J^\#,\check{C}).$$
\end{defn}

By the ideas in
\cite[4.2.7, 4.2.9, 4.2.10]{schwarz1995cohomology}
we have that
$C^\infty_{\epsilon,U}(S_J)$ is a Banach space with Banach norm $\|\cdot\|_\epsilon$ and $\Phi_J^{-1}$ embeds
$\ccJ_\epsilon(J|_U)$ as an open subset of this Banach space making it into a Banach manifold for $\epsilon$ small enough.
There are a few minor differences between
Definition \ref{Definition almost copmlex structures} and \cite[Definition 4.2.11]{schwarz1995cohomology}.
\begin{enumerate}
\item Our almost complex structures
are not necessarily compatible with $\omega$,
but they do tame $\omega$,
\item Our almost complex structures
and all of their derivatives agree with those of $J$ along some regions of $\Sigma \times M$ and
\item the formula for the map $\Phi_J$ is different.
\end{enumerate}
These differences play no important role
in the proof of the fact that
$\ccJ_\epsilon(J|_U)$
is a Banach manifold.
Also it
makes no difference in the proofs of Propositions
\ref{proposition bundle for d bar equation},
\ref{proposition transverse to zero} and
\ref{proposition regular subset of J}
which are just modified versions of
\cite[Proposition 4.2.4]{schwarz1995cohomology},
\cite[4.2.18]{schwarz1995cohomology}
and
\cite[Proposition 4.2.5]{schwarz1995cohomology}
respectively.
Now one of the issues with $\ccJ_\epsilon(J|_U)$ is that it is not a topological subspace of $\ccJ^\Sigma(J^\#,\check{C})$.
However we wish to
prove theorems with respect to the topology of
$\ccJ^\Sigma(J^\#,\check{C})$.
The following lemma addresses this issue.
If $\epsilon = (\epsilon_i)_{i \in \N}$
and $\epsilon' = (\epsilon'_i)_{i \in \N}$ are sequences of positive constants, we write $\epsilon \leq \epsilon'$ if $\epsilon_i \leq \epsilon'_i$ for all $i \in \N$.
\begin{lemma} \label{lemma ubiquitous from Banach ubiquitous}
Let $\epsilon^{U,J}$ be a sequence of rapidly decreasing constants for each $J \in \ccJ^\Sigma(J^\#,\check{C})$ and each relatively compact subset $U \subset \Sigma$.
Let $W_{U,J,\epsilon} \subset \ccJ_\epsilon(J|_U)$
be a ubiquitous subset for each $J \in J^\Sigma(J^\#,\check{C})$, relatively compact open subset $U \subset \Sigma$ and each $\epsilon \leq \epsilon^{U,J}$.
Then $\cup_{U,J,\epsilon} W_{U,J,\epsilon}$ is ubiquitous in $J^\Sigma(J^\#,\check{C})$.
\end{lemma}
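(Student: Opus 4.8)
The plan is to show that $\cup_{U,J,\epsilon} W_{U,J,\epsilon}$ contains a countable intersection of dense open subsets of $\ccJ^\Sigma(J^\#,\check{C})$. The two ingredients that must be combined are (i) each $W_{U,J,\epsilon}$, being ubiquitous in $\ccJ_\epsilon(J|_U)$, contains a countable intersection of dense open subsets of the Banach manifold $\ccJ_\epsilon(J|_U)$, and (ii) the sets $\ccJ_\epsilon(J|_U)$, as $J$ ranges over $\ccJ^\Sigma(J^\#,\check{C})$, $U$ over a fixed exhausting countable family of relatively compact open subsets of $\Sigma$, and $\epsilon$ over a countable cofinal family below $\epsilon^{U,J}$, cover $\ccJ^\Sigma(J^\#,\check{C})$ in a suitably ``nice'' way. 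First I would fix once and for all a countable exhaustion $U_1 \subset U_2 \subset \cdots$ of $\Sigma$ by relatively compact open subsets, so that only countably many choices of $U$ need to be considered.

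Next I would address the topology mismatch. The map $\Phi_J : C^\infty_{\epsilon,U}(S_J) \lra{} \ccJ^\Sigma(J^\#,\check{C})$ is continuous (indeed smooth in the appropriate sense) from the Banach topology to the $C^\infty_{\textnormal{loc}}$ topology, but it is not a homeomorphism onto its image; nevertheless $\Phi_J^{-1}(\ccJ_\epsilon(J|_U))$ is open in $C^\infty_{\epsilon,U}(S_J)$ and the restriction $\Phi_J$ on a small enough neighborhood is an embedding with respect to both topologies in the sense that it carries Banach-open sets to $C^\infty_{\textnormal{loc}}$-open sets relatively. The key observation I would extract is: if $V \subset \ccJ_\epsilon(J|_U)$ is open and dense in the Banach topology, then its preimage under $\Phi_J$ is open dense in $C^\infty_{\epsilon,U}(S_J)$, and $\Phi_J$ of a countable intersection of such sets is comeager in $\ccJ_\epsilon(J|_U)$; since $\ccJ_\epsilon(J|_U)$ is itself open in $\ccJ^\Sigma(J^\#,\check{C})$ once we identify a neighborhood of $J$ via $\Phi_J$, a comeager subset of $\ccJ_\epsilon(J|_U)$ that is moreover relatively $C^\infty_{\textnormal{loc}}$-open is, up to shrinking, comeager in a $C^\infty_{\textnormal{loc}}$-open neighborhood of $J$. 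The standard trick here (used implicitly in \cite{schwarz1995cohomology}) is that although $\ccJ_\epsilon(J|_U)$ is not open in $\ccJ^\Sigma(J^\#,\check{C})$, a Baire-category statement proven inside it transfers because the inclusion is continuous and the union over $J$ of these pieces covers: given any $J_0 \in \ccJ^\Sigma(J^\#,\check{C})$, one has $J_0 \in \ccJ_\epsilon(J_0|_U)$ for $U$ large and $\epsilon$ small, so $\cup_{U,J,\epsilon}\ccJ_\epsilon(J|_U) = \ccJ^\Sigma(J^\#,\check{C})$.

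I would then assemble the Baire argument. Write $\ccJ := \ccJ^\Sigma(J^\#,\check{C})$, which is a separable metrizable space (a Fréchet-manifold-like space with the $C^\infty_{\textnormal{loc}}$ topology), hence Lindelöf. The collection $\{\ccJ_{\epsilon}(J|_{U_k}) : J \in \ccJ,\ k \in \N,\ \epsilon \leq \epsilon^{U_k,J},\ \epsilon \in \Q_{>0}^\N \text{ suitably}\}$ covers $\ccJ$; by Lindelöf I can extract a countable subcover indexed by $(J_m, k_m, \epsilon_m)_{m \in \N}$. For each $m$, $W_{U_{k_m}, J_m, \epsilon_m}$ contains $\cap_{\ell} G_{m,\ell}$ with $G_{m,\ell}$ dense open in $\ccJ_{\epsilon_m}(J_m|_{U_{k_m}})$; I would push each $G_{m,\ell}$ to a subset $\widetilde G_{m,\ell}$ of $\ccJ$ that is dense open in the open set $\ccJ_{\epsilon_m}(J_m|_{U_{k_m}})$ viewed inside $\ccJ$ (using that this piece, through $\Phi_{J_m}$, is identified with an open subset of $\ccJ$ near $J_m$) and extend by all of $\ccJ \setminus \overline{\ccJ_{\epsilon_m}(J_m|_{U_{k_m}})}$ to make it dense open in $\ccJ$. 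Then $\cap_{m,\ell} \widetilde G_{m,\ell}$ is a countable intersection of dense open subsets of $\ccJ$ contained in $\cup_{U,J,\epsilon} W_{U,J,\epsilon}$, which is exactly what ubiquity requires.

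The main obstacle I anticipate is the second paragraph: making rigorous the claim that a ubiquitous (comeager) subset of the Banach manifold $\ccJ_\epsilon(J|_U)$ yields a relatively $C^\infty_{\textnormal{loc}}$-open-dense subset of the corresponding open piece of $\ccJ$, i.e. controlling the interplay between the Banach topology on $C^\infty_{\epsilon,U}(S_J)$ and the $C^\infty_{\textnormal{loc}}$ topology on $\ccJ$. The resolution is that density is the only delicate point — openness is automatic since $\Phi_J$ is continuous — and density in the weaker $C^\infty_{\textnormal{loc}}$ topology follows from density in the stronger Banach topology because $C^\infty_{\epsilon,U}(S_J)$ is $C^\infty_{\textnormal{loc}}$-dense in a neighborhood of $0$ in the space of all compactly-$\widetilde U$-supported sections (smooth sections being dense, and the weighted norm only constraining high derivatives which are irrelevant for $C^\infty_{\textnormal{loc}}$ convergence on a fixed compact set). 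I would invoke exactly the argument of \cite[Section 4.2]{schwarz1995cohomology}, which handles this very transfer from the auxiliary Banach manifold of perturbations back to the genuine space of almost complex structures, so that the present lemma is really a bookkeeping statement repackaging that machinery. This completes the proof of the lemma and hence, in combination with Propositions \ref{proposition bundle for d bar equation}, \ref{proposition transverse to zero} and \ref{proposition regular subset of J}, of Proposition \ref{proposition regular subset for surface}.
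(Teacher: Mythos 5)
Your overall architecture matches the paper's proof of this lemma — countable exhaustion of $\Sigma$, separability to reduce to countably many base points, extension of the Banach-ubiquitous sets by the complement of a closure, then a countable intersection — but there is a genuine gap at the point where you treat the pieces $\ccJ_\epsilon(J|_U)$ as open subsets of $\ccJ^\Sigma(J^\#,\check{C})$. The paper states explicitly, just before this lemma, that $\ccJ_\epsilon(J|_U)$ is \emph{not} a topological subspace of $\ccJ^\Sigma(J^\#,\check{C})$: it is cut out by the bound $\|A\|_\epsilon<\epsilon_0$ in a weighted norm together with support conditions, and it need not contain any $C^\infty_{\textnormal{loc}}$-open neighborhood of $J$. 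This breaks your argument in two places. First, the Lindel\"of property yields countable subcovers only of \emph{open} covers, so you cannot extract a countable subcover from the cover by the sets $\ccJ_{\epsilon}(J|_{U_k})$ as written; a cover by sets with empty interior need not admit one. Second, for the same reason the sets $\widetilde{G}_{m,\ell}=G_{m,\ell}\cup(\ccJ^\Sigma(J^\#,\check{C})\setminus\overline{\ccJ_{\epsilon_m}(J_m|_{U_{k_m}})})$ are not visibly open in the $C^\infty_{\textnormal{loc}}$ topology: Banach-openness of $G_{m,\ell}$ gives nothing here, since the Banach topology is strictly finer than the subspace topology. (Your finer-to-coarser density transfer is fine; openness is the problem. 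You even acknowledge the non-openness in one sentence and then assert openness in the next, so the proposal is internally inconsistent on exactly the delicate point.)

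The paper's fix, which you should incorporate: for each $J$ choose the weights $\epsilon^{J,i}\le\epsilon^{U_i,J}$ together with an honest $C^\infty_{\textnormal{loc}}$-open neighborhood $W_J$ of $J$ contained in the \emph{union} $V_J:=\cup_{i}\ccJ_{\epsilon^{J,i}}(J|_{U_i})$ over the whole exhaustion (no single piece suffices), and a smaller open set $\check{W}_J$ with $\overline{\check{W}_J}\subset W_J$. Separability is then applied to the genuinely open cover $\{\check{W}_J\}$ to produce countably many base points $J_i$, and one sets $W^{i,j}:=W_{U_j,J_i,\epsilon^{J_i,j}}\cup(\ccJ^\Sigma(J^\#,\check{C})-\overline{W_{J_i}})$. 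The identity $W^{i,j}\cap\ccJ_{\epsilon^{J_i,j}}(J_i|_{U_j})\cap\check{W}_{J_i}=W_{U_j,J_i,\epsilon^{J_i,j}}\cap\check{W}_{J_i}$ then gives the containment of $\cap_{i,j}W^{i,j}$ in $\cup_{U,J,\epsilon}W_{U,J,\epsilon}$, exactly as in the last step of your assembly.
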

\proof
Choose a sequence $U_i \subset \Sigma$, $i \in \N$ of relatively compact open subsets whose union is $\Sigma$.
Choose a sequence of constants
$\epsilon^{J,i} \leq \epsilon^{J,U_i}$, $i \in \N$
and an open neighborhood
$W_J \subset \ccJ^\Sigma(J^\#,\check{C})$ of $J$
so that
$V_J := \cup_{i \in \N} \ccJ_{\epsilon^{J,i}}(J|_{U_i})$
contains $W_J$
for each $J \in \ccJ^\Sigma(J^\#,\check{C})$.
Choose an open neighborhood $\check{W}_J \subset \ccJ^\Sigma(J^\#,\check{C})$
of $J$ whose closure is contained in $W_J$ for each
$J \in \ccJ^\Sigma(J^\#,\check{C})$.
Separability of $\ccJ^\Sigma(J^\#,\check{C})$ implies
that there exists a sequence
$(J_i)_{i \in \N}$ of elements
in $\ccJ^\Sigma(J^\#,\check{C})$
so that
$\ccJ^\Sigma(J^\#,\check{C}) = \cup_{i \in \N} \check{W}_{J_i}$.
Define 
$$W^{i,j} := W_{U_j,J_i,\epsilon^{J_i,j}}
\cup (\ccJ^\Sigma(J^\#,\check{C}) - \overline{W_J}) \subset \ccJ^\Sigma(J^\#,\check{C}).$$
Then $W^{i,j}$ is ubiquitous in
$\ccJ^\Sigma(J^\#,\check{C})$
and
$W^{i,j} \cap \ccJ_{\epsilon^{J_i,j}}(J_i|_{U_j})
\cap \check{W}_{J_i} = W_{U_j,J_i,\epsilon^{J_i,j}} \cap \check{W}_{J_i}$
for each $i,j \in \N$.
Hence
$\cap_{i,j \in \N} W^{i,j}$
is ubiquitous in $\ccJ^\Sigma(J^\#,\check{C})$ and is contained in
$\cup_{U,J,\epsilon} W_{U,J,\epsilon}$ and therefore
$\cup_{U,J,\epsilon} W_{U,J,\epsilon}$
is ubiquitous.
\qed

\bigskip

By combining Lemma \ref{lemma ubiquitous from Banach ubiquitous}
above with
\cite[Proposition 4.2.5]{schwarz1995cohomology}
 (see also Proposition \ref{proposition transverse to zero} below),
we get the following proposition:

\begin{prop} \label{proposition ubiquitous regular}
The subspace of $H$-regular almost complex structures in
$\ccJ^\Sigma(J^\#,\check{C})$
as in Definition \ref{definition regular almost complex structure}
is ubiquitous.
\end{prop}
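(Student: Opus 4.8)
The plan is to prove Proposition \ref{proposition ubiquitous regular} by a standard Sard--Smale argument, carried out first on the Banach manifolds $\ccJ_\epsilon(J|_U)$ of Definition \ref{Definition almost copmlex structures} and then transported to $\ccJ^\Sigma(J^\#,\check{C})$ via Lemma \ref{lemma ubiquitous from Banach ubiquitous}. Fix a tuple $\gamma \in \Gamma(H)$. The first step is to build the universal moduli space: over the product $\ccJ_\epsilon(J|_U) \times {\mathcal P}^{1,p}_\gamma(\Sigma,M)$ one has a Banach bundle $\widetilde{\mathcal E}_{\epsilon,U,\gamma}$ whose fiber over $(\check J,u)$ is $L^p_\Sigma((\id_{\overline\Sigma},u)^* M^{\check J})$, exactly as in Proposition \ref{proposistion bundle and Fredholm section} but with $\check J$ now allowed to vary, and a section $\overline\partial_\gamma$ whose zero set $\widetilde{\mathcal M}_{\epsilon,U,\gamma}$ consists of pairs $(\check J,u)$ with $u$ a $(\check J,H)$-Floer trajectory converging to $\gamma$. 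This is the content of \cite[Proposition 4.2.4]{schwarz1995cohomology} adapted to our setting; the adaptations (taming rather than compatibility, derivatives vanishing on prescribed regions, the modified exponential $\Phi_J(A)=Je^A$) do not affect the argument.

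The second and central step is to show that the universal section $\overline\partial_\gamma$ is transverse to zero, i.e. that its linearization is surjective at every zero $(\check J,u)$. The linearization splits as $D\overline\partial_{\check J,\gamma}|_u \oplus (\text{variation in }\check J)$, and since $D\overline\partial_{\check J,\gamma}|_u$ is Fredholm its image is closed with finite-dimensional cokernel; one must show the $\check J$-variations fill this cokernel. By the usual duality argument, a nonzero element $\eta$ of the cokernel is an $L^q$-section ($1/p+1/q=1$) satisfying a first-order elliptic equation, hence is smooth away from its zero set and, by unique continuation, has discrete zeros and vanishes to finite order. Because $\gamma$ is nondegenerate and (when $\Sigma \neq \R\times\T$) at least two components of $\gamma$ have distinct images, $u$ is not entirely contained in the ``fixed'' region $M-\widetilde U$ where the $\check J$-variations are required to vanish; more precisely, one produces an open set of points $\sigma_0\in\Sigma$ where $u(\sigma_0)\notin V\cup([1+\epsilon/8,1+\epsilon/2]\times C)$ and $du(\sigma_0)+X_{H_{\sigma_0}}\otimes\beta \neq 0$ and $\eta(\sigma_0)\neq 0$. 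At such a point one perturbs $\check J$ by an element of $C^\infty_{\epsilon,U}(S_{\check J})$ supported near $(\sigma_0,u(\sigma_0))$ to obtain a nonzero pairing against $\eta$, a contradiction. This is precisely \cite[4.2.18]{schwarz1995cohomology}, and the only genuinely new point to check is that the regularity constraints on $C^\infty_{\epsilon,U}(S_{\check J})$ (vanishing on $\Sigma\times V$ and $\Sigma\times M-\widetilde U$) still leave enough room; this is exactly where the hypotheses defining $\Gamma(H)$ and the somewhere-injectivity-type argument of Proposition \ref{proposition regular subset for surface} are used, and I expect this step to be the main obstacle.

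With transversality of the universal section in hand, the third step is to apply the Sard--Smale theorem to the projection $\widetilde{\mathcal M}_{\epsilon,U,\gamma} \to \ccJ_\epsilon(J|_U)$, which is Fredholm of index equal to $\dim\ccM(H,J,\gamma)$; its regular values form a ubiquitous (residual) subset $W_{U,J,\epsilon,\gamma}\subset\ccJ_\epsilon(J|_U)$, and for $\check J$ in this set $\overline\partial_{\check J,\gamma}$ is transverse to zero. One then intersects over the countable set $\Gamma(H)$ — which is countable because nondegenerate $1$-periodic orbits of a fixed Hamiltonian are discrete, cappings are indexed by a countable group, and a countable intersection of ubiquitous sets is ubiquitous — to obtain a ubiquitous subset $W_{U,J,\epsilon}\subset\ccJ_\epsilon(J|_U)$ of $H$-regular families. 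Finally, applying Lemma \ref{lemma ubiquitous from Banach ubiquitous} with these $W_{U,J,\epsilon}$ (and choosing the $\epsilon^{U,J}$ small enough that each $\ccJ_\epsilon(J|_U)$ is a Banach manifold) shows that $\bigcup_{U,J,\epsilon} W_{U,J,\epsilon}$ is ubiquitous in $\ccJ^\Sigma(J^\#,\check{C})$, and since every element of this union is $H$-regular, the subspace of $H$-regular almost complex structures is ubiquitous. This completes the proof.
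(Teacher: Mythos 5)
Your proposal is correct and follows essentially the same route as the paper, whose entire proof is the one-line citation of Lemma \ref{lemma ubiquitous from Banach ubiquitous} together with \cite[Proposition 4.2.5]{schwarz1995cohomology} (the universal-moduli-space transversality being Proposition \ref{proposition transverse to zero}, i.e. \cite[4.2.18]{schwarz1995cohomology}). Your expanded account — universal section, surjectivity of the linearization using the perturbations in $C^\infty_{\epsilon,U}(S_{\check J})$ and the hypotheses defining $\Gamma(H)$, Sard--Smale for the Fredholm projection, countable intersection over $\gamma\in\Gamma(H)$, then Lemma \ref{lemma ubiquitous from Banach ubiquitous} — is exactly what the paper's citation compresses.
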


We now wish to prove the same thing for $(H,V)$-regular almost complex structures.
In order to do this we need some more propositions and lemmas.
Note that every element
$J = (J_\sigma)_{\sigma \in \Sigma}$
in
$\ccJ^\Sigma(J^\#,\check{C})$ extends to a smooth family of almost complex structures
$(J_\sigma)_{\sigma \in \overline{\Sigma}}$
since $J$ is translation invariant in the cylindrical ends at infinity.
Hence, from now on we will define
$J_\sigma$ to be the limit as $\sigma' \in \Sigma$ tends to $\sigma$ of $J_{\sigma'}$ for each $\sigma \in \partial \overline{\Sigma}$.
We will also use such conventions for
other families of objects over $\Sigma$
such as $\beta$ and $H$.

\begin{prop} \label{proposition bundle for d bar equation}
	\cite[Proposition 4.2.4 and Theorem 3.1.31]{schwarz1995cohomology}.
Let $U \subset \Sigma$ be a relatively compact open set, $\epsilon$ a sequence of rapidly decreasing constants, $J \in \ccJ^\Sigma(J^\#,\check{C})$ and
$\gamma \in \Gamma(H)$.
Let ${\mathcal B} := \ccJ_\epsilon(J|_U)$
and ${\mathcal P} :=
{\mathcal P}^{1,p}_\gamma(\Sigma,M)$, $p > 2$
where
${\mathcal P}^{1,p}_\gamma(\Sigma,M)$
is defined as in Equation
(\ref{equation banach space of maps}).
Then the set
\begin{equation} \label{equation banach bundle over B times P}    
{\mathcal E} := \sqcup_{(J',u) \in {\mathcal B} \times {\mathcal P}}
\{(J',u)\} \times L^p_\Sigma((\id_{\overline{\Sigma}},u)^* M^{J'})
\end{equation}
is naturally a smooth Banach bundle
over
${\mathcal B} \times {\mathcal P}$
whose fiber over
$(J',u)$ is $L^p_\Sigma((\id_{\overline{\Sigma}},u)^* M^{J'})$.
%
Also
$$\overline{\partial}_{{\mathcal B} \times {\mathcal P}} : {\mathcal B} \times {\mathcal P} \lra{} {\mathcal E}, \quad \overline{\partial}_{{\mathcal B} \times {\mathcal P}}(J',u) := \overline{\partial}_{J',\gamma}(u)$$
is a smooth section
of ${\mathcal E}$
where $\overline{\partial}_{J,\gamma}$ is defined in Equation (\ref{equation d bar from gamma to epsilon J}).
\end{prop}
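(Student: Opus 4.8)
The plan is to follow closely the construction in \cite[Proposition 4.2.4]{schwarz1995cohomology}, together with the exponential decay estimates of \cite[Theorem 3.1.31]{schwarz1995cohomology}, and to check that the three minor deviations from Schwarz's setup listed after Definition \ref{Definition almost copmlex structures} (taming rather than compatibility, almost complex structures pinned down together with all their derivatives along certain regions, and the different formula for $\Phi_J$) do not affect any of the arguments.

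First I would set up the bundle $\mathcal{E}$. Recall that $\mathcal{P} = \mathcal{P}^{1,p}_\gamma(\Sigma,M)$ is already a Banach manifold by \cite[Theorem 2.1.7]{schwarz1995cohomology}, with charts centered at smooth maps $h \in C^\infty_\gamma(\overline{\Sigma},M)$ and modeled on the Banach spaces $W^{1,p}_\Sigma(h^*D)$, and that $\mathcal{B} = \ccJ_\epsilon(J|_U)$ is a Banach manifold via the chart $\Phi_J^{-1}$ into the Banach space $C^\infty_{\epsilon,U}(S_J)$. The fiber of $\mathcal{E}$ over $(J',u)$ is $L^p_\Sigma((\mathrm{id}_{\overline{\Sigma}},u)^* M^{J'})$; the point is that both the base point $u$ and the target bundle $M^{J'}$ vary. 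To trivialize locally, over the $\mathcal{P}$-direction one uses the standard device of pulling back along $\exp$ and parallel transport, as in \cite[Section 2.1]{schwarz1995cohomology}, which identifies $L^p_\Sigma((\mathrm{id},u)^* M^{J'})$ with $L^p_\Sigma((\mathrm{id},h)^* M^{J'})$ smoothly in $u$; over the $\mathcal{B}$-direction one notes that for $J'$ in the chart $\ccJ_\epsilon(J|_U)$ there is a canonical fiberwise isomorphism between $J$-antilinear and $J'$-antilinear homomorphisms $T\overline{\Sigma}\to TM$ depending smoothly on $J'=\Phi_J(A)$, because $A$ ranges in a Banach space of sections with rapidly decreasing $C^k$-norms and the relevant superposition operators are smooth. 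Composing these identifications gives local trivializations of $\mathcal{E}$ with smooth transition maps, so $\mathcal{E}$ is a smooth Banach bundle.

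Next I would check that $\overline{\partial}_{\mathcal{B}\times\mathcal{P}}$ is a well-defined smooth section. The two things to verify are that $\overline{\partial}_{J',\gamma}(u)\in L^p_\Sigma((\mathrm{id},u)^* M^{J'})$ for every $(J',u)$, and that $(J',u)\mapsto \overline{\partial}_{J',\gamma}(u)$ is smooth in the local charts. Away from the cylindrical ends there is nothing to prove, since $\Sigma$-compatibility of $H$ and $W^{1,p}\hookrightarrow C^0$ (valid because $p>2$) give local $L^p$ bounds; near each end the term $du + X_{H_\sigma}\otimes\beta$ decays like the difference between $u$ and the asymptotic $1$-periodic orbit, and since the orbits in $\gamma\in\Gamma(H)$ are non-degenerate the exponential convergence estimates of \cite[Theorem 3.1.31]{schwarz1995cohomology} show this term lies in $W^{1,p}$, hence in the weighted space $L^p_\Sigma$ of Definition \ref{defn sobolov spaces of sections}, along the ends (there $\beta=\kappa_j\,dt$, so $X_{H_\sigma}\otimes\beta$ is exactly the asymptotic Hamiltonian term). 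For smoothness, in the chart centered at $(J,h)$ the section becomes a composition of: the bounded linear covariant-derivative map $v\mapsto \nabla v$; the superposition operators $x\mapsto X_{H_\sigma}(x)$ and $x\mapsto J'_\sigma(x)=(\Phi_J(A))_\sigma(x)$, which are smooth maps of Banach spaces by $W^{1,p}\hookrightarrow C^0$, smoothness of $H$, and smoothness of $A\mapsto J'$; and the fixed linear bundle isomorphisms used above. Each factor is smooth and the composition is bounded multilinear in its arguments, so $\overline{\partial}_{\mathcal{B}\times\mathcal{P}}$ is smooth.

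The genuinely delicate step is the behavior at the cylindrical ends: one must ensure that $\overline{\partial}_{J',\gamma}$ takes values in the \emph{weighted} $L^p_\Sigma$ spaces uniformly as $(J',u)$ varies over a chart, and this is exactly where the non-degeneracy of the $1$-periodic orbits of $(\kappa_j H^j)_{j\in I_-\sqcup I_+}$ and the associated exponential-decay and elliptic-estimate package of \cite[Theorem 3.1.31]{schwarz1995cohomology} enter. Everything else is a routine translation of \cite[Proposition 4.2.4]{schwarz1995cohomology} into the present notation, and the three listed discrepancies with Schwarz's framework play no essential role.
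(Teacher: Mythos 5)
Your proposal is correct and follows essentially the same route as the paper, which gives no independent proof of this proposition but simply cites \cite[Proposition 4.2.4 and Theorem 3.1.31]{schwarz1995cohomology} and remarks (after Definition \ref{Definition almost copmlex structures}) that the three listed deviations from Schwarz's setup play no role. Your sketch fills in exactly the steps that citation is standing in for (local trivializations via parallel transport and the canonical $J$-to-$J'$ antilinear identification, smoothness of the superposition operators via $W^{1,p}\hookrightarrow C^0$ for $p>2$, and the asymptotic analysis at the cylindrical ends), so there is nothing to add.
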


From now on, until the proof of Proposition \ref{proposition regular subset for surface} below,
we will fix $U$, $\epsilon$, $J$,
$\gamma$, ${\mathcal B}$, ${\mathcal P}$, ${\mathcal E}$ and $\overline{\partial}_{{\mathcal B} \times {\mathcal E}}$
from the proposition above.
\begin{defn}
Define
${\mathcal S} \subset {\mathcal B} \times {\mathcal P}$ to be
the open subset of pairs
$(J',u) \in {\mathcal B} \times {\mathcal P}$
satisfying
\begin{equation} \label{equation U large enough}
U \nsubseteq u^{-1}(V \cup ([1+\epsilon/8,1+\epsilon/2] \times C)) \cup (\Sigma - \text{supp}(du - \beta \otimes X_H))
\end{equation}
where $\text{supp}(du - \beta \otimes X_H)$
is the support of the distribution
$du - X_H \otimes \beta \in L^p_{\text{loc}}(\Sigma \otimes u^* TM)$
where
$$X_H \otimes \beta|_\sigma  :=  X_{H_\sigma}|_{u(\sigma)} \otimes  \beta|_\sigma \ \forall \ \sigma  \in \Sigma.$$
We also define
 ${\mathcal D} := \overline{\partial}_{{\mathcal B} \times {\mathcal P}}^{-1}(0) \cap {\mathcal S}$ and let
 \begin{equation} \label{equation projection map to almost complex structures}
 \Pi_{\mathcal B} : {\mathcal D} \lra{} {\mathcal B}.
 \end{equation}
 be the natural projection map.
\end{defn}

\begin{prop} \label{proposition transverse to zero}
\cite[Proposition 4.2.18 and its proof]{schwarz1995cohomology}.
Suppose that $(J',u) \in {\mathcal D}$.
Then the section
$\overline{\partial}_{{\mathcal B} \times {\mathcal P}}$
above is transverse to $0$ at $(J',u)$.
Also the natural linear map
\begin{equation} \label{equation linearization restricted to complex space}
D_{J',u}|_{T_{J'}{\mathcal B}} :  T_{J'}{\mathcal B} = T_{J'}{\mathcal B} \times 0 \lra{D\overline{\partial}_{{\mathcal B} \times {\mathcal P}}} T_{(J',u)} {\mathcal E} \lra{\textnormal{pr}} L^p_\Sigma((\id_{\overline{\Sigma}},u)^* M^{J'})
\end{equation}
linearizing $\overline{\partial}_{{\mathcal B} \times {\mathcal P}}$ at $(J',u)$
has dense image.
\end{prop}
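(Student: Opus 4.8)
The plan is to prove Proposition \ref{proposition transverse to zero} by following the standard "universal moduli space is a smooth Banach manifold" argument adapted to our setting, with the two modifications that the almost complex structures only vary on $\widetilde{U} = U \times (M-([1+\epsilon/8,1+\epsilon/2]\times C))$ and agree with $J$ to infinite order along $\Sigma \times V$. The first and main task is to show that the linearized operator
\[
D_{J',u} : T_{J'}{\mathcal B} \times W^{1,p}_\Sigma((\id,u)^* TM) \lra{} L^p_\Sigma((\id_{\overline\Sigma},u)^* M^{J'})
\]
has dense image; surjectivity of $D_{J',u}|_{T_{J'}{\mathcal B}}$ onto a dense subspace then immediately implies that $\overline{\partial}_{{\mathcal B}\times{\mathcal P}}$ is transverse to zero at $(J',u)$, since the image of the full linearization contains the image of $D_{J',u}|_{T_{J'}{\mathcal B}}$ together with the image of the (Fredholm, hence closed-range) operator $D_{J',\gamma}(u)$ obtained by restricting to $T_u{\mathcal P}$. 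The argument for density is the usual one: if the image of $D_{J',u}$ were not dense, there would be a nonzero element $\eta$ in the cokernel, i.e. $\eta \in L^q_\Sigma$ (with $1/p+1/q=1$) annihilating the image, and in particular annihilating the image of $D_{J',u}|_{T_{J'}{\mathcal B}}$. I would first use elliptic regularity for the formal adjoint of $D_{J',\gamma}(u)$ to conclude $\eta$ is smooth on the open set where $u$ is "somewhere injective relative to $\beta$", and then localize near a point $\sigma_0 \in U$ satisfying \eqref{equation U large enough}.

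The key step is then the following pointwise argument at such a $\sigma_0$: because $(J',u)\in{\mathcal S}$, condition \eqref{equation U large enough} guarantees there is $\sigma_0 \in U$ with $u(\sigma_0)\notin V \cup([1+\epsilon/8,1+\epsilon/2]\times C)$ and $(du - X_H\otimes\beta)|_{\sigma_0}\neq 0$. Near such a point we may choose an infinitesimal variation $A \in C^\infty_{\epsilon,U}(S_{J'})$ supported in a small neighborhood of $(\sigma_0, u(\sigma_0))$ in $\widetilde U$ (this is allowed precisely because $u(\sigma_0)$ lies away from $V$ and away from the forbidden cylindrical region, where $A$ and its derivatives must vanish), such that the corresponding term $A\circ (du + X_{H_{\sigma}}\otimes\beta)\circ {\bf j}$ in $D_{J',u}(A,0)$ is a prescribed nonzero section near $\sigma_0$; standard linear algebra (using that $du - X_H\otimes\beta\neq 0$ at $\sigma_0$ and that $S_{J'}$ is the bundle of $J'$-antilinear endomorphisms) shows these span a dense subspace of the fiber near $\sigma_0$. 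Pairing against $\eta$ forces $\eta(\sigma_0)=0$; a continuation/connectedness argument along the support of $du - X_H\otimes\beta$, exactly as in \cite[Proposition 4.2.18]{schwarz1995cohomology}, then forces $\eta \equiv 0$ on all of $\Sigma$, a contradiction. This is the part I expect to require the most care, because one must check that the constraints "$A$ vanishes to infinite order on $\Sigma\times V$ and outside $\widetilde U$" do not obstruct the local surjectivity — but this is guaranteed by condition \eqref{equation U large enough}, which is exactly why ${\mathcal S}$ was defined the way it was.

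Once density of the image of $D_{J',u}|_{T_{J'}{\mathcal B}}$ is established, the transversality statement follows formally: the projection $\textnormal{pr}\circ D\overline{\partial}_{{\mathcal B}\times{\mathcal P}}$ at $(J',u)$ has image containing a dense subspace plus the closed, finite-codimensional range of $D_{J',\gamma}(u)$, hence is all of $L^p_\Sigma((\id_{\overline\Sigma},u)^*M^{J'})$. The remaining bookkeeping — that ${\mathcal E}$ is genuinely a smooth Banach bundle, that $\overline{\partial}_{{\mathcal B}\times{\mathcal P}}$ is smooth, and that the relevant operators are Fredholm — is all imported verbatim from \cite[Theorem 3.1.31, Proposition 4.2.4]{schwarz1995cohomology} via Proposition \ref{proposition bundle for d bar equation}. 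I would close by remarking that the only genuinely new input beyond \cite{schwarz1995cohomology} is the verification that perturbations constrained to $\widetilde U$ and vanishing along $\Sigma\times V$ still suffice, which is precisely what the definition of ${\mathcal S}$ and the hypothesis $(J',u)\in{\mathcal D}$ provide.
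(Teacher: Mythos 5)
Your proposal is correct and is essentially the argument the paper relies on: the paper proves this proposition purely by citing \cite[Proposition 4.2.18]{schwarz1995cohomology} (noting only that the modifications --- perturbations supported in $\widetilde{U}$ and vanishing to infinite order along $\Sigma\times V$ --- do not affect the proof), and your reconstruction of that proof, via the $L^q$-cokernel/unique-continuation duality argument with the localization of the perturbation $A$ justified exactly by condition \eqref{equation U large enough} in the definition of ${\mathcal S}$, matches it. The only cosmetic quibble is the sign mismatch between $du - X_H\otimes\beta$ (in the definition of ${\mathcal S}$) and $du + X_H\otimes\beta$ (in the Floer equation), which you inherit from the paper itself and which does not affect the argument.
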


Hence by combining the proposition above with the last part of Proposition \ref{proposistion bundle and Fredholm section} and the implicit function theorem, we get the following corollary.

\begin{corollary} \label{corollary banach manifold}
${\mathcal D}$
is a Banach submanifold
of ${\mathcal S}$.
\end{corollary}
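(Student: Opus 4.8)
\textbf{Proof proposal for Corollary \ref{corollary banach manifold}.}

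The plan is to apply the standard implicit function theorem for Banach manifolds to the section $\overline{\partial}_{{\mathcal B} \times {\mathcal P}}$ restricted to the open set ${\mathcal S}$, exactly as in \cite[Corollary 4.2.19]{schwarz1995cohomology}. First I would recall the three facts already established in the excerpt: (i) by Proposition \ref{proposition bundle for d bar equation}, ${\mathcal E} \to {\mathcal B} \times {\mathcal P}$ is a smooth Banach bundle and $\overline{\partial}_{{\mathcal B} \times {\mathcal P}}$ is a smooth section; (ii) by the last part of Proposition \ref{proposistion bundle and Fredholm section}, for each fixed $J'$ the section $\overline{\partial}_{J',\gamma}$ is Fredholm, so its vertical linearization $D_{J',u}$ restricted to $T_u{\mathcal P}$ has closed image of finite codimension and finite-dimensional kernel; (iii) by Proposition \ref{proposition transverse to zero}, at every point $(J',u) \in {\mathcal D} = \overline{\partial}_{{\mathcal B} \times {\mathcal P}}^{-1}(0) \cap {\mathcal S}$ the full vertical linearization $D_{J',u} : T_{J'}{\mathcal B} \times T_u{\mathcal P} \to L^p_\Sigma((\id_{\overline{\Sigma}},u)^* M^{J'})$ is surjective, and moreover its restriction to $T_{J'}{\mathcal B}$ alone already has dense image.

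The key step is to combine (ii) and (iii) to conclude that $D_{J',u}$ is not merely surjective but splits, i.e.\ its kernel admits a topological complement in $T_{J'}{\mathcal B} \times T_u{\mathcal P}$. This is where one uses the Fredholm property: the image of $D_{J',u}|_{T_u{\mathcal P}}$ is closed of finite codimension $c$, and by (iii) one can pick finitely many elements of $T_{J'}{\mathcal B}$ whose images span a complement of this closed subspace; since a finite-dimensional subspace of a Banach space always splits off, $D_{J',u}$ is a surjective Fredholm-type operator whose kernel is complemented (the kernel decomposes as the finite-dimensional $\ker(\overline{\partial}_{J',\gamma})$-part plus all of $T_{J'}{\mathcal B}$ modulo the finitely many chosen directions). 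Equivalently, $D_{J',u}$ is a split surjection. Once this is in place, the implicit function theorem for Banach manifolds (see \cite[Section A.3]{McduffSalamon:sympbook}, or the version cited in \cite{schwarz1995cohomology}) applies directly at each point of ${\mathcal D}$: near $(J',u)$, the zero set $\overline{\partial}_{{\mathcal B} \times {\mathcal P}}^{-1}(0)$ is a smooth Banach submanifold of ${\mathcal B} \times {\mathcal P}$ with tangent space $\ker D_{J',u}$. Intersecting with the open set ${\mathcal S}$ does not affect the submanifold property, so ${\mathcal D}$ is a Banach submanifold of ${\mathcal S}$.

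I do not expect a genuine obstacle here — the corollary is a formal consequence of the two preceding propositions plus the implicit function theorem, and the only real content (transversality and the density/Fredholm dichotomy) has already been extracted in Propositions \ref{proposition transverse to zero} and \ref{proposistion bundle and Fredholm section}. The one point requiring a little care is verifying that $\overline{\partial}_{{\mathcal B} \times {\mathcal P}}$ is transverse to the zero section \emph{as a map of Banach manifolds} (not just that its vertical linearization is surjective): this requires the image of $D_{J',u}$ to be the whole fiber \emph{and} the kernel to split, which is precisely the combination of (iii) with the Fredholm property (ii). I would state this splitting explicitly before invoking the implicit function theorem. The smoothness of the resulting manifold structure is inherited from the smoothness of $\overline{\partial}_{{\mathcal B} \times {\mathcal P}}$ asserted in Proposition \ref{proposition bundle for d bar equation}.
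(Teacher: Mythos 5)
Your proposal is correct and follows exactly the route the paper intends: the paper's proof is the single sentence that the corollary follows "by combining the proposition above [Proposition \ref{proposition transverse to zero}] with the last part of Proposition \ref{proposistion bundle and Fredholm section} and the implicit function theorem," which is precisely the surjectivity-plus-Fredholm-plus-splitting argument you spell out. The only difference is that you make explicit the standard splitting lemma (a surjective operator whose restriction to one factor is Fredholm has complemented kernel), which the paper leaves implicit.
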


\begin{prop} \label{proposition regular subset of J}
	\cite[Theorem 3.3.11, Theorem 4.2.2, Proposition 4.2.5 together with its proof]{schwarz1995cohomology}.
	The map $\Pi_{\mathcal B}$ in Equation (\ref{equation projection map to almost complex structures}) is Fredholm.
	The subset
	$(\Pi_{\mathcal B})^\reg \subset {\mathcal B}$ of regular values of
	$\Pi_{\mathcal B}$
	is ubiquitous as in Definition \ref{defnition ubiquitous} and
	$\Pi_{\ccJ}^{-1}(J')$
	is a smooth manifold of dimension $k$ where $k$ is defined in Equation (\ref{equation conley zehnder index sum})
	for all $J' \in \Pi_{\mathcal B}$.	
\end{prop}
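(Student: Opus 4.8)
The plan is to prove this, following \cite[Proposition 4.2.5 and its proof]{schwarz1995cohomology}, as an application of the Sard--Smale theorem to the projection $\Pi_{\mathcal B}$. By Corollary \ref{corollary banach manifold} we already know that ${\mathcal D}=\overline{\partial}_{{\mathcal B}\times{\mathcal P}}^{-1}(0)\cap{\mathcal S}$ is a $C^\infty$ Banach submanifold of the open set ${\mathcal S}\subset{\mathcal B}\times{\mathcal P}$, and that $\Pi_{\mathcal B}\colon{\mathcal D}\to{\mathcal B}$ is the restriction of the smooth projection onto the first factor. So it suffices to establish: first, that $\Pi_{\mathcal B}$ is a $C^\infty$ Fredholm map of index $k$, with $k$ as in Equation (\ref{equation conley zehnder index sum}); second, that ${\mathcal B}$ and ${\mathcal D}$ are second countable; and then to invoke the Sard--Smale theorem, which produces a residual set of regular values of $\Pi_{\mathcal B}$, that is, a subset of ${\mathcal B}$ containing a countable intersection of dense open sets, hence ubiquitous in the sense of Definition \ref{defnition ubiquitous}; together with the regular value theorem for Fredholm maps, which identifies each fibre $\Pi_{\mathcal B}^{-1}(J')$ over a regular value $J'$ with a $C^\infty$ submanifold of ${\mathcal D}$ of dimension equal to the Fredholm index, namely $k$.

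For the Fredholm property, fix $(J',u)\in{\mathcal D}$ and write $L$ for the linearisation of $\overline{\partial}_{{\mathcal B}\times{\mathcal P}}$ at $(J',u)$, a bounded operator from $T_{J'}{\mathcal B}\times T_u{\mathcal P}$ to $L^p_\Sigma\bigl((\id_{\overline{\Sigma}},u)^*M^{J'}\bigr)$ obtained by composing $D\overline{\partial}_{{\mathcal B}\times{\mathcal P}}$ at $(J',u)$ with the canonical splitting of $T{\mathcal E}$ at the zero $(J',u,0)$ into its horizontal part $T_{(J',u)}({\mathcal B}\times{\mathcal P})$ and its vertical part $L^p_\Sigma\bigl((\id_{\overline{\Sigma}},u)^*M^{J'}\bigr)$. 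Then $T_{(J',u)}{\mathcal D}=\ker L$, and under this identification $D\Pi_{\mathcal B}|_{(J',u)}$ is the restriction to $\ker L$ of the projection $\mathrm{pr}\colon T_{J'}{\mathcal B}\times T_u{\mathcal P}\to T_{J'}{\mathcal B}$. Its kernel is $\{(0,\eta):L(0,\eta)=0\}=\ker D\overline{\partial}_{J',\gamma}(u)$, which is finite dimensional since $\overline{\partial}_{J',\gamma}$ is a Fredholm section (Proposition \ref{proposistion bundle and Fredholm section}). By Proposition \ref{proposition transverse to zero} the operator $L$ is surjective, and $L$ restricted to the second factor $T_u{\mathcal P}$ is the Fredholm operator $D\overline{\partial}_{J',\gamma}(u)$; the elementary linear-algebra fact that for a bounded surjective map $L\colon X\oplus Y\to Z$ with $L|_Y$ Fredholm the projection $\mathrm{pr}|_{\ker L}\colon\ker L\to X$ is Fredholm of index $\mathrm{ind}(L|_Y)$ then shows $D\Pi_{\mathcal B}|_{(J',u)}$ is Fredholm of index $\mathrm{ind}\,D\overline{\partial}_{J',\gamma}(u)$. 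Because $c_1(M)=0$ this index is independent of $(J',u)$ and equals $k$: it is the Riemann--Roch--type computation recorded in \cite[Theorems 4.2.2 and 3.3.11]{schwarz1995cohomology}, the same one that gives $\dim\ccM(H,J,\gamma)=k$ whenever $J$ is $H$-regular. Smoothness of $\Pi_{\mathcal B}$ is inherited from the smoothness of the Banach manifold structures on ${\mathcal B}$ and ${\mathcal D}$ and holds for all $p>2$.

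Finally, ${\mathcal B}=\ccJ_\epsilon(J|_U)$ is an open subset of the separable Banach space $C^\infty_{\epsilon,U}(S_J)$ (separability with Floer's $\epsilon$-norm being established in \cite[Section 4.2]{schwarz1995cohomology}; the modifications made in Definition \ref{Definition almost copmlex structures} do not affect this), and ${\mathcal D}$ is a Banach submanifold of the separable space ${\mathcal S}\subset{\mathcal B}\times{\mathcal P}^{1,p}_\gamma(\Sigma,M)$, so both ${\mathcal B}$ and ${\mathcal D}$ are second countable. Hence the Sard--Smale theorem applies to the $C^\infty$ Fredholm map $\Pi_{\mathcal B}$, giving that $(\Pi_{\mathcal B})^{\reg}$ is residual in ${\mathcal B}$, hence ubiquitous; and for each $J'\in(\Pi_{\mathcal B})^{\reg}$ the regular value theorem makes $\Pi_{\mathcal B}^{-1}(J')$ a smooth manifold of dimension $k$. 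The argument is essentially bookkeeping once Propositions \ref{proposition bundle for d bar equation}, \ref{proposition transverse to zero} and Corollary \ref{corollary banach manifold} are in hand; the points that require genuine care are the computation of $\mathrm{ind}(\Pi_{\mathcal B})=k$ via the surjective-plus-Fredholm-restriction lemma, the second countability of ${\mathcal D}$ (needed for the global form of Sard--Smale), and checking that the differentiability class of $\Pi_{\mathcal B}$ exceeds its index — all unproblematic here since everything in sight is $C^\infty$ and $p>2$.
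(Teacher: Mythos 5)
Your argument is correct and is essentially the same as the one the paper relies on: the paper proves this proposition only by citation to Schwarz, whose Proposition 4.2.5 is exactly the Sard--Smale argument you reconstruct from Corollary \ref{corollary banach manifold}, Proposition \ref{proposition transverse to zero}, and the index computation of \cite[Theorems 3.3.11, 4.2.2]{schwarz1995cohomology}. Your handling of the key points (the surjective-plus-Fredholm-restriction lemma giving $\mathrm{ind}\,\Pi_{\mathcal B}=k$, second countability with the $C^\infty_\epsilon$ norm, and $C^\infty$ regularity so Sard--Smale applies) matches the cited proof, so nothing further is needed.
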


By the Sobolev embedding theorem
we can think of the tangent space
$T_u {\mathcal P}$
at a point $u \in {\mathcal P}$
naturally as a subspace of
$C^0(u^* TM)$.
Let $\iota_{\mathcal P} : T_u {\mathcal P} \lra{} C^0(u^* TM)$
be the natural inclusion map.
Then we have the following definition:
\begin{defn} \label{defninition tangent space for P}
For each $u \in {\mathcal P}$ and $\sigma \in \Sigma$,
define the Banach subspace
$T_{u,\sigma} {\mathcal P} \subset T_u {\mathcal P}$
to be the subspace consisting of elements $v \in T_u {\mathcal P}$
satisfying $\iota_{\mathcal P}(v) (\sigma) = 0$.
\end{defn}

\begin{lemma} \label{lemma surjectivity of smaller space}
Let $\sigma \in \Sigma$
and let
$(J',u) \in {\mathcal D}$.
%
Then the map
$$D_{J',u,\sigma} : T_{J'} {\mathcal B} \times T_{u,\sigma} {\mathcal P} 
\lra{D\overline{\partial}_{{\mathcal B} \times {\mathcal P}}} T_{(J',u)} {\mathcal E} \lra{\textnormal{pr}} L^p_\Sigma((\id_{\overline{\Sigma}},u)^* M^{J'})
$$
is surjective.
\end{lemma}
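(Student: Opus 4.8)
\textbf{Proof proposal for Lemma \ref{lemma surjectivity of smaller space}.}

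The plan is to reduce the surjectivity of $D_{J',u,\sigma}$ to the surjectivity statement already established in Proposition \ref{proposition transverse to zero}, namely that $D_{J',u}$ (the linearization restricted to $T_{J'}{\mathcal B} \times T_u{\mathcal P}$) is surjective, and then to correct for the single-point constraint $\iota_{\mathcal P}(v)(\sigma)=0$ by absorbing the resulting finite-dimensional discrepancy into the image of the Banach space $T_{J'}{\mathcal B}$ of almost complex structure deformations. More precisely, fix $\eta \in L^p_\Sigma((\id_{\overline{\Sigma}},u)^* M^{J'})$. By Proposition \ref{proposition transverse to zero} combined with the Fredholm property of the full linearized operator $D_{(J',u)}\overline{\partial}_{{\mathcal B}\times{\mathcal P}}$ (Proposition \ref{proposistion bundle and Fredholm section}), we may write $\eta = D_{J',u}(a,v)$ for some $(a,v) \in T_{J'}{\mathcal B} \times T_u{\mathcal P}$. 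The element $v$ need not vanish at $\sigma$, so I want to replace $v$ by $v - w$ with $w \in T_u{\mathcal P}$ chosen so that $\iota_{\mathcal P}(w)(\sigma) = \iota_{\mathcal P}(v)(\sigma)$, and then compensate by adjusting $a$ so that $D_{J',u}(a',v-w)$ is still $\eta$; the point is that $D_{J',u}(\cdot\,, w)$ lands in a region where the $\eta$-component contributed by $w$ can be cancelled using a deformation supported near $\sigma$.

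The key steps, in order, would be: (1) Observe that since $(J',u) \in {\mathcal D} \subset {\mathcal S}$, condition (\ref{equation U large enough}) guarantees the existence of a point $\sigma' \in U$ at which $u$ is not in the degenerate region $V \cup ([1+\epsilon/8,1+\epsilon/2]\times C)$ and at which $du - X_H \otimes \beta$ does not vanish; by continuity this holds on a neighborhood $U' \ni \sigma'$ inside $U$. Crucially one may choose $U'$ so that $\sigma \notin \overline{U'}$ — this uses that $U$ is relatively compact and open, and that we can shrink $U'$ around $\sigma'$ away from $\sigma$ (if $\sigma' = \sigma$ one argues as in Proposition \ref{proposition transverse to zero} that the somewhere-injectivity/non-stationarity point can be taken distinct from any fixed point, using that the vanishing locus of $du - X_H\otimes\beta$ is nowhere dense). (2) Use the standard argument (exactly as in the proof of \cite[Proposition 4.2.18]{schwarz1995cohomology} underlying Proposition \ref{proposition transverse to zero}) that the deformations $A \in C^\infty_{\epsilon,U}(S_{J'})$ supported in $U'$, acting via $A \mapsto \tfrac{1}{2}A(\sigma)(du - X_H\otimes\beta)(\sigma)\circ{\bf j}$ (the $T_{J'}{\mathcal B}$-part of the linearization), span a dense subspace of the $L^p$-sections supported on $U'$, because on $U'$ the map $u$ avoids the frozen region and $du - X_H\otimes\beta \neq 0$. (3) Given $\eta$, first solve $\eta = D_{J',u}(a,v)$ using Proposition \ref{proposition transverse to zero}; then pick $w \in T_u{\mathcal P}$ with $\iota_{\mathcal P}(w)(\sigma) = \iota_{\mathcal P}(v)(\sigma)$ and $\iota_{\mathcal P}(w)$ supported away from $\overline{U'}$ (possible since $\sigma \notin \overline{U'}$); set $\tilde v := v - w \in T_{u,\sigma}{\mathcal P}$. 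Then $\eta - D_{J',u}(a,\tilde v) = D_{J',u}(0,w)$ is an $L^p$-section, which I approximate and ultimately hit exactly by a deformation $a' \in T_{J'}{\mathcal B}$ supported in $U'$, using the density/cokernel-closedness argument of step (2) together with the Fredholm property to upgrade density to surjectivity onto the (closed, finite-codimensional image of) $D_{J',u,\sigma}$. Hence $\eta = D_{J',u}(a + a', \tilde v) = D_{J',u,\sigma}(a+a',\tilde v)$ is in the image.

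The main obstacle I anticipate is step (1)–(2): ensuring that the ``good point'' $\sigma'$ provided by the condition ${\mathcal S}$ can be chosen distinct from the prescribed evaluation point $\sigma$, and simultaneously that the almost-complex-structure deformations localized near $\sigma'$ still generate enough of the cokernel. This is the same delicacy that appears in proving transversality of evaluation maps in Gromov–Witten theory, and it forces one to use that the set where $du - X_H\otimes\beta$ vanishes (together with the preimage of the frozen region) is of empty interior in $U$, which is exactly what (\ref{equation U large enough}) provides. Once that is in hand, the rest is a routine adaptation of the unperturbed surjectivity argument of \cite{schwarz1995cohomology}, with the point constraint handled by the correction term $w$ chosen to live away from the locus where the deformation freedom is used.
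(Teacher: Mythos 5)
Your proposal contains a genuine gap in its constructive middle steps, even though the soft functional-analytic principle you gesture at in the final clause is the right one. The problem is in step (3): you choose the correction $w \in T_u{\mathcal P}$ to be supported away from $\overline{U'}$, and you then want to cancel the resulting error $D_{J',u}(0,w)$ by an almost complex structure deformation $a'$ supported in $U'$. But the $T_{J'}{\mathcal B}$-component of the linearization acts pointwise, roughly as $a' \mapsto \tfrac{1}{2}a'\circ(du+X_H\otimes\beta)\circ{\bf j}$, so $D_{J',u}(a',0)$ vanishes outside $U'$; meanwhile $D_{J',u}(0,w)$ is a first-order operator applied to $w$ and is therefore supported where $w$ is, i.e. away from $\overline{U'}$. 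Two $L^p$-sections with disjoint supports cannot approximate one another unless both vanish, so the cancellation you describe is impossible by any localized construction, and the localized density statement in your step (2) (density in $L^p$ of sections supported on $U'$) is not the density you actually need. Relatedly, the worry you flag about separating $\sigma'$ from $\sigma$ is a non-issue, because no localization is required at all.

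The paper's proof is a two-line soft argument that discards the explicit correction scheme entirely. First, the restriction $D_{J',u}|_{0\times T_u{\mathcal P}}$ is Fredholm (Proposition \ref{proposition bundle for d bar equation}), and since $T_{u,\sigma}{\mathcal P}\subset T_u{\mathcal P}$ has finite codimension, the further restriction $D_{J',u,\sigma}|_{0\times T_{u,\sigma}{\mathcal P}}$ is still Fredholm; its image is therefore closed of finite codimension in $L^p_\Sigma((\id_{\overline{\Sigma}},u)^*M^{J'})$, and any subspace containing a closed finite-codimensional subspace is itself closed, so the image of $D_{J',u,\sigma}$ is closed. Second, that image contains the image of $D_{J',u}|_{T_{J'}{\mathcal B}\times 0}$, which is dense by the last part of Proposition \ref{proposition transverse to zero} (this is the \emph{global} density statement, not a localized one). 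A closed dense subspace is everything, so $D_{J',u,\sigma}$ is surjective. If you intend your final clause to mean exactly this, then everything involving $\sigma'$, $U'$, and $w$ should be deleted; as written, those steps do not assemble into a proof.
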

\proof
The map
$$D_{J',u}|_{0 \times T_u{\mathcal P}} : 
{0 \times T_u{\mathcal P}}
\lra{D\overline{\partial}_{{\mathcal B} \times {\mathcal P}}} T_{(J',u)} {\mathcal E}  \lra{\text{pr}} L^p_\Sigma((\id_{\overline{\Sigma}},u)^* M^{J'})
$$
is Fredholm by
Proposition \ref{proposition bundle for d bar equation}
and hence the map
$D_{J',u,\sigma}|_{0 \times T_{u,\sigma} {\mathcal P}}$
is Fredholm since $T_{u,\sigma}{\mathcal P} \subset T_u {\mathcal P}$ is a subspace of finite codimension.
This implies that the image
of $D_{J',u,\sigma}$ is closed.
Such an image is also dense by the last part of
Proposition
\ref{proposition transverse to zero}
and hence is surjective.

\qed

\begin{lemma} \label{lemma smoothness of evaluation map}
	
	The natural map
	$$E : \Sigma \times {\mathcal D} \lra{} M, \quad E(\sigma,(J',u)) := u(\sigma)$$
	is $C^\infty$.
\end{lemma}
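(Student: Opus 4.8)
\textbf{Proof proposal for Lemma \ref{lemma smoothness of evaluation map}.}

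The plan is to reduce the smoothness of $E$ to two facts that have already been established: first, that ${\mathcal D}$ is a Banach submanifold of ${\mathcal S} \subset {\mathcal B} \times {\mathcal P}$ (Corollary \ref{corollary banach manifold}), and second, that the universal evaluation map on ${\mathcal P}^{1,p}_\gamma(\Sigma,M)$ — the space of $W^{1,p}$ maps — is smooth once one stays away from the marked point in the domain, or more precisely once one works with $p > 2$ so that $W^{1,p}$ embeds into $C^0$. The subtle point is that a $W^{1,p}$ map is only $C^0$, so the naive evaluation $(\sigma, u) \mapsto u(\sigma)$ is \emph{not} smooth on all of $\Sigma \times {\mathcal P}$; what saves us is elliptic regularity. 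First I would observe that since $(J', u) \in {\mathcal D}$ means in particular $\overline{\partial}_{J',\gamma}(u) = 0$, the map $u$ is a genuine solution of the (perturbed) Cauchy–Riemann equation with a smooth almost complex structure $J'$ and smooth inhomogeneous term $X_{H}\otimes\beta$, hence $u$ is a smooth map $\overline{\Sigma} \to M$ by interior elliptic regularity (and boundary regularity at the ends, using the translation invariance there). So pointwise the map $E$ lands in the right place and is at least well-defined; the content of the lemma is joint smoothness in $(\sigma, (J',u))$ as $(J',u)$ varies in the Banach manifold ${\mathcal D}$.

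The key steps, in order, would be: (1) Fix a point $(\sigma_0, (J'_0, u_0)) \in \Sigma \times {\mathcal D}$ and pass to local coordinates: a chart for ${\mathcal D}$ near $(J'_0, u_0)$ obtained from the implicit function theorem applied to $\overline{\partial}_{{\mathcal B}\times{\mathcal P}}$, and a chart for ${\mathcal P}$ near $u_0$ of the form $v \mapsto \exp_{h_0} \circ v$ for $h_0 \in C^\infty_\gamma(\overline\Sigma, M)$, with $v$ ranging over an open subset of $W^{1,p}_\Sigma(h_0^* D)$. (2) Using elliptic bootstrapping for the $\overline{\partial}$-operator with parameters — the almost complex structure $J'$ and the Hamiltonian term both depend smoothly on the Banach coordinates — upgrade the solutions in ${\mathcal D}$ from $W^{1,p}$-regularity to $W^{k,p}_{loc}$-regularity for every $k$, \emph{with the estimates depending smoothly on the ${\mathcal D}$-coordinates}; this is the standard parametrized regularity statement (cf. the arguments behind \cite[Theorem 3.1.31]{schwarz1995cohomology} and the elliptic estimates underlying Proposition \ref{proposition bundle for d bar equation}). (3) Conclude that on a neighborhood of $(J'_0, u_0)$ in ${\mathcal D}$, the assignment $(J', u) \mapsto u|_{U'}$ for a relatively compact open $U' \ni \sigma_0$ is a smooth map from (an open subset of) a Banach manifold into $C^\infty(U', M)$ — here one uses that $W^{k,p} \hookrightarrow C^{k-1}$ and that the bootstrap constants vary continuously (indeed smoothly) with the parameters. (4) Finally compose with the honest evaluation $C^\infty(U', M) \times U' \to M$, $(w, \sigma) \mapsto w(\sigma)$, which is smooth, to deduce that $E$ is smooth near $(\sigma_0, (J'_0, u_0))$. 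Since the point was arbitrary, $E$ is $C^\infty$.

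I expect the main obstacle to be step (2): making the parametrized elliptic bootstrap genuinely smooth (not merely continuous) in the Banach-manifold direction. The issue is that one must differentiate the family of elliptic estimates with respect to the coordinates on ${\mathcal D}$, which involves controlling how the Green's operator / parametrix for the linearized operator $D_{J',u}$ varies; the cleanest route is to invoke the smoothness of the implicit-function-theorem solution map for the section $\overline{\partial}_{{\mathcal B}\times{\mathcal P}}$ (which is smooth by Proposition \ref{proposition bundle for d bar equation}) combined with the fact that the inclusion of the solution set into higher Sobolev spaces is realized by composing with smooth Nemytskii-type operators built from $J'$ and $H$. Everything here is routine in the symplectic literature (e.g. \cite[Sections 3.1 and 3.2]{schwarz1995cohomology}, or \cite[Chapter 3]{McDuffSalamon:Jholomorphiccurves}), so in the write-up I would state the parametrized regularity as a citation and spend the bulk of the proof on the clean reduction in steps (1), (3), (4) rather than re-deriving the elliptic machinery.
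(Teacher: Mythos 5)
Your proposal is correct and follows essentially the same route as the paper: first use elliptic regularity to see that each $u$ with $(J',u)\in{\mathcal D}$ is smooth, then invoke the parametrized elliptic estimates (the paper cites \cite[Proposition B.4.9]{McDuffSalamon:Jholomorphiccurves} together with the Sobolev embedding) to show that $(J',u)\mapsto u|_K$ is a smooth map from ${\mathcal D}$ into $C^r(K,M)$ for compact codimension $0$ pieces $K\subset\Sigma$, and finally compose with the smooth evaluation map on the finite-regularity mapping space. The only cosmetic difference is that the paper works with $C^r(K,M)$ for each $r$ rather than $C^\infty(U',M)$, since the evaluation $K\times C^r(K,M)\to M$ is only $C^r$; running the argument for all $r$ gives $C^\infty$.
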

Note that if we extend  this map
to ${\mathcal B} \times {\mathcal P}$
in the natural way
then such a map is not even $C^1$.

\proof

By \cite[Proposition 2.5.7]{schwarz1995cohomology},
we have that
$u \in C^\infty(\Sigma,M)$
for all $(J',u) \in {\mathcal D}$.
By \cite[Proposition B.4.9]{McDuffSalamon:Jholomorphiccurves} combined with the Sobolov embedding theorem \cite[Proposition B.1.11]{McDuffSalamon:Jholomorphiccurves},
we have that for each
$(J',u) \in {\mathcal D}$,
and each compact codimension
$0$ submanifold $K \subset \Sigma$,
the natural map
from $T_{(J',u)} {\mathcal D}$
to $T_u C^r(K,M)$
is well defined
continuous map between
Banach spaces.
%
%
Since charts on mapping spaces are constructed using the exponential map of a metric on $M$,
this implies that the natural map
from ${\mathcal D}$ to
$C^r(K,M)$
is smooth
for each compact codimension $0$ submanifold $K \subset \Sigma$ and each $r \geq 0$.
%
%
%
%
Therefore,  our lemma follows
from the fact that the evaluation map
$$K \times C^r(K,M) \lra{} M$$
is $C^r$ for all compact codimension
$0$ submanifolds $K \subset \Sigma$ and all $r \geq 0$ (\cite[Page 78]{krikorian1972differentiable}).
\qed

\begin{lemma} \label{lemma submersion of evaluation at sigma}
Let $\sigma \in \Sigma$.
Then the map
$$E|_{\{\sigma\} \times {\mathcal D} } : {\mathcal D} \lra{} M, \quad E|_{\{\sigma\} \times {\mathcal D} }(J',u) = u(\sigma)$$
is a submersion.
\end{lemma}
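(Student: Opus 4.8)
\textbf{Proof proposal for Lemma \ref{lemma submersion of evaluation at sigma}.}

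The plan is to show that the derivative of $E|_{\{\sigma\} \times {\mathcal D}}$ at a point $(J',u) \in {\mathcal D}$ is surjective onto $T_{u(\sigma)}M$. First I would recall from Corollary \ref{corollary banach manifold} that ${\mathcal D}$ is a Banach submanifold of ${\mathcal S}$, so its tangent space at $(J',u)$ is the kernel of the fiberwise-linearized $\overline\partial$ operator; explicitly, a tangent vector is a pair $(A,v) \in T_{J'}{\mathcal B} \times T_u{\mathcal P}$ with $D_{J',u}(A,v) = 0$ in $L^p_\Sigma((\id_{\overline\Sigma},u)^* M^{J'})$. Under this identification, and using that $E$ is $C^\infty$ on $\Sigma \times {\mathcal D}$ by Lemma \ref{lemma smoothness of evaluation map}, the derivative of $E|_{\{\sigma\} \times {\mathcal D}}$ at $(J',u)$ sends $(A,v)$ to $\iota_{\mathcal P}(v)(\sigma) \in T_{u(\sigma)}M$, where $\iota_{\mathcal P} : T_u{\mathcal P} \hookrightarrow C^0(u^*TM)$ is the natural inclusion from Definition \ref{defninition tangent space for P}.

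So the statement to prove is: for every $w \in T_{u(\sigma)}M$ there exists $(A,v) \in T_{J'}{\mathcal B} \times T_u{\mathcal P}$ with $D_{J',u}(A,v) = 0$ and $\iota_{\mathcal P}(v)(\sigma) = w$. I would argue as follows. Pick any $\widetilde v \in T_u{\mathcal P}$ with $\iota_{\mathcal P}(\widetilde v)(\sigma) = w$; such a $\widetilde v$ exists because the evaluation map $T_u{\mathcal P} \to T_{u(\sigma)}M$ is surjective (one can prescribe the value of a $W^{1,p}$ section at a point, $p > 2$, by the Sobolev embedding). In general $D_{J',u}(0,\widetilde v)$ is some nonzero element $\xi \in L^p_\Sigma((\id_{\overline\Sigma},u)^* M^{J'})$. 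By Lemma \ref{lemma surjectivity of smaller space}, the map $D_{J',u,\sigma} : T_{J'}{\mathcal B} \times T_{u,\sigma}{\mathcal P} \to L^p_\Sigma((\id_{\overline\Sigma},u)^* M^{J'})$ is surjective, where $T_{u,\sigma}{\mathcal P}$ consists of sections vanishing at $\sigma$; hence there is a pair $(A, v') \in T_{J'}{\mathcal B} \times T_{u,\sigma}{\mathcal P}$ with $D_{J',u}(A, v') = -\xi$. Then $v := \widetilde v + v'$ satisfies $\iota_{\mathcal P}(v)(\sigma) = \iota_{\mathcal P}(\widetilde v)(\sigma) + \iota_{\mathcal P}(v')(\sigma) = w + 0 = w$, and $D_{J',u}(A, v) = D_{J',u}(0,\widetilde v) + D_{J',u}(A, v') = \xi - \xi = 0$, so $(A,v) \in T_{(J',u)}{\mathcal D}$ maps to $w$ under the derivative of $E|_{\{\sigma\}\times{\mathcal D}}$. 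This proves surjectivity of the derivative at every point, i.e. $E|_{\{\sigma\}\times{\mathcal D}}$ is a submersion.

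The only subtle point — and the step I would be most careful about — is the precise identification of the derivative of $E|_{\{\sigma\}\times{\mathcal D}}$ with the pointwise evaluation $(A,v) \mapsto \iota_{\mathcal P}(v)(\sigma)$; this uses Lemma \ref{lemma smoothness of evaluation map} (smoothness of $E$ restricted to ${\mathcal D}$, not to the ambient ${\mathcal B}\times{\mathcal P}$, where it fails to be $C^1$) together with the fact that the chart maps on ${\mathcal P}$ are built from the exponential map, so that the local expression of $E|_{\{\sigma\}\times{\mathcal D}}$ in these charts is, to first order at $0$, literally $v \mapsto v(\sigma)$. Everything else is a formal linear-algebra argument combining Lemmas \ref{lemma surjectivity of smaller space} and the surjectivity of pointwise evaluation on $W^{1,p}$ sections. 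No new compactness or transversality input beyond what is already established is needed.
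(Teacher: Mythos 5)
Your proposal is correct and follows essentially the same argument as the paper: choose a lift $\widetilde v$ of the target vector, use Lemma \ref{lemma surjectivity of smaller space} to correct it by a pair $(A,v')$ with $v'$ vanishing at $\sigma$, and observe the corrected pair lies in $T_{(J',u)}{\mathcal D}$ and still evaluates to $w$. Your extra care about identifying the derivative of $E|_{\{\sigma\}\times{\mathcal D}}$ with pointwise evaluation via the exponential charts is a point the paper leaves implicit, but the substance is the same.
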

\proof
Let 
$(J',u) \in {\mathcal D}$ and let
$W \in T_{u(\sigma)}M$.
Choose $w \in T_u {\mathcal P}$ so that
$w(\sigma) = W$.
Since $D_{J',u,\sigma}$
from Lemma \ref{lemma surjectivity of smaller space}
is surjective,
there exists $(Y_1,w_1) \in T_{J'}{\mathcal B} \times T_{u,\sigma} {\mathcal P}$
so that
$D_{J',u}(Y_1,w_1) = D_{J',u}(0,w)$
where $T_{u,\sigma} {\mathcal P}$
is defined in Definition
\ref{defninition tangent space for P} and where $D_{J',u}$ is the composition
$$D_{J',u} : 
{T_{J'} {\mathcal B} \times T_u{\mathcal P}}
\lra{D\overline{\partial}_{{\mathcal B} \times {\mathcal P}}} T_{(J',u)} {\mathcal E}  \lra{\text{pr}} L^p_\Sigma((\id_{\overline{\Sigma}},u)^* M^{J'}).
$$
Therefore
$(- Y_1,w - w_1) \in T_{J',u}{\mathcal D}$
and
$(w - w_1)(\sigma) = W$ and hence the map
$E|_{\{\sigma\} \times {\mathcal D} }$ is a submersion.
\qed

\begin{lemma} \label{lemma surjectivity of evaluation map}
The map
\begin{equation} \label{equation ev}
\ev : \Sigma \times {\mathcal D} \lra{} \Sigma \times M, \quad F(\sigma,(J',u)) := (\sigma,u(\sigma))
\end{equation}
is smooth and a submersion.
\end{lemma}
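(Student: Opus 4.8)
\textbf{Proof proposal for Lemma \ref{lemma surjectivity of evaluation map}.}

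The plan is to deduce the smoothness and submersion properties of $\ev$ in \eqref{equation ev} from the three preceding lemmas: Lemma \ref{lemma smoothness of evaluation map} (smoothness of $E : \Sigma \times {\mathcal D} \to M$), Lemma \ref{lemma submersion of evaluation at sigma} (the map $E|_{\{\sigma\} \times {\mathcal D}}$ is a submersion for each fixed $\sigma$), and implicitly the fact that $\Sigma$ is a smooth manifold. First I would observe that $\ev = (\mathrm{pr}_\Sigma, E)$ where $\mathrm{pr}_\Sigma : \Sigma \times {\mathcal D} \to \Sigma$ is the (smooth) projection and $E$ is the map from Lemma \ref{lemma smoothness of evaluation map}; hence $\ev$ is smooth as a pairing of two smooth maps into the product $\Sigma \times M$. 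This takes care of the smoothness claim immediately.

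For the submersion claim, fix a point $(\sigma, (J',u)) \in \Sigma \times {\mathcal D}$ and a tangent vector $(X, W) \in T_\sigma \Sigma \times T_{u(\sigma)} M$ that we wish to hit. First I would lift $X$: since $\mathrm{pr}_\Sigma$ is a submersion, there is a tangent vector $\xi \in T_{(\sigma,(J',u))}(\Sigma \times {\mathcal D})$ of the form $(X, 0)$ in the product splitting $T_\sigma \Sigma \times T_{(J',u)} {\mathcal D}$. Its image under $D\ev$ is $(X, D E(\xi))$ for some $DE(\xi) \in T_{u(\sigma)} M$, so it remains to correct the second component. For this I would use Lemma \ref{lemma submersion of evaluation at sigma}: the restricted map $E|_{\{\sigma\}\times{\mathcal D}} : {\mathcal D} \to M$ is a submersion at $(J',u)$, so there exists $\eta \in T_{(J',u)} {\mathcal D}$ with $D(E|_{\{\sigma\}\times {\mathcal D}})(\eta) = W - DE(\xi)$. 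Then the tangent vector $\xi + (0,\eta) \in T_\sigma\Sigma \times T_{(J',u)}{\mathcal D}$ maps under $D\ev$ to $(X + 0, \; DE(\xi) + (W - DE(\xi))) = (X, W)$, where I use that the $\Sigma$-component of $D\ev$ applied to $(0,\eta)$ vanishes (since $\eta$ is tangent to the $\{\sigma\}$-fiber) and that $D E$ restricted to $T_{(J',u)}{\mathcal D}$ agrees with $D(E|_{\{\sigma\}\times{\mathcal D}})$. Hence $D\ev$ is surjective at every point, so $\ev$ is a submersion.

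I do not expect a genuine obstacle here; this lemma is purely formal bookkeeping once Lemmas \ref{lemma smoothness of evaluation map} and \ref{lemma submersion of evaluation at sigma} are in hand. The only point requiring mild care is the compatibility between the differential of $E$ on the full space $\Sigma \times {\mathcal D}$ and the differential of its restriction to a fixed-$\sigma$ slice — one must check that $DE$ restricted to the subspace $0 \times T_{(J',u)}{\mathcal D}$ really does coincide with $D(E|_{\{\sigma\}\times{\mathcal D}})$, which is immediate from the definition $E(\sigma', (J'', v)) = v(\sigma')$ and the chain rule. After that, the argument is the standard fact that a smooth map whose composite with a submersion (projection to $\Sigma$) is surjective on the first factor and which is fiberwise submersive on the second factor is itself a submersion into the product. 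This completes the proof.
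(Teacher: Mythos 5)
Your proposal is correct and matches the paper's approach: the paper's entire proof is the one-line remark that the lemma follows directly from Lemmas \ref{lemma smoothness of evaluation map} and \ref{lemma submersion of evaluation at sigma}, and your argument simply spells out the routine linear-algebra details (lifting the $\Sigma$-component and correcting the $M$-component via the fiberwise submersion) that the paper leaves implicit.
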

\proof
This follows directly from
Lemmas
\ref{lemma smoothness of evaluation map} and
 \ref{lemma submersion of evaluation at sigma}.
\qed

\begin{lemma} \label{lemma transverse ubiquitous}
Let $f := (f_i)_{i \in \N}$ be a countable collection of smooth maps
$f_i : Q_i \lra{} \Sigma \times M$,
$i \in \N$.
Let
$\ccJ_\epsilon(J|_U,f) \subset {\mathcal B}$ be the subset consisting of elements $J'$ 
that are $H$-regular
with the property that
$\ev|_{\Pi_{\mathcal B}^{-1}(J')}$
is transverse to $f_i$ for each $i \in \N$
where $\ev$ and $\Pi_{\mathcal B}$ are defined in Equations
(\ref{equation ev}) and (\ref{equation projection map to almost complex structures}) respectively.
Then $\ccJ_\epsilon(J|_U,f)$ is ubiquitous in ${\mathcal B}$.
\end{lemma}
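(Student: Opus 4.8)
The plan is to prove this by a parametric transversality (Sard--Smale) argument applied to a fibered universal moduli space, in the style of \cite[Chapter 4]{schwarz1995cohomology} and \cite[Chapter 3]{McDuffSalamon:Jholomorphiccurves}. Fix the collection $f = (f_i)_{i \in \N}$ with $f_i \colon Q_i \to \Sigma \times M$, and recall from Proposition~\ref{proposition bundle for d bar equation}, Proposition~\ref{proposition transverse to zero} and Corollary~\ref{corollary banach manifold} that ${\mathcal D} = \overline{\partial}_{{\mathcal B}\times{\mathcal P}}^{-1}(0) \cap {\mathcal S}$ is a Banach submanifold of ${\mathcal S}$, that the evaluation map $\ev \colon \Sigma \times {\mathcal D} \to \Sigma \times M$ of Lemma~\ref{lemma surjectivity of evaluation map} is smooth and a submersion, and that $\Pi_{\mathcal B} \colon {\mathcal D} \to {\mathcal B}$ is Fredholm with ubiquitous set $(\Pi_{\mathcal B})^{\reg}$ of regular values (Proposition~\ref{proposition regular subset of J}). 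For each $i \in \N$ I would form the fiber product
$$
{\mathcal D}_i := \{ (\sigma, \delta, q) \in \Sigma \times {\mathcal D} \times Q_i \ : \ \ev(\sigma,\delta) = f_i(q) \}
$$
of $\ev$ with $f_i$ over $\Sigma \times M$, together with the projection $\pi_i \colon {\mathcal D}_i \to {\mathcal B}$, $(\sigma,\delta,q) \mapsto \Pi_{\mathcal B}(\delta)$.

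First I would check that ${\mathcal D}_i$ is a smooth Banach submanifold of $\Sigma \times {\mathcal D} \times Q_i$: it is the preimage of the diagonal $\Delta \subset (\Sigma \times M)^2$ under $(\sigma,\delta,q) \mapsto (\ev(\sigma,\delta), f_i(q))$, and since $\ev$ is a submersion this map is transverse to $\Delta$, so ${\mathcal D}_i$ has finite real codimension $\dim_\R(\Sigma \times M) = 2n+2$. Next I would verify that $\pi_i$ is Fredholm. The composition $\Sigma \times {\mathcal D} \times Q_i \to {\mathcal D} \xrightarrow{\Pi_{\mathcal B}} {\mathcal B}$ is $\Pi_{\mathcal B}$ precomposed with the projection discarding the finite-dimensional factors $\Sigma$ and $Q_i$, hence Fredholm of index $\operatorname{ind}(\Pi_{\mathcal B}) + \dim_\R \Sigma + \dim_\R Q_i$; restricting a Fredholm map to the finite-codimension submanifold ${\mathcal D}_i$ again yields a Fredholm map, so $\pi_i$ is Fredholm of index $\operatorname{ind}(\Pi_{\mathcal B}) + \dim_\R Q_i - 2n$, which is exactly the expected dimension of the fiber product $\Sigma \times {\mathcal M}(H,J',\gamma) \times_{\Sigma\times M} Q_i$.

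Then the Sard--Smale theorem gives that the set $R_i \subset {\mathcal B}$ of regular values of $\pi_i$ is ubiquitous. For $J'$ in the ubiquitous set $R_i \cap (\Pi_{\mathcal B})^{\reg}$, the standard parametric transversality principle shows that $\ev|_{\Sigma \times \Pi_{\mathcal B}^{-1}(J')}$ is transverse to $f_i$: since $\Pi_{\mathcal B}^{-1}(J')$ is a manifold and $J'$ is a regular value of $\pi_i$, any tangent vector to $\pi_i^{-1}(J')$ unfolds to a tangent vector to ${\mathcal D}_i$, which witnesses the required transversality after projecting out the ${\mathcal B}$-direction. Intersecting over all $i \in \N$ and with the ubiquitous set $(\Pi_{\mathcal B})^{\reg}$ of $H$-regular elements (Proposition~\ref{proposition regular subset of J}, see also Definition~\ref{definition regular almost complex structure}), one obtains that $\ccJ_\epsilon(J|_U, f)$ contains a countable intersection of dense open subsets of ${\mathcal B}$, hence is ubiquitous.

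The step I expect to be the main obstacle is the Fredholm property of $\pi_i$ together with the bookkeeping around the open set ${\mathcal S}$: one must keep in mind that $\Pi_{\mathcal B}^{-1}(J')$ is precisely the part of ${\mathcal M}(H,J',\gamma)$ lying in ${\mathcal S}$ (the trajectories that actually feel perturbations supported over $U$), and confirm that the "restriction of a Fredholm operator to a closed finite-codimension subspace" step is carried out with the correct index. The remaining ingredients --- second countability of ${\mathcal D}_i$ so that Sard--Smale applies, the passage from regularity of $\pi_i$ to transversality of $\ev|_{\Sigma \times \Pi_{\mathcal B}^{-1}(J')}$, and the eventual assembly over varying $U$, $\epsilon$, $J$ and over $\gamma \in \Gamma(H)$ via Lemma~\ref{lemma ubiquitous from Banach ubiquitous} --- are routine and parallel the arguments in \cite[Chapter 4]{schwarz1995cohomology}.
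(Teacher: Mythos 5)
Your argument is correct and follows essentially the same route as the paper: the fiber product ${\mathcal D}_i$ you form is exactly the pullback $\widetilde{Q}_i := \ev^* f_i$ used in the paper, your $\pi_i$ is the paper's composition $P \circ \widetilde{\Pi}_i$, and both proofs rest on the submersivity of $\ev$ (Lemma \ref{lemma surjectivity of evaluation map}), the Fredholm property of $\Pi_{\mathcal B}$ (Proposition \ref{proposition regular subset of J}), Sard--Smale, and intersection with the ubiquitous set of $H$-regular structures. Your write-up merely spells out the index bookkeeping and the parametric transversality step that the paper leaves implicit.
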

\proof
Let $\widetilde{\Pi}_i := \ev^* f_i : \widetilde{Q}_i \lra{} \Sigma \times {\mathcal D}$
be the pullback of $f_i$ for each $i \in \N$ (this exists by Lemma \ref{lemma surjectivity of evaluation map}).
%
Since $\Sigma \times M$ and $Q_i$ is finite dimensional, we have that $\widetilde{\Pi}_i$ is a Fredholm map. Hence the composition
$P \circ \widetilde{\Pi}_i$
is Fredholm by Proposition \ref{proposition regular subset of J}
where
$P : \Sigma \times {\mathcal D} \lra{} {\mathcal B}$ is the natural projection map.
Hence the set of regular values
${\mathcal R}_i$ of $P \circ \widetilde{\Pi}_i$ is ubiquitous in ${\mathcal B}$.
The subset of $H$-regular almost complex structures $\ccJ^\reg_H$
is ubiquitous by Lemma \ref{proposition ubiquitous regular}.
Our lemma now follows from
the fact that
$\ccJ_\epsilon(J|_U,f)$
contains the ubiquitous set $\ccJ^\reg_H \cap \cap_{i \in \N} {\mathcal R}_i$.
\qed

\begin{proof}[Proof of Proposition \ref{proposition regular subset for surface}.]
Let $S_i, U_i \subset \Sigma$, $i \in \N$ be non-empty relatively compact open subsets of $\Sigma$
so that $\cup_i S_i = \Sigma$, $S_i \subset S_{i+1}$ and 
$\overline{U_i} \cap \overline{S_i} = \emptyset$ 
for all $i \in \N$.
For each $J  = (J_\sigma)_{\sigma \in \Sigma} \in \ccJ^\Sigma(J^\#,\check{C})$,
let
$\ccM^{\textnormal{vert}}(J)$
be the set of somewhere injective maps
$v : \P^1 \lra{} \Sigma \times M$
where
\begin{enumerate}
\item the image of $v$ is contained in $\{\sigma_v\} \times M$ for some $\sigma_v \in \Sigma$ and not contained in $\Sigma \times V$ and
\item $v$ is $J_{\sigma_v}$-holomorphic after identifying $\{\sigma_v\} \times M$ with $M$ in the natural way.
\end{enumerate}
Then by the methods in
\cite[Section 3.2]{McDuffSalamon:Jholomorphiccurves}
we have that there is a ubiquitous subset
$\ccJ^{\reg}_{\P^1} \subset \ccJ^\Sigma(J^\#,\check{C})$
so that $\ccM^{\textnormal{vert}}(J)$ is a manifold whose connected components are of dimension
at most $2n-2$ where $2n$ is the dimension of $M$ and so that the
evaluation map
$$\ev_{\P^1} :  \ccM^{\textnormal{vert}}(J) \lra{} \Sigma \times M, \quad \ev_{\P^1}(u) := u(0)$$
is smooth.
Then
for each $J \in \ccJ^\reg_{\P^1}$,
there exists a countable collection of smooth maps $f_J^i : W_J^i \lra{} \Sigma \times M$, $i \in \N$ where $\dim(W_J^i) \leq 2n-2$
so that
for each $u \in \ccM^{\textnormal{vert}}(J)$, we have that
$\textnormal{image}(u) \subset \cup_{i \in \N} f_J^i(W^i_J)$.
We can assume that the functions $f^i_J$, $i \in \N$, $J \in \ccJ^\reg_{\P^1}$
have the property that if there exists $i \in \N$ so that $J = (J_\sigma)_{\sigma \in \Sigma}, J' = (J'_{\sigma})_{\sigma \in \Sigma} \in \ccJ^\reg_{\P^1}$ satisfies $J_\sigma = J'_\sigma$ for all $\sigma \in S_i$
then $E_{i,k,J} := (f^k_J)^{-1}(S_i \times M)$
is diffeomorphic to $(f^k_{J'})^{-1}(S_i \times M)$
and $f^k_J|_{E_{i,k,J}} = f^k_{J'}|_{E_{i,k,J}}$
under this identification
for all $k \in \N$.
Since any manifold is a countable
union of compact codimension $0$ submanifolds with boundary,
we can assume that $W^i_J$ is a compact manifold with boundary for each $i \in \N$ as well.

We now wish to write the moduli spaces
$\ccM(H,J,\gamma)$ as a union of compact sets for each $H,J,\gamma$ in a consistent way.
We will use Gromov compactness ideas to do this.
For each $j \in I_\pm$ and
each non-degenerate fixed point
$p$ of $\phi^{\kappa_j H^j}_1$,
let $N_{j,p}$ be a neighborhood of $p$
whose closure $\overline{N_{j,p}}$
does not contain any other fixed points of $\phi^{\kappa_j H^j}_1$.
For each $J \in \ccJ^\Sigma(J^\#,\check{C})$
and each $\gamma = (\gamma^j)_{j \in I_- \sqcup I_+} \in \Gamma(H)$,
let
$K(i,J,\gamma) \subset \ccM(H,J,\gamma)$
be the subset of maps
$u : \Sigma \lra{} M$
satisfying $|du| \leq i$
with respect to a fixed metric on $\Sigma$ which is translation invariant on the cylindrical ends
and
so that $u(\iota_j(s,t)) \in \phi^{\kappa_j H^j}_t(\overline{N_{j,\widehat{\gamma}^j(0)}})$
for each $\pm s \geq i$ and $t \in \T$
where $\widehat{\gamma}^j$
is the $1$-periodic associated to $\gamma^j$
for each $j \in I_\pm$.
Then a Gromov compactness argument
(e.g. \cite[Theorem 4.3.22]{schwarz1995cohomology})
tells us that for each $J \in \ccJ^\Sigma(J^\#,\check{C})$ and $\gamma \in \Gamma(H)$,
$K(i,J,\gamma)$ is compact for each $i \in \N$ and the union of such subsets over all $i$ is $\ccM(H,J,\gamma)$.

Let $\ccJ^\reg_H \subset \ccJ^\Sigma(J^\#,\check{C})$
be the subspace of $H$-regular almost complex structures as in Definition
\ref{definition regular almost complex structure}.
This is a ubiquitous subset by Proposition \ref{proposition ubiquitous regular}.
Let $(N_i)_{i \in \N}$ be open subsets of $M$ satisfying $\cap_{i \in \N} N_i = V \cup ([1+\epsilon/8,1+\epsilon/2] \times C)$.
For each $i \in \N$, $J \in \ccJ^\Sigma(J^\#,\check{C})$
and $\gamma \in \Gamma(H)$
let
$\ccM(i,J,\gamma) \subset K(i,J,\gamma)$
be the subset consisting of maps $u : \Sigma \lra{} M$
satisfying $u(U_i) \cap N_i = \emptyset$.
Let
$$
\ev : \Sigma \times \ccM(H,J,\gamma) \lra{} \Sigma \times M, \quad \ev(\sigma,u) := (\sigma,u(\sigma))
$$
be the natural evaluation map.
For each $i \in \N$,
$J \in \ccJ^\reg_H$
and $\gamma \in \Gamma(H)$,
let $\ccM^{\textnormal{tr}}(i,J,\gamma) \subset \ccM(H,J,\gamma)$ be the open subset
consisting of maps $u$
for which
there exists a neighborhood $N'_u$ of $u$
in $\ccM(H,J,\gamma)$ so that
$\ev|_{S_i \times N'_u}$ is transverse
to $f^k_J|_{E_{i,k,J}}$
for each $k \leq i$.
%
Let $\ccJ^\reg_{i,\gamma} \subset \ccJ^\reg_H$
be the subset of 
almost complex structures $J$ satisfying
$\ccM(i,J,\gamma) \subset \ccM^{\textnormal{tr}}(i,J,\gamma)$ for each $i \in \N$ and $\gamma \in \Gamma(H)$.
%
Since 
$\ccM(i,J,\gamma)$
is compact
for each $i \in \N$, $J \in \ccJ^\reg_{i,\gamma}$ and $\gamma \in \Gamma(H)$
and since transversality is an open condition so long as the corresponding domains are compact,
we have that $J^\reg_{i,\gamma} \subset \ccJ^\reg_H$ is open.
It is also dense by Lemma \ref{lemma transverse ubiquitous}.
Hence
$\ccJ^\reg_{i,\gamma}$ is a ubiquitous subset of $J^\Sigma(J^\#,\check{C})$
for each $i \in \N$, $\gamma \in \Gamma(H)$ since modifying $J$ inside $U_i \times M$
does not change $f^k_J|_{E_{i,k,J}}$ for all $k \leq i$.
Hence $\ccJ^\reg := \cap_{i,\gamma} \ccJ^\reg_{i,\gamma}$
is ubiquitous in $J^\Sigma(J^\#,\check{C})$.

Now let $J \in \ccJ^\reg$
and let $u \in \ccM(H,J,\gamma)$
for some $\gamma \in \Gamma(H)$.
Since $\gamma \in \Gamma(H)$ and since $U_i$ eventually becomes disjoint from any compact subset of $\Sigma$ for $i$ large enough,
there exists $i_u \in \N$
so that $u \in \ccM(i,J,\gamma)$ for each $i \geq i_u$.
Since $J \in \ccJ^\reg_{i,\gamma}$ for each $i \geq i_u$,
there is a neighborhood $N'_{u,i}$ of $u$ in $\ccM(H,J^\#,\gamma)$
so that
the evaluation map $\ev|_{\Sigma \times N'_{u,i}}$
is transverse to $f^k_J|_{E_{i,k,J}}$
for each $k \leq i$ and each $i \geq i_u$.
Therefore since $S_j \subset S_{j+1}$ for all $j \in \N$,
we get that $\ev$ is transverse to
$f^k_J|_{E_{i,k,J}}$ for each $i,k \in \N$ satisfying $k \leq i$
at each point $u \in \ccM(H,J,\gamma)$.
Hence $\ccJ^\reg \subset \ccJ^\reg(H,J^\#,\check{C})$
and so
$\ccJ^\reg(H,J^\#,\check{C})$
is ubiquitous.

Also $M(H,J,\gamma^\#)$
is a manifold of dimension $k$
where $k$ is defined in (\ref{equation conley zehnder index sum})
for each $\gamma^\# \in \Gamma(H)$
and each $J \in \ccJ^{\Sigma,\reg}(H,J^\#,\check{C})$
by Proposition \ref{proposition regular subset of J}.
\end{proof}

\section{Appendix C: Floer trajectories, Filtrations and Compactness.}

Throughout this section we will use the following notation (see Definitions \ref{defn Riemann Surface} and \ref{definition riemann surface admissible}):
\begin{itemize}
\item $\Sigma$ is a Riemann surface with $n_-$ negative cylindrical ends and $n_+$ positive cylindrical ends labeled by finite sets $I_-$, $I_+$ respectively,
\item $\iota_j : \bbI_\pm \times \T \hookrightarrow \Sigma$
is the cylindrical end corresponding to $j$ for each $j \in I_\pm$,
\item $\beta$ is a $\Sigma$-compatible $1$-form and $(\kappa_j)_{j \in I_- \sqcup I_+}$ are the weights of $\beta$ at each cylindrical end,
\item $\check{C}$ is a contact cylinder with associated Liouville domain $D$,
\item $H^\# := (H^j)_{j \in I_- \sqcup I_+}$
is a tuple of Hamiltonians and
\item $J^\# := (J^j)_{j \in I_- \sqcup I_+}$
is a tuple of families of almost complex structures
in $\ccJ^\T(J_0,V,\omega)$.
\end{itemize}

\begin{defn} \label{defn partially converging to capped orbits}
	Let
	$(\gamma_j)_{j \in I_- \sqcup I_+} = (\widetilde{\gamma}_j,\check{\gamma}_j)_{j \in I_- \sqcup I_+}$
	be capped loops where $\widetilde{\gamma}_j : \Sigma_j \lra{} M$ for each $j \in I_- \sqcup I_+$ (Definition \ref{defn capped 1-periodic orbit}).
	A smooth map $u : \Sigma \lra{} M$
	{\it partially converges to $(\gamma_j)_{j \in I_- \sqcup I_+}$}
	if there is a sequence $a^j_1,a^j_2,a^j_3,\cdots \in (0,\infty)$ tending to 
	$a^j_\infty \in (0,\infty]$
	for each $j \in I_\pm$
	and a
	sequence of capped loops $$(\gamma^k_j)_{j \in I_- \sqcup I_+, \ k \in \N} = (\widetilde{\gamma}^k_j,\check{\gamma}^k_j)_{j \in I_- \sqcup I_+, k \in \N}$$
	where $\widetilde{\gamma}_j^k : \Sigma_j^k \lra{} M$ for each $j \in I_- \sqcup I_+$, $k \in \N$
	so that
	\begin{itemize}
		\item 	$\widetilde{\gamma}^k_j(\check{\gamma}^k_j(t)) = u(\pm a^j_k,t)$ for each $t \in \T$, $j \in I_\pm$,  and $k \in \N$,
		\item the surface $u^k : S_k \lra{} M$ obtained by gluing
		$$u|_{\Sigma - \cup_{j \in I_-} \iota_j((-\infty,-a^j_k) \times \T)
		 - \cup_{j \in I_+} \iota_j((a^j_k,\infty) \times \T )}$$
		 to each oriented surface $\widetilde{\gamma}^k_j$, $j \in I_- \sqcup I_+$
		 is null-homologous for each $k \in \N$ and
		 \item $\gamma^k_j$ $C^0$ converges to $\gamma_j$ in the space of capped loops as $k \lra{} \infty$ for each $j \in I_- \sqcup I_+$.
	\end{itemize}

\end{defn}

Note that if $u$ converges to
capped $1$-periodic orbits $(\gamma_j)_{j \in I_- \sqcup I_+}$ as in Definition \ref{definition riemann surface admissible}
then it partially converges to these capped $1$-periodic orbits. Also note that $a^j_\infty$ does not have to be equal to $\infty$.

\begin{defn}
Let $V$ be a vector space over $\R$
and $\omega_V \in \bigwedge^2 V^*$.
A linear complex structure $J_V : V \lra{} V$
is {\it $\omega_V$-semi tame}
if
$\omega_V(v,J_V(v)) \geq 0$ for all $v \in V$.
An almost complex structure
$J$ on $M$ is {\it $\widetilde{\omega}$-semi tame}
for some $\widetilde{\omega} \in \Omega^2(M)$
if $J|_x$ is $\widetilde{\omega}|_x$-semi tame
for all $x \in X$.
Similarly,
a smooth family of almost complex structures
$(J_\sigma)_{\sigma \in \Sigma'}$
is {\it $\widetilde{\omega}$-semi tame}
if $J_\sigma$ is $\widetilde{\omega}$-semi tame for each $\sigma \in \Sigma'$.
\end{defn}

\begin{lemma} \label{lemma filtration}
	Let $\widetilde{\omega} \in \Omega^2(M)$ be closed $2$-form.
	Let
	\begin{enumerate}
		\item $H := (H_\sigma)_{\sigma \in \Sigma}$
		be a $\Sigma$-compatible family of smooth functions (as in Definition \ref{definition riemann surface admissible})
		with limits $H^\#$,
		\item let $F := (F_\sigma)_{\sigma \in \Sigma}$
		be a smooth family of functions which is $\Sigma$-compatible
		with limits $F^\# = (F^j)_{j \in I_- \sqcup I_+}$ and
		\item let $J := (J_\sigma)_{\sigma \in \Sigma} \in \ccJ^\Sigma(V,J_0,\omega)$
		be a $\Sigma$-compatible family of almost complex structures with limits $J^\#$.
	\end{enumerate}
	Suppose
	\begin{itemize}
		\item $H$ is $\widetilde{\omega}$-compatible where $F$ is the primitive associated to $(H,\widetilde{\omega})$ as in Definition \ref{definition action}, $J$ is $\widetilde{\omega}$-semi tame and
		\item $d(f^x \beta) \leq 0$ for all $x \in M$
		where $$f^x : \Sigma \lra{} \R, \quad f^x(\sigma) := F_\sigma(x), \ \forall \sigma \in \Sigma.$$
	\end{itemize}
	
	Then for any solution $u : \Sigma \lra{} M$
	of the $(H,J)$-Floer equation which partially converges capped loops $(\gamma_j)_{j \in I_- \sqcup I_+}$,
	we have
	\begin{equation}
	\sum_{j \in I_-} \cA_{\kappa_j H^j, \widetilde{\omega},\kappa_j F^j}(\gamma_j) \geq \sum_{j \in I_+} \cA_{\kappa_j H^j, \widetilde{\omega},\kappa_j F^j}(\gamma_j)
	\end{equation}
	(see Equation (\ref{eqn:fomegahactionfunctional})).
	
\end{lemma}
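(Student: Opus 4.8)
The plan is to establish the usual a priori energy estimate, in the form adapted to general Riemann--surface domains, to the $1$-form $\beta$, and to the alternative primitive $F$. The starting point is that, setting $v := du + X_{H_\sigma}\otimes\beta \in \Omega^1(\Sigma,u^*TM)$, the Floer equation (\ref{eqn:floer}) says precisely that $v$ intertwines the complex structures, so that in any local conformal coordinate $z = s+it$ one has $v(\partial_t) = J_\sigma\,v(\partial_s)$. Consequently $\widetilde\omega\big(v(\partial_s),v(\partial_t)\big) = \widetilde\omega\big(v(\partial_s),J_\sigma v(\partial_s)\big)\ge 0$, since $J$ is $\widetilde\omega$-semi tame, and the $2$-form $\widetilde\omega(v(\partial_s),v(\partial_t))\,ds\wedge dt$ (conformally invariant, being up to a positive factor the geometric energy density of $u$ measured against $\widetilde\omega$) is globally defined and non-negative on $\Sigma$.

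The key computation I would carry out is a pointwise ``energy identity''. Expanding $\widetilde\omega(v(\partial_s),v(\partial_t))$ bilinearly, the pure $du$ term contributes $u^*\widetilde\omega$, the pure $X_{H_\sigma}$ term vanishes, and the two mixed terms are rewritten using the defining relation $i_{X_{H_\sigma}}\widetilde\omega = -dF_\sigma$ of the primitive (Definition \ref{definition action}). Splitting off the total differential of the function $\sigma\mapsto F_\sigma(u(\sigma))$ from the ``explicit-in-$\sigma$'' part of $F_\sigma$, and using that the $d\beta$-terms produced by these two contributions cancel, one obtains an identity of the form
\begin{equation}
\widetilde\omega\big(v(\partial_s),v(\partial_t)\big)\,ds\wedge dt \;=\; u^*\widetilde\omega \;+\; d\big((F_\sigma\circ u)\,\beta\big) \;\pm\; \eta_\sigma ,
\end{equation}
where $f^x(\sigma):=F_\sigma(x)$ and $\eta_\sigma$ is the value at $\sigma$ of the $2$-form obtained by freezing $x = u(\sigma)$ in $d(f^x\beta)$; the hypothesis $d(f^x\beta)\le 0$ is exactly what guarantees that, with the sign conventions in force, this last term contributes favorably after integration.

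I would then integrate this identity over the truncated surfaces $\Sigma_k := \Sigma - \bigcup_{j\in I_-}\iota_j((-\infty,-a^j_k)\times\T) - \bigcup_{j\in I_+}\iota_j((a^j_k,\infty)\times\T)$ furnished by the partial-convergence data (Definition \ref{defn partially converging to capped orbits}): the term $\int_{\Sigma_k}u^*\widetilde\omega$ is rewritten, using closedness of $\widetilde\omega$ together with the null-homology of the closed surface obtained by capping $u|_{\Sigma_k}$ with the surfaces $\widetilde\gamma^k_j$, as a signed sum of the symplectic areas $\int_{\Sigma^k_j}\widetilde\gamma^{k*}_j\widetilde\omega$; the term $\int_{\Sigma_k}d((F_\sigma\circ u)\,\beta)$ becomes, by Stokes, a signed sum of boundary-circle integrals which, since $\iota_j^*\beta = \kappa_j\,dt$ and $(F_\sigma)$ has already stabilized to $F^j$ near the ends, equal $\kappa_j\int_0^1 F^j_t(\overline\gamma^k_j(t))\,dt$; and the $\eta$-term has a fixed sign by hypothesis. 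Collecting everything gives $0 \le (\text{geometric energy}) = \big(\sum_{j\in I_-}\cA_{\kappa_j H^j,\widetilde\omega,\kappa_j F^j}(\gamma^k_j) - \sum_{j\in I_+}\cA_{\kappa_j H^j,\widetilde\omega,\kappa_j F^j}(\gamma^k_j)\big) - (\text{a non-negative quantity})$, so the displayed difference of actions is $\ge 0$. Letting $k\to\infty$ and using the $C^0$-convergence $\gamma^k_j\to\gamma^j$ in the space of capped loops --- so that both the symplectic areas and the boundary $F$-integrals converge to the corresponding quantities for $\gamma^j$ --- then yields $\sum_{j\in I_-}\cA_{\kappa_j H^j,\widetilde\omega,\kappa_j F^j}(\gamma^j)\ge\sum_{j\in I_+}\cA_{\kappa_j H^j,\widetilde\omega,\kappa_j F^j}(\gamma^j)$, which is the assertion. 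The main obstacle I anticipate is the orientation/sign bookkeeping --- making the null-homology gluing convention, the boundary orientations in Stokes, and the conventions of (\ref{eqn:fomegahactionfunctional}) fit together so that the inequality comes out in the stated direction; as a secondary point, one must check that the truncation circles may be chosen inside the region where $\beta$, $H$ and $F$ have stabilized to their cylindrical limits, which is automatic when $u$ converges honestly (all $a^j_\infty = \infty$) and requires a short extra argument, or a reduction to that case, when some $a^j_\infty$ is finite.
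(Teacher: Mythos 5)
Your proposal is correct and follows essentially the same route as the paper's proof: you form the non-negative "twisted" energy density $\widetilde\omega(v(\partial_s),v(\partial_t))$ from the Floer equation and semi-tameness, identify it with $u^*\widetilde\omega+\beta\wedge u^*dF_\sigma$ via $i_{X_{H_\sigma}}\widetilde\omega=-dF_\sigma$, bound it by $u^*\widetilde\omega-d(\widehat F\beta)$ using $d(f^x\beta)\le 0$, and conclude by Stokes together with the null-homology of the capped surface and the $C^0$-limit of the $\gamma^k_j$. The sign and truncation issues you flag are exactly the bookkeeping the paper carries out (it integrates over all of $\Sigma$ and passes to the limit in the boundary terms), and they do not constitute a gap.
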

\begin{proof}
	The above inequality will follow from Stokes' formula.
	Let $\widehat{\omega}_\beta$ be a  $2$-form on $\Sigma$ defined by
	$$\widehat{\omega}_\beta(Z_1,Z_2) := \widetilde{\omega}(du(Z_1) - \beta(Z_1)X_{H_\sigma},du(Z_2) - \beta(Z_2)X_{H_\sigma}), \quad \forall \ Z_1,Z_2 \in T_\sigma \Sigma, \ \sigma \in \Sigma.$$

	In order to prove our Lemma, we will show
	\begin{enumerate}
		\item \label{item:integralgeq0}
		$\int_\Sigma \widehat{\omega}_\beta \geq 0$ and
		\item \label{item:stokesformula}
		$\int_\Sigma \widehat{\omega}_\beta \leq
		\sum_{j \in I_-} \cA_{\kappa_j H^j, \widetilde{\omega},\kappa_j F^j}(\gamma_j) - \sum_{j \in I_+} \cA_{\kappa_j H^j, \widetilde{\omega},\kappa_j F^j}(\gamma_j).$
	\end{enumerate}
	
	We will now prove (\ref{item:integralgeq0}).
	If ${\bf j}$ is the complex structure on $\Sigma$, it is sufficient for us to show
	$\widehat{\omega}_\beta(Z,{\bf j} Z) \geq 0$ for all $Z \in T\Sigma$.
	Fix $z \in \Sigma$ and $Z \in T_z \Sigma$.
	Then $\widehat{\omega}_\beta(Z,{\bf j}(Z)) = \widetilde{\omega}(du(Z) - \beta(Z)X_{H_\sigma},du({\bf j}(Z)) - \beta({\bf j}(Z)X_{H_\sigma}))$.
	Since $u$ satisfies Floer's equation
	(\ref{eqn:floer}),
	we get that the above expression is equal to:
	$\widetilde{\omega}(du(Z) - \beta(Z)X_{H_\sigma},J_\sigma (du(Z) - \beta(Z)X_{H_\sigma}))$
	which is $\geq 0$ since $J$ is $\widetilde{\omega}$-semi tame.
	
	We now need to prove (\ref{item:stokesformula}).
	We will do this by first modifying $\widehat{\omega}_\beta$ and then applying Stokes' formula.
	Let us look at a holomorphic
	coordinate chart $U$ inside $\Sigma$
	with holomorphic coordinate $z = s+it$
	and consider the vectors
	$\partial_s := \frac{\partial}{\partial s}$,
	$\partial_t := \frac{\partial}{\partial t}$  at this point.
	Then
	$$\widehat{\omega}_\beta(\partial_s,\partial_t)
	= \widetilde{\omega}(du(\partial_s) -\beta(\partial_s) X_{H_z} ,
	du(\partial_t) - \beta(\partial_t)X_{H_z} ) = $$
	$$
	\widetilde{\omega}
	(du(\partial_s),du(\partial_t))
	+ 
	\beta(\partial_s)
	\widetilde{\omega}
	(-X_{H_z} ,
	du(\partial_t))
	-
	$$
	$$ 
	\beta(\partial_t)
	\widetilde{\omega}
	(du(\partial_s),X_{H_z} )
	+
	\beta(\partial_s)
	\beta(\partial_t)
	\widetilde{\omega}
	(X_{H_z} , X_{H_z} ) \stackrel{(\ref{equation H compatible with omega})}{=} 
	$$
	$$
	\widetilde{\omega}
	(du(\partial_s),du(\partial_t))
	+ 
	\beta(\partial_s)
	dF_z(du(\partial_t))
	-
	\beta(\partial_t)
	dF_z
	(du(\partial_s))=
	$$
	$$
	\left(u^* \widetilde{\omega}
	+
	\beta \wedge u^* dF_z \right)(\partial_s,\partial_t).
	$$
	Therefore
	\begin{equation} \label{eqn:omegabetaformula}
	\widehat{\omega}_\beta|_\sigma = u^* \widetilde{\omega}
	+
	\beta \wedge u^* dF_\sigma, \quad \forall \ \sigma \in \Sigma.
	\end{equation}
	
	Define $\widehat{F} : \Sigma \lra{} \R$, $\widehat{F}(\sigma) := F_\sigma(u(\sigma))$.
	Since $d(f^x \beta) \leq 0$ for all $x \in M$,
	we get by Equation (\ref{eqn:omegabetaformula}),
	\begin{equation} \label{eqn:inequalityone}
	\widehat{\omega}_\beta|_\sigma \leq u^* \widetilde{\omega} - d(\widehat{F} \beta).
	\end{equation}

	Let $(\overline{\gamma}_j)_{j \in I_- \sqcup I_+}$
	be the associated loops of the capped loops $(\gamma_j)_{j \in I_- \sqcup I_+}$.
	Since the maps
	$\gamma^k_j$ from Definition \ref{defn partially converging to capped orbits} $C^0$ converge to $\gamma_j$ as $k$ tends to infinity, we have
	by Stokes' formula,
	\begin{equation} \label{eqn:stokesformulaapplied}
	\int_\Sigma d(\widehat{F} \beta) = \sum_{j \in I_+} \int_0^1 \kappa_j F^j_t(\overline{\gamma}_j(t)) dt - \sum_{j \in I_-} \int_0^1 \kappa_j F^j_t(\overline{\gamma}_j( t)) dt.
	\end{equation}
	Let $\gamma_j = (\widetilde{\gamma}_j,\check{\gamma}_j)$ be our capped loop where
	$\widetilde{\gamma}_j : \Sigma_j \lra{} M$
	for each $j \in I_- \sqcup I_+$.
	Since the surface $u^k$ from Definition \ref{defn partially converging to capped orbits}
	is null-homologous for each $k$,
	we get the following equation
	\begin{equation} \label{eqn:cappingzeroequation}
	\sum_{j \in I_-} \int_{\Sigma_j} (\widetilde{\gamma}_j)^*
	 \widetilde{\omega} + \int_\Sigma u^*\widetilde{\omega} = \sum_{j \in I_+} \int_{\Sigma_j} (\widetilde{\gamma}_j)^*\widetilde{\omega}.
	\end{equation}
	Therefore by Equations (\ref{eqn:inequalityone}),
	(\ref{eqn:stokesformulaapplied}),
(\ref{eqn:cappingzeroequation}) and
(\ref{eqn:fomegahactionfunctional})
	we have
	$$\int_\Sigma \widehat{\omega}_\beta \leq
	\sum_{j \in I_-} \cA_{\kappa_j H^j, \widetilde{\omega},\kappa_j F^j}(\gamma_j) - \sum_{j \in I_+} \cA_{\kappa_j H^j, \widetilde{\omega},\kappa_j F^j}(\gamma_j).$$
	Therefore
	(\ref{item:integralgeq0})
	and (\ref{item:stokesformula})
	hold and we are done.
\end{proof}

\begin{defn} \label{defn covergence degree one}
	(see \cite[Definition 4.3.20]{schwarz1995cohomology}).
%
Let $\gamma := (\gamma_j)_{j \in I_- \sqcup I_+}$
be non-degenerate capped $1$-periodic
orbits of $(\kappa_j H_j)_{j \in I_- \sqcup I_+}$.
Let $H \in \ccH^{\Sigma}(H^\#,\check{C})$
and $J \in \ccJ^{\Sigma}(J^\#,\check{C})$ as in Definition \ref{definition riemann surface admissible}.
A sequence
$(u_k)_{k \in \N}$ in $\ccM(H,J,\gamma)$
(Definition \ref{definition riemann surface admissible})
{\it geometrically converges to a broken solution $(u,v)$ of degree $1$} where
$u : \Sigma \lra{} M$,
$v : \R \times \T \lra{} M$ if there exists
\begin{itemize}
\item $m \in I_\pm$,
\item a non-degenerate capped $1$-periodic orbit $\widehat{\gamma}$ of $\kappa_m H^m$ and
\item a sequence $(s_k)_{k \in \N}$ tending to $\infty$ if $m \in I_+$ and $-\infty$ if $m \in I_-$
\end{itemize}
so that $u_k$ converges in $C^\infty_{\text{loc}}$
to $u$
and $u_k \circ \iota_m \circ \tau_k$
converges in $C^\infty_{\text{loc}}$
to $v$
where
\begin{itemize}
\item  $u \in \ccM(H,J,\check{\gamma})$ where
$\check{\gamma} = (\check{\gamma}_j)_{j \in I_- \sqcup I_+}, \quad \check{\gamma}_j =
\left\{
\begin{array}{cc}
\widehat{\gamma} & \text{if} \ j = m \\
\gamma_j & \text{otherwise}
\end{array},
\right.
$
\item $v \in \ccM(\kappa_m H_m, J^m,\gamma')$ 
(Definition \ref{definition space of Floer cylinders})
where $\gamma' = (\gamma_m,\widehat{\gamma})$
 if $m \in I_-$ and
$\gamma' = (\widehat{\gamma},\gamma_m)$ if $m \in I_+$,
\item $\tau_k$ is the map $\tau_k : \bbI_{s_k} \times \T \lra{} \bbI_\pm \times \T, \quad \tau_k(s,t) := (s + s_k,t)$ for each $k \in \N$
where $\bbI_{s_k} := [-s_k,\infty)$ if $m \in I_+$
and $\bbI_{s_k} := (-\infty,-s_k]$ if $m \in I_-$,
\item $|(\check{\gamma}^j)_{j \in I_-}| -  |(\check{\gamma}^j)_{j \in I_+}| = 0$ and
 $|\widehat{\gamma}| - |\gamma_m| = 1$
 if $m \in I_-$
 and $|\gamma_m| - |\widehat{\gamma}| = 1$ if $m \in I_+$.
\end{itemize}
We will call the capped $1$-periodic orbit $\widehat{\gamma}$ the {\it connecting orbit}
and $m \in I_- \sqcup I_+$ the {\it connecting index}.
%
\end{defn}

The following proposition
is inspired by ideas from
\cite[Section 10.1]{cieliebak2015symplectic}.

\begin{prop} \label{proposition compactness result}
Suppose $I_- = \{\star\}$ is a single element set and
(see Definitions \ref{defn chain complex} and \ref{definition space of Floer cylinders})
\begin{itemize}
\item 
$(a^j_-,a^j_+) \in \Sc(Q^j_-) \times \Sc(Q^j_+)$ is a $\check{C}$-action interval,
\item $\kappa_j H^j \in \ccH^{\T,\reg}(\check{C},a^j_-,a^j_+)$,
 $J^j \in \ccJ^{\T,\reg}(\kappa_j H^j,\check{C})$ and
\item $\gamma_j \in \Gamma^\Z_{\check{C},a^j_-,a^j_+}(H^j)$
\end{itemize}
for each $j \in I_- \sqcup I_+$.
Suppose $Q^\star_\pm \subset Q^j_\pm$ for each $j \in I_+$ and
\begin{equation} \label{equation action lower bound}
a^\star_- \leq \sum_{j \in I_+} a^j_-|_{Q_-^\star},  \quad
a^\star_+  \leq a^j_+|_{Q_+^\star} + \sum_{j' \in I_+ - j}
a^{j'}_-|_{Q_+^\star}, \quad \forall \ j \in I_+.
\end{equation}
Suppose
$|\gamma_\star| -  |(\gamma_j)_{j \in I_+}| = 1$.
Let $H \in  \ccH^\Sigma(H^\#,\check{C})$
(Definition \ref{definition riemann surface admissible})
, $J \in  J^{\Sigma,\reg}(H,J^\#,\check{C})$
(Proposition \ref{proposition regular subset for surface})
and let $(u_k)_{k \in \N}$
be a sequence in $\ccM(H,J,\gamma)$ where $\gamma = (\gamma_j)_{j \in I_- \sqcup I_+}$.

Then
there is a subsequence
$(u_{k_j})_{j \in \N}$
which geometrically converges to a broken solution $(u,v)$ of degree $1$ as in Definition \ref{defn covergence degree one} so that 
the connecting orbit $\widehat{\gamma}$
is an element of
$\Gamma^\Z_{\check{C},a^m_-,a^m_+}(H^m)$
where $m$ is the connecting index.
\end{prop}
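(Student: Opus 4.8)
\textbf{Proof plan for Proposition \ref{proposition compactness result}.}

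The plan is to apply a standard Gromov--Floer compactness argument, using the index bounded hypothesis and the action estimates to rule out all degenerations except the single degree-$1$ breaking described in Definition \ref{defn covergence degree one}. First I would invoke the semi-positivity of $(M,\omega)$ together with Proposition \ref{proposition regular subset for surface}: since $J \in \ccJ^{\Sigma,\reg}(H,J^\#,\check{C})$ and the expected dimension of $\ccM(H,J,\gamma)$ is $1$ (so its image is disjoint from $V$ and, by the standard semi-positive dimension count, no sphere bubbling can occur in the limit), the only source of noncompactness is energy escaping down one of the cylindrical ends. A uniform energy bound is obtained exactly as in Lemma \ref{lemma filtration}: the $(H,\widetilde{\omega})$-action decreases along Floer trajectories, and since all the capped $1$-periodic orbits $\gamma_j$ lie in fixed action intervals, the quantity $\int_\Sigma \widehat{\omega}_\beta$ is bounded above uniformly in $k$. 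This is where the hypotheses \eqref{equation action lower bound} enter, ensuring that when one packages the ends together the resulting bound is finite. One must also use the index bounded hypothesis on $\check{C}$ to confine Floer trajectories and bubbles to a fixed compact region of $M$ (a maximum principle argument in the cylindrical region $[1+\epsilon/8,1+\epsilon/2] \times C$, as in \cite[Section 10.1]{cieliebak2015symplectic}), so that Gromov compactness \cite[Theorem 4.3.22]{schwarz1995cohomology} applies.

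Next I would extract a convergent subsequence. After passing to a subsequence, $u_k$ converges in $C^\infty_{\textnormal{loc}}$ to some $u : \Sigma \lra{} M$ and, by the bubbling-off / breaking analysis, energy may concentrate near finitely many necks along the cylindrical ends. Because the index drop is exactly $1$ and because $(M,\omega)$ is semi-positive with $c_1 = 0$ (so index and energy are linked and holomorphic spheres would force a $\geq 2$ index jump in this setting and are excluded by regularity), there can be at most one neck and it accounts for the full index difference. Concretely: writing $|(\gamma_j)_{j \in I_-}| - |(\gamma_j)_{j \in I_+}| = 1$ and noting any limit configuration partitions this $1$ into a principal component of index $\geq 0$ and a cylinder component of index $\geq 1$ (each component having nonnegative Fredholm index by regularity of $H$, $J$ and the limit almost complex structures $J^m$), the only possibility is index $0$ on the principal component and index $1$ on a single broken cylinder at some end $m \in I_- \sqcup I_+$, with connecting orbit $\widehat{\gamma}$ a non-degenerate capped $1$-periodic orbit of $\kappa_m H^m$. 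This produces $u \in \ccM(H,J,\check{\gamma})$ and $v \in \ccM(\kappa_m H^m,J^m,\gamma')$ exactly as in Definition \ref{defn covergence degree one}.

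Finally I would verify that the connecting orbit $\widehat{\gamma}$ lies in $\Gamma^\Z_{\check{C},a^m_-,a^m_+}(H^m)$. This is an action-filtration bookkeeping step: applying Lemma \ref{lemma filtration} to the broken pieces $u$ and $v$ separately, together with the inequalities \eqref{equation action lower bound} relating $(a^\star_\pm)$ to the collection $(a^j_\pm)_{j \in I_+}$, shows that $\widehat{\gamma}$ must satisfy $a^m_- \leq \cA_{H^m,\check{C}}(\widehat{\gamma})|_{Q^m_-}$ and $a^m_+ \nleq \cA_{H^m,\check{C}}(\widehat{\gamma})|_{Q^m_+}$; one also uses Corollary \ref{corollary action of capped loop disjoint from contact cylinder} to control the action of $\widehat{\gamma}$ near the contact cylinder and the fact that $H^m$ has no $1$-periodic orbits in $[1+\epsilon/8,1+\epsilon/2]\times C$. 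The main obstacle I expect is the energy/action bookkeeping in this last step: one must be careful that the two-sided action estimates, which in the combined moduli space only control the \emph{total} action dropped, can be split consistently across the breaking so that each of $u$ and $v$ individually respects the relevant filtration, and that the relative cohomology classes of the cappings add up correctly under gluing (Definition \ref{definition connect sub of orbit}). The index bounded hypothesis is what makes this tractable, since it prevents the connecting orbit from having arbitrarily large period while keeping its index in a bounded range.
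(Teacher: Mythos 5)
Your overall shape (Gromov--Floer compactness plus action estimates to pin the connecting orbit into the window $\Gamma^\Z_{\check{C},a^m_-,a^m_+}(H^m)$) matches the paper, but there are two genuine problems. First, you repeatedly invoke an ``index bounded'' hypothesis on $\check{C}$ (to confine trajectories, and to bound the period of the connecting orbit). The proposition makes no such assumption --- $\check{C}$ is an arbitrary contact cylinder here, and no confinement is needed since $M$ is compact by standing assumption. Second, and more seriously, your second paragraph is circular: you derive non-degeneracy of the connecting orbit $\widehat{\gamma}$ as an \emph{output} of a Fredholm index count over the components of the limit configuration, but that index count (and indeed the applicability of the breaking theorem \cite[Theorem 4.3.21]{schwarz1995cohomology}) requires non-degenerate asymptotics as an \emph{input}. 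In this paper's setting that input is not free: $\kappa_m H^m \in \ccH^{\T,\reg}(\check{C},a^m_-,a^m_+)$ only guarantees non-degeneracy of orbits \emph{inside} the action window, and the Hamiltonians genuinely have degenerate orbits outside it (e.g.\ the constant orbits in $M - (D\cup\check{C})$). A priori the breaking could occur at such a degenerate orbit, and your plan has no mechanism to exclude this before running the index argument.

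The paper resolves this by reversing the order of your second and third paragraphs, using a device you do not identify: for each $k$ it cuts $u_k$ along the loop $t\mapsto u_k(\iota_m(s_k,t))$ and caps the two halves, producing intermediate capped loops $\eta^k$ to which Lemma \ref{lemma filtration} applies directly (this is exactly what the ``partially converges'' notion of Definition \ref{defn partially converging to capped orbits} is for, and it also settles your worry about cappings adding up under gluing). This yields the sandwich
$\sum_{j\in I_+}\cA_{H,\check{C}}(\gamma_j)\le\cA_{H,\check{C}}(\eta^k)\le\cA_{H,\check{C}}(\gamma_\star)$ (resp.\ its analogue for $m\in I_+$) at finite $k$, which combined with \eqref{equation action lower bound} puts $\eta^k$, and hence the limit $\widehat{\gamma}$, inside the window $(a^m_-,a^m_+)$. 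Only then does regularity of $H^m$ give non-degeneracy of $\widehat{\gamma}$, and only then can the standard breaking analysis be invoked to produce the degree-$1$ broken solution. So the ``action bookkeeping'' you defer to a final verification step is in fact the load-bearing step that must come first; without it the compactness argument does not get off the ground.
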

\proof
Suppose that
there is some $m \in I_\pm$ and some sequence
$s_k \in \bbI_\pm$
so that the sequence of maps
$$l_k : \T \lra{} M, \quad t \to \iota_m(s_k,t)$$ $C^0$ converges to
a $1$-periodic orbit $l : \T \lra{} M$.
Let $\eta^k = (\widetilde{\eta}^k,\check{\eta}^k)$ be the unique capped loop with the property that
its associated $1$-periodic orbit is $l_k$
and
where $\widetilde{\eta}^k$ is given by 
\begin{itemize}
\item 
the catenation of
$\iota_m|_{(-\infty,s_m] \times \T}$ and $\widetilde{\gamma}^m$ if $m \in I_-$
\item or the catenation of
$\iota_j|_{[s_j,\infty) \times \T}$ (with the opposite orientation) and
$\widetilde{\gamma}^m$ if $m \in I_+$ where $\gamma_m = (\widetilde{\gamma}^m,\check{\gamma}^m)$. 
\end{itemize}
By a repeated Gromov compactness argument (e.g. \cite[Theorem 4.3.22]{schwarz1995cohomology}) applied to cylinders and half cylinders,
 $\eta^k$ converges to a capped $1$-periodic orbit $\widehat{\gamma}$ whose associated $1$-periodic orbit is $l$
after passing to a subsequence.
Now by Lemma \ref{lemma filtration},
$$\sum_{j \in I_+} \cA_{H,\check{C}}(\gamma_j) \leq 
\cA_{H,\check{C}}(\eta^k)
\leq \cA_{H,\check{C}}(\gamma_\star)
$$
if $m \in I_-$ and
$$
\cA_{H,\check{C}}(\gamma_m) \leq
\cA_{H,\check{C}}(\eta^k)
\leq \cA_{H,\check{C}}(\gamma_\star) - \sum_{j \in I_+ - m} \cA_{H,\check{C}}(\gamma_j)
$$
if $m \in I_+$
for each $k \in \N$.
Hence by Equation (\ref{equation action lower bound}), 
$a_-^m \leq \cA_{H,\check{C}}(\eta^k)|_{Q^m_-}$ and $a_+^m \nleq \cA_{H,\check{C}}(\eta^k)|_{Q^m_+}
$
for each $k \in \N$ and hence
\begin{equation} \label{equation middle part action bounds}
a_-^m \leq \cA_{H,\check{C}}(\widehat{\gamma})|_{Q^m_-}, \quad a_+^m \nleq \cA_{H,\check{C}}(\widehat{\gamma})|_{Q^m_+}.
\end{equation}
Therefore $\widehat{\gamma}$ has to be non-degenerate.
Hence by using the argument above
one can show by \cite[Theorem 4.3.21]{schwarz1995cohomology}
that (after passing to a subsequence)
$(u_k)_{k \in \N}$
geometrically converges to a broken solution $(u,v)$ of degree $1$ with connecting orbit
$\widehat{\gamma}$
satisfying Equation (\ref{equation middle part action bounds}).
%
%
\qed

\begin{remark} \label{remark parameterized version of breaking action}
There is also a parameterized version of
Proposition \ref{proposition compactness result}
where we now have $H = (H_{s,\sigma})_{s \in [0,1], \sigma \in \Sigma_s} \in  \ccH^{\Sigma_\bullet}(H^\#,\check{C})$ 
(Definition \ref{definition parameterized moduli spaces})
and $J \in  J^{\Sigma_\bullet,\reg}(H,(Y_0,Y_1),J^\#,\check{C})$
(Proposition \ref{proposition parameterized regular subset for surface})
for some smooth family of Riemann surfaces
$\Sigma_\bullet := (\Sigma_t)_{t \in [0,1]}$
where
$Y_j = (H_{j,s})_{s \in \Sigma_j}$ for $j=0,1$ and
$|\gamma_\star| -  |(\gamma_j)_{j \in I_+}| = 0$.
The proof is also identical.
\end{remark}

\section{Appendix D: Flatness of Novikov rings.}

Throughout this section we will fix a finitely generated abelian group $(A,\cdot)$.

\begin{defn}
A {\it rational polyhedral cone in $(A \otimes_\Z \R)^*$}
is a cone of the form
$$Q := \left\{
\sum_{i=0}^k r_i w_i \ : \ r_0,\cdots,r_k \geq 0
\right\}
\subset (A \otimes_\Z \R)^*$$
for some fixed elements
$w_0,\cdots,w_k \in (A \otimes_\Z \Q)^* \subset (A \otimes_\Z \R)^*$
called {\it generators of $Q$}.
\end{defn}

Such a cone is closed and hence if, in addition, this cone is salient 
(Definition \ref{definition convex cone})
then we can define the Novikov rings
$\Lambda_\K^{A,Q}$ and $\Lambda_\K^{A,Q,+}$ as in Definition \ref{definition novikov ring associated to salient cone}. The aim of this section is to prove:
\begin{prop} \label{flatness of rational polyhedral novikov rings}
Suppose $\K$ is Noetherian and let $Q_0, Q_1 \subset (A \otimes_\Z \R)^*$ be salient rational polyhedral cones satisfying $Q_1 \subset Q_0$.
Then $\Lambda_\K^{A,Q_1}$ is a flat $\Lambda_\K^{A,Q_0}$-module.
\end{prop}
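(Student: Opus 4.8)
The plan is to reduce the statement to a purely algebraic fact about completions of monoid rings. First I would observe that the Novikov rings $\Lambda_\K^{A,Q_i}$ and $\Lambda_\K^{A,Q_i,+}$ can be described, via Remark \ref{remark description of Novikov ring}, as completions of monoid rings: $\Lambda_\K^{A,Q_i,+}$ is the completion of $\K[S^{Q_i}_0]$ with respect to the filtration by the ideals generated by elements far out in the cone $\check{Q}_i := \{x \in A : 0 \preceq_{Q_i} x\}$, and $\Lambda_\K^{A,Q_i}$ is obtained from $\Lambda_\K^{A,Q_i,+}$ by inverting all of $\check{Q}_i$ (a localization at a multiplicative set of ``monomials''). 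Since $Q_1 \subset Q_0$, the dual order $\preceq_{Q_1}$ is coarser than $\preceq_{Q_0}$, so $S^{Q_0}_x \subset S^{Q_1}_x$ and we get a natural ring map $\Lambda_\K^{A,Q_0} \to \Lambda_\K^{A,Q_1}$; we want it flat.

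The key steps, in order, would be: (1) Reduce to the positive Novikov rings: since $\Lambda_\K^{A,Q_i}$ is a localization of $\Lambda_\K^{A,Q_i,+}$ and localization is flat, and since flatness is transitive and stable under base change, it suffices to prove $\Lambda_\K^{A,Q_1,+}$ is flat over $\Lambda_\K^{A,Q_0,+}$ (one must check the localized targets match up, i.e. that inverting $\check{Q}_0$ inside $\Lambda_\K^{A,Q_1,+}$ and then completing/localizing appropriately recovers $\Lambda_\K^{A,Q_1}$ — this is where one uses that $Q_1 \subset Q_0$ so $\check{Q}_0 \subset \check{Q}_1$ and the monomials in $\check{Q}_0$ already become invertible). (2) Identify $\Lambda_\K^{A,Q_i,+}$ as the $I_i$-adic completion of a suitable Noetherian ring, where $I_i$ is an ideal of the monoid ring $R_i := \K[S^{Q_i}_0]$. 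Here I would use that $Q_i$ is a rational polyhedral salient cone: by Gordan's lemma the monoid $S^{Q_i}_0$ is finitely generated, so $R_i$ is a finitely generated $\K$-algebra, hence Noetherian since $\K$ is Noetherian. (3) Apply the standard fact that $I$-adic completion of a Noetherian ring is flat over that ring (\cite[Tag 00MB]{stacks-project} or Atiyah--Macdonald Prop.\ 10.14), combined with: a ring map between Noetherian rings that is compatible with the adic filtrations and induces an isomorphism on associated graded pieces (or more simply, such that the completion is faithfully flat / the map is the completion map) yields flatness of $\widehat{R_1}$ over $\widehat{R_0}$. Concretely, $\widehat{R_1} = \widehat{R_0} \otimes_{R_0} R_1$-completed, and one shows the natural map $\widehat{R_0} \otimes_{R_0} R_1 \to \widehat{R_1}$ is an isomorphism (both sides are the $I_0 R_1$-adic completion of $R_1$, since $I_1$ and $I_0 R_1$ define the same topology on $R_1$ because $Q_1 \subset Q_0$ forces the cones defining the filtrations to be mutually cofinal — each is contained in a dilate of the other up to finitely many classes), and then flatness of $\widehat{R_1} = \widehat{R_0 \to R_1 \text{ completion}}$ over $\widehat{R_0}$ follows from flatness of completion plus flatness of $R_1$ over... no: rather one uses that $R_1$ is the monoid ring of a larger monoid, which need not be flat over $R_0$, so instead I would argue directly that $I_0$-adic completion $\widehat{(\cdot)}$ applied to the inclusion $R_0 \hookrightarrow R_1$ is flat because $\widehat{R_0} \to \widehat{R_1}$ realizes $\widehat{R_1}$ as a completion of $\widehat{R_0}[S^{Q_1}_0 / S^{Q_0}_0]$-type object and one can check the Tor groups vanish degree by degree using the graded pieces.

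\textbf{Main obstacle.} The hard part will be Step (3): verifying that the map $\Lambda_\K^{A,Q_0,+} \to \Lambda_\K^{A,Q_1,+}$ is genuinely flat and not merely that each side is flat over the uncompleted monoid ring. The subtlety is that the inclusion of monoid rings $\K[S^{Q_0}_0] \hookrightarrow \K[S^{Q_1}_0]$ is typically \emph{not} flat (the larger monoid can fail to be flat over the smaller one — e.g.\ adding a ray can introduce relations), so one cannot simply say ``flat $\circ$ flat.'' I expect the cleanest route is to show the two adic topologies on $R_1$ coming from $I_1$ and from $I_0 R_1$ coincide (this uses salience and the polyhedral structure crucially: the ``positive part'' $\check{Q}_1$ of $A$ relative to $Q_1$ is cofinal in, and contains, the positive part $\check{Q}_0$ relative to $Q_0$ up to bounded error, because $Q_1 \subseteq Q_0$ implies the dual monoids are commensurable in the relevant sense), deduce $\Lambda_\K^{A,Q_1,+} = \varprojlim R_1 / I_0^n R_1 = \varprojlim (\widehat{R_0}/I_0^n\widehat{R_0}) \otimes_{R_0/I_0^n} (R_1/I_0^n R_1)$, and then invoke that $\widehat{R_0}$-adic completion of $R_1$ as an $R_0$-module is flat over $\widehat{R_0}$ precisely because $R_0$ is Noetherian and $\widehat{R_0}$ is flat over $R_0$ — more carefully, $\widehat{R_1} \cong \widehat{R_0} \widehat{\otimes}_{R_0} R_1$ and for a Noetherian ring $R_0$ with ideal $I_0$, the functor $M \mapsto \widehat{M} = \varprojlim M/I_0^n M$ on finitely generated modules is exact and $\widehat{M} = \widehat{R_0} \otimes_{R_0} M$, so $\widehat{R_1} = \widehat{R_0} \otimes_{R_0} R_1$ as $\widehat{R_0}$-modules; but this still requires $R_1$ flat over $R_0$ to conclude $\widehat{R_1}$ flat over $\widehat{R_0}$. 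So the real obstacle is circumventing the non-flatness of $R_0 \hookrightarrow R_1$, which I would handle by instead writing $R_1$ as a \emph{filtered colimit of finite free $R_0$-modules localized appropriately}, or — more likely what the authors intend — by passing to a common refinement: choose a rational polyhedral cone $Q_2 \subseteq Q_1 \cap Q_0^{\text{something}}$ and a \emph{subdivision} argument showing $\Lambda_\K^{A,Q_1,+}$ is a localization of a polynomial ring over $\Lambda_\K^{A,Q_0,+}$ completed — toric-geometry subdivisions make $\Spec$ of the larger monoid ring into an affine chart obtained from the smaller one by successive blow-ups and localizations, and the completion kills the blow-up locus, leaving only flat (localization) operations. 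That toric/combinatorial reduction is where I anticipate spending most of the effort.
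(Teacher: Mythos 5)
Your setup is exactly the paper's: identify $\Lambda_\K^{A,Q_i,+}$ as an adic completion of the monoid ring $R^i := \K[S^{Q_i}_0]$ (Noetherian by Gordan's lemma since $Q_i$ is rational polyhedral and $\K$ is Noetherian), identify $\Lambda_\K^{A,Q_i}$ as a localization of it, and you correctly isolate the real obstacle, namely that the inclusion of monoid rings $R^0 \hookrightarrow R^1$ is not flat, so one cannot simply compose ``completion is flat'' with ``localization is flat'' along that map. But your proposed ways around the obstacle --- writing $R^1$ as a filtered colimit of localized free $R^0$-modules, or a toric subdivision argument in which ``the completion kills the blow-up locus'' --- are not carried out, and I do not see how either would close the gap: blow-ups are not flat, and the claim that completing makes the non-flat locus irrelevant is precisely what needs proof. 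As written, the argument has a genuine hole at its central step.

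The paper's resolution is a short algebraic trick you did not find. Choose a single element $y \in A$ that is cofinal for \emph{both} cones (possible because $Q_1 \subset Q_0$; Lemma \ref{lemma cofinal element}). Then both adic topologies are the $(y)$-adic one, $\Lambda_\K^{A,Q_i,+} \cong \widehat{R^i}_{(y)}$ and $\Lambda_\K^{A,Q_i} \cong S_y^{-1}\widehat{R^i}_{(y)}$, and --- the key point --- inverting $y$ collapses both monoid rings to the full group ring: $S_y^{-1}R^0 = S_y^{-1}R^1 = \K[A]$. Setting $B := R^1 \otimes_{R^0} \widehat{R^0}_{(y)}$ (Noetherian), one checks that $S^{-1}_{1\otimes y}B \cong S_y^{-1}\widehat{R^0}_{(y)} = \Lambda_\K^{A,Q_0}$ (Lemma \ref{lemma inclusion map tensor localization isomorphism}; the $R^1$-factor is absorbed because it localizes to $\K[A]$), while the $(1\otimes y)$-adic completion of $B$ is $\widehat{R^1}_{(y)} = \Lambda_\K^{A,Q_1,+}$ (Lemma \ref{lemma completion of tensor product induces isomorphism}). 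Thus $\Lambda_\K^{A,Q_1}$ is a localization of a completion of the Noetherian ring $B$, hence flat over $B$; and a flat $B$-module on which $y$ acts invertibly is flat over the localization $S^{-1}_{1\otimes y}B = \Lambda_\K^{A,Q_0}$. No subdivision or colimit presentation of $R^1$ over $R^0$ is needed: the non-flatness of $R^0 \hookrightarrow R^1$ is sidestepped entirely by routing the flatness through $B$ rather than through the map $R^0 \to R^1$.
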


The key idea of the proof is
to show that there are appropriate subalgebras of $\Lambda^{A,Q_j}$, $j=0,1$
so that one is the completion of the other (Lemma \ref{lemma completion of tensor product induces isomorphism}) and so that appropriate localizations of them
recover $\Lambda^{A,Q_j}$, $j=0,1$ (Lemma \ref{lemma inclusion map tensor localization isomorphism}).
Before we prove Proposition \ref{flatness of rational polyhedral novikov rings},
we need a few definitions and lemmas.
Throughout this section we will assume that our ring $\K$ is Noetherian.

\begin{defn} \label{definition monoid ring}
Let $Q \subset (A \otimes_\Z \R)^*$ be a closed salient cone.
	For each $x \in A$,
	let $F^Q_x$
	be the free $\K$-module generated
	by elements of the set
	$S^Q_x := \{a \in A \ : \ x \preceq_Q a \}$ (Definition \ref{definition novikov ring associated to salient cone}).
\end{defn}

\begin{remark}
	When $x = 0$, $F^Q_0$ is a $\K$-algebra with multiplication induced from the product $\cdot$ on $A$ and
	$F^Q_x$ is an ideal in $F^Q_0$
	for each $x \in A$ satisfying $0 \preceq_Q x$.
\end{remark}

\begin{defn} \label{defn ideal}
If $R$ is a ring and $x \in R$, then we write $(x)_R$ for the ideal generated by $x$. If it is clear which ring $x$ lives in, then we write $(x) = (x)_R$.
\end{defn}

\begin{lemma} \label{lemma multiplying ideals}
Let $Q \subset (A \otimes_\Z \R)^*$
be a closed salient cone.
Then we have an equality of ideals
$(F^Q_x)^m = (x^m)$
in the $\K$-algebra
$F^Q_0$ for each $x \in A$, $m > 0$ satisfying $0 \preceq_Q x$.
\end{lemma}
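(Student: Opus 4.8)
The statement asserts that in the $\K$-algebra $F^Q_0$, the $m$-th power of the ideal $(F^Q_x)$ equals the principal ideal generated by $x^m \in A \subset F^Q_0$. The plan is to prove the two inclusions $(F^Q_x)^m \subseteq (x^m)$ and $(x^m) \subseteq (F^Q_x)^m$ separately, both by reducing to statements about the monoid $S^Q_0 = \{a \in A : 0 \preceq_Q a\}$ and the partial order $\preceq_Q$. Recall that $F^Q_x$ is the free $\K$-module on $S^Q_x = \{a \in A : x \preceq_Q a\}$, that as an ideal of $F^Q_0$ it is generated (as a $\K$-module, hence a fortiori as an ideal) by the monomials $\{t^a : a \in S^Q_x\}$ where I write $t^a$ for the basis element corresponding to $a$, and that multiplication satisfies $t^a t^b = t^{a \cdot b}$. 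Since $0 \preceq_Q x$, we have $S^Q_x = x \cdot S^Q_0$ (translation by $x$), so $F^Q_x = t^x F^Q_0$ as an ideal — i.e. $F^Q_x = (t^x)$ already, though we do not need this simplification and it is safer to argue directly with generators.

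\textbf{Key steps.} First I would record that $(F^Q_x)^m$ is the ideal generated by all products $t^{a_1} t^{a_2} \cdots t^{a_m} = t^{a_1 \cdot a_2 \cdots a_m}$ with each $a_i \in S^Q_x$, i.e. with $x \preceq_Q a_i$ for all $i$. For the inclusion $(F^Q_x)^m \subseteq (x^m)$: given such a product, I would show $x^m \preceq_Q a_1 \cdots a_m$, which follows because $\preceq_Q$ is translation-invariant and compatible with the group operation — from $x \preceq_Q a_i$ one gets, multiplying the relations, $x^m = x \cdots x \preceq_Q a_1 \cdots a_m$ (here one uses that $f((a_i x^{-1}) \otimes 1) \geq 0$ for all $f \in Q$ implies, summing over $i$, $f((a_1 \cdots a_m x^{-m}) \otimes 1) \geq 0$). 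Hence $a_1 \cdots a_m \in S^Q_{x^m}$, so $t^{a_1 \cdots a_m} = t^{x^m} \cdot t^{(a_1 \cdots a_m) x^{-m}}$ with the second factor in $F^Q_0$ — wait, one must check $(a_1 \cdots a_m) x^{-m} \in S^Q_0$, which is exactly $x^m \preceq_Q a_1 \cdots a_m$ again — so $t^{a_1 \cdots a_m} \in (x^m)$. For the reverse inclusion $(x^m) \subseteq (F^Q_x)^m$: note $x \in S^Q_x$ since $x \preceq_Q x$ (reflexivity), so $t^x \in F^Q_x$, hence $t^{x^m} = (t^x)^m \in (F^Q_x)^m$, giving $(x^m) \subseteq (F^Q_x)^m$.

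\textbf{Main obstacle.} There is essentially no serious obstacle: the lemma is a bookkeeping identity about the combinatorics of the ordered monoid $(A, \preceq_Q)$ together with the observation that $0 \preceq_Q x$ forces $t^x$ to lie in $F^Q_x$. The only point requiring a little care is to make the "multiply the relations $x \preceq_Q a_i$ together" step precise via the definition of $\preceq_Q$ in terms of the cone $Q$ (evaluating functionals $f \in Q$ and using that $Q$ is closed under addition), and to be careful that in the final bracketed expression I am writing ideals in $F^Q_0$ and not accidentally passing to the full Novikov ring. I would present the argument in two short displayed chains of inclusions, one for each containment, keeping all computations at the level of generators.
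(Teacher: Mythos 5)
Your proof is correct and follows essentially the same route as the paper's: the inclusion $(x^m)\subseteq (F^Q_x)^m$ from reflexivity of $\preceq_Q$, and the reverse inclusion by factoring a product of generators as $x^m$ times an element of $F^Q_0$ (the paper writes this as $a^m=(a\cdot x^{-1})^m x^m$; your version with general products $a_1\cdots a_m$ is the same computation, spelled out slightly more completely). No issues.
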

\proof
Let $x \in A$ satisfy $0 \preceq_Q x$ and let $m > 0$.
Since $x \in F^Q_x$, we have
$(x^m) \subset (F^Q_x)^m$.
Now let $a \in A$ satisfy $x \preceq_Q a$.
Then $0 \preceq_Q a \cdot x^{-1}$ which means that $a \cdot x^{-1} \in F^Q_0$.
Hence
$a^m = (a \cdot x^{-1})^m x^m \in (x^m)$.
Since $(F^Q_x)$ is an ideal generated by elements $a^m$ satisfying $x \preceq_Q a$,
we then get that $(x^m) = (F^Q_x)^m$.
\qed

\begin{defn} \label{definition cofinal element}
Let $Q \subset (A \otimes_\Z \R)^*$ be a closed salient cone.
A {\it $Q$-cofinal element} is an element $y \in A$ satisfying $0 \preceq_Q y$
so that the sequence $(y^n)_{n \in \N}$
is cofinal in $(A,\preceq_Q)$.
\end{defn}

\begin{lemma} \label{lemma cofinal element}
Let $Q_0,Q_1 \subset (A \otimes_\Z \R)^*$
be closed salient cones so that $Q_1 \subset Q_0$.
Then there exists $y \in A$ which is both a $Q_0$ and $Q_1$-cofinal element.
\end{lemma}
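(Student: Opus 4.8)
\textbf{Proof proposal for Lemma \ref{lemma cofinal element}.}

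The plan is to produce a single lattice vector $y \in A$ lying in the interior of the dual cone of $Q_0$ (hence also of the dual cone of $Q_1$, since $Q_1 \subset Q_0$ implies the dual cones satisfy $Q_0^\vee \subset Q_1^\vee$ after the appropriate identification), and then to show that such a $y$ is automatically $Q_0$-cofinal, which forces it to be $Q_1$-cofinal as well. First I would pass to the real vector space $W := A \otimes_\Z \R$ and consider the closed salient cone $Q_0 \subset W^*$. Since $Q_0$ is salient and closed, its dual cone $Q_0^\vee := \{v \in W : f(v) \geq 0 \ \forall f \in Q_0\}$ has nonempty interior; indeed salience of $Q_0$ is exactly the statement that $Q_0^\vee$ is full-dimensional, and $\mathrm{int}(Q_0^\vee) = \{v : f(v) > 0 \ \forall f \in Q_0 \setminus 0\}$ is a nonempty open convex cone. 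Because $A \otimes_\Z \Q$ is dense in $W$ and rational points are dense in any nonempty open set, I can pick a rational point in $\mathrm{int}(Q_0^\vee)$, and clearing denominators (scaling by a positive integer, which keeps us in the open cone) yields an element $y$ of the image of $A$ in $W$ lying in $\mathrm{int}(Q_0^\vee)$. Lifting back, choose $y \in A$ mapping to this vector; note $0 \preceq_{Q_0} y$ because $f(y \otimes 1) = f(y) > 0 \geq 0$ for all $f \in Q_0$, where I am using the pairing convention from Definition \ref{definition convex cone} (here $0 \preceq_{Q_0} y$ means $f((y \cdot 0^{-1}) \otimes 1) = f(y \otimes 1) \geq 0$).

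Next I would verify that $y$ is $Q_0$-cofinal, i.e.\ that $(y^n)_{n \in \N}$ is cofinal in $(A, \preceq_{Q_0})$. Fix an arbitrary $a \in A$; I must find $n$ with $a \preceq_{Q_0} y^n$, equivalently $f((y^n \cdot a^{-1}) \otimes 1) = n\, f(y \otimes 1) - f(a \otimes 1) \geq 0$ for all $f \in Q_0$. Choose finitely many generators $w_0, \dots, w_k$ of the rational polyhedral cone $Q_0$ (this is where I use that $Q_0$ is rational polyhedral, as in the hypotheses of Proposition \ref{flatness of rational polyhedral novikov rings} which this lemma serves — though the statement as written only assumes closed salient, the application has polyhedral cones; if one wants the lemma in full closed-salient generality one instead uses that the unit sphere intersected with $Q_0$ is compact, so $\inf_{\|f\|=1, f \in Q_0} f(y) =: c > 0$ and $\sup_{\|f\|=1} |f(a \otimes 1)| =: C < \infty$, and takes $n > C/c$). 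With generators $w_0,\dots,w_k$ in hand, set $c := \min_i w_i(y \otimes 1) > 0$ and $C := \max_i |w_i(a \otimes 1)|$; since every $f \in Q_0$ is a nonnegative combination $\sum_i r_i w_i$, the inequality $n\, f(y \otimes 1) \geq f(a \otimes 1)$ holds for all $f \in Q_0$ as soon as $n\, w_i(y \otimes 1) \geq w_i(a \otimes 1)$ for each $i$, which holds for $n \geq C/c$. Hence $a \preceq_{Q_0} y^n$ for such $n$, proving $Q_0$-cofinality.

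Finally, I would deduce $Q_1$-cofinality for free. Since $Q_1 \subset Q_0$, the relation $\preceq_{Q_1}$ is coarser than $\preceq_{Q_0}$: if $x \preceq_{Q_0} x'$ then $f((x' \cdot x^{-1}) \otimes 1) \geq 0$ for all $f \in Q_0 \supset Q_1$, so $x \preceq_{Q_1} x'$. Therefore cofinality of $(y^n)_n$ in $(A, \preceq_{Q_0})$ immediately implies cofinality in $(A, \preceq_{Q_1})$: given any $a \in A$, pick $n$ with $a \preceq_{Q_0} y^n$, whence $a \preceq_{Q_1} y^n$. Also $0 \preceq_{Q_0} y$ gives $0 \preceq_{Q_1} y$ by the same monotonicity. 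Thus $y$ is simultaneously a $Q_0$- and $Q_1$-cofinal element, completing the proof. The only mildly delicate point is the existence of a rational (equivalently, $A$-rational) vector in the interior of $Q_0^\vee$, which rests on salience $\Leftrightarrow$ full-dimensionality of the dual cone together with density of rational points; everything else is a routine estimate with finitely many cone generators.
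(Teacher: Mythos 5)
Your proof is correct, but it takes a genuinely different route from the paper's. The paper stays entirely inside the monoid structure of $A$: it chooses group generators $y_0,\dots,y_k$ of $A$ all satisfying $0\preceq_{Q_0}y_i$, sets $y:=\prod_i y_i$, and then, writing an arbitrary $x=\prod_i y_i^{l_i}$, observes that $y^{\max_i|l_i|}\cdot x^{-1}=\prod_i y_i^{\max_j|l_j|-l_i}$ is a product of nonnegative powers of the $y_i$ and hence $\succeq_{Q_j}0$ for $j=0,1$; no analysis is needed in the verification step. You instead place $y\otimes 1$ in the interior of the dual cone $Q_0^\vee$ and verify cofinality by a compactness estimate ($c=\min_{f\in Q_0\cap S}f(y\otimes 1)>0$ versus $C=\max_{f\in S}|f(a\otimes 1)|$), with the polyhedral-generator variant as an alternative. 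What each buys: the paper's argument is purely algebraic once the generators are in hand, but it quietly assumes that $A$ admits a generating set contained in $\{a:0\preceq_{Q_0}a\}$ — a fact whose justification is essentially your dual-cone step (salience of the closed cone $Q_0$ $\Leftrightarrow$ $\textnormal{int}(Q_0^\vee)\neq\emptyset$, plus density of $\frac{1}{N}A$ in $A\otimes_\Z\R$), so you have made explicit the one point the paper leaves implicit. Conversely, your estimate uses closedness of $Q_0$ via compactness of $Q_0\cap S$, exactly where the paper uses it to produce the $y_i$. Both arguments deduce the $Q_1$ statement from $Q_1\subset Q_0$ by the same monotonicity of $\preceq$, and both handle the general closed salient case, not just the rational polyhedral one. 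Your proof stands as a valid, slightly more self-contained alternative.
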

\proof
Since $Q_0$ is closed and salient and $A$ is finitely generated, there exist $y_0,\cdots,y_k \in A$ generating $A$ as a group and satisfying $0 \preceq_{Q_0} y_i$ for each $i = 0,\cdots, k$.
Then $0 \preceq_{Q_1} y_i$ for each $i =0,\cdots,k$ as well since $Q_1 \subset Q_0$.
Define $y := \prod_{i=0}^k y_i$.
Now suppose $x \in A$ satisfies $0 \preceq_{Q_j} x$ for some $j = 0,1$. Then there exists $l_0,\cdots,l_k \in \Z$ so that
$x = \prod_{i=0}^k y_i^{l_i}$.
Then $0 \preceq_{Q_j} x^{-1} \cdot y^{\max_{i=0}^k |l_i|}$
and hence $x \preceq_{Q_j} y^{\max_{i=0}^k |l_i|}$. Therefore $y$ is a $Q_j$-cofinal element.
\qed

\begin{defn}
Let $R$ be a ring and $I \subset R$ an ideal.
We define the {\it completion of $R$ along $I$} to be the inverse limit of rings
$$\widehat{R}_I := \varprojlim_{m \in \N} R/I^m.$$
\end{defn}

%

\begin{lemma} \label{lemma completion definition of positive Novikov ring}
Let $Q \subset (A \otimes_\Z \R)^*$ be a closed salient cone and let $y$ be a $Q$-cofinal element.
The natural inclusion map of $\K$-algebras
$F^Q_0 \hookrightarrow \Lambda_\K^{A,Q,+}$
extends to an isomorphism
$$\widehat{F^Q_0}_{(y)} \lra{\cong} \Lambda_\K^{A,Q,+}.$$
\end{lemma}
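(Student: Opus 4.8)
The plan is to unwind both sides of the claimed isomorphism as $\K$-modules with enough bookkeeping to match them. First I would recall that $\Lambda_\K^{A,Q,+} = \varprojlim_{x_+ \in I} F^Q_{0,x_+}$ where $F^Q_{0,x_+} = F^Q_0/(F^Q_0 \cap F^Q_{x_+}) = F^Q_0/F^Q_{x_+}$ by Equation (\ref{equation positive Novikov ring}) (note $F^Q_{x_+} \subset F^Q_0$ since $0 \preceq_Q x_+$ when $x_+$ ranges over the relevant subset of the directed set, and more generally $F^Q_0 \cap F^Q_{x_+} = F^Q_{x_+}$ when $x_+$ has $0 \preceq_Q x_+$). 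The inverse limit is taken over $x_+$ running through $(A, \preceq_Q)$ with the opposite ordering, i.e.\ over $I = A^{\op}$. Since $y$ is a $Q$-cofinal element, the sequence $(y^m)_{m \in \N}$ is cofinal in $(A,\preceq_Q)$, so the subset $\{y^m : m \in \N\}$ is cofinal in the inverse directed set indexing the limit; hence by Lemma \ref{lemma isomorphism condition} (cofinal subsystems induce isomorphisms on the limit) we may replace the index set by this sequence, giving
$$\Lambda_\K^{A,Q,+} \cong \varprojlim_{m \in \N} F^Q_0/F^Q_{y^m}.$$

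Next I would identify the ideals $F^Q_{y^m}$ with powers of a principal ideal. By Lemma \ref{lemma multiplying ideals}, $(F^Q_{y})^m = (y^m)_{F^Q_0}$, and more directly $F^Q_{y^m}$ as an ideal of $F^Q_0$ equals $(y^m)_{F^Q_0}$: indeed $F^Q_{y^m}$ is spanned over $\K$ by $\{a \in A : y^m \preceq_Q a\}$, and any such $a$ satisfies $a = (a \cdot y^{-m}) \cdot y^m$ with $a \cdot y^{-m} \in F^Q_0$, so $a \in (y^m)_{F^Q_0}$; conversely $y^m \in F^Q_{y^m}$ and $F^Q_{y^m}$ is an ideal, so $(y^m)_{F^Q_0} \subseteq F^Q_{y^m}$. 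Therefore
$$\Lambda_\K^{A,Q,+} \cong \varprojlim_{m \in \N} F^Q_0/(y^m)_{F^Q_0} = \widehat{(F^Q_0)}_{(y)},$$
which is exactly the completion of $F^Q_0$ along the ideal generated by $y$. The natural map $F^Q_0 \hookrightarrow \Lambda_\K^{A,Q,+}$ (sending a finite sum to itself, viewed as a formal series) is compatible with these identifications, since it is induced by the canonical maps $F^Q_0 \to F^Q_0/(y^m)_{F^Q_0}$, and it extends to the isomorphism $\widehat{(F^Q_0)}_{(y)} \xrightarrow{\cong} \Lambda_\K^{A,Q,+}$ just constructed.

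The remaining point to check is that the isomorphism of $\K$-modules just built is in fact a ring isomorphism, i.e.\ that the inverse limit $\varprojlim_m F^Q_0/(y^m)$ carries the ring structure of the completion and that the formal power series multiplication on $\Lambda_\K^{A,Q,+}$ agrees with it termwise. This is essentially Remark \ref{remark description of Novikov ring}: an element of $\Lambda_\K^{A,Q,+}$ is a series $\sum_i b_i t^{a_i}$ with $0 \preceq_Q a_i$ and $(a_i)$ cofinal in $(A,\preceq_Q)$, and two such series are declared equal in the completion iff their partial sums agree modulo every $(y^m)$; multiplication of series is the usual convolution, which is visibly the multiplication induced on the inverse limit of the quotient rings $F^Q_0/(y^m)$. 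I expect the main (only mildly technical) obstacle to be this verification that the inverse-limit-of-quotients description of $\Lambda_\K^{A,Q,+}$ coming from Definition \ref{definition novikov ring associated to salient cone} (which involves the double-system formalism and the functor $\varinjlim\varprojlim$) literally coincides with the $I$-adic completion $\widehat{(F^Q_0)}_{(y)}$ as a topological ring rather than merely as a module; this is a matter of carefully tracking the double-system morphisms in Definition \ref{definition novikov ring associated to salient cone} (which are the natural compositions $F^Q_{x_-,x_+} \to F^Q_{x'_-,x'_+}$) and observing that, restricted to the cofinal subset with $x_- = 0$ and $x_+ \in \{y^m\}$, they become exactly the reduction maps $F^Q_0/(y^{m'}) \to F^Q_0/(y^m)$ for $m \le m'$, with the product $\mu$ of Definition \ref{definition novikov ring associated to salient cone} reducing to the product on each quotient ring. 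Once this bookkeeping is done, the lemma follows.
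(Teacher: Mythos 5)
Your proposal is correct and follows essentially the same route as the paper: restrict the inverse limit in Equation (\ref{equation positive Novikov ring}) to the cofinal family $(y^m)_{m\in\N}$ and identify $F^Q_{y^m}=(y^m)=(y)^m$ via Lemma \ref{lemma multiplying ideals}. The paper's proof is just a terser version of this; your additional verification that the multiplication agrees is a reasonable elaboration rather than a different argument.
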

\proof
By Lemma \ref{lemma multiplying ideals}
combined with the fact that $y$ is a $Q$-cofinal element, we have
$$
	\Lambda_\K^{A,Q,+} =
	\varprojlim_{m \in \N} F^Q_0/(y^m).
$$
by Equation (\ref{equation positive Novikov ring}).
This proves our lemma since $(y^m) = (y)^m$ for each $m > 0$.
\qed

\begin{defn}
A multiplicative subset in a commutative ring $R$ is a subset closed under multiplication.
	For a multiplicative subset $S \subset R$, we define the
	{\it localization $S^{-1}R$ of $R$ along $S$}
	to be the set of equivalence classes of pairs $(r,s) \in R \times S$ so that $(r_1,s_1) \sim (r_2,s_2)$ if and only if there exists a $t \in S$ so that
	$t(r_1s_2 - r_2s_1) = 0$, where addition and multiplication are defined via the formulas
	\begin{equation} \label{equation addition and multiplcation of fractions}
	(r_1,s_1) + (r_2,s_2) = (r_1s_2 + r_2 s_1, s_1s_2), \quad (r_1,s_1)(r_2,s_2) = (r_1r_2,s_1s_2).
	\end{equation}
	If $x \in R$, we define $S_x \subset R$ to be the smallest multiplicative subset containing $x$.
\end{defn}

\begin{lemma} \label{lemma localization definition}
Let $Q \subset (A \otimes_\Z \R)^*$ be a closed salient cone and let $y$ be a $Q$-cofinal element.
Then the natural inclusion map
$\Lambda_\K^{A,Q,+} \hookrightarrow \Lambda_\K^{A,Q}$
extends to an isomorphism of $\K$-algebras
\begin{equation} \label{equation isomorphism}
S_y^{-1} \Lambda_\K^{A,Q,+} \lra{\cong} \Lambda_\K^{A,Q}.
\end{equation}
\end{lemma}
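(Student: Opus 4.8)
\textbf{Proof proposal for Lemma \ref{lemma localization definition}.}

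The plan is to exhibit the isomorphism (\ref{equation isomorphism}) by using the explicit power-series description of the Novikov rings in Remark \ref{remark description of Novikov ring}, together with the fact that $y$ being a $Q$-cofinal element means $\{y^n : n \in \N\}$ is cofinal in $(A,\preceq_Q)$. First I would observe that there is a well-defined $\K$-algebra homomorphism $\phi : S_y^{-1}\Lambda_\K^{A,Q,+} \lra{} \Lambda_\K^{A,Q}$ extending the inclusion $\Lambda_\K^{A,Q,+} \hookrightarrow \Lambda_\K^{A,Q}$: since $y$ (viewed as $1 \cdot t^y$) is a unit in $\Lambda_\K^{A,Q}$ by Definition \ref{definition novikov ring associated to salient cone} and Remark \ref{remark description of Novikov ring}, the universal property of localization gives such a $\phi$, sending a fraction $(w, y^n)$ to $w \cdot t^{-ny}$. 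It remains to check that $\phi$ is injective and surjective.

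For surjectivity, I would take an arbitrary element $\sum_{i \in \N} b_i t^{a_i} \in \Lambda_\K^{A,Q}$, where $(a_i)_{i \in \N}$ is cofinal in $(A, \preceq_Q)$. Because $y$ is $Q$-cofinal, the cone-lower-bound behaviour of the exponents means there is some $N \in \N$ with $y^{-N} \preceq_Q a_i$ for all $i$ --- more precisely, the terms of the series "tend to infinity in the cone," so only finitely many exceptions occur, and one can absorb them; the key point is that after multiplying by $y^N$ for $N$ large enough, every exponent $a_i \cdot y^N$ satisfies $0 \preceq_Q a_i \cdot y^N$, so $y^N \cdot \left(\sum_i b_i t^{a_i}\right) \in \Lambda_\K^{A,Q,+}$. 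Hence $\sum_i b_i t^{a_i} = \phi\big((y^N \sum_i b_i t^{a_i},\, y^N)\big)$ lies in the image. For injectivity, I would note that $\Lambda_\K^{A,Q,+}$ is an integral domain when $\K$ is (and more generally $y$ is a nonzerodivisor on $\Lambda_\K^{A,Q,+}$, since multiplication by $t^y$ is injective on the power-series description --- it just shifts exponents), so the localization map $\Lambda_\K^{A,Q,+} \hookrightarrow S_y^{-1}\Lambda_\K^{A,Q,+}$ is injective and a fraction $(w,y^n)$ is zero iff $y^m w = 0$ for some $m$ iff $w = 0$; then $\phi(w,y^n) = w t^{-ny} = 0$ in $\Lambda_\K^{A,Q}$ forces $w = 0$ since multiplication by $t^{-ny}$ is injective there too. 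Combining the two directions, $\phi$ is an isomorphism of $\K$-algebras, and by construction it restricts to the identity (composed with the natural inclusion of $\Lambda_\K^{A,Q,+}$ as fractions with denominator $1$), which is the compatibility asserted in the statement.

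The main obstacle I anticipate is the bookkeeping in the surjectivity step: one must be careful that "the exponents of a convergent Novikov series, after multiplying by a sufficiently high power of a cofinal element, all land in the dual cone $S_0^Q$" --- this uses precisely the defining convergence condition (that $\min$ of the relevant linear functionals on the exponents tends to $\infty$) together with the finite generation of $A$ and the rational-polyhedrality implicit via Lemma \ref{lemma cofinal element}. A clean way to handle this is to reduce to the case of finitely many exponents plus a tail that already lies in $\Lambda_\K^{A,Q,+}$: split the series as (finite sum of "bad" terms) $+$ (tail in the positive Novikov ring), clear the denominators of the finite part by one power of $y$, and note the tail needs no clearing. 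Everything else --- the existence of $\phi$, the injectivity, and the algebra-homomorphism property --- is routine localization theory once one has the power-series model from Remark \ref{remark description of Novikov ring}.
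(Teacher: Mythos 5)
Your proposal is correct and rests on the same underlying facts as the paper's proof: that $t^y$ is a unit in $\Lambda_\K^{A,Q}$ and a nonzerodivisor on $\Lambda_\K^{A,Q,+}$, and that every element of $\Lambda_\K^{A,Q}$ lands in $\Lambda_\K^{A,Q,+}$ after multiplication by a large power of $y$ (the paper packages this as the identification $\Lambda_\K^{A,Q}=\varinjlim_m y^{-m}\Lambda_\K^{A,Q,+}$ and observes that this colimit is exactly the localization, whereas you verify surjectivity and injectivity of the induced map directly). Your "finite bad part plus positive tail" decomposition is a clean way to justify the surjectivity step, and the injectivity argument via the nonzerodivisor property is exactly what is needed; the aside about $\Lambda_\K^{A,Q,+}$ being a domain is unnecessary (and would fail if $A$ has torsion), but it plays no role in the argument.
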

\proof
Since $F^Q_{x_0x_1} = x_0F^Q_{x_1}$ for each $x_0,x_1 \in A$ and since $y^{-1}$ is $-Q$-cofinal,
we have a natural isomorphism
\begin{equation} \label{eqn direct limit ring}
\Lambda_\K^{A,Q} = \varinjlim_{m \in \N} y^{-m} \Lambda_\K^{A,Q,+}
\end{equation}
by Equations (\ref{equation Novikov ring definition}) and (\ref{equation positive Novikov ring}) where the morphisms in the corresponding directed system are the inclusion maps.
Therefore since $y$ is invertible in $\Lambda_\K^{A,Q}$, we get by the definition of direct limit
(Definition \ref{definition direct limit})
that
elements of the ring (\ref{eqn direct limit ring}) are equivalence classes of pairs $(x,y^m)$, $x \in \Lambda_\K^{A,Q,+}$, $m > 0$ where 
$(x,y^m) \sim (x',y^{m'})$ iff $xy^{m'+k} - x' y^{m+k} = 0$ for some $k \geq 0$ and where
addition and multiplication satisfy formulas which are similar to (\ref{equation addition and multiplcation of fractions}).
This proves our lemma.
\qed

\smallskip

We get the following immediate corollary of Lemmas \ref{lemma completion definition of positive Novikov ring} and
\ref{lemma localization definition}.

\begin{corollary} \label{corollary isomorphism test}
Let $Q_0, Q_1 \subset (A \otimes_\Z \R)^*$ be closed salient cones so that $Q_1 \subset Q_0$ and let $y \in A$ be a $Q_j$-cofinal element for $j = 0,1$.
Then we have the following commutative diagram of $\K$-algebras:
\begin{center}
\begin{tikzpicture}

\node at (-1,-1) {$S_y^{-1}(\widehat{F^{Q_0}_0}_{(y)})$};

\node at (1,-1) {$\Lambda_\K^{A,Q_0}$};

\node at (-1,-2) {$S_y^{-1}(\widehat{F^{Q_1}_0}_{(y)})$};

\node at (1,-2) {$\Lambda_\K^{A,Q_1}$};

\draw [->](-1,-1.4) -- (-1,-1.6);

\draw [->](1,-1.3) -- (1,-1.7);

\draw [->](0.1,-1) -- (0.4,-1);

\draw [->](0.1,-2) -- (0.4,-2);

\node at (0.2,-0.8) {$\cong$};

\node at (0.2,-1.8) {$\cong$};

\end{tikzpicture}
\end{center}

\end{corollary}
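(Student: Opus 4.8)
\textbf{Plan for the proof of Corollary \ref{corollary isomorphism test}.}

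The statement is an immediate consequence of Lemmas \ref{lemma completion definition of positive Novikov ring} and \ref{lemma localization definition}, so the plan is to assemble these two lemmas and check that the resulting square commutes. First I would observe that the hypotheses guarantee $y$ is simultaneously a $Q_0$- and $Q_1$-cofinal element (this is exactly what Lemma \ref{lemma cofinal element} provides, but here it is already an assumption), so both lemmas apply with this same $y$ for each of $Q_0$ and $Q_1$. Applying Lemma \ref{lemma completion definition of positive Novikov ring} to $Q_j$ gives an isomorphism of $\K$-algebras $\widehat{F^{Q_j}_0}_{(y)} \xrightarrow{\cong} \Lambda_\K^{A,Q_j,+}$ for $j=0,1$, extending the natural inclusion $F^{Q_j}_0 \hookrightarrow \Lambda_\K^{A,Q_j,+}$. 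Localizing both sides at the multiplicative set $S_y$ and using that localization is a functor, I get isomorphisms $S_y^{-1}(\widehat{F^{Q_j}_0}_{(y)}) \xrightarrow{\cong} S_y^{-1}\Lambda_\K^{A,Q_j,+}$. Composing with the isomorphism $S_y^{-1}\Lambda_\K^{A,Q_j,+} \xrightarrow{\cong} \Lambda_\K^{A,Q_j}$ of Lemma \ref{lemma localization definition} produces the two horizontal isomorphisms of the diagram.

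It remains to verify that the square commutes, i.e. that the left vertical map (induced by the inclusion $F^{Q_0}_0 \hookrightarrow F^{Q_1}_0$, since $Q_1 \subset Q_0$ forces $\preceq_{Q_0} \subseteq \preceq_{Q_1}$ and hence $S^{Q_0}_x \subseteq S^{Q_1}_x$ for all $x$) is compatible with the right vertical map (the inclusion $\Lambda_\K^{A,Q_0} \hookrightarrow \Lambda_\K^{A,Q_1}$, which makes sense for the same reason). For this I would trace a general element: an element of $S_y^{-1}(\widehat{F^{Q_0}_0}_{(y)})$ is a fraction $x/y^m$ with $x$ in the completion; under the top horizontal isomorphism it maps to the corresponding element of $\Lambda_\K^{A,Q_0}$, and under the inclusion $F^{Q_0}_0 \hookrightarrow F^{Q_1}_0$ (extended to completions and localizations) it maps to the same fraction viewed in $S_y^{-1}(\widehat{F^{Q_1}_0}_{(y)})$. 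Both routes around the square send $x/y^m$ to the same power series in $\Lambda_\K^{A,Q_1}$ because all four maps are, at the level of the dense subrings $F^{Q_j}_0$, literally the identity on the common underlying set of $\K$-linear combinations of elements of $A$; the completions and localizations only enlarge these rings in compatible ways. So commutativity reduces to the obvious fact that the inclusions of the dense subrings commute with the inclusions of their completions and localizations, which is a formal consequence of the universal properties of $\varprojlim$ and of localization.

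The only mildly delicate point—and the one I would write out with a little care—is checking that the left vertical arrow is well-defined as stated: that the inclusion $F^{Q_0}_0 \hookrightarrow F^{Q_1}_0$ really does induce a ring homomorphism on completions $\widehat{F^{Q_0}_0}_{(y)} \to \widehat{F^{Q_1}_0}_{(y)}$. This holds because $(y)^m_{F^{Q_0}_0} \subseteq (y)^m_{F^{Q_1}_0} \cap F^{Q_0}_0$, equivalently because $(y^m)_{F^{Q_j}_0} = (F^{Q_j}_{y^m})_{F^{Q_j}_0}$ by Lemma \ref{lemma multiplying ideals} and $F^{Q_0}_{y^m} \subseteq F^{Q_1}_{y^m}$, so the quotient maps $F^{Q_0}_0/(y)^m \to F^{Q_1}_0/(y)^m$ are compatible as $m$ varies and pass to the inverse limit. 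Given this, the whole corollary is bookkeeping; I do not expect any genuine obstacle, which is presumably why the paper states it as an immediate corollary. If desired, one could compress the write-up to a single sentence invoking the functoriality of completion and localization applied to the inclusion $F^{Q_0}_0 \hookrightarrow F^{Q_1}_0$ together with Lemmas \ref{lemma completion definition of positive Novikov ring} and \ref{lemma localization definition}.
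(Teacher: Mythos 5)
Your proposal is correct and follows exactly the paper's route: the paper presents this as an immediate consequence of Lemmas \ref{lemma completion definition of positive Novikov ring} and \ref{lemma localization definition}, and your write-up simply fills in the (routine) functoriality and commutativity checks that the paper leaves implicit. No gaps.
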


From now on, until the end of this section,
we will let $Q_0, Q_1 \subset (A \otimes_\Z \R)^*$ be closed salient cones so that $Q_1 \subset Q_0$ and
let $y$ be a $Q_j$-cofinal element for $j=0,1$ (see Lemma \ref{lemma cofinal element}).
We will also define
$R^j := F^{Q_j}_0$
and $I_j := (y)_{R^j}$ (Definition \ref{defn ideal}) for $j= 0,1$.

\begin{lemma} \label{lemma inclusion map injective}
The natural map
\begin{equation} \label{equation localization inclusion injective}
\K[A] = S_y^{-1} R^0 \lra{} S^{-1}_y \widehat{R^0}_{I_0}
\end{equation}
is injective.
\end{lemma}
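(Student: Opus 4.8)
\textbf{Proof proposal for Lemma \ref{lemma inclusion map injective}.}

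The plan is to show that the kernel of the natural map $R^0 \to \widehat{R^0}_{I_0}$ is contained in the set of elements killed by some power of $y$, since then passing to the localization $S_y^{-1}(-)$ makes the induced map injective. Concretely, the kernel of $R^0 \to \widehat{R^0}_{I_0} = \varprojlim_m R^0/I_0^m$ is exactly $\bigcap_{m \in \N} I_0^m$, and by Lemma \ref{lemma multiplying ideals} we have $I_0^m = (y^m)$ inside $R^0$. So I need to understand $\bigcap_m (y^m)$ in the monoid ring $R^0 = F^{Q_0}_0 = \K[S^{Q_0}_0]$, where $S^{Q_0}_0 = \{a \in A : 0 \preceq_{Q_0} a\}$ is the dual monoid.

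First I would reduce to a Krull-intersection-type statement. Write $x \in \bigcap_m (y^m)$; for each $m$ choose $z_m \in R^0$ with $x = y^m z_m$. The key structural fact is that $R^0$ is a monoid ring over a Noetherian ring $\K$ on a finitely generated, cancellative, salient monoid $S^{Q_0}_0$ (finitely generated because $Q_0$ is a closed salient rational—or at least finitely-generated—cone; salient because $Q_0$ is salient, so $S^{Q_0}_0$ contains no nonzero units other than the identity, i.e. $S^{Q_0}_0 \cap (S^{Q_0}_0)^{-1} = \{e\}$). Such a ring is Noetherian by Gordan's lemma / the Hilbert basis theorem applied to the finitely generated monoid algebra. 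Then I would invoke the Krull intersection theorem in the form: if $R$ is a Noetherian ring and $I$ an ideal, then $\bigcap_m I^m = \{x \in R : (1 - i)x = 0 \text{ for some } i \in I\}$. Applying this with $R = R^0$, $I = I_0 = (y)$, any $x \in \bigcap_m (y^m)$ satisfies $(1 - y z)x = 0$ for some $z \in R^0$.

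From $(1-yz)x = 0$ I would conclude $x = 0$ already in $R^0$: the element $1 - yz$ has a nonzero constant term (namely $1$, since $yz \in I_0$ lies in the augmentation ideal because $y$ is in the augmentation ideal of the salient monoid ring), so it is not a zero-divisor — one can see this by the standard leading-term/degree argument, filtering $R^0$ by a linear functional on $A$ that is strictly positive on $S^{Q_0}_0 \setminus \{e\}$ (such a functional exists in the interior of the dual of $Q_0$ because $Q_0$ is salient), and noting that the lowest-degree term of $(1-yz)x$ equals the lowest-degree term of $x$. Hence $x = 0$, so $R^0 \to \widehat{R^0}_{I_0}$ is injective, and a fortiori $S_y^{-1}R^0 \to S_y^{-1}\widehat{R^0}_{I_0}$ is injective because localization is exact. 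Finally I identify $S_y^{-1}R^0$ with $\K[A]$: inverting the $Q_0$-cofinal element $y$ in the monoid ring of $S^{Q_0}_0$ recovers the full group ring $\K[A]$ since every element of $A$ divides a power of $y$ (the defining property of a cofinal element), which is precisely the content already used in Lemma \ref{lemma localization definition}.

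The main obstacle I anticipate is not any single deep input but rather assembling the finiteness and non-zero-divisor facts cleanly: confirming that $F^{Q_0}_0$ is Noetherian (requiring that the cone $Q_0$, hence the monoid $S^{Q_0}_0$, be finitely generated — which holds since in this paper the relevant cones are rational polyhedral, cf. Proposition \ref{flatness of rational polyhedral novikov rings}) so that Krull intersection applies, and verifying that elements with nonzero constant term are non-zero-divisors in this monoid ring via the grading induced by an interior functional of $Q_0^\vee$. Both are routine given salience and finite generation, but they are the pieces that need to be stated carefully rather than waved through.
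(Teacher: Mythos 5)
Your proposal reaches the same reduction as the paper -- everything comes down to showing $\ker\bigl(R^0 \to \widehat{R^0}_{I_0}\bigr) = \bigcap_m I_0^m = 0$ and then applying exactness of localization -- but you prove the intersection statement by a noticeably heavier route. The paper simply observes that $\bigcap_m I_0^m = 0$: by Lemma \ref{lemma multiplying ideals}, $I_0^m = F^{Q_0}_{y^m}$ is spanned by the $a \in A$ with $y^m \preceq_{Q_0} a$, and a nonzero element of $R^0$ has finite support, which by cofinality of $y$ cannot satisfy this condition for all $m$. No Noetherianity and no Krull intersection theorem are needed. Your detour through Artin--Rees/Krull is valid where it applies, but it buys nothing here and costs generality, as explained below.

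Two points in your justification need attention. First, invoking the Krull intersection theorem forces you to assume $R^0 = F^{Q_0}_0$ is Noetherian, which (via Gordan's lemma) the paper only establishes when $Q_0$ is rational polyhedral; but the lemma is stated for arbitrary closed salient cones, and the section's standing hypotheses do not include rationality. You flag this yourself, but it does mean your argument proves a strictly weaker statement than the one asserted (adequate for Proposition \ref{flatness of rational polyhedral novikov rings}, not for the lemma as stated). Second, your justification that $1-yz$ is a non-zero-divisor rests on the existence of a functional strictly positive on $S^{Q_0}_0 \setminus \{e\}$, which you attribute to salience of $Q_0$. This is false in general: salience of the cone $Q_0$ does not imply sharpness of the dual monoid $S^{Q_0}_0$ (take $A = \Z^2$ and $Q_0$ a single ray; then $S^{Q_0}_0$ is a half-space monoid containing a copy of $\Z$ as units). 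The non-zero-divisor conclusion itself survives, but you should instead grade by a single $f \in Q_0$ with $f(y) > 0$ (such an $f$ exists outside the degenerate case $Q_0 = \{0\}$) and run the lowest-degree-term argument with respect to that $f$ alone; the point is only that every term of $yzx$ has $f$-degree at least $f(y) > 0$ above the minimal $f$-degree of $x$.
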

\proof
The natural map
$R^0 \lra{} \widehat{R^0}_{I_0}$
is injective
since $\cap_{m \in \N} I_0^m = 0$.
Hence (\ref{equation localization inclusion injective}) is injective by
\cite[Tag 00CS]{stacks-project}.
\qed

\begin{lemma} \label{lemma completion of tensor product induces isomorphism}
The map
\begin{equation} \label{equation tensor product inclusion}
R^1 \otimes_{R^0} \widehat{R^0}_{I_0} \lra{} \widehat{R^1}_{I_1}
\end{equation}
induced by the natural inclusion maps
extends to an isomorphism
\begin{equation} \label{equation completed tensor inclusion}
\widehat{(R^1 \otimes_{R^0} \widehat{R^0}_{I_0})}_{(1 \otimes y)} \lra{\cong} \widehat{R^1}_{I_1}.
\end{equation}
\end{lemma}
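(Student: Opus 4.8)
\textbf{Proof proposal for Lemma \ref{lemma completion of tensor product induces isomorphism}.}

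The plan is to show that the ideal $(1 \otimes y)$ in the ring $T := R^1 \otimes_{R^0} \widehat{R^0}_{I_0}$ has powers that match up, under the map \eqref{equation tensor product inclusion}, with the powers of $I_1$ in $\widehat{R^1}_{I_1}$, and then pass to inverse limits. First I would observe that since $y = 1 \cdot y$ corresponds to $1 \otimes y$ under the inclusions $R^1 \hookrightarrow T$ and $R^0 \hookrightarrow T$ (both send $y$ to the same element because $y \in R^0 \subset R^1$), the map \eqref{equation tensor product inclusion} carries the ideal $(1 \otimes y)^m$ onto $I_1^m = (y)_{R^1}^m$ for each $m$; here I use Lemma \ref{lemma multiplying ideals}, which identifies $(y^m)$ with $(F^{Q_1}_y)^m$, so that the $I_1$-adic filtration is the one relevant to the Novikov completion. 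The key algebraic input is the standard base-change fact: for a ring map $R^0 \to R^1$ and an ideal $I_0 \subset R^0$ with image generating $I_1$, one has $T / (1 \otimes y)^m = R^1 \otimes_{R^0} (\widehat{R^0}_{I_0} / I_0^m \widehat{R^0}_{I_0}) = R^1 \otimes_{R^0} (R^0 / I_0^m) = R^1 / I_1^m$, where the middle equality is because $\widehat{R^0}_{I_0} / I_0^m \widehat{R^0}_{I_0} \cong R^0 / I_0^m$ (a basic property of adic completions, using that $R^0$ is Noetherian since $\K$ is, via $F^{Q_0}_0$ being a finitely generated $\K$-algebra — the monoid $S^{Q_0}_0$ is finitely generated because $Q_0$ is rational polyhedral and salient).

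Granting this, taking the inverse limit over $m$ of the isomorphisms $T/(1 \otimes y)^m \cong R^1/I_1^m$ gives precisely \eqref{equation completed tensor inclusion}, since the left side is by definition $\widehat{T}_{(1 \otimes y)}$ and the right side is $\widehat{R^1}_{I_1}$, and the transition maps are visibly compatible with the map \eqref{equation tensor product inclusion}. I would also need to check that $R^1$ is Noetherian and that $R^1$ is flat (indeed free) over $R^0$ so that the tensor product behaves well; $R^1 = F^{Q_1}_0 = \K[S^{Q_1}_0]$ is a finitely generated $\K$-algebra, hence Noetherian, and $R^0 = \K[S^{Q_0}_0]$ with $S^{Q_0}_0 \subset S^{Q_1}_0$ (because $Q_1 \subset Q_0$ forces $\preceq_{Q_0}\,\subset\,\preceq_{Q_1}$), so $\K[S^{Q_1}_0]$ is a free $\K[S^{Q_0}_0]$-module on a system of coset representatives of the submonoid — this is the point where I use the monoid structure explicitly.

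The main obstacle I anticipate is not the final limit argument but the bookkeeping needed to be sure that the $I_0$-adic and $I_1$-adic topologies are the ones genuinely arising from the Novikov ring filtrations, i.e. that $I_j^m = (F^{Q_j}_{y^m})$ as ideals, which is exactly the content of Lemma \ref{lemma multiplying ideals} and the cofinality of $(y^m)_{m}$ from Definition \ref{definition cofinal element} — one must invoke Lemma \ref{lemma cofinal element} to know such a $y$ exists that is simultaneously $Q_0$- and $Q_1$-cofinal. A secondary subtlety is verifying $\widehat{R^0}_{I_0}/I_0^m \widehat{R^0}_{I_0} \cong R^0/I_0^m$; this is automatic for Noetherian rings (the completion is flat and $I_0$-adically complete with the right quotients, see e.g. \cite[Tag 0315]{stacks-project}), and I would cite that rather than reprove it. Once these identifications are in place, the isomorphism \eqref{equation completed tensor inclusion} is formal.
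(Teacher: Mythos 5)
Your proposal is correct and follows essentially the same route as the paper: both reduce to the identification $(R^1\otimes_{R^0}\widehat{R^0}_{I_0})/(1\otimes y)^m \cong R^1/(y^m)$ via $\widehat{R^0}_{I_0}/I_0^m\widehat{R^0}_{I_0}\cong R^0/I_0^m$, and then take the inverse limit over $m$. The extra verifications you flag (freeness of $R^1$ over $R^0$, Noetherianity of $R^1$) are not actually needed for this step, since the chain of identifications only uses right-exactness of the tensor product and the standard quotient property of adic completions for the principal ideal $(y)$.
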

\proof
The map (\ref{equation tensor product inclusion}) induces an isomorphism
$$(R^1 \otimes_{R^0} \widehat{R^0}_{I_0}) / (1 \otimes y^m)
= (R^1 \otimes_{R^0} R^0) / (1 \otimes y^m) \cong R^1/(y^m)$$
for each $m > 0$. Taking the inverse limit as $m \to \infty$ of this isomorphism gives us the isomorphism (\ref{equation completed tensor inclusion}).
\qed

\smallskip

Note that we have natural inclusion maps
\begin{equation} \label{eqn natural inclusion map}
R^1 \hookrightarrow S_y^{-1} R^1 = \K[A] = S_y^{-1} R^0 \hookrightarrow S_y^{-1}\widehat{R^0}_{I_0}
\end{equation}
since $y$ is not a zero divisor in $R^0$ or $R^1$ and also by Lemma \ref{lemma inclusion map injective}.

\begin{lemma} \label{lemma inclusion map tensor localization isomorphism}
The map
\begin{equation} \label{equation inclusion to localization R0}
\Psi : R^1 \otimes_{R^0} \widehat{R^0}_{I_0} \lra{} S_y^{-1}\widehat{R^0}_{I_0}
\end{equation}
induced by the natural inclusion map
(\ref{eqn natural inclusion map}) extends to an isomorphism
\begin{equation} \label{localiszation isomorphism from Q0}
\Phi : S_{1 \otimes y}^{-1}(R^1 \otimes_{R^0} \widehat{R^0}_{I_0}) \lra{\cong} S_y^{-1}(\widehat{R^0}_{I_0}).
\end{equation}
\end{lemma}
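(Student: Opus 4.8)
The plan is to reduce the statement to the standard fact that localization commutes with tensor products, together with an explicit description of the rings $S_y^{-1}R^0$ and $S_y^{-1}R^1$. First I would record two preliminary observations. Since $y$ is the image of a single element of the group $A$, it is a unit in $\K[A]$, and both $R^0 = F^{Q_0}_0$ and $R^1 = F^{Q_1}_0$ are subrings of $\K[A]$; hence $y$ is a non-zero-divisor in each. Moreover, because $y$ is a $Q_j$-cofinal element, for every $a\in A$ there is an $m$ with $0\preceq_{Q_j} y^m a$, i.e. $y^m a\in S^{Q_j}_0$, so $a=y^{-m}(y^m a)\in S_y^{-1}R^j$; this shows $S_y^{-1}R^0=\K[A]=S_y^{-1}R^1$, and the two identifications are compatible with the inclusion $R^0\hookrightarrow R^1$, since each restricts to the tautological inclusion $R^j\hookrightarrow\K[A]$. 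In particular the composite $R^0\hookrightarrow\widehat{R^0}_{I_0}\to S_y^{-1}\widehat{R^0}_{I_0}$ is injective (combining $\bigcap_m I_0^m=0$ with Lemma \ref{lemma inclusion map injective}), so the map (\ref{eqn natural inclusion map}), and therefore $\Psi$, are well defined.

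Next I would check that $\Psi$ inverts $1\otimes y$. Since $y\in R^0$ we have $1\otimes y=y\otimes 1$ in $R^1\otimes_{R^0}\widehat{R^0}_{I_0}$, and tracing through (\ref{eqn natural inclusion map}) shows that $\Psi(y\otimes 1)$ is the image of $y$ under $R^0\hookrightarrow\widehat{R^0}_{I_0}\to S_y^{-1}\widehat{R^0}_{I_0}$, which is invertible by construction of the localization. Hence $\Psi$ factors through $S_{1\otimes y}^{-1}\bigl(R^1\otimes_{R^0}\widehat{R^0}_{I_0}\bigr)$, and this factorization is the map $\Phi$.

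For bijectivity of $\Phi$ I would compute the source directly. As $1\otimes y$ is the image of $y\in R^1$, inverting it amounts to base-changing the first tensor factor: $S_{1\otimes y}^{-1}\bigl(R^1\otimes_{R^0}\widehat{R^0}_{I_0}\bigr)=\bigl(S_y^{-1}R^1\bigr)\otimes_{R^0}\widehat{R^0}_{I_0}$. Substituting $S_y^{-1}R^1=\K[A]=S_y^{-1}R^0$ from the first paragraph and using again that localization commutes with tensor products, this equals $\bigl(S_y^{-1}R^0\bigr)\otimes_{R^0}\widehat{R^0}_{I_0}=S_y^{-1}\bigl(R^0\otimes_{R^0}\widehat{R^0}_{I_0}\bigr)=S_y^{-1}\widehat{R^0}_{I_0}$. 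Finally I would verify that the composite of these canonical isomorphisms is exactly $\Phi$, by checking agreement on the generators $r\otimes 1$ ($r\in R^1$) and $1\otimes s$ ($s\in\widehat{R^0}_{I_0}$); here the non-zero-divisor property of $y$ is used to pass between $R^0$- and $R^1$-representatives of an element of $\K[A]$.

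The main obstacle I anticipate is purely organizational: ensuring the ``localization commutes with tensor'' identifications are applied with the correct base rings, and, above all, confirming that the composite isomorphism coincides with the extension $\Phi$ of the originally defined map $\Psi$ rather than with some other abstract isomorphism between the same two rings. Everything else is formal commutative algebra, using only that $y$ is a cofinal non-zero-divisor (the Noetherian hypothesis on $\K$, via Gordan's lemma, is needed only for the subsequent flatness argument, not for this lemma). This lemma then feeds, together with Lemma \ref{lemma completion of tensor product induces isomorphism} and Corollary \ref{corollary isomorphism test}, into the flatness statement of Proposition \ref{flatness of rational polyhedral novikov rings}.
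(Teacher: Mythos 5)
Your proposal is correct, and it reaches the same conclusion by a somewhat different route than the paper. The paper proves the statement by a direct two-step check: surjectivity of $\Phi$ follows because its precomposition with $S^{-1}_{1\otimes y}(R^0\otimes_{R^0}\widehat{R^0}_{I_0})\lra{} S^{-1}_{1\otimes y}(R^1\otimes_{R^0}\widehat{R^0}_{I_0})$ is an isomorphism, and injectivity is an element chase: given $c\in\ker\Phi$, one clears denominators to get $c'=(1\otimes y)^m c\in R^1\otimes_{R^0}\widehat{R^0}_{I_0}$, uses $Q_0$-cofinality of $y$ to push $(y\otimes 1)^{m'}c'$ into $R^0\otimes_{R^0}\widehat{R^0}_{I_0}=\widehat{R^0}_{I_0}$, and then invokes that $y$ is a non-zero-divisor there to conclude $c=0$. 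You instead assemble the isomorphism from canonical identities — localization as base change in the $R^1$ factor, the equality $S_y^{-1}R^1=\K[A]=S_y^{-1}R^0$ (compatibly with $R^0\hookrightarrow R^1$), and localization commuting with $\otimes_{R^0}$ — and then verify that the resulting composite agrees with $\Phi$ on the generators $r\otimes 1$ and $1\otimes s$. Both arguments use exactly the same inputs (cofinality of $y$ in both cones and the fact that $y$ is a non-zero-divisor), so neither is more general; yours is cleaner in that injectivity comes for free from the base-change formula, while the paper's is more self-contained in that it never has to confirm that an abstract isomorphism coincides with the concretely defined map $\Phi$ — a point you correctly flag as the one step requiring care, and which your check on generators handles adequately.
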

\proof
Since the map (\ref{equation inclusion to localization R0}) sends $1 \otimes y$ to $y$, we get that the map $\Phi$ is well defined.
Also the composition of the map
$S^{-1}_{1 \otimes y}(R^0 \otimes_{R^0} \widehat{R^0}_{I_0}) \lra{} S^{-1}_{1 \otimes y}(R^1 \otimes_{R^0} \widehat{R^0}_{I_0})$ with $\Phi$ is an isomorphism and hence $\Phi$ is surjective.
Finally, suppose $c \in \ker(\Phi)$.
Then, there exists $m > 0$ so that
$c' := (1 \otimes y)^m c \in R^1 \otimes_{R^0} \widehat{R^0}_{I_0}$.
Since $y$ is a $Q_0$-cofinal element,
we have
\begin{equation} \label{equation multiple of c prime}
(y \otimes 1)^{m'} c' \in R^0 \otimes_{R^0} \widehat{R^0}_{I_0} = \widehat{R^0}_{I_0}
\end{equation}
for some large $m' > 0$.
Since $y$ is not a zero divisor in $\widehat{R^0}_{I_0}$,
we get that the inclusion map
$\widehat{R^0}_{I_0} \hookrightarrow S_y^{-1}\widehat{R^0}_{I_0}$
is an injection and so the element (\ref{equation multiple of c prime}) is zero.
Hence 
$$(1 \otimes y)^{m+m'} c
=
(y \otimes 1)^{m'} c' = 0 \in R^1 \otimes_{R^0} \widehat{R^0}_{I_0}.$$
Since $1 \otimes y$ is invertible
in $S_{1 \otimes y}^{-1}(R^1 \otimes_{R^0} \widehat{R^0}_{I_0})$,
this implies that $c = 0$
and hence $\Phi$ is injective.
%
%
%
%
Hence $\Phi$ is an isomorphism.
\qed

\begin{proof}[Proof of Proposition \ref{flatness of rational polyhedral novikov rings}.]
Since $Q_j$ is a rational polyhedral cone
we get that
$R^j$ is a finitely generated
 $\K$-algebra by Gordan's lemma
\cite[Proposition 1, Section 1.2]{fulton1993introduction}
 and hence is Noetherian by \cite[Tag 00FN]{stacks-project} for $j=0,1$ since $\K$ is Noetherian.
Hence
$R^1 \otimes_{R^0} \widehat{R^0}_{I_0}$
is Noetherian
by
\cite[Tag 0CY6]{stacks-project},
\cite[Tag 00FN]{stacks-project} and
\cite[Tag 05GH]{stacks-project}.
Therefore
$\widehat{(R^1 \otimes_{R^0} \widehat{R^0}_{I_0})}_{(1 \otimes y)}$
is a flat
$R^1 \otimes_{R^0} \widehat{R^0}_{I_0}$-module by
\cite[Tag 00MB]{stacks-project} and \cite[Tag 00HT (1)]{stacks-project}.
Hence
$\widehat{R^1}_{I_1}$
is a flat
$R^1 \otimes_{R^0} \widehat{R^0}_{I_0}$-module by Lemma
\ref{lemma completion of tensor product induces isomorphism}.
Therefore since
$S_y^{-1}\widehat{R^1}_{I_1}$
is a flat
$R^1 \otimes_{R^0} \widehat{R^0}_{I_0}$-module
by
\cite[Tag 00HT (1)]{stacks-project}
combined with
\cite[Tag 00HC]{stacks-project}
we get that
$S_y^{-1}\widehat{R^1}_{I_1}$
is a flat
$S_{1 \otimes y}^{-1}(R^1 \otimes_{R^0} \widehat{R^0}_{I_0})$-module by \cite[Tag 00HT (2)]{stacks-project}.
Hence
$S_y^{-1}\widehat{R^1}_{I_1}$ is a flat
$S_y^{-1}(\widehat{R^0}_{I_0})$-module
by Lemma \ref{lemma inclusion map tensor localization isomorphism}.
Our proposition now follows from Corollary \ref{corollary isomorphism test}.
\end{proof}

\bibliography{references}

\end{document}